\newcommand{\vnorm}[1]{\left\|#1\right\|}
\newcommand{\PLH}{{\mkern-2mu\times\mkern-2mu}}
\newcommand{\bigslant}[2]{{\left.\raisebox{.2em}{$#1$}\middle/\raisebox{-.2em}{$#2$}\right.}}
\newenvironment{chapquote}[2][2em]
{\setlength{\@tempdima}{#1}%
	\def\chapquote@author{#2}%
	\parshape 1 \@tempdima \dimexpr\textwidth-2\@tempdima\relax%
	\itshape}
{\par\normalfont\hfill--\ \chapquote@author\hspace*{\@tempdima}\par\bigskip}
\newcommand{\brk}{\vspace*{0.18in}}
\newcommand{\smlbrk}{\vspace*{0.04in}}
\newcommand{\tI}{\tilde{I}}
\newcommand{\cI}{{\mathcal I}}
\newcommand{\cJ}{{\mathcal J}}
\newcommand{\cT}{{\mathcal T}}
\newcommand{\tU}{\tilde{U}}
\newcommand{\tW}{\tilde{W}}
\DeclareMathOperator\diam{diam}
\newcommand{\scR}{\mathscr{R}}
\newcommand{\BMA}{\mathbb{A}}
\newcommand{\BMB}{\mathbb{B}}
\newcommand{\BMC}{\mathbb{C}}
\newcommand{\bbF}{\mathbb{F}}
\newcommand{\bbL}{\mathbb{L}}
\newcommand{\bbR}{\mathbb{R}}
\newcommand{\bbZ}{\mathbb{Z}}
\newcommand{\cR}{\mathcal{R}}
\newcommand{\cC}{\mathcal{C}}
\renewcommand{\vec}[1]{\mathbf{#1}}
\newtheorem*{thm*}{Theorem}
\newtheorem{thm}{Theorem}[chapter]
\newtheorem{lem}[thm]{Lemma}
\newtheorem{prop}[thm]{Proposition}
\newtheorem{cor}[thm]{Corollary}
\newtheorem*{remark}{Remark}
\newtheorem*{conj*}{Conjecture}
\theoremstyle{definition}
\newtheorem{example}{Example}[chapter]
\newtheorem*{definition}{Definition}
\newcommand\abstractname{Abstract}  
\newenvironment{abstract}{%
	\thispagestyle{plain}
	\null\vfil
	\@beginparpenalty\@lowpenalty
	\begin{center}%
		\bfseries \abstractname
		\@endparpenalty\@M
\end{center}}%
{\par\vfil\null}
\newenvironment{abstract}{%
	\if@twocolumn
	\section*{\abstractname}%
	\else
	\small
	\begin{center}%
		{\bfseries \abstractname\vspace{-.5em}\vspace{\z@}}%
	\end{center}%
	\quotation
	\fi}
{\if@twocolumn\else\endquotation\fi}
\author{Brian G. Kodalen}
\date{}
\begin{document}
	\frontmatter
	\thispagestyle{empty}
	\begin{center}
		
		{\large\textbf{Cometric Association Schemes}}

		by
		
		\smlbrk
		Brian G.\ Kodalen
		
		\smlbrk\smlbrk
		A Thesis

		Submitted to the Faculty

		of the

		WORCESTER POLYTECHNIC INSTITUTE

		In partial fulfillment of the requirements for the

		Degree of Doctor of Philosophy

		in

		Mathematical Sciences

		by
		
		\brk\brk
		\rule{3in}{1.2pt}
		
		\smlbrk
		March 2019
	\end{center}
\vfill
APPROVED:

{\setlength{\parindent}{0cm}
\begin{center}
\vspace{0.4in}
\rule{3in}{0.8pt}

Dr.\ William J.\ Martin\\ Major Thesis Advisor
\end{center}
\vspace{0.4in}
\rule{3in}{0.8pt}\hfill\rule{3in}{0.8pt}

Dr.\ Peter J.\ Cameron\hfill Dr. Padraig \'{O} Cath\'{a}in\\University of St Andrews\hfill Worcester Polytechnic Institute

\vspace{0.4in}
\rule{3in}{0.8pt}\hfill\rule{3in}{0.8pt}

Dr. Peter R. Christopher\hfill Dr. William M. Kantor\\Worcester Polytechnic Institute \hfill University of Oregon

\vspace{0.4in}
\rule{3in}{0.8pt}\hfill\rule{3in}{0.8pt}

Dr. G\'{a}bor N. S\'{a}rk\"{o}zy\hfill Dr. Luca Capogna\\Worcester Polytechnic Institute\hfill Worcester Polytechnic Institute\\\hspace*{\fill} Head of Department

}
\clearpage


\begin{abstract}
	\thispagestyle{plain}
	
	The combinatorial objects known as association schemes arise in group theory, extremal graph theory, coding theory, the design of experiments, and even quantum information theory. One may think of a $d$-class association scheme as a $(d+1)$-dimensional matrix algebra over $\mathbb{R}$ closed under entrywise products. In this context, an imprimitive scheme is one which admits a subalgebra of block matrices, also closed under the entrywise product. Such systems of imprimitivity provide us with \textit{quotient schemes}, smaller association schemes which are often easier to understand, providing useful information about the structure of the larger scheme.
	
	One important property of any association scheme is that we may find a basis of $d+1$ idempotent matrices for our algebra. A \textit{cometric} scheme is one whose idempotent basis may be ordered $E_0,E_1,\dots,E_d$ so that there exists polynomials $f_0,f_1,\dots,f_d$ with $f_i\circ(E_1) = E_i$ and $\text{deg}(f_i) = i$ for each $i$. Imprimitive cometric schemes relate closely to $t$-distance sets, sets of unit vectors with only $t$ distinct angles, such as equiangular lines and mutually unbiased bases. Throughout this thesis we are primarily interested in three distinct goals: building new examples of cometric association schemes, drawing connections between cometric association schemes and other objects either combinatorial or geometric, and finding new realizability conditions on feasible parameter sets --- using these conditions to rule out open parameter sets when possible.
	
	After introducing association schemes with relevant terminology and definitions, this thesis focuses on a few recent results regarding cometric schemes with small $d$. We begin by examining the matrix algebra of any such scheme, first looking for low rank positive semidefinite matrices with few distinct entries and later establishing new conditions on realizable parameter sets. We then focus on certain imprimitive examples of both 3- and 4-class cometric association schemes, generating new examples of the former while building realizability conditions for both. In each case, we examine the related $t$-distance sets, giving conditions which work towards equivalence; in the case of 3-class $Q$-antipodal schemes, an equivalence is established. We conclude by partially extending a result of Brouwer and Koolen concerning the connectivity of graphs arising from metric association schemes.
\end{abstract}
\clearpage

\vfill
\begin{center}
	\textbf{Acknowledgments}
\end{center}

First and foremost, I would like to thank my advisor Dr.\ William Martin. The guidance he gave over the last five years has made me a better instructor, writer, researcher, and student. His patience, understanding, and willingness to read every line of this document and many others was essential to my success at WPI. The sheer enjoyment he finds in this subject, paired with his knowledge of many neighboring areas, drew me to this field originally and spurred me on throughout the more frustrating times --- instilling the same love for association schemes within me.

\brk

I would also like to thank those with whom I have had numerous conversations throughout my time at WPI, helping to teach me a broader picture of discrete math. Specifically Dr.\ William Kantor, Dr.\ Tyler Reese, Dr.\ Padraig \'{O} Cath\'{a}in, Dr.\ Peter Christopher, Dr.\  G\'{a}bor N. S\'{a}rk\"{o}zy, and Mike Yereniuk, all of whom have challenged me to step out of my comfort zone to tackle problems in nearby topics. I am additionally thankful to Dr.\ Peter Cameron for agreeing to serve on my dissertation committee along with many of those mentioned above. I am also grateful to the math department at WPI as a whole for the encouragement and support which has characterized my time at WPI.

\brk

Finally, I would like to thank those who have kept me together the last five years. My Mom and Dad, whose love, encouragement, and support has built me into the man I am today. My wife Cara for laughing when I wasn't funny, nodding along when I wasn't making sense, and pushing me forward whenever I was ready to quit. My pastor Neal who took a personal interest in my well-being and met with me consistently as both a counselor and friend. And finally, my Lord and Savior Jesus Christ, without whom I would be lost.
 
 \vfill
 \clearpage

	\tableofcontents
	\clearpage
	\section*{List of Symbols}
	\begin{longtable}{p{.15\textwidth} p{.75\textwidth}}
		$\mathbbm{1}$ & All ones vector\\
		& \\
		$A_i$ & Adjacency matrix\\
		$\mathbb{A},\mathbb{B}$ & Bose-Mesner algebra\\
		$a$ & Vertex\\
		$a^\perp$ & Set containing $a$ and its neighborhood\\
		$a_i^*,b_i^*,c_i^*$ & Non-zero entries of $L_1^*$ for a $Q$-polynomial association scheme\\
		$\alpha$ & Inner product of two vectors\\
		$\alpha(\Gamma)$ & independence number of a Graph $\Gamma$\\
		$\mathcal{A}$ & Regular simplex\\
		&\\
		$\mathcal{B}$ & Set of blocks of a $2$-design\\
		$B$ & Incidence matrix of a block design\\
		
		&\\
		$\mathcal{C}[g]$ & Conjugacy class of group element $g$\\	
		&\\
		$\Delta_m$ & Diagonal matrix with entries $m_0,\dots,m_d$\\
		$\Delta_k$ & Diagonal matrix with entries $k_0,\dots,k_d$\\
		$\delta_{ij}$ & Delta function (1 if $i=j$, 0 otherwise)\\
		$d_{\Gamma}(a,b)$ & Distance between vertices $a$ and $b$ in $\Gamma$\\
		&\\
		$E_j$ & Idempotent matrix corresponding to eigenspace $V_j$\\
		$E_j^\prime$ & Idempotent of a subscheme\\
		$\tilde{E}_j$ & Idempotent of a quotient scheme\\
		$E\Gamma$ & Edge set of graph $\Gamma$\\
		$\vec{e}_j$ & $j^\text{th}$ standard basis vector\\
		&\\
		$\mathbb{F}$ & Field\\	
		$f\circ(A)$ & Function $f$ applied entrywise to matrix $A$\\
		&\\
		$G$ & Group, Gram matrix\\
		$\Gamma$ & Graph\\
		$\Gamma(a)$ & Neighborhood of vertex $a$ in graph $\Gamma$\\
		$\Gamma\backslash S$ & Resultant graph when vertices in $S$ are deleted from the graph $\Gamma$\\
		$\Gamma_a$ & $\Gamma\backslash a^\perp$\\
		&\\
		$H_i$ & Unweighted distribution diagram of $(X,\mathcal{R})$ via $R_i$\\
		&\\
		$I$ & Identity matrix\\
		$\mathcal{I}$ & Set of relations corresponding to a system of imprimitivity\\
		$\iota^*(X,\mathcal{R})$ & Krein array of a $Q$-polynomial association scheme\\
		&\\
		$J$ & All ones matrix\\	
		$\mathcal{J}$ & Set of idempotents corresponding to a system of imprimitivity\\
		&\\
		$K_{m,n}$ & Complete bipartite graph with fibers of size $m$ and $n$\\
		$K_s$ & Complete graph on $s$ vertices\\
		$k_i$ & valency of relation $R_i$\\
		&\\
		$L_i$ & array of intersection numbers\\
		$\mathbb{L}$ & Algebra spanned by arrays of intersection numbers\\
		$L_i^*$ & array of intersection numbers\\
		$\mathbb{L}^*$ & Algebra spanned by arrays of Krein parameters\\
		$\lambda_{j},\lambda_{j,i}$ & Entry of $E_i$ corresponding to Schur-idempotent $A_i$\\
		$\lambda$ & Dual Eigenvalue (Chapter 3)\\
		& $\vert B_i\cap B_j\vert$ for distinct blocks $B_i$ and $B_j$ (Chapter 4)\\
		& $\vert \Gamma(x)\cap\Gamma(y)\vert$ for adjacent vertices $x$ and $y$ in an SRG (Chapter 5)\\
		&\\
		$\left<M\right>_\circ$ & Algebra generated by matrix $M$ using entrywise products\\
		$\left<M\right>_*$ & Algebra generated by matrix $M$ using standard matrix products\\
		$M_{ij}$, $[M]_{ij}$ & Entry of matrix $M$ in row $i$ and column $j$\\
		$M\circ N$ & Entrywise product of $M$ and $N$\\
		$M\otimes N$ & Tensor product of $M$ with $N$\\
		$m_i$ & multiplicity of eigenspace $V_i$\\
		&\\
		$\mathbb{N}$ & Natural numbers\\
		&\\
		$O_n$ & Orthogonal array on $n$ symbols\\
		&\\
		$P$ & First eigenmatrix of an association scheme\\
		$PD_m(\gamma,\delta)$ & Projective double cover in $\mathbb{R}^m$ with angles $\gamma,\delta$\\
		$p^k_{i,j}$ & Intersection number\\
		$\phi$,$\phi^*$ & Algebra isomorphisms\\\
		$\text{proj}_S(\vec{x})$ & Orthogonal projection of vector $\vec{x}$ onto the subspace $S$\\
		&\\
		$Q$ & Second eigenmatrix of an association scheme\\
		$Q_\ell^m$, $Q_\ell$ & Gegenbauer polynomial of degree $\ell$ (with multiplicity $m$)\\
		$q^k_{ij}$ & Krein parameter\\
		$q_0,\dots,q_{d+1}$ & Orthogonal polynomials of a $Q$-polynomial association scheme\\
		&\\
		$R_i$ & Relation\\
		$R_i^\prime$ & Relation of a subscheme\\
		$\tilde{R}_i$ & Relation of a quotient scheme\\
		$R_i(a)$ & $i^\text{th}$ neighborhood of the vertex $a$\\
		$\mathcal{R}$ & Set of relations\\
		$\mathbb{R}$ & Set of real numbers\\
		$\mathscr{R}$ & Ring of polynomials\\
		$\mathscr{R}(t,n)$ & $t^\text{th}$ order Reed Muller code on $n$ variables\\
		$\bigslant{\mathscr{R}}{(f)}$ & Quotient ring of $R$ given the ideal $(f)$\\
		
		&\\
		$\theta_{ij}$ & Eigenvalue of $A_i$ corresponding to the eigenspace $V_j$\\
		$\theta$ & Eigenvalue, angle\\
		
		&\\
		$V_j$ & Eigenspace\\
		$V\oplus W$ & Direct sum of vector spaces $V$ and $W$.\\
		$V\Gamma$ & Vertex set of graph $\Gamma$\\
		
		&\\
		$X$ & Finite set\\
		$\vert X\vert$ & Cardinality of set $X$\\ 
		$\chi$ & Character\\
		$(X,\mathcal{R})$ & Association scheme with point set $X$ and relations $\mathcal{R}$\\
		$(\hat{X},\hat{\mathcal{R}})$ & Association scheme (formally) dual to $(X,\mathcal{R})$\\
		$\vec{x}$ & Vector\\
		$\vert\vert \vec{x}\vert\vert$ & Norm of the vector $\vec{x}$\\
		
		&\\
		$\mathbb{Z}$ & Set of integers\\
		$\mathbb{Z}_n$ & Cyclic group on $\left\{0,\dots,n-1\right\}$\\

	\end{longtable}
	\clearpage

	\mainmatter
	
	\chapter{Introduction}\label{introduction}
	\begin{chapquote}{Bertrand Russell, 1907}
		``Mathematics, rightly viewed, possesses not only truth, but supreme beauty"
	\end{chapquote}
	Much of the motivation for the theory of association schemes arises from coding theory; for the purpose of illustration, we will use this application as an entry point into our discussion of association schemes. A binary code of length $n$ may simply be viewed as a subset of $\mathbb{Z}_2^n$. First consider the parity check code on two bits: $P = \left\{000,011,101,110\right\}.$ This code has the additional property that it forms a subspace of $\mathbb{Z}_2^3$, not just a subset; any code with this property is called a \emph{linear code}. Given a linear code, $C$, we may represent the code using a \emph{generator matrix} -- a matrix whose rows form a basis for $C$; we say the \emph{dimension} of $C$ is the number of rows in the generator matrix. For instance, the parity check code on two bits may be described as $P = \text{rowspan}\left[\begin{array}{ccc}
	1 & 1 & 0\\
	0 & 1 & 1\\
	\end{array}\right]$ and thus has dimension $2$. We may equivalently define this code via $P = \text{null}\left[\begin{array}{ccc}
	1 & 1 & 1\\
	\end{array}\right]$. The dual code of a linear code $C$, denoted $C^\perp$, consists of the subspace formed by swapping the two previous matrices. Returning to our example, we find $P^\perp = \text{null}\left[\begin{array}{ccc}
	1 & 1 & 0\\
	0 & 1 & 1\\
	\end{array}\right] = \text{rowspan}\left[\begin{array}{ccc}
	1 & 1 & 1\\
	\end{array}\right].$ Given a code $C$ of length $n$, we may form graphs $\Gamma_1,\dots,\Gamma_n$ on $C$ where two codewords are adjacent in $\Gamma_i$ if and only if they differ in exactly $i$ positions. Using $P$ as our code, we find $\Gamma_1$ and $\Gamma_3$ are both empty while $\Gamma_2\simeq K_4$, the complete graph on four vertices. Using the dual code $P^\perp$ instead, we find that $\Gamma_1$ and $\Gamma_2$ are both empty while $\Gamma_3\simeq K_2$. The interaction of these two codes and their corresponding graphs will be discussed later as subobjects of one association scheme called the 3-cube.
	
	We now move to a family of codes known as the Reed Muller codes, denoted $\mathcal{R}(t,m)$ for $t\geq 0$ and $m\geq 1$. For fixed $t$ and $m$, $\mathcal{R}(t,m)$ is a linear code of length $2^m$ with dimension $\sum_{i=0}^t\binom{m}{i}$. While there are many ways to represent the codewords of this family, we will use a construction relying on binary polynomials. Let $P_t\subset\bbZ_2[x_1,\dots,x_m]$ be the space of binary polynomials of degree $t$ or less on $m$ variables. We begin by imposing an ordering on the elements of $\bbZ_2^m$, say $p_1,\dots,p_{2^m}$. Then, for each $f\in P_t$, we build the corresponding codeword, $c_f$, by evaluating $f$ at each point of $\bbZ_2^m$ in order; that is, the $i^\text{th}$ element of $c_f$ is given by $f(e_i)$. As $P_t$ is a vector space, we may create codewords for each polynomial in some basis of $P_t$ and use the resultant codewords as the rows of our generator matrix; that is, if $\left\{f_1,\dots,f_\ell\right\}$ is a basis for $P_t$ then $c_{f_1},\dots,c_{f_\ell}$ is a basis for the binary code. Since we have indexed each entry of each codeword by some element of $\bbZ_2^m$, we find that this code has length $2^m$. Further, consider the basis of $P_t$ given by the set of monomials. This basis has $\sum_{i=0}^t\binom{m}{i}$ polynomials, giving us the dimension of our code. For example, $\mathcal{R}(1,4)$ may be generated using the following generator matrix $M$, where the rows of $M$ are indexed by the basis $1$, $x_1$, $x_2$, $x_3$, and $x_4$, while the columns are indexed by the elements of $\mathbb{Z}_2^4$ ordered lexicographically. Then the element in row $p$ and columns $x$ of $M$ is $p(x)$.
	\[M = \left[\begin{array}{cccccccccccccccc}
	1 & 1 & 1 & 1 & 1 & 1 & 1 & 1 & 1 & 1 & 1 & 1 & 1 & 1 & 1 & 1\\
	0 & 0 & 0 & 0 & 0 & 0 & 0 & 0 & 1 & 1 & 1 & 1 & 1 & 1 & 1 & 1\\
	0 & 0 & 0 & 0 & 1 & 1 & 1 & 1 & 0 & 0 & 0 & 0 & 1 & 1 & 1 & 1\\
	0 & 0 & 1 & 1 & 0 & 0 & 1 & 1 & 0 & 0 & 1 & 1 & 0 & 0 & 1 & 1\\
	0 & 1 & 0 & 1 & 0 & 1 & 0 & 1 & 0 & 1 & 0 & 1 & 0 & 1 & 0 & 1\\
	\end{array}\right].\]
	Here, the coordinates are indexed by $0000,$ $0001,$ $0010,$ $0011,$ etc. This example, with $t=1$, contains 32 distinct codewords, though the size of the code increases rapidly with $t$. In fact, $\mathcal{R}(2,4)$ contains $2048$ distinct codewords. Unfortunately, it is not only the number of codewords we typically care about. Another main parameter we are interested in is the \emph{minimum distance} --- the smallest number of entries in which unequal codewords may differ. It is in this parameter that we pay for the extra codewords in the higher order Reed Muller code; the minimum distance of $\mathcal{R}(1,4)$ is 8, while $\mathcal{R}(2,4)$ has a minimum distance of only half that. Given the large minimum distance of $\mathcal{R}(1,4)$ and the large size of $\mathcal{R}(2,4)$, the question arises: what is the largest subcode of $\mathcal{R}(2,4)$ such that the minimum distance is six? One may show that any generator matrix cannot have more than seven rows and thus we will not find any linear subcode with more than 128 codewords. However, we may do better than this if we do not require linearity. Thus, we will instead define a code explicitly by providing a polynomial for each and every codeword. First, consider the eight quadratic polynomials
	\[\begin{aligned}p_1 &=x_1x_2+x_1x_3+x_1x_4+x_2x_3+x_2x_4+x_3x_4,\\
	p_2 &= x_1x_2+x_2x_3+x_3x_4,\\p_3 &=x_1x_2+x_2x_4+x_4x_3,\\p_4 &=x_1x_3+x_3x_2+x_2x_4,\\p_5 &=x_1x_3+x_3x_4+x_4x_2,\\p_6 &=x_1x_4+x_4x_2+x_2x_3,\\p_7 &=x_1x_4+x_4x_3+x_3x_2,\\p_8 &=0.\end{aligned}\]
	Each of these determines a coset of $\mathcal{R}(1,4)$ inside $\mathcal{R}(2,4)$ by adding the resultant codeword to each of the words in $\mathcal{R}(1,4)$; for example, the coset corresponding to $p_8$ is $\mathcal{R}(1,4)$ itself. The union of these cosets gives us a code with 256 distinct words with minimum distance six. This code is known as the (extended) Nordstrom-Robinson code, the first in an infinite family of non-linear codes which may be defined similarly by taking cosets of the first order Reed Muller codes inside the respective second order Reed-Muller code.
	
	It turns out this code has an interesting history behind it. The code, originally given in \cite{Nordstrom1967}, was found by a high-school student after attending an introductory talk at his school. John Robinson, a professor at the University of Iowa at the time, gave the talk in the mid 1960's in which he discussed both linear and non-linear binary codes. After introducing the best possible linear code of length 15 and minimum distance 5 (the double-error-correcting BCH code), Robinson pointed out that the upper bound on non-linear codes with the same length and minimum distance was a factor of 2 greater --- yet no such code was known.  Alan Nordstrom responded to the challenge and, through trial and error, was able to produce what is now known as the Nordstrom-Robinson code. This code attracted attention quickly and within a few years it was discovered that the extended version (as described above) may be generalized to two infinite families of non-linear codes, first the Preparata codes in 1968 \cite{Preparata1968} and four years later the Kerdock codes \cite{Kerdock1972}.
	
	Perhaps one of the most intriguing questions arising from these families at the time was the notion that they were formally dual --- despite the notion of ``duality" being a property of linear codes. Recall that we define the dual of a linear code as the null-space of the generator matrix --- that is, the dual code consists of all codewords which are orthogonal to every codeword of the original code. Using the MacWilliams identity \cite{MacWilliams1963}, this notion was generalized to a statement about the parameters of codes. For a binary code $C$ of length $n$, MacWilliams defined the \emph{weight distribution} as the sequence of numbers $A_t = \left\vert\left\{c\in C\mid w(c)=t\right\}\right\vert$ where $w(c)$ counts the number of non-zero entries of codeword $c$. Then the \emph{weight enumerator polynomial} is given by
	\[W(C;x,y) = \sum_{t=0}^n A_t x^ty^{n-t}.\]
	MacWilliams showed that any pair of dual codes $C$ and $C^\perp$ must satisfy the identity
	\begin{equation}W(C^\perp;x,y) = \frac{1}{\vert C\vert}W(C;y-x,y+x).\label{macwill}\end{equation}
	Thus MacWilliams defined the notion of formal duality; we say two codes are \emph{formally dual} if they satisfy Equation \eqref{macwill}. For linear codes, this allows us to show certain linear codes do not exist --- that is, given $A_t$, if any coefficient of $\frac{1}{\vert C\vert}W(C;y-x,y+x)$ is not a non-negative integer, we have a quick proof that $C^\perp$ does not exist based solely on its purported weight enumerator. The converse however is not true; in fact, we may find a non-linear code $C$ for which no formally dual code exists, while the right hand side of Equation \eqref{macwill} has only non-negative integer coefficients. Therefore it is important to emphasize that the notion of formal duality is a statement of the parameters of a code, not the code itself. In fact, a linear code may have many codes formally dual to it, despite always having a unique dual. As an example in the non-linear case, the weight enumerator of the Nordstrom-Robinson code is
	\[y^{16}+112x^6y^{10}+30x^8y^{8}+112x^{10}y^6+x^{16}.\]
	One may check that the RHS of Equation \eqref{macwill} results in the same polynomial; we therefore say that the Nordstrom-Robinson code is \emph{formally self-dual}. More generally, one finds that the Kerdock and Preparata codes are formally dual codes. However, since this duality is based solely on the parameters, it does not provide a way to construct one family from the other. It was not until two decades later in 1994 that Hammons, Kumar, Calderbank, Sloane, and Sole \cite{Hammons1994} showed that certain codes with these parameters are the images of submodules of $\bbZ_4^n$ under the Gray map --- that is, they are linear when viewed as codes of length 8 with an alphabet of size 4. This was illuminated further by Calderbank, Cameron, Kantor, and Seidel \cite{Calderbank1997} who gave a geometric path from the binary Kerdock codes to $\bbZ_4$-Kerdock codes. Thus, while there cannot exist linear binary codes with these parameters, one may two construct $\bbZ_4$-submodules of $\bbZ_4^n$ corresponding to each parameter set which are dual in the traditional sense.
	
	Outside the question of duality, the Kerdock codes have many other, quite fascinating, connections. In the early 1970s, Cameron \cite{Cameron1972} introduced a type of multipartite graph called a \emph{linked system of symmetric designs} (``LSSDs", refer to Chapter \ref{3class} for more detail). Around that time, Goethals communicated to Cameron that one may build examples of such objects using the Kerdock codes; these examples where shown to be optimal with respect to the number of fibers \cite{Noda1974}. This family of LSSDs became known as the Cameron-Seidel association scheme (see Section \ref{kerdock}), remaining the archetypal example of LSSDs even to this day. A second (though not completely independent) use of Kerdock codes is in the construction of real mutually unbiased bases. Here, we look for orthonormal bases in $\bbR^m$ where vectors from distinct bases have an inner product of $\pm\frac{1}{\sqrt{m}}$. With connections to quantum cryptography and Euclidean geometry, mutually unbiased bases have been an area of interest for quite some time now. It was shown using quadratic forms \cite{Cameron1973} that the Kerdock sets not only gave examples of real MUBs, but that these examples were optimal with respect to the number of bases \cite{Calderbank1997} --- this is the only known infinite family of real MUBs achieving this upper bound.
	
	A similar problem is that of finding lines in $\bbR^m$ in which any pair of lines intersect in a fixed angle; such sets of lines are called ``equiangular lines". Gerzon showed that the upper bound on the number of lines in $\bbR^m$ is given by $\frac{m(m+1)}{2}$ \cite{Lemmens1973}, yet the known constructions all scaled linearly with the dimension. It was not until nearly 30 years later that de Caen \cite{deCaen2000} used the Cameron-Seidel scheme to build $\frac{2}{9}\left(d+1\right)^2$ real equiangular lines whenever $d = 3\left(2^{2t-1}\right)-1$ for some positive integer $t$, resulting in the first known infinite family of size quadratic in the dimension.
	
	We therefore find the Kerdock codes, the first example of which was discovered by a high school student, have deep connections to many other areas of study including design theory, quantum cryptography, and equiangular lines; objects such as these clearly warrant further study. Central to many of these connections is the fact that the Cameron-Seidel scheme --- the graphs given the distinct distances in any Kerdock code --- forms a 3-class association scheme. A \textit{symmetric $d$-class association scheme} (see Chapter \ref{association} for a more thorough definition) may be thought of as a edge-coloring of the complete graph using $d$ colors such that: given any colors $c_1$, $c_2$, and $c_3$, the number of $c_1,c_2,c_3$ triangles containing a fixed $c_1$-edge depend only on the colors chosen, not on the edge. We also include the ``0-color" as the graph of loops where we define a ``triangle" containing a loop as a loop paired with any incident edge. Using the Kerdock codes as an example, we color the edges by distance between codewords and this tells us that any pair of words at distance $i$ have a constant number of words distance $j$ from one and $k$ from the other, independent of the pair chosen.
	
	Within the study of association schemes, we will often find rich connections to other areas of mathematics. In this thesis we will examine a type of association schemes known as ``cometric" (see Section \ref{poly} for the definition); this class includes many of the objects mentioned already. Within the field of association schemes, we find many parameters which describe the structure of an association scheme (see Chapter \ref{association}) analogous to the weight distribution of a binary code. Using these parameters, we arrive at a notion of formal duality of association schemes; two association schemes are formally dual if the first and second eigenmatrices of one are swapped for the other. Just as formal duality for general codes arose from explicit duality of linear codes, formal duality in association schemes arises from character duality of abelian groups. Given an abelian group $G$, the dual group $G^*$ is given by taking the set of characters of $G$ where for $\chi,\psi\in G^*$ and $x\in G$, $(\chi*\psi) (x) = \chi(x)*\psi(x)$. If there exists an abelian group acting sharply transitively on the points of an association scheme, then the dual scheme is well-defined. However, without this added structure, there is no clear way to build the ``dual" of a general association scheme. Despite this, we may define duality formally, at the parameter level, and find concrete examples of formally dual pairs of association schemes without a clear way of constructing one from the other. We finish this introduction with a brief history of association schemes followed by an outline of this thesis.
	
	First introduced by Bose and Nair \cite{Bose1939} in 1939 with connections to certain block designs, the algebraic structure known as an ``association scheme" was formally defined later in 1952 by Bose and Shimamoto \cite{Bose1952} as a set of relations on a point set satisfying certain strong regularity properties (see Chapter \ref{association}). It was not until seven years later that Bose and Mesner \cite{Bose1959} described the equivalence between association schemes and Schur-closed matrix algebras --- commutative vector spaces of matrices closed under two distinct matrix products. Around the same time Wielandt was expanding on the theory of Schur (\cite{Wielandt1964},\cite{Schur1933}) to understand the commuting algebra, or centralizer ring, of permutation groups. These two concepts were generalized together by Higman in 1967 \cite{Higman1967} who discussed so-called ``coherent configurations". Shortly thereafter Biggs introduced a generalization of distance-transitive graphs known as ``distance regular graphs" \cite{Biggs1971}, showing that the adjacency matrix of any such graph generates the matrix algebra of an association scheme with very particular ``polynomial" properties. Over the next few years, Biggs continued to develop the notion of distance-regular graphs and their relationship with association schemes, culminating in parts of his book Algebraic Graph Theory \cite{Biggs1974}. Arguably one of the most influential works on this topic is the thesis of Delsarte in 1973 \cite{Delsarte1973}, developed seemingly independently of Biggs. In this thesis, he lays out the definitions and parameters central to association schemes, discusses subsets of associations schemes, and defines both $P$-polynomial (metric) and $Q$-polynomial (cometric) association schemes; the former are equivalent to Biggs' notion of distance regular graphs while the latter remained largely unexplored until decades later. These cometric examples will be the main focus of this thesis. Delsarte devoted particular attention to the Johnson and Hamming schemes, defining more clearly the notion of duality within these two schemes and bringing to the forefront the connections between association schemes and coding theory.
	
	The two decades that followed brought with them many new results concerning polynomial association schemes, especially those that are $P$-polynomial. Authors such as Biggs, Damerell, Gardiner, Meredith, and Smith (see \cite{Brouwer1989} for a list of their relevant publications) continued to develop our understanding of distance-regular and distance-transitive graphs. Meanwhile authors such as Terwilliger \cite{Terwilliger1986} and Neumaier \cite{Neumaier1985} focused on specific families, developing parameter characterizations of Johnson and Hamming schemes --- the major examples of metric association schemes. Terwilliger then went on to work towards classifying association schemes which are both $P$- and $Q$-polynomial in papers such as \cite{Terwilliger1987} and \cite{Terwilliger1988}. Much of what is known has been compiled into books, first by Bannai and Ito in \cite{Bannai1984}, then by Brouwer, Cohen, and Neumaier in \cite{Brouwer1989}, and most recently by Bailey in \cite{Bailey2004}.
	
	Despite the great attention devoted to $P$-polynomial schemes, it seems not much progress was made in understanding their $Q$-polynomial analogues until Dickie's thesis \cite{Dickie1995} in 1995 and two papers of Suzuki (\cite{Suzuki1998},\cite{Suzuki1998-2}) three years later. In the latter two papers, Suzuki showed, apart from cycles, any $Q$-polynomial association scheme may have at most two $Q$-polynomial orderings and any imprimitive $Q$-polynomial association scheme must be either $Q$-bipartite or $Q$-antipodal (except possibly for two sporadic cases which were later ruled out). These results were analogues of results for distance regular graphs dating as much as three decades prior, yet the method for proving these results was quite different. These papers triggered a resurgence of interest in cometric association schemes as the following two decades brought many new results. Some results included finding equivalences between certain classes of cometric association schemes and other geometric structures; for instance \cite{VanDam1999} discusses $3$-class $Q$-antipodal schemes while \cite{LeCompte2010} focuses on $4$-class schemes which are both $Q$-antipodal and $Q$-bipartite. New examples were found, including families discovered by Penttila and Williford \cite{Penttila2011}, another family found by Moorhouse and Williford \cite{Moorhouse2016}, and many new sporadic examples found by Gavin King \cite{King2018}. While far from an exhaustive list, this author would be remiss without also mentioning papers of Suda (\cite{Suda2011},\cite{Suda2012}), van Dam, Martin and Muzychuk \cite{vanDam2013}, Martin and Williford \cite{Williford2009}, and Martin, Muzychuk and Williford \cite{Martin2007}.
	
	In this thesis, we begin by defining association schemes and their associated Bose-Mesner algebras. The remainder of Chapter \ref{association} consists of the various definitions which occur within this field such as the parameters, feasibility conditions, substructures, and polynomial structures --- some of which are mentioned above. We then focus on the matrix algebra in Chapter \ref{psdcone}, where we examine the cone of positive semidefinite matrices. Here, we introduce methods to build other line systems, such as equiangular lines, as well as develop a new feasibility condition on association schemes using a theorem of Sch\"{o}nberg --- we explicitly calculate many of these new conditions for cometric schemes, in particular. In Chapter \ref{3class}, we recall the definition of linked systems of symmetric designs (LSSDs) defined by Cameron \cite{Cameron1972} and explore constructions of these as well as connections to other objects. We review past results on such objects including the Cameron-Seidel scheme mentioned previously, results of Noda on the maximum size of any such object, and their equivalence with 3-class $Q$-antipodal association schemes, a result of van Dam. We then introduce a new geometric object called a ``set of linked simplices" and we show that these are equivalent to LSSDs. Using this new equivalence, we investigate when we may build real mutually unbiased bases from these association schemes as well as construct new examples using parameters distinct from those of the Cameron-Seidel scheme. Chapter 5 focuses on $4$-class $Q$-bipartite association schemes, beginning with a motivational discussion indicating how these association schemes naturally occur. While we do not develop an equivalence in this chapter between the association schemes and so-called ``orthogonal projective double covers", we make substantial progress towards that goal. In the final chapter, Chapter 6, we investigate the connectivity of relations in association schemes in general. With the goal of understanding the ``nearest-neighbor graph" of a cometric association scheme, we show that any connected graph (in the absence of twin vertices --- vertices with the same neighborhood) within an association scheme remains connected after deleting the entire neighborhood of a vertex (including the vertex itself). We use this result to show that any graph within an association scheme with connectivity equal to 2 must be a cycle. In the appendix, we finish be surveying the known infinite families of symmetric designs in order to determine which families could yield the parameter sets of LSSDs with more than two fibers.
	
	\chapter{Association schemes}\label{association}
Association schemes arise in group theory, graph theory, design theory, coding theory and more. For example, let $X$ be a finite group with conjugacy classes $\cC[g] = \{hgh^{-1}:h\in X\}$ ($g\in X$). We may form relations on the pairs of group elements via $R_g = \left\{ (a,b) \mid ab^{-1} \in \cC[g]  \right\}$; in this way, each conjugacy class results in a single relation independent of the representative group element chosen. We may then merge the relations $R_g\cup R_{g^{-1}}$ to arrive at symmetric relations. The group, along with the relations, then form a $d$-class association scheme where $d$ is the number of non-trivial relations formed (relations other than $R_e$ for identity element $e$). While we may easily construct such an association scheme from a group, we find that there are many more association schemes which do not require the group structure on the point set. In fact, for any finite set $X$, the orbits on $X\times X$ of any 
permutation group $\mathcal{G}$ acting generously transitively (for any two points in $X$, there exists a group element swapping these elements) also gives an association scheme.
Some of the most well-studied association schemes are distance-regular graphs, including Moore graphs, distance-transitive graphs, strongly regular graphs, generalized polygons, etc.  For an introduction to the 
extensive literature on the subject, the reader may consult \cite{Delsarte1973,Bannai1984,Brouwer1989,Godsil1993}, 
the survey \cite{Martin2009}, or the more recent book of  Bailey \cite{Bailey2004} which focuses on 
connections to the statistical design of experiments.
\begin{definition}
	Let $X$ be a finite set of vertices. A \textit{symmetric d-class association scheme}\index{association scheme!symmetric} (see \cite{Brouwer1989}) on $X$ is a pair $(X,\mathcal{R})$ where $\mathcal{R} =\left\{R_0,R_1,\dots,R_d\right\}$ is a set of $d+1$ relations on $X$ satisfying the following properties:
	\begin{enumerate}[label=$(\roman*)$]
		\item $R_0$ is the identity relation;
		\item $\left\{R_0,R_1,\dots, R_d\right\}$ forms a partition of $X\times X$;
		\item $(x,y)\in R_i$ implies $(y,x)\in R_i$;
		\item for $0\leq i,j,k\leq d$ there exist constants $p_{i,j}^k$ such that for any vertices $x,y\in X$ with $(x,y)\in R_k$, the number of vertices $z$ for which $(x,z)\in R_i$ and $(z,y)\in R_j$ is $p_{i,j}^k$, depending only on $i$, $j$, and $k$.
	\end{enumerate}
\end{definition}
The constants $p_{i,j}^k$ are known as the \emph{intersection numbers}\index{parameters!intersection numbers} of our association scheme and we allow ourselves to suppress the comma whenever $i$ and $j$ are given by single digits, thus $p_{5,2}^7$ and $p_{52}^7$ are synonymous throughout this thesis. Properties \emph{(iii)} and \emph{(iv)} together imply that $p_{ij}^k = p_{ji}^k$ for all $i,j,k$; we call such an association scheme \textit{commutative}. There is a broader definition for a \emph{commutative association scheme}\index{association scheme!commutative} where we replace \emph{(iii)} with the condition that for every $i$, there exists some $i'$ such that $R_{i'} = R_i^T$; that is $(x,y)\in R_i$ if and only if $(y,x)\in R_{i'}$. In this case however, we add the property $p_{ij}^k = p_{ji}^k$. Throughout this thesis, all association schemes will be symmetric, though we will add remarks at times when the theorems apply directly to the non-symmetric case as well.

For each $0\leq i\leq d$ we define the (undirected) graph $\Gamma_i = \Gamma(X,R_i)$ on $X$ with $\Gamma_1,\dots,\Gamma_d$ all simple. Note, throughout this thesis we will use the notation $\Gamma(V,E)$ to denote a graph with vertex set $V$ and edge set $E\subset V\times V$. For each $a\in X$ we define the $i^\text{th}$ \emph{neighborhood}\index{relation!neighborhood} of $a$ $R_i(a) = \left\{b\in X\mid (a,b)\in R_i\right\}$; i.e. $R_i(a)$ is the neighborhood of $a$ in the graph $\Gamma_i$. Then for any $a\in X$, the set $X$ is partitioned into the \emph{subconstituents}\index{relation!subconstituents} $\left\{R_i(a)\mid 0\leq i\leq d\right\}$ with respect to $a$.
\begin{example} The following association scheme is known as the \emph{3-cube}\index{3-cube} with vertex set $X = \left\{0,\dots,7\right\}$ and relations corresponding to the graphs $\Gamma_0,\dots,\Gamma_3$ given below.
	\begin{figure}[H]\begin{center}\scalebox{.7}{$\begin{aligned}
				\begin{tikzpicture}[shorten >=1pt,auto,node distance=2cm,
				thin,main node/.style = {circle,draw, inner sep = 0pt, minimum size = 15pt}]
				
				\node[main node,fill=white] (1) {0};
				\node[main node,fill=red] [right of = 1](2) {1};
				\node[main node,fill=red] [above of = 1](3) {2};
				\node[main node,fill=cyan] [right of = 3](4) {3};
				\node[main node,fill=red] [above right of = 1](5) {4};
				\node[main node,fill=cyan] [right of = 5](6) {5};
				\node[main node,fill=cyan] [above of = 5] (7) {6};
				\node[main node,fill=green] [right of = 7](8) {7};
				\node at (2,4.5) (9) {$\Gamma_0$};
				
				\path (1) edge [in=120,out=145,loop] ();
				\path (2) edge [in=120,out=145,loop] ();
				\path (3) edge [in=120,out=145,loop] ();
				\path (4) edge [in=120,out=145,loop] ();
				\path (5) edge [in=120,out=145,loop] ();
				\path (6) edge [in=120,out=145,loop] ();
				\path (7) edge [in=120,out=145,loop] ();
				\path (8) edge [in=120,out=145,loop] ();
				
				\end{tikzpicture}\qquad&\qquad\begin{tikzpicture}[shorten >=1pt,auto,node distance=2cm,
				thin,main node/.style = {circle,draw, inner sep = 0pt, minimum size = 15pt}]
				
				\node[main node,fill=white] (1) {0};
				\node[main node,fill=red] [right of = 1](2) {1};
				\node[main node,fill=red] [above of = 1](3) {2};
				\node[main node,fill=cyan] [right of = 3](4) {3};
				\node[main node,fill=red] [above right of = 1](5) {4};
				\node[main node,fill=cyan] [right of = 5](6) {5};
				\node[main node,fill=cyan] [above of = 5] (7) {6};
				\node[main node,fill=green] [right of = 7](8) {7};
				\node at (2,4.5) (9) {$\Gamma_1$};
				
				\draw[-] (3)--(1)--(2)--(4)--(3)--(7)--(8)--(6)--(5)--(7);
				\draw[-] (1)--(5)--(6)--(2)--(4)--(8);
				\end{tikzpicture}\qquad\qquad
				\begin{tikzpicture}[shorten >=1pt,auto,node distance=2cm,
				thin,main node/.style = {circle,draw, inner sep = 0pt, minimum size = 15pt}]
				
				\node[main node,fill=white] (1) {0};
				\node[main node,fill=red] [right of = 1](2) {1};
				\node[main node,fill=red] [above of = 1](3) {2};
				\node[main node,fill=cyan] [right of = 3](4) {3};
				\node[main node,fill=red] [above right of = 1](5) {4};
				\node[main node,fill=cyan] [right of = 5](6) {5};
				\node[main node,fill=cyan] [above of = 5] (7) {6};
				\node[main node,fill=green] [right of = 7](8) {7};
				\node at (2,4.5) (9) {$\Gamma_2$};
				
				\path[-]
				(1)edge [bend left=25] node {} (4)
				edge node {} (6)
				edge node {} (7)
				(2)edge node {} (3)
				edge node {} (5)
				edge node {} (8)
				(3)edge node {} (5)
				edge node {} (8)
				(4) edge node {} (6)
				(7) edge node {} (4)
				edge node {} (6)
				(5) edge [bend right = 25] node {} (8);
				\end{tikzpicture}\qquad&\qquad
				\begin{tikzpicture}[shorten >=1pt,auto,node distance=2cm,
				thin,main node/.style = {circle,draw, inner sep = 0pt, minimum size = 15pt}]
				
				\node[main node,fill=white] (1) {0};
				\node[main node,fill=red] [right of = 1](2) {1};
				\node[main node,fill=red] [above of = 1](3) {2};
				\node[main node,fill=cyan] [right of = 3](4) {3};
				\node[main node,fill=red] [above right of = 1](5) {4};
				\node[main node,fill=cyan] [right of = 5](6) {5};
				\node[main node,fill=cyan] [above of = 5] (7) {6};
				\node[main node,fill=green] [right of = 7](8) {7};
				\node at (2,4.5) (9) {$\Gamma_3$};
				
				\path[-]
				(1) edge [bend right] node {} (8)
				(2) edge [bend left=45] node {} (7)
				(3) edge [bend left=45] node {} (6)
				(4) edge node {} (5);
				\end{tikzpicture}
				\end{aligned}$}\end{center}
		\caption[Graphs of the 3-cube.]{The four graphs of the 3-cube. The four subconstituents of the vertex $0$ are colored white, red, blue, and green respectively.}\label{3cube}
	\end{figure}
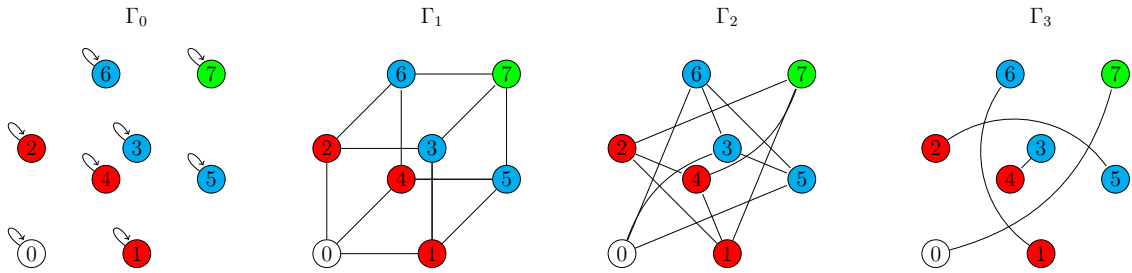
	The following matrices give the intersection numbers of this association scheme where the $i^\text{th}$ matrix contains $p^k_{ij}$ with rows indexed by $k$ and columns indexed by $j$:
	\[\left[\begin{array}{cccc}
	1&0&0&0\\
	0&1&0&0\\
	0&0&1&0\\
	0&0&0&1\\
	\end{array}\right],\quad\left[\begin{array}{cccc}
	0&3&0&0\\
	1&0&2&0\\
	0&2&0&1\\
	0&0&3&0\\
	\end{array}\right],\quad\left[\begin{array}{cccc}
	0&0&3&0\\
	0&2&0&1\\
	1&0&2&0\\
	0&3&0&0\\
	\end{array}\right],\quad\left[\begin{array}{cccc}
	0&0&0&1\\
	0&0&1&0\\
	0&1&0&0\\
	1&0&0&0\\
	\end{array}\right].\]
	Since $p^k_{ij} = p^k_{ji}$, many of the columns listed above are redundant, thus we may instead give a more brief list of the intersection numbers as follows:
	\[\begin{array}{c|cccc|ccc|cc|c}
	k & p^{k}_{0,0}	&p^{k}_{0,1}& p^{k}_{0,2} 	& p^{k}_{0,3}   & p^{k}_{1,1}	&p^{k}_{1,2}& p^{k}_{1,3} 	& p^{k}_{2,2} 	& p^{k}_{2,3} 	& p^{k}_{3,3}\\\hline
	0 & 1			& 0			& 0 			& 0				& 3				& 0			& 0 			& 3				& 0   			& 1\\
	1 & 0			& 1			& 0 			& 0				& 0				& 2			& 0 			& 0   			& 1   			& 0\\
	2 & 0			& 0			& 1 			& 0				& 2				& 0 		& 1				& 2 			& 0 			& 0\\
	3 & 0			& 0			& 0 			& 1				& 0 			& 3			& 0 			& 0   			& 0 			& 0\\
	\end{array}\]
	We often find this brief description useful and will further reduce our description of the parameters when there is no loss of clarity.
	
	We note with this example that for each $i$, $\Gamma_i$ may be formed by taking the distance $i$ graph of $\Gamma_1$. This will not hold in general --- that is, we will typically not find such a graph encoding our relations using distance. When it does however, we say the association scheme is metric (see Section \ref{poly}) and $\Gamma_1$ is a distance-regular graph (drg). Due to the encoding of the relations in such a drg, metric schemes and their paired drgs are referred to synonymously; for instance, this association scheme was refereed to as the ``3-cube".
\end{example}
For any $0\leq i\leq d$ and any vertex $x\in X$,
\[p^{0}_{ii} = \left\vert\left\{y:(y,x)\in R_i\right\}\right\vert = \left\vert R_i(x)\right\vert.\]
Thus we define $k_i:=p^0_{ii}$ as the \emph{valency}\index{valency} of the $i^\text{th}$ relation. Many other restrictions on our intersection numbers follow immediately from our definition, for instance $p^0_{12} = 0$; we will summarize these in a lemma at the end of the next section.
\section{Bose-Mesner algebra}\label{BMA}
Often it becomes useful to order the vertices in $X$ and represent each $R_i$ as a 01-matrix $A_i$ where the $(x,y)$ entry of $A_i$ is 1 if and only if $(x,y)\in R_i$; thus $A_i$ is the adjacency matrix of $\Gamma_i$. Let $\vert X\vert = n$ and denote the $n\times n$ identity as $I_n$ and the $n\times n$ matrix of ones as $J_n$; we suppress the subscript $n$ when it is clear from the context. The defining properties of a symmetric association scheme are then encoded as:
\begin{enumerate}[label=$(\roman*)$]
	\item $A_0 = I$;
	\item $\sum_i A_i = J$;
	\item for all $0\leq i\leq d$, $A_i^T = A_i$;
	\item for all $0\leq i,j\leq d$, $A_iA_j = \sum p_{ij}^k A_k$,
\end{enumerate}
where each $A_i$ has only zeros and ones as entries. The fourth condition tells us that $\BMA = \text{span}\left\{A_0,A_1,\dots A_d\right\}$ forms a matrix algebra under standard matrix multiplication --- we will refer to the matrices $\left\{A_i\right\}$ as our basis of 01-matrices\index{01-basis}. We call this algebra the \emph{Bose-Mesner algebra}\index{Bose-Mesner algebra} and note that the remaining conditions ensure it is a $(d+1)$-dimensional algebra of symmetric matrices containing the identity. Further, as our basis matrices are 01-matrices with pairwise disjoint support, this algebra is also closed under Schur (entrywise) products and contains the Schur identity, $J$. We find that, conversely, any such an algebra determines an association scheme; that is, any $(d+1)$-dimensional vector space of symmetric matrices closed under both standard and Schur matrix products containing the identities for both operations corresponds to the Bose-Mesner algebra of some symmetric association scheme. Throughout, we will use this algebraic definition interchangeably with the combinatorial definition. As our algebra contains only symmetric matrices, it is necessarily commutative (more generally the assumption $p^k_{ij} = p^k_{ji}$ made for commutative association schemes is sufficient to guarantee the Bose-Mesner algebra is commutative). Therefore, we may simultaneously diagonalize the matrices $A_0,\dots,A_d$ resulting in the maximal common orthogonal eigenspaces $V_0,\dots,V_{d'}$ with corresponding projection matrices $E_0,\dots,E_{d'}$. Since, for every $i$, there exist eigenvalues $\theta_{ij}$ such that $A_i = \sum_{j=0}^{d'}\theta_{ij}E_j$ we find that $\BMA \subseteq \text{span}\left\{E_0,E_1,\dots, E_{d'}\right\}$ thus $d\leq d'$. Further since the eigenspaces $V_j$ are maximal for each $0\leq j\leq d$ and pairwise orthogonal,
\[E_j = \frac{1}{c_j}\prod_{i=0}^d\left(\prod_{\theta_{ik}\neq\theta_{ij}}\left(A_i-\theta_{ik}I\right)\right)\]
for some normalization constant $c_j$. Thus each $E_j\in \BMA$ giving $\text{span}\left\{E_0,E_1,\dots, E_{d'}\right\}\subseteq\BMA$ and therefore $d=d'$ --- we will refer to the matrices $\left\{E_j\right\}$ as our basis of idempotents\index{idempotent basis}. This shows that $\BMA$ contains a basis of $d+1$ idempotents $E_0,\dots,E_d$ which together diagonalize every matrix in $\BMA$ and act as projection matrices onto the common eigenspaces. Since the rank 1 matrix $J\in\BMA$, we find that $\frac{1}{\vert X\vert}J$ must belong to this basis; by convention we assume $E_0= \frac{1}{\vert X\vert}J$. For each $0\leq j\leq d$ we define $m_j = \text{rank } E_j$ and note that $m_0 = 1$ and $\sum_{j=0}^dm_j = \vert X\vert$.

While $\BMA = \text{span}\left\{A_0,A_1,\dots A_d\right\} = \text{span}\left\{E_0,E_1,\dots, E_{d}\right\}$, we often find that we may generate $\BMA$ with a single matrix if we allow ourselves to take products and not just linear combinations. For example, the Bose-Mesner algebra of the 3-cube (Example \ref{3cube}) may be generated by taking linear combinations of powers of $A_1$, the adjacency matrix of $\Gamma_1$. For any matrix $M\in\BMA$, we define $\left<M\right>_*$ as the set of matrices which are linear combinations of the powers of $M$; the set $\left<M\right>_*$ is called the \emph{subalgebra}\index{subalgebra} of $\BMA$ generated by $M$. 
\begin{lem} \label{mstardim}
	Let $\BMA$ be the Bose-Mesner algebra of a symmetric association scheme. For any $A\in \BMA$, the dimension of $\left<A\right>_*$ equals the number of distinct eigenvalues.
\end{lem}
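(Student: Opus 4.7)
The plan is to use the idempotent basis $E_0,\dots,E_d$ established just prior to the lemma, and then reduce the question to a computation with orthogonal projectors. First I would write $A = \sum_{j=0}^{d}\lambda_j E_j$, where the scalars $\lambda_j$ are obtained by expressing $A$ in the $A_i$-basis and substituting $A_i=\sum_j \theta_{ij}E_j$. Let $\mu_1,\dots,\mu_s$ denote the distinct values among $\lambda_0,\dots,\lambda_d$, and set
\[
F_r \;=\; \sum_{j\,:\,\lambda_j=\mu_r} E_j, \qquad 1\leq r\leq s.
\]
Since the $E_j$ are mutually orthogonal idempotents summing to $I$, so are $F_1,\dots,F_s$; in particular they are nonzero and linearly independent, and $A=\sum_{r=1}^{s}\mu_r F_r$ is the spectral decomposition of $A$.

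The bulk of the argument would then be a two-sided containment showing $\langle A\rangle_* = \operatorname{span}\{F_1,\dots,F_s\}$. For the easy direction, orthogonality of the $F_r$ gives $A^k=\sum_{r=1}^s\mu_r^k F_r$ for every $k\geq 0$, so every polynomial in $A$ lies in $\operatorname{span}\{F_1,\dots,F_s\}$. For the reverse direction I would use Lagrange interpolation: since $\mu_1,\dots,\mu_s$ are distinct, there exist univariate polynomials $p_1,\dots,p_s$ with $p_r(\mu_{r'})=\delta_{r,r'}$, and substituting $A$ into the expansion $A^k=\sum_r\mu_r^kF_r$ gives $p_r(A)=F_r\in\langle A\rangle_*$. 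Combining these inclusions yields $\langle A\rangle_* = \operatorname{span}\{F_1,\dots,F_s\}$, whose dimension is $s$, the number of distinct eigenvalues.

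There is no serious obstacle; the only thing one must be slightly careful about is justifying that $A$ actually has exactly $s$ distinct eigenvalues (and not fewer). This is handled by the linear independence of the $F_r$: if two of the $\mu_r$'s coincided we would have merged them in the definition of $F_r$, so by construction the $\mu_r$ really are pairwise distinct, and each $F_r$ is a nonzero projector onto the $\mu_r$-eigenspace of $A$. I would also briefly remark, for completeness, that this is equivalent to the familiar fact that the dimension of the algebra generated by a diagonalizable matrix equals the degree of its minimal polynomial $\prod_{r=1}^{s}(x-\mu_r)$, but the projector argument above keeps the proof entirely inside the Bose--Mesner framework already developed.
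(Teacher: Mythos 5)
Your proposal is correct and follows essentially the same route as the paper: both decompose $A$ as a combination of orthogonal projectors onto its distinct eigenspaces to bound the dimension above, and both recover each projector as a polynomial in $A$ (your Lagrange interpolants $p_r$ are exactly the paper's normalized products $\frac{1}{c_j}\prod_{i\neq j}(A-\alpha_i I)$) to bound it below. Your version merely spells out more explicitly how the grouped projectors $F_r$ arise from the scheme's minimal idempotents $E_j$.
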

\begin{proof}
	Let $A\in \BMA$ be given with $n$ distinct eigenvalues. Then there exists $\alpha_1,\dots,\alpha_n$ so that $A = \sum_{i=1}^n\alpha_i E_i$ for some orthogonal projection matrices $E_1,\dots,E_n$ --- implying the dimension of $\left<A\right>_*$ is no larger than $n$. However, since $I = A^0$, we may build each $E_j$ via
	\[E_j = \frac{1}{c_j}\prod_{i\neq j} (A - \alpha_i I)\]
	showing that the dimension of $\left<A\right>_*$ is at least $n$.
\end{proof}
\begin{cor}
	For a Bose-Mesner algebra $\BMA$ and matrix $A\in \BMA$, $\BMA = \left<A\right>_*$ if and only if $A$ has $d+1$ distinct eigenvalues.
\end{cor}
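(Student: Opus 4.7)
The plan is to derive this corollary directly from Lemma~\ref{mstardim}, essentially as a statement about matching dimensions. Two facts need to be in place before invoking the lemma: first, that $\BMA$ has dimension exactly $d+1$, which was established above as part of identifying the idempotent basis $E_0,\ldots,E_d$; second, that $\langle A\rangle_*\subseteq\BMA$ for every $A\in\BMA$, which is immediate because $\BMA$ is closed under both matrix multiplication and linear combinations, so $I=A^0$, $A$, $A^2,\ldots$ and all their linear combinations lie in $\BMA$.

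With these in hand, I would proceed as follows. By Lemma~\ref{mstardim}, $\dim\langle A\rangle_*$ equals the number of distinct eigenvalues of $A$. Since $\langle A\rangle_*$ is a subspace of the $(d+1)$-dimensional algebra $\BMA$, equality $\BMA=\langle A\rangle_*$ holds if and only if $\dim\langle A\rangle_*=d+1$, which by the lemma is equivalent to $A$ having $d+1$ distinct eigenvalues. This handles both directions simultaneously.

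As a sanity check, one should observe that the number of distinct eigenvalues of any $A\in\BMA$ is automatically at most $d+1$: writing $A=\sum_{j=0}^{d}\theta_{j}E_{j}$ using the idempotent basis, the spectrum of $A$ is contained in $\{\theta_0,\ldots,\theta_d\}$, so at most $d+1$ distinct values can occur. This confirms that the condition ``$A$ has $d+1$ distinct eigenvalues'' is the maximal possible case, as the statement of the corollary suggests. There is no real obstacle here; the corollary is an immediate dimension-count consequence of the lemma, so the proof should be short and formal.
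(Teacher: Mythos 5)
Your proof is correct and matches the paper's intent exactly: the corollary is stated without proof precisely because it is the immediate dimension-count consequence of Lemma~\ref{mstardim} that you spell out. The observation that any $A\in\BMA$ has at most $d+1$ distinct eigenvalues is a nice confirmation but not strictly needed.
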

Similarly, we define the \emph{Schur subalgebra}\index{subalgebra!Schur} of $M$, denoted $\left<M\right>_\circ$, as the set of matrices which are linear combinations of all Schur powers of $M$. We immediately see analogous results.
\begin{lem} \label{mcircdim}
	Let $\BMA$ be the Bose-Mesner algebra of a symmetric association scheme. For any $E\in \BMA$, the dimension of $\left<E\right>_\circ$ equals the number of distinct entries in $E$.
\end{lem}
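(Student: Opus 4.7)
The plan is to mirror the argument used for $\langle A\rangle_*$ in Lemma \ref{mstardim}, swapping the ordinary product for the Schur product throughout. Under this dictionary, the identity matrix $I$ is replaced by the Schur identity $J$, the orthogonal projections $E_j$ are replaced by the 01-basis $A_0,\dots,A_d$ (which are pairwise Schur-orthogonal idempotents summing to $J$), and the eigenvalues of $A$ are replaced by the entries of $E$.

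First I would expand $E$ in the 01-basis as $E=\sum_{i=0}^d \lambda_i A_i$. Because the $A_i$ have pairwise disjoint supports whose union is the full index set, the multiset of distinct values in $\{\lambda_0,\dots,\lambda_d\}$ coincides exactly with the set of distinct entries of $E$. Let $\mu_1,\dots,\mu_n$ denote these distinct values, and regroup by defining $B_\ell=\sum_{i:\lambda_i=\mu_\ell} A_i$. Then each $B_\ell$ is a 01-matrix, the $B_\ell$'s are pairwise Schur-orthogonal with $B_1+\cdots+B_n=J$, and $B_\ell\circ B_\ell=B_\ell$.

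Next, for the upper bound on $\dim\langle E\rangle_\circ$, I would observe that for every $k\geq 0$,
\[ E^{\circ k}=\sum_{\ell=1}^n \mu_\ell^{\,k}\, B_\ell, \]
using the convention $E^{\circ 0}=J=\sum_\ell B_\ell$ (which is legitimate since the Schur identity is $J$, and $J\in\BMA$). Hence every Schur power of $E$ lies in $\mathrm{span}\{B_1,\dots,B_n\}$, giving $\dim\langle E\rangle_\circ\leq n$.

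For the lower bound, I would use a Lagrange-interpolation argument in the Schur product, which is the direct analogue of the product formula for $E_j$ appearing in the proof of Lemma \ref{mstardim}. Since the values $\mu_1,\dots,\mu_n$ are distinct, one can write
\[ B_\ell=\frac{1}{\prod_{k\neq\ell}(\mu_\ell-\mu_k)}\; \mathop{\raisebox{-.3ex}{\Large$\circ$}}_{k\neq\ell}\bigl(E-\mu_k J\bigr), \]
where the big-$\circ$ denotes iterated Schur product. Evaluating the right-hand side entrywise and using $B_\ell\circ B_{\ell'}=\delta_{\ell\ell'}B_\ell$ confirms the equality. Thus each $B_\ell$ lies in $\langle E\rangle_\circ$, so $\dim\langle E\rangle_\circ\geq n$, and together with the previous inequality this yields equality.

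The only genuine subtlety is bookkeeping about whether $J$ is counted as a Schur power of $E$; this is dispensed with by noting $J\in\BMA$ and interpreting $E^{\circ 0}=J$, exactly as $I=A^0$ was used in the proof of Lemma \ref{mstardim}. Otherwise, the proof is a routine transcription of the previous argument through the product/Schur-product duality inside $\BMA$, so no step should present a real obstacle.
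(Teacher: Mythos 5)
Your proof is correct and is precisely the dualization the paper intends: the text states Lemma \ref{mcircdim} without proof as the Schur-product analogue of Lemma \ref{mstardim}, and your transcription (expanding $E$ in the $A_i$ basis, grouping into Schur idempotents $B_\ell$, and recovering each $B_\ell$ by Lagrange interpolation with $J$ as the Schur identity) is exactly the intended argument. No issues.
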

\begin{cor}
	For a Bose-Mesner algebra $\BMA$ and matrix $E\in \BMA$, $\BMA = \left<E\right>_\circ$ if and only if $E$ has $d+1$ distinct entries.
\end{cor}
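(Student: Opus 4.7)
The plan is to deduce this corollary directly from Lemma \ref{mcircdim} together with the basic fact that $\BMA$ has dimension $d+1$, mirroring exactly the way the earlier corollary about $\left<A\right>_*$ follows from Lemma \ref{mstardim}.

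First I would observe containment in one direction. Since $\BMA$ is closed under Schur products, contains the Schur identity $J$, and contains $E$, it contains every Schur power $E^{\circ k}$ and every linear combination of such powers. Hence $\left<E\right>_\circ \subseteq \BMA$. Equality of these two algebras is therefore equivalent to equality of their dimensions.

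Next I would invoke Lemma \ref{mcircdim} to replace $\dim\left<E\right>_\circ$ with the number of distinct entries of $E$. Since $\dim \BMA = d+1$ by construction, the condition $\BMA = \left<E\right>_\circ$ reduces to the statement that $E$ has exactly $d+1$ distinct entries. One direction is immediate from the dimension comparison; for the other direction, one should note that any matrix in $\BMA$ has at most $d+1$ distinct entries (since $M = \sum c_i A_i$ takes value $c_i$ on the support of $A_i$, and the supports of $A_0,\dots,A_d$ partition $X\times X$), so the number of distinct entries of $E$ is bounded by $d+1$; thus the two subspaces coincide precisely when this bound is attained.

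There is no real obstacle once Lemma \ref{mcircdim} is in hand; the entire content of the corollary is packaged in that lemma. If one were to unpack the lemma itself — the genuine work in this picture — one would argue dually to the proof of Lemma \ref{mstardim}: each $A_i$ can be produced from $E$ by the Schur-interpolation identity
\[
A_i \;=\; \frac{1}{c_i}\underset{\alpha\neq \alpha_i}{\bigcirc}\bigl(E - \alpha J\bigr),
\]
where the Schur product runs over the distinct entries $\alpha$ of $E$ other than $\alpha_i$, and $c_i$ is the appropriate normalizing constant. This produces one $A_i$ for each distinct entry, giving the matching lower bound on $\dim\left<E\right>_\circ$ to complement the obvious upper bound. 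Feeding this back into the dimension argument above completes the corollary.
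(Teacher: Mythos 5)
Your proof is correct and follows exactly the route the paper intends: the corollary is meant to drop out of Lemma \ref{mcircdim} by comparing $\dim\left<E\right>_\circ$ with $\dim\BMA = d+1$, using Schur-closure of $\BMA$ for the containment $\left<E\right>_\circ\subseteq\BMA$. One small correction to your parenthetical unpacking of the lemma: the Schur-interpolation product $\underset{\alpha\neq\alpha_i}{\bigcirc}(E-\alpha J)$ is, up to the nonzero constant $c_i$, the sum of \emph{all} $A_j$ whose common entry in $E$ equals $\alpha_i$ (the minimal Schur idempotent of $\left<E\right>_\circ$ for that value), not $A_i$ itself — exactly as the matrix-product analogue $\prod_{i\neq j}(A-\alpha_i I)$ yields the projector onto a possibly merged eigenspace. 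This does not affect your argument, since these products are supported on pairwise disjoint entry sets and hence are linearly independent, which is all the lower bound on $\dim\left<E\right>_\circ$ requires; and when $E$ has $d+1$ distinct entries each such product is a single $A_i$, as you use in the corollary.
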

As a simple example, it is clear that the subalgebra generated by any idempotent basis matrix (with two distinct eigenvalues) contains only multiples of that idempotent and the identity matrix. Similarly, the Schur subalgebra generated by any $01$-matrix contains only multiples of that $01$-matrix and any constant matrix ($cJ$ for some constant $c$). These specific subalgebras are typically not useful, thus we will typically only consider subalgebras of $01$-matrices and Schur subalgebras of idempotent matrices. For a more interesting example, consider again the 3-cube in Example 2.1. Let $A_i$ be the adjacency matrix of $\Gamma_i$ and note that $\left<A_1\right>_* = \BMA$ while $\left<A_2\right>_* = \left<A_0,A_2\right>$; that is, $A_1$ generates the entire Bose-Mesner algebra while $A_2$ generates a proper subset of $\BMA$. While not listed in the example, we find that this association scheme contains a single minimal idempotent whose Schur subalgebra equals $\BMA$ --- all other minimal idempotents generate a proper subset of $\BMA$. Throughout this thesis we will consider cases where the (Schur) subalgebra of a single matrix is the entire Bose-Mesner algebra (for instance, polynomial schemes) as well as others where the (Schur) subalgebra of some matrix is a proper subset of the Bose-Mesner algebra (for instance, imprimitive schemes) --- both situations may give rise to useful structure.

We take a moment here to remark on the notion of duality in our matrix algebra. We have already mentioned that $\BMA$ is closed under two distinct products: standard matrix multiplication and Schur multiplication. While it is clear these are distinct products, they are indistinguishable at the formal level. Consider an abstract inner product space which admits a set of orthogonal basis vectors $\left\{b_i\right\}$ under the product $\star$. For any pair of vectors $v = \sum_i v_ib_i$ and $w = \sum_i w_ib_i$, we may define the \emph{product with respect to basis $\left\{b_i\right\}$} as $v\star w = \sum_i (v_iw_i)b_i$. In this light, the two products used in association schemes become very similar. Returning to our Bose-Mesner algebra, let $F,F'\in\BMA$ with $F = \sum_if_iE_i = \sum_i g_iA_i$ and $F' = \sum_if_i'E_i = \sum_i g_i'A_i$. We then find
\[\begin{aligned}
FF' & =\sum_{i,j}f_if_j'E_iE_j = \sum_i f_if_i'E_i;\qquad
F\circ F' &=\sum_{i,j}g_ig_i'A_i\circ A_j= \sum_i g_ig_i'A_i.
\end{aligned}\]
Thus our two products may be described as the product with respect to the basis $\left\{E_i\right\}$ (standard matrix multiplication) and the product with respect to the basis $\left\{A_i\right\}$ (entrywise multiplication). We consider these two products dual operations on our algebra and their basis matrices dual bases. Throughout this thesis, we will often investigate this duality and point out when there are differences and/or gaps in our understanding of the landscape.

While the algebraic structure of either basis with respect to its corresponding product is trivial, the manner in which matrix products and entrywise products interact is more interesting. Our first tool for understanding this interaction is the change of basis matrix from one set of idempotents to the other. For any Bose-Mesner algebra, the \emph{first and second eigenmatrices}\index{eigenmatrices} are given by $P$ and $Q$ respectively so that
\begin{equation}
\label{PQmat}
A_i = \sum_{j} P_{ji} E_j,\qquad E_j = \frac{1}{\vert X\vert} \sum_{i} Q_{ij}A_i.
\end{equation}
The name of these matrices arises from the fact that column $i$ of $P$ consists of the eigenvalues of $A_i$ while column $j$ of $Q$ gives the ``dual eigenvalues" of $\vert X\vert E_j$ --- eigenvalues with respect to the Schur product. Let $\Delta_m=\text{diag}(m_0,m_1,\dots,m_d)$ and $\Delta_k=\text{diag}(k_0,k_1,\dots,k_d)$ and the following two relations hold for our eigenmatrices:
\begin{lem}[\cite{Brouwer1989}, First and second orthogonality relations] \label{orthorels}\index{eigenmatrices!orthogonality relations}The eigenmatrices of an association scheme satisfy
	\begin{equation}
	PQ = \vert X\vert I, \qquad \Delta_mP = Q^T\Delta_k.
	\end{equation}
\end{lem}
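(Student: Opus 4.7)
The plan is to deduce both relations directly from the change-of-basis equations \eqref{PQmat}, using the linear independence of the two bases $\{A_i\}$ and $\{E_j\}$ for the first relation, and the Frobenius (trace) inner product for the second.

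For $PQ=|X|I$, I would first substitute the formula for $E_j$ into the expression for $A_i$:
\[
A_i = \sum_{j} P_{ji}E_j = \sum_j P_{ji}\cdot\frac{1}{|X|}\sum_{k} Q_{kj}A_k = \frac{1}{|X|}\sum_{k}\Big(\sum_j Q_{kj}P_{ji}\Big)A_k.
\]
Since $\{A_0,\dots,A_d\}$ is a basis for $\BMA$ (the $A_i$'s have pairwise disjoint support), matching coefficients yields $(QP)_{ki}=|X|\,\delta_{ki}$, so $QP=|X|I$, and by commutativity of inverses for square matrices, $PQ=|X|I$ as well.

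For $\Delta_m P = Q^T\Delta_k$, the key idea is to compute $\operatorname{tr}(A_i E_j)$ in two different ways. I would first establish two orthogonality statements among the basis matrices:
\begin{enumerate}[label=(\alph*)]
    \item $\operatorname{tr}(A_iA_\ell) = |X|\,k_i\,\delta_{i\ell}$, which follows because $A_i\circ A_\ell = \delta_{i\ell}A_i$ (the relations partition $X\times X$), so the diagonal of $A_iA_\ell = A_iA_\ell^T$ is zero off-diagonal and equals $p^0_{ii}=k_i$ in each of the $|X|$ diagonal positions when $i=\ell$.
    \item $\operatorname{tr}(E_jE_\ell) = m_j\,\delta_{j\ell}$, since the $E_j$ are pairwise orthogonal projections and $\operatorname{rank}(E_j)=m_j$.
\end{enumerate}
Then, expanding $A_i$ in the idempotent basis via \eqref{PQmat} gives
\[
\operatorname{tr}(A_iE_j) = \operatorname{tr}\Big(\sum_\ell P_{\ell i}E_\ell E_j\Big) = P_{ji}\,m_j,
\]
while expanding $E_j$ in the Schur basis gives
\[
\operatorname{tr}(A_iE_j) = \frac{1}{|X|}\sum_\ell Q_{\ell j}\operatorname{tr}(A_iA_\ell) = \frac{1}{|X|}Q_{ij}\,|X|k_i = Q_{ij}\,k_i.
\]
Equating these for all $i,j$ yields $m_j P_{ji} = k_i Q_{ij}$, which is precisely the entrywise form of $\Delta_m P = Q^T\Delta_k$.

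Neither step involves any real obstacle: the main work is just the bookkeeping to verify statements (a) and (b). The only subtle point is being careful about the indexing conventions in \eqref{PQmat}, where $P_{ji}$ is the $j$-th eigenvalue of $A_i$ (so columns of $P$ are indexed by Schur-idempotents and rows by the matrix-idempotents), and correspondingly for $Q$; this is why the transpose appears on $Q$ in the second relation.
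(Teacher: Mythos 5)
Your proposal is correct. Note that the paper does not actually prove this lemma --- it is stated with a citation to Brouwer, Cohen, and Neumaier --- so there is no in-paper argument to compare against; your derivation (linear independence of the $\{A_i\}$ for $PQ=|X|I$, and computing $\operatorname{tr}(A_iE_j)$ two ways for $m_jP_{ji}=k_iQ_{ij}$) is the standard one and both halves check out, including the facts $\operatorname{tr}(A_iA_\ell)=|X|k_i\delta_{i\ell}$ and $\operatorname{tr}(E_jE_\ell)=m_j\delta_{j\ell}$ and the passage from the left inverse $QP=|X|I$ to the right inverse for square matrices.
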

These relations generalize the orthogonality relations of a group.

A second consideration for our dual bases is to compare the structure constants for each product with respect to each basis. While we used the existence of structure constants $p^k_{ij}$ to show that $\BMA$ is closed under matrix multiplication, closure under Schur products is seen from the fact that the $A_i$ are pairwise orthogonal idempotents. This implies the existence of structure constants for our second basis. Thus, for $0\leq i,j,k\leq d$ there exist constants $q^k_{ij}$ so that
\begin{equation}E_i\circ E_j = \frac{1}{\vert X\vert}\sum_k q_{ij}^k E_k.\label{Emult}\end{equation}
We call these constants the \emph{Krein parameters}\index{parameters!Krein} of the association scheme. We note here that, while there are many distinct parameters of any association scheme, there are many relations which reduce our parameter space. For instance, any strongly regular graph (2-class association scheme) contains two $3\times 3$ matrices $P$ and $Q$ along with 27 intersection numbers and 27 Krein parameters, resulting in 72 total parameters --- yet the parameters of a connected strongly regular graph may be uniquely determined by exactly three parameters. In fact, just within the intersection numbers of the form $p^0_{ij}$, only $d+1$ of the $(d+1)^2$ intersection numbers are non-zero. With this in mind, we list out the basic properties of the adjacency matrices and orthogonal idempotents as well as the relations on the parameters which help reduce our parameter space. While we will use the lemmas that follow throughout the thesis, we emphasize here the duality at play between the two bases as well as their structure constants. See Lemmas 2.1.1, 2.2.1, 2.3.1, and Theorem 2.3.2 in the book of Brouwer, Cohen, and Neumaier \cite{Brouwer1989} for proofs of Lemma \ref{kitchensink}.

\newpage
\begin{lem}\label{AElem} The adjacency matrices $A_0,\dots,A_d$ and minimal idempotents $E_0,\dots,E_d$ satisfy:
	\begin{multicols}{2}
		\begin{enumerate}
			\item[(i)] $\displaystyle{A_0 = I}$,
			\item[(ii)] $\displaystyle{A_i\circ A_j = \delta_{ij}A_i}$,
			\item[(iii)] $\displaystyle{\sum_i A_i = J}$,
			\item[(iv)] $\displaystyle{A_iA_j = \sum_k p^k_{ij} A_k}$,
			\item[(v)] $\displaystyle{A_iE_j = P_{ji}E_j}$,\vfill\null
			\item[$(i')$] $\displaystyle{E_0 = J}$,
			\item[$(ii')$] $\displaystyle{E_iE_j = \delta_{ij}E_j}$,
			\item[$(iii')$] $\displaystyle{\sum_j E_j = I}$,
			\item[$(iv')$] $\displaystyle{\vert X\vert E_i\circ E_j = \sum_k q^k_{ij}E_k}$,
			\item[$(v')$] $\displaystyle{\vert X\vert A_i\circ E_j = Q_{ij} A_i}$.\vfill\null
		\end{enumerate}
	\end{multicols}
\end{lem}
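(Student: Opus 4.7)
The plan is to observe that each of the ten assertions falls out directly from material already established earlier in the chapter. The unifying idea is that the Bose-Mesner algebra $\BMA$ carries two distinguished orthogonal bases — the Schur-idempotent basis $\{A_i\}$ and the projection basis $\{E_j\}$ — and these statements are precisely the identities that express orthogonality and closure for each basis under each of the two products, together with the change-of-basis data encoded in $P$ and $Q$.

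First I would dispatch the purely definitional items. Statement (i) is the encoding of the identity relation $R_0$; (iii) reflects that $\{R_i\}$ partitions $X\times X$; (ii) records that the $A_i$ are $01$-matrices with pairwise disjoint supports, which is immediate from the partition; and (iv) is just the matrix form of defining property (iv) of a symmetric association scheme. Dually, (i') records the convention $E_0 = \frac{1}{|X|}J$ fixed in the paragraph following the construction of the idempotent basis (the stated $E_0 = J$ should be read with this scaling, since $E_0$ must be an idempotent of rank $1$). Statement (iii') is the resolution of the identity by the orthogonal projections onto the maximal common eigenspaces $V_0,\dots,V_d$; (ii') says these projections are pairwise orthogonal; and (iv') is literally the definition of the Krein parameters given in equation \eqref{Emult}.

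The only items requiring a short derivation are the eigenvalue identities (v) and (v'), and these follow immediately from the definitions of $P$ and $Q$ in \eqref{PQmat}. For (v), I would write $A_i = \sum_\ell P_{\ell i} E_\ell$ and multiply on the right by $E_j$; using (ii') only the $\ell=j$ term survives, yielding $A_i E_j = P_{ji} E_j$. For (v'), dually, I would write $E_j = \frac{1}{|X|}\sum_\ell Q_{\ell j} A_\ell$ and Schur-multiply by $A_i$; using (ii) only the $\ell=i$ term survives, giving $|X|\,A_i\circ E_j = Q_{ij} A_i$.

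There is essentially no genuine obstacle here: once the two bases and the eigenmatrices $P$ and $Q$ have been defined, every identity reduces to a one-line calculation using either $A_i\circ A_j = \delta_{ij}A_i$ or $E_iE_j = \delta_{ij}E_j$ to collapse a double sum over an orthogonal basis. The main point worth emphasizing in the write-up is the symmetry of the presentation — pairing each unprimed statement with its primed counterpart — so that the duality between the $\{A_i\}$-basis with matrix multiplication and the $\{E_j\}$-basis with Schur multiplication, highlighted in the discussion preceding the lemma, is visible in the structure of the proof itself.
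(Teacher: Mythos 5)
Your proposal is correct and matches how the paper treats these identities: the paper states the lemma without proof, treating items (i)--(iv), (i')--(iv') as definitional consequences of the partition of $X\times X$, the orthogonality of the projections, and the definitions of $p^k_{ij}$ and $q^k_{ij}$, and it derives (v) and (v') exactly as you do (see the opening lines of the proof of Lemma \ref{arrayeigenvec}, where $A_iE_j = P_{ji}E_j$ is obtained by expanding $A_i$ in the idempotent basis and applying $E_iE_k=\delta_{ik}E_i$). Your observation that (i') should read $E_0 = \frac{1}{\vert X\vert}J$ is also right; this is the convention fixed in Section \ref{BMA}.
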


\begin{lem}[{\cite{Brouwer1989}}]\label{kitchensink} The parameters $p^\ell_{ij}$, $q^\ell_{ij}$, $k_i = p^0_{ii}$, $m_j = q^0_{jj}$, and the eigenmatrices $P$ and $Q$ satisfy:
	\begin{multicols}{2}
		\begin{enumerate}
			\item[(i)] $\displaystyle{p_{0j}^\ell = \delta_{j\ell}}$,
			\item[(ii)] $\displaystyle{p^0_{ij} = \delta_{ij}k_i}$,
			\item[(iii)] $\displaystyle{p^\ell_{ij} = p^\ell_{ji}}$,
			\item[(iv)] $\displaystyle{p^\ell_{ij}k_\ell= p^j_{i\ell}k_j}$,
			\item[(v)] $\displaystyle{\sum_jp^\ell_{ij} = k_i}$,
			\item[(vi)] $\displaystyle{\sum_\ell p^\ell_{ij}p^m_{\ell h} = \sum_\ell p^m_{i\ell}p^\ell_{jh}}$,
			\item[(vii)] $\displaystyle{P_{ij}P_{ih} = \sum_\ell p^\ell_{jh}P_{i\ell}}$,
			\item[(viii)] $\displaystyle{P_{ji}Q_{hj} = \sum_\ell p_{i\ell}^hQ_{\ell j}}$,
			\item[(ix)] $\displaystyle{\sum_{j}P_{ji} = \sum_{h}p^h_{hi}}$,
			\item[(x)] $\displaystyle{P_{j0} = 1}$,
			\item[(xi)] $\displaystyle{P_{0i} = k_i}$,
			\item[(xii)] $\displaystyle{\sum_j m_jP_{ji}P_{jh} = \vert X\vert k_i\delta_{ih}}$,
			\item[(xiii)] $\displaystyle{p^\ell_{ij} = \frac{1}{\vert X\vert k_\ell}\sum_{h=0}^d m_hP_{hi}P_{h j}P_{h\ell}}$,
			
			\item[($i^\prime$)] $\displaystyle{q^\ell_{0j} = \delta_{j\ell}}$,
			\item[($ii^\prime$)] $\displaystyle{q^0_{ij} = \delta_{ij}m_j}$,
			\item[($iii^\prime$)] $\displaystyle{q^{\ell}_{ij} = q^\ell_{ji}}$,
			\item[($iv^\prime$)] $\displaystyle{q^\ell_{ij}m_\ell = q^j_{i\ell}m_j}$,
			\item[($v^\prime$)] $\displaystyle{\sum_j q^\ell_{ij} = m_i}$,
			\item[($vi^\prime$)] $\displaystyle{\sum_\ell q^\ell_{ij}q^{m}_{\ell h} = \sum_\ell q^m_{i\ell}q^\ell_{jh}}$,
			\item[($vii^\prime$)] $\displaystyle{Q_{ij}Q_{ih} = \sum_{\ell}q^\ell_{jh}Q_{i\ell}}$,
			\item[($viii^\prime$)] $\displaystyle{P_{ij}Q_{jh} = \sum_\ell q^i_{h\ell}P_{\ell j}}$,
			\item[($ix^\prime$)] $\displaystyle{\sum_{j}Q_{ji} = \sum_{h}q^h_{hi}}$,
			\item[($x^\prime$)] $\displaystyle{Q_{i0} = 1}$,
			\item[($xi^\prime$)] $\displaystyle{Q_{0j} = m_j}$,
			\item[($xii^\prime$)] $\displaystyle{\sum_i k_iQ_{ij}Q_{ih} = \vert X\vert m_j\delta_{jh}}$,
			\item[($xiii^\prime$)] $\displaystyle{q^\ell_{ij} = \frac{1}{\vert X\vert m_\ell}\sum_{h=0}^d k_hQ_{hi}Q_{h j}Q_{h\ell}}$.
			
		\end{enumerate}
	\end{multicols}
\end{lem}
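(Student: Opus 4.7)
The plan is to derive all 26 identities from Lemma \ref{AElem} (the basic relations for $A_i$ and $E_j$) together with the definitions $A_i = \sum_j P_{ji}E_j$ and $E_j = |X|^{-1}\sum_i Q_{ij}A_i$ from \eqref{PQmat}. The organizing observation is that the list is written so that the primed identities are the formal duals of the unprimed ones under the substitution $(A_i,k_i,p,P)\leftrightarrow (E_j,m_j,q,Q)$ and the exchange of ordinary matrix product with Schur product. So I would prove each unprimed identity and then note that the identical argument, with Lemma \ref{AElem}\,(i)--(v) replaced by (i')--(v'), yields the primed version; in particular the two orthogonality relations in Lemma \ref{orthorels} give (xii) and (xii') together.

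A batch of identities is immediate from the definitions and Lemma \ref{AElem}: (i) from $A_0A_j=A_j$ read via (iv); (ii) from reading $(A_iA_j)_{xx}$ using that $A_i,A_j$ are 01-matrices with disjoint supports; (iii) from symmetry of each $A_i$; (v) from applying $A_iA_j=\sum_k p^k_{ij}A_k$ to $\mathbf{1}$ and using $A_h\mathbf{1}=k_h\mathbf{1}$; (x) from $A_0=I$ and Lemma \ref{AElem}\,(v); (xi) from $A_iJ=k_iJ$ together with $E_0=J/|X|$. Identity (vi) is the associativity $(A_iA_j)A_h=A_i(A_jA_h)$ expanded twice via (iv). For (iv), $p^\ell_{ij}k_\ell=p^j_{i\ell}k_j$, I would double-count ordered triples $(x,y,z)$ with $(x,y)\in R_\ell$, $(x,z)\in R_i$, $(z,y)\in R_j$: fixing the pair in $R_\ell$ gives $|X|k_\ell p^\ell_{ij}$, and refixing to the pair in $R_j$, after one symmetry step $p^j_{\ell i}=p^j_{i\ell}$, gives $|X|k_jp^j_{i\ell}$.

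The spectral identities come from Lemma \ref{AElem}\,(v). Identity (vii) is obtained by applying both sides of $A_jA_h=\sum_\ell p^\ell_{jh}A_\ell$ to $E_i$ and comparing coefficients using $A_rE_i=P_{ir}E_i$. Identity (xiii) is a Fourier-type inversion: substitute $A_h=\sum_r P_{rh}E_r$ into the right-hand side and collapse using $E_rE_jE_\ell=\delta_{rj}\delta_{j\ell}E_j$ and $m_h=\mathrm{tr}(E_h)$. Identity (viii) is the most interesting mixing identity: compute the $(x,y)$-entry of $A_iE_j$ for $(x,y)\in R_h$ in two ways. On one hand $A_iE_j=P_{ji}E_j$, and $(E_j)_{xy}=Q_{hj}/|X|$ by Lemma \ref{AElem}\,(v'), giving $P_{ji}Q_{hj}/|X|$; on the other, expanding the matrix product and sorting the intermediate vertex $z$ by the relation $R_\ell$ it has with $y$ yields $|X|^{-1}\sum_\ell p^h_{i\ell}Q_{\ell j}$. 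Finally (ix) is the trace of the intersection matrix $L_i$ defined by $(L_i)_{hk}=p^h_{ik}$: on one hand $\mathrm{tr}(L_i)=\sum_h p^h_{ih}$, on the other $L_i$ represents left multiplication by $A_i$ on $\mathbb{A}$ in the basis $A_0,\dots,A_d$, so its eigenvalues are precisely $P_{0i},\dots,P_{di}$, each with multiplicity one, and the trace is $\sum_j P_{ji}$.

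The main obstacle is not conceptual difficulty but bookkeeping: several identities (especially (iv), (viii), (xiii)) involve three free indices on each side and succeed only after a careful relabeling via $p^k_{ij}=p^k_{ji}$ or after mixing standard and Schur structure constants in the correct order. Once (i)--(xiii) are settled, the primed list requires no new ideas; each proof transposes verbatim under the duality between the 01-basis $\{A_i\}$ (Schur-orthogonal idempotents with Schur identity $J$) and the projector basis $\{E_j\}$ (matrix-orthogonal idempotents with identity $I$), with Lemma \ref{AElem} supplying the matching bullet on the primed side at each step.
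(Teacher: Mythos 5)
The paper does not actually prove this lemma: it simply points the reader to Lemmas 2.1.1, 2.2.1, 2.3.1 and Theorem 2.3.2 of Brouwer--Cohen--Neumaier \cite{Brouwer1989}. Your proposal therefore supplies what the paper omits, and it does so correctly: every identity you list follows by exactly the mechanisms you describe, namely comparing coefficients in the two bases $\{A_i\}$ and $\{E_j\}$ using Lemma \ref{AElem} and Equation \eqref{PQmat}, associativity for \emph{(vi)}, the mixed entrywise/spectral computation of $A_iE_j$ for \emph{(viii)}, the trace of $L_i$ together with Lemma \ref{arrayeigenvec} for \emph{(ix)}, the trace form $\tr(A_iA_jA_\ell)$ for \emph{(xiii)}, and the formal $A\leftrightarrow E$, ordinary-product $\leftrightarrow$ Schur-product duality to transpose each argument to its primed counterpart. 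This is the standard derivation and is essentially what the cited source does. One small imprecision: for \emph{(v)}, applying $A_iA_j=\sum_k p^k_{ij}A_k$ to $\mathbbm{1}$ and using $A_h\mathbbm{1}=k_h\mathbbm{1}$ yields $\sum_k p^k_{ij}k_k=k_ik_j$, which is a different (also true) identity; to get $\sum_j p^\ell_{ij}=k_i$ you should instead sum the relation over $j$ (using $\sum_j A_j=J$ and $A_iJ=k_iJ$) and compare coefficients of $A_\ell$, or simply count directly: for fixed $(x,y)\in R_\ell$, the sets $\{z:(x,z)\in R_i,\ (z,y)\in R_j\}$ partition $R_i(x)$. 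With that one-line repair the argument is complete.
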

We note that using this lemma, we find that any of the sets $\left\{p^k_{ij}\right\}_{i,j,k}$, $\left\{q^k_{ij}\right\}_{i,j,k}$, $\left\{P_{ij}\right\}_{i,j}$, or $\left\{Q_{ij}\right\}_{i,j}$ may be used to determine all the others. We define the collection of all four of these sets to be the \emph{parameter set}\index{parameter set} of an association scheme, noting immediately that it suffices to give any one of the four.
\section{Parameter arrays}
For a matrix $A$, we denote the entry in row $i$ and column $j$ as $\left[A\right]_{ij}$. We define the \emph{arrays of intersection numbers}\index{parameters!arrays of intersection numbers} $L_0,\dots,L_d$ as $(d+1)\PLH(d+1)$ matrices with $\left[L_i\right]_{kj} = p^k_{ij}$. We then define the vector space $\bbL = \text{span}\left\{L_0,\dots,L_d\right\}$ and note that Lemma \ref{kitchensink} \emph{(vi)} gives us
\[
\left[L_iL_j\right]_{mk} = \sum_\ell p^m_{i\ell }p^\ell_{jk} = \sum_{\ell }p^\ell_{ij}p^m_{\ell k} = \sum_\ell p^\ell_{ij}\left[L_\ell\right]_{mk}
\]
for $0\leq m,k\leq d$. Therefore we find that this vector space forms a matrix algebra under matrix multiplication with
\begin{equation}\label{Lprod}
L_iL_j = \sum_\ell p^\ell_{ij}L_\ell.
\end{equation}
Likewise, we define the \emph{arrays of Krein parameters}\index{parameters!arrays of Krein parameters} as $L_0^*,\dots,L_d^*$ with $\left[L_i^*\right]_{kj} = q^k_{ij}$. These span a dual matrix algebra $\bbL^* = \text{span}\left\{L_0^*,\dots,L_d^*\right\}$ as Lemma \ref{kitchensink} \emph{($\mathit{vi'}$)} gives
\begin{equation}\label{Lstarprod}
L_i^*L_j^* = \sum_{\ell}q^\ell_{ij}L_\ell^*.
\end{equation}
We now define algebra isomorphisms $\phi:\BMA\rightarrow\bbL$ and $\phi^*:\BMA\rightarrow\bbL^*$ via linear extension of the mappings
\begin{equation}\label{algiso}
\phi(A_i) = L_i,\qquad \phi^*(E_i) = \frac{1}{\vert X\vert}L_i^*.
\end{equation}
The first isomorphism preserves standard matrix multiplication while the second respects the Schur product in the sense that $\phi^*(M\circ N) = \phi^*(M)\phi^*(N)$. This provides us with the following lemma.
\begin{lem}\label{arrayeigenvec}
	Let $P$ and $Q$ be the eigenmatrices of an association scheme with arrays of intersection numbers $L_0,\dots,L_d$ and arrays of Krein parameters $L_0^* ,\dots, L_d^*$. For each $0\leq i,j\leq d$, column $j$ of $Q$ is an eigenvector of $L_i$ with eigenvalue $P_{ji}$. Likewise column $i$ of $P$ is an eigenvector of $L_j^*$ with eigenvalue $Q_{ij}$.
\end{lem}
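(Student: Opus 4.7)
The plan is to read the claim as a direct consequence of items \emph{(viii)} and \emph{(viii$^\prime$)} of Lemma \ref{kitchensink}, which were introduced exactly to encode how the $P$- and $Q$-columns interact with the intersection and Krein parameters. No heavy machinery is required; the work is to match indices correctly.

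For the first assertion, I would unpack the $(h,j)$ entry of the product $L_iQ$. By the definition $[L_i]_{h\ell}=p^h_{i\ell}$ we have
\[
[L_iQ]_{hj}=\sum_{\ell}[L_i]_{h\ell}\,Q_{\ell j}=\sum_{\ell}p^h_{i\ell}\,Q_{\ell j}.
\]
Lemma \ref{kitchensink}\emph{(viii)} identifies this sum as $P_{ji}Q_{hj}$. Ranging over $h$, this says precisely that $L_i$ acting on the $j$-th column of $Q$ scales it by $P_{ji}$, giving the eigenvector--eigenvalue pair claimed.

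The dual statement follows the same template, but one has to massage \emph{(viii$^\prime$)} so that the symbol on the left matches the scalar we want to pull out. Writing $[L_j^*P]_{hi}=\sum_{\ell}q^h_{j\ell}P_{\ell i}$ and applying \emph{(viii$^\prime$)} after the substitution $(i,h,j)\mapsto(h,j,i)$ (using commutativity $q^h_{j\ell}=q^h_{\ell j}$ where needed) converts the sum into $Q_{ij}P_{hi}$, which is the desired eigenvalue equation $L_j^*\,P_{\cdot i}=Q_{ij}\,P_{\cdot i}$. The only real pitfall is this bookkeeping with the Krein-parameter identity; once the indices are aligned the two halves of the lemma are mirror images of one another.

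As an aside, the same result admits a conceptual proof through the algebra isomorphisms $\phi$ and $\phi^*$ of \eqref{algiso}: applying $\phi$ to the spectral relation $A_iE_j=P_{ji}E_j$ from Lemma \ref{AElem}\emph{(v)} yields $L_i\,\phi(E_j)=P_{ji}\,\phi(E_j)$, so every nonzero column of $\phi(E_j)=\frac{1}{|X|}\sum_{i}Q_{ij}L_i$ is an eigenvector of $L_i$ with eigenvalue $P_{ji}$; identifying these columns with the columns of $Q$ again reduces to item \emph{(viii)}. I would present the index-chasing proof as the main argument because it is shorter and more transparent, and mention the isomorphism viewpoint only if the duality theme is to be emphasized.
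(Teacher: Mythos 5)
Your proof is correct, but it takes a different route from the one in the thesis. You derive the eigenvalue equations by expanding the $(h,j)$ entry of $L_iQ$ and the $(h,i)$ entry of $L_j^*P$ and invoking Lemma \ref{kitchensink}\emph{(viii)} and \emph{(viii$^\prime$)}; your index bookkeeping checks out (in particular, no appeal to $q^h_{j\ell}=q^h_{\ell j}$ is actually needed, since $[L_j^*]_{h\ell}=q^h_{j\ell}$ matches the identity directly after your substitution). The paper instead runs the argument you relegate to an aside: it applies $\phi$ to $A_iE_j=P_{ji}E_j$ and $\phi^*$ to $E_j\circ A_i=\frac{Q_{ji}}{|X|}A_i$ to get the matrix identities $L_i\bigl(\sum_kQ_{kj}L_k\bigr)=P_{ji}\bigl(\sum_kQ_{kj}L_k\bigr)$ and its dual, and then extracts the claimed eigenvectors not via \emph{(viii)} but via the observation that $[L_k]_{h0}=\delta_{hk}$, so that column zero of $\sum_kQ_{kj}L_k$ is precisely column $j$ of $Q$ (and dually for $P$). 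The trade-off: your argument is shorter and entirely mechanical, but it leans on \emph{(viii)} and \emph{(viii$^\prime$)}, which the thesis imports from \cite{Brouwer1989} without proof; the paper's version is self-contained modulo the elementary facts in Lemma \ref{AElem} and makes the $\phi$/$\phi^*$ duality do the work, which fits the expository theme of the chapter. Either proof is acceptable.
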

\begin{proof}
	Let $A_0,\dots,A_d$ be the 01-basis matrices and $E_0,\dots,E_d$ be the basis idempotents of our association scheme. Using Lemma \ref{AElem}, we recall that $A_i = \sum_{k} P_{ki}E_k$ and $E_iE_k = \delta_{ik}E_i$, thus $A_iE_j = P_{ji}E_j$. Applying $\phi$ to both sides of the equality $E_j = \frac{1}{\vert X\vert}\sum_{j}Q_{ji} A_i$ results in $\phi(\vert X\vert E_j) = \sum_{i}Q_{ij} L_i$ giving
	\[L_i\left(\sum_iQ_{ij}L_i\right) = \phi(A_i)\phi(\vert X\vert E_j)= \phi(\vert X\vert A_iE_j) = P_{ji}\left(\sum_iQ_{ij}L_i\right)  .\]
	Similarly, we have $E_j = \frac{1}{\vert X\vert}\sum_{k} Q_{kj}A_k$ and $A_k\circ A_i = \delta_{ik}A_i$, thus $E_j\circ A_i = \frac{Q_{ji}}{\vert X\vert}A_i$. Now applying $\phi^*$ to both sides of $A_i = \sum_{j}P_{ji} E_j$ results in $\phi^*(\vert X\vert A_i) = \sum_{j}P_{ji} L_i^*$ giving
	\[L_j^*\left(\frac{1}{\vert X\vert}\sum_jP_{ji}L_j^*\right)=\phi^*(\vert X\vert E_j)\phi^*(A_j) = \phi^*(\vert X\vert E_j\circ A_i) =Q_{ji}\left(\frac{1}{\vert X\vert}\sum_jP_{ji}L_j^*\right).\]
	In each case, we note that $\left[L_i\right]_{j0} = \left[L_j^*\right]_{i0} = \delta_{ij}$ and thus column ``zero" of each matrix product gives our result.
\end{proof}
\begin{cor}
	Let $L_i$ ($L_j^*$) be an array of intersection numbers (Krein parameters) of a $d$-class association scheme. If $L_i$ ($L_j^*$) has $d+1$ distinct eigenvalues then it uniquely determines all remaining parameters of the scheme.
\end{cor}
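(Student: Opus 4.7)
The plan is to show that, when $L_i$ has $d+1$ distinct eigenvalues, the subalgebra $\langle L_i\rangle_*$ already equals the full algebra $\bbL$, and then to recover each individual $L_k$ inside $\bbL$ via its $0$-th column. By Lemma \ref{arrayeigenvec}, the eigenvalues of $L_i$ are exactly the entries $P_{0i},P_{1i},\dots,P_{di}$ of column $i$ of $P$. The isomorphism $\phi:\BMA\to\bbL$ from Equation \eqref{algiso} sends the basis $\{E_j\}$ to matrices $\phi(E_j)\in\bbL$ satisfying $\phi(E_j)\phi(E_k)=\delta_{jk}\phi(E_j)$ and $\sum_j\phi(E_j)=\phi(I)=L_0=I$, so that $L_i=\sum_j P_{ji}\phi(E_j)$ is a genuine spectral decomposition. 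The proof of Lemma \ref{mstardim} then adapts verbatim inside $\bbL$: the Lagrange-style polynomial $\frac{1}{c_j}\prod_{k\neq j}(L_i-P_{ki}I)$ lies in $\langle L_i\rangle_*$ and equals $\phi(E_j)$, so $\dim\langle L_i\rangle_*$ equals the number of distinct eigenvalues of $L_i$, which is $d+1$ by hypothesis. Since $\bbL$ itself has dimension $d+1$ and contains $\langle L_i\rangle_*$, we conclude $\langle L_i\rangle_*=\bbL$.

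Next I would recover the individual $L_k$'s. Using parts (i) and (iii) of Lemma \ref{kitchensink}, $[L_k]_{m0}=p^m_{k0}=\delta_{mk}$, so the $0$-th column of $L_k$ is the standard basis vector $e_k$. Since $\{L_0,\dots,L_d\}$ is a basis of $\bbL$, the map sending an element of $\bbL$ to its $0$-th column is a linear bijection onto $\mathbb{R}^{d+1}$; hence within $\bbL$ each $L_k$ is the unique matrix whose $0$-th column is $e_k$. Concretely, compute the basis $\{I,L_i,L_i^2,\dots,L_i^d\}$ of $\langle L_i\rangle_*=\bbL$ and, for each $k$, solve a $(d+1)\times(d+1)$ linear system to express $L_k$ in that basis. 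This produces every intersection number $p^m_{kj}$, and Lemma \ref{kitchensink} then yields $P$, $Q$ and all Krein parameters.

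The parallel claim for $L_j^*$ is handled identically via the dual isomorphism $\phi^*:\BMA\to\bbL^*$. The eigenvalues of $L_j^*$ are $Q_{0j},\dots,Q_{dj}$ by Lemma \ref{arrayeigenvec}, so the distinct-eigenvalue hypothesis forces $\langle L_j^*\rangle_*=\bbL^*$; and since $[L_k^*]_{m0}=q^m_{k0}=\delta_{mk}$, each $L_k^*$ is the unique element of $\bbL^*$ whose $0$-th column is $e_k$, recovering every Krein parameter and hence the full parameter set. No genuine obstacle is anticipated; the only point requiring a brief justification is that Lemma \ref{mstardim}'s argument transfers from $\BMA$ to $\bbL$, but this is immediate since $L_i$ is diagonalizable through the idempotents $\phi(E_j)$ and since $I=L_0$ already belongs to $\bbL$.
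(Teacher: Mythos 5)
Your proof is correct, but it takes a genuinely different route from the paper's. The paper works at the level of eigenvectors: it reads off $k_i=[L_i]_{0i}$, recovers all valencies from Lemma \ref{kitchensink}(iv), diagonalizes $L_i$, normalizes the eigenvectors so that their $0$-th entries equal $1$, identifies them (via Lemma \ref{arrayeigenvec}) as the columns of $Q$ up to the unknown scalars $m_j$, and finally pins down those multiplicities from the orthogonality relation $\vert X\vert\Delta_m=Q^T\Delta_kQ$, obtaining $m_j=\vert X\vert/(v_j^T\Delta_k v_j)$. You instead work at the level of the algebra $\bbL$: the distinct-eigenvalue hypothesis forces $\langle L_i\rangle_*=\bbL$ (the transported idempotents $\phi(E_j)$ are Lagrange polynomials in $L_i$), and then the observation that the $0$-th column of $L_k$ is $e_k$ makes the $0$-th-column map a linear bijection $\bbL\to\mathbb{R}^{d+1}$, so each $L_k$ is recovered by solving one linear system in the basis $\{I,L_i,\dots,L_i^d\}$. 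Your argument is self-contained at the level of intersection numbers and sidesteps the orthogonality relations and the multiplicity computation entirely; the paper's version is more explicit in that it produces $Q$ (and the $m_j$) by closed formulas, which is convenient for actual computation. Both hinge on the same two facts — the eigenvalues of $L_i$ are the entries of column $i$ of $P$, and $d+1$ distinct eigenvalues makes the relevant spectral data nondegenerate — so either write-up is a legitimate proof, and both transfer verbatim to the Krein side via $\phi^*$.
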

\begin{proof}
	We will prove this for an array of intersection numbers, noting that the proof for an array of Krein parameters follows analogously. Let $L_i$ be an array of intersection numbers with $d+1$ distinct eigenvalues. First, $k_i = p^0_{ii} = [L_i]_{0i}$. We then use Lemma \ref{kitchensink} (iv) to solve for the valencies $k_0,\dots,k_d$. Now, since $L_i$ has $d+1$ distinct eigenvalues, we find $d+1$ distinct eigenvectors $v_0,\dots,v_d$ --- normalize these so that $(v_i)_0 = 1$ and define the matrix $M$ so that the $i^\text{th}$ column of $M$ is $v_i$. From Lemma $\label{arrayeigenvec}$, we know that the columns of $Q$ are exactly $m_iv_i$ where $m_i$ is the multiplicity of the given eigenvalue, thus $Q = M\Delta_m$. Thus, we must determine these multiplicities to finish our proof. However, Lemma \ref{orthorels} tells us that
	\[\begin{aligned}
	\vert X\vert \Delta_m = Q^T\Delta_k Q  = \Delta_m M^T\Delta_k M\Delta_m\\
	\end{aligned}\]
	Noting that all matrices given are invertible since each $k_i$ and $m_i$ are strictly positive, this gives $m_i = \bigslant{\vert X\vert}{v_i^T\Delta_kv_i}$.
\end{proof}
When relation $R_1$ is distinguished or projection matrix $E_1$ is distinguished, we refer to $L_1$ as the \emph{intersection matrix}\index{parameters!intersection matrix} and $L_1^*$ as the \emph{Krein matrix}\index{parameters!Krein matrix}; these two matrices will become particularly important for polynomial schemes (see Section \ref{poly}), for which they are tridiagonal. We end this section by noting that the matrices given in Example \ref{3cube} are exactly the arrays of intersection numbers for that association scheme. A final note before moving on is that the parameters of an association scheme need not define the scheme uniquely. In fact, there exist non-isomorphic $2$-class association schemes with exactly the same parameter set; consider the $4\times 4$ rook graph and the Shrikhande graph.
\section{Formal duality of association schemes}\label{dualpair}
In this section, we give an explicit example of the duality previously alluded to. Consider first the strongly regular graph $K_{3,3}$. This association scheme is a 2-class bipartite scheme with nontrivial relations given by adjacency in $K_{3,3}$ and non-adjacency in $K_{3,3}$ respectively. Thus the three graphs for this scheme are
\begin{figure}[H]\begin{center}\scalebox{.7}{$\begin{aligned}
			\begin{tikzpicture}[shorten >=1pt,auto,node distance=2cm,
			thin,main node/.style = {circle,draw, inner sep = 0pt, minimum size = 15pt}]
			
			\node[main node,fill=white] (1) {0};
			\node[main node,fill=cyan] [right of = 1](2) {2};
			\node[main node,fill=cyan] [right of = 2](3) {4};
			\node[main node,fill=red] [below of = 1](4) {1};
			\node[main node,fill=red] [right of = 4](5) {3};
			\node[main node,fill=red] [right of = 5](6) {5};
			\node [above of =2] (9) {$\Gamma_0$};
			
			\path (1) edge [in=120,out=145,loop] ();
			\path (2) edge [in=120,out=145,loop] ();
			\path (3) edge [in=120,out=145,loop] ();
			\path (4) edge [in=120,out=145,loop] ();
			\path (5) edge [in=120,out=145,loop] ();
			\path (6) edge [in=120,out=145,loop] ();
			
			\end{tikzpicture}\qquad&\qquad\begin{tikzpicture}[shorten >=1pt,auto,node distance=2cm,
			thin,main node/.style = {circle,draw, inner sep = 0pt, minimum size = 15pt}]
			
			\node[main node,fill=white] (1) {0};
			\node[main node,fill=cyan] [right of = 1](2) {2};
			\node[main node,fill=cyan] [right of = 2](3) {4};
			\node[main node,fill=red] [below of = 1](4) {1};
			\node[main node,fill=red] [right of = 4](5) {3};
			\node[main node,fill=red] [right of = 5](6) {5};
			\node [above of =2] {$\Gamma_1$};
			
			\draw[-] (1)--(4)--(2)--(5)--(3)--(6)--(1)--(5)--(2)--(6)--(3)--(4);
			\end{tikzpicture}\qquad\qquad
			\begin{tikzpicture}[shorten >=1pt,auto,node distance=2cm,
			thin,main node/.style = {circle,draw, inner sep = 0pt, minimum size = 15pt}]	
			\node[main node,fill=white] (1) {0};
			\node[main node,fill=cyan] [right of = 1](2) {2};
			\node[main node,fill=cyan] [right of = 2](3) {4};
			\node[main node,fill=red] [below of = 1](4) {1};
			\node[main node,fill=red] [right of = 4](5) {3};
			\node[main node,fill=red] [right of = 5](6) {5};
			\node [above of =2] {$\Gamma_2$};
			
			\path[-]
			(1)edge node {} (2)
			edge [bend right=35] node {} (3)
			(2)edge node {} (3)
			(4)edge node {} (5)
			edge [bend left=35] node {} (6)
			(5) edge node {} (6);
			\end{tikzpicture}
			\end{aligned}$}\end{center}
	\caption[Graphs of $K_{3,3}$.]{The association scheme of $K_{3,3}$. The subconstituents of vertex $0$ are colored white, red,  and blue respectively.}\label{k33}
\end{figure}
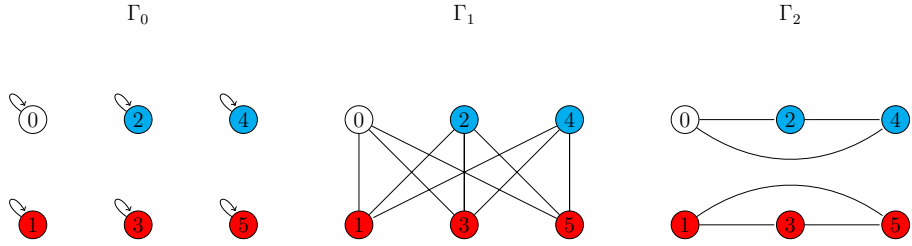
The intersection numbers, Krein parameters, and eigenmatrices are
\[\begin{aligned}L_0 &= \left[\begin{array}{cccc}
1&0&0\\
0&1&0\\
0&0&1\\
\end{array}\right],\quad L_1 &= \left[\begin{array}{cccc}
0&3&0\\
1&0&2\\
0&3&0\\
\end{array}\right],\quad L_2 &= \left[\begin{array}{cccc}
0&0&2\\
0&2&0\\
1&0&1\\
\end{array}\right];\\
L_0^* &= \left[\begin{array}{cccc}
1&0&0\\
0&1&0\\
0&0&1\\
\end{array}\right],\quad L_1^* &= \left[\begin{array}{cccc}
0&4&0\\
1&2&1\\
0&4&0\\
\end{array}\right],\quad L_2^* &= \left[\begin{array}{cccc}
0&0&1\\
0&1&0\\
1&0&0\\
\end{array}\right];\end{aligned}\]
\[P = \left[\begin{array}{rrr}
1&3&2\\
1&0&-1\\
1&-3&2\\
\end{array}\right],\qquad Q = \left[\begin{array}{rrr}
1&4&1\\
1&0&-1\\
1&-2&1\\
\end{array}\right].\]
Now consider the $2$-class association scheme determined by the octahedron. The three graphs for this scheme are
\begin{figure}[H]\begin{center}\scalebox{.7}{$\begin{aligned}
			\begin{tikzpicture}[shorten >=1pt,auto,node distance=2cm,
			thin,main node/.style = {circle,draw, inner sep = 0pt, minimum size = 15pt}]
			
			\node[main node,fill=red] (2) {1};
			\node[main node,fill=white] [below right of = 2](1) {0};
			\node[main node,fill=cyan] [right of = 1](6) {3};
			\node[main node,fill=red] [above right of = 6](3) {2};
			\node[main node,fill=red] [below right of = 6](4) {4};
			\node[main node,fill=red] [below left of  = 1](5) {5};
			\node at (2.5,1) (9) {$\Gamma_0$};
			
			\path (1) edge [in=120,out=145,loop] ();
			\path (2) edge [in=120,out=145,loop] ();
			\path (3) edge [in=120,out=145,loop] ();
			\path (4) edge [in=120,out=145,loop] ();
			\path (5) edge [in=120,out=145,loop] ();
			\path (6) edge [in=120,out=145,loop] ();
			
			\end{tikzpicture}\qquad&\qquad\begin{tikzpicture}[shorten >=1pt,auto,node distance=2cm,
			thin,main node/.style = {circle,draw, inner sep = 0pt, minimum size = 15pt}]
			
			\node[main node,fill=red] (2) {1};
			\node[main node,fill=white] [below right of = 2](1) {0};
			\node[main node,fill=cyan] [right of = 1](6) {3};
			\node[main node,fill=red] [above right of = 6](3) {2};
			\node[main node,fill=red] [below right of = 6](4) {4};
			\node[main node,fill=red] [below left of  = 1](5) {5};
			\node at (2.5,1) (9) {$\Gamma_1$};
			
			\draw[-] (1)--(2)--(3)--(4)--(5)--(2)--(6)--(5)--(1)--(4)--(6)--(3)--(1);
			\end{tikzpicture}\qquad\qquad
			\begin{tikzpicture}[shorten >=1pt,auto,node distance=2cm,
			thin,main node/.style = {circle,draw, inner sep = 0pt, minimum size = 15pt}]	
			
			\node[main node,fill=red] (2) {1};
			\node[main node,fill=white] [below right of = 2](1) {0};
			\node[main node,fill=cyan] [right of = 1](6) {3};
			\node[main node,fill=red] [above right of = 6](3) {2};
			\node[main node,fill=red] [below right of = 6](4) {4};
			\node[main node,fill=red] [below left of  = 1](5) {5};
			\node at (2.5,1) (9) {$\Gamma_2$};
			
			\path[-]
			(1)edge node {} (6)
			(2)edge [bend right=45] node {} (4)
			(3)edge [bend left=45] node {} (5);
			\end{tikzpicture}
			\end{aligned}$}\end{center}
	\caption[Graphs of the octahedron.]{The association scheme of the octahedron. The subconstituents of vertex $0$ are colored white, red, and blue respectively.}\label{octahedron}
\end{figure}
The intersection numbers, Krein parameters, and eigenmatrices of this association scheme are
\[\begin{aligned}
L_0 &= \left[\begin{array}{cccc}
1&0&0\\
0&1&0\\
0&0&1\\
\end{array}\right],\quad L_1 &= \left[\begin{array}{cccc}
0&4&0\\
1&2&1\\
0&4&0\\
\end{array}\right],\quad L_2 &= \left[\begin{array}{cccc}
0&0&1\\
0&1&0\\
1&0&0\\
\end{array}\right];\\
L_0^* &= \left[\begin{array}{cccc}
1&0&0\\
0&1&0\\
0&0&1\\
\end{array}\right],\quad L_1^* &= \left[\begin{array}{cccc}
0&3&0\\
1&0&2\\
0&3&0\\
\end{array}\right],\quad L_2^* &= \left[\begin{array}{cccc}
0&0&2\\
0&2&0\\
1&0&1\\
\end{array}\right];
\end{aligned}\]
\[P = \left[\begin{array}{rrr}
1&4&1\\
1&0&-1\\
1&-2&1\\
\end{array}\right],\qquad Q = \left[\begin{array}{rrr}
1&3&2\\
1&0&-1\\
1&-3&2\\
\end{array}\right].\]
One may observe that the intersection numbers and the Krein parameters of the two schemes are interchanged, as are the first and second eigenmatrices. This specific example is due to a duality arising from the context of translation schemes and regular group actions. Given a point set $X$ and a group $G$, we say $G$ acts regularly on $X$ if, for any pair of points $x,y\in X$, there exists a unique $g$ mapping $x$ to $y$; clearly this may only occur if $\vert X\vert = \vert G\vert$. We then say an association scheme is a \emph{translation schemes}\index{translation schemes} if there is an abelian group acting regularly on the vertices such that for each $g\in \mathcal{G}$ and any pair of vertices $x,y\in X$, $(g(x),g(y))\in R_i$ if and only if $(x,y)\in R_i$. When this occurs, we may fix a vertex $x\in X$ and identify the remaining vertices with non-identity elements of $G$ corresponding to which element of $G$ maps $x$ to each vertex; thus we may identify $X$ with $G$ and consider our association scheme to be on the group $G$. The dual group $G^*$ is defined as the set of characters (homomorphisms mapping $G\rightarrow \mathbb{C}^*$) equipped with entrywise products as the group action. We then define the dual association scheme $(G^*,\mathcal{R}^*)$ where $(\chi,\chi')\in R_i^*$ if and only if $E_i\left(\bigslant{\chi}{\chi'}\right) = \left(\bigslant{\chi}{\chi'}\right)$ where this division is done entrywise. Finally, we find that the new association scheme is also a translation scheme using the group $\mathcal{G}^*$. In the case of the two association schemes given above, the desired group is $\mathcal{G}\simeq \bbZ_6$ and we find each scheme results as the dual of the other; in general we expect the dual operation to be an involution if the dual association scheme exists. The pair of schemes listed is a specific case of the more general duality between $\overline{rK_s}$ and $\overline{sK_r}$; that is, the complement of $r$ copies of $K_s$ and the complement of $s$ copies of $K_r$ respectively. In this more general setting, we find that the cyclic group $\bbZ_{rs}$ acts regularly on each set of vertices.

In general, the automorphism group of an association scheme could be trivial. In this case, no group acts transitively on the vertices, and a dual scheme as described above is not defined. However, just as with non-linear codes in coding theory, we abstract the notion of duality to a formal definition on the parameters. We say two association schemes are \emph{formally dual}\index{formal duality} if the first and second eigenmatrices of one are the second and first eigenmatrices, respectively, of the other. This necessarily swaps the intersection numbers and Krein parameters so that, for example, $p^k_{ij}$ of a scheme will equal $q^k_{ij}$ of it's dual. In many cases, no formally dual scheme may exist; consider any association scheme for which the Krein parameters are not integral. However, there are non-trivial examples of formally dual pairs of association schemes, for instance consider the two infinite families generated by de Caen and van Dam \cite{deCaen1999}. This notion of formal duality still plays a major role in our understanding of the field of association schemes as a whole, motivating many of the questions we will focus on in this thesis.

We finish this discussion with one final note: any definition based solely on the parameters of an association scheme gives rise to an analogous ``dual" definition. For instance, we find that $K_{3,3}$ is \emph{bipartite}\index{bipartite}, meaning $p^k_{ij}= 0$ whenever $i+j+k\notin2\bbZ$. The dual graph, the octahedron, must then have $q^k_{ij}=0$ whenever $i+j+k\notin2\bbZ$; we call this \emph{dual-bipartite}\index{bipartite!dual-} or $Q$-bipartite (see Section \ref{poly}). Similarly, we find that both $K_{3,3}$ and the octahedron are \emph{antipodal graphs}\index{antipodal}: $p^d_{di} = 0$ whenever $i\notin\left\{0,d\right\}$. Both graphs are then \emph{dual-antipodal}\index{antipodal!dual-} or $Q$-antipodal (see Section \ref{poly}): $q^d_{di}=0$ whenever $i\notin\left\{0,d\right\}$. While there has been much research into bipartite and antipodal graphs, in this thesis we are interested in the implications of these dual properties, seeking when such objects may exist and what combinatorial or geometric structure the properties impose.
\section{Feasibility and realizability}
One main point of interest is whether or not an association scheme exists, given a (possibly partial) parameter set. While existence often cannot be proven without explicitly constructing the scheme, we often may rule out the existence of a scheme due to the values its intersection numbers or Krein parameters must take. In this section we examine three main conditions which we will use throughout this thesis in addition to what we already stated in Lemma \ref{kitchensink}. We begin with an immediate restriction on the intersection numbers:
\begin{lem}[\cite{Brouwer1989}]\label{intfeas}
	The intersection numbers of an association scheme must be non-negative integers.\qed
\end{lem}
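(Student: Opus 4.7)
The statement is essentially immediate from the combinatorial definition, so my proposal is really just to unpack that definition carefully and point out why no additional work is required. The plan is to recall condition \emph{(iv)} in the definition of a symmetric $d$-class association scheme: for each $(x,y)\in R_k$, the quantity $p_{ij}^k$ equals the cardinality of the set
\[S_{ij}^k(x,y) = \bigl\{ z \in X : (x,z)\in R_i \text{ and } (z,y)\in R_j \bigr\}.\]
Since $S_{ij}^k(x,y)$ is a finite subset of $X$, its cardinality is automatically a non-negative integer. The content of axiom \emph{(iv)} is precisely that this cardinality does not depend on the representative pair $(x,y)\in R_k$, so the common value $p_{ij}^k = |S_{ij}^k(x,y)|$ is itself a well-defined non-negative integer.

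The only conceivable obstacle is a boundary case: what if no pair $(x,y)$ lies in $R_k$, so that the condition in \emph{(iv)} is vacuous? This cannot happen in our setting, because $R_k$ is one of the relations in the partition of $X\times X$ by $\mathcal{R}$, so $R_k$ is nonempty; hence there is at least one pair $(x,y)\in R_k$ from which to compute $|S_{ij}^k(x,y)|$. (Even if one allowed empty relations, the convention $p_{ij}^k = 0$ would still yield a non-negative integer.)

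Thus the proof reduces to a single observation and no computation is needed; the result should be presented as a direct consequence of axiom \emph{(iv)}, perhaps with a one-line remark that the counts $k_i = p_{ii}^0 = |R_i(x)|$ are likewise non-negative integers, confirming the statement on the distinguished diagonal entries used repeatedly in Lemma \ref{kitchensink}.
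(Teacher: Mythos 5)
Your proposal is correct and matches the paper's justification exactly: the paper dismisses the lemma with the remark that ``by definition, each $p^k_{ij}$ is the cardinality of a set,'' which is precisely the observation you unpack. The extra care about well-definedness and the nonemptiness of each $R_k$ is fine but not needed beyond what the definition already guarantees.
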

This condition is easy to verify since, by definition, each $p^k_{ij}$ is the cardinality of a set. While this property is immediate, it can be a powerful tool to eliminate examples with very little information about the association scheme. Next consider the Krein parameters of our association scheme.
\begin{lem}[\cite{Scott1973},Krein conditions]\label{kreinfeas}\index{Krein conditions}
	The Krein parameters of an association scheme must be non-negative real numbers.
\end{lem}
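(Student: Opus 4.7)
The plan is to invoke the Schur product theorem together with the fact that the minimal idempotents $E_i$ are orthogonal projections, hence positive semidefinite. Reality is immediate: all $A_i$ are real symmetric, so the $E_j$ are real symmetric, and the structure constants $q^k_{ij}$ defined by $E_i\circ E_j = \frac{1}{|X|}\sum_k q^k_{ij} E_k$ are real. The nontrivial content is the non-negativity.

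First I would record that each $E_j$ is an orthogonal projection matrix (by Lemma \ref{AElem}, $E_j^2 = E_j$ and $E_j^T = E_j$), so each $E_j$ is positive semidefinite. Next I would invoke the Schur product theorem: if $A$ and $B$ are real symmetric positive semidefinite matrices, then $A\circ B$ is positive semidefinite. The short justification is that writing $A = \sum_a u_a u_a^T$ and $B = \sum_b v_b v_b^T$ as sums of rank-one PSD matrices gives
\[
A\circ B = \sum_{a,b} (u_a\circ v_b)(u_a\circ v_b)^T,
\]
which is a sum of rank-one PSD matrices and thus PSD. Applying this with $A = E_i$ and $B = E_j$ shows that $E_i\circ E_j$ is positive semidefinite.

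Now I would read off the eigenvalues of $E_i\circ E_j$ from its expansion in the idempotent basis. Since $E_0,\dots,E_d$ are pairwise orthogonal projections onto the common eigenspaces $V_0,\dots,V_d$, the matrix $\frac{1}{|X|}\sum_k q^k_{ij} E_k$ acts on $V_k$ as multiplication by $\frac{q^k_{ij}}{|X|}$. Equivalently, multiplying the defining equation through by $E_k$ and using $E_k E_\ell = \delta_{k\ell} E_k$ gives $(E_i\circ E_j)E_k = \frac{q^k_{ij}}{|X|}E_k$, so $\frac{q^k_{ij}}{|X|}$ is an eigenvalue of $E_i\circ E_j$ (with multiplicity $m_k$). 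Since $E_i\circ E_j$ is positive semidefinite, all of its eigenvalues are non-negative, so $q^k_{ij}\geq 0$ for every triple $i,j,k$.

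No step here is truly hard; the only subtlety is the appeal to the Schur product theorem, which is the algebraic heart of the argument and justifies the name \emph{Krein conditions} by packaging the positivity of a certain inner product on the idempotent basis. If a more self-contained proof were desired, one could alternatively compute $q^k_{ij} = \frac{|X|}{m_k}\operatorname{tr}(E_k(E_i\circ E_j))$ and expand the trace as a sum of squares using the spectral decomposition of $E_i$ and $E_j$, but the conceptual route above via the Schur product theorem is cleaner.
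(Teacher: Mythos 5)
Your proof is correct and follows essentially the same route as the paper: both reduce the claim to positive semidefiniteness of $E_i\circ E_j$ and then read off its eigenvalues $q^k_{ij}/\vert X\vert$ from the idempotent expansion. The only difference is cosmetic --- the paper justifies the Schur product theorem by observing that $E_i\circ E_j$ is a principal submatrix of $E_i\otimes E_j$ (which also gives the upper bound $q^k_{ij}\leq\vert X\vert$ for free), whereas you use the rank-one decomposition $A\circ B=\sum_{a,b}(u_a\circ v_b)(u_a\circ v_b)^T$.
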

\begin{proof}
	Given a matrix $M$, let $\sigma(M)$ be the set of eigenvalues of $M$. From Equation \eqref{Emult}, we find that $\sigma(E_i\circ E_j) = \left\{\bigslant{q_{ij}^0}{\vert X\vert},\dots,\bigslant{q_{ij}^d}{\vert X\vert}\right\}$. However $E_i\circ E_j\in \BMA$ and therefore it must be symmetric, implying all of its eigenvalues are real. Further, note that $\sigma(E) = \left\{0,1\right\}$ for any projection matrix $E$. We also note that $\sigma(A\otimes B) = \left\{\alpha\beta,\alpha\in\sigma(A),\beta\in\sigma(B)\right\}$ for any pair of matrices $A$ and $B$. Thus, $\sigma(E_i\otimes E_j) = \left\{0,1\right\}$. Finally, since $E_i\circ E_j$ is a principal submatrix of $E_i\otimes E_j$, the eigenvalues of $E_i\circ E_j$ must be contained in the interval $0\leq \lambda\leq 1$.
\end{proof}
The final feasibility condition we will list here is known as the \emph{absolute bound}\index{absolute bound}.
\begin{lem}[\cite{Neumaier1981},Absolute bound]\label{absolute}
	The multiplicities $m_i$ ($0\leq i\leq d$) of a $d$-class association scheme satisfy:
	\[\sum_{q_{ij}^k\neq 0} m_k\leq\begin{cases}
	m_im_j & \text{ if }i\neq j\\
	\binom{m_i+1}{2} & \text{ if }i= j.
	\end{cases}\]
\end{lem}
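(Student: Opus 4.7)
The plan is to interpret both sides of the inequality as dimensions of natural subspaces of $\mathbb{R}^{\vert X\vert}$ and to produce a containment between them. Write $V_k=\colsp(E_k)$, so $\dim V_k=m_k$; since the $V_k$ are pairwise orthogonal eigenspaces, $\sum_{q_{ij}^k\neq 0} m_k = \dim\bigl(\bigoplus_{q_{ij}^k\neq 0} V_k\bigr)$. On the other side, define the Hadamard product subspace $V_i\circ V_j := \spn\{u\circ v : u\in V_i,\, v\in V_j\}$, where $u\circ v$ is the entrywise product of vectors. The goal is to prove $\bigoplus_{q_{ij}^k\neq 0} V_k\subseteq V_i\circ V_j$ and then bound $\dim(V_i\circ V_j)$ from above by $m_im_j$, improving to $\binom{m_i+1}{2}$ when $i=j$.

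For the dimension bound on $V_i\circ V_j$, fix orthonormal bases $u_1,\dots,u_{m_i}$ of $V_i$ and $v_1,\dots,v_{m_j}$ of $V_j$. Bilinearity of $\circ$ shows that the $m_im_j$ vectors $u_a\circ v_b$ span $V_i\circ V_j$, yielding $\dim(V_i\circ V_j)\leq m_im_j$ in general. When $i=j$, we take $v_b=u_b$ and use the symmetry $u_a\circ u_b=u_b\circ u_a$ to collapse the spanning set down to the $m_i$ diagonal vectors $u_a\circ u_a$ together with the $\binom{m_i}{2}$ off-diagonal vectors $u_a\circ u_b$ for $a<b$, a total of $\binom{m_i+1}{2}$.

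The heart of the argument is the containment $V_k\subseteq V_i\circ V_j$ whenever $q_{ij}^k\neq 0$. Using the spectral decompositions $E_i=\sum_a u_au_a^T$ and $E_j=\sum_b v_bv_b^T$ together with the elementary identity $(uu^T)\circ(vv^T)=(u\circ v)(u\circ v)^T$, one computes
\[
E_i\circ E_j \;=\; \sum_{a,b}(u_a\circ v_b)(u_a\circ v_b)^T,
\]
so $\colsp(E_i\circ E_j)\subseteq V_i\circ V_j$. In the reverse direction, Equation~\eqref{Emult} combined with $E_kE_\ell=\delta_{k\ell}E_k$ from Lemma~\ref{AElem} gives $(E_i\circ E_j)E_k=\tfrac{q_{ij}^k}{\vert X\vert}E_k$, so whenever $q_{ij}^k\neq 0$ we may solve $E_k=\tfrac{\vert X\vert}{q_{ij}^k}(E_i\circ E_j)E_k$, exhibiting $V_k\subseteq \colsp(E_i\circ E_j)\subseteq V_i\circ V_j$.

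Combining the two directions, the orthogonal direct sum $\bigoplus_{q_{ij}^k\neq 0} V_k$ embeds into $V_i\circ V_j$, and comparing dimensions delivers both bounds simultaneously. The main hurdle is spotting the outer-product identity $(uu^T)\circ(vv^T)=(u\circ v)(u\circ v)^T$, which is the bridge between entrywise matrix multiplication and the column-space construction; once it is secured, everything else is bookkeeping, with the only real subtlety being the symmetry argument that sharpens $m_i^2$ to $\binom{m_i+1}{2}$ in the diagonal case.
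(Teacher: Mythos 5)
Your proof is correct and follows essentially the same route as the paper: identify $\sum_{q_{ij}^k\neq 0}m_k$ with the rank of $E_i\circ E_j$ and then bound that rank by exhibiting a spanning set for its column space. The only difference is cosmetic --- for the off-diagonal case the paper bounds the rank by viewing $E_i\circ E_j$ as a principal submatrix of $E_i\otimes E_j$, whereas you use the identity $(uu^T)\circ(vv^T)=(u\circ v)(u\circ v)^T$ to get the same $m_im_j$ spanning vectors directly, which has the minor virtue of treating both cases uniformly and of making explicit the step $(E_i\circ E_j)E_k=\tfrac{q_{ij}^k}{\vert X\vert}E_k$ that the paper leaves implicit.
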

\begin{proof}
	The sum on the left is the rank of $E_i\circ E_j$, a principal submatrix of the rank $m_im_j$ matrix $E_i\otimes E_j$. Further, if $i=j$, $E_i\circ E_j$ is the entrywise square of $E_i$. Assuming $\text{col}(E_i) = \text{span}(v_1,\dots,v_{m_i})$, the columns of $E_i\circ E_i$ must be linear combinations of the vectors $v_j\circ v_k$ for $1\leq j\leq k\leq d$, a total of $\binom{m_i+1}{2}$ vectors.
\end{proof}
There are many other feasibility conditions we may list here including some arising from design theory and others as simple as the handshaking lemma. In this thesis however, we consider the conditions already stated to be a baseline. Thus, we do not claim that any parameter set fulfilling these conditions is guaranteed to correspond to the parameters of some association scheme, rather we instead simply ignore parameter sets which do not fulfill these basic parameter restrictions. We therefore define two separate terms which will be used throughout this thesis, \emph{feasible parameter sets}\index{feasible parameter set} and \emph{realizable parameter sets}\index{realizable parameter set}. \label{FCpage}
\begin{definition}
	A \emph{feasible parameter set} is a set of Krein parameters, intersection numbers, and eigenmatrices such that:
	\begin{itemize}
		\item[FC1:] The Krein parameters satisfy Lemmas \ref{kreinfeas} and \ref{kitchensink} \emph{($\mathit{i'}$) -- ($\mathit{xiii'}$)},
		\item[FC2:] The intersection numbers satisfy Lemmas \ref{intfeas} and \ref{kitchensink} \emph{(i) -- (xiii)},
		\item[FC3:] The integers $m_j = q^0_{jj}$ satisfy Lemma \ref{absolute}.
	\end{itemize}
\end{definition}
\begin{definition}
	A feasible parameter set is \emph{realizable} if there exists an association scheme $(X,\cR)$ with the given parameter set.
\end{definition}
\section{Imprimitivity}\label{imprimitivity}
In group theory, one is often interested in learning about the subgroups of any given group. That is, given a finite group $G$, are there subsets of the vertices $S\subset G$ for which the subset is closed under the group product. When such a subset occurs, we say $S$ is a subgroup of $G$, denoted $S\leq G$. In some cases, namely when our subgroup $N\leq G$ is normal, we may define another subgroup called the quotient group: $\nicefrac{G}{N}$. In this context, we often think of $G$ being built out of the normal subgroup $N$ and the quotient group $\nicefrac{G}{N}$, even though we cannot necessarily recover $G$ from these two ingredients. This line of investigation arises naturally in association schemes through the notion of subschemes. Within any Bose-Mesner algebra $\BMA$, we often find smaller subspaces $\BMB\subset\BMA$ which are closed under both entrywise products and standard matrix products. While these smaller algebras are typically not Bose-Mesner algebras (they need not include both identities $I$ and $J$), we may find a mapping from $\BMB$ to a vector space on smaller matrices which does result in a new Bose-Mesner algebra. Given an association scheme $(X,\mathcal{R})$, a subscheme $(X',\left\{R'_i\right\})$ is a subset of the points $X'\subset X$ paired with the non-empty relations $\left\{R'_i\right\}$ where $R'_i = R_i\cap(X'\times X')$ which itself forms an association scheme. The subalgebra mapping in this case corresponds to taking the principal submatrix of each matrix in $\BMA$ obtained by restricting to the rows and columns corresponding to the points $X'$. Just as in the group case, where every group $G$ contains two trivial subgroups $\left\{e_G\right\}$ and $G$ itself, every Bose-Mesner algebra $\BMA$ contains trivial subspaces closed under both products such as $\left<I\right>$ and $\BMA$; these will be ignored for all that follows.

In this section we will examine the analogue of a non-simple group, called an imprimitive scheme which admits not only a non-trivial subscheme, but a quotient scheme as well. Analogous to a non-simple group, we often think of the imprimitive scheme as being built out of these two smaller schemes. However, knowing both the quotient scheme and the subscheme is not sufficient to entirely determine the original scheme. This is, in part, due to the many different types of products which may be used to piece smaller schemes together. For instance, Song \cite{Song2002} indicates the many distinct association schemes on 12 points using the standard direct and wreath products. Additionally, Cameron and Bailey \cite{Bailey2005} introduce a third product, called the ``crested product", which is distinct, in general, from the other two.

An association scheme $(X,\mathcal{R})$ is \emph{imprimitive}\index{imprimitive} if there exists a non-trivial union of relations $\cup_{i\in \mathcal{I}} R_i$ which forms an equivalence relation on $X\times X$; here, the only trivial unions are $\mathcal{I} = \left\{0\right\}$ and $\mathcal{I} = \left\{0,\dots,d\right\}$. Given an imprimitive association scheme, we call the set of equivalence classes a \emph{system of imprimitivity}\index{imprimitivity!system of -} --- noting that a system of imprimitivity is determined uniquely by the index set $\mathcal{I}$. An equivalent definition for imprimitivity is as follows: an association scheme is \emph{imprimitive} if there exists a non-trivial relation whose graph is disconnected. To see the equivalence of these two definitions, assume that $R_i$ is disconnected for $i\neq 0$. Then $\cI = \left\{j:p^j_{ii}\right\}$ yields a system of imprimitivity.

In some cases, we may have multiple systems of imprimitivity in our association scheme. Consider Example \ref{3cube} and note that $\cI_1 = \left\{0, 2\right\}$ and $\cI_2 = \left\{0, 3\right\}$ both yield systems of imprimitivity yet $\cI_1$ results in two equivalence classes while $\cI_2$ gives four. Thus we must be careful to distinguish between distinct systems of imprimitivity for any given association scheme. In each case, we find that the size of any equivalence class is equal to the sum of the valencies $k_i$ $(i\in\cI)$; hence all equivalence classes have the same size for a given system of imprimitivity. In the example of the 3-cube, the size of each equivalence class for $\cI_1$ is $v_0+v_2 = 4$; likewise the size of each equivalence class for $\cI_2$ is $v_0+v_3 = 2$. For all that follows we denote the size of any given equivalence class by $r$ and then the number of equivalence classes by $w$ noting that $\vert X\vert = wr$. We now define the subscheme and quotient scheme of an imprimitive scheme. Note that there is no distinction made between a general subscheme and one which corresponds to a single fiber of a system of imprimitivity. To avoid confusion, we will henceforth only refer to subschemes which arise from systems of imprimitivity throughout this text.

\begin{definition}
	Let $(X,\mathcal{R})$ be an imprimitive association scheme with system of imprimitivity $X_1,\dots, X_w$ with index set $\mathcal{I}$.
	\begin{itemize}
		\item[(i)] The ($j^\text{th}$) \emph{subscheme} of $(X,\mathcal{R})$ is $\left(X_j,\left\{R'_i\right\}_{i\in\mathcal{I}}\right)$ where $R'_i = R_i\cap (X_j\times X_j)$.\index{imprimitivity!subscheme}
		\item[(ii)] The \emph{quotient scheme} of $(X,\mathcal{R})$ is the association scheme $\left(\left\{X_1,\dots,X_w\right\},\left\{\tilde{R}_i\right\}\right)$ where $(X_i,X_j)\in \tilde{R}_k$ if and only if there exists $x\in X_i$, $y\in X_j$ with $(x,y)\in R_k$.\index{imprimitivity!quotient scheme}
	\end{itemize}
\end{definition}
Note that the subscheme and quotient scheme depend explicitly on the system of imprimitivity chosen. Thus, in cases where multiple systems of imprimitivity exist, we must be careful to distinguish which we are using. Thus, if multiple systems of imprimitivity occur, say with index sets $\mathcal{I}_1$ and $\mathcal{I}_2$, we will say ``the quotient scheme (or subscheme) corresponding to $\mathcal{I}_1$". Before moving to examples, we will provide a derivation of each using their Bose-Mesner algebras. We do this to illuminate the duality at play, referring the reader to numerous other sources for a combinatorial derivation (\cite{Brouwer1989},\cite{Rao1984},\cite{Cameron1978},\cite{Martin2007}). 

We begin with the subscheme. Let $(X,\cR)$ be given with the system of imprimitivity $X_1,\dots,X_w$ with index set $\cI = \left\{0,i_1,\dots,i_s\right\}$ and fiber size $r$. Then $\left\{A_i\right\}_{i\in\cI}$ forms a basis for a second matrix algebra $\mathbb{B}\subset\BMA$ which is also closed under both matrix and Schur multiplication. We may order the vertices by equivalence classes so that every matrix in $\mathbb{B}$ is block diagonal with $w$ blocks of size $r\times r$. Further, \[\left[\sum_{i\in\cI}A_i\right]_{x,y} = \begin{cases}
1 \text{ if there exists a }0\leq k\leq w\text{ with }x,y\in X_k,\\
0 \text{ otherwise.}
\end{cases}\]
Each of these matrices will therefore be block diagonal matrices as the $(x,y)$ entry of each matrix will be 0 for $x\in X_i$ and $y\in X_j$ $(i\neq j)$. However, this tells us that for $i,j\in \cI$,

\[A_iA_j = \left[\begin{array}{cccc}
\phi_1(A_i)\phi_1(A_j) & 0 & \dots & 0\\
0 & \phi_2(A_i)\phi_2(A_j) & \dots & 0\\
\vdots & \vdots & \ddots & \vdots \\
0 & 0 & \dots & \phi_w(A_i)\phi_w(A_j)
\end{array}\right]\]
where $\phi_\ell$ maps any matrix $A$ to the $\ell^\text{th}$ $r\times r$ diagonal block. Thus we have $w$ vector spaces which are all isomorphic to $\BMB$, in particular, each vector space is closed under matrix multiplication and entrywise products. In addition however, each of these vector spaces contain both $I$ and $J$ since $\phi_\ell(A_0) = I$ and $\phi_\ell(\sum_{i\in\cI}A_i) = J$ for any choice of $1\leq \ell\leq w$. Thus each of these vector spaces is a subscheme of $\BMA$.

\begin{lem}\label{sub}
	Let $\BMA$ be the Bose-Mesner algebra of an imprimitive scheme with system of imprimitivity with $w$ fibers of size $r$ and index set $\cI$. Then (under an appropriate ordering of the vertices) $\sum_{i\in\cI} A_i = I_w\otimes J_r$ and the Bose-Mesner algebra of the subscheme, $\BMA'$, is isomorphic to the vector space of matrices $\left<A_i\right>_{i\in\cI}$.
\end{lem}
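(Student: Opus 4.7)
The plan is to exploit the block-diagonal structure forced by the system of imprimitivity, then build an explicit isomorphism via projection onto a single diagonal block. First, I would reorder the vertices of $X$ so that $X_1$ comes first, then $X_2$, and so on. Under this ordering, $\left[\sum_{i\in\cI}A_i\right]_{x,y}=1$ if and only if there exists $i\in\cI$ with $(x,y)\in R_i$, which by definition of the system of imprimitivity is exactly the condition that $x$ and $y$ share a fiber; hence this sum equals $I_w\otimes J_r$. Consequently, each $A_i$ with $i\in\cI$ is supported inside the diagonal blocks, so every matrix in $\left<A_i\right>_{i\in\cI}$ is block-diagonal with $w$ blocks of size $r\times r$.

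Next, I would define the projection $\phi_\ell:\left<A_i\right>_{i\in\cI}\to\Mat_{r\times r}(\bbR)$ extracting the $\ell^{\text{th}}$ diagonal block. This map is clearly linear, and because block-diagonal matrices multiply blockwise under both the ordinary and Schur products, $\phi_\ell$ respects both operations. Its image contains $\phi_\ell(A_0)=I_r$ and $\phi_\ell\!\left(\sum_{i\in\cI}A_i\right)=J_r$, and is closed under both products, so it is itself a Bose-Mesner algebra. To identify this image with $\BMA'$ as defined combinatorially, I would observe that the subscheme relation $R'_i=R_i\cap(X_\ell\times X_\ell)$ has indicator matrix exactly $\phi_\ell(A_i)$, so the two algebras share the same $01$-basis.

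The main technical point will be verifying that $\phi_\ell$ is injective on $\left<A_i\right>_{i\in\cI}$, equivalently that the matrices $\phi_\ell(A_i)$ for $i\in\cI$ are linearly independent. Since they inherit pairwise disjoint supports from the $A_i$, it suffices to check that each $\phi_\ell(A_i)$ is nonzero. This follows from the fact that the fiber $X_\ell$ is a union of $R_j$-neighborhoods for $j\in\cI$: for any $x\in X_\ell$ and $i\in\cI$, all $k_i$ of its $R_i$-neighbors lie in $X_\ell$, so the $\ell^{\text{th}}$ block of $A_i$ has $k_i>0$ ones in each row. Together with the matching basis counts, this produces the claimed algebra isomorphism $\BMA'\cong\left<A_i\right>_{i\in\cI}$, with the isomorphism respecting both the matrix and Schur products.
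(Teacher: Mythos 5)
Your proposal is correct and follows essentially the same route as the paper: order the vertices by fibers, observe the block-diagonal support forced by $\sum_{i\in\cI}A_i=I_w\otimes J_r$, and use the projection $\phi_\ell$ onto a diagonal block, which preserves both products and carries $A_0$ and $\sum_{i\in\cI}A_i$ to $I_r$ and $J_r$. Your explicit injectivity check (each $\phi_\ell(A_i)$ is nonzero because every $R_i$-neighbor of a point of $X_\ell$ stays in $X_\ell$ for $i\in\cI$) is a detail the paper leaves implicit, but it is correct and does not change the argument.
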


Before moving to the quotient scheme, we investigate the subscheme further. As before, we note that $\BMB$ is commutative, and thus we may simultaneously diagonalize all matrices in $\BMB$ giving the basis of minimal idempotents $\left\{E^\prime_{0'},E'_{1'},\dots,E'_{s'}\right\}$ (here, we use $j'$ to denote an index of the subscheme noting that, in general, $E'_{j'}$ will not necessarily be related to $E_j$). Since every maximal common eigenspace of $\BMB$ must be the direct sum of maximal eigenspaces in $\BMA'$, the dimension of any maximal common eigenspace in $\BMB$ must be a multiple of $w$. Then, since we have shown the rank $w$ matrix $I_w\otimes J_r\in\BMB$, we know that $\frac{1}{r}I_w\otimes J_r$ must be one minimal idempotent as it is has the minimum possible non-zero rank of any idempotent matrix in $\BMB$; without loss of generality we assume this is $E^\prime_{0'}$. Now, since each maximal common eigenspace of $\BMA$ must be common eigenspaces of $\BMB$, we must have that for each $0\leq i\leq d$, there exists a unique $j'\in\left\{0',\dots,s'\right\}$ such that $E_j'E_i = E_i$ --- that is, the maximal common eigenspaces of $\BMB$ partition those of $\BMA$. We therefore define for each $j'\in\left\{0',\dots,s'\right\}$ the set $\left\{i:E'_{j'}E_i = E_i \right\}$ giving $E'_{j'} = \sum_{i\in\textbf{j}'}E_i$; for ease of notation, we identify an index $j'$ with both an index of the subscheme and the corresponding set of indices of idempotents from the original scheme contributing to $E_{j'}$. This is to emphasize that the idempotents of $\BMA'$ come from a partition of the idempotents of $\BMA$ --- we will see a similar notation for quotient schemes where relations of the quotient scheme arise from a partition of the relations of $\BMA$.	

Since matrix multiplication is preserved by our mapping, we find that for $i,j,k\in\cI$, $p^{\prime k}_{ij} = p^k_{ij}$ and thus the intersection numbers match our original scheme for indices within $\cI$. While Schur products are also preserved by $\psi_\ell$, recall that the idempotents of $\BMB$ were not the same as the idempotents of $\BMA$, thus we do not expect our Krein parameters to be preserved. Instead we must determine the parameters $q^{\prime k'}_{i'j'}$ so that 
\[E_{i'}^\prime\circ E_{j'}^\prime = \frac{1}{\vert X_\ell\vert} \sum_{k'=0'}^{s'}q^{\prime k'}_{i'j'}E_{k'}^\prime.\]
First, note that such constants must exist since $\mathbb{B}$ is closed under entrywise multiplication. Now, recalling that $E_i^\prime = \sum_{a\in\hat{i}} E_a$, we may multiply each side of the above equation by $E_h$ for some $0\leq h\leq d$ and find
\[\frac{1}{\vert X\vert}\sum_{a\in\hat{i},b\in\hat{j}} q^h_{ab}E_h = \left(E_{i'}^\prime\circ E_{j'}^\prime\right)E_h = \frac{1}{\vert X_\ell\vert}q^{\prime k'}_{i'j'} E_h \]
where $h\in k'$. Thus we find
\begin{equation}q^{\prime k'}_{i'j'} = \frac{1}{w}\sum_{a\in i',b\in j'}q^h_{ab}\end{equation}
for any $h\in k'$.

We note that, while there exist algebra isomorphisms between any pair of subschemes, the $w$ distinct subschemes need not be combinatorially isomorphic; that is, while $\psi_i\circ\psi_j^{-1}$ maps $\BMA_j'\rightarrow \BMA_i'$, there need not be a bijection between $X_j\rightarrow X_i$ for which $\left(\gamma(x),\gamma(y)\right)\in R_i^\prime$ if and only if $(x,y)\in R_i^\prime$ (see \cite{Jaeger1998} for further discussion on the distinction between ``isomorphic" and ``combinatorially isomorphic"). We summarize this construction as follows.

We now consider the dual notion, the \emph{quotient scheme}, by swapping the roles of our adjacency matrices and idempotents in the above derivation. As the quotient scheme and subscheme arise in the same context, it should not be surprising that they are highly related. Thus, we will hold on to some of the notation we have already established: using $j'\in\left\{0',\dots,s'\right\}$ to denote an index of the subscheme as well as the set $\left\{i:E^\prime_{j'} E_i = E_i\right\}$. Now, observe that Lemma \ref{kitchensink} \emph{($\mathit{i'}$)} applied to any subscheme tells us that $q^{\prime k'}_{0'0'} = 0$ for any $k'\neq 0'$. Thus, our original scheme must have $q^k_{ab} = 0$ whenever $a,b\in0'$ and $k\notin0'$. We denote $\cJ = 0'$ and find that the set of matrices $\left\{E_j\right\}_{j\in\cJ}$ is then closed under entrywise multiplication. Therefore we have a third vector space of matrices $\BMC = \text{span}_{j\in\cJ}\left(E_j\right)$ closed under both matrix and Schur multiplication. This vector space will be isomorphic to the Bose-Mesner algebra of our quotient scheme just as $\BMB$ was isomorphic to each $\BMA'$, however we require more work to show this.	

Without loss of generality assume that $\vert \cJ\vert = t+1$. Since Schur products are trivially commutative, we may guarantee the existence of Schur idempotents (01-matrices) which span our matrix algebra. Let $\left\{\tilde{A}_{\tilde{0}},\dots,\tilde{A}_{\tilde{t}}\right\}$ correspond to the set of minimal Schur idempotents in $\BMC$; that is, the idempotents contained in $\BMC$ which are not sums of other Schur idempotents in $\BMC$. These new idempotents correspond to the maximal common Schur-eigenspaces of $\BMC$. However, since $\BMC\subset\BMA$, each schur idempotent must be a sum of 01-matrices from $\BMA$. As before, for each index $\tilde{i}\in\left\{\tilde{0},\dots,\tilde{t}\right\}$ we define the set $\tilde{i} = \left\{j:\tilde{A}_{\tilde{i}}\circ A_j = A_j\right\}$ giving $\tilde{A}_{\tilde{i}} = \sum_{j\in\tilde{i}}A_j$. This immediately implies that $\tilde{A}_{\tilde{i}}E_k = 0$ for any $k\notin \cJ$. Then each $\tilde{A}_{\tilde{i}}$ may be diagonalized using only the $\left\{E_j\right\}_{j\in\cJ}$ giving,
\[\tilde{A}_{\tilde{i}}(I_w\otimes J_r) = \tilde{A}_{\tilde{i}}\left(r\sum_{j\in \cJ} E_j\right) = r\tilde{A}_{\tilde{i}}.\] 

Thus, each $A^\prime_i$ is a block matrix, constant on each block. This block form tells us that any Schur idempotent of $\BMC$ must have a multiple of $wr^2$ ones since there must be at least $r^2$ ones in each of the $w$ rows of blocks. Then $I_w\otimes J_r$ must be one of our minimal idempotents since $I_w\otimes J_r\in \BMC$ and it has exactly $wr^2$ ones; without loss of generality, we assume $\tilde{A}_{\tilde{0}} = I_w\otimes J_r$. Further, the block form of our 01-matrices tells us there exists matrices $\tilde{M}_{\tilde{i}}$ for $\tilde{i}\in\left\{\tilde{0},\dots,\tilde{t}\right\}$ such that $\tilde{A}_{\tilde{i}} = J_r\otimes \tilde{M}_{\tilde{i}}$ and we may define an algebra homomorphism $\tilde{\psi}$ mapping $\tilde{A}_{\tilde{i}}\mapsto \tilde{M}_{\tilde{i}}$ which preserves entrywise multiplication. We further find that
\begin{equation}\tilde{\psi}\left(AB\right) = r\tilde{\psi}(A)\tilde{\psi}(B).\label{tildepsiprod}\end{equation}
Then $\tilde{\psi}(A^\prime_0) = I$, $\tilde{\psi}\left(\sum_{i\in\cJ}A'_i\right) =J_w$, and Equation \eqref{tildepsiprod} gives us,
\[\tilde{p}^{\tilde{k}}_{\tilde{i}\tilde{j}} = \frac{1}{r}\sum_{i\in\tilde{i},j\in\tilde{j}}p^k_{ij}\]
for any choice of $k\in\tilde{k}$; this smaller algebra satisfies all the conditions of a Bose-Mesner algebra. Finally, since $\tilde{\psi}$ preserves entrywise products, we find that $\tilde{q}^k_{ij} = q^k_{ij}$ for $i,j,k\in\cJ$.
\begin{lem}\label{quotient}
	Let $\BMA$ be the Bose-Mesner algebra of an imprimitive scheme with system of imprimitivity with $w$ fibers of size $r$ and index set $\cI$. Then there exists an index set $\cJ$ such that $\sum_{j\in\cJ} E_j = \frac{1}{r}\left(I_w\otimes J_r\right)$ and the Bose-Mesner algebra of the quotient scheme, $\tilde{\BMA}$, is isomorphic to the vector space of matrices $\left<E_j\right>_{j\in\cJ}$.
\end{lem}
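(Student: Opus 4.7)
The plan is to dualize the derivation of Lemma~\ref{sub} by swapping the roles of the adjacency basis $\{A_i\}$ and the idempotent basis $\{E_j\}$ throughout. The target is to exhibit an index set $\cJ$ so that $\BMC := \text{span}\{E_j\}_{j\in\cJ}$ is closed under both matrix products, then to build an algebra map carrying $\BMC$ onto a Bose-Mesner algebra on the $w$ fibers.

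First I would locate $\cJ$. Lemma~\ref{sub} already produces a subscheme $\BMB$ whose all-ones idempotent $E'_{0'}$ equals $\tfrac{1}{r}(I_w\otimes J_r)$ when viewed inside $\BMA$, and whose expansion $E'_{0'} = \sum_{i\in 0'}E_i$ identifies a subset of the original indices. Applying the Krein analogue of Lemma~\ref{kitchensink}~\emph{(ii)} (property \emph{($ii^\prime$)}) to $\BMB$ gives $q^{\prime k'}_{0'0'}=\delta_{0'k'}m_{0'}$, and the formula $q^{\prime k'}_{i'j'} = \tfrac{1}{w}\sum_{a\in i',\,b\in j'}q^h_{ab}$ combined with nonnegativity of Krein parameters (Lemma~\ref{kreinfeas}) forces $q^h_{ab}=0$ whenever $a,b\in 0'$ and $h\notin 0'$. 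Taking $\cJ := 0'$ therefore closes $\{E_j\}_{j\in\cJ}$ under Schur products, while closure under ordinary products is automatic from pairwise orthogonality of idempotents. The desired equality $\sum_{j\in\cJ}E_j = \tfrac{1}{r}(I_w\otimes J_r)$ is then just $E'_{0'}$ re-read inside $\BMA$.

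Next I would extract the minimal Schur idempotents $\tilde{A}_{\tilde{0}},\dots,\tilde{A}_{\tilde{t}}$ spanning $\BMC$, which induce a partition of $\{0,\dots,d\}$ via $\tilde{i}=\{j:\tilde{A}_{\tilde{i}}\circ A_j = A_j\}$. The crux of the argument is to show each $\tilde{A}_{\tilde{i}}$ is block-constant on $w\times w$ blocks of size $r\times r$, so that $\tilde{A}_{\tilde{i}} = J_r\otimes \tilde{M}_{\tilde{i}}$ for some $w\times w$ 01-matrix $\tilde{M}_{\tilde{i}}$. This follows because $\tilde{A}_{\tilde{i}}E_k = 0$ for $k\notin\cJ$, which yields the absorption identity $\tilde{A}_{\tilde{i}}(I_w\otimes J_r) = r\tilde{A}_{\tilde{i}}$; since the left-hand side sums each row of $\tilde{A}_{\tilde{i}}$ along fibers and $\tilde{A}_{\tilde{i}}$ is $01$-valued, each row must be constant on each fiber, and by symmetry so must each column. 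I view this block-constant step as the main obstacle: the rest is algebraic bookkeeping, but this structural descent is what actually lets us pass from $n\times n$ matrices to $w\times w$ matrices.

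With the tensor form in hand, I would finish by defining $\tilde{\psi}\colon \BMC \to \Mat_w(\bbR)$ via $\tilde{\psi}(\tilde{A}_{\tilde{i}}) = \tilde{M}_{\tilde{i}}$, extended linearly. A direct tensor computation yields $\tilde{\psi}(M\circ N) = \tilde{\psi}(M)\circ\tilde{\psi}(N)$ and $\tilde{\psi}(MN) = r\,\tilde{\psi}(M)\tilde{\psi}(N)$, so $\tilde{\psi}$ is a linear bijection preserving the Schur product exactly and the ordinary product up to the scalar $r$. Since $\tilde{\psi}(\tilde{A}_{\tilde{0}})=I_w$ and $\tilde{\psi}\bigl(\sum_{\tilde{i}}\tilde{A}_{\tilde{i}}\bigr)=J_w$, the image is a $(t+1)$-dimensional algebra of symmetric matrices containing both $I_w$ and $J_w$ and closed under both products, hence the Bose-Mesner algebra of a symmetric association scheme on the fiber set. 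Applying $\tilde{\psi}$ to the structure identities of Lemma~\ref{AElem} then delivers the parameter relations $\tilde{p}^{\tilde{k}}_{\tilde{i}\tilde{j}} = \tfrac{1}{r}\sum_{i\in\tilde{i},\,j\in\tilde{j}}p^k_{ij}$ and $\tilde{q}^k_{ij}=q^k_{ij}$ for $i,j,k\in\cJ$, completing the identification with the quotient scheme.
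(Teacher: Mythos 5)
Your proposal is correct and follows essentially the same route as the paper: identify $\cJ=0'$ from the subscheme's Krein parameters together with nonnegativity, observe that $\{E_j\}_{j\in\cJ}$ is then closed under both products, use the absorption identity $\tilde{A}_{\tilde{i}}(I_w\otimes J_r)=r\tilde{A}_{\tilde{i}}$ to get the block-constant tensor form, and transport the structure through $\tilde{\psi}$ with $\tilde{\psi}(MN)=r\,\tilde{\psi}(M)\tilde{\psi}(N)$. The only quibble is a citation slip: the vanishing $q^{\prime k'}_{0'0'}=\delta_{0'k'}$ comes from Lemma~\ref{kitchensink}~\emph{($i'$)} applied to the subscheme, not \emph{($ii'$)} (which only addresses $k'=0'$), though your conclusion is unaffected.
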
	

From the above two derivations we find that both the subscheme and the quotient scheme may be obtained by finding a subset of idempotents under one product which are closed under the second product. When this occurs we find a smaller matrix algebra which is isomorphic to a Bose-Mesner algebra on fewer vertices. Thus the only algebraic difference between a subscheme and the quotient scheme is simply which set of idempotents we begin with (or rather, with respect to which product we take idempotents for). Throughout this thesis we will use $\tilde{i}$ to describe an index of the quotient scheme and $i^\prime$ an index of the subscheme. Further $\cI$ and $\cJ$ will continue to denote the subset of indices for which
\[\sum_{i\in\cI} A_i = I_w\otimes J_r = r\sum_{j\in\cJ} E_j.\]
In particular, we also allow ourselves to designate a system of imprimitivity via the index set $\cJ$, ensuring from context that the meaning of the index set is clear.	Finally, we have a lemma concerning the eigenmatrices of the subscheme and quotient scheme.
\begin{lem}[\cite{Brouwer2003},\cite{Martin2007}]\label{repeatedcols}
	Let $P$ and $Q$ be the first and second eigenmatrix of an imprimitive association scheme with a system of imprimitivity given by index sets $\cI$ and $\cJ$. Let $P^\prime$ be the first eigenmatrix of the subscheme and $\tilde{Q}$ be the second eigenmatrix of the quotient scheme. Then for any $i\in \cI$ and $j\in \cJ$ we have
	\[P^\prime_{k'i} = P_{ki},\qquad \tilde{Q}_{\tilde{h}j} = Q_{hj}\]
	where $k\in k'$ and $h\in\tilde{h}$.
\end{lem}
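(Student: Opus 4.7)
The plan is to prove both statements by exploiting the inclusions $\BMB\subset\BMA$ and $\BMC\subset\BMA$ together with the uniqueness of spectral decomposition. The two halves are dual: one uses matrix multiplication and the adjacency basis, the other uses the Schur product and the idempotent basis, mirroring the symmetry already developed in Lemmas \ref{sub} and \ref{quotient}.

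For the subscheme, I note that for each $i\in\cI$ the matrix $A_i$ lies in $\BMB$, so there exist constants $c_{k'}$ with $A_i=\sum_{k'}c_{k'}E'_{k'}$, where (viewed inside $\BMB\subset\BMA$) $E'_{k'}=\sum_{a\in k'}E_a$. Substituting gives $A_i=\sum_{a}c_{k'(a)}E_a$, while the original spectral decomposition reads $A_i=\sum_a P_{ai}E_a$. Since $\{E_0,\dots,E_d\}$ is a linearly independent set in $\BMA$, I can match coefficients and conclude $P_{ai}=c_{k'}$ for every $a\in k'$. Pushing the identity $A_i=\sum_{k'}c_{k'}E'_{k'}$ forward through the isomorphism $\phi_\ell:\BMB\to\BMA'$ identifies $c_{k'}$ with the eigenvalue $P'_{k'i}$ of $A'_i$, yielding $P'_{k'i}=P_{ki}$ whenever $k\in k'$.

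For the quotient I mirror the argument with the two products interchanged. For $j\in\cJ$ the idempotent $E_j$ lies in $\BMC$, and $\BMC$ has the Schur basis $\{\tilde A_{\tilde h}\}$ with $\tilde A_{\tilde h}=\sum_{i\in\tilde h}A_i$, so $E_j=\sum_{\tilde h}d_{\tilde h}\tilde A_{\tilde h}$ for some constants $d_{\tilde h}$. Combining this with the expansion $E_j=\tfrac{1}{|X|}\sum_i Q_{ij}A_i$ and matching coefficients in the linearly independent set $\{A_0,\dots,A_d\}$ forces $Q_{ij}=|X|\,d_{\tilde h}$ for every $i\in\tilde h$; in particular $Q_{ij}$ is constant on each block $\tilde h$. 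To identify the common value with an entry of $\tilde Q$, I apply $\tilde\psi$: since $\tilde\psi$ preserves Schur products and rescales matrix products by $r$, the relation $E_j^2=E_j$ implies $(r\tilde\psi(E_j))^2=r\tilde\psi(E_j)$, so $r\tilde\psi(E_j)$ is a minimal idempotent $\tilde E_{\tilde j}$ of $\tilde{\BMA}$ (this is how the correspondence $j\leftrightarrow\tilde j$ is set up). Applying $\tilde\psi$ to $E_j=\sum_{\tilde h}d_{\tilde h}\tilde A_{\tilde h}$ and comparing with $\tilde E_{\tilde j}=\tfrac{1}{w}\sum_{\tilde h}\tilde Q_{\tilde h\tilde j}\tilde A_{\tilde h}$ (viewed in $\tilde{\BMA}$) produces $\tilde Q_{\tilde h\tilde j}=rw\,d_{\tilde h}=Q_{hj}$ for any $h\in\tilde h$.

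The main obstacle is purely notational: keeping straight the dual labellings $\{k'\}$ versus $\{\tilde h\}$, together with the convention that the same symbol $j$ is used for an $\BMA$-idempotent in $\cJ$ and its image in the quotient. No deep estimate is required --- once the constructions of Lemmas \ref{sub} and \ref{quotient} are in hand, both equalities reduce to an application of uniqueness of spectral (respectively Schur-spectral) decomposition followed by pushing the identity through the appropriate algebra (iso)morphism.
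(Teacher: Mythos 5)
Your proof is correct and takes essentially the same route as the paper: both rest on the facts that $E'_{k'}=\sum_{a\in k'}E_a$ and $\tilde A_{\tilde h}=\sum_{i\in\tilde h}A_i$, and that $\phi$ and $\tilde\psi$ preserve the matrix product and the Schur product respectively. The only difference is bookkeeping --- you match coefficients in the two basis expansions of $A_i$ (resp.\ $E_j$), while the paper applies the homomorphism directly to the eigenvalue equations $A_iE'_{k'}=P_{ki}E'_{k'}$ and $E_j\circ\tilde A_{\tilde h}=\frac{Q_{hj}}{\vert X\vert}\tilde A_{\tilde h}$, which amounts to the same computation.
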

\begin{proof}
	Recall that each subscheme of $\BMA$ is found by mapping each matrix in $\left<A_i\right>_{i\in\cI}$ to the principal submatrix corresponding to one of the equivalence classes. However, since $\phi:\BMA\rightarrow\BMA'$ via this mapping preserves matrix products, we see that 
	\[ P'_{k'i}\phi(E'_{k'}) = \phi(A_i)\phi(E'_{k'})= \phi(A_iE'_{k'})  =P_{ki}\phi(E'_{k'})\]
	for any choice of $i\in\cI$ and $k\in k'$. Similarly, the mapping $\tilde{\psi}:\BMA\rightarrow\tilde{\BMA}$ preserves entrywise products. Thus choosing any $j\in\cJ$ and $h\in\tilde{h}$ results in
	\[Q_{\tilde{h}j}\phi(\tilde{A}_{\tilde{h}}) = \tilde{\psi}(E_{j}\circ \tilde{A}_{\tilde{h}}) = \tilde{\psi}(E_{j})\circ\tilde{\psi}(\tilde{A}_{\tilde{h}}) = \tilde{Q}_{hj}\tilde{\psi}(\tilde{A}_{\tilde{h}})\]
\end{proof}
We finish this section by providing an illustration of each type of system of imprimitivity, returning to the example of the $3$-cube. Recall that this association scheme had two systems of imprimitivity with index sets $\cI_1 = \left\{0, 2\right\}$ and $\cI_2 = \left\{0, 3\right\}$. We begin with $\cI_1 = \left\{0,2\right\}$ and display the components of $\Gamma_2$ below.
\begin{center}\scalebox{.7}{$\begin{aligned}
		\begin{tikzpicture}[shorten >=1pt,auto,node distance=2cm,
		thin,main node/.style = {circle,draw, inner sep = 0pt, minimum size = 15pt}]
		
		\node[main node,fill=white] (1) {0};
		\node [right of = 1](2) {};
		\node [above of = 1](3) {};
		\node[main node,fill=cyan] [right of = 3](4) {3};
		\node [above right of = 1](5) {};
		\node[main node,fill=cyan] [right of = 5](6) {5};
		\node[main node,fill=cyan] [above of = 5] (7) {6};
		\node [right of = 7](8) {};
		
		\node [below right of =6] (11) {};
		\node[main node,fill=red] [right of = 11](12) {1};
		\node[main node,fill=red] [above of = 11](13) {2};
		\node [right of = 13](14) {};
		\node[main node,fill=red] [above right of = 11](15) {4};
		\node [right of = 15](16) {};
		\node [above of = 15] (17) {};
		\node[main node,fill=green] [right of = 17](18) {7};
		\node at (4,3.5) (9) {$\Gamma_2$};
		
		\path[-]
		(1)edge node {} (4)
		edge node {} (6)
		edge node {} (7)
		(12)edge node {} (13)
		edge node {} (15)
		edge node {} (18)
		(13)edge node {} (15)
		edge node {} (18)
		(4) edge node {} (6)
		(7) edge node {} (4)
		edge node {} (6)
		(15) edge node {} (18);
		\end{tikzpicture}
		\end{aligned}$}\end{center}
For each of the two components we find a subscheme isomorphic to $K_4$:
\begin{center}\scalebox{.7}{$\begin{aligned}
		\begin{tikzpicture}[shorten >=1pt,auto,node distance=2cm,
		thin,main node/.style = {circle,draw, inner sep = 0pt, minimum size = 15pt}]
		
		\node[main node,fill=white] (1) {0};
		\node[main node,fill=cyan] [right of = 3](4) {1};
		\node[main node,fill=cyan] [right of = 5](6) {2};
		\node[main node,fill=cyan] [above of = 5] (7) {3};
		\node at (2,4.5) (9) {$\Gamma_0^\prime$};
		
		\path (1) edge [in=120,out=145,loop] ();
		\path (4) edge [in=120,out=145,loop] ();
		\path (6) edge [in=120,out=145,loop] ();
		\path (7) edge [in=120,out=145,loop] ();
		
		\end{tikzpicture}\qquad\qquad
		\begin{tikzpicture}[shorten >=1pt,auto,node distance=2cm,
		thin,main node/.style = {circle,draw, inner sep = 0pt, minimum size = 15pt}]
		
		\node[main node,fill=white] (1) {0};
		\node[main node,fill=cyan] [right of = 3](4) {1};
		\node[main node,fill=cyan] [right of = 5](6) {2};
		\node[main node,fill=cyan] [above of = 5] (7) {3};
		\node at (2,4.5) (9) {$\Gamma_1^\prime$};
		
		\path[-]
		(1)edge node {} (4)
		edge node {} (6)
		edge node {} (7)
		(4) edge node {} (6)
		(7) edge node {} (4)
		edge node {} (6);
		\end{tikzpicture}
		\end{aligned}$}\end{center}
The quotient scheme is found by collapsing each component to a single point, giving an association scheme isomorphic to $K_2$:
\begin{center}\scalebox{.7}{$\begin{aligned}
		\begin{tikzpicture}[shorten >=1pt,auto,node distance=2cm,
		thin,main node/.style = {circle,draw, inner sep = 0pt, minimum size = 15pt}]
		
		\node[main node,fill=white] (1) {0};
		\node[main node,fill=red] [right of = 1](2) {1};
		\node at (1,1.5) (9) {$\Gamma_{\tilde{0}}$};
		
		\path (1) edge [in=120,out=145,loop] ();
		\path (2) edge [in=120,out=145,loop] ();
		
		\end{tikzpicture}\qquad\qquad
		\begin{tikzpicture}[shorten >=1pt,auto,node distance=2cm,
		thin,main node/.style = {circle,draw, inner sep = 0pt, minimum size = 15pt}]
		
		\node[main node,fill=white] (1) {0};
		\node[main node,fill=red] [right of = 1](2) {1};
		\node at (1,1.5) (9) {$\Gamma_{\tilde{1}}$};
		
		\path[-]
		(1)edge node {} (2);
		\end{tikzpicture}
		\end{aligned}$}\end{center}
Similarly, the system of imprimitivity corresponding to $\cI_2 = \left\{0,3\right\}$ results in the following components of $\Gamma_3$:
\begin{center}\scalebox{.7}{$\begin{aligned}
		\begin{tikzpicture}[shorten >=1pt,auto,node distance=2cm,
		thin,main node/.style = {circle,draw, inner sep = 0pt, minimum size = 15pt}]
		
		\node[main node,fill=white] (1) {0};
		\node[main node,fill=red] [right of = 1](2) {1};
		\node[main node,fill=red] [above of = 1](3) {2};
		\node[main node,fill=cyan] [right of = 3](4) {3};
		\node[main node,fill=red] [above right of = 1](5) {4};
		\node[main node,fill=cyan] [right of = 5](6) {5};
		\node[main node,fill=cyan] [above of = 5] (7) {6};
		\node[main node,fill=green] [right of = 7](8) {7};
		\node at (2,4.5) (9) {$\Gamma_3$};
		
		\path[-]
		(1) edge [bend right] node {} (8)
		(2) edge [bend left=45] node {} (7)
		(3) edge [bend left=45] node {} (6)
		(4) edge node {} (5);
		\end{tikzpicture}
		\end{aligned}$}\end{center}
Each component here results in a subscheme isomorphic to $K_2$:
\begin{center}\scalebox{.7}{$\begin{aligned}
		\begin{tikzpicture}[shorten >=1pt,auto,node distance=2cm,
		thin,main node/.style = {circle,draw, inner sep = 0pt, minimum size = 15pt}]
		
		\node[main node,fill=white] (1) {0};
		\node[main node,fill=red] [right of = 1](2) {1};
		\node at (1,1.5) (9) {$\Gamma_0^\prime$};
		
		\path (1) edge [in=120,out=145,loop] ();
		\path (2) edge [in=120,out=145,loop] ();
		
		\end{tikzpicture}\qquad\qquad
		\begin{tikzpicture}[shorten >=1pt,auto,node distance=2cm,
		thin,main node/.style = {circle,draw, inner sep = 0pt, minimum size = 15pt}]
		
		\node[main node,fill=white] (1) {0};
		\node[main node,fill=red] [right of = 1](2) {1};
		\node at (1,1.5) (9) {$\Gamma_1^\prime$};
		
		\path[-]
		(1)edge node {} (2);
		\end{tikzpicture}
		\end{aligned}$}\end{center}
while, the quotient scheme is isomorphic to $K_4$:

\begin{center}\scalebox{.7}{$\begin{aligned}
		\begin{tikzpicture}[shorten >=1pt,auto,node distance=2cm,
		thin,main node/.style = {circle,draw, inner sep = 0pt, minimum size = 15pt}]
		
		\node[main node,fill=white] (1) {0};
		\node[main node,fill=red] [right of = 1](2) {1};
		\node[main node,fill=red] [above of = 1](3) {2};
		\node[main node,fill=red] [above of =2](4) {3};
		\node at (1,2.5) (9) {$\Gamma_{\tilde{0}}$};
		
		\path (1) edge [in=120,out=145,loop] ();
		\path (2) edge [in=120,out=145,loop] ();
		\path (3) edge [in=120,out=145,loop] ();
		\path (4) edge [in=120,out=145,loop] ();
		
		\end{tikzpicture}\qquad&\qquad\begin{tikzpicture}[shorten >=1pt,auto,node distance=2cm,
		thin,main node/.style = {circle,draw, inner sep = 0pt, minimum size = 15pt}]
		
		\node[main node,fill=white] (1) {0};
		\node[main node,fill=red] [right of = 1](2) {1};
		\node[main node,fill=red] [above of = 1](3) {2};
		\node[main node,fill=red] [above of =2](4) {3};
		\node at (1,2.5) (9) {$\Gamma_{\tilde{1}}$};
		
		\path[-]
		(1) edge node {} (2)
		edge node {} (3)
		edge node {} (4)
		(2) edge node {} (3)
		edge node {} (4)
		(3) edge node {} (4);
		\end{tikzpicture}
		\end{aligned}$}\end{center}
It is not typical that the two examples --- both in the same Bose-Mesner algebra --- have subscheme and quotient scheme swapped. This is an artifact of the self-duality of the 3-cube. These schemes correspond to the dual pair of linear codes $\text{rowsp}\left[\begin{array}{ccc}
1 & 1 & 1
\end{array}\right]$ and $\text{nullsp}\left[\begin{array}{ccc}
1 & 1 & 1
\end{array}\right].$
\section{Polynomial schemes}\label{poly}
In this section we define and briefly develop the notion of polynomial association schemes. We again present two dual concepts, $P$-polynomial and $Q$-polynomial, though we will focus primarily on the $Q$-polynomial case outside of this section. Let $(X,\cR)$ be a $d$-class association scheme. We say $(X,\cR)$ is \emph{$Q$-polynomial}\index{$Q$-polynomial}, or \emph{cometric}\index{$Q$-polynomial!cometric}, if there exists an ordering of the idempotents, say $E_0,E_1,\dots,E_d$, such that the Krein parameters satisfy the following conditions:
\begin{enumerate}
	\item $q^k_{ij} = 0$ whenever $k>i+j$, and
	\item $q^k_{ij} > 0$ whenever $k = i+j$.
\end{enumerate}
Additionally, we note that it is sufficient to check only the above conditions with $i=1$ (see \cite[Prop.\ 2.7.1]{Brouwer1989}). Thus we may characterize $Q$-polynomial association schemes as exactly those for which there exists an eigenspace ordering for which the Krein matrix $L_1^*$ is irreducible tridiagonal. Noting that each row of $L_1^*$ sums to $q^0_{11}$, the parameters of $(X,\cR)$ are then entirely determined by its \emph{Krein array}\index{parameters!Krein array} $\iota^*(X,\cR) = \left\{b_0^*,\dots,b_{d-1}^*;c_1^*,\dots,c_d^*\right\}$ where $b_i^* = q^i_{1,i+1}$ and $c_i^* = q^{i+1}_{1i}$. When this occurs, we find that $\BMA = \left<E_1\right>_\circ$; that is, $E_1$ generates our entire Bose-Mesner algebra using Schur products. Additionally, for each $0\leq k\leq d$, we may define a single-variable polynomial $q_k(t)$ of degree $k$ so that $Q_{ik} = q_k\left(Q_{i1}\right)$ for $0\leq i\leq d$. This is equivalent to $E_k = \frac{1}{\vert X\vert}q_k\circ\left(\vert X\vert E_1\right)$; that is $q_k$ applied entrywise to $\vert X\vert E_1$ results in $\vert X\vert E_k$ (again see \cite[Prop.\ 2.7.1]{Brouwer1989}). Then we may define one final polynomial $q_{d+1}(t)$ with degree $d+1$ and no repeated roots such that $q_{d+1}\circ(\vert X\vert E_1) = 0$. This immediately implies that $E_1$ has $d+1$ distinct entries and we find it convenient to order the relations according to these values so that $Q_{01}>Q_{11}>\dots>Q_{d1}$; we call this the \emph{natural ordering of relations}\index{natural ordering} with respect to the $Q$-polynomial ordering $E_0,E_1,\dots,E_d$. As is suggested by this definition, it is possible to find multiple $Q$-polynomial orderings for the same association scheme. However, Suzuki \cite{Suzuki1998-2} showed that, with the exception of cycles, any $Q$-polynomial association scheme has at most two $Q$-polynomial orderings. We say a $Q$-polynomial association scheme is \emph{$Q$-bipartite}\index{$Q$-polynomial!$Q$-bipartite} if the Krein parameters satisfy $q^k_{ij} = 0$ whenever $i+j+k\notin 2\bbZ$. We find in this case that $\left\{E_i\right\}_{i\in 2\bbZ}$ serves as a Schur-closed subalgebra. In contrast, we say a $Q$-polynomial scheme is \emph{$Q$-antipodal}\index{$Q$-polynomial!$Q$-antipodal} if $q^k_{dd} = 0$ whenever $k\notin\left\{0,d\right\}$ and thus $\left\{E_0,E_d\right\}$ is a Schur-closed subalgebra. Each case coincides with a system of imprimitivity; the following is Suzuki's theorem concerning these systems.
\begin{thm}[\cite{Suzuki1998},\cite{Cerzo2009},\cite{Tanaka2011}] \label{suzukiimprim} Suppose $(X,\mathcal{R})$ is an imprimitive cometric association scheme with $Q$-polynomial ordering $E_0,\dots,E_d$ and natural ordering $A_0,\dots,A_d$. Then one of the following holds:
	\begin{enumerate}[label=$(\roman*)$]
		\item $(X,\mathcal{R})$ is $Q$-bipartite and $\mathcal{J} = \left\{0,2,4,\dots\right\}$, $\mathcal{I} = \left\{0,d\right\}$;
		\item $(X,\mathcal{R})$ is $Q$-antipodal and $\mathcal{J} = \left\{0,d\right\}$, $\mathcal{I} = \left\{0,2,4,\dots\right\}$.
	\end{enumerate}
\end{thm}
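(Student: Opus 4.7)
My plan is to first pin down the form of $\mathcal{J}$ using Schur-closure of $\BMC := \text{span}\{E_j\}_{j\in\mathcal{J}}$ together with the cometric top-term positivity $q^{i+j}_{ij} > 0$, then deduce $\mathcal{I}$ and verify the $Q$-bipartite or $Q$-antipodal Krein conditions.

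By the cometric hypothesis, $E_1$ Schur-generates $\BMA$. If $1 \in \mathcal{J}$, then $\BMC \supseteq \left<E_1\right>_\circ = \BMA$, contradicting nontriviality of the system of imprimitivity. So $1 \notin \mathcal{J}$; let $t := \min(\mathcal{J} \setminus \{0\}) \geq 2$. Applying Equation~\eqref{Emult} and the cometric positivity $q^{2s}_{ss} > 0$ to the Schur-closure of $\BMC$ yields $2t \in \mathcal{J}$ whenever $2t \leq d$; iterating with products $E_t \circ E_{jt}$ gives $\{0, t, 2t, \ldots\} \cap [0, d] \subseteq \mathcal{J}$.

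The crucial step is showing $\mathcal{J}$ contains no other elements. I would translate to polynomial language via the algebra isomorphism $\BMA \cong \mathbb{R}[x]/(q_{d+1})$ sending $\vert X\vert E_j \mapsto q_j$ (under which Schur product becomes polynomial product), so that $\BMC$ corresponds to the subring $\text{span}\{q_j : j \in \mathcal{J}\}$. The three-term recurrence $x\, q_j = c_j^* q_{j-1} + a_j^* q_j + b_j^* q_{j+1}$ arising from the tridiagonal irreducible $L_1^*$ (with $b_j^*, c_j^* > 0$) can be used inductively on expansions of $q_t \cdot q_{jt}$ in the subring: at each step, the sign structure forces all intermediate indices to lie in $\mathcal{J}$ or to fall outside $[0, d]$. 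This yields the dichotomy: either $t = 2$, so $\mathcal{J} = \{0, 2, 4, \ldots\}$, or $2t > d$, which combined with $t \in \mathcal{J}$ and the cometric top-term condition forces $t = d$ and $\mathcal{J} = \{0, d\}$.

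The Krein-parameter conditions then follow: $\mathcal{J} = \{0, d\}$ being Schur-closed is exactly $q^k_{dd} = 0$ for $k \notin \{0, d\}$, the $Q$-antipodal definition; $\mathcal{J} = \{0, 2, 4, \ldots\}$ being Schur-closed, combined with a parity induction on the Krein products $q_i q_j$ via the three-term recurrence, yields $q^k_{ij} = 0$ whenever $i + j + k$ is odd. Finally, $\mathcal{I}$ is recovered from $\sum_{i \in \mathcal{I}} A_i = r \sum_{j \in \mathcal{J}} E_j$: in the $Q$-bipartite case, the sum of even-indexed idempotents is nonzero only at the extreme relations $R_0, R_d$ (by the sign structure of $q_j(Q_{d1})$), giving $\mathcal{I} = \{0, d\}$; in the $Q$-antipodal case, $E_0 + E_d$ is constant exactly on even-indexed relations, giving $\mathcal{I} = \{0, 2, 4, \ldots\}$. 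The main obstacle is the sign analysis in the middle step: carefully verifying that no intermediate $q_m$-coefficient can vanish in the three-term-recurrence expansions when $t \geq 3$ and $2t \leq d$, which is precisely what forces the clean dichotomy.
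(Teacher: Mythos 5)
There is a genuine gap, and it sits exactly where you flag ``the main obstacle.'' First, note that the paper does not prove this theorem at all: it is quoted from Suzuki \cite{Suzuki1998}, with the two exceptional cases of Suzuki's original statement removed only later by Cerzo--Suzuki \cite{Cerzo2009} and Tanaka \cite{Tanaka2011}. That history is the decisive evidence against your middle step. Your opening moves are fine: $1\notin\mathcal{J}$ because $\left<E_1\right>_\circ=\BMA$, and with $t:=\min(\mathcal{J}\setminus\{0\})$ the top-term positivity $q^{i+j}_{ij}>0$ does give $\{0,t,2t,\dots\}\cap[0,d]\subseteq\mathcal{J}$. But the claimed ``clean dichotomy'' ($t=2$, or $2t>d$ forcing $t=d$) cannot be extracted from the three-term recurrence and the sign structure of the Krein parameters alone. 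The Krein conditions only give non-negativity, and intermediate Krein parameters genuinely can vanish; this is precisely what leaves alive the candidate systems $\mathcal{J}=\{0,3\}$ with $d=4$ (where $2t>d$ but $t\neq d$, contradicting your second branch) and $\mathcal{J}=\{0,3,6\}$ with $d=6$ (where $2t\le d$ and $t\neq 2$, falling outside both branches). Suzuki's original argument, which is essentially the argument you outline carried out carefully, terminates with exactly these two undecided cases; eliminating them required a further decade and substantially different techniques (detailed parameter analysis in \cite{Cerzo2009}, and the theory of tridiagonal pairs in \cite{Tanaka2011}).

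Concretely: if you try to run your induction on the expansion of $q_t\cdot q_{jt}$ for $t=3$, $d=6$, nothing in the recurrence $x\,q_j=c_j^*q_{j-1}+a_j^*q_j+b_j^*q_{j+1}$ prevents the coefficients of $q_1,q_2,q_4,q_5$ in the relevant products from all vanishing simultaneously, because the $a_j^*$ are unconstrained in sign pattern beyond non-negativity and the products $q_3\cdot q_3$, $q_3\cdot q_6$ only see specific linear combinations of Krein parameters. So the step ``the sign structure forces all intermediate indices to lie in $\mathcal{J}$ or fall outside $[0,d]$'' is false as stated. A correct proof must either reproduce Suzuki's case analysis and then separately dispose of the $d=4$ and $d=6$ exceptional configurations, or cite those three papers as the text does. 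The final paragraph of your proposal (reading off the $Q$-bipartite/$Q$-antipodal Krein conditions from Schur-closure of $\mathcal{J}$, and recovering $\mathcal{I}$ from $\sum_{i\in\cI}A_i=r\sum_{j\in\cJ}E_j$) is routine once the dichotomy on $\mathcal{J}$ is established, so the gap is localized but essential.
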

The original theorem in \cite{Suzuki1998} allowed for two exceptional cases, one with $d=4$ and another with $d=6$. These two cases were later ruled out in \cite{Cerzo2009} and \cite{Tanaka2011} respectively.

We now consider the more familiar dual notion: $P$-polynomial association schemes. Again let $(X,\cR)$ be a $d$-class association scheme. We say $(X,\cR)$ is \emph{$P$-polynomial}\index{$P$-polynomial}, or \emph{metric}\index{$P$-polynomial!metric}, if there exists an ordering of the relations, say $A_0,A_1,\dots,A_d$, such that the intersection numbers satisfy the following conditions:
\begin{enumerate}
	\item $p^k_{ij} = 0$ whenever $k>i+j$, and
	\item $p^k_{ij} > 0$ whenever $k = i+j$.
\end{enumerate}
Just as with cometric association schemes, we find that it suffices to check the above conditions only when $i=1$ (\cite[Prop.\ 2.7.1]{Brouwer1989}) and thus an association scheme is $P$-polynomial if and only if there exists an ordering of the relations for which the intersection matrix $(L_1)$ is irreducible tridiagonal. In this case we find that $\BMA= \left<A_1\right>_*$ and it is therefore common to consider a $P$-polynomial scheme synonymous with $\Gamma_1$ --- a \emph{distance-regular graph}; that is $(x,y)\in R_i$ if and only if the distance between $x$ and $y$ in $\Gamma_1$ is $i$. We find analogous results as we saw in the $Q$-polynomial case with \cite[Theorem 4.2.12]{Brouwer1989} and \cite{Taylor1978} giving that any $P$-polynomial association scheme which is not a cycle has at most two $P$-polynomial orderings. Further, we may define \emph{$P$-bipartite}\index{$P$-polynomial!bipartite} (or more commonly \emph{bipartite}) and \emph{$P$-antipodal}\index{$P$-polynomial!antipodal} (\emph{antipodal}) as those schemes for which $p^k_{ij}= 0$ if $i+j+k\not\in 2\bbZ$ and $p^k_{dd}= 0$ whenever $k\not\in\left\{0,d\right\}$ respectively. For these schemes we again find systems of imprimitivity. This time, however, $\mathcal{I} = \left\{0,2,4,\dots\right\}$ and $\mathcal{J} = \left\{0,d\right\}$ corresponding to the bipartition of a bipartite graph, while $\mathcal{I} = \left\{0,d\right\}$ and $\mathcal{J} = \left\{0,2,4,\dots\right\}$ give the antipodal classes as our system. As before, with the exception of cycles, we find that these systems of imprimitivity are all that can occur for $P$-polynomial schemes and note that both can occur within the same association scheme, for example the $3$-cube (more generally the $n$-cube) is both bipartite and antipodal. For details, see Theorem 4.2.1 in the book \cite{Brouwer1989} of Brouwer, et al.\ where credit is given to Smith \cite{Smith1971} and Gardiner \cite{Gardiner1980} where this is reference ``313" in the book \cite{Brouwer1989}.

Despite the close connection between $P$-polynomial and $Q$-polynomial association schemes, we note that many of the theorems mentioned here in the $P$-polynomial case predate their $Q$-polynomial analogues by as much as 30 years. Further, there are many other theorems which are known to be true for metric schemes whose cometric analogues have yet to be proven. For instance Taylor and Levingston showed in 1978 (\cite{Taylor1978}) that the sequence $k_0,k_1,\dots,k_d$ is unimodal, however the $Q$-analogue --- the property that the sequence $m_0,m_1,\dots,m_d$ is unimodal for any $Q$-polynomial ordering --- remains a conjecture to this day. Chapter 6 of \cite{Brouwer1989} details many of the known examples of distance-regular graphs --- all tables mentioned here appear in that chapter. Brouwer, et al.\ list 21 classical parameter sets (Tables 6.1 \& 6.2), 15 of which correspond to infinite families, three folded classical graphs (Table 6.3), nine near regular polygons including the generalized polygons (Tables 6.5 \& 6.6), as well as 20 more primitive distance regular graphs (Tables 6.8 \& 6.9). Further they give the known bipartite and antipodal examples (Tables 6.9 \& 6.10 resp.). On the cometric side however, much less is known. Out of the infinite families of $P$-polynomial schemes listed in \cite{Brouwer1989}, five families are also $Q$-polynomial: the Johnson schemes, Hamming schemes, Grassmann schemes, dual polar spaces, and sesquilinear/quadratic forms. More recently \cite{VanDam2005}, Van Dam and Koolen discovered the twisted Grassmann graphs which are also both metric and cometric. The remaining known families $Q$-polynomial association schemes are: linked systems of symmetric designs, mutually unbiased bases, bipartite doubles of some polar spaces, duals of metric translation schemes, two families found by Penttila and Williford \cite{Penttila2011} --- one 4-class $Q$-bipartite and one 3-class primitive --- and one more family found by Moorhouse and Williford \cite{Moorhouse2016}. In addition to these we also know of sporadic examples such as the 22 listed in \cite{Martin2007} and new examples found by King \cite{King2018}. Out of these examples, we note that only the families which are both metric and cometric allow for unbounded $d$. In fact, Bannai and Ito made the following conjecture concerning primitive association schemes.
\begin{conj*}[Bannai \& Ito] 
	For $d$ sufficiently large, a primitive association scheme with $d$ classes is metric if and only if it is cometric.
\end{conj*}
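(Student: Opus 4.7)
The plan is to work in parallel with the classification strategy that resolved the original Bannai--Ito conjecture on distance-regular graphs of bounded valency. Leonard's theorem together with the Terwilliger--Ito characterization tells us that the Krein array $\iota^*(X,\cR)$ of a $Q$-polynomial scheme is determined, up to an explicit list of parameter families, by the three-term recurrence satisfied by the dual orthogonal polynomials $q_0,\dots,q_{d+1}$. The first step would be to exploit this rigidity: show that as $d\to\infty$ the feasibility conditions FC1--FC3 (integrality of the $p^k_{ij}$, nonnegativity of the $q^k_{ij}$, and the absolute bound) progressively eliminate all Krein arrays except those of Askey--Wilson type, and therefore force the scheme into a finite list of classical $Q$-polynomial families.

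The second step would be a dual finiteness theorem analogous to the Bang--Dubickas--Koolen--Moulton result that for any fixed valency $k$ there are only finitely many distance-regular graphs. The natural dual invariant to bound here is $m_1$, equivalently the dimension of the spherical embedding determined by $E_1$. If one could show that for each fixed $m_1$ only finitely many primitive cometric schemes exist, then for $d$ sufficiently large every primitive cometric scheme would have to fall into one of the known families---Johnson, Hamming, Grassmann, bilinear forms, dual polar, twisted Grassmann---each of which is already known to be metric. Primitivity is essential: by Theorem \ref{suzukiimprim}, the $Q$-bipartite and $Q$-antipodal cases supply imprimitive examples of arbitrarily large $d$ that need not be metric, so any attack must use primitivity substantively. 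The converse direction (metric $\Rightarrow$ cometric) is then handled by feeding the metric classification into the entrywise identity $Q_{ik}=q_k(Q_{i1})$ family by family.

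The main obstacle is the absence of a combinatorial model on the cometric side. The metric case proceeds by exploiting the graph $\Gamma_1$: counting walks, analyzing local structure, ruling out intersection arrays via forbidden subgraphs, and using connectivity of neighborhoods. None of these have genuine analogues when the only data is an idempotent $E_1$ serving as the Gram matrix of a spherical embedding, since the point set $X$ need not carry any natural graph structure reflecting $E_1$. A successful approach would either need to construct a combinatorial shadow of $E_1$ powerful enough to yield the required finiteness (perhaps via Terwilliger algebras, or higher-dimensional coherent configurations attached to the spherical design $E_1 X$), or bypass combinatorics altogether by solving the Krein recurrence modulo FC1--FC3 directly. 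Both routes appear to demand substantial new ideas, which is why the conjecture has remained open even as its metric counterpart has largely been settled.
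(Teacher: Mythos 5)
The statement you are asked about is a \emph{conjecture}, stated as such in the paper (attributed to Bannai and Ito), and the paper offers no proof of it; it remains open. What you have written is not a proof but a research program, and you concede as much in your final paragraph. Each of the two pillars of your plan is itself an unproven assertion at least as hard as the conjecture. First, your appeal to Leonard's theorem is circular: Leonard's theorem (and the Terwilliger--Ito theory of tridiagonal pairs) classifies parameter arrays of schemes that are simultaneously $P$- and $Q$-polynomial; it says nothing about the Krein array of a scheme assumed only to be cometric, which is exactly the hypothesis you are trying to upgrade. There is no known result that FC1--FC3 force the Krein array of a primitive cometric scheme into Askey--Wilson type as $d\to\infty$. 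Second, the proposed ``dual finiteness theorem'' --- that for fixed $m_1$ only finitely many primitive cometric schemes exist --- is an open problem (a folklore dual of Bang--Dubickas--Koolen--Moulton), and even granting it, finiteness for each fixed $m_1$ does not by itself force large-$d$ schemes into the known classical families unless one also classifies the finitely many exceptions, which is the whole difficulty.

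The converse direction is also not dispatched by your sketch: distance-regular graphs of large diameter and unbounded valency are not classified, so ``feeding the metric classification into the identity $Q_{ik}=q_k(Q_{i1})$ family by family'' has no classification to feed. Your diagnosis of the obstruction --- the lack of a combinatorial model attached to $E_1$ --- is accurate and matches the paper's framing of why the cometric theory lags its metric counterpart, but an accurate diagnosis of why a proof is hard is not a proof. If you intend to submit work on this statement, it should be labelled as a discussion of the conjecture, not as a proof attempt.
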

We note that every primitive $2$-class association scheme (connected strongly regular graph) is vacuously both metric and cometric. Thus, in view of the above conjecture, it should not be surprising that the most fruitful place to search for new examples of $Q$-polynomial schemes which are not also metric is the range $3\leq d\leq 6$.
	\chapter{Positive semidefinite cone of an association scheme}\label{psdcone}
	We begin this chapter with a simple problem: How many lines may one draw on a piece of paper which intersect in a single angle? It is clear that we may draw two lines which intersect with any angle $0< \theta< \frac{\pi}{2}$. However, if we fix $\theta = \frac{\pi}{3}$, we may draw a third line bisecting the obtuse angle between our pair of lines which results in a angle of $\frac{\pi}{3}$ which both original lines. The same question may be asked in a higher dimension: how many lines may one construct in $\mathbb{R}^3$ so that any pair of lines intersect in the same angle? For this case, we construct a regular pentagon centered at the origin of the $xy$-plane and draw fives lines connecting the origin to each vertex of the pentagon. The resulting five lines (identified by each vertex) admit two possible angles between them: $\frac{2\pi}{5}$ between adjacent vertices and $\frac{\pi}{5}$ between non-adjacent vertices.
	
	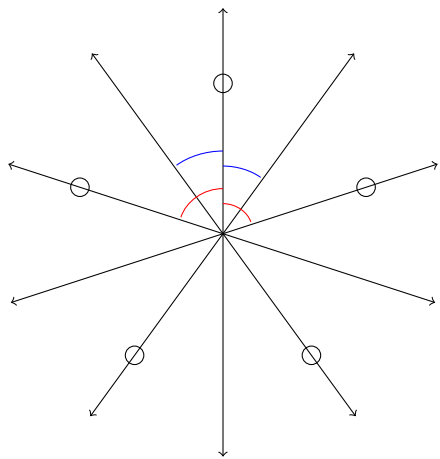
\begin{figure}[h]\centering\begin{tikzpicture}[shorten >=1pt,auto,node distance=2cm,
		thin,main node/.style = {circle,draw, inner sep = 0pt, minimum size = 7pt}]
		\foreach \a in {90,162,...,378} { 
			\node[main node,fill=white] at (\a:2cm) (\a) {};
			\node at (\a+180:2cm) (\a2) {};
			\draw[<->] (\a:3cm) -- (\a+180:3cm);
		}
		\draw[blue] (90:0.9cm) arc (90:54:0.9cm);
		\draw[red] (90:0.4cm) arc (90:18:0.4cm);
		\draw[blue] (90:1.1cm) arc (90:126:1.1cm);
		\draw[red] (90:0.6cm) arc (90:162:0.6cm);
		\end{tikzpicture}\caption[Five lines with two angles in $\mathbb{R}^2$]{Five lines resulting from a regular pentagon in $\mathbb{R}^2$. The blue angle is $\frac{\pi}{5}$ while the red angle is $\frac{2\pi}{5}$}.\end{figure}

	Now shift the pentagon along the $z$-axis and track the five lines joining the origin of $\mathbb{R}^3$ to its vertices. By shifting the pentagon far enough, we eventually find that each pair of lines with adjacent vertices admit a smaller angle than those pairs which are not adjacent.
	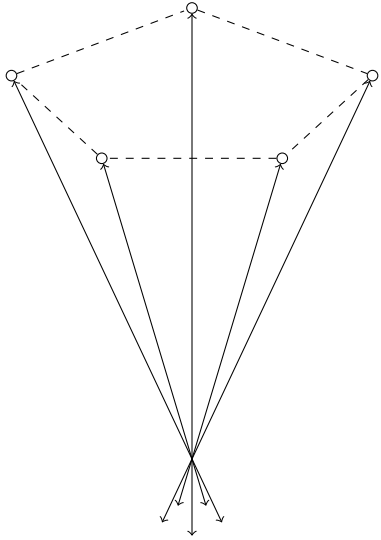
\begin{figure}[h]\centering\begin{tikzpicture}[shorten >=1pt,auto,node distance=2cm,
		thin,main node/.style = {circle,draw, inner sep = 0pt, minimum size = 4pt}]
		\def\z{6}
		\def\x{1.2}
		\def\y{.9}
		\def\xx{2.4}
		\def\yy{2}
		\node[main node, fill=white] at (0,\z) (1) {};
		\node[main node, fill=white] at (\xx,\z-\y) (2) {};
		\node[main node, fill=white] at (-\xx,\z-\y) (3) {};
		\node[main node, fill=white] at (\x,\z-\yy) (4) {};
		\node[main node, fill=white] at (-\x,\z-\yy) (5) {};
		\begin{scope}[yscale=-0.2,xscale=-0.2]
		\node at (0,\z) (1b) {};
		\node at (\xx,\z-\y) (2b) {};
		\node at (-\xx,\z-\y) (3b) {};
		\node at (\x,\z-\yy) (4b) {};
		\node at (-\x,\z-\yy) (5b) {};
		\end{scope}
		\node at (0,0) (0) {};
		\draw[<->] (1) -- (1b);
		\draw[<->] (2) -- (2b);
		\draw[<->] (3) -- (3b);
		\draw[<->] (4) -- (4b);
		\draw[<->] (5) -- (5b);
		\draw[-,dashed] (1) -- (2) -- (4) -- (5) -- (3) -- (1);

		\end{tikzpicture}\caption[Pentagon lifted out of the plane]{Lines joining the origin to the points on a regular pentagon after lifting it out of the plane. In this case, the smaller angle occurs between consecutive lines, rather than those coming from non-adjacent points.}\end{figure}
	
	Thus there exists some height for which the angles are all the same --- this occurs when the cosine of the angle between pairs is $\frac{1}{\sqrt{5}}$. Further, one finds that this configuration allows for a sixth line to be added, given by the $z$-axis, which is ``equiangular" with each of the other five lines. More commonly, this configuration of lines is given by connecting the center of an icosahedron to the six pairs of antipodal points. In both cases listed above, we find that these configurations are optimal for that dimension --- that is, one cannot find a set of four lines in $\mathbb{R}^2$ nor seven lines in $\mathbb{R}^3$ for which any pair of lines intersect in a given angle.
	
	This simple problem generalizes to a classic unsolved problem in discrete geometry (\cite{Haantjes1948},\cite{Lemmens1973},\cite{vanLint1966}). Given a fixed positive integer $n$ and angle $0<\theta<\frac{\pi}{2}$, a set of \emph{equiangular lines}\index{equiangular lines} is a set of lines through the origin of $\mathbb{R}^n$ such that the angle between any pair of distinct lines is $\theta$. We then ask, what is the maximum number of equiangular lines one may find for each choice of $n$ and $\theta$. Over the past 70 years, researchers in both math and physics have developed the theory of equiangular lines, finding upper bounds on the number of equiangular lines in any given dimension using tools such as linear programming \cite{Delsarte1975}, number theory \cite{Lemmens1973}, graph spectra \cite{Brouwer2012}, and most recently semidefinite programming \cite{Barg2014}. A related question is as follows: How many orthonormal bases may we find in $\mathbb{R}^n$ such that the absolute value of the inner product between vectors in distinct bases is fixed. Such a set is called a set of \emph{real mutually unbiased bases}\index{mutually unbiased bases} (see \cite{Boykin2005},\cite{Calderbank1997},\cite{Delsarte1975}). Recently, the complex analogues have attracted interest in quantum computing (\cite{Appleby2005},\cite{Grassl2008},\cite{Goyeneche2015},\cite{Ivanovic1981},\cite{Waldron2018},\cite{Wootters1989}), though we will restrict ourselves to the questions in real space as the adjacency matrices of symmetric association schemes are always diagonalizable over the reals.
	
	Both of the problems mentioned above are restricted versions of the more general question of finding spherical $t$-distance sets. A spherical $t$-distance set is a set of unit vectors such that the inner product between any pair of distinct vectors may take only $t$ unique values in the interval $[-1,1)$. For instance, a set of equiangular lines is a spherical 2-distance set; even though we allow only one angle between lines, this results in two possible inner products between unit vectors. Likewise, a set of mutually unbiased bases is a spherical 3-distance set. In each case the question of finding equiangular lines or mutually unbiased bases may be posed as finding large positive semidefinite matrices with a low rank, constant diagonal, and few distinct entries off the diagonal. To wit, let $G$ be an $n\times n$ positive semidefinite matrix with rank $r$. Then there exists a $r\times n$ matrix $U$ such that $G = U^TU$; that is $G$ is the Gram matrix of the columns of $U$. If $G$ has a constant diagonal $\alpha$, then the columns of $\frac{1}{\sqrt{\alpha}}U$ are unit vectors with the same number of distinct inner products between distinct pairs of vectors. The columns of $U$ therefore give a spherical $t$-distance set in $\mathbb{R}^r$ where $t$ is the number of distinct entries off the diagonal of $G$. Now recall that a $d$-class association scheme results in a Bose-Mesner algebra with $d+1$ basis idempotents. Fix some Bose-Mesner algebra $\BMA$ on $n$ points. We are interested in positive semidefinite matrices and thus we will focus on the cone of positive semidefinite matrices within $\BMA$. In any real vector space $V$, a \emph{cone} is a subset $C\subset V$ closed under addition and multiplication by non-negative scalars. It often becomes useful to define the cone via the extremal vectors; an extremal vector $v\in C$ is a vector for which $x+y = v$ with $x,y\in C$ implies either $x=cv$ or $y=cv$ for some constant $c$. Given these points, our cone is simply the non-negative linear combinations of these extremal vectors. Returning to $\BMA$, the basis of idempotents are the extremal points. Thus, the \emph{cone of positive semidefinite matrices}\index{positive semidefinite cone} in $\BMA$ is exactly the set $\left\{\sum_{i=0}^d\alpha_iE_i\right\}$ for $\alpha_i\geq 0$.
	
	Noting that every matrix in $\BMA$ has a constant diagonal and at most $d$ entries off the main diagonal, if $\BMA$ contains a matrix with non-zero diagonal and rank $r$ then there exists a spherical $d$-distance set of size $n$ in $\mathbb{R}^r$. Conversely, any restrictions we find on the existence of such $t$-distance sets allows us to add feasibility conditions on the parameters of association schemes. For instance, Delsarte, Goethals, and Seidel \cite{Delsarte1975} bounded the maximum size of a $t$-distance set based solely on the dimension of the ambient space and the inner products allowed between vectors. An example of one of the bounds found is as follows: Let $X$ be a spherical $4$-distance set in $\mathbb{R}^n$ with inner products $\left\{\pm\alpha,\pm\beta \right\}$ for $\alpha,\beta\in \mathbb{R}$. Then as long as $\alpha^2+\beta^2\leq \frac{6}{n+4}$ we must have
	\[\vert X\vert \leq\frac{n(n+2)(1-\alpha^2)(1-\beta^2)}{2-(n+2)(\alpha^2+\beta^2)+n(n+2)\alpha\beta}\]
	provided the denominator is positive. Using these same techniques they also confirm the \emph{relative bound}\index{relative bound}: given any $2$-distance set $X$ in $\mathbb{R}^n$ with inner products $\pm\alpha\in \mathbb{R}$, 
	\[\vert X\vert \leq\frac{n(1-\alpha^2)}{1-n\alpha^2}.\]
	Such an observation allows us to rule out certain parameter sets if the rank of any matrix in the PSD cone falls too low compared to the number and values of the distinct entries off the diagonal. In a later paper \cite{Delsarte1977}, the same authors proved that a $s$-distance set is a spherical $t$-design if and only if each of the first $t$ Gegenbauer polynomials, summed over the inner products $\left<x,x'\right>$ ($x,x'\in X$) are all zero. Further, they showed that if a $t$-design has fewer than $\frac{t+1}{2}$ distinct inner products, then it must correspond to the point set of an association scheme. Later, Bannai and Bannai \cite{Bannai2008} showed that whenever the number of distinct inner products is less than $\frac{t+3}{2}$, the $t$-design still admits an association scheme as long as the set of vectors are antipodal ($x\in X$ if and only if $-x\in X$). This close relation between ``near-tight" $t$-designs and association schemes hints at the types of vector systems we should expect to find within Bose-Mesner algebras. For instance, Suda \cite{Suda2011} used these results to characterize when the first idempotent of a $Q$-polynomial ordering results in a spherical $t$-design. A central tool in many of these results is a family of single-variable polynomials known as the Gegenbauer polynomials. A key result of Sch\"{o}nberg \cite{Schoenberg1942} concerning this family tells us that each Gegenbauer polynomial, when applied to a Gram matrix entrywise, produces a positive semidefinite matrix.
	
	In this chapter we will further examine the cone of positive semidefinite matrices within Bose-Mesner algebras. We first display how one may build large spherical $t$-distance sets by taking non-negative linear combinations of the idempotents. Using this, we will build examples of equiangular lines meeting a known upper bound for the number of lines in given dimensions. We will then introduce the Gegenbauer polynomials as well as Sch\"{o}nberg's Theorem. Using this theorem we will derive new constraints on the Krein parameters of any association scheme. We further examine these restrictions in the case of $Q$-polynomial association schemes where we give seven feasibility conditions which are not implied by the previously mentioned feasibility conditions; we believe these results are new. These results will be applied directly to the case of $4$-class $Q$-bipartite association schemes in Chapter \ref{4classbip}. The material in this chapter grew out of discussions with W-H.\ Yu and contains joint work with W.\ J.\ Martin. Below we list the main contributions in this chapter.
	
	In both Theorem \ref{schoen-as} and the final line of Theorem \ref{degreebound}, $Q_k^m(t)$ denotes the Gegenbauer polynomial of degree $k$ in dimension $m$. The superscript $m$ should help the reader to distinguish this from entries of the second eigenmatrix $Q$.
	\begin{restatable*}{lem}{buildingequi}\label{equilines}
		Let $\BMA$ be a Bose-Mesner algebra with second eigenmatrix $Q$ and multiplicities $m_0,\dots,m_d$. Define $Q^\prime$ to be the submatrix of $Q$ given by deleting row $0$. If there exists a non-negative vector $x$ so that $Q^\prime x\in\left\{-1,1\right\}^d$, then $\BMA$ contains the Gram matrix of $\vert X\vert$ equiangular lines in dimension $n=\sum_{x_j\neq 0} m_j$ with angle $\alpha =\left(\sum x_jm_j\right)^{-1}$.
	\end{restatable*}
	\begin{restatable*}{thm}{schAS}\label{schoen-as}
		Let $(X,\mathcal{R})$ be an association scheme with minimal idempotents $E_0,\dots,E_d$ and matrices of Krein parameters $\displaystyle{L_0^*,\dots,L_d^*}$. Fix some $E_i$, $0\leq i\leq d$, and let $m_i:=\text{rank}\left(E_i\right)$. Then for any choice of $\ell>0$, there exist non-negative constants $\theta_{\ell j}$, $0\leq j\leq d$, such that
		\begin{equation}\label{ELGeg}
		Q_\ell^{m_i}\circ\left(\frac{\vert X\vert}{m_i}E_i\right) = \sum_j \theta_{\ell j} E_j;\qquad	Q_\ell^{m_i}\left(\frac{1}{m_i}L_i^*\right) = \frac{1}{\vert X\vert}\sum_j \theta_{\ell j} L_j^*.
		\end{equation}
		The eigenvalues of $Q_\ell^{m_i}\circ\left(\frac{\vert X\vert}{m_i}E_i\right)$ are $\theta_{\ell 0},\dots,\theta_{\ell d}$ where $\theta_{\ell j}$ is non-zero only if $E_j$ is contained in the Schur subalgebra generated by $E_i$.
	\end{restatable*}
	
	\begin{restatable*}{thm}{degbnd}\label{degreebound}
		Suppose we have a feasible parameter set for an association scheme with first and second eigenmatrices $P$ and $Q$. Fix $0\leq i\leq d$ and set $m_i:=Q_{0i}$. For $\ell>0$ define $\mu_\ell = \frac{\ell-1}{\ell+m_i-3}$ and $\lambda_j = \nicefrac{Q_{ji}}{Q_{0i}}$. Let $\gamma_{\ell,j}$ equal $\lambda_j^2$ if $(1+\mu_\ell)^2\lambda_j^2\geq 4\mu_\ell$ and $\mu_\ell$ otherwise. Then all entries of $Q_{\ell^*}^{m_i}\left(\frac{1}{m_i}L_i^*\right)$ are non-negative whenever
		\[\sum_{j=1}^d \left(\prod_{\ell=2}^{\ell^*+1}\gamma_{x,j}\right)P_{0j}\left(1+\lambda_j^2\right)\leq\frac{1}{\vert X\vert}.\]
	\end{restatable*}
	\begin{restatable*}{thm}{cometricbounds}\label{cometricbnds}
		Suppose we have a feasible parameter set for a cometric association scheme with Krein array $\iota^*(X,\cR) = \left\{m,b^*_1,\dots,b^*_{d-1};1,c_2^*\dots,c^*_{d}\right\}$ where $m>2$. Define $b_{j-1}^* = c_j^*=a_j^*=0$ for $j>d$. Then the scheme is realizable only if
		\begin{enumerate}[label=$(\roman*)$]
			\item $\left(a_1^*\right)^2 + b_1^*c_2^* \geq\frac{2m(m-1)}{m+2},$
			\item $\left(a_1^*\right)^2+2a_1^*a_2^*+c_2^*q^2_{22}\geq \frac{4m(m-2)}{m+4},$
			\item $\frac{6m(m-1)(m-4)}{(m+4)(m+6)}+\frac{\left(3a_1^*\left(a_1^*+a_2^*\right)+c_2^*q_{22}^2\right)b_1^*c_2^*+\left(a_1^*\right)^4}{m}\geq \frac{(7m-18)\left(\left(a_1^*\right)^2+b_1^*c_2^*\right)}{m+6},$
			\item $\sum_{i=1}^3\left(b_i^*c_{i+1}^* + a_i^*\sum_{j=i}^3 a_j^*\right)\leq \frac{3(3m-2)}{m+6}.$
			\item $\frac{16m(m-1)}{(m+4)(m+8)} + \frac{\left(a_1^*\right)^4 + \left(3a_1^*\left(a_1^*+a_2^*\right)+c_2^*q_{22}^2\right)b_1^*c_2^*}{(m-2)m}\geq \frac{12\left(\left(a_1^*\right)^2+b_1^*c_2^*\right)}{m+8},$
		\end{enumerate}
		Additionally, if $a_1^*>0$, then
		\begin{enumerate}[label=(\roman*)]
			\addtocounter{enumi}{5}
			\item $\left(a_1^*\right)^2 + b_1^*c_2^*\left(2 + \frac{a_2^*}{a_1^*}\right)\geq \frac{4m(2m-3)}{m+6},$
			\item $\left(a_1^*\right)^2+2a_1^*a_2^*-\left(a_2^*\right)^2+2c_2^*q_{22}^2+\frac{b_2^*c_3^*\left(a_3^*-a_1^*\right)-ma_2^*}{a_1^*+a_2^*}\geq\frac{6m(m-4)}{m+6}.$
		\end{enumerate}
	\end{restatable*}

	\section{Lines with few angles}\label{equianglines}
	Let $(X,\cR)$ be an association scheme with basis relations $A_0,\dots,A_d$, orthogonal idempotents $E_0,\dots,E_d$, and Bose-Mesner algebra $\BMA$. Since each $E_i$ is an idempotent matrix, it has spectrum $1^{m_i}$, $0^{v-m_i}$ and thus is positive semidefinite, denoted $E_i\succeq 0$. Any $G\in\BMA$ may be uniquely expressed
	\begin{equation}G = \sum_i\alpha_i E_i,\label{lincomb}\end{equation}
	and we have $\text{spec}\left(G\right) = \left\{\alpha_0,\dots,\alpha_d\right\}$. Therefore $G\succeq 0$ if and only if $\alpha_i\geq 0$ for all $0\leq i\leq d$. Thus the \emph{positive semidefinite cone}\index{positive semidefinite cone} of $(X,\cR)$ is the set of non-negative linear combinations of the idempotents $E_0,\dots,E_d$. Further, Equation \eqref{lincomb} gives us that \begin{equation}\text{rank}\left(G\right) = \sum_{\alpha_i\neq 0} m_i.\label{rank}\end{equation}
	Finally, for any $G$ in the positive semidefinite cone, we must have $G = \sum_i \beta_i A_i$ with $\beta_0>0$ if $G\neq 0$ and thus there are at most $d$ distinct values off the main diagonal. Recall that a set of \emph{equiangular lines}\index{equiangular lines} is a set of vectors for which the angle between any pair is given by some fixed $0<\theta<\frac{\pi}{2}$. By abuse of terminology, we say the (common) ``angle" for this set is $\alpha = \cos(\theta)$ and thus we identify a set of equiangular lines in $\mathbb{R}^n$ with angle $0<\alpha<1$ with a rank $n$ positive semidefinite matrix for which each entry on the main diagonal is $1$ and every entry off the diagonal is one of $\pm \alpha$; we call such a matrix the \emph{Gram matrix of a set of equiangular lines with angle $\alpha$}. While we have mentioned equiangular lines and mutually unbiased bases multiple times, we will also find other $t$-distance sets useful to consider, fixing certain angles to fit our application. For instance, in Chapter $\ref{3class}$ we examine one type of cometric association scheme which may be characterized by the 3-distance set in $\mathbb{R}^r$ with $-\frac{1}{r}$ as a possible inner product. We then show that certain linear combinations of $E_0$, $E_1$, and $E_3$ will also produce equiangular lines in this way. We show later in the chapter that a certain family of these association schemes also contains the Gram matrix of real mutually unbiased bases. In the optimal case, we find that the number of vectors in each set scales as $\frac{1}{2}v^2$ for dimension $v$. In this section, we restrict ourselves to building equiangular lines and consider 3-class primitive $Q$-polynomial association schemes. Before moving to examples, we note the following bound on the maximum number of equiangular lines in a given dimension known as the relative bound.
	\begin{thm}[\cite{vanLint1966}]\label{relbound}\index{relative bound}
		For $0<\alpha<1$, let $v_\alpha(n)$ be the maximum number of equiangular lines with angle $\alpha$ in $\mathbb{R}^n$. If $n<\alpha^{-2}$ then
		\[v_\alpha(n)\leq\frac{n(1-\alpha^2)}{1-n\alpha^2}.\]
	\end{thm}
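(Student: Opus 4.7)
The plan is to pass to the Gram matrix of unit vectors representing the lines and then exploit a rank/trace inequality. Suppose $N$ equiangular lines with angle $\alpha$ exist in $\mathbb{R}^n$; pick one unit vector on each line, producing $v_1,\dots,v_N$ with $\langle v_i,v_i\rangle = 1$ and $\langle v_i,v_j\rangle\in\{-\alpha,\alpha\}$ for $i\neq j$. Let $G$ denote their $N\times N$ Gram matrix, so that $G = I + \alpha S$ for some symmetric matrix $S$ having zero diagonal and $\pm 1$ off-diagonal entries (a Seidel matrix). Since $G$ is a Gram matrix of vectors living in $\mathbb{R}^n$, it is positive semidefinite with $\operatorname{rank}(G)\leq n$.

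Next I would compute two traces. Clearly $\operatorname{tr}(G) = N$, while every diagonal entry of $G^2$ equals $\sum_{j} G_{ij}^2 = 1 + (N-1)\alpha^2$, so $\operatorname{tr}(G^2) = N + N(N-1)\alpha^2$. Writing the non-zero eigenvalues of $G$ as $\sigma_1,\dots,\sigma_r$ with $r\leq n$, the Cauchy--Schwarz inequality applied to $(\sigma_1,\dots,\sigma_r)$ and $(1,\dots,1)\in\mathbb{R}^r$ gives
\[
\bigl(\operatorname{tr}(G)\bigr)^2 \;=\;\Bigl(\sum_{i=1}^{r}\sigma_i\Bigr)^{\!2} \;\leq\; r\sum_{i=1}^{r}\sigma_i^2 \;\leq\; n\,\operatorname{tr}(G^2).
\]

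Substituting the trace computations yields $N^2 \leq n\bigl(N + N(N-1)\alpha^2\bigr)$, i.e.\ $N \leq n + n(N-1)\alpha^2$. Rearranging,
\[
N(1 - n\alpha^2) \;\leq\; n(1-\alpha^2).
\]
The hypothesis $n<\alpha^{-2}$ makes the factor $1-n\alpha^2$ strictly positive, so dividing preserves the inequality and delivers the claimed bound $N\leq n(1-\alpha^2)/(1-n\alpha^2)$.

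The argument is essentially a two-line calculation once the Gram matrix is set up, and there is no serious obstacle: the only subtlety is ensuring that one applies Cauchy--Schwarz to the non-zero eigenvalues (so that rank, rather than $N$, appears on the right), and that one tracks the sign of $1-n\alpha^2$ when dividing. Equality holds precisely when all $n$ non-zero eigenvalues of $G$ are equal, which foreshadows the tight examples appearing later when such a Gram matrix is extracted from the idempotents of an association scheme as in equation \eqref{rank}.
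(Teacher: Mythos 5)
Your argument is correct: the trace computations, the Cauchy--Schwarz step applied to the nonzero eigenvalues (so that the rank bound $r\leq n$ enters), and the sign check on $1-n\alpha^2$ are all exactly what is needed. The paper states this theorem as a cited result of van Lint and Seidel without proof, and your derivation is precisely the standard Gram-matrix argument from that literature, so there is nothing to reconcile.
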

	In this section, we refer to a system of equiangular lines as \emph{optimal}\index{equiangular lines!optimal} if it achieves the relative bound for the given dimension and angle; likewise a system of equiangular lines is \emph{near optimal}\index{equiangular lines!near optimal} if it is within one line of being optimal. We note that the relative bound need not give an integer on the right hand side of the inequality. In the case where the relative bound gives $v_\alpha(n)\leq x+\eta$ for $x\in\mathbb{Z}$ and $0<\eta<1$, we reduce the bound to $v_\alpha(n)\leq x$. In this case, a set of $x$ vectors with inner products $\pm\alpha$ would still be considered optimal, since no larger set could ever be found. We now examine two 3-class primitive association schemes to illustrate how we may build equiangular lines from these schemes. The first comes from the halved 7-cube while the second corresponds to the dual polar space $B_3(2)$.
	\begin{example}
		Consider the 3-class primitive association scheme given by the halved 7-cube \cite{Brouwer1989}. This scheme is both metric and cometric with the following eigenmatrices
		\[P = \left[\begin{array}{rrrr}
		1& 21& 35& 7\\
		1& 9& -5& -5\\
		1& 1& -5& 3\\
		1& -3& 3& -1\\
		\end{array}\right],\qquad  Q= \left[\begin{array}{rrrr}
		1& 7& 21& 35\\
		1& 3& 1& -5\\
		1& -1& -3& 3\\
		1& -5& 9& -5\\
		\end{array}\right].\]
		Recall that $E_j = \frac{1}{\vert X\vert}\sum_{i}Q_{ji}A_i$ and consider
		\[G = \frac{16}{7}\left(E_1 + E_2\right)\]
		where the constant $\frac{16}{7} = \frac{\vert X\vert}{m_1+m_2}$ is chosen to make the main diagonal of $G$ equal to 1. We may then use the entries of $Q$ to replace each idempotent with the corresponding sum of adjacency matrices giving 
		\[G = A_0 + \frac{1}{7}A_1 -\frac{1}{7}A_2+\frac{1}{7}A_3.\]
		Then $G$ is the Gram matrix of $64$ lines in dimension $m_1+m_2 = 28$ with angle $\frac{1}{7}$. Using the relative bound, we find that this system is optimal.
	\end{example}
	\begin{example}
		Consider the 3-class primitive cometric (and metric) association scheme defined on the dual polar space $B_3(2)$. The first and second eigenmatrices are
		\[P = \left[\begin{array}{rrrr}
		1& 14& 56& 64\\
		1& 5& -2& -8\\
		1& -1& -4& 4\\
		1& -7& 14& -8\\
		\end{array}\right],\qquad  Q= \left[\begin{array}{rrrr}
		1& 35& 84& 15\\
		1& \nicefrac{25}{2}& -6& -\nicefrac{15}{2}\\
		1& \nicefrac{5}{4}& -6& \nicefrac{15}{4}\\
		1& -\nicefrac{35}{8}& \nicefrac{21}{4}& -\nicefrac{15}{8}\\
		\end{array}\right].\]
		Consider the matrix
		\[G = 15E_0+24E_1 + 24E_3 = 9A_0 + A_1 +A_2-A_3.\]
		Similar to before, $\frac{1}{9}G$ is the Gram matrix of $135$ lines in dimension $m_0+m_1+m_2 = 51$ with angle $\pm\frac{1}{9}$. Here, the relative bound tells us that optimal number of lines in dimension 51 with these inner products is bounded above by 136, thus this construction is near optimal.
	\end{example}
	We now consider the construction in general.
	\buildingequi
	\begin{proof}
		Let $y = Q^\prime x$. Then we have the $d$ equations
		\[Q_{i0}x_0 + Q_{i1}x_1 + \dots + Q_{id}x_d = y_i\]
		for $1\leq i\leq d$ where each $y_i$ is either $1$ or $-1$.	Consider the matrix
		\[G = \sum x_jE_j = \frac{1}{\vert X\vert}\sum_{i=0}^{d}\left(\sum_{j=0}^dx_jQ_{ij}\right)A_i = \frac{1}{\vert X\vert}\left(\sum_{j=0}^d x_jm_j\right)A_0 + \frac{1}{\vert X\vert}\sum_{i=1}^d y_iA_i.\]
		Since $\vert y_i\vert = 1$ for each $1\leq i\leq d$, each off-diagonal entry of $G$ has the same absolute value. Thus we may scale $G$ by its diagonal entry to obtain the Gram matrix of a set of equiangular lines with angle $\left(\sum x_j Q_{0j}\right)^{-1}$. The rank of $G$ is the sum of the ranks of those $E_j$ with $x_j\neq 0$.
	\end{proof}
	\begin{table}[H]
		We finish this section by applying Lemma \ref{equilines} to the table of 3-class primitive $Q$-polynomial schemes found at \cite{Willifordtable3}.
		\begin{center}
			Optimal constructions\\
			\begin{tabular}{l|c|c|ccl|c|c|c}
				Label & $\vert X\vert$ & $n$ & $\nicefrac{1}{\alpha}$&&Label & $\vert X\vert$ & $n$ & $\nicefrac{1}{\alpha}$ \\\cline{1-4}\cline{6-9}
				$\left<64,7\right>$ & 64 & 28 & 7 & \qquad &$\left<120,14\right>$ & 120 & 35 & 7\\
				$\left<64,9\right>$ & 64 & 36 & 9 & \qquad	&$\left<120,17a\right>$ & 120 & 35 & 7\\
				$\left<64,21\right>$ & 64 & 28 & 7 & \qquad & $\left<1024,31\right>$ & 1024 & 496 & 31 \\
				$\left<120,9\right>$ & 120 & 35 & 7 & \qquad &$\left<1024,66\right>$ & 1024 & 528 & 33\\
			\end{tabular}\\\vspace{5mm}
			Potential optimal constructions\\
			\begin{tabular}{l|c|c|ccl|c|c|c}
				Label & $\vert X\vert$ & $n$ & $\nicefrac{1}{\alpha}$&&Label & $\vert X\vert$ & $n$ & $\nicefrac{1}{\alpha}$ \\\cline{1-4}\cline{6-9}
				$\left<280,27a\right>$ & 280 & 63 & 9 &		\qquad	&$\left<1456,97\right>$ & 1456 & 195 & 15 \\
				$\left<324,19a\right>$ & 324 & 171 & 19 &	\qquad  &$\left<1520,49\right>$ & 1520 & 589 & 31 \\	
				$\left<344,42\right>$ & 344 & 43 & 7 &		 \qquad  &$\left<1520,56\right>$ & 1520 & 589 & 31 \\
				$\left<460,51\right>$ & 460 & 69 & 9 &		 \qquad &$\left<1596,55\right>$ & 1596 & 551 & 29\\
				$\left<540,44\right>$ & 540 & 99 & 11 &		\qquad  &$\left<2016,62a\right>$ & 2016 & 651 & 31 \\	
				$\left<540,49\right>$ & 540 & 99 & 11 &			\qquad   &$\left<2016,65\right>$ & 2016 & 651 & 31\\	
				$\left<936,51\right>$ & 936 & 221 & 17 & 			\qquad  &$\left<2160,119\right>$ & 2160 & 255 & 17\\
				$\left<936,51a\right>$ & 936 & 221 & 17 &			\qquad &$\left<2160,119a\right>$ & 2160 & 255 & 17 \\
				$\left<1024,33\right>$ & 1024 & 528 & 33 &  		\qquad &$\left<2160,119b\right>$ & 2160 & 255 & 17 \\
				$\left<1200,55\right>$ & 1200 & 110 & 11& 			\qquad &$\left<2500,51\right>$ & 2500 & 1225 & 49\\
				$\left<1200,109a\right>$ & 1200 & 110 & 11			&\qquad &$\left<2500,51a\right>$ & 2500 & 1275 & 51 \\
				$\left<1344,79\right>$ & 1344 & 238 & 17			&\qquad &$\left<2500,75\right>$ & 2500 & 1275 & 51 \\
				$\left<1456,90a\right>$ & 1456 & 195 & 15												
			\end{tabular}\\\vspace{5mm}
			Near optimal constructions\\
			\begin{tabular}{l|c|c|ccl|c|c|c}
				Label & $\vert X\vert$ & $n$ &  $\nicefrac{1}{\alpha}$&&Label & $\vert X\vert$ & $n$ & $\nicefrac{1}{\alpha}$\\\cline{1-4}\cline{6-9}
				$\left<35,6\right>^*$ & 35 & 21 & 7 & \qquad &$\left<135,35\right>^*$ & 135 & 51 & 9\\
			\end{tabular}\\\vspace{5mm}
			Potential near optimal constructions\\
			\begin{tabular}{l|c|c|ccl|c|c|c}
				Label & $\vert X\vert$ & $n$ &  $\nicefrac{1}{\alpha}$&&Label & $\vert X\vert$ & $n$ & $\nicefrac{1}{\alpha}$\\\cline{1-4}\cline{6-9}
				$\left<279,30\right>$ & 279 & 63 & 9 &\qquad &$\left<923,70\right>$ & 923 & 143 & 13 \\
				$\left<319,28\right>$ & 319 & 88 & 11 &\qquad &$\left<1035,68\right>$ & 1035 & 185 & 15 \\
				$\left<377,28\right>$ & 377 & 117 & 13 &  \qquad &$\left<1349,70\right>$ & 1349 & 285 & 19 \\
				$\left<527,30\right>$ & 527 & 187 & 17 & \qquad &$\left<1975,78\right>$ & 1975 & 475 & 25 \\
				$\left<527,30a\right>$ & 527 & 187 & 17 & \qquad &$\left<2159,126\right>$ & 2159 & 255 & 17 \\
				$\left<729,56\right>$ & 729 & 337 & 25& \qquad &$\left<2759,88\right>$ & 2759 & 713 & 31 \\
				
			\end{tabular}
			\caption[Optimal and near-optimal constructions for equiangular lines using 3-class primitive cometric association schemes]{In these tables we give the parameters for sets of equiangular lines which arise from the given 3-class primitive cometric scheme --- if the scheme exists. In both cases, we split the tables into those schemes known to exist and those whose existence has not yet been determined. Thus, the sets of equiangular lines in the first and third tables do exist, while the sets in the second and forth tables are not guaranteed to exist as the parameter set might not be realizable. Each parameter set is listed in \cite{Willifordtable3} using the label in the far left column. For each set we list the number of lines $\vert X\vert$, dimension $n$, and the inverse of the inner product $\frac{1}{\alpha}$.}\label{optimalconst}
		\end{center}
	\end{table}
	While it seems surprising that each angle is the inverse of an odd integer, the following result of Neumann guarantees this holds for any large set of equiangular lines.
	\begin{thm}[\cite{Lemmens1973}]
		Let $X$ be a set of $v$ equiangular lines in $\mathbb{R}^n$ with angle $\alpha$. If $v>2n$ then $\frac{1}{\alpha}$ is an odd integer.
	\end{thm}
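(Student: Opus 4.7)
The plan is to argue at the level of the Seidel matrix attached to the system. Choose a unit vector $u_i$ on each of the $v$ lines and form the Gram matrix $G = (u_i^{T} u_j)_{i,j}$. By construction $G$ has ones on the diagonal and entries $\pm \alpha$ off, so one may write $G = I + \alpha S$, where $S$ is a symmetric integer matrix (a Seidel matrix) with zero diagonal and $\pm 1$ entries off-diagonal. Since the $u_i$ all live in $\mathbb{R}^{n}$, $\mathrm{rank}(G) \leq n$, so $-1/\alpha$ is an eigenvalue of $S$ with multiplicity
\[
m \;=\; v - \mathrm{rank}(G) \;\geq\; v - n.
\]
The hypothesis $v > 2n$ then gives $m > v/2$.

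The first step is to show that $1/\alpha$ is an integer. Because $S$ has integer entries, its characteristic polynomial lies in $\mathbb{Z}[x]$; in particular, the minimal polynomial of $-1/\alpha$ over $\mathbb{Q}$ is an integer polynomial, all of whose roots are algebraic integers and appear as eigenvalues of $S$ with multiplicity at least $m$. If this minimal polynomial has degree $d$, then $dm \leq v$; combined with $m > v/2$ this forces $d = 1$. Hence $-1/\alpha$ is a rational algebraic integer, and we may set $1/\alpha = k \in \mathbb{Z}_{>0}$.

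The second step is to show that $k$ is odd, and this is where I would reduce modulo $2$. Consider the integer matrix $S + kI$: every $\pm 1$ off-diagonal entry becomes $1$ in $\mathbb{F}_{2}$, and the diagonal becomes $k \bmod 2$. Suppose, for contradiction, that $k$ were even. Then $S + kI \equiv J - I \pmod{2}$. A short computation using $(J-I)^2 = (v-2)J + I$ shows that over $\mathbb{F}_{2}$ the matrix $J - I$ has rank $v$ when $v$ is even (it squares to $I$) and rank $v - 1$ when $v$ is odd (it is idempotent with kernel spanned by the all-ones vector). In either case,
\[
\mathrm{rank}_{\mathbb{F}_{2}}(S + kI) \;\geq\; v - 1.
\]
Since the $\mathbb{Q}$-rank of an integer matrix dominates its $\mathbb{F}_{2}$-rank, the nullity of $S + kI$ over $\mathbb{Q}$ is at most $1$. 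But that nullity is exactly $m$, and $v > 2n \geq 2$ gives $v \geq 3$, so $m > v/2 \geq 3/2$ forces $m \geq 2$, a contradiction. Therefore $k$ must be odd.

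The main obstacle is the second step: the integrality of $1/\alpha$ follows from a fairly standard Galois-conjugate argument, but \emph{oddness} needs a separate idea. The cleanest route I see is to exploit the coincidence $+1 \equiv -1 \pmod{2}$, which washes out the sign pattern of $S$ and lets one compare $S + kI$ to the explicit matrix $J - I$; the computation of $\mathrm{rank}_{\mathbb{F}_{2}}(J - I)$ is then routine.
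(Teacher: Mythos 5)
Your argument is correct and is essentially the classical proof of Neumann's theorem from the cited source \cite{Lemmens1973}: the multiplicity bound $m\ge v-n>v/2$ forces $-1/\alpha$ to be a rational algebraic integer via the Galois-conjugate count, and the reduction of $S+kI$ modulo $2$ to $J-I$ (whose $\mathbb{F}_2$-rank is at least $v-1$) rules out even $k$. The thesis states this result without proof, deferring to that reference, so there is nothing in the paper to contrast with; your write-up fills that gap correctly.
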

	\begin{thm}
		For each near-optimal case listed in Table \ref{optimalconst} except possibly for $\left<729,56\right>$, if the parameter set is realizable, we may extend the set of vectors by one to obtain an optimal set.
	\end{thm}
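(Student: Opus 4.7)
The plan is to extend the given set $\{u_1, \ldots, u_N\}$ of $N = |X|$ equiangular lines by appending the single explicit unit vector
\[ v_* \;:=\; \frac{U\mathbf{1}}{\sqrt{N c_0}}, \]
where $U$ is the $n \times N$ matrix whose columns are the $u_i$ and $G = U^T U = \sum_{j \in S} c_j E_j$ is the (normalized) Gram matrix coming from the construction of Lemma~\ref{equilines}. Provided $0 \in S$, the all-ones vector is an eigenvector with $G\mathbf{1} = c_0\mathbf{1}$, and a direct calculation gives $\|v_*\|^2 = \mathbf{1}^T G\mathbf{1}/(Nc_0) = 1$ together with $v_*^T u_i = c_0/\sqrt{Nc_0} = \sqrt{c_0/N}$. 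Thus $v_*$ makes angle $\alpha$ with every $u_i$ exactly when $c_0 = N\alpha^2$.

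To see why the identity $c_0 = N\alpha^2$ is the natural one to expect in this setting, suppose hypothetically that an optimal extension to $N+1$ equiangular lines existed; its Gram matrix would then have the form $\tilde G = \lambda \Pi$ for a rank-$n$ orthogonal projection $\Pi$ and $\lambda = (N+1)/n$. Cauchy interlacing applied to the principal submatrix $G$ forces the spectrum of $G$ to be $\lambda^{n-1}, \nu, 0^{N-n}$, where trace accounting yields $\nu = (N+1-n)/n$; using the near-optimal identity $N\alpha^2 = (N+1-n)/n$ this is exactly $\nu = N\alpha^2$. Since $G = \sum_{j \in S} c_j E_j$ has spectrum $\{c_j^{m_j}\}_{j \in S} \cup \{0^{N-n}\}$, and in the schemes of Table~\ref{optimalconst} the only multiplicity equal to one is $m_0 = 1$, the singleton eigenvalue $\nu$ must be $c_0$, while every other $c_j$ ($j \in S \setminus \{0\}$) must equal $\lambda$. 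Conversely, whenever these two identities do hold for the construction, the vector $v_*$ witnesses the desired extension: $\{u_1,\dots,u_N,v_*\}$ is a set of $N+1$ equiangular lines in $\mathbb{R}^n$ meeting the relative bound.

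The main obstacle is therefore a finite, case-by-case verification that the coefficients produced by Lemma~\ref{equilines} really do satisfy $c_0 = (N+1-n)/n$ and $c_j = (N+1)/n$ for $j \in S \setminus \{0\}$ in each near-optimal entry of Table~\ref{optimalconst}. Using the second eigenmatrix of each scheme one solves $Q'x \in \{\pm 1\}^d$ for $x$, reads off $c_j = x_j|X|/\sum_i x_i m_i$, and compares with the target values; this is tedious but mechanical. I expect the excluded case $\langle 729, 56\rangle$ to be precisely the one for which these identities fail, so that the rank-one correction needed to turn $G$ into a scaled rank-$n$ projection cannot be realized as $v_* v_*^T$ for any unit vector $v_*$ achieving inner product $\pm\alpha$ with every $u_i$, at least not via the natural all-ones construction.
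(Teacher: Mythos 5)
Your proposal is correct and is essentially the paper's own argument: appending the unit vector $U\mathbf{1}/\sqrt{Nc_0}$ is exactly the vector-level realization of the paper's augmentation of $G$ by the row $\sqrt{x_0/|X|}\,\mathbbm{1}$, and your criterion $c_0 = N\alpha^2$ is identical to the paper's condition that the coefficient of $E_0$ equal $|X|\alpha^2$, with both proofs finishing by a case-by-case check that singles out $\left<729,56\right>$. The Cauchy-interlacing digression is harmless extra motivation but not needed.
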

	\begin{proof}
		In each case, $G = x_0 E_0 + \sum_{j=1}^{d}x_jE_j$ for orthogonal idempotents $E_0,\dots,E_d$ and $E_0 = \frac{1}{\vert X\vert}J$. Therefore $G\mathbbm{1} = x_0\mathbbm{1}$ and we must have $\text{rank}(\left[\begin{array}{c|c}
		G & x_0\mathbbm{1}
		\end{array}\right]) = \text{rank}\left(G\right)$ since the last column is the sum of all previous columns. Similarly, we may augment our new matrix with an extra row by adding all previous rows together giving
		\[G' = \left[\begin{array}{c|c}
		G & x_0\mathbbm{1}\\\hline
		x_0\mathbbm{1} & \vert X\vert c_0
		\end{array}\right]\]
		again with $\text{rank}(G') = \text{rank}(G)$. Finally, we may scale the last row and column each by $\sqrt{\vert X\vert x_0}$ without changing the rank, since this equates to multiplying a single vector in the set by a scalar. The resulting matrix is
		\[H = \left[\begin{array}{c|c}
		G & \sqrt{\frac{x_0}{\vert X\vert}}\mathbbm{1}\\\hline
		\sqrt{\frac{x_0}{\vert X\vert}}\mathbbm{1}^T & 1
		\end{array}\right].\]
		If $\sqrt{\frac{x_0}{\vert X\vert}} = \alpha$, then this is the Gram matrix of a set of $\vert X\vert+1$ equiangular vectors in the same dimension. Thus we must check that the coefficient of $E_0$ is $\vert X\vert\alpha^2$. We verify that this holds for every case except $\left<729,56\right>$.
	\end{proof}
	Note that this exceptional case is the only listed case where the relative bound is not an integer. That is, \[\frac{337(1-\frac{1}{25^2})}{1-\frac{337}{25^2}} = \frac{4381}{6}\approx 730.167.\] Since the bound concerns the maximum cardinality of a set, this results in an upper bound of 730. However even a set with 730 vectors in dimension 25 would not make this bound sharp --- this is likely why this case fails.  
	\section{Gegenbauer polynomials}\label{gegdef}
	In all that follows let $m$ be a fixed positive integer and define $\scR:=\bbR[x_1,\dots,x_m]$. A \emph{monomial}\index{polynomial!monomial} is defined as a (possibly empty) product of variables $x_1,\dots,x_m$ and given a monomial $t =\prod_{i=1}^{m}x_i^{d_i}$ ($d_i\in \bbZ^+$), the \emph{degree}\index{polynomial!degree} of $t$ is defined as $\text{deg}(t) = \sum_i d_i$. A polynomial $f\in\scR$ may be represented uniquely as a (finite) linear combination of distinct monomials $f =\sum_i\alpha_it_i$ and $\text{deg}(f) = \max\left\{\text{deg}(t_i)\right\}$. For each variable $x_j$, we define the \emph{derivative with respect to $x_j$}\index{polynomial!derivative} of a monomial as $\frac{\partial}{\partial x_j}t = \frac{d_j}{x_j}t$ and extend the definition linearly for any polynomial in $\scR$. For $f\in \scR$, $f$ is \emph{homogeneous}\index{polynomial!homogeneous} if there exists some constant $d\in\bbZ^+$ such that $\text{deg}(t)=d$ for every monomial $t$ in $f$. Further, $f$ is \emph{harmonic}\index{polynomial!harmonic} if $\Delta f = \sum_i \left(\frac{\partial}{\partial x_i}\circ\frac{\partial}{\partial x_i}\right)\left(f\right) = 0$. For $f\in\scR$ the principal ideal generated by $f$ is $(f) = \left\{gf:g\in\scR\right\}$. We denote the cosets of this ideal by $[g]_f = \left\{h\in\scR: g-h\in(f)\right\}$ where we suppress the subscript if it is clear from the context. The quotient ring $\bigslant{\scR}{(f)}$ is, of course, the set of equivalence classes $\left\{[g]_f:g\in\scR\right\}$ with the obvious operations. Let $S^{m-1}\subset \bbR^m$ be the $(m-1)$-dimensional sphere; we define the set of polynomials on the sphere as
	\[\text{Pol}(S^{m-1}):=\bigslant{\scR}{\displaystyle{\left(1-\sum_ix_i^2\right)}}.\]
	Thus, using $f = 1-\sum_{i}x_i^2$, we say a polynomial $h\in\scR$ is \emph{harmonic on the sphere}\index{polynomial!harmonic!on the sphere} if there exists a harmonic polynomial $g\in[h]_f$; we similarly define \emph{homogeneous on the sphere}\index{polynomial!homogeneous!on the sphere}. Finally, we say a polynomial $h\in\scR$ is \emph{zonal}\index{polynomial!zonal} if there exists a vector $a\in \bbR^m$ and a single-variable polynomial $p(t)\in\bbR[t]$ such that $h(x) = p(\left<a,x\right>)$ for all $x\in S^{m-1}$. Note that since $g\in \left(1-\sum_ix_i^2\right)$ implies $g(x) = 0$ for all $x\in S^{m-1}$, this condition is independent of the representative chosen from the equivalence class.\par
	We now introduce a set of orthogonal polynomials arising from the context of spherical harmonics. The Gegenbauer polynomials in dimension $m$ are defined using the three-term recurrence:
	\begin{equation}\label{recurrence}
	Q_\ell^m(t) = \frac{(2\ell+m-4)tQ^m_{\ell-1}(t) - (\ell-1)Q^m_{\ell-2}(t)}{\ell+m-3} \qquad \ell\geq 2,
	\end{equation}
	\begin{equation*}
	Q^m_0(t) = 1\qquad Q^m_1(t) = t.
	\end{equation*}\index{Gegenbauer polynomials}
	Note that $Q^m_\ell(1) = 1$ for all $k\geq 0$. We will suppress the superscript $m$ if it is clear in the context. Below we list the first six Gegenbauer polynomials and plot $Q_1(t)$ through $Q_5(t)$ along with their roots for $m=10$.
	\[
	Q_0(t)=1,\qquad
	Q_1(t)=t,\qquad
	Q_2(t)=\frac{mt^2 - 1}{m-1},\qquad
	Q_3(t)=\frac{(m+2)t^3 - 3t}{m-1},\qquad\]\[
	Q_4(t)=\frac{(m+4)(m+2)t^4 - 6(m+2)t^2+3}{m^2-1},\]\[
	Q_5(t)=\frac{(m+6)(m+4)t^5-10(m+4)t^3+15t}{m^2-1}.\]
	\begin{figure}[!ht]
		\begin{center}
			\includegraphics[scale=.35]{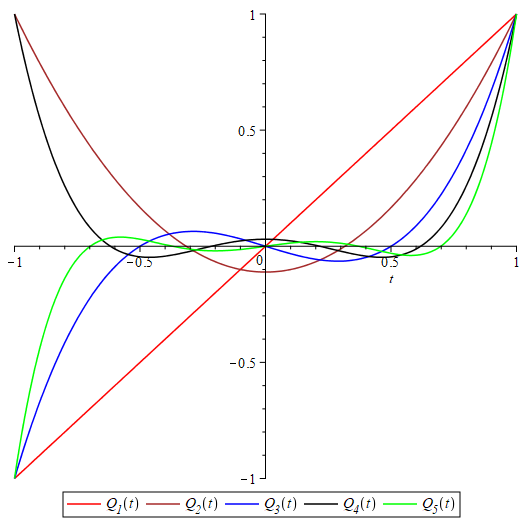}
			\caption[Gegenbauer polynomials]{Gegenbauer polynomials with degree 1 through degree 5 with $m=10$.}\label{gegpic}
		\end{center}
	\end{figure}
	\begin{figure}[!ht]
		\begin{center}
			\includegraphics[scale=.35]{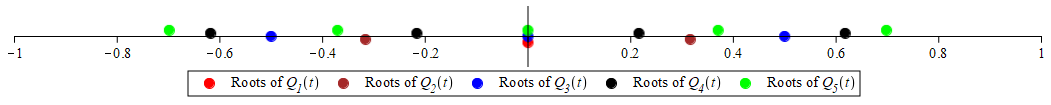}
			\caption[Roots of Gegenbauer polynomials]{Roots of the five Gegenbauer polynomials with degrees $1$ to $5$.}\label{rootpic}
		\end{center}
	\end{figure}
	\begin{thm}\label{gegprop}
		For each $m,\ell\in\bbZ^+$ and $a\in\bbR^m$, the polynomial $Q_\ell^m\left(\left<a,x\right>\right)$ is zonal and both homogeneous and harmonic on the sphere.
	\end{thm}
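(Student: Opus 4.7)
The plan is to verify the three properties in turn. Zonality is immediate from the definition with $p(t) := Q_\ell^m(t) \in \bbR[t]$, so the remaining work concerns homogeneity and harmonicity on the sphere, and for those I will first reduce to the case $\Vert a\Vert = 1$. Since $Q_\ell^m(\Vert a\Vert\, t)$ is a polynomial in $t$ of degree $\ell$, the fact that $\{Q_k^m\}_{k \geq 0}$ is a basis of $\bbR[t]$ lets us write it as a finite sum $\sum_k \lambda_k Q_k^m(t)$; substituting $t = \langle a/\Vert a\Vert,\, x\rangle$ then reduces everything to the unit case, modulo the ideal $(1 - \sum_i x_i^2)$.

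Assume now that $\Vert a\Vert = 1$, and expand $Q_\ell^m(t) = \sum_{k \geq 0} c_{\ell,k}\, t^{\ell - 2k}$, noting that only monomials of parity $\ell$ appear. Define
\[
  P_\ell(x) := \sum_{k \geq 0} c_{\ell,k}\, \langle a, x\rangle^{\ell - 2k} \Vert x\Vert^{2k} \;=\; \Vert x\Vert^\ell\, Q_\ell^m\bigl(\langle a, x\rangle / \Vert x\Vert\bigr).
\]
This $P_\ell$ is visibly homogeneous of degree $\ell$ and agrees with $Q_\ell^m(\langle a, x\rangle)$ on the sphere, which settles homogeneity. Substituting $t = \langle a, x\rangle/\Vert x\Vert$ into the three-term recurrence for $Q_\ell^m$ and multiplying by $\Vert x\Vert^\ell$ yields the clean recurrence
\[
  (\ell + m - 3)\, P_\ell \;=\; (2\ell + m - 4)\, \langle a, x\rangle\, P_{\ell-1} \;-\; (\ell - 1)\, \Vert x\Vert^2\, P_{\ell-2},
\]
which I will use to prove $\Delta P_\ell = 0$ by induction on $\ell$, with trivial base cases $P_0 = 1$ and $P_1 = \langle a, x\rangle$.

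For the inductive step, combining $\Delta\langle a, x\rangle = 0$, $\Delta\Vert x\Vert^2 = 2m$, the product rule for $\Delta$, Euler's relation $x \cdot \nabla P_{\ell-2} = (\ell - 2)\,P_{\ell-2}$, and the inductive hypothesis $\Delta P_{\ell-1} = \Delta P_{\ell-2} = 0$, the recurrence collapses to
\[
  (\ell + m - 3)\, \Delta P_\ell \;=\; 2(2\ell + m - 4)\bigl[\, a\cdot\nabla P_{\ell-1} \,-\, (\ell - 1) P_{\ell-2} \,\bigr].
\]
The main obstacle is therefore the auxiliary derivative identity $a \cdot \nabla P_\ell = \ell\, P_{\ell-1}$, which I will establish by a parallel induction: applying $a \cdot \nabla$ to the recurrence, substituting the inductive identities for $P_{\ell-1}$ and $P_{\ell-2}$, and then invoking the recurrence for $P_{\ell-1}$ itself to eliminate the resulting $\langle a, x\rangle P_{\ell-2}$ and $\Vert x\Vert^2 P_{\ell-3}$ terms, the right-hand side collapses to $\ell(\ell + m - 3)\, P_{\ell-1}$. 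This closes the double induction and delivers harmonicity of the homogeneous representative $P_\ell$.
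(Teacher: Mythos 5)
Your proof is correct and rests on exactly the same homogeneous representative as the paper's one-line argument, namely $F_\ell^m(x)=\Vert x\Vert^\ell Q_\ell^m\left(\langle a,x\rangle/\Vert x\Vert\right)$. The added value is that you actually verify harmonicity via the double induction on $\Delta P_\ell$ and $a\cdot\nabla P_\ell=\ell P_{\ell-1}$ (which the paper asserts without proof), and you supply the reduction to $\Vert a\Vert=1$ that the paper silently needs, since $F_\ell^m$ itself is harmonic only for unit $a$ (e.g.\ $\Delta P_2 = 2m(\Vert a\Vert^2-1)/(m-1)$).
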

	\begin{proof}
		The zonal condition is satisfied by construction. The other two conditions follow from considering $F^m_\ell(x)\in\left[Q_\ell^m(\left<a,x\right>)\right]_{1-\sum x_i^2}$ where 
		\[F_\ell^m(x)= \vert\vert x\vert\vert^\ell Q_\ell^m\left(\frac{\left<a,x\right>}{\vert\vert x\vert\vert}\right).\]
		Note that since our ideal is generated by $1-\sum x_i^2$, we find $\vert\vert x\vert\vert^2\equiv 1$.
	\end{proof}
	We note that, up to scaling and rotation of the sphere, $F_\ell^m(x)$ as defined above is the unique degree $\ell$ polynomial which is zonal and both homogeneous and harmonic on the sphere. These polynomials have played an important role in understanding spherical $s$-distance sets and $t$-designs (\cite{Delsarte1977},\cite{Suda2011}). The main results of \cite{Delsarte1977} come from considering a finite set $X\subset\mathbb{S}^m$ and a basis $q_1,\dots, q_M$ of the harmonic polynomial functions on the sphere with fixed degree $k$. We then map $X$ to $\mathbb{R}^M$ by evaluating each basis polynomial at every point. Fixing two points, $\zeta,\xi\in X$, we then use Theorem \ref{gegprop} to show that $\displaystyle{\sum_{i=1}^M\left<q_i(\zeta),q_i(\xi)\right> = c_kQ^m_k\left(\left<\zeta,\xi\right>\right)}$ where the constant $c_k$ does not depend on the points chosen. It should not be surprising that these polynomials were of interest before their use in combinatorics \cite{Delsarte1977}. In fact, 35 years earlier, Sch\"{o}nberg characterized positive definite functions on the sphere using these same polynomials.
	\section{Sch\"{o}nberg's theorem}
	Let $m$ be a fixed positive integer. For any finite set of unit vectors $X\subset S^{m-1}$, let $G_X$ denote the Gram matrix of $X$; then $G_X$ is positive semidefinite ($G_X\succeq 0$). A function $f:[-1,1]\rightarrow\mathbb{R}$ is \textit{positive definite}\index{positive definite function} on $S^{m-1}$ if, for every finite subset $X$, $f$ applied entrywise to $G_X$ results in a positive semidefinite matrix, i.e.\ $f\circ(G_X)\succeq 0$. Here we present Sch\"{o}nberg's Theorem \cite{Schoenberg1942} as it applies to polynomials.
	\begin{thm}[Sch\"{o}nberg \cite{Schoenberg1942}]\label{schoenthm}
		Fix $m\in\mathbb{Z}^+$. A polynomial $f:[-1,1]\rightarrow\mathbb{R}$ with degree $d$ is positive definite on $S^{m-1}$ if and only if $f(t) = \sum_{\ell=0}^d c_\ell Q_\ell^m(t)$ for non-negative constants $c_\ell$.\qed
	\end{thm}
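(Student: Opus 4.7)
The plan is to decompose the theorem into its two directions and rely on the addition formula for spherical harmonics, which is essentially what the Gegenbauer polynomials encode. Since the degree $d$ Gegenbauer polynomials $Q_0^m,\dots,Q_d^m$ form a basis for $\mathbb{R}[t]_{\leq d}$, any polynomial $f$ of degree $d$ admits a unique expansion $f(t)=\sum_{\ell=0}^d c_\ell Q_\ell^m(t)$; the content of the theorem is the sign constraint on the $c_\ell$.

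For the ``if'' direction, I would show that each individual matrix $Q_\ell^m\circ(G_X)$ is positive semidefinite, since then any non-negative linear combination is as well, giving $f\circ(G_X)\succeq 0$. To this end, pick an orthonormal basis $Y_{\ell,1},\dots,Y_{\ell,N_\ell}$ of the degree-$\ell$ harmonic polynomials on $S^{m-1}$. The addition formula (which, as alluded to after Theorem \ref{gegprop}, follows from the uniqueness of the zonal homogeneous harmonic of degree $\ell$) asserts that
\begin{equation*}
Q_\ell^m(\langle \zeta,\xi\rangle) \;=\; \kappa_\ell \sum_{j=1}^{N_\ell} Y_{\ell,j}(\zeta)\,Y_{\ell,j}(\xi)
\end{equation*}
for all $\zeta,\xi\in S^{m-1}$ and some positive constant $\kappa_\ell$ (the normalization is fixed by $Q_\ell^m(1)=1$). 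Evaluating at the points of $X$ and forming the $|X|$-dimensional vectors $v_{\ell,j}:=\bigl(Y_{\ell,j}(x)\bigr)_{x\in X}$, we obtain $Q_\ell^m\circ(G_X)=\kappa_\ell\sum_j v_{\ell,j}v_{\ell,j}^T\succeq 0$.

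For the ``only if'' direction, assume $f=\sum c_\ell Q_\ell^m$ is positive definite on $S^{m-1}$ and fix $\ell_0\leq d$; I want to show $c_{\ell_0}\geq 0$. The plan is to promote the discrete positivity assumption to a continuous one by approximation, then apply orthogonality. Concretely, pick any real harmonic polynomial $q$ of degree $\ell_0$ on $S^{m-1}$ with $q\not\equiv 0$. For a finite subset $X\subset S^{m-1}$, positive definiteness gives
\begin{equation*}
\sum_{x,y\in X} q(x)\,q(y)\,f(\langle x,y\rangle) \;=\; q_X^T\bigl(f\circ G_X\bigr)q_X \;\geq\; 0,
\end{equation*}
where $q_X$ is the column vector $(q(x))_{x\in X}$. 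Taking $X$ to be an increasingly dense sample of $S^{m-1}$ (e.g.\ Riemann-sum style with the uniform surface measure $\sigma$), the left side converges to $\iint_{S^{m-1}\times S^{m-1}} f(\langle x,y\rangle)\,q(x)\,q(y)\,d\sigma(x)\,d\sigma(y)$, so this integral is non-negative.

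Finally, I apply the Funk--Hecke formula: the integral operator $T_f\phi(x):=\int f(\langle x,y\rangle)\phi(y)\,d\sigma(y)$ has the spherical harmonics of each degree $\ell$ as eigenfunctions, with eigenvalue proportional to $c_\ell$ (this is essentially the orthogonality of the $Q_\ell^m$ with respect to the weight $(1-t^2)^{(m-3)/2}$, combined with the addition formula). Thus $T_f q=\lambda\,c_{\ell_0}\,q$ for some strictly positive $\lambda$, and the double integral above equals $\lambda\,c_{\ell_0}\int q^2\,d\sigma$. Since $\int q^2\,d\sigma>0$, the inequality forces $c_{\ell_0}\geq 0$. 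The main obstacle is being careful about the approximation step: one must argue that Riemann sums along, say, a quasi-uniform sequence of point sets converge to the double integral, which uses only continuity of $f$ and of $q$, so this is routine but must be stated. An alternative that sidesteps approximation entirely is to apply the original hypothesis to $X$ chosen as the support of a spherical cubature formula of sufficiently high strength exactly integrating $(x,y)\mapsto f(\langle x,y\rangle)q(x)q(y)$, but the density approach seems cleaner.
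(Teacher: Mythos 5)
The paper does not actually prove this statement: it is quoted from Sch\"{o}nberg's 1942 paper and the theorem is stamped \verb|\qed| with no argument supplied, so there is no internal proof to compare against. Your proposal is the standard classical proof and, as a sketch, it is correct. The ``if'' direction via the addition formula $Q_\ell^m(\langle\zeta,\xi\rangle)=\kappa_\ell\sum_j Y_{\ell,j}(\zeta)Y_{\ell,j}(\xi)$ with $\kappa_\ell>0$ is exactly the right mechanism (and is the same identity the thesis alludes to after Theorem \ref{gegprop} when discussing Delsarte--Goethals--Seidel), and writing $Q_\ell^m\circ(G_X)$ as a sum of rank-one PSD matrices settles that half. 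For the ``only if'' direction, your computation closes correctly: expanding $f=\sum_k c_kQ_k^m$ and using the addition formula plus orthogonality of harmonics of distinct degrees, the double integral collapses to $c_{\ell_0}\kappa_{\ell_0}\int q^2\,d\sigma$, so you do not even need to invoke Funk--Hecke as a separate black box. Two small points deserve explicit care if this were written out in full: (1) the passage from finite point sets to the surface measure should be phrased as weak convergence of the empirical measures $\mu_{X_n}\to\sigma$, so that $\iint F\,d\mu_{X_n}\,d\mu_{X_n}\to\iint F\,d\sigma\,d\sigma$ for the continuous kernel $F(x,y)=f(\langle x,y\rangle)q(x)q(y)$ after normalizing the quadratic form by $|X_n|^{-2}$ (which preserves non-negativity); and (2) the argument implicitly assumes $m\geq 2$ so that nonzero harmonics of each degree $\ell\leq d$ exist on $S^{m-1}$ and $\kappa_\ell>0$ --- the degenerate case $m=1$ is outside the scope of how the thesis actually uses the theorem. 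Neither point is a gap in substance.
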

	In particular, $Q_\ell^m(t)$ is a positive definite function for any choice of $m$ and $\ell$. This leads to the following theorem:
	\schAS
	\begin{proof}
		Since $\BMA =\text{span}\left\{E_0,\dots,E_d\right\}$ is closed under entrywise products, we must have $Q_k^{m_i}\circ\left(\frac{\vert X\vert}{m_i}E_i\right)\in \BMA$ and we may write $Q_k^{m_i}\circ\left(\frac{\vert X\vert}{m_i}E_i\right) = \sum_j \theta_{kj} E_j$ where each $\theta_{kj}$ is an eigenvalue of $Q_k^{m_i}\circ\left(\frac{\vert X\vert}{m_i}E_i\right)$. Now recall the algebra isomorphism $\phi^*:\BMA\rightarrow\bbL^*$ in \eqref{algiso} mapping entrywise products to standard matrix products for which $\phi^*\left(E_j\right)=\frac{1}{\vert X\vert}L_j^*$. Applying $\phi^*$ to both sides of the equation on the left in \eqref{ELGeg} gives the equation on the right in \eqref{ELGeg}. Finally, since $E_i$ is an idempotent matrix with constant main diagonal entries given by $\frac{1}{\vert X\vert}Q_{0i} = \frac{m_i}{\vert X\vert}$, we know that $\frac{\vert X\vert}{m_i}E_i$ is the Gram matrix of a set of points in $S^{m_i-1}$. Therefore, Theorem~\ref{schoenthm} tells us that $Q_k^{m_i}\circ\left(\frac{\vert X\vert}{m_i}E_i\right)$ must be positive semidefinite.
	\end{proof}
	While Sch\"{o}nberg's condition $\displaystyle{Q^{\text{rank }G}_\ell\circ(G)\succeq 0}$ is a statement about Gram matrices, Theorem \ref{schoen-as} provides us with an equivalent statement about parameters. So we have a new feasibility condition for parameter sets.
	\begin{cor}\label{schoenL}
		If a parameter set $\left\{q_{ij}^k\right\}_{i,j,k}$ is realizable then, for all $\ell\geq 0$ and all $0\leq i\leq d$, $Q^{m_i}_\ell\left(\frac{1}{m_i}L_i^*\right)\geq 0$.
	\end{cor}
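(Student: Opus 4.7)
The plan is to reduce this corollary almost entirely to Theorem \ref{schoen-as}, which has already done the heavy lifting. The key observation is that the desired conclusion $Q_\ell^{m_i}\bigl(\tfrac{1}{m_i}L_i^*\bigr)\geq 0$ should be read as entrywise non-negativity, because the feasibility conditions in question concern arrays of Krein parameters.

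First, I would invoke Theorem \ref{schoen-as} to write
\[
Q_\ell^{m_i}\!\left(\tfrac{1}{m_i}L_i^*\right) \;=\; \frac{1}{|X|}\sum_{j=0}^{d}\theta_{\ell j}\,L_j^*,
\]
where the coefficients $\theta_{\ell j}$ are precisely the eigenvalues of $Q_\ell^{m_i}\!\circ\!\bigl(\tfrac{|X|}{m_i}E_i\bigr)$. Next I would recall from the proof of Theorem \ref{schoen-as} that $\tfrac{|X|}{m_i}E_i$ is the Gram matrix of a finite set of unit vectors in $S^{m_i-1}$, so Sch\"onberg's Theorem \ref{schoenthm} guarantees $Q_\ell^{m_i}\!\circ\!\bigl(\tfrac{|X|}{m_i}E_i\bigr)\succeq 0$, and hence every $\theta_{\ell j}\geq 0$.

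Finally, I would note that $[L_j^*]_{kh}=q^k_{jh}$ is a Krein parameter; by Lemma \ref{kreinfeas}, realizability forces every such entry to be non-negative. Thus the displayed expression above is a non-negative linear combination of matrices whose entries are themselves non-negative, which yields the desired entrywise inequality.

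There is no real obstacle in this argument — all the substantive work (Sch\"onberg's theorem and the translation between Schur products on idempotents and matrix products on the $L_j^*$) is already contained in Theorem \ref{schoen-as}. The only minor care needed is to clarify the meaning of the symbol $\geq 0$ (entrywise, not positive semidefinite) and to point out that this interpretation is exactly the one relevant to certifying realizability of a parameter set.
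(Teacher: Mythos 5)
Your proposal is correct and matches the paper's proof, which likewise reads the conclusion entrywise and deduces it from Theorem \ref{schoen-as} together with the observation that the Krein conditions make every $L_j^*$ entrywise non-negative. Your re-derivation of the non-negativity of the $\theta_{\ell j}$ via Sch\"onberg's theorem is harmless but redundant, since Theorem \ref{schoen-as} already asserts those constants are non-negative.
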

	\begin{proof}
		This follows directly from Theorem \ref{schoen-as}, noting that the Krein conditions imply each $L_j^*$ is non-negative.
	\end{proof}
	We note that, Lemma \ref{kitchensink} \emph{($\mathit{i'}$)} tells us that $q^j_{i0} = \delta_{i,j}$ for $0\leq i\leq d$ and thus Equation $\eqref{ELGeg}$ gives us that the first column of $Q^{m_i}_\ell\left(\frac{1}{m_i}L_i^*\right)$ is \[\frac{1}{\vert X\vert}\sum_j\theta_{\ell j}\vec{e}_j=\left[\begin{array}{c}
	\bigslant{\theta_{\ell0}}{\vert X\vert}\\\vdots\\\bigslant{\theta_{\ell d}}{\vert X\vert}
	\end{array}\right].\]
	Therefore it is sufficient to check that the first column of $Q^{m_i}_\ell\left(\frac{1}{m_i}L_i^*\right)$ is non-negative.
	\subsection{Bound on new feasibility conditions}\label{proof}
	In order to identify where Corollary \ref{schoenL} might have impact, we work in this section to show that, for $\ell$ sufficiently large --- a bound we give entirely in terms of the Krein parameters --- the condition automatically holds. We will do this by first simplifying Equation \eqref{ELGeg} into a vector equation using the observation made after Corollary \ref{schoenL}. From this equation, we derive a three-term recurrence of non-negative vectors and transform our vector space via an invertible transformation which changes our transition matrix from $\frac{1}{m}L_i^*$ to one which is orthogonally diagonalizable. We then map our vector space to one with twice the dimension where our three-term recurrence may be represented as a linear recurrence. Using this linear recurrence we express our initial vector as a sum of steady state vectors and transient vectors in order to find a bound on how quickly the transient vectors must decrease in norm. In all that follows suppose we have a feasible parameter set with first and second eigenmatrix $P$ and $Q$. For fixed $0\leq i\leq d$, let $L_i^*$ be the $i^\text{th}$ matrix of Krein parameters. Let $\ell\geq0$ be an integer and assume $m_i:=q_{0i}^i>2$. Let $\vec{c}_\ell$ be the first column of the matrix $Q_{\ell}^{m_i}\left(\frac{1}{m_i}L_i^*\right)$. Using our recurrence relation \eqref{recurrence} we have,
	\begin{equation}\label{crec}
	\vec{c}_\ell = \frac{(2\ell+m_i-4)\frac{1}{m_i}L_i^*\vec{c}_{\ell-1} - (\ell-1)\vec{c}_{\ell-2}}{\ell+m-3} \qquad \ell\geq 2.
	\end{equation}
	Since $Q_0^m\left(\frac{1}{m_i}L_i^*\right) = I$ and $Q_1^m\left(\frac{1}{m_i}L^*_i\right) = \frac{1}{m_i}L_i$, we have
	\begin{equation*}
	\vec{c}_0 = \vec{e}_0\qquad \vec{c}_1 = \frac{1}{m_i}\vec{e}_i.
	\end{equation*}
	One immediate difficulty is that our transition matrix $L_i^*$ is not symmetric, and thus does not have orthogonal eigenvectors. We will need this property shortly, so we make our first transformation via $\vec{b}_\ell = \sqrt{\Delta_m}\vec{c}_\ell$ where $\sqrt{\Delta_m}$ is the diagonal matrix with $i^\text{th}$ diagonal entry $\sqrt{m_i}$. This transformation turns our recurrence relation into
	\begin{equation}\label{brec}
	\vec{b}_\ell = \frac{(2\ell+m_i-4)M\vec{b}_{\ell-1} - (\ell-1)\vec{b}_{\ell-2}}{\ell+m_i-3} \qquad \ell\geq 2,
	\end{equation}
	where $M = \frac{1}{m_i}\sqrt{\Delta_m}L_i^*\left(\sqrt{\Delta_m}\right)^{-1}$. For this three-term recurrence, our initial vectors are
	\begin{equation*}
	\vec{b}_0 = \vec{e}_0\qquad \vec{b}_1 = \frac{1}{\sqrt{m_i}}\vec{e}_i.
	\end{equation*}
	The new transition matrix $M$ is symmetric and we may easily calculate the eigenvalues and eigenvectors of $M$ via the following lemma.
	\begin{lem}\label{Mmat}
		Let $M = \frac{1}{m_i}\sqrt{\Delta_m}L_i^*\left(\sqrt{\Delta_m}\right)^{-1}$. Then the set $\left\{\vec{p}_0,\vec{p}_1,\dots,\vec{p}_d\right\}$ with
		\[\vec{p}_j = \left[\begin{array}{c}
		P_{0j}\\
		\sqrt{m_1}P_{1j}\\
		\vdots\\
		\sqrt{m_d}P_{dj}\\
		\end{array}\right]\]
		is an orthogonal set of eigenvectors for $M$ with eigenvalues $m_i^{-1}Q_{ji}$ $(0\leq j\leq d)$.
	\end{lem}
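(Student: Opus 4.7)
The statement is essentially a change-of-basis observation combined with two identities that have already been proved. The plan is to recognize $\vec{p}_j$ as a diagonal rescaling of the $j$-th column of $P$, so that $M\vec{p}_j$ reduces via a simple cancellation to $\frac{1}{m_i}\sqrt{\Delta_m}(L_i^* P_{\cdot j})$, and then invoke Lemma~\ref{arrayeigenvec} to identify $L_i^* P_{\cdot j}$ as a scalar multiple of $P_{\cdot j}$. Orthogonality will then follow from the first orthogonality relation for $P$ (Lemma~\ref{kitchensink} (xii)). I expect no serious obstacle: the only delicate point is index bookkeeping, since $Q$ is not in general symmetric and the statements of Lemma~\ref{arrayeigenvec} and Lemma~\ref{kitchensink} (xii) each involve the first or second index of the eigenmatrix in a very specific way.

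Concretely, writing $P_{\cdot j}$ for the $j$-th column of $P$, one observes $\vec{p}_j = \sqrt{\Delta_m}\,P_{\cdot j}$. Plugging into the definition of $M$ gives
\[
M\vec{p}_j \;=\; \frac{1}{m_i}\sqrt{\Delta_m}\,L_i^*\,\bigl(\sqrt{\Delta_m}\bigr)^{-1}\sqrt{\Delta_m}\,P_{\cdot j}
\;=\;\frac{1}{m_i}\sqrt{\Delta_m}\,\bigl(L_i^* P_{\cdot j}\bigr).
\]
Now Lemma~\ref{arrayeigenvec}, applied with the roles of the two indices swapped, says that $P_{\cdot j}$ is an eigenvector of $L_i^*$ with eigenvalue $Q_{ji}$ (this is exactly the identity produced in the proof of that lemma after reading off column zero). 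Substituting yields $M\vec{p}_j = \tfrac{Q_{ji}}{m_i}\vec{p}_j$, as claimed.

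For orthogonality, one computes directly
\[
\vec{p}_j^{\,T}\vec{p}_k \;=\; \sum_{\ell=0}^{d} m_\ell\,P_{\ell j}\,P_{\ell k},
\]
which is precisely the left-hand side of Lemma~\ref{kitchensink} (xii) after relabeling indices; that identity evaluates the sum as $|X|\,k_j\,\delta_{jk}$. This gives $\vec{p}_j\perp\vec{p}_k$ for $j\neq k$ and, as a bonus for later use, $\|\vec{p}_j\|^2 = |X|\,k_j$, so in particular none of the $\vec{p}_j$ is the zero vector. Since we have produced $d+1$ pairwise orthogonal nonzero eigenvectors of $M$, they form an orthogonal eigenbasis, completing the proof.
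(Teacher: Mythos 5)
Your proof is correct and follows essentially the same route as the paper: identify $\vec{p}_j=\sqrt{\Delta_m}P_{\cdot j}$ so the conjugation cancels, use the fact that the columns of $P$ are eigenvectors of $L_i^*$ with eigenvalues $Q_{ji}$ (you cite Lemma~\ref{arrayeigenvec}, the paper cites the equivalent Lemma~\ref{kitchensink}~($\mathit{viii'}$)), and deduce orthogonality from the relation $\left(\sqrt{\Delta_m}P\right)^T\left(\sqrt{\Delta_m}P\right)=\vert X\vert\Delta_k$, which is exactly Lemma~\ref{kitchensink}~(xii) in entrywise form. The index bookkeeping is handled correctly throughout.
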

	\begin{proof}
		Note that Lemma \ref{kitchensink} \emph{($\mathit{viii'}$)} tells us that the columns of $P$ are eigenvectors of $L_i^*$ with eigenvalues $Q_{ji}$ ($0\leq j\leq d$). Conjugating $L_i^*$ by $\sqrt{\Delta_m}$ results in a matrix with eigenvectors given by the columns of $\sqrt{\Delta_m}P$ with the same eigenvalues. Further, scaling by $\frac{1}{m_i}$ leaves the eigenvectors unchanged but scales the eigenvalues by $\frac{1}{m_i}$. Finally, recall our orthogonality relations (Lemma \ref{orthorels}): $\Delta_m P = Q^T \Delta_k$ and $PQ = \vert X\vert I$. Therefore
		\[\left(\sqrt{\Delta_m}P\right)^T\left(\sqrt{\Delta_m}P\right) = P^T\Delta_mP = P^TQ^T\Delta_k = \vert X\vert\Delta_k.\qedhere\]
	\end{proof}
	Now that our transition matrix has an orthogonal set of eigenvectors, we double the dimension of our vector space in order to make our recurrence relation linear. For $\ell\geq 1$, let 
	\begin{equation}\label{tkdef}\mu_\ell = \frac{\ell-1}{\ell+m_i-3};\qquad \vec{y}_\ell = \left[\begin{array}{c}
	\vec{b}_{\ell}\\\hdashline[2pt/2pt]
	\vec{b}_{\ell-1}
	\end{array}\right];\qquad T_\ell = \left[\begin{array}{>{\centering\arraybackslash}p{2cm};{2pt/2pt}>{\centering\arraybackslash}p{2cm}}
	$(1+\mu_\ell)M$ & $-\mu_\ell I$\\\hdashline[2pt/2pt]
	$I$ & $0$
	\end{array}\right].\end{equation}
	Then we have
	\begin{equation}\label{yrec}
	\vec{y}_\ell = T_\ell\vec{y}_{\ell-1};\qquad \vec{y}_1 = \left[\begin{array}{c}
	\frac{1}{\sqrt{m_i}}\vec{e}_{i}\\\hdashline[2pt/2pt]
	\vec{e}_{0}
	\end{array}\right].
	\end{equation}
	Although $T_\ell$ depends on $\ell$, we may identify some common properties among the eigenvalues and eigenvectors of $T_\ell$ for all values of $\ell>0$. The next few lemmas describe the eigenspaces of $T_\ell$ as well as the action of $T_\ell$ on linear subspaces of our vector space containing our initial vector. Let $\vec{p}_j$ be the $j^\text{th}$ column of $\sqrt{\Delta_m}P$ and define the $d+1$ 2-dim subspaces $B_j = \text{span}\left\{\left[\begin{array}{c}
	\vec{p}\\\hdashline[2pt/2pt]
	0
	\end{array}\right],\left[\begin{array}{c}
	0\\\hdashline[2pt/2pt]
	\vec{p}
	\end{array}\right]\right\}$ which are clearly pairwise orthogonal.
	\begin{lem}\label{Tkeig}
		Let $(\lambda,\vec{p})$ be an eigenpair of $M$. Let $\left\{\eta^+,\eta^-\right\}$ be the two roots of the quadratic polynomial $x^2-(1+\mu_\ell)\lambda x +\mu_\ell$. Then $\left(\eta^+,\left[\begin{array}{c}
		\eta^+\vec{p}\\\hdashline[2pt/2pt]
		\vec{p}
		\end{array}\right]\right)$ is an eigenpair of $T_\ell$. If $\eta^+\neq \eta^-$ then $ \left(\eta^-,\left[\begin{array}{c}
		\eta^-\vec{p}\\\hdashline[2pt/2pt]
		\vec{p}
		\end{array}\right]\right)$ is also an eigenpair, otherwise $\left[\begin{array}{c}
		\vec{p}\\\hdashline[2pt/2pt]
		\vec{0}
		\end{array}\right]$ is a generalized eigenvector of order two with eigenvalue $\eta^+$.
		
	\end{lem}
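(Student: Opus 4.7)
The plan is to verify the claim by direct computation, exploiting the $2\times 2$ block structure of $T_\ell$ given in \eqref{tkdef} together with the eigenrelation $M\vec{p} = \lambda \vec{p}$. Since $T_\ell$ acts in block form, testing whether a vector of the shape $\left[\begin{array}{c}\alpha\vec{p}\\ \vec{p}\end{array}\right]$ is an eigenvector reduces to a single scalar condition on $\alpha$ relating to $\lambda$, $\mu_\ell$, and the candidate eigenvalue. I expect the quadratic defining $\eta^\pm$ to emerge naturally from this condition.

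For the first assertion, I would apply $T_\ell$ to $\left[\begin{array}{c}\eta^+\vec{p}\\ \vec{p}\end{array}\right]$. The bottom block contributes $\eta^+\vec{p}$ automatically, and the top block contributes $(1+\mu_\ell)\eta^+ M\vec{p} - \mu_\ell\vec{p} = \bigl((1+\mu_\ell)\lambda\eta^+ - \mu_\ell\bigr)\vec{p}$ after using $M\vec{p}=\lambda\vec{p}$. Matching this with $\eta^+$ times the candidate vector reduces to the single equation $(\eta^+)^2 - (1+\mu_\ell)\lambda\eta^+ + \mu_\ell = 0$, which is exactly the quadratic in the hypothesis. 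The same argument verbatim handles $\eta^-$, and when $\eta^+\neq \eta^-$ the two resulting eigenvectors are linearly independent since their top blocks, $\eta^+\vec{p}$ and $\eta^-\vec{p}$, differ.

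For the degenerate case $\eta^+=\eta^-$, I would use Vieta's formulas applied to the quadratic: the double root forces $2\eta^+=(1+\mu_\ell)\lambda$ and $(\eta^+)^2=\mu_\ell$. I would then compute $(T_\ell - \eta^+ I)\left[\begin{array}{c}\vec{p}\\ \vec{0}\end{array}\right]$. The top block yields $\bigl((1+\mu_\ell)\lambda - \eta^+\bigr)\vec{p} = \eta^+\vec{p}$ (using Vieta), and the bottom block yields $\vec{p}$. So $(T_\ell - \eta^+ I)$ sends $\left[\begin{array}{c}\vec{p}\\ \vec{0}\end{array}\right]$ precisely to the eigenvector $\left[\begin{array}{c}\eta^+\vec{p}\\ \vec{p}\end{array}\right]$ identified in the first part, and applying $(T_\ell - \eta^+ I)$ a second time yields $\vec{0}$. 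This is exactly the statement that $\left[\begin{array}{c}\vec{p}\\ \vec{0}\end{array}\right]$ is a generalized eigenvector of order two for $\eta^+$.

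There is no real obstacle here; the argument is a mechanical block computation. The only delicate point is the bookkeeping in the Jordan case, where one must verify both that $(T_\ell-\eta^+I)$ does not already annihilate $\left[\begin{array}{c}\vec{p}\\ \vec{0}\end{array}\right]$ and that its image lies in the (one-dimensional) genuine eigenspace within the block $B_j$ spanned by $\vec{p}$. This follows automatically because $\vec{p}\neq \vec{0}$ (it is an eigenvector of $M$) and because the image computed above is $\left[\begin{array}{c}\eta^+\vec{p}\\ \vec{p}\end{array}\right]$, a vector we have already shown to be annihilated by $(T_\ell-\eta^+I)$.
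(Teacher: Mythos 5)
Your proof is correct and follows essentially the same route as the paper: a direct block computation showing $T_\ell$ maps $\bigl[\eta\vec{p};\vec{p}\bigr]$ to $\bigl[\eta^2\vec{p};\eta\vec{p}\bigr]$ via the quadratic relation, and in the double-root case showing $(T_\ell-\eta I)$ sends $\bigl[\vec{p};\vec{0}\bigr]$ to the eigenvector $\bigl[\eta\vec{p};\vec{p}\bigr]$. No gaps.
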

	\begin{proof}
		Let $M \vec{p} = \lambda\vec{p}$ and let $\eta$ be given so that $\eta^2-(1+\mu_\ell)\lambda\eta +\mu_\ell=0$. Then
		\[T_\ell\left[\begin{array}{c}
		\eta \vec{p}\\\hdashline[2pt/2pt]
		\vec{p}
		\end{array}\right] = \left[\begin{array}{c}
		(1+\mu_\ell)M(\eta \vec{p}) - \mu_\ell \vec{p}\\\hdashline[2pt/2pt]
		\eta \vec{p}
		\end{array}\right] = \left[\begin{array}{c}
		\left((1+\mu_\ell)\lambda\eta - \mu_\ell\right) \vec{p}\\\hdashline[2pt/2pt]
		\eta \vec{p}
		\end{array}\right] =  \left[\begin{array}{c}
		\eta^2 \vec{p}\\\hdashline[2pt/2pt]
		\eta \vec{p}
		\end{array}\right].\]
		If we have two distinct roots $(\eta^+\neq \eta^-)$ then $\left[\begin{array}{c}
		\eta^+ \vec{p}\\\hdashline[2pt/2pt]
		\vec{p}
		\end{array}\right]$ and $\left[\begin{array}{c}
		\eta^- \vec{p}\\\hdashline[2pt/2pt]
		\vec{p}
		\end{array}\right]$ are linearly independent eigenvectors. If instead we find that $\eta^+=\eta^-=\eta$, then $\eta = \frac{(1+\mu_\ell)\lambda}{2}$ since $x^2-bx+c = (x-\eta)^2$ implies $\eta = \frac{b}{2}$. In this case,
		\[T_\ell\left[\begin{array}{c}
		\vec{p}\\\hdashline[2pt/2pt]
		\vec{0}
		\end{array}\right] = \left[\begin{array}{c}
		(1+\mu_\ell)M\vec{p}\\\hdashline[2pt/2pt]
		\vec{p}
		\end{array}\right]= \left[\begin{array}{c}
		(1+\mu_\ell)\lambda\vec{p}\\\hdashline[2pt/2pt]
		\vec{p}
		\end{array}\right] = \left[\begin{array}{c}
		\eta \vec{p}\\\hdashline[2pt/2pt]
		\vec{0}
		\end{array}\right]+\left[\begin{array}{c}
		\eta \vec{p}\\\hdashline[2pt/2pt]
		\vec{p}
		\end{array}\right].\]
		Thus $(T_\ell-\eta I)\left[\begin{array}{c}
		\vec{p}\\\hdashline[2pt/2pt]
		\vec{0}
		\end{array}\right] = \left[\begin{array}{c}
		\eta \vec{p}\\\hdashline[2pt/2pt]
		\vec{p}
		\end{array}\right]$ giving us that $\left[\begin{array}{c}
		\vec{p}\\\hdashline[2pt/2pt]
		\vec{0}
		\end{array}\right]$ is a generalized eigenvector of order two; that is, $\left[\begin{array}{c}
		\vec{p}\\\hdashline[2pt/2pt]
		\vec{0}
		\end{array}\right]$ is in the kernel of $\left(T_\ell-\eta I\right)^2$ but not $\left(T_\ell-\eta I\right)$.
	\end{proof}
	Lemma \ref{Tkeig} gives a complete description of the eigenspaces of our transition matrix $T_\ell$ since we know $M$ is diagonalizable. Further, since the eigenvectors of $M$ are orthogonal, the pairwise orthogonal $B_j$ are either full generalized eigenspaces or the sum of two one-dimensional eigenspaces. In either case, each $B_j$ is $T_\ell$-invariant for all $\ell\geq 0$. This alone is not sufficient to find our bound as we seek to show that the successive actions of $T_\ell$ on our initial vector maps it close enough to some steady state vector to guarantee all entries are non-negative. Thus we must both find the steady state vector as well as determine the action of $T_\ell$ on the projections of our initial vector on each $B_j$. We note however that whenever our eigenvalues are real, the singular values of $T_\ell$ may grow quite large. Thus, instead of bounding the action of $T_\ell$ on each $B_j$ as a whole, we will bound a region of $B_j$ containing our projections and show that $T_\ell$ acts on that region in a consistent manner.
	\begin{lem}\label{realroot}
		Let $\left(\lambda,\vec{p}\right)$ be an eigenpair of $M$ and assume that $x^2-(1+\mu_\ell)\lambda x + \mu_\ell$ has only real roots, $\vert\eta^-\vert\leq\vert\eta^+\vert$. Let $\vec{v} = \left[\begin{array}{c}
		a\vec{p}\\\hdashline[2pt/2pt]
		b\vec{p}
		\end{array}\right]$ with $b\neq 0$. There exists a constant $\beta\in\mathbb{R}$ so that $T_\ell \vec{v} = \left[\begin{array}{c}
		\beta\vec{p}\\\hdashline[2pt/2pt]
		a\vec{p}
		\end{array}\right]$. Further, $ \left(\bigslant{\lambda a}{b}\right)\geq0$ and $\vert\eta^+\vert\leq \vert\frac{a}{b}\vert\leq\vert\lambda\vert$ imply $\left(\bigslant{\lambda\beta}{a}\right)\geq 0$ and $\vert\eta^+\vert\leq \vert\frac{\beta}{a}\vert\leq\vert\lambda\vert$ giving $\vert\vert T_\ell \vec{v}\vert\vert_2\leq \vert \lambda\vert\cdot\vert\vert \vec{v}\vert\vert_2$.
	\end{lem}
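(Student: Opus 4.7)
The plan is to reduce everything to the Vieta relations $\eta^{+}+\eta^{-}=(1+\mu_\ell)\lambda$ and $\eta^{+}\eta^{-}=\mu_\ell$ for the two real roots of the quadratic $x^2-(1+\mu_\ell)\lambda x+\mu_\ell$. The existence of the scalar $\beta$ is immediate: using $M\vec{p}=\lambda\vec{p}$ and the block form of $T_\ell$ in \eqref{tkdef}, a direct computation yields $T_\ell\vec{v}=\left[\begin{array}{c}\beta\vec{p}\\a\vec{p}\end{array}\right]$ with $\beta=(1+\mu_\ell)\lambda a-\mu_\ell b$, so no deeper argument is needed for this part.

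For the quantitative part I would set $r=a/b$ and $s=\beta/a$, so that $s=(1+\mu_\ell)\lambda-\mu_\ell/r=(\eta^{+}+\eta^{-})-\eta^{+}\eta^{-}/r$. First I would observe that since $\mu_\ell=\eta^{+}\eta^{-}\geq 0$, both roots share a common sign (or one vanishes), and that common sign agrees with the sign of $\lambda$ by the sum relation. Next I would show that the hypothesis $|\eta^{+}|\leq|\lambda|$ forces $|\lambda|\leq 1$: writing $\eta^{+}$ explicitly and, by the symmetry $\lambda\mapsto-\lambda$, $r\mapsto-r$ of the entire setup, assuming $\lambda>0$, the inequality $\eta^{+}\leq\lambda$ becomes $\sqrt{(1+\mu_\ell)^2\lambda^2-4\mu_\ell}\leq(1-\mu_\ell)\lambda$, which squares down to $\mu_\ell(\lambda^2-1)\leq 0$ and hence $\lambda\leq 1$ whenever $\mu_\ell>0$. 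The degenerate case $\mu_\ell=0$ yields $\eta^{+}=\lambda$, $\eta^{-}=0$ and is handled directly.

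With $0<\eta^{+}\leq r\leq\lambda\leq 1$ in hand, two short inequalities finish the sign and magnitude claim for $s$. The upper bound $s\leq\lambda$ reduces to $\mu_\ell\lambda r\leq\mu_\ell$, which holds because $\lambda r\leq\lambda^2\leq 1$. The lower bound $s\geq\eta^{+}$ reduces to $\eta^{+}\eta^{-}/r\leq\eta^{-}$, which holds because $r\geq\eta^{+}>0$. Together these give $\lambda\beta/a\geq 0$ and $|\eta^{+}|\leq|\beta/a|\leq|\lambda|$, which is the desired statement.

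The norm bound is then a two-line consequence: the block structure gives $\|T_\ell\vec{v}\|_2^2=(\beta^2+a^2)\|\vec{p}\|_2^2$ and $\|\vec{v}\|_2^2=(a^2+b^2)\|\vec{p}\|_2^2$, and combining $|\beta/a|\leq|\lambda|$ with the hypothesis $|a/b|\leq|\lambda|$ gives $\beta^2\leq\lambda^2 a^2$ and $a^2\leq\lambda^2 b^2$, so $\beta^2+a^2\leq\lambda^2(a^2+b^2)$. The main obstacle is organizational rather than mathematical: carefully tracking signs through the absolute-value hypothesis and handling the boundary case $\mu_\ell=0$ (where $a=0$ can occur and $s$ is not literally defined) separately. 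Once the observation $|\lambda|\leq 1$ is made, the remaining content of the lemma collapses into the three elementary inequalities above.
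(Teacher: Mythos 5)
Your proof is correct, and it rests on the same two pillars as the paper's: the explicit formula $\beta=(1+\mu_\ell)\lambda a-\mu_\ell b$ together with the Vieta relations $\eta^++\eta^-=(1+\mu_\ell)\lambda$, $\eta^+\eta^-=\mu_\ell$, followed by elementary sign and magnitude inequalities. The differences are organizational but real. The paper obtains $\beta$ by expanding $\vec{v}$ in the eigenbasis of $T_\ell$, which forces a case split between distinct roots (two eigenvectors) and a repeated root (eigenvector plus generalized eigenvector); your direct block multiplication gives the same $\beta$ in one line and eliminates that case split entirely. For the inequality $|\beta/a|\leq|\lambda|$ the two arguments diverge slightly: the paper writes $\beta/a=\eta^++\tfrac{\eta^-}{a}(a-\eta^+)$ and, using $|\eta^-|\leq|\eta^+|\leq|a|$ and the common sign of all terms, bounds $|\beta/a|$ by $|a/b|\leq|\lambda|$ (a marginally stronger intermediate bound), whereas you route through the auxiliary observation $|\lambda|\leq 1$ and the identity $s-\lambda=\mu_\ell(\lambda r-1)/r$. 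Your derivation of $|\lambda|\leq 1$ by squaring $\sqrt{(1+\mu_\ell)^2\lambda^2-4\mu_\ell}\leq(1-\mu_\ell)\lambda$ is legitimate only because the hypothesis $|\eta^+|\leq|\lambda|$ forces the right-hand side to be nonnegative, a point worth stating explicitly (in the paper's setting $\mu_\ell<1$ and $|\lambda|\leq 1$ hold anyway, since $\lambda$ is a normalized dual eigenvalue). The lower bound $s-\eta^+=\eta^-(r-\eta^+)/r\geq 0$ is literally the paper's identity in different notation. Net effect: your version is a modest simplification of the same argument, trading the eigendecomposition and its case analysis for one extra elementary observation.
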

	\begin{proof}
		We begin by noting that since our operator $T_\ell$ is linear, we may assume without loss of generality that $b=1$ and $\vert\vert\vec{p}\vert\vert_2 = 1$. This, paired with our constraint that $\lambda^2\geq\bigslant{4\mu_\ell}{(1+\mu_\ell)^2}$ tells us that $a$ and $\lambda$ must share signs. Since $\eta^+$ and $\eta^-$ are the two (not necessarily distinct) roots of the polynomial $x^2-(1+\mu_\ell)\lambda x + \mu_k$, we know that $(x-\eta^-)(x-\eta^+) = x^2-(1+\mu_\ell)\lambda x + \mu_k$ and thus $\eta^++\eta^-=(1+\mu_\ell)\lambda$ and $\eta^+\eta^- = \mu_\ell$. This further implies our two roots $\eta^-$ and $\eta^+$ also share signs with $\lambda$. We first consider the case when $\eta^-\neq \eta^+$. Here, Lemma \ref{Tkeig} tells us that $\eta^+$ and $\eta^-$ are eigenvalues of $T_\ell$ with eigenvectors
		\[\vec{v}^+ = \left[\begin{array}{c}
		\eta^+ \vec{p}\\\hdashline[2pt/2pt]
		\vec{p}
		\end{array}\right];\qquad \vec{v}^-=\left[\begin{array}{c}
		\eta^- \vec{p}\\\hdashline[2pt/2pt]
		\vec{p}
		\end{array}\right].\]
		Noting that $\vec{v} = \left(\frac{\eta^--a}{\eta^--\eta^+}\right)\vec{v}^+ + \left(\frac{a-\eta^+}{\eta^--\eta^+}\right)\vec{v}^-$, we may calculate $T_\ell \vec{v}$ explicitly giving
		\[T_\ell \vec{v} = \left(\frac{\eta^--a}{\eta^--\eta^+}\right)\eta^+\vec{v}^+ + \left(\frac{a-\eta^+}{\eta^--\eta^+}\right)\eta^-\vec{v}^- = \left[\begin{array}{c}
		\beta \vec{p}\\\hdashline[2pt/2pt]
		a\vec{p}
		\end{array}\right]\]
		where $\beta = \left(\frac{\eta^--a}{\eta^--\eta^+}\right)\left(\eta^+\right)^2 + \left(\frac{a-\eta^+}{\eta^--\eta^+}\right)\left(\eta^-\right)^2 = a\left(\eta^++\eta^-\right)-\eta^-\eta^+$. Thus $\frac{\beta}{a} = \eta^+ + \frac{\eta^-}{a}\left(a-\eta^+\right).$
		Since $\eta^-$, $\eta^+,$ and $a$ all share the same sign and $\vert\eta^+\vert\leq\vert a\vert$, $\frac{\eta^-}{a}\left(a-\eta^+\right)$ must also share the same sign as $\eta^+$ providing
		\[\left\vert\eta^+\right\vert\leq  \left\vert\eta^+ + \frac{\eta^-}{a}\left(a-\eta^+\right) \right\vert\leq\left\vert\eta^+ + \left(a-\eta^+\right) \right\vert = \vert a\vert\leq \vert\lambda\vert.\]
		Similarly, consider the case when $\eta^- = \eta^+ = \eta = \frac{(1+\mu_\ell)\lambda}{2}$. Lemma \ref{Tkeig} gives the eigenvector and generalized eigenvector
		\[\vec{w} = \left[\begin{array}{c}
		\eta \vec{p}\\\hdashline[2pt/2pt]
		\vec{p}
		\end{array}\right];\qquad \vec{w}^*=\left[\begin{array}{c}
		\vec{p}\\\hdashline[2pt/2pt]
		\vec{0}
		\end{array}\right]\]
		and we have $\vec{v} = \vec{w} + \left(a-\eta\right)\vec{w}^*$. Then $T_\ell\vec{v} =  a\vec{w} + \eta\left(a-\eta\right)\vec{w}^* = \left[\begin{array}{c}
		(2a-\eta)\eta\vec{p}\\\hdashline[2pt/2pt]
		a\vec{p}
		\end{array}\right].$
		Again taking $\beta$ to be the top coefficient, we find that $\frac{\beta}{a} = \eta+ \frac{\eta}{a}(a-\eta)$ and as before we find that $\frac{\eta}{a}(a-\eta)$ and $\eta$ share the same sign giving
		\[\vert \eta\vert\leq \left\vert \eta+ \frac{\eta}{a}(a-\eta)\right\vert\leq \left\vert \eta+ (a-\eta)\right\vert=\left\vert a\right\vert\leq\left\vert\lambda\right\vert.\]
		Thus in both cases we find that $\vert\eta\vert\leq \left\vert\frac{\beta}{a}\right\vert\leq\vert \lambda\vert$, giving
		\[\vert\vert T_\ell \vec{v}\vert\vert^2_2 =\beta^2 + a^2 = \frac{\beta^2}{a^2}a^2 + a^2\leq \lambda^2a^2+\lambda^2 = \lambda^2\vert\vert \vec{v}\vert\vert_2^2.\] 
		Finally, since $\eta^+$ and $\frac{\eta^-}{a}\left(a-\eta^+\right)$ had the same sign in both cases, we must also have $\frac{\beta}{a}$ share the same sign, forcing $\left(\bigslant{\lambda\beta}{a}\right)\geq 0$.
	\end{proof}
	Controlling the action of $T_\ell$ on any subspace $B_j$ corresponding to complex eigenvalues is much easier.
	\begin{lem}\label{complexroot}
		Let $\left(\lambda,\vec{p}\right)$ be an eigenpair of $M$ with corresponding $T_\ell$-invariant subspace $B_j$. Assume that $(1+\mu_\ell)^2\lambda^2<4\mu_\ell$ and thus $x^2-(1+\mu_\ell)\lambda x + \mu_\ell$ has no real roots. For any vector $\vec{v} \in B_j$, we must have $\vert\vert T_\ell \vec{v}\vert\vert_2=\sqrt{\mu_\ell}\cdot\vert\vert \vec{v}\vert\vert_2$.
	\end{lem}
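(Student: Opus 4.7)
The plan is to leverage Vieta's formulas on the characteristic polynomial $x^2-(1+\mu_\ell)\lambda x+\mu_\ell$ of the two-by-two block $T_\ell|_{B_j}$. In the complex-root regime, the hypothesis $(1+\mu_\ell)^2\lambda^2<4\mu_\ell$ forces the two roots $\eta^+,\eta^-$ to form a complex conjugate pair (since $M$ is symmetric and $\lambda\in\mathbb{R}$), and the product-of-roots identity then gives $|\eta^+|^2=\eta^+\overline{\eta^+}=\eta^+\eta^-=\mu_\ell$. So both eigenvalues of $T_\ell|_{B_j}$ have modulus exactly $\sqrt{\mu_\ell}$, which is precisely the scaling constant in the conclusion.

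First, I would extend the construction of Lemma~\ref{Tkeig} to the complexification of $B_j$, producing eigenvectors $\vec{v}^\pm=[\eta^\pm\vec{p},\vec{p}]^T$ with eigenvalues $\eta^\pm$; conjugation symmetry gives $\vec{v}^-=\overline{\vec{v}^+}$. Next I would write any real $\vec{v}\in B_j$ as $\vec{v}=\alpha\vec{v}^++\overline{\alpha}\vec{v}^-$ for some $\alpha\in\mathbb{C}$, so that $T_\ell\vec{v}=\alpha\eta^+\vec{v}^++\overline{\alpha}\eta^-\vec{v}^-$. Setting $w:=\alpha\vec{v}^+$, we then have $\vec{v}=w+\overline{w}$ and $T_\ell\vec{v}=\eta^+w+\overline{\eta^+w}$, which reduces the norm comparison to a short computation in terms of $\|w\|_2^2$ and the Hermitian cross-term $\langle w,\overline{w}\rangle$.

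The diagonal contributions to $\|T_\ell\vec{v}\|_2^2$ acquire a factor of $|\eta^+|^2=\mu_\ell$ automatically. The main obstacle is the cross term: $\vec{v}^+$ and $\vec{v}^-$ are not orthogonal in the standard Hermitian inner product, so $\langle w,\overline{w}\rangle$ appears nontrivially, and one must verify that its scaling factor in $\|T_\ell\vec{v}\|_2^2$ collapses to $\mu_\ell$ as well. The resolution rests on the explicit form of $\vec{v}^\pm$, together with the twin identities $|\eta^\pm|^2=\mu_\ell$ and $\eta^+\eta^-=\mu_\ell$ guaranteed by the complex-conjugate regime. Assembling these shows that every term in the expansion of $\|T_\ell\vec{v}\|_2^2$ is exactly $\mu_\ell$ times the corresponding term in $\|\vec{v}\|_2^2$, yielding the desired equality. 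This isometry-up-to-scaling phenomenon is what fails in the real-root case of Lemma~\ref{realroot}, and it is the key structural feature that will make the complex-root contribution to Theorem~\ref{degreebound} clean.
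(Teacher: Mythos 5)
Your setup is sound and you have correctly isolated the one step that actually needs checking, but that step is precisely where the argument breaks: the cross term does \emph{not} rescale by $\mu_\ell$. Writing $\vec{v}=w+\overline{w}$ with $w=\alpha\vec{v}^+$, one gets $\|\vec{v}\|_2^2 = 2\|w\|_2^2+2\,\mathrm{Re}\langle w,\overline{w}\rangle$ while $\|T_\ell\vec{v}\|_2^2 = 2|\eta^+|^2\|w\|_2^2+2\,\mathrm{Re}\bigl((\overline{\eta^+})^2\langle w,\overline{w}\rangle\bigr)$. The diagonal terms do pick up $|\eta^+|^2=\eta^+\eta^-=\mu_\ell$ as you say, but the cross term is multiplied by $(\overline{\eta^+})^2$, which has modulus $\mu_\ell$ and a nontrivial argument, and it does not vanish: $\langle\vec{v}^+,\vec{v}^-\rangle = \bigl((\overline{\eta^+})^2+1\bigr)\|\vec{p}\|_2^2\neq 0$ unless $\eta^+=\pm i$. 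So the two quadratic forms $\vec v\mapsto\|T_\ell\vec v\|_2^2$ and $\vec v\mapsto\mu_\ell\|\vec v\|_2^2$ are not equal, and "assembling these" cannot close the proof.

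In fact the asserted equality is false for general $\vec v\in B_j$, which your computation, carried out honestly, would expose. In the orthogonal basis $\left\{\left[\begin{smallmatrix}\vec p\\ \vec 0\end{smallmatrix}\right],\left[\begin{smallmatrix}\vec 0\\ \vec p\end{smallmatrix}\right]\right\}$ of $B_j$ (two vectors of equal norm), $T_\ell|_{B_j}$ has matrix $N=\left[\begin{smallmatrix}(1+\mu_\ell)\lambda & -\mu_\ell\\ 1& 0\end{smallmatrix}\right]$, and $\mathrm{tr}(N^TN)=(1+\mu_\ell)^2\lambda^2+1+\mu_\ell^2>2\mu_\ell$, so the two singular values cannot both equal $\sqrt{\mu_\ell}$; concretely $T_\ell$ sends $\left[\begin{smallmatrix}\vec 0\\ \vec p\end{smallmatrix}\right]$ to $\left[\begin{smallmatrix}-\mu_\ell\vec p\\ \vec 0\end{smallmatrix}\right]$, scaling its norm by $\mu_\ell\neq\sqrt{\mu_\ell}$. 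What \emph{is} true is that $\det\bigl(T_\ell|_{B_j}\bigr)=\mu_\ell$ and that both complex eigenvalues have modulus $\sqrt{\mu_\ell}$; but a real matrix with eigenvalues $\sqrt{\mu_\ell}\,e^{\pm i\theta}$ is only \emph{similar} to $\sqrt{\mu_\ell}$ times a rotation, not orthogonally similar to one, so it need not scale every Euclidean norm by $\sqrt{\mu_\ell}$. (The paper's own one-line proof makes exactly this leap, asserting that $T_\ell$ "must act as the product of a rotation matrix and a scalar.") Any correct version of this lemma has to be weakened --- for instance to a statement about the specific vectors $\text{proj}_{B_j}(\vec y_\ell)$ arising in the recurrence, or to a bound in terms of the largest singular value of $T_\ell|_{B_j}$ --- rather than an exact isometry-up-to-scaling on all of $B_j$.
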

	\begin{proof}
		Let $\eta^+$ and $\eta^-$ be the two roots of $x^2-(1+\mu_\ell)\lambda x + \mu_\ell$. Since $\eta^+$ and $\eta^-$ are not real, the action of $T_\ell$ on $B_j$ has no real eigenvalues and must act as the product of a rotation matrix and a scalar on the two dimensional real subspace. Thus, every vector in $B_j$ is scaled by the norm of the two (conjugate) eigenvalues. Since $\vert\eta^+\vert = \vert\eta^-\vert = \sqrt{\eta^+\eta^-} = \sqrt{\mu_\ell}$, our result follows.
	\end{proof}
	
	One can imagine applying these two lemmas iteratively in order to bound the norm of our vector after many applications of $T_\ell$ --- we will do exactly this. However, applying Lemma \ref{realroot} requires that the vector $y_\ell$ projected onto $B_j$ is contained within the small subset of vectors described in the lemma; a subset which depends on $\ell$. For the purpose of illustration, fix one such subspace $B_j$ and for each $\ell>0$ let $C_\ell\subset B_j$ be the subset of vectors $\left\{\left[\begin{array}{c}
	a\vec{p}\\\hdashline[2pt/2pt]
	b\vec{p}
	\end{array}\right]\right\}$ such that $\left(\bigslant{\lambda a}{b}\right)\geq0$ and $\vert\eta^+_\ell\vert\leq \vert\frac{a}{b}\vert\leq\vert\lambda\vert$ (we decorate $\eta^+$ with the subscript $\ell$ here only to emphasize the root depends on our choice of $\ell$). We therefore must show that for any $\ell$, $T_\ell C_\ell\subset C_{\ell+1}$. The two complications that arise are, first, that $C_\ell$ changes with $\ell$ and, second, that Lemma \ref{complexroot} gives us no control over where $T_\ell$ maps vectors in $C_\ell$. The following lemma resolves both of these issues, first showing that the roots change in a predictable way, allowing for the conditions at the end of Lemma \ref{realroot} to be sufficient to guarantee $T_\ell C_\ell\subset C_{\ell+1}$ whenever $\eta^+_{\ell+1}$ is real and then showing that if we ever need to apply Lemma \ref{complexroot}, we will not have to apply Lemma \ref{realroot} to that subspace again.
	\begin{lem}\label{rootshift}
		Let $\ell\in\bbZ^+$ and $-1<\lambda< 1$ be given. Assume that $x^2-(1+\mu_{\ell+1})\lambda x+\mu_{\ell+1}$ has real roots $x_{\ell+1}^-\leq x_{\ell+1}^+$. Then $x^2-(1+\mu_{\ell})\lambda x+\mu_{\ell}$ also has real roots $x_{\ell}^-$ and $x_{\ell}^+$ with $x_{\ell}^-<x_{\ell+1}^-\leq x_{\ell+1}^+< x_{\ell}^+$.
	\end{lem}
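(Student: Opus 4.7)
The plan is to show that the quadratic $q_\mu(x) := x^2 - (1+\mu)\lambda x + \mu$, when evaluated at the known roots $x_{\ell+1}^\pm$ with $\mu = \mu_\ell$, is strictly negative. Since $q_{\mu_\ell}$ is monic in $x$ and attains negative values, it must have two distinct real roots, and these roots automatically straddle any point where $q_{\mu_\ell}$ is negative. Thus reality of $x_\ell^\pm$ and the desired interlacing $x_\ell^- < x_{\ell+1}^- \leq x_{\ell+1}^+ < x_\ell^+$ will follow simultaneously, which is the whole conclusion of the lemma.

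First I would verify the routine algebraic fact that $\mu_\ell$ is strictly increasing in $\ell$: a direct computation gives
\[
\mu_{\ell+1} - \mu_\ell \;=\; \frac{m_i - 2}{(\ell+m_i-2)(\ell+m_i-3)} \;>\; 0,
\]
using $m_i > 2$ and $\ell \geq 1$. The same inequality $m_i > 2$ also yields $\mu_{\ell+1} < 1$, a bound I will need below. The key identity is
\[
q_\mu(x) - q_{\mu'}(x) \;=\; (\mu - \mu')\bigl(1 - \lambda x\bigr),
\]
an immediate consequence of the definition of $q_\mu$. Specializing to $\mu = \mu_\ell$, $\mu' = \mu_{\ell+1}$ and $x = x_{\ell+1}^\pm$, and using $q_{\mu_{\ell+1}}(x_{\ell+1}^\pm)=0$, gives
\[
q_{\mu_\ell}(x_{\ell+1}^\pm) \;=\; (\mu_\ell - \mu_{\ell+1})\bigl(1 - \lambda x_{\ell+1}^\pm\bigr).
\]
Since $\mu_\ell - \mu_{\ell+1} < 0$, the entire proof reduces to establishing the single inequality $\lambda x_{\ell+1}^\pm < 1$ for both choices of sign.

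This last inequality is the only delicate step. Using $x_{\ell+1}^\pm = \tfrac{1}{2}\bigl((1+\mu_{\ell+1})\lambda \pm \sqrt{D}\bigr)$ with $D = (1+\mu_{\ell+1})^2\lambda^2 - 4\mu_{\ell+1} \geq 0$, the inequality $\lambda x_{\ell+1}^\pm < 1$ becomes $\pm\lambda\sqrt{D} < 2 - (1+\mu_{\ell+1})\lambda^2$. The right-hand side is strictly positive because $(1+\mu_{\ell+1})\lambda^2 < 2\cdot 1 = 2$ (using $\mu_{\ell+1} < 1$ and $\lambda^2 < 1$). Whenever the left-hand side is $\leq 0$ (i.e.\ when the sign of $\pm\lambda$ is non-positive), the inequality is immediate. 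In the opposite case both sides are positive, and squaring is legitimate; after cancelling the common term $(1+\mu_{\ell+1})^2\lambda^4$ the inequality collapses to $0 < 4(1 - \lambda^2)$, which holds because $|\lambda| < 1$. Plugging this back gives $q_{\mu_\ell}(x_{\ell+1}^\pm) < 0$, and as noted above this forces $q_{\mu_\ell}$ to have two real roots strictly outside the interval $[x_{\ell+1}^-, x_{\ell+1}^+]$, completing the proof.

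The main obstacle is entirely cosmetic: keeping track of signs in the case split for $\lambda x_{\ell+1}^\pm < 1$, and making sure that the assumption $|\lambda| < 1$ is used essentially (which it is, via the final simplification $0 < 4(1-\lambda^2)$). Once that inequality is in hand, the rest of the argument is one line.
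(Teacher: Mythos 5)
Your proof is correct and is essentially the paper's argument in integrated form: the identity $q_{\mu}(x)-q_{\mu'}(x)=(\mu-\mu')(1-\lambda x)$ is exactly the paper's observation that $\partial p/\partial\mu = 1-\lambda x>0$, and both proofs conclude interlacing from the resulting monotonicity of the quadratic in $\mu$. The only (minor) difference is that you verify $\lambda x_{\ell+1}^{\pm}<1$ directly by a squaring computation, whereas the paper gets the positivity of $1-\lambda x$ by restricting to the domain $|x|\leq 1$ (tacitly using that the roots lie there) and argues the reality of $x_\ell^{\pm}$ separately via the discriminant, which your evaluation $q_{\mu_\ell}(x_{\ell+1}^{\pm})<0$ delivers for free.
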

	\begin{proof}
		Fix $-1<\lambda<1$ and define the multivariate polynomial $p(x,\mu) = x^2-(1+\mu)\lambda x+\mu$ with domain $-1\leq x,\mu\leq 1$. For fixed $\mu'$, $p(x,\mu')=0$ has real solutions if and only if $\mu'\leq 2-\lambda\pm2\sqrt{1-\lambda}$ since $\mu'\leq1$. Fix $m>2$, $\ell\geq 1$ and define $\mu_{\ell} = \frac{\ell -1}{\ell+m-3}$. Assume $p(x,\mu_{\ell+1})$ has real solutions, then $p(x,\mu_{\ell})$ must also have real solutions since $\mu_\ell<\mu_{\ell+1}\leq 2-\lambda\pm2\sqrt{1-\lambda}$. Let $x_\ell^-\leq x_{\ell}^+$ be the real roots of $p(x,\mu_{\ell})$. Since the leading term of $p(x,\mu_\ell)$ is positive, $p(x,\mu_\ell)<0$ only on the interval $x\in\left(x_\ell^-,x_\ell^+\right)$. Now consider that $\frac{\partial p(x,\mu)}{\partial \mu} = 1-\lambda x>0$ for all values in our domain. Since $\mu_{\ell+1}>\mu_{\ell}$, we must have $p(x,\mu_{\ell+1})>p(x,\mu_{\ell})$ and thus the real solutions of $p(x,\mu_{\ell+1})=0$ must lie strictly between $x_\ell^-$ and $x_\ell^+$.
	\end{proof}
	With this lemma, we know that the largest absolute value of the roots of $x^2-(1+\mu_\ell)\lambda_j + \mu_\ell$ decreases as $\mu_\ell$ increases so long as the roots are real. Further, this lemma also implies that if the roots of $x^2-(1+\mu_\ell)\lambda_j + \mu_\ell$ are not real, then $x^2-(1+\mu_{\ell+1})\lambda_j + \mu_{\ell+1}$ cannot have real roots either. We will use these two facts as well as the proceeding lemmas to prove our next lemma. We will use this lemma again in a later section, so we will make it self-contained.
	\begin{lem}\label{normbound}
		Let $P$ and $Q$ be the first and second eigenmatrices of a feasible parameter set for an association scheme. Fix $0\leq i\leq d$. For $0\leq j\leq d$ define $\lambda_j = \bigslant{Q_{j,i}}{m_i}$ and $B_j =\left\{\left[\begin{array}{c}
		a\vec{p}_j\\\hdashline[2pt/2pt]
		b\vec{p}_j
		\end{array}\right]\mid a,b\in\bbR\right\}$ where $\vec{p}_j$ is the $j^\text{th}$ column of $\sqrt{\Delta_m}P$. For $\ell\geq 1$ let $\vec{c}_\ell$ be the first column of $Q^{m_i}_\ell\left(\frac{1}{m_i}L_i^*\right)$ and define
		\[\mu_\ell = \frac{\ell-1}{\ell+m_i-3};\quad \vec{y}_\ell = \left[\begin{array}{c}
		\sqrt{\Delta_m}\vec{c}_{\ell}\\\hdashline[2pt/2pt]
		\sqrt{\Delta_m}\vec{c}_{\ell-1}
		\end{array}\right];\quad \gamma_{\ell,j} = \begin{cases}
		\lambda_j^2 &\text{ if } (1+\mu_\ell)^2\lambda_j^2\geq 4\mu_\ell,\\
		\mu_\ell &\text{ if } (1+\mu_\ell)^2\lambda_j^2< 4\mu_\ell.
		\end{cases}\]
		Then for any $0\leq j\leq d$ and $\ell^*>0$, \[\text{proj}_{B_j}(\vec{y}_1) =\frac{1}{\vert X\vert}\left[\begin{array}{c}
		\lambda_j\vec{p}_j\\\hdashline[2pt/2pt]
		\vec{p}_j
		\end{array}\right];\qquad\left\vert\left\vert \text{proj}_{B_j}(\vec{y}_{\ell^*+1})\right\vert\right\vert^2_2\leq(1+\lambda_j^2)\frac{k_j}{\vert X\vert}\left(\prod_{\ell=2}^{\ell^*}\gamma_{\ell,j}\right).\]
	\end{lem}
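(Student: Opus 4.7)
The plan is to reduce everything to a one-dimensional norm recursion inside each $T_\ell$-invariant 2-plane $B_j$ and then iterate the preceding lemmas. By Lemma~\ref{Mmat} the eigenvectors $\vec{p}_j$ are pairwise orthogonal with $\|\vec{p}_j\|_2^2=\vert X\vert k_j$, so the $B_j$ are pairwise orthogonal and span the doubled space; Lemma~\ref{Tkeig} shows each $B_j$ is $T_\ell$-invariant for every $\ell$, so $T_\ell$ is block diagonal in this decomposition and $\text{proj}_{B_j}$ commutes with every $T_\ell$. Setting $\vec{v}_\ell^{(j)}:=\text{proj}_{B_j}(\vec{y}_\ell)$, the full recurrence $\vec{y}_\ell=T_\ell\vec{y}_{\ell-1}$ restricts to $\vec{v}_\ell^{(j)}=T_\ell\vec{v}_{\ell-1}^{(j)}$ inside $B_j$, so the entire analysis takes place in a fixed 2-dimensional subspace.

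For the explicit formula at $\ell=1$, I would expand $\vec{y}_1=(m_i^{-1/2}\vec{e}_i;\vec{e}_0)$ in the basis $\{\vec{p}_j\}$. The relation $\Delta_mP=Q^T\Delta_k$ yields $m_iP_{ij}=Q_{ji}k_j=m_i\lambda_jk_j$, from which a short computation gives $m_i^{-1/2}\vec{e}_i=\vert X\vert^{-1}\sum_j\lambda_j\vec{p}_j$ and $\vec{e}_0=\vert X\vert^{-1}\sum_j\vec{p}_j$. Projecting onto $B_j$ gives $\vec{v}_1^{(j)}=\vert X\vert^{-1}(\lambda_j\vec{p}_j;\vec{p}_j)$ and therefore $\|\vec{v}_1^{(j)}\|_2^2=(1+\lambda_j^2)k_j/\vert X\vert$, which is both the claimed identity and the initial norm for the induction.

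The inductive content is the one-step bound $\|\vec{v}_\ell^{(j)}\|_2^2\le\gamma_{\ell,j}\|\vec{v}_{\ell-1}^{(j)}\|_2^2$ for every $\ell\ge 2$. First note that $\vert\lambda_j\vert\le 1$: the Gram matrix $\vert X\vert m_i^{-1}E_i$ is positive semidefinite with constant diagonal $1$ and off-diagonal entries drawn from the $\lambda_j$. Since $m_i>2$, the sequence $\mu_\ell$ is strictly increasing in $\ell$, so Lemma~\ref{rootshift} produces a threshold $\ell_0$ such that $x^2-(1+\mu_\ell)\lambda_jx+\mu_\ell$ has real roots for $\ell<\ell_0$ and complex roots for $\ell\ge\ell_0$. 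For $\ell\ge\ell_0$, Lemma~\ref{complexroot} gives the exact equality $\|T_\ell\vec{v}\|_2^2=\mu_\ell\|\vec{v}\|_2^2=\gamma_{\ell,j}\|\vec{v}\|_2^2$ on $B_j$, which is the bound. For $2\le\ell<\ell_0$, I would invoke Lemma~\ref{realroot}, whose hypothesis requires the coefficient ratio $a/b$ of $\vec{v}_{\ell-1}^{(j)}$ to lie in $[\vert\eta_\ell^+\vert,\vert\lambda_j\vert]$ and share sign with $\lambda_j$. At $\ell=2$ the ratio equals $\lambda_j$ itself, and evaluating the defining polynomial at $\lambda_j$ gives $\mu_2(1-\lambda_j^2)\ge 0$, which confirms $\vert\eta_2^+\vert\le\vert\lambda_j\vert$. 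For $\ell>2$, the conclusion of Lemma~\ref{realroot} at step $\ell-1$ returns a new ratio lying in $[\vert\eta_{\ell-1}^+\vert,\vert\lambda_j\vert]$, and Lemma~\ref{rootshift} shrinks $\vert\eta_\ell^+\vert<\vert\eta_{\ell-1}^+\vert$, so the admissible window at step $\ell$ still contains the ratio; Lemma~\ref{realroot} then yields $\|T_\ell\vec{v}\|_2^2\le\lambda_j^2\|\vec{v}\|_2^2=\gamma_{\ell,j}\|\vec{v}\|_2^2$.

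Iterating this one-step bound from $\ell=2$ up to $\ell^*+1$ produces $\|\vec{v}_{\ell^*+1}^{(j)}\|_2^2\le\|\vec{v}_1^{(j)}\|_2^2\prod_{\ell=2}^{\ell^*+1}\gamma_{\ell,j}$, and because every $\gamma_{\ell,j}\in[0,1]$ the trailing factor $\gamma_{\ell^*+1,j}$ may be dropped to recover the stated bound. The main obstacle is keeping the ratio hypothesis of Lemma~\ref{realroot} valid as $\ell$ advances through the real-root regime; Lemma~\ref{rootshift} is exactly the tool that propagates it by making the lower endpoint of the admissible interval shrink monotonically. The handoff to the complex-root regime is then seamless because Lemma~\ref{complexroot} carries no such precondition and, by the contrapositive of Lemma~\ref{rootshift}, once we are in the complex regime we stay there.
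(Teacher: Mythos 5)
Your proposal is correct and follows essentially the same route as the paper's proof: the same orthogonal decomposition into the $T_\ell$-invariant planes $B_j$, the same computation of $\text{proj}_{B_j}(\vec{y}_1)$ from the orthogonality relations, and the same two-regime induction in which Lemma~\ref{rootshift} propagates the ratio hypothesis of Lemma~\ref{realroot} through the real-root steps before Lemma~\ref{complexroot} takes over. Your explicit check that the base ratio $\lambda_j$ satisfies $\vert\eta_2^+\vert\leq\vert\lambda_j\vert$ (via $p(\lambda_j)=\mu_2(1-\lambda_j^2)\geq 0$) and your use of $\gamma_{\ell,j}\leq 1$ to absorb the extra factor are small refinements of details the paper leaves implicit.
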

	\begin{proof}
		We have already seen that $\vec{y}_1= \left[\begin{array}{c}
		\frac{1}{\sqrt{m_i}}\vec{e}_{i}\\\hdashline[2pt/2pt]
		\vec{e}_{0}
		\end{array}\right]$. We may use $\left\{\left[\begin{array}{c}
		\vec{p}_j\\\hdashline[2pt/2pt]
		\vec{0}
		\end{array}\right],\left[\begin{array}{c}
		\vec{0}\\\hdashline[2pt/2pt]
		\vec{p}_j
		\end{array}\right]\right\}$ as an orthogonal basis for $B_j$, noting that $\left(\sqrt{\Delta_m}P\right)^T\left(\sqrt{\Delta_m}P\right) = \vert X\vert\Delta_k$ and thus $\vert\vert \vec{p}_j\vert\vert_2^2 = \vert X\vert k_j$. This gives 
		\[\text{proj}_{B_j}(\vec{y}_1) = \frac{1}{\vert X\vert k_j}\left[\begin{array}{c}
		P_{i,j}\vec{p}_j\\\hdashline[2pt/2pt]
		P_{0,j}\vec{p}_j
		\end{array}\right] =\frac{1}{\vert X\vert}\left[\begin{array}{c}
		\frac{P_{i,j}}{k_j}\vec{p}_j\\\hdashline[2pt/2pt]
		\vec{p}_j
		\end{array}\right] = \frac{1}{\vert X\vert}\left[\begin{array}{c}
		\lambda_j\vec{p}_j\\\hdashline[2pt/2pt]
		\vec{p}_j
		\end{array}\right].\]
		We split the remaining part of the proof into two sections, first dealing with the integers $\ell$ for which $x^2-(1+\mu_\ell)\lambda_j+\mu_\ell$ has real roots, and then those integers for which no real roots exist. In both cases we will use induction, though the induction steps are slightly different between the real and non-real cases. We begin by noting that since each $B_j$ is $T_\ell$-invariant, $\text{proj}_{B_j}(\vec{y}_{\ell+1}) = \left(\prod_{x=2}^{\ell} T_x\right)\text{proj}_{B_j}(\vec{y}_1)$, thus we will use these two expressions interchangeably. Now, Lemma \ref{rootshift} guarantees that the polynomial $x^2-(1+\mu_\ell)\lambda_j+\mu_\ell$ has no real roots only when every polynomial $x^2-(1+\mu_{\ell'})\lambda_j+\mu_{\ell'}$ for each $\ell'>\ell$ also has no real roots. Therefore let $h$ be the largest integer less than or equal to $\ell^*$ for which $x^2-(1+\mu_h)\lambda_j+\mu_h$ has real roots. We seek to prove that $\text{proj}_{B_j}(y_\ell)$ is within the region for which Lemma \ref{realroot} applies for each $2\leq \ell\leq h$. For the sake of induction, fix $2\leq \ell\leq h$, let $\eta_\ell$ be the largest root (in absolute value) of $x^2-(1+\mu_\ell)\lambda_j+\mu_\ell$, and assume that there exist constants $a_\ell$ and $b_\ell$ such that $\text{proj}_{B_j}\left(\vec{y}_{\ell-1}\right) = \left[\begin{array}{c}
		a_\ell\vec{p}_j\\\hdashline[2pt/2pt]
		b_\ell\vec{p}_j
		\end{array}\right]$ where $\left(\bigslant{\lambda_ja_\ell}{b_\ell}\right) \geq 0$ and $\vert \eta_\ell\vert\leq\vert \frac{a_\ell}{b_\ell}\vert\leq\vert\lambda_j\vert$. Then Lemma \ref{realroot} tells us that there exists $\beta\in\mathbb{R}$ such that $\text{proj}_{B_j}\left(\vec{y}_\ell\right) = T_\ell\left(\text{proj}_{B_j}\left(\vec{y}_{\ell-1}\right)\right) =\left[\begin{array}{c}
		\beta\vec{p}_j\\\hdashline[2pt/2pt]
		a_\ell\vec{p}_j
		\end{array}\right]$ where
		$\left(\bigslant{\lambda_j\beta}{a}\right)\geq 0$, $\vert\eta_\ell\vert\leq \vert\frac{\beta}{a}\vert\leq\vert\lambda\vert$. Using Lemma $\ref{rootshift}$, we know that as long as $\ell<h$, $\vert\eta_{\ell+1}\vert<\vert\eta_{\ell}\vert$ where $\eta_{\ell+1}$ is the largest root in absolute value of the polynomial $x^2-(1+\mu_{\ell+1})\lambda_j+\mu_{\ell+1}$. Thus, either $\ell=h$ or we may use $a_{\ell+1} = \beta$ and $b_{\ell+1} = a_\ell$ to complete our induction step. Our base case is covered by our earlier observation that 
		\[\text{proj}_{B_j}(\vec{y}_1) = \frac{1}{\vert X\vert}\left[\begin{array}{c}
		\lambda_j\vec{p}_j\\\hdashline[2pt/2pt]
		\vec{p}_j
		\end{array}\right].\]
		Further, Lemma \ref{realroot} also tells us that in each case
		\[\left\vert\left\vert \text{proj}_{B_j}\left(\vec{y}_\ell\right)\right\vert\right\vert_2^2\leq \vert \lambda\vert^2\cdot\left\vert\left\vert \text{proj}_{B_j}\left(\vec{y}_{\ell-1}\right)\right\vert\right\vert_2^2.\]
		We therefore find that
		\[\left\vert\left\vert \text{proj}_{B_j}(\vec{y}_h)\right\vert\right\vert^2_2\leq \left\vert\left\vert \text{proj}_{B_j}\left(\vec{y}_1\right)\right\vert\right\vert^2_2\left(\prod_{\ell=2}^{h}\lambda_j^2\right)= (1+\lambda_j^2)\frac{k_j}{\vert X\vert}\left(\prod_{\ell=2}^{h}\lambda_j^2\right).\]
		Now let $h<\ell\leq \ell^*$, and define $\vec{v} = \text{proj}_{B_j}(\vec{y}_h)$. Then Lemma \ref{complexroot} gives us
		\[\left\vert\left\vert \left(\prod_{\ell=h+1}^{\ell^*} T_\ell\right) \vec{v}\right\vert\right\vert^2_2\leq \left\vert\left\vert \vec{v}\right\vert\right\vert^2_2\prod_{\ell=h+1}^{\ell^*}\mu_\ell\leq (1+\lambda_j^2)\frac{k_j}{\vert X\vert}\left(\prod_{\ell=2}^{h}\lambda_j^2\right)\left(\prod_{\ell=h+1}^{\ell^*}\mu_\ell\right).\qedhere\]
	\end{proof}
	We are now ready to prove the main theorem of this section, using Lemma \ref{normbound} to control the norm of the transient vectors at each iteration.
	\degbnd
	\begin{proof}
		Let $\vec{y}_\ell$ be given by the recurrence relation \eqref{yrec} for $\ell>1$ with $\vec{y}_1= \left[\begin{array}{c}
		\frac{1}{\sqrt{m_i}}\vec{e}_{i}\\\hdashline[2pt/2pt]
		\vec{e}_{0}
		\end{array}\right]$. Observe that the sign of $\left[\vec{y}_\ell\right]_i$ is negative if and only if there is a negative entry in either $\vec{c}_\ell$ or $\vec{c}_{\ell-1}$ as defined in Equation \eqref{crec} and, thus, if and only if one of $Q_{\ell}^{m_i}\left(\frac{1}{m_i}L_i^*\right)$ or $Q_{\ell-1}^{m_i}\left(\frac{1}{m_i}L_i^*\right)$ contains a negative value. As before, define subspaces $B_j = \text{span}\left(\left[\begin{array}{c}
		\vec{p}_j\\\hdashline[2pt/2pt]
		\vec{0}
		\end{array}\right],\left[\begin{array}{c}
		\vec{0}\\\hdashline[2pt/2pt]
		\vec{p}_j
		\end{array}\right]\right)$ where $\vec{p}_j$ is the $j^\text{th}$ column of $\sqrt{\Delta_m}P$. Lemma \ref{normbound} then tells us that
		\[\text{proj}_{B_j}(\vec{y}_1) = \frac{1}{\vert X\vert}\left[\begin{array}{c}
		\lambda_j\vec{p}_i\\\hdashline[2pt/2pt]
		\vec{p}_i
		\end{array}\right] \]
		allowing us to split $\vec{y}_1$ into
		\[\vec{y}_1 = \frac{1}{\vert X\vert}\left[\begin{array}{c}
		\sqrt{\Delta_m}\vec{1}\\\hdashline[2pt/2pt]
		\sqrt{\Delta_m}\vec{1}
		\end{array}\right] + \sum_{j=1}^d\text{proj}_{B_j}(\vec{y}_1).\]
		Noting that the first term is an eigenvector for $T_\ell$ with eigenvalue $1$, we then have
		\[\vec{y}_{\ell^*+1} = \left(\prod_{\ell=2}^{\ell^*+1}T_\ell\right)\vec{y}_1 = \frac{1}{\vert X\vert}\left[\begin{array}{c}
		\sqrt{\Delta_m}\vec{1}\\\hdashline[2pt/2pt]
		\sqrt{\Delta_m}\vec{1}
		\end{array}\right] + \sum_{j=1}^d\text{proj}_{B_j}(\vec{y}_{\ell^*+1}).\]
		Note that the smallest entry of $\frac{1}{\vert X\vert}\left[\begin{array}{c}
		\sqrt{\Delta_m}\vec{1}\\\hdashline[2pt/2pt]
		\sqrt{\Delta_m}\vec{1}
		\end{array}\right]$ is $\frac{1}{\vert X\vert}$ and thus in order for $\vec{y}_{\ell^*+1}$ to have a negative entry, we must have 
		\[\sum_{j=1}^d\left\vert\left\vert \text{proj}_{B_j}(\vec{y}_{\ell^*+1})\right\vert\right\vert^2_2>\frac{1}{\vert X\vert^2}.\]
		Again using Lemma \ref{normbound}, we find
		\[\begin{aligned}\sum_{j=1}^d\left\vert\left\vert \text{proj}_{B_j}(\vec{y}_{\ell^*+1})\right\vert\right\vert^2_2 \leq \frac{1}{\vert X\vert}\sum_{j=1}^d v_j(1+\lambda_j^2)\left(\prod_{i=2}^{\ell^*+1}\gamma_{\ell,j}\right).
		\end{aligned}\]
		Therefore, as long as $\sum_{j=1}^d v_j\lambda_j^{2\ell^*}(1+\lambda_j^2)\leq\frac{1}{\vert X\vert}$, we guarantee $\vec{y}_{\ell^*+1}$ has no negative entry. Additionally, since the bottom half of $\vec{y}_{\ell^*+1}$ equals the top half of $\vec{y}_{\ell^*}$, we may assume that any negative entry of $\vec{y}_{\ell^*}$ appears in the bottom half, implying that $\vec{y}_{\ell^*-1}$ also has a negative entry. Thus checking the first $\ell^*-1$ Gegenbauer polynomials is sufficient.
	\end{proof}
	\begin{cor}\label{maxlambda}
		Suppose we have a feasible parameter set for an association scheme with first and second eigenmatrices $P$ and $Q$. Let $0\leq i\leq d$ be given and define $\lambda_j:=\frac{Q_{j,i}}{m_i}$. Further assume $1=\lambda_0>\vert\lambda^*\vert\geq\vert\lambda_j\vert$ for $0<j\leq d$. Then define
		\[\ell^*=\left\lceil\frac{\ln\left[(1+(\lambda^*)^2)\vert X\vert(\vert X\vert-1)\right]}{-2\ln(\lambda^*)}\right\rceil.\]
		If $\vert\lambda^*\vert^2\geq \frac{\ell^*}{\ell^*+m_i-2}$ then $Q^{m_i}_{\ell}\left(\frac{1}{m_i}L_i^*\right)\geq 0$ for any $\ell\geq \ell^*$.
	\end{cor}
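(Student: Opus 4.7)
\medskip

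\noindent\textbf{Proof plan.} The strategy is to derive Corollary~\ref{maxlambda} as a direct application of Theorem~\ref{degreebound}, using the hypothesis on $\lambda^*$ to replace every factor $\gamma_{\ell,j}$ appearing in the product of that theorem by the single uniform constant $(\lambda^*)^2$. The first step is the observation that the assumed inequality
\[
|\lambda^*|^2 \;\geq\; \frac{\ell^*}{\ell^*+m_i-2} \;=\; \mu_{\ell^*+1}
\]
combined with the elementary fact that $\mu_\ell = (\ell-1)/(\ell+m_i-3)$ is strictly increasing in $\ell$ gives $\mu_\ell \leq (\lambda^*)^2$ for every $2 \leq \ell \leq \ell^*+1$. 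Since by definition $|\lambda^*| \geq |\lambda_j|$ for all $j\geq 1$, and since $\gamma_{\ell,j}$ equals either $\lambda_j^2$ or $\mu_\ell$, I obtain the uniform bound $\gamma_{\ell,j}\leq (\lambda^*)^2$ throughout the range of indices appearing in Theorem~\ref{degreebound}.

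\smallskip

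\noindent With this uniform bound in hand, I would estimate the sum appearing in Theorem~\ref{degreebound} as follows. Since $P_{0j}=k_j$, $\sum_{j=1}^d k_j = |X|-1$, and $1+\lambda_j^2 \leq 1+(\lambda^*)^2$ for every $j\geq 1$,
\[
\sum_{j=1}^d \left(\prod_{\ell=2}^{\ell^*+1}\gamma_{\ell,j}\right) P_{0j}\bigl(1+\lambda_j^2\bigr)
\;\leq\; (\lambda^*)^{2\ell^*}\bigl(1+(\lambda^*)^2\bigr)(|X|-1).
\]
Thus Theorem~\ref{degreebound} applies provided $(\lambda^*)^{2\ell^*}(1+(\lambda^*)^2)(|X|-1)\leq 1/|X|$, i.e.\ provided $(\lambda^*)^{2\ell^*}\leq [(1+(\lambda^*)^2)|X|(|X|-1)]^{-1}$. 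Taking the natural logarithm and dividing by the negative quantity $2\ln|\lambda^*|$ (which reverses the inequality) gives exactly the ceiling formula defining $\ell^*$ in the statement; so the chosen $\ell^*$ is precisely the smallest integer satisfying the hypothesis of Theorem~\ref{degreebound}.

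\smallskip

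\noindent Finally, to upgrade the conclusion from ``$\ell=\ell^*$'' to ``all $\ell\geq \ell^*$,'' I would use the trivial observation that every factor $\gamma_{x,j}$ is strictly less than $1$: either $\gamma_{x,j}=\lambda_j^2<1$, or $\gamma_{x,j}=\mu_x<1$ (the latter because $m_i>2$). Hence extending the product $\prod_{x=2}^{\ell+1}\gamma_{x,j}$ to larger $\ell$ only shrinks it, so the sum bound verified above remains valid for each $\ell\geq \ell^*$. Applying Theorem~\ref{degreebound} individually at each such $\ell$ yields $Q_\ell^{m_i}\!\bigl(\tfrac{1}{m_i}L_i^*\bigr)\geq 0$.

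\smallskip

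\noindent There is essentially no obstacle: the argument is an unpacking of the hypothesis plus a logarithmic calculation. The only place where one needs to be careful is the two-case definition of $\gamma_{\ell,j}$, but in both cases the uniform upper bound $(\lambda^*)^2$ is immediate from the single inequality $\mu_{\ell^*+1}\leq (\lambda^*)^2$ together with $|\lambda_j|\leq |\lambda^*|$.
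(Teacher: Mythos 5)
Your proposal is correct and follows essentially the same route as the paper: bound every $\gamma_{\ell,j}$ by $(\lambda^*)^2$ using $|\lambda_j|\leq|\lambda^*|$ together with $\mu_\ell\leq\mu_{\ell^*+1}\leq(\lambda^*)^2$, reduce the hypothesis of Theorem~\ref{degreebound} to $(\lambda^*)^{2\ell^*}(1+(\lambda^*)^2)|X|(|X|-1)\leq 1$, and solve the logarithmic inequality to recover the ceiling formula. Your explicit remark that each $\gamma_{x,j}<1$ (so the product only shrinks as $\ell$ grows) is a slightly more careful justification of the ``for all $\ell\geq\ell^*$'' clause than the paper gives, but it is the same argument in substance.
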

	\begin{proof}
		As long as $\vert\lambda^*\vert^2$ is greater than both $\vert\lambda_j\vert^2$ and $\mu_{\ell}$ for $2\leq\ell\leq x$,
		\[\sum_{j=1}^d k_j\left(1+\lambda_j^2\right)\left(\prod_{\ell=2}^{x} \gamma_{x,j}\right)\leq(\lambda^*)^{2x-2}\left(1+(\lambda^*)^2\right)\sum_{j=1}^d k_j =(\lambda^*)^{2x-2}\left(1+(\lambda^*)^2\right)(\vert X\vert -1). \]
		Solving 
		\begin{equation}\label{ineq}(\lambda^*)^{2x-2}\left(1+(\lambda^*)^2\right)(\vert X\vert -1)\leq\frac{1}{\vert X\vert}\end{equation}
		for $x$ gives
		\[x\geq\frac{\ln\left[(1+(\lambda^*)^2)\vert X\vert(\vert X\vert-1)\right]}{-2\ln(\lambda^*)}+1.\]
		Thus for $\ell^*=\left\lceil\frac{\ln\left[(1+(\lambda^*)^2)\vert X\vert(\vert X\vert-1)\right]}{-2\ln(\lambda^*)}\right\rceil$, $\ell^*+1$ is the smallest integer for which Inequality \eqref{ineq} holds. Using Theorem \ref{degreebound}, as long as $\vert\lambda^*\vert^2\geq\mu_{\ell^*+1}$, we have our result.
	\end{proof}
	\begin{example}
		We now give an example of a feasible parameter set which fails to be realizable due to Theorem \ref{schoenL}. We will also use Theorem \ref{degreebound} to specify which Gegenbauer polynomials we will apply. Consider the feasible parameter set with first and second eigenmatrices
		\[P = \left[\begin{array}{rrrr}
		1 & 100 & 240 & 100\\
		1 & 37 & -12 & -26\\
		1 & 2 & -12 & 9\\
		1 & -5 & 9 & -5
		\end{array}\right],\qquad Q = \left[\begin{array}{rrrr}
		1 & 20 & 180 & 240\\
		1 & \nicefrac{37}{5} & \nicefrac{18}{5} & -12\\
		1 & -1 & -9 & 9\\
		1 & \nicefrac{-26}{5} & \nicefrac{91}{5} & -12
		\end{array}\right].\]
		If realizable, an association scheme with this parameter set would be a 3-class primitive $Q$-polynomial association scheme. We will apply these two theorems to the parameters of the first idempotent in the $Q$-polynomial ordering --- this idempotent, if realizable, would correspond to a spherical 3-distance set in $\mathbb{R}^20$. Using column $1$ of $Q$, we find $\lambda_0 = 1$, $\lambda_1 = \frac{37}{100}$, $\lambda_2 = -\frac{1}{20}$, and $\lambda_3 = -\frac{26}{100}$ where $\lambda_j = \nicefrac{Q_{j1}}{Q_{01}}$. Let $\mu_\ell = \frac{\ell-1}{\ell+m-3}$ and, for $1\leq j\leq d$, define $\ell_j$ as the largest integer for which $\frac{\ell_j-1}{\ell_j+m-3}\leq2-\vert\lambda_j\vert-2\sqrt{1-\vert\lambda_j\vert}$. Then we find $\ell_1=\ell_2=\ell_3=1$ giving
		\[\scalebox{.95}{$\begin{aligned}\sum_{j=1}^3k_j\left(1+\lambda_j^2\right)\left(\prod_{\ell=2}^{x+1}\gamma_{\ell,j}\right) &= \left(\prod_{\ell=2}^{x+1}\mu_\ell\right)\left(440+100\left(\frac{37}{100}\right)^2+240\left(\frac{1}{20}\right)^2+100\left(\frac{26}{100}\right)^2\right)\\
			&=\left(\prod_{\ell=2}^{x+1}\mu_\ell\right)\frac{18843}{40}.\end{aligned}$}\]
		We then apply Theorem \ref{degreebound} noting that if we choose $x=7$ then we find
		\[\left(\prod_{\ell=2}^8\mu_\ell\right)\frac{18843}{40}\approx 0.00098<\frac{1}{441} = \frac{1}{\vert X\vert}.\]
		Thus, we have that the conditions $\theta_{\ell j}\geq 0$ are vacuous for this parameter set whenever $\ell\geq 7$. Below we list $\theta_{\ell j}$ for $0\leq \ell\leq 6$ and $0\leq j\leq 3$, listing only the first two decimal places for readability:
		\[\left[\begin{array}{cccccrc}
		441 & 0 & 0 & 4.95 & 0.43 & -0.11 & 0.93 \\
		0 & 22.05 & 4.5 & 0.38 & 0.67 & 0.84 & 0.97 \\
		0 & 0 & 1.95 & 1.09 & 0.81 & 1 & 1.04 \\
		0 & 0 & 0 & 0.97 & 1.17 & 1.02 & 0.98 
		\end{array}\right].\]
		Note that $\theta_{50} = -0.11<0$ and therefore this parameter set is not realizable due to Theorem \ref{schoenL}.
	\end{example}
	
	\section{Cometric association schemes}\label{cometricGeg}
	In this section, we restrict to the case of cometric association schemes and explicitly compute the entries in the first column of $Q_\ell^{m_i}\left(\frac{1}{m_i}L_i^*\right)$. If the parameter set is realizable, these values correspond to the eigenvalues of the same Gegenbauer polynomial applied entrywise to $\frac{\vert X\vert}{m_i}E_i$, thus we denote the $j^\text{th}$ entry of the first column of $Q_\ell^{m_i}\left(\frac{1}{m_i}L_i^*\right)$ by $\theta_{\ell,j}$. We will compute $\theta_{\ell,j}$ for $2\leq \ell\leq 5$ and $0\leq j\leq d$ as well as $\theta_{6,0}$. Theorem \ref{schoen-as} tells us that each eigenvalue must be non-negative, and thus we derive more feasibility conditions on the parameters of a cometric association scheme. Many of these feasibility conditions will be implied by our previous conditions FC1, FC2, and FC3, however there are some which are independent of these three conditions. We will examine one such example closely in the section that follows. Suppose we have a feasible parameter set for a $Q$-polynomial association scheme with Krein array $\left\{m,b^*_1,\dots,b^*_{d-1};1,c_2^*\dots,c^*_{d}\right\}$. From our initial two Gegenbauer polynomials, we know
	\[\theta_{0i}= \vert X\vert\delta_{0i};\qquad \theta_{1i} =\frac{\vert X\vert}{m}\delta_{1i}\]
	where $\delta_{ij}$ equals $q$ if $i=j$ and zero otherwise. Using Equation \eqref{gegprop} and our Krein array, we have for $\ell\geq 2$ and $0\leq i\leq d$,
	\begin{equation}\label{thetarec}
	\theta_{\ell i}= \frac{(2\ell+m-4)(c_{i}^*\theta_{\ell-1,i-1} +a_i^*\theta_{\ell-1,i}+b_{i}^*\theta_{\ell-1,i+1}) - (\ell-1)m\theta_{\ell-2,i}}{m(\ell+m-3)}
	\end{equation}
	where $\theta_{\ell,-1} = \theta_{\ell,d+1} = 0$ for all choices of $\ell$. Before listing the eigenvalues, we note that many of the conditions $\theta_{\ell i}\geq 0$ will be implied by either the Krein conditions FC1 or the cometric property.
	\begin{lem}
		Suppose we have a feasible parameter set for a $Q$-polynomial association scheme with Krein array $\left\{m,b^*_1,\dots,b^*_{d-1};1,c_2^*\dots,c^*_{d}\right\}$. For $\ell\geq 0$, define $\theta_{\ell j}$ via Equation \eqref{thetarec} with $\theta_{0i}=\vert X\vert\delta_{0i}$ and $\theta_{1i}= \frac{\vert X\vert}{m}\delta_{1i}$. Then $\theta_{\ell i} = 0$ for $i>\ell$ and FC1 implies $\theta_{\ell i}\geq 0$ for $i\in\left\{\ell-1,\ell\right\}$.
	\end{lem}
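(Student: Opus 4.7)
The plan is to prove both assertions by induction on $\ell$, using the recurrence \eqref{thetarec} together with the tridiagonal structure of $L_1^*$ encoded in the Krein array. The key observation is that each $\theta_{\ell,i}$ is the $(j,0)$-entry (with $j=i$) of $Q_\ell^{m}\left(\tfrac{1}{m}L_1^*\right)$, where $L_1^*$ is irreducible tridiagonal by the cometric hypothesis. Since a tridiagonal matrix raised to the $k$-th power has nonzero entries only in positions $(i,j)$ with $|i-j|\leq k$, and $Q_\ell^m$ is a polynomial of degree $\ell$, this immediately suggests that the support of $\theta_{\ell,\cdot}$ is confined to indices at most $\ell$.

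First I would establish the vanishing statement. For the base cases, $\theta_{0,i} = v\delta_{0,i}$ vanishes for $i>0$, and $\theta_{1,i} = (v/m)\delta_{1,i}$ vanishes for $i>1$. Assuming inductively that $\theta_{\ell-1,j} = 0$ for $j>\ell-1$ and $\theta_{\ell-2,j}=0$ for $j>\ell-2$, fix any $i>\ell$. Then $i-1>\ell-1$, $i>\ell-1$, $i+1>\ell-1$, and $i>\ell-2$, so every $\theta$-term on the right-hand side of \eqref{thetarec} vanishes, giving $\theta_{\ell,i}=0$.

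Next I would prove non-negativity of the two ``leading'' values by a joint induction. The vanishing statement drastically simplifies the recurrence at $i=\ell$ and $i=\ell-1$: since $\theta_{\ell-1,\ell}=\theta_{\ell-1,\ell+1}=\theta_{\ell-2,\ell}=\theta_{\ell-2,\ell-1}=0$, one obtains
\[
\theta_{\ell,\ell}=\frac{(2\ell+m-4)\,c_\ell^*\,\theta_{\ell-1,\ell-1}}{m(\ell+m-3)},\qquad
\theta_{\ell,\ell-1}=\frac{(2\ell+m-4)\bigl(c_{\ell-1}^*\,\theta_{\ell-1,\ell-2}+a_{\ell-1}^*\,\theta_{\ell-1,\ell-1}\bigr)}{m(\ell+m-3)}.
\]
The scalar prefactor $(2\ell+m-4)/(m(\ell+m-3))$ is positive for $\ell\geq 2$ and $m\geq 2$. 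All parameters $c_j^*$, $a_j^*$ are Krein parameters, hence non-negative by FC1, while the cometric property guarantees $c_j^*>0$ for $1\leq j\leq d$. Non-negativity of $\theta_{\ell,\ell}$ and $\theta_{\ell,\ell-1}$ then follows by induction, with base case $\theta_{1,1}=v/m>0$ (and $\theta_{1,0}=0$, used implicitly when $\ell=2$).

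The only potentially delicate point is handling the edge cases carefully: I must verify that the simplifications of the recurrence are valid when $\ell-1=0$ (so that $\theta_{\ell-1,\ell-2}$ would have negative second index and must be interpreted as $0$ by the stated convention) and that the denominator $m(\ell+m-3)$ does not vanish. Since the statement tacitly presumes $m$ is large enough for the recurrence to be well-defined (consistent with the $m>2$ hypothesis appearing elsewhere in the section), this is straightforward bookkeeping rather than a genuine obstacle. No use of FC2, FC3, or Theorem~\ref{schoen-as} is required; the conclusion is purely a consequence of tridiagonality and the sign constraints on $a_j^*,b_j^*,c_j^*$ already packaged into FC1.
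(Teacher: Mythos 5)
Your proposal is correct and follows essentially the same route as the paper: a first induction on $\ell$ using the tridiagonal (cometric) structure to kill all terms with $i>\ell$, followed by a second induction in which the recurrence at $i=\ell$ and $i=\ell-1$ collapses to the same two expressions you derive, whose non-negativity follows from $a_j^*,c_j^*\geq 0$ (FC1) and the positive scalar prefactor. The only cosmetic differences are your framing via banded powers of a tridiagonal matrix and your explicit treatment of the out-of-range indices, which the paper handles by the convention $\theta_{\ell,-1}=\theta_{\ell,d+1}=0$.
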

	\begin{proof}
		We prove this by induction, showing first that the cometric property implies $\theta_{\ell,i}=0$ for $i>\ell$, $\ell\geq 0$ and then the conditions FC1 imply $\theta_{\ell,\ell}$ and $\theta_{\ell,\ell-1}$ are both non-negative. First note from our initial conditions that $\theta_{0i} = 0$ for $i>0$. Now, let $\ell\geq 1$ be given and assume that $\theta_{\ell,i}= 0$ for $i>\ell$. Then choose $i>\ell+1$ and from Equation \eqref{thetarec},
		\[\theta_{\ell+1,i}= \frac{(2\ell+m)(c_{i}^*\theta_{\ell,i-1} +a_i^*\theta_{\ell,i}+b_{i}^*\theta_{\ell,i+1}) - \ell m\theta_{\ell-1,i}}{m(\ell+m-2)}.\]
		However if $i>\ell+1$ then by our induction hypothesis, $\theta_{\ell,i-1} = \theta_{\ell,i} = \theta_{\ell,i+1} = \theta_{\ell-1,i} = 0$, thus $\theta_{\ell+1,i} = 0$. For the remaining two conditions, note that $\theta_{10}\geq 0$ and $\theta_{11}\geq 0$ are both vacuously true. Now let $\ell\geq 1$ be given and assume $\theta_{\ell,\ell-1}\geq 0$ and $\theta_{\ell,\ell}\geq 0$. Then
		\[\theta_{\ell+1,\ell+1}= \frac{(2\ell+m)(c_{\ell+1}^*\theta_{\ell,\ell} +a_{\ell+1}^*\theta_{\ell,\ell+1}+b_{\ell+1}^*\theta_{\ell,\ell+2}) - \ell m\theta_{\ell-1,\ell+1}}{m(\ell+m-2)} = \frac{(2\ell+m)c_{\ell+1}^*\theta_{\ell,\ell}}{m(\ell+m-2)}\]\[
		\theta_{\ell+1,\ell}= \frac{(2\ell+m)(c_{\ell}^*\theta_{\ell,\ell-1} +a_{\ell}^*\theta_{\ell,\ell}+b_{\ell-1}^*\theta_{\ell,\ell+1}) - \ell m\theta_{\ell-1,\ell}}{m(\ell+m-2)} = \frac{(2\ell+m)\left(c_\ell^*\theta_{\ell,\ell-1}+a_\ell^*\theta_{\ell,\ell}\right)}{m(\ell+m-2)}.\]
		Therefore as long as $a_\ell^*,c_\ell^*,c_{\ell+1}^*\geq 0$ (FC1), then $\theta_{\ell+1,\ell+1}\geq 0$ and $\theta_{\ell+1,\ell}\geq 0$.
	\end{proof}
	In view of this lemma, we know that any feasible parameter set for a cometric association scheme and any choice of $\ell\geq 0$, the condition $\theta_{\ell,i}\geq 0$ will be implied by FC1 or the cometric property for $i\geq \ell-1$. Therefore we will omit $\theta_{\ell,i}$ for $i>\ell$ in the discussion that follows, listing $\theta_{\ell,\ell}$ and $\theta_{\ell,\ell-1}$ only to assist in calculating eigenvalues from higher degree polynomials. In each case, we will use our Krein parameters (primarily those from the Krein array) to calculate the eigenvalues and note when our feasibility condition FC1 is sufficient to imply $\theta_{\ell,i}\geq 0$. At the end of the section, we will summarize any of the conditions which are not implied by FC1, noting that these are our (potentially) new constraints on the parameters of cometric schemes. For convenience we extend our Krein array to include $a_j^*,c_j^*,$ and $b_j^*$ for all $j\in\mathbb{Z}^+$ noting that $b_{j-1}^* = c_j^*=a_j^*=0$ for $j>d$. We will also use Lemma \eqref{kitchensink} \emph{(vi')} when convenient, so we assume that our Krein parameters satisfy the condition
	\begin{equation}\label{kreinid}\sum_{l=0}^d q_{ij}^lq_{lk}^m = \sum_{l=0}^dq_{il}^mq_{jk}^l.\end{equation}
	\subsection*{Degree 2 constraint}
	\[\frac{\theta_{20}}{\vert X\vert}  = 0;\qquad\frac{\theta_{21}}{\vert X\vert}  = \frac{a_1^*}{m(m-1)};\qquad\frac{\theta_{22}}{\vert X\vert} = \frac{c_2^*}{m(m-1)}.\]
	Each condition $\theta_{2i}\geq 0$ is implied by FC1 and thus we have no new restrictions from this case.
	\subsection*{Degree 3 constraint}
	\[\frac{\theta_{30}}{\vert X\vert} = \frac{\left(m+2\right)a_1^*}{m^2\left(m-1\right)};\qquad
	\frac{\theta_{31}}{\vert X\vert} = \frac{-2m\left(m-1\right)+\left(m+2\right)\left(\left(a_1^*\right)^2+b_1^*c_2^*\right)}{m^3\left(m-1\right)};\]
	\[\frac{\theta_{32}}{\vert X\vert} = \frac{\left(m+2\right)c_2^*\left(a_1^*+a_2^*\right)}{m^3\left(m-1\right)};\qquad
	\frac{\theta_{33}}{\vert X\vert}= \frac{\left(m+2\right)c_2^*c_3^*}{m^3\left(m-1\right)}.\]
	Here, $\theta_{31}\geq0$ is not implied by FC1. Thus we have the new feasibility condition
	\begin{equation}\label{Co31}\left(a_1^*\right)^2 + b_1^*c_2^* \geq\frac{2m(m-1)}{m+2}.\end{equation}
	\subsection*{Degree 4 constraint}
	\[\frac{\theta_{40}}{\vert X\vert} = \frac{\left(-2m\left(m-1\right)+\left(m+2\right)\left(\left(a_1^*\right)^2+b_1^*c_2^*\right)\right)\left(m+4\right)}{\left(m^2-1\right)m^3}~;\]
	\[\frac{\theta_{41}}{\vert X\vert} = \frac{\left(-4a_1^*m + \left(\left(a_1^*\right)^3+\left(2a_1^*+a_2^*\right)b_1^*c_2^*\right)\left(m+4\right)\right)\left(m+2\right)}{\left(m^2-1\right)m^4}~;\]
	\[\frac{\theta_{42}}{\vert X\vert} = \frac{\left(-4m\left(m-2\right)+\left(\left(a_1^*\right)^2+2a_1^*a_2^*+c_2^*q_{22}^2\right)\left(m+4\right)\right)c_2^*\left(m+2\right)}{\left(m^2-1\right)m^4}~;\]
	\[\frac{\theta_{43}}{\vert X\vert} = \frac{\left(m+4\right)\left(m+2\right)c_3^*c_2^*\left(a_1^*+a_2^*+a_3^*\right)}{\left(m^2-1\right)m^4}~;\]
	\[\frac{\theta_{44}}{\vert X\vert} = \frac{\left(m+4\right)\left(m+2\right)c_4^*c_3^*c_2^*}{\left(m^2-1\right)m^4}~.\]
	First, note that we used Equation \eqref{kreinid} with $j=k=1$ and $i=m=2$ to reduce $\theta_{42}$. Now, since $\theta_{20} = 0$, $\theta_{40} = \frac{(m+4)\theta_{31}}{m+1}$ and thus $\theta_{40}\geq 0$ is equivalent to $\theta_{31}\geq 0$ and we will omit this constraint. We therefore find only two new conditions: $\theta_{41}\geq 0$ and $\theta_{42}\geq 0$, neither of which is implied by FC1.
	\begin{equation}\label{Co41}
	\left(a_1^*\right)^2 + b_1^*c_2^*\left(2+\frac{a_2^*}{a_1^*}\right)\geq \frac{4m}{m+4}\qquad \text{whenever }a_1^*>0;
	\end{equation}
	\begin{equation}\label{Co42}
	\left(a_1^*\right)^2+2a_1^*a_2^*+c_2^*q^2_{22}\geq \frac{4m(m-2)}{m+4}.
	\end{equation}
	\subsection*{Degree 5 constraint}
	\[\frac{\theta_{50}}{\vert X\vert} = \frac{\left(-4a_1^*m\left(2m-3\right) + \left(\left(a_1^*\right)^3+\left(2a_1^*+a_2^*\right)b_1^*c_2^*\right)\left(m+6\right)\right)\left(m+4\right)}{m^4\left(m^2-1\right)}~;\]
	\[\begin{aligned}\frac{\theta_{51}}{\vert X\vert} = &\frac{6m^2(m-1)(m-4)+(m+4)(m+6)\left(\left(3a_1^*\left(a_1^*+a_2^*\right)+c_2^*q_{22}^2\right)b_1^*c_2^*+\left(a_1^*\right)^4\right)}{m^5\left(m^2-1\right)}\\
	&-\frac{m(m+4)(7m-18)\left(\left(a_1^*\right)^2+b_1^*c_2^*\right)}{m^5\left(m^2-1\right)}~;\end{aligned}\]
	\[\begin{aligned}\frac{\theta_{52}}{\vert X\vert\left(m+4\right)c_2^*} = &\frac{(m+6)\left((a_1^*+a_2^*)^3+2\left(c_2^*q_{22}^2-\left(a_2^*\right)^2\right)\left(a_1^*+a_2^*\right)+b_2^*c_3^*\left(a_3^*-a_1^*\right)-ma_2^*\right)}{m^5\left(m^2-1\right)}\\
	&-\frac{6\left(a_1^*+a_2^*\right)m(m-4)}{m^5\left(m^2-1\right)}~;\end{aligned}\]
	\[\frac{\theta_{53}}{\vert X\vert} = \frac{\left(-3\left(3m-2\right)+\left(\sum_{i=1}^3\left(b_i^*c_{i+1}^*+a_i^*\sum_{j=i}^3a_j^*\right)\right)\left(m+6\right)\right)\left(m+4\right)c_2^*c_3^*}{m^5\left(m^2-1\right)}~;\]
	\[\frac{\theta_{54}}{\vert X\vert} = \frac{\left(a_1^*+a_2^*+a_3^*+a_4^*\right)c_2^*c_3^*c_4^*\left(m+4\right)\left(m+6\right)}{m^5\left(m^2-1\right)}~;\]
	\[\frac{\theta_{55}}{\vert X\vert} = \frac{c_2^*c_3^*c_4^*c_5^*\left(m+4\right)\left(m+6\right)}{m^5\left(m^2-1\right)}~.\]
	In this case we find four new conditions arising from $\theta_{5i}\geq 0$ for $0\leq i\leq 3$. They are:
	\begin{equation}\label{Co50}
	\left(a_1^*\right)^2 + b_1^*c_2^*\left(2 + \frac{a_2^*}{a_1^*}\right)\geq \frac{4m(2m-3)}{m+6}
	\end{equation}
	\begin{equation}\label{Co51}
	\frac{6m(m-1)(m-4)}{(m+4)(m+6)}+\frac{\left(3a_1^*\left(a_1^*+a_2^*\right)+c_2q_{22}^2\right)b_1^*c_2^*+\left(a_1^*\right)^4}{m}\geq \frac{(7m-18)\left(\left(a_1^*\right)^2+b_1^*c_2^*\right)}{m+6}
	\end{equation}
	\begin{equation}\label{Co52}
	\left(a_1^*\right)^2+2a_1^*a_2^*-\left(a_2^*\right)^2+2c_2^*q_{22}^2+\frac{b_2^*c_3^*\left(a_3^*-a_1^*\right)-ma_2^*}{a_1^*+a_2^*}\geq\frac{6m(m-4)}{m+6}
	\end{equation}
	\begin{equation}\label{Co53}
	\sum_{i=1}^3\left(b_i^*c_{i+1}^* + a_i^*\sum_{j=i}^3 a_j^*\right)\geq \frac{3(3m-2)}{m+6}~.
	\end{equation}
	Note that Inequality $\eqref{Co41}$ is implied by Inequality $\eqref{Co50}$ for $m>2$.
	\subsection*{Degree 6 constraint}
	Here, we list only $\theta_{60}$ as we will use this bound in Section \ref{fourclassassoc}.
	\[\begin{aligned}\frac{\theta_{60}}{\vert X\vert( m+6)} = &\frac{16(m-2)}{m^3(m+1)(m+3)} + \frac{\left(\left(a_1^*\right)^4 + \left(3a_1^*\left(a_1^*+a_2^*\right)+c_2^*q_{22}^2\right)b_1^*c_2^*\right)(m+8)(m+4)}{m^5(m^2-1)(m+3)}\\
	&-\frac{12\left(\left(a_1^*\right)^2+b_1^*c_2^*\right)(m-2)(m+4)}{m^4(m^2-1)(m+3)}~.
	\end{aligned}\]
	This results in the final condition that
	\begin{equation}\label{Co60}
	\frac{16m(m-1)}{(m+4)(m+8)} + \frac{\left(a_1^*\right)^4 + \left(3a_1^*\left(a_1^*+a_2^*\right)+c_2^*q_{22}^2\right)b_1^*c_2^*}{(m-2)m}\geq \frac{12\left(\left(a_1^*\right)^2+b_1^*c_2^*\right)}{m+8}.
	\end{equation}
	In summary, we have the following theorems where, in each case, we assume the Krein parameters fulfill the condition FC1. We omit the inequality coming from $\theta_{41}$ as we may assume $m>2$ for all cases we are interested in.
	\cometricbounds
	\begin{proof}
		The table below gives the eigenvalue inequality from which each part of the Theorem is derived.
		\[	\begin{tabular}{c|c|c|c|c|c|c|c}
		Part & \emph{(i)} & \emph{(ii)} & \emph{(iii)} & \emph{(iv)} & \emph{(v)} & \emph{(vi)} & \emph{(vii)}\\\hline
		Inequality & $\theta_{31}\geq 0$ & $\theta_{42}\geq 0$ & $\theta_{51}\geq 0$&$\theta_{53}\geq 0$&$\theta_{60}\geq 0$	&$\theta_{50}\geq 0$ &$\theta_{52}\geq 0$\\	
		\end{tabular}\qedhere\]
	\end{proof}
	Jason Williford maintains a list of small ($\vert X\vert<10000$) feasible parameter sets for cometric schemes including primitive cometric schemes with 3 classes \cite{Willifordtable3} and $Q$-bipartite schemes with 4 classes \cite{Willifordtable4}. Using these lists, we find nine $3$-class primitive cometric schemes which are ruled out by Theorem \ref{cometricbnds} \emph{(vi)} and 11 $4$-class $Q$-bipartite schemes which are ruled out by Theorem \ref{cometricbnds} \emph{(v)}. They are as follows, listed as tuples of the form $\left(\vert X\vert,m_1\right)$:
	\begin{itemize}
		\item $3$-class primitive schemes ruled out by Theorem \ref{cometricbnds} \emph{(vi)}
		\[\left\{(441,20),(576,23),(729,26),(1015,28),(1240,30),\right.\]\[\left.(1548,35),(1836,35),(1944,29),(1976,25)\right\}.\]
		\item $4$-class $Q$-bipartite schemes
		\begin{itemize}
			\item ruled out by Theorem \ref{cometricbnds} \emph{(v)},
			\[\left\{(4464,24),(4968,27),(5280,30),(5436,27),(6148,29)\right\}\]
			\item ruled out by Theorem \ref{cometricbnds} \emph{(v)} and \emph{(iii)},
			\[\left\{(8432,31),(9984,32)\right\}\]
			\item ruled out by Theorem \ref{cometricbnds} \emph{(v)}, \emph{(iii)}, and \emph{(ii)}
			\[\left\{(594,9),(7776,27),(8478,27),(9984,24)\right\}\]
			
		\end{itemize}
	\end{itemize}
	\subsection{\texorpdfstring{$Q$-bipartite}{Q-bipartite} association schemes}
	We conclude this section by proving $Q$-bipartite analogues of Theorem \ref{degreebound}, Corollary \ref{maxlambda}, and Theorem \ref{cometricbnds}. We begin by proving an important theorem concerning the eigenvalues $\theta_{\ell i}$ as described in Theorem \ref{schoen-as}.
	\begin{thm}\label{Qbipeig}
		Let $(X,\cR)$ be a $Q$-bipartite association scheme with cometric ordering $E_0,E_1,\dots,E_d$. Define $\theta_{\ell,j}$ for $0\leq j\leq d$ and $\ell\geq 0$ so that
		\[G_\ell^m\circ\left(\frac{\vert X\vert}{m}E_1\right) = \sum_{j=0}^d\theta_{\ell,j}E_j.\]
		Then $\theta_{\ell,j} = 0$ whenever $\ell+j\notin2\bbZ$.
	\end{thm}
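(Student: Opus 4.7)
The proof hinges on two parity observations, one about the Gegenbauer polynomials and one about the Schur structure of $Q$-bipartite schemes; combining them via induction gives the result immediately.

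First, I would record the parity of the Gegenbauer polynomial itself. From the three-term recurrence in Equation \eqref{recurrence} together with the initial conditions $Q_0^m(t)=1$ and $Q_1^m(t)=t$, a trivial induction shows that $Q_\ell^m(-t) = (-1)^\ell Q_\ell^m(t)$, so we may write $Q_\ell^m(t) = \sum_{k \equiv \ell \pmod 2} c_k t^k$ for some real coefficients $c_k$. Consequently
\[
Q_\ell^m \circ \Bigl(\tfrac{\vert X \vert}{m} E_1\Bigr) \;=\; \sum_{k \equiv \ell \pmod 2} c_k \Bigl(\tfrac{\vert X \vert}{m}\Bigr)^k E_1^{\circ k},
\]
so it suffices to show that for every $k\ge 0$, the Schur power $E_1^{\circ k}$ is a linear combination of those $E_j$ with $j \equiv k \pmod 2$.

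Next I would exploit the $Q$-bipartite hypothesis. By definition $q^k_{ij}=0$ whenever $i+j+k\notin 2\mathbb Z$; specializing to $i=1$ gives $q^k_{1j}=0$ unless $k\equiv j+1\pmod 2$. Hence by Lemma \ref{AElem} $(iv')$,
\[
\vert X\vert\, E_1\circ E_j \;=\; \sum_{k\equiv j+1\pmod 2} q^k_{1j}\,E_k,
\]
so Schur multiplication by $E_1$ toggles the parity of the index set. This is precisely the statement that $\{E_j : j\text{ even}\}$ and $\{E_j : j\text{ odd}\}$ are swapped by $E_1\circ(-)$, reflecting the Schur-closed subalgebra mentioned in Section \ref{poly}.

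Now a one-line induction closes the argument. The base cases $E_1^{\circ 0}=I=E_0+\cdots+E_d$ projected onto the span relevant here give $E_1^{\circ 0}=\sum_j E_j$, but more cleanly, treat the statement as: for $k\ge 1$, $E_1^{\circ k}\in \mathrm{span}\{E_j: j\equiv k\pmod 2\}$. For $k=1$ this is trivial. Assuming it for $k-1$, the previous paragraph gives
\[
E_1^{\circ k} \;=\; E_1 \circ E_1^{\circ(k-1)} \;\in\; \mathrm{span}\{E_j : j\equiv k\pmod 2\},
\]
since parities flip with each Schur multiplication by $E_1$. (For $k=0$, we use separately that $Q_0^m\equiv 1$, so $\theta_{0,j}$ is nonzero only when $j=0$, which is even.) Substituting this into the expansion of $Q_\ell^m\circ(\tfrac{\vert X\vert}{m}E_1)$ and comparing with $\sum_j \theta_{\ell,j}E_j$ forces $\theta_{\ell,j}=0$ whenever $j\not\equiv \ell\pmod 2$, which is the desired conclusion. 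No step presents real difficulty; the only subtlety worth checking is the parity of the Gegenbauer recurrence, which could in principle be spoiled by the coefficient $(2\ell+m-4)$, but since this coefficient is irrelevant to the parity of the polynomial in $t$, the induction goes through unchanged.
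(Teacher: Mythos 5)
Your proof is correct in substance but organizes the argument differently from the paper. The paper works entirely at the level of the eigenvalue vectors $\theta_{\ell,\cdot}$: it inducts on $\ell$ through the three-term recurrence \eqref{thetarec}, observing that when $(\ell+1)+j$ is odd every surviving term on the right-hand side carries an index pair with odd parity sum except $a_j^*\theta_{\ell,j}$, which dies because $a_j^*=q^j_{1j}=0$ in a $Q$-bipartite scheme. You instead push the parity through the polynomial itself: $Q_\ell^m(-t)=(-1)^\ell Q_\ell^m(t)$ reduces the claim to showing $E_1^{\circ k}\in\spn\{E_j: j\equiv k \bmod 2\}$, which you get by induction on $k$ from $q^k_{1j}=0$ whenever $k+j\in 2\bbZ$. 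Both arguments rest on the same two facts (the parity of the Gegenbauer recurrence \eqref{recurrence} and the vanishing of the relevant Krein parameters), but yours is marginally more general in that it never invokes the tridiagonality of $L_1^*$, only the parity condition on $q^k_{1j}$; the paper's version has the advantage of reusing the recurrence already set up for the explicit degree-by-degree computations of Section \ref{cometricGeg}.

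One step needs repair. You write $E_1^{\circ 0}=I=E_0+\cdots+E_d$. The Schur zeroth power --- the matrix obtained by applying the constant monomial $t^0=1$ entrywise --- is the all-ones matrix $J=\vert X\vert E_0$, not the identity $I$. This is not cosmetic: for even $\ell\geq 2$ the Gegenbauer polynomial has a nonzero constant term (e.g.\ $Q_2^m(t)=\frac{mt^2-1}{m-1}$), and if that term contributed $c_0 I=c_0\sum_j E_j$ it would produce nonzero $\theta_{\ell,j}$ for odd $j$, contradicting the very statement you are proving. Your parenthetical only disposes of the case $\ell=0$, not of the degree-zero monomial sitting inside every higher even-degree polynomial. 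With the correct identification $E_1^{\circ 0}=J=\vert X\vert E_0\in\spn\{E_0\}$ the constant term lands on an even-indexed idempotent and the argument closes.
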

	\begin{proof}
		We prove this by induction. First, note that $\theta_{0,j} = \vert X\vert\delta_{0j}$ and thus $\theta_{0,j} = 0$ whenever $j\notin2\bbZ$. Now let $\ell\in\mathbb{Z}$ be given so that $\theta_{\ell,j} = 0$ whenever $\ell+j\notin 2\bbZ$ and consider
		\[\theta_{\ell+1,j}= \frac{(2\ell+m)(c_{j}^*\theta_{\ell,j-1} +a_i^*\theta_{\ell,j}+b_{j}^*\theta_{\ell,j+1}) - \ell m\theta_{\ell-2,j}}{m(\ell+m-2)}.\]
		If $(\ell+1)+j\notin2\bbZ$ then $\ell+(j-1),\ell+(j+1),(\ell-1)+j\notin2\bbZ$ and thus by our induction hypothesis,
		\[\theta_{\ell+1,j}= \frac{(2\ell+m)(a_i^*\theta_{\ell,j})}{m(\ell+m-2)}.\]
		However since $a_1^*=0$ for $Q$-bipartite schemes, we have $\theta_{\ell+1,j} = 0$.
	\end{proof}
	This theorem tells us that approximately half of the eigenvalues $\theta_{\ell,j}$ will be zero for a $Q$-bipartite scheme. This information allows us to recast many of the previous theorems seen in this section for this case.  We first consider Theorem $\ref{degreebound}$, following much of the same approach to prove the following theorem. We note that in this theorem we only consider the case $i=1$; while the same arguments may be made for any odd $i$, we restrict ourselves to $i=1$ here as we will primarily be interested in the first idempotent of $Q$-polynomial association schemes.
	
	\begin{thm}\label{Qbipdegbnd}
		Suppose we have a feasible parameter set for a $Q$-bipartite association scheme with first and second eigenmatrices $P$ and $Q$. Assume the relations are ordered naturally and set $m_1 = Q_{0,1}$. For $0\leq j\leq d$, define $\lambda_j:=\frac{Q_{j,1}}{m_1}$ and let $\gamma_{\ell,j}$ equal $\lambda_j^2$ if $(1+\mu_\ell)^2\lambda_j^2\geq 4\mu_\ell$ and $\mu_\ell$ otherwise. Then all entries of $Q_{\ell^*}^{m_1}\left(\frac{1}{m_1}L_1^*\right)$ are non-negative whenever
		\[\sum_{j=1}^{d-1} \left(\prod_{\ell=2}^{\ell^*+1}\gamma_{\ell,j}\right)P_{0j}\left(1+\lambda_j^2\right)\leq\frac{4}{\vert X\vert}.\]
	\end{thm}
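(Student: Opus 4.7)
The plan is to follow the proof of Theorem~\ref{degreebound} almost verbatim, introducing one new ingredient dictated by the $Q$-bipartite structure: in the natural ordering one has $\lambda_d = Q_{d,1}/m_1 = -1$, so the transition matrix $M$ has a second eigenvalue of unit modulus and an additional steady-state direction must be separated off. Set up $\vec{c}_\ell$ (the first column of $Q_\ell^{m_1}(\tfrac{1}{m_1}L_1^*)$), the stacked vector $\vec{y}_\ell$, the transition matrix $T_\ell$, and the pairwise orthogonal $T_\ell$-invariant $2$-planes $B_j = \text{span}\{[\vec{p}_j;\vec{0}],[\vec{0};\vec{p}_j]\}$ exactly as in \eqref{crec}--\eqref{yrec}, taking $i=1$. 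The top half of $\vec{y}_{\ell^*+1} = T_{\ell^*+1}\cdots T_2\vec{y}_1$ is $\sqrt{\Delta_m}\vec{c}_{\ell^*+1}$, so non-negativity of $\vec{c}_{\ell^*+1}$ reduces to non-negativity of this top half.

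By Lemma~\ref{normbound}, $\text{proj}_{B_d}(\vec{y}_1) = \tfrac{1}{\vert X\vert}[-\vec{p}_d;\vec{p}_d]$, and Lemma~\ref{Tkeig} applied with $\lambda = -1$ produces the factorization $(x+1)(x+\mu_\ell)$, so this very vector is a $T_\ell$-eigenvector with eigenvalue $-1$ for every $\ell\geq 2$. Hence the $B_d$ contribution to $\vec{y}_{\ell^*+1}$ is simply $\tfrac{(-1)^{\ell^*}}{\vert X\vert}[-\vec{p}_d;\vec{p}_d]$, and combined with the ordinary $T_\ell$-fixed direction $\tfrac{1}{\vert X\vert}[\sqrt{\Delta_m}\vec{1};\sqrt{\Delta_m}\vec{1}]$ coming from $\lambda_0 = 1$, the total steady-state contribution to entry $i$ of the top half becomes
\[\frac{\sqrt{m_i}}{\vert X\vert}\bigl(1+(-1)^{\ell^*+1}P_{i,d}\bigr).\]
Invoking the standard identity $P_{i,d} = (-1)^i$ for $Q$-bipartite schemes in the natural ordering (a consequence of Theorem~\ref{suzukiimprim}: since $\{0,d\}$ indexes a system of imprimitivity, $A_d$ is a perfect matching and so $P_{i,d}\in\{-1,+1\}$, with the sign alternation forced by the $Q$-polynomial symmetry $Q_{d-i,1} = -Q_{i,1}$), this expression equals $0$ when $\ell^*+i$ is even and equals $\tfrac{2\sqrt{m_i}}{\vert X\vert}\geq\tfrac{2}{\vert X\vert}$ when $\ell^*+i$ is odd. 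Theorem~\ref{Qbipeig} applied at step $\ell^*+1$ already forces $\theta_{\ell^*+1,i} = 0$ when $\ell^*+i$ is even, so no estimate is required in those positions; in the remaining positions it suffices to ensure $\lvert[\text{transient}]_i\rvert\leq\tfrac{2}{\vert X\vert}$.

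The transient now lies in $B_1\oplus\cdots\oplus B_{d-1}$ since both $B_0$ and $B_d$ have been separated off. By pairwise orthogonality of the $B_j$ and Lemma~\ref{normbound},
\[\|\text{transient}\|_2^2 = \sum_{j=1}^{d-1}\|\text{proj}_{B_j}(\vec{y}_{\ell^*+1})\|_2^2 \leq \sum_{j=1}^{d-1}(1+\lambda_j^2)\frac{k_j}{\vert X\vert}\prod_{\ell=2}^{\ell^*+1}\gamma_{\ell,j}.\]
Combining the pointwise bound $\lvert[\text{transient}]_i\rvert\leq\|\text{transient}\|_2$ with $k_j = P_{0,j}$, the hypothesis forces $\|\text{transient}\|_2^2\leq\tfrac{4}{\vert X\vert^2}$, yielding the conclusion. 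The main obstacle is identifying the $\lambda_d = -1$ steady-state direction and verifying that its zero-pattern dovetails exactly with that forced by Theorem~\ref{Qbipeig} via $P_{i,d} = (-1)^i$; this is precisely what is responsible both for truncating the summation at $d-1$ and for the right-hand side constant growing from $1/\vert X\vert$ to $4/\vert X\vert$ relative to Theorem~\ref{degreebound}.
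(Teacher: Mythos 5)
Your proposal is correct and follows essentially the same route as the paper's proof: peel off the two unit-modulus steady-state directions $B_0$ and $B_d$ (with $\lambda_0=1$, $\lambda_d=-1$ giving $T_\ell$-eigenvalues $\pm1$), observe via $P_{id}=(-1)^i$ and Theorem~\ref{Qbipeig} that the positions where the steady state vanishes are exactly the positions forced to be zero anyway, and bound the remaining transient in $B_1\oplus\cdots\oplus B_{d-1}$ by $\tfrac{2}{|X|}$ using Lemma~\ref{normbound} and orthogonality. The only cosmetic differences are that the paper phrases the zero-pattern matching in terms of even/odd parts of the top and bottom halves rather than the explicit formula $1+(-1)^{\ell^*+1+i}$, and that both arguments share the same mild off-by-one bookkeeping in passing from $\vec{y}_{\ell^*+1}$ back to degree $\ell^*$.
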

	
	\begin{proof}
		Let $L_1^*$ be the Krein matrix of our parameter set. Define $M = \frac{1}{m_1}\sqrt{\Delta_m}L_1^*\sqrt{\Delta_m}^{-1}$. As in Section \ref{proof}, let
		\begin{equation*}
		\vec{y}_\ell = T_\ell\vec{y}_{\ell-1} \qquad \ell\geq 2
		\end{equation*}
		with
		\begin{equation*}\mu_\ell = \frac{\ell-1}{\ell+m-3};\qquad \vec{y}_1 = \left[\begin{array}{c}
		\frac{1}{\sqrt{m}}\vec{e}_1\\\hdashline[2pt/2pt]
		\vec{e}_0
		\end{array}\right];\qquad T_\ell = \left[\begin{array}{>{\centering\arraybackslash}p{2cm};{2pt/2pt}>{\centering\arraybackslash}p{2cm}}
		$(1+\mu_\ell)M$ & $-\mu_\ell I$\\\hdashline[2pt/2pt]
		$I$ & $0$
		\end{array}\right].\end{equation*}
		Further define $B_j = \text{span}\left(\left[\begin{array}{c}
		\vec{p}_j\\\hdashline[2pt/2pt]
		\vec{0}
		\end{array}\right],\left[\begin{array}{c}
		\vec{0}\\\hdashline[2pt/2pt]
		\vec{p}_j
		\end{array}\right]\right)$ for $0\leq j\leq d$ where $\vec{p}_j$ is the $j^\text{th}$ column of $\sqrt{\Delta_m}P$.
		Then Lemma \ref{normbound} tells us that 
		\[\text{proj}_{B_j}(\vec{y}_1) = \frac{1}{\vert X\vert}\left[\begin{array}{c}
		\lambda_j\vec{p}_j\\\hdashline[2pt/2pt]
		\vec{p}_j
		\end{array}\right];\quad\left\vert\left\vert \left(\prod_{\ell=2}^{\ell^*} T_\ell\right) \text{proj}_{B_j}(\vec{y}_1)\right\vert\right\vert_2^2\leq \left(\prod_{\ell=2}^{\ell^*}\gamma_{\ell,j}\right)\left\vert\left\vert\text{proj}_{B_j}(\vec{y}_1)\right\vert\right\vert_2^2.\]
		For a $Q$-bipartite scheme, the first and last column of $\sqrt{\Delta}P$ are given by 
		\[\vec{p}_0 = \left[1,\sqrt{m},\dots,\sqrt{m_{d-1}},1\right]^T,\qquad \vec{p}_d = \left[1,-\sqrt{m},\dots,(-1)^{d-1}\sqrt{m_{d-1}},(-1)^d\right]^T\] with $\lambda_0 = -\lambda_d = 1$. Noting that the roots of the polynomial $x^2-(1+\mu_\ell)(\pm 1)x + \mu_\ell$ are $\pm1$ and $\mu_\ell$, we find that both $\text{proj}_{B_0}(\vec{y}_1)$ and $\text{proj}_{B_d}(\vec{y}_1)$ are eigenvectors of $T_\ell$ for all $\ell\geq 2$. Therefore
		\[ \text{proj}_{B_0}(\vec{y}_{\ell^*}) = \text{proj}_{B_0}(\vec{y}_1);\qquad \text{proj}_{B_d}(\vec{y}_{\ell^*}) = \left(-1\right)^{\ell^*-1}\text{proj}_{B_0}(\vec{y}_1).\]
		For a vector $\vec{v}$ of length $d+1$, we refer to the entries $v_0,v_2,v_4,\dots,v_{2\left\lfloor\frac{d}{2}\right\rfloor}$ as the \textit{even part of} $\vec{v}$ and likewise the entries $v_1,v_3,\dots,v_{2\left\lceil\frac{d}{2}\right\rceil-1}$ as the \textit{odd part of} $\vec{v}$. Further, for a vector $\vec{y}$ of length $2d+2$, define the \textit{top half of} $\vec{y}$ to be the first $d+1$ entries and the \textit{bottom half of} $\vec{y}$ to be the last $d+1$ entries. Then we have the following four statements
		\begin{itemize}
			\item For even $\ell$,
			\begin{itemize}
				\item all entries in the odd part of the top half of $\text{proj}_{B_0\oplus B_d}(\vec{y}_\ell)$ are 0;
				\item all entries in the even part of the bottom half of $\text{proj}_{B_0\oplus B_d}(\vec{y}_\ell)$ are 0.
			\end{itemize}
			\item For odd $\ell$,
			\begin{itemize}
				\item all entries in the even part of the top half of $\text{proj}_{B_0\oplus B_d}(\vec{y}_\ell)$ are 0;
				\item all entries in the odd part of the bottom half of $\text{proj}_{B_0\oplus B_d}(\vec{y}_\ell)$ are 0.
			\end{itemize}
		\end{itemize}
		Similarly Theorem \ref{Qbipeig} shows that the same is true for $\vec{y}_\ell$, that is
		\begin{itemize}
			\item For even $\ell$,
			\begin{itemize}
				\item all entries in the odd part of the top half of $\vec{y}_\ell$ are 0;
				\item all entries in the even part of the bottom half of $\vec{y}_\ell$ are 0.
			\end{itemize}
			\item For odd $\ell$,
			\begin{itemize}
				\item all entries in the even part of the top half of $\vec{y}_\ell$ are 0;
				\item all entries in the odd part of the bottom half of $\vec{y}_\ell$ are 0.
			\end{itemize}
		\end{itemize}
		We find also that $\text{proj}_{B_0}(\vec{y}_\ell) + \text{proj}_{B_d}(\vec{y}_\ell)$ is non-zero except in the entries listed above. This implies the sum of the remaining projections must follow the same pattern, permitting non-zero values only when $\text{proj}_{B_0}\left(\vec{y}_\ell\right)+\text{proj}_{B_d}\left(\vec{y}_\ell\right)$ is also non-zero. Since the smallest remaining entry of $\text{proj}_{B_0}(\vec{y}_\ell) + \text{proj}_{B_d}(\vec{y}_\ell)$ is $\frac{2}{\vert X\vert}$, we have the implication: If
		\[\left\vert\left\vert \sum_{j=1}^{d-1}\left(\prod_{\ell=2}^{\ell^*+1}T_\ell\right)\text{proj}_{B_j}(\vec{y}_1)\right\vert\right\vert^2_2 \leq\frac{4}{\vert X\vert^2}\]
		then $\vec{y}_{\ell^*+1}$ has no negative entries. Following the reasoning from before, we note that if $\vec{y}_{\ell^*}$ has a negative value, it must appear in the bottom half, thus $\vec{y}_{\ell^*-1}$ must have a negative value and it is sufficient to check the Gegenbauer polynomials up to degree $\ell^*-1$.	
	\end{proof}
	\begin{cor}\label{Qbipuppbnd}
		Suppose we have a feasible parameter set for a $Q$-bipartite association scheme with relations ordered naturally. Define $\lambda_1:=\nicefrac{Q_{1,1}}{m_1}$ and let $\lambda^*\geq\vert\lambda_1\vert$ be given. Then for
		\[\ell^*=\left\lceil\frac{\ln\left[\left(1+\lambda_1^2\right)\vert X\vert\left(\vert X\vert-2\right)\right]-\ln(4)}{-2\ln\left(\lambda^*\right)}\right\rceil,\]
		if $\left\vert\lambda^*\right\vert^2\geq \frac{\ell^*}{\ell^*+m_1-2}$ then $\theta_{\ell,j}\geq 0$ is implied by FC1 for $\ell\geq \ell^*$ and $0\leq j\leq d$.
	\end{cor}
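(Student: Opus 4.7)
The plan is to mirror the argument used for Corollary \ref{maxlambda}, but feed in Theorem \ref{Qbipdegbnd} in place of Theorem \ref{degreebound}. The three differences that must be accounted for are: the right-hand side is $4/|X|$ (rather than $1/|X|$); the sum runs only over $1\le j\le d-1$; and the ``$1$'' inside $1+\lambda_j^2$ is paired with $\lambda_1^2$ (rather than with $(\lambda^*)^2$).

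For the first two features, I would begin by recording the structural facts about $Q$-bipartite schemes in natural ordering. By Theorem \ref{suzukiimprim}, $\mathcal{I}=\{0,d\}$ indexes a system of imprimitivity, and the proof of Theorem \ref{Qbipdegbnd} already uses $\lambda_d=-1$. Combined with $k_d=1$ (so that $R_d$ is a perfect matching, consistent with the ``$-2$'' appearing in the statement) we obtain $\sum_{j=1}^{d-1}P_{0j}=|X|-2$. For the $1+\lambda_1^2$ factor I would invoke the $Q$-bipartite symmetry $\lambda_{d-j}=-\lambda_j$ (arising from $q^k_{ij}=0$ when $i+j+k$ is odd); together with the strictly decreasing natural ordering $1=\lambda_0>\lambda_1>\cdots>\lambda_d=-1$, this yields $|\lambda_j|\le|\lambda_1|$, and hence $1+\lambda_j^2\le 1+\lambda_1^2$, for every $1\le j\le d-1$.

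Next, the hypothesis $(\lambda^*)^2\ge\mu_{\ell^*+1}$ together with monotonicity of $\mu_\ell$ in $\ell$ gives $(\lambda^*)^2\ge\mu_\ell$ for all $2\le\ell\le\ell^*+1$; coupled with $(\lambda^*)^2\ge|\lambda_1|^2\ge|\lambda_j|^2$, either case of the definition of $\gamma_{\ell,j}$ forces $\gamma_{\ell,j}\le(\lambda^*)^2$. Substituting these bounds termwise into the hypothesis of Theorem \ref{Qbipdegbnd} produces
\[
\sum_{j=1}^{d-1}\left(\prod_{\ell=2}^{\ell^*+1}\gamma_{\ell,j}\right)P_{0j}(1+\lambda_j^2)\;\le\;(\lambda^*)^{2\ell^*}(1+\lambda_1^2)(|X|-2).
\]

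Finally, solving $(\lambda^*)^{2\ell^*}(1+\lambda_1^2)(|X|-2)\le 4/|X|$ for $\ell^*$ (and using $\ln|\lambda^*|<0$ to flip the inequality) returns precisely the ceiling in the corollary's statement, so Theorem \ref{Qbipdegbnd} delivers $Q^{m_1}_{\ell^*}(L_1^*/m_1)\ge 0$. Because every $\gamma_{\ell,j}<1$ under our hypotheses, the bound only strengthens as $\ell^*$ grows, so the conclusion propagates to all $\ell\ge\ell^*$. I expect the only subtlety worth careful tracking is the pair of ``shrinkage'' ingredients $\lambda_{d-j}=-\lambda_j$ and $k_d=1$: both follow from the $Q$-bipartite property together with Suzuki's theorem, but they must be recorded explicitly to recover the sharper constants $|X|-2$ and $1+\lambda_1^2$ rather than their cruder analogues $|X|-1$ and $1+(\lambda^*)^2$ from Corollary \ref{maxlambda}.
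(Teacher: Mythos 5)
Your proposal is correct and follows essentially the same route as the paper: bound each $\gamma_{\ell,j}$ by $(\lambda^*)^2$, use the $Q$-bipartite facts $|\lambda_j|\le|\lambda_1|$ and $\sum_{j=1}^{d-1}P_{0j}=|X|-2$ to collapse the sum, and solve the resulting inequality for $\ell^*$ before invoking Theorem \ref{Qbipdegbnd}. You simply make explicit (via $k_d=1$ and $\lambda_{d-j}=-\lambda_j$) two structural facts the paper asserts without elaboration.
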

	\begin{proof}
		Let $\lambda_j:=\frac{Q_{j,1}}{m_1}$ and note that the $Q$-bipartite property, along with the natural ordering of relations, tells us $\vert\lambda_1\vert\geq \vert\lambda_j\vert$ for $1\leq j\leq d-1$. Therefore, as long as $\left\vert \lambda^*\right\vert^2$ is greater than both $\vert\lambda_1\vert^2$ and $\mu_{\ell}$ for $2\leq\ell\leq x$,
		\[\sum_{j=1}^{d-1} P_{0j}(1+\lambda_j^2)\left(\prod_{\ell=2}^x \gamma_{\ell,j}\right)\leq(\lambda^*)^{2x-2}(1+\lambda_1^2)\sum_{j=1}^{d-1} P_{0j} =(\lambda^*)^{2x-2}(1+\lambda_1^2)(\vert X\vert -2). \]
		Solving 
		\begin{equation}\label{ineq2}(\lambda^*)^{2x-2}(1+\lambda_1^2)(\vert X\vert -2)\leq\frac{4}{\vert X\vert}\end{equation}
		for $x$ gives
		\[x\geq\frac{\ln\left[(1+\lambda_1^2)\vert X\vert(\vert X\vert-2)\right]-\ln(4)}{-2\ln(\lambda^*)}+1.\]
		Thus defining $\ell^* = \left\lceil\frac{\ln\left[(1+(\lambda_1)^2)\vert X\vert(\vert X\vert-2)\right]-\ln(4)}{-2\ln(\lambda^*)}\right\rceil$ gives that $\ell^*+1$ is the smallest integer for which Inequality \eqref{ineq2} holds. Then, as long as $\vert\lambda^*\vert^2\geq \mu_{\ell^*+1}$, we may use Theorem \ref{Qbipdegbnd} to give our result.
	\end{proof}
	Finally, we use our $Q$-bipartite property to simplify the expressions in Theorem \ref{cometricbnds}.
	\begin{cor}\label{Qbipbnds}
		Suppose we have a feasible parameter set for a $Q$-bipartite association scheme with Krein array $\left\{m,b^*_1,\dots,b^*_{d-1};1,c_2^*\dots,c^*_{d}\right\}$. Then the scheme is realizable only if each of the following hold:
		\begin{enumerate}[label=(\roman*)]
			\item $\displaystyle{b_1^*c_2^* \geq\frac{2m(m-1)}{m+2},}$
			\item $\displaystyle{c_2^*q^2_{22}\geq \frac{4m(m-2)}{m+4},}$
			\item $\displaystyle{\frac{6m(m-1)(m-4)}{(m+4)(m+6)} + \frac{b_1^*c_2^*c_2^*q_{22}^2}{m}\geq \frac{b_1^*c_2^*(7m-18)}{m+6},}$
			\item $\displaystyle{\sum_{i=1}^3b_i^*c_{i+1}^*\leq \frac{3(3m-2)}{m+6},}$
			\item $\displaystyle{\frac{16m(m-1)}{(m+4)b_1^*c_2^*} + \frac{c_2^*q_{22}^2(m+8)}{(m-2)m}\geq 12.}$
		\end{enumerate}
	\end{cor}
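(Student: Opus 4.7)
The plan is to derive this corollary directly from Theorem~\ref{cometricbnds} by exploiting the defining property of $Q$-bipartite schemes. Recall that a cometric scheme is $Q$-bipartite if $q^k_{ij} = 0$ whenever $i+j+k \notin 2\bbZ$. Since $a_i^* = q^i_{1i}$ and $1 + 2i$ is odd, the $Q$-bipartite condition forces $a_i^* = 0$ for every $i \geq 1$. This single observation will simplify each inequality in Theorem~\ref{cometricbnds}.

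First I would handle the five inequalities of Theorem~\ref{cometricbnds} that survive the specialization $a_i^* = 0$. In \emph{(i)}, the term $(a_1^*)^2$ vanishes, yielding $b_1^* c_2^* \geq \tfrac{2m(m-1)}{m+2}$. In \emph{(ii)}, both $(a_1^*)^2$ and $2a_1^* a_2^*$ drop out, yielding $c_2^* q^2_{22} \geq \tfrac{4m(m-2)}{m+4}$. In \emph{(iii)}, the summand $(a_1^*)^4$ and the subterm $3a_1^*(a_1^*+a_2^*)$ vanish, while the right-hand side $(a_1^*)^2 + b_1^* c_2^*$ collapses to $b_1^* c_2^*$, giving exactly condition~\emph{(iii)} of the corollary. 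Condition~\emph{(iv)} of Theorem~\ref{cometricbnds} simplifies in the same way to the stated sum $\sum_{i=1}^3 b_i^* c_{i+1}^*$. For \emph{(v)}, after setting $a_1^* = a_2^* = 0$ we obtain
\[
\frac{16m(m-1)}{(m+4)(m+8)} + \frac{c_2^* q_{22}^2 \, b_1^* c_2^*}{(m-2)m} \;\geq\; \frac{12\, b_1^* c_2^*}{m+8};
\]
dividing by $b_1^* c_2^*$ (which is positive since the scheme is $Q$-polynomial, so $b_1^*, c_2^* > 0$) and multiplying by $m+8$ produces the form stated in the corollary. Finally, conditions \emph{(vi)} and \emph{(vii)} of Theorem~\ref{cometricbnds} carry the hypothesis $a_1^* > 0$, which fails here, so they impose no constraint in the $Q$-bipartite case.

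Since every step is an algebraic specialization, no obstacle arises; the only things to justify explicitly are that $a_i^* = 0$ (from the $Q$-bipartite parity condition) and that $b_1^* c_2^* > 0$ (from the cometric property, which ensures the Krein matrix $L_1^*$ is irreducible tridiagonal with positive sub- and super-diagonal). The conclusion follows immediately upon substituting $a_i^* = 0$ into each of the first five inequalities of Theorem~\ref{cometricbnds} and rearranging. This matches exactly the five inequalities claimed in Corollary~\ref{Qbipbnds}.
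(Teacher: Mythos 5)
Your proposal is correct and is essentially the paper's own proof, which simply specializes Theorem~\ref{cometricbnds} to the case $a_i^*=0$ forced by the $Q$-bipartite parity condition; you supply the same specialization with the additional (harmless) rearrangement of part \emph{(v)} spelled out. Nothing further is needed.
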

	\begin{proof}
		Consider Theorem \ref{cometricbnds} with the added constraint that $a_i^* = 0$ for $0\leq i\leq d$.
	\end{proof}
	We finish this chapter with two examples, one with a feasible parameter set which satisfies all above conditions and another which is ruled out by Corollary \ref{Qbipbnds}.
	\begin{example}
		In this example we will apply Corollary \ref{Qbipuppbnd} to find the maximum degree of Gegenbauer polynomials which we need to check in order to verify Theorem \ref{Qbipeig}. Consider the feasible parameter set with first and second eigenmatrices 
		\[P = \left[\begin{array}{rrrrr}
		1 & 1116 & 7750 &1116 &1\\
		1 &186& 0 &-186& -1\\
		1 &24& -50& 24& 1\\
		1 &-6& 0 &6 &-1\\
		1 &-36& 70 &-36& 1\\
		\end{array}\right],\qquad Q = \left[\begin{array}{rrrrr}
		1 &156& 2976& 4836 &2015\\
		1 &26& 64 &-26& -65\\
		1 &0 &\nicefrac{-96}{5} &0 &\nicefrac{91}{5}\\
		1 &-26& 64 &26& -65\\
		1 &-156& 2976& -4836& 2015
		\end{array}\right].\]
		If realizable, the association scheme with this parameter set would be a 4-class $Q$-bipartite scheme. Defining $\lambda^*:=\sqrt{\mu_6} = \sqrt{\frac{5}{159}}$ and noting that $\lambda_1 = \frac{Q_{11}}{m_1} = \frac{1}{6}$, we have
		\[\left\lceil\frac{\ln\left[\left(1+\left(\lambda_1\right)^2\right)\vert X\vert\left(\vert X\vert-2\right)\right]-\ln(4)}{-2\ln\left(\lambda^*\right)}\right\rceil = 5.\]
		Since $\left\vert\lambda^*\right\vert^2 = \mu_6 = \frac{4}{158}>\frac{1}{36} = \left\vert \lambda_1\right\vert^2$, we may use Corollary \ref{Qbipuppbnd} to show that $\theta_{\ell,j}\geq 0$ is vacuous for this parameter set for $\ell\geq 5$. We list the remaining eigenvalues below
		\[\left[\begin{array}{ccccc}
		9984 & 0 & 0 & 0 & 2.61\\
		0 & 64 & 0 & 2.56 & 0\\
		0 & 0 & 3.36 & 0 & 1.99\\
		0 & 0 & 0 & 1.98& 0\\
		0 & 0 & 0 & 0 & 2.02
		\end{array}\right]\]
		Since all eigenvalues are non-negative, this parameter set is not ruled out by Theorem \ref{Qbipeig}. We will see many more examples like this in Chapter \ref{4classbip}, many of which will be ruled out.
		
	\end{example}
	\begin{example}
		In contrast to the example above, we now give a feasible parameter set for a 4-class $Q$-bipartite association scheme which is ruled out by the methods discussed in this chapter. Consider the feasible parameter set with first and second eigenmatrices 
		\[P = \left[\begin{array}{rrrrr}
		1 & 128 & 336 &128 &1\\
		1 &64& 0 &-64& -1\\
		1 &20& -42& 20& 1\\
		1 &-2& 0 &2 &-1\\
		1 &-4& 6 &-4 & 1\\
		\end{array}\right],\qquad Q = \left[\begin{array}{rrrrr}
		1 &9& 44& 288 &252\\
		1 &\nicefrac{9}{2}& \nicefrac{55}{8} &\nicefrac{-9}{2}& \nicefrac{-63}{8}\\
		1 &0 &\nicefrac{-11}{2} &0 &\nicefrac{9}{2}\\
		1 &\nicefrac{-9}{2}& \nicefrac{55}{8} &\nicefrac{9}{2}& \nicefrac{-63}{8}\\
		1 &-9& 44& -288& 252
		\end{array}\right].\]
		
		Using Lemma \ref{kitchensink}, we may calculate the various Krein parameters appearing in Corollary \ref{Qbipbnds} as follows
		\[b_1^* = 8;\qquad c_2^* = \frac{18}{11};\qquad q^2_{22} = \frac{121}{16};\qquad m = 9.\]
		Plugging these values into the left hand side of Corollary \ref{Qbipbnds} $(v)$ results in
		\[\frac{16m(m-1)}{(m+4)b_1^*c_2^*} + \frac{c_2^*q_{22}^2(m+8)}{(m-2)m} = \frac{7359}{728}\approx 10.1,\]
		thus, Corollary \ref{Qbipbnds} $(v)$ is violated and this parameter set is not realizable.
		
	\end{example}
\chapter{3-class \texorpdfstring{$Q$-antipodal}{Q-antipodal}: LSSDs}\label{3class}
In \cite{Cameron1972}, Cameron investigated groups with inequivalent doubly-transitive permutation representations having the same permutation character and introduced the notion of a linked system of symmetric designs (LSSD). One such example arising from Kerdock codes was communicated by Goethals \cite{Goethals1976} and studied in depth by Cameron and Seidel \cite{Cameron1973}. These structures, in the homogeneous case, were then further studied by Noda \cite{Noda1974} who bounded the number of fibers in a LSSD in terms of the design parameters induced between any two of the fibers. Focusing on the $(16,6,2)$ designs, Mathon \cite{Mathon1981} classified all inequivalent LSSDs using these design parameters via a computer search, finding that there were multiple inequivalent LSSDs with two or three fibers but only the scheme described by Geothals worked with four or more fibers. Later, Van Dam proved in \cite{VanDam1999} the equivalence between these objects and 3-class Q-antipodal association schemes. Martin, Muzychuk, and Williford found a connection to mutually unbiased bases in certain dimensions \cite{Martin2007}. Finally Davis, Martin, Polhill \cite{Davis2014} and Jedwab, Li, Simon \cite{Jedwab2017} built more non-trivial examples using difference sets in 2-groups.\par
We begin this chapter with a survey of known results focusing on the connection to association schemes. We introduce ``linked simplices", natural geometric objects which are of interest in their own right, as collections of full-dimensional simplices with only two possible angles between vectors in distinct simplices. We establish the equivalence of sets of linked simplices and LSSDs. We compare three known bounds on the number of fibers and explore connections to structures in Euclidean space. We show how to construct equiangular lines from arbitrary LSSDs and explore cases where LSSDs lead to real mutually unbiased bases (MUBs). After reviewing known examples, we focus on the case of Menon design parameters and, employing an equivalence with sets of mutually unbiased Hadamard matrices, we construct new families of LSSDs for many values of $v$. In particular, we show that one may fix the largest power of two dividing $v$ without bounding the number of fibers in an LSSD, a result which was not previously known. In Appendix \ref{families}, we survey the design parameters of known infinite families of symmetric designs and determine which of these cannot be the design parameters of LSSDs with more than two fibers, noting differences vis-\'{a}-vis a recent discovery by Jedwab et.\ al.\ \cite{Jedwab2017} in restricted cases.

Throughout this chapter we will heavily discuss both the parameters of the association schemes given by LSSDs as well as the design parameters of the given symmetric designs. Thus, for clarity sake, we will always refer to the triple $(v,k,\lambda)$ as the \emph{design parameters}\index{symmetric design parameters} of the LSSD so as to distinguish these from the intersection numbers, Krein parameters, and eigenmatrices of an association scheme. This chapter is based on research published in \cite{Kodalen2019}. Below we list the main theorems of this chapter.
\begin{restatable*}{thm}{lssdsimpequiv}
	A $LSSD(v,k,\lambda;w)$ is equivalent to a set of $w$ linked simplices in $\mathbb{R}^{v-1}$ whose angles depend on the parameters $v$, $k$, and $\lambda$.
\end{restatable*}
\begin{restatable*}{thm}{lssdhadequiv}
	\label{equiv}
	An optimistic $LSSD(v,k,\lambda;w)$ with $\vert v-2k\vert = 2\sqrt{k-\lambda}$ exists if and only if there exists a set of $w-1$ regular unbiased Hadamard matrices, $H_i$, with order $v$ and $H_iJ = 2\sqrt{k-\lambda}J$.
\end{restatable*}
\begin{restatable*}{thm}{buildinghad}[cf.\ Thm~13 in \cite{Kharaghani2010}]
	\label{newLSSD}
	Given a regular Hadamard matrix of order $n$ and an orthogonal array of size $n^2\times N$,
	\begin{itemize}
		\item There exist $N-1$ regular unbiased Hadamard matrices of order $n^2$.
		\item There exists a $LSSD$ with $v=n^2$ and $w=N$.
	\end{itemize}
\end{restatable*}
\begin{restatable*}{cor}{asymplssd}
	\label{asymptotics}
	For sufficiently large $n$, if there exists a regular Hadamard matrix of order $n$, then there exists a $LSSD(n^2,k,\lambda; w)$ with $w \geq n^\frac{1}{14.8}$.
\end{restatable*}
\begin{restatable*}{cor}{oddlssd}
	For any $n\geq 1$ and $w>2$, there exists an odd $t$ for which there exists an $LSSD(16^nt,k,\lambda;w)$.
\end{restatable*}
\begin{restatable*}{cor}{lssdthreesix}
	There exists an $LSSD(v,k,\lambda;w)$ with $v=36^{2n}$ and $w = 4^n+1$ for all $n\geq 1$.
\end{restatable*}
\section{Homogeneous linked systems of symmetric designs}
We begin by reviewing symmetric designs as these will play a central role in all that follows. A \textit{symmetric 2-design}\index{block design!2-design} with design parameters $(v,k,\lambda)$ is a set of blocks $\mathcal{B}$ on point set $X$ written as $(X,\mathcal{B})$ satisfying the following three conditions:
\begin{itemize}
	\item There are $v$ blocks and $v$ points ($\vert\mathcal{B}\vert = \vert X\vert = v$);
	\item Every block contains $k$ points and every point is contained in $k$ blocks;
	\item Every pair of points is contained in $\lambda$ blocks and the intersection of any pair of blocks contains $\lambda$ points.
\end{itemize}
We will ignore the case $v=k=\lambda$ and consider the case $k=1,\lambda=0$ ``degenerate".

We form an \textit{incidence}\index{block design!incidence matrix} matrix $B$ for the block design, indexing rows by blocks and columns by points, setting $B_{ij} = 1$ if point $j$ is in block $i$ and $B_{ij} = 0$ otherwise. Finally, we note the following two equivalent equations which hold for any symmetric 2-design:
\begin{align}
k(k-1) &= \lambda(v-1)\label{sym:1},\\
k(v-k) &= (k-\lambda)(v-1).\label{sym:2}
\end{align}
The \emph{incidence graph}\index{symmetric design!incidence graph} of $(X,\mathcal{B})$ is the graph whose adjacency matrix is $\left[\begin{array}{c:c}
0 & B\\\hdashline[2pt/2pt]
B^T & 0
\end{array}\right].$

We now move to a description of a homogeneous\footnote{Here, ``homogeneous" refers to the designs between fibers all having the same design parameters. For the duration of this chapter, we will only concern ourselves with this case, though we drop this clarification later and refer to the structures simply as linked systems of symmetric designs.} linked system of symmetric designs as described by Cameron in \cite{Cameron1972} and Noda in \cite{Noda1974}. 
Consider a multipartite graph $\Gamma$ on $wv$ vertices with vertex set partitioned into $w$ sets of $v$ vertices called ``fibers":
\[X = X_1\dot{\cup} X_2\dot{\cup}\cdots\dot{\cup}X_w.\]
We say $\Gamma$ is a \emph{linked system of symmetric designs}\index{linked system of symmetric designs}, $LSSD(v,k,\lambda;w)$ ($w\geq 2$), if it satisfies the following three properties:
\begin{enumerate}[label=$(\roman*)$]
	\item no edge of $\Gamma$ has both ends in the same fiber $X_i$;
	\item for all $1\leq i,j\leq w$ with $i\neq j$, the induced subgraph of $\Gamma$ between $X_i$ and $X_j$ is the incidence graph of some $(v,k,\lambda)$-design;
	\item there exist constants $\mu$ and $\nu$ such that for distinct $h,i,j$ ($1\leq h,i,j\leq w$), 
	\begin{equation}\label{munudef}
	a\in X_i, b\in X_j \Rightarrow\vert\Gamma(a)\cap\Gamma(b)\cap X_h\vert=\begin{cases}
	\mu & a\sim b\\
	\nu & a\not\sim b
	\end{cases}
	\end{equation}
\end{enumerate}
where $\sim$ denotes adjacency in $\Gamma$ and $\Gamma(x)$ denotes the neighborhood of vertex $x$. Observe that $\Gamma$ is regular with valency $k(w-1)$. A specific type of LSSD introduced in \cite{Davis2014}, constructed from a \text{linking system of difference sets}\index{linking system of difference sets}, is a LSSD where the symmetric design induced between any two fibers comes from a difference set with the further restriction that the difference sets induced between any pair of three fibers interact in a consistent way. Recently Jedwab, Li, and Simon \cite{Jedwab2017} examined these in more detail, building new examples and proving non-existence results for certain design parameters. However, Mathon \cite{Mathon1981} showed there exist three-fiber LSSDs which do not correspond to linking systems of difference sets, thus we expect there to be many LSSDs which we cannot build via linking difference sets. We will consider the general case, and thus do not assume this added structure on our symmetric designs.\par
In \cite[Proposition 0]{Noda1974}, Noda shows that $\mu$ and $\nu$ must take one of two pairs of values given by:
\begin{equation}
\nu = \frac{k(k\pm \sqrt{k-\lambda})}{v}, \qquad \mu = \nu\mp \sqrt{k-\lambda}. \label{mu-nu}
\end{equation}
With these two possibilities for $\mu$ and $\nu$, it becomes useful to distinguish between the two types of LSSDs. We will refer to the LSSD as \emph{$\mu$-heavy}\index{$\mu$-heavy} (resp., \emph{$\nu$-heavy}\index{$\nu$-heavy}) when $\mu>\nu$ (resp., $\nu>\mu$). Note that since both $\mu$ and $\nu$ are integers, we must also have $\sqrt{k-\lambda}\in\bbZ$; for the remainder of this chapter, define $s = \sqrt{k-\lambda}$ Note, this parameter is known as the ``order" of the symmetric design and many authors use $n$ to denote this value --- we will always use $s$. Further, since $k\neq \lambda$, we must have $s>0$ and thus $\mu\neq \nu$. We now review a proposition of Noda which notes that, given an LSSD $\Gamma$, swapping adjacency between fibers produces another LSSD; we call this graph the \emph{multipartite complement}\index{multipartite complement} of $\Gamma$.
\begin{prop}[Noda]\label{complement}
	Let $\Gamma$ be a  $LSSD(v,k,\lambda;w)$ with $w>2$. If $\Gamma$ is $\mu$-heavy (resp., $\nu$-heavy), the multipartite complement $\Gamma'$ is a $\nu$-heavy (resp., $\mu$-heavy) $LSSD(v,v-k,v-2k+\lambda;w)$.
\end{prop}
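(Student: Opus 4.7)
The plan is to verify the three defining properties of an LSSD for the multipartite complement $\Gamma'$ directly, and then read off the heaviness swap as a one-line arithmetic consequence. Property $(i)$ is immediate: only edges between distinct fibers are flipped in forming $\Gamma'$, so $\Gamma'$ inherits the absence of within-fiber edges. For property $(ii)$, I would invoke the classical fact that the bipartite complement of the incidence graph of a symmetric $(v,k,\lambda)$-design is the incidence graph of a symmetric $(v,v-k,v-2k+\lambda)$-design; this is confirmed in a line by using $BB^T=(k-\lambda)I+\lambda J$ to compute $(J-B)(J-B)^T$.

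The substantive step is property $(iii)$. Fix three distinct fibers $X_i,X_j,X_h$ and pick $a\in X_i$, $b\in X_j$. For any $x\in X_h$, $x\in \Gamma'(a)\cap \Gamma'(b)$ iff $x\notin \Gamma(a)\cup\Gamma(b)$, and each of $\Gamma(a)\cap X_h$, $\Gamma(b)\cap X_h$ has cardinality $k$ by the valency structure of $\Gamma$. Inclusion--exclusion then gives
\[
\vert\Gamma'(a)\cap \Gamma'(b)\cap X_h\vert \;=\; v - 2k + \vert\Gamma(a)\cap \Gamma(b)\cap X_h\vert .
\]
Since for vertices in distinct fibers $a\sim_{\Gamma'}b$ iff $a\not\sim_\Gamma b$, reading off the two cases of \eqref{munudef} yields
\[
\mu' = v - 2k + \nu, \qquad \nu' = v - 2k + \mu,
\]
constants depending only on $\Gamma'$-adjacency and not on the particular choice of $a$, $b$, or $h$. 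Together with $(i)$ and $(ii)$, this shows $\Gamma'$ is an $LSSD(v,v-k,v-2k+\lambda;w)$.

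For the heaviness swap, observe that
\[
\mu' - \nu' \;=\; (v-2k+\nu) - (v-2k+\mu) \;=\; -(\mu - \nu),
\]
so the sign of $\mu'-\nu'$ is opposite to that of $\mu-\nu$, forcing $\mu$-heavy $\Gamma$ to yield $\nu$-heavy $\Gamma'$ and vice versa. As a consistency check one may substitute the two alternatives from \eqref{mu-nu} into $\mu'=v-2k+\nu$ and match against the same formula applied to parameters $(v,v-k,v-2k+\lambda)$ (with $s$ unchanged, since $k-\lambda=(v-k)-(v-2k+\lambda)$); the opposite sign choice is selected for $\Gamma'$, in agreement with the one-line computation above.

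The proof is essentially routine, and there is no serious obstacle: the only real work is the single inclusion--exclusion count in property $(iii)$, after which everything else is arithmetic. The only small point worth noting is that the hypothesis $w>2$ is exactly what makes property $(iii)$ nonvacuous for $\Gamma'$, mirroring the requirement for $\Gamma$.
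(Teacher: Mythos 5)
Your proof is correct and complete; the paper itself gives no proof of this proposition (it is simply cited from Noda), and your argument --- inclusion--exclusion on the third fiber to get $\mu' = v-2k+\nu$ and $\nu' = v-2k+\mu$, hence the sign flip in $\mu'-\nu'$ --- is the standard one, with the consistency check against \eqref{mu-nu} (using that $s$ is preserved under design complementation) a nice confirmation. The only slight imprecision is your closing remark: the hypothesis $w>2$ is needed not so much to make property $(iii)$ nonvacuous as to make the $\mu$-heavy/$\nu$-heavy dichotomy meaningful at all, since $\mu$ and $\nu$ are undefined when $w=2$.
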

We further find that, given $v$, $k$, and $\lambda$, only one of the outcomes in \eqref{mu-nu} is possible for $v\geq 3$.
\begin{lem}
	\label{gcd}
	Let $\Gamma$ be a  $LSSD(v,k,\lambda;w)$ with $w>2$ and $1<k<v-1$. Then the following hold:
	\begin{enumerate}[label=$(\roman*)$]
		\item exactly one of $\frac{k(k+ s)}{v}$ and $\frac{k(k-s)}{v}$ is an integer;
		\item $\gcd(k,v)>1$;
		\item $\gcd(s,v)>1$.
	\end{enumerate}
\end{lem}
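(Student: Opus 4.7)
The key identity to exploit is the design equation $k(k-1)=\lambda(v-1)$ rewritten as $\lambda v = k^2-s^2 = (k-s)(k+s)$, so $v\mid(k-s)(k+s)$ holds in any symmetric $2$-design. Since $\mu$ and $\nu$ are cardinalities, Noda's formula \eqref{mu-nu} immediately gives one half of (i): at least one of $k(k+s)/v$ and $k(k-s)/v$ is an integer.

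For the other direction of (i), I would assume both are integers and seek a contradiction. Adding the relations $v\mid k(k+s)$ and $v\mid k(k-s)$ yields $v\mid 2k^2$, and combining with $v\mid k^2-s^2$ gives $v\mid 2s^2$. The hypothesis $1<k<v-1$ excludes the degenerate case $\lambda=0$ (which would force $k\leq 1$), so $\lambda\geq 1$ and $k\geq s^2+1$; in particular $v>k>s^2$. The only divisor of $2s^2$ exceeding $s^2$ is $2s^2$ itself, hence $v=2s^2$. Substituting $\lambda=k-v/2$ into $k(k-1)=\lambda(v-1)$ produces the quadratic $2k^2-2kv+v(v-1)=0$, whose discriminant $4v(2-v)$ is negative for $v\geq 3$, contradicting the existence of $k$.

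For (ii), suppose $\gcd(k,v)=1$. Part (i) and coprimality with $k$ force $v\mid k\pm s$ for the sign determined by $\Gamma$. Using $s<k<v$ (from $\lambda\geq 1$ and $k<v$), the bounds $0<k-s<v$ and $0<k+s<2v$ leave only $k=s$ (excluded, as it yields $\lambda\leq 0$) or $k+s=v$. In the latter case, writing $m=v-k=s$ and substituting $\lambda=k-m^2$ into the design equation, I would cancel a factor of $m\neq 0$ to reach $v-m=m(v-1)$, which simplifies to $m=1$, i.e., $k=v-1$ --- contradicting the hypothesis. Part (iii) follows by the same template: if $\gcd(s,v)=1$, subtracting $v\mid k^2-s^2$ from $v\mid k^2\pm ks$ gives $v\mid s(k\pm s)$, whence $v\mid k\pm s$ by coprimality, and the argument for (ii) applies verbatim.

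The main obstacle is the ``at most one'' direction of (i): the assumption of simultaneous divisibility is not self-evidently contradictory, and the contradiction only crystallizes after chaining the identity $\lambda v=(k-s)(k+s)$ with the parameter bounds to force $v=2s^2$, followed by the discriminant check. Parts (ii) and (iii) are then uniform applications of the same divisibility-plus-bounding strategy.
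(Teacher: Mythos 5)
Your proof is correct, and its engine is the same as the paper's: the identity $\lambda v = k^2-s^2$ from \eqref{sym:1}, combined with the integrality of $\nu$ in \eqref{mu-nu}, yields $v\mid 2s^2$ for part \emph{(i)} and $v\mid k\pm s$ for parts \emph{(ii)} and \emph{(iii)}, which are then ruled out by size bounds. The one structural difference is that the paper first invokes Proposition \ref{complement} to normalize $k\le \frac{v}{2}$, after which $0<2s^2<v$ and $0<k\pm s<v$ hold outright and the divisibilities are immediately absurd. You skip that normalization, so the weaker bounds $v>s^2$ and $k+s<2v$ leave the boundary cases $v=2s^2$ and $k+s=v$ alive, and you must close them by substituting back into the design equation (the negative-discriminant computation, and the cancellation giving $s=1$, $k=v-1$). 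Both computations check out, so the argument is sound; it is marginally longer than the paper's but self-contained in that it never passes to the multipartite complement.
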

\begin{proof}
	By Proposition $\ref{complement}$, we may assume $k\leq \frac{v}{2}$ as $\frac{k(k\pm s)}{v}$ is integral if and only if $\frac{(v-k)((v-k)\pm s)}{v}$ is integral. Now, Equation \eqref{sym:1} gives $k^2-s^2 = \lambda v$ and thus we have the equations
	\[\begin{aligned}
	\frac{k(k+ s)}{v} -\frac{s(k+ s)}{v} =\frac{k(k- s)}{v} +\frac{s(k- s)}{v} &= \lambda.
	\end{aligned}\]
	Now assume $\frac{k(k+s)}{v}$ and $\frac{k(k-s)}{v}$ are both integral. This implies that $\frac{s(k+s)}{v}$ and $\frac{s(k-s)}{v}$ are both integral, and thus their difference $\frac{2s^2}{v}$ must also be integral. However this contradicts $s<k\leq\frac{v}{2}$, thus at most one of $\frac{k(k+s)}{v}$ and $\frac{k(k-s)}{v}$ may be integral. The assumption that our parameters were feasible (in fact, realizable) is sufficient to guarantee at least one is an integer, giving $(i)$. For $(ii)$, we again assume $k\leq \frac{v}{2}$ (noting that $\gcd(k,v) = \gcd(v-k,v)$ since $k<v$). Now, since $s<k$ we must have $k+s<v$. Thus if $\frac{k(k+s)}{v}$ is an integer, we must have $\gcd(k,v)>1$. Similarly $\frac{k(k-s)}{v}\in\mathbb{Z}$ implies $\gcd(k,v)>1$. The same argument applies to $(iii)$ noting again that one of $\frac{k(k\pm s)}{v}$ is integral if and only if one of $\frac{s(k\pm s)}{v}$ is integral.
\end{proof}
\begin{remark}
	\label{rmk1}
	Theorem \ref{Qpoly} and the discussion that follows shows that the parameters of an LSSD uniquely determine the parameters of an association scheme. It is important to note here that the first line of Lemma \ref{gcd} may be replaced with the line ``Suppose the parameter set of a $LSSD(v,k,\lambda;w)$ with $w>2$ and $1<k<v-1$ is feasible." In fact, all we need to prove Lemma \ref{gcd} is that either $\frac{k(k+s)}{v}$ or $\frac{k(k-s)}{v}$ is integral. This will become useful as we apply this lemma to rule out feasible parameter sets where, clearly, we cannot assume the LSSD exists.
\end{remark}
The case where $k=1$ or $k=v-1$ produces LSSDs which are not of interest to us and for the remainder of the chapter, we will refer to these designs as ``degenerate"\index{linked system of symmetric designs!degenerate}. For a further description of why these designs are degenerate, see Section \ref{degenerate}. The observations that $\gcd(k,v)>1$ and $\gcd(s,v)>1$ are two tools which help us determine more easily which design parameters might be feasible for a LSSD with $w>2$. We may find many other statements similar to these, but these two will be sufficient for now. Using these, we can immediately rule out many design parameters. For instance:
\begin{cor}
	\label{comp}
	Assume $w>2$. If there exists a non-degenerate $LSSD(v,k,\lambda;w)$, then $v$ is composite.\qed
\end{cor}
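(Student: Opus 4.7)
The plan is to invoke Lemma~\ref{gcd} directly; the corollary is essentially an immediate consequence of part~$(ii)$ of that lemma.

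First I would unpack what ``non-degenerate'' rules out: by the convention established just before the corollary, degeneracy corresponds to $k=1$ or $k=v-1$, so a non-degenerate LSSD with $w>2$ satisfies the hypothesis $1<k<v-1$ of Lemma~\ref{gcd}. Applying part~$(ii)$ of that lemma yields $\gcd(k,v)>1$. Since $1\le k<v$, the integer $\gcd(k,v)$ is a divisor of $v$ lying strictly between $1$ and $v$, so $v$ admits a non-trivial factorization and therefore is composite.

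There really is no substantive obstacle here; the only care needed is to make sure the hypotheses of Lemma~\ref{gcd} are met, which is why the non-degeneracy assumption $1<k<v-1$ appears. (Note that if one instead used part~$(iii)$ of the lemma, giving $\gcd(s,v)>1$, one would have to separately exclude the possibility $s\ge v$, so part~$(ii)$ is the cleaner route.) It may also be worth remarking that, in view of the comment following Lemma~\ref{gcd} (see Remark~\ref{rmk1}), the same conclusion can be stated at the level of feasible parameter sets rather than realized LSSDs, and so this corollary already rules out the realizability of a large collection of putative parameter sets with $v$ prime.
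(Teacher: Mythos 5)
Your proof is correct and is exactly the argument the paper intends: the corollary is stated with an immediate \qed as a direct consequence of Lemma \ref{gcd}$(ii)$, and your unpacking (non-degeneracy gives $1<k<v-1$, so $\gcd(k,v)$ is a divisor of $v$ strictly between $1$ and $v$) fills in the one-line justification the paper omits. No issues.
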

For a further use of these tools to rule out certain families of symmetric designs, see Appendix \ref{families}.

We now provide a theorem of Van Dam concerning the equivalence between linked systems of symmetric designs and 3-class $Q$-antipodal association schemes. We do not give a self-contained proof here, instead referring to results of Van Dam to outline the proof. While this result appears in \cite{VanDam1999}, we include a later result appearing in \cite{vanDam2013} to assist with our outline.
\begin{thm}[\cite{VanDam1999}]
	\label{Qpoly}
	Let $\Gamma$ be a non-degenerate LSSD with adjacency matrix $A$. Then the algebra $\langle A\rangle_*$ is the Bose-Mesner algebra of a 3-class Q-antipodal association scheme on $X$. Conversely, every $Q$-antipodal 3-class association scheme arises in this way. More specifically, the natural ordering of the relations of any $Q$-antipodal 3-class association scheme is as follows:
	\begin{itemize}
		\item $R_0$ is the identity relation on $X$;
		\item $R_1$ is given by adjacency in the $\mu$-heavy LSSD;
		\item $R_2$ is the union of complete graphs on the fibers induced by $R_1$;
		\item $R_3$ is given by adjacency in the $\nu$-heavy LSSD.
	\end{itemize}
\end{thm}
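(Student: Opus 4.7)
The plan is to prove the equivalence by explicit construction, verifying the association-scheme axioms in the forward direction and extracting the LSSD structure from the intersection numbers in the reverse direction.

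\textbf{Forward direction.} Given a $\mu$-heavy LSSD $\Gamma$ (after passing to the multipartite complement via Proposition \ref{complement} if necessary), I would define $R_0$ (identity), $R_1=E(\Gamma)$, $R_2$ (same-fiber pairs of distinct vertices), and $R_3$ (the remaining cross-fiber pairs). Axioms (i)--(iii) of the scheme definition are immediate; for (iv) the essential computation is $A_1^2$: for $(x,y)\in R_k$, the number of common $R_1$-neighbors equals $k(w-1)$, $(w-2)\mu$, $(w-1)\lambda$, and $(w-2)\nu$ for $k=0,1,2,3$ respectively. The case $k=2$ uses the symmetric-design identity $B_{ij}B_{ij}^T=s^2 I+\lambda J$, and the cases $k=1,3$ invoke the triple condition \eqref{munudef}. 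The remaining products $A_1A_2$, $A_1A_3$, $A_2^2$, $A_2A_3$, $A_3^2$ follow mechanically from $A_2=I_w\otimes J_v-I$ and $A_3=J-I-A_1-A_2$ together with the already-established $A_1^2$.

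\textbf{Eigenspaces and $Q$-antipodality.} To witness the cometric ordering and $Q$-antipodality, I would identify four $A_1$-eigenspaces. The all-ones vector \one\ gives eigenvalue $k(w-1)$. The subspace $V'$ of fiber-constant vectors orthogonal to \one\ has dimension $w-1$ and $A_1$-eigenvalue $-k$, using $A_1\chi_{X_i}=k(\one-\chi_{X_i})$. On the ``fiber-zero-sum'' complement, the triple condition $B_{ij}B_{jh}=(\mu-\nu)B_{ih}+\nu J$ restricts modulo \one\ to $U_{ij}U_{jh}=U_{ih}$, where $sU_{ij}=B_{ij}|_{\one^\perp}$ is orthogonal, so the block-diagonal conjugation $U_{ij}=V_i^{-1}V_j$ reduces $A_1$ on this subspace to $s(J_w-I_w)\otimes I_{v-1}$ with eigenvalues $s(w-1)$ (multiplicity $v-1$) and $-s$ (multiplicity $(w-1)(v-1)$). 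The four eigenvalues appear in the asserted natural order $k(w-1)>s(w-1)>-s>-k$. Setting $E_3=\frac{1}{v}(I_w\otimes J_v)-\frac{1}{wv}J$, a direct entrywise computation yields $(wv)\,E_3\circ E_3=(w-1)E_0+(w-2)E_3$, so $q^k_{33}=0$ for $k\in\{1,2\}$, confirming $Q$-antipodality with $E_3$ paired to the ``same-fiber'' system $\cI=\{0,2\}$.

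\textbf{Reverse direction.} For a 3-class $Q$-antipodal scheme with the given cometric ordering, Theorem \ref{suzukiimprim} gives $\cI=\{0,2\}$ and $\cJ=\{0,3\}$, so $A_0+A_2=I_w\otimes J_v$ with all fibers of a common size $v$. I would then verify that $\Gamma_1$ is a $\mu$-heavy LSSD on these fibers. Cross-fiber regularity follows by counting inside the scheme: for $x\in X_i$ and $y\in X_j$ with $(x,y)\in R_1$, the number of further $R_1$-neighbors of $x$ in $X_j\setminus\{y\}$ equals $p^1_{12}$, so each vertex has exactly $k:=1+p^1_{12}$ neighbors in each other fiber. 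The design and triple conditions come from the block decomposition of $A_1^2=\sum_k p^k_{11}A_k$: the $(X_i,X_i)$-block yields $\sum_{j\neq i}N_{ij}N_{ij}^T=p^0_{11}I+p^2_{11}(J-I)$, and the $(X_i,X_h)$-block yields $\sum_{j\neq i,h}N_{ij}N_{jh}=p^1_{11}N_{ih}+p^3_{11}(J-N_{ih})$.

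\textbf{Main obstacle.} These block identities supply only summed information, whereas the LSSD requires each \emph{individual} $N_{ij}N_{ij}^T=s^2I+\lambda J$ and each $N_{ij}N_{jh}$ to match the LSSD triple form. The plan to clear this hurdle is to use the $Q$-antipodal spectral data: the rank and explicit form of $E_3$, together with the multiplicities forced by $m_3=w-1$ and the dual multiplicities $m_1,m_2$, pin down the singular values of each $N_{ij}$ uniformly across pairs of fibers. Concretely, I would compute $E_1$ and $E_2$ from the $A_1$-spectrum above via the second eigenmatrix $Q$ and read off that each $N_{ij}|_{\one^\perp}$ has the constant singular value $s$; this forces $N_{ij}N_{ij}^T=s^2 I+\lambda J$ individually, and the cocycle $U_{ij}U_{jh}=U_{ih}$ then yields the triple condition with the constants $\mu,\nu$ of \eqref{mu-nu}.
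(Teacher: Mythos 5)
Your proposal takes a genuinely different route from the paper. The paper's own argument is only an outline that delegates the substance to external results: Theorem~5.8 of \cite{VanDam1999} (to get $m_1=v-1$ and the design structure) and Propositions~5.1 and~6.1 of \cite{vanDam2013} (uniformity and $m_2=(w-1)m_1$). You instead propose a self-contained verification. Your forward direction is correct and complete in all essentials, and your converse strategy --- recovering the individual block identities $N_{ij}N_{ij}^T=s^2I+\lambda J$ and $N_{ij}N_{jh}=(\mu-\nu)N_{ih}+\nu J$ from the Gram structure of $E_1$ rather than from the summed block equations of $A_1^2$ --- is in fact the same mechanism the paper deploys later, in Theorems \ref{Association2Simplex}, \ref{2design} and \ref{3fibers}: once the diagonal blocks of $E_1$ are all equal to $\tfrac{1}{w}\left(I-\tfrac1vJ_v\right)$, writing $E_1=F^TF$ with $F=[F_1|\cdots|F_w]$ forces $F_iF_i^T=\tfrac1wI_{v-1}$, and then $F_i^TF_jF_j^TF_i=\tfrac1{w^2}\left(I-\tfrac1vJ\right)$ and $F_i^TF_jF_j^TF_h=\tfrac1wF_i^TF_h$ give exactly the per-pair design identity and the per-triple linking identity. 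So your ``main obstacle'' paragraph does clear the hurdle, and in a way that buys a citation-free proof.

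Two points need to be made explicit. First, in the converse, the step ``each $N_{ij}|_{\one^\perp}$ has constant singular value $s$'' is \emph{not} forced by $m_3=w-1$ together with $Q$-antipodality of $E_3$: the identity $wv\,E_3\circ E_3=(w-1)E_0+(w-2)E_3$ holds automatically for any $3$-class scheme with $\cI=\{0,2\}$, $\cJ=\{0,3\}$, whatever $m_1$ is. What you actually need is $m_1=v-1$, since only then do the diagonal blocks $F_i^TF_i=\tfrac{m_1}{w(v-1)}\left(I-\tfrac1vJ\right)$ have full rank $m_1$ and hence $F_iF_i^T$ a scalar matrix; if $m_1>v-1$ the cross terms $F_i^TF_jF_j^TF_i$ are not scalar multiples of a fixed projection and the argument fails. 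The fix is elementary but must be stated: using $Q_{21}=-m_1/(v-1)$ (from $E_1(I_w\otimes J_v)=0$) and column orthogonality one computes $q^3_{11}=\frac{m_1\left(m_1-(v-1)\right)}{(w-1)(v-1)}$, so the cometric condition $q^3_{11}=0$ is precisely $m_1=v-1$. Second, in the forward direction you verify $Q$-antipodality but never verify that $E_0,E_1,E_2,E_3$ is actually a $Q$-polynomial ordering, i.e.\ that $L_1^*$ is irreducible tridiagonal; this is where non-degeneracy enters (the paper notes irreducibility requires $s(v-2)>v-2k$, equivalently $k>1$), and without it the degenerate $(v,1,0)$ systems would be counterexamples to the statement as phrased. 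With those two items supplied, your proof is complete.
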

\noindent \textit{Proof outline.} Let $\Gamma$ be the graph of a linked system of symmetric designs with $0<\lambda<k-1$ and assume without loss of generality that $k\leq \frac{v}{2}$. Using the definition of an LSSD, one may verify quickly that all intersection numbers are well-defined, and thus the adjacency algebra of $\Gamma$ is the Bose-Mesner algebra of a 3-class association scheme. We may then build $L_1$ to find that the eigenvalues of this graph are $k(w-1)>\sqrt{k-\lambda}(w-1)>-\sqrt{k-\lambda}>-k$ with multiplicities $m_0,m_1,m_2,m_3$ respectively. Since $\Gamma$ is connected, we find that $m_0=1$. Further, since $\Gamma$ has four distinct eigenvalues, Theorem 5.8 of \cite{VanDam1999} tells us $m_1 = v-1$. We then use the equations $\sum_{i=0}^3\theta_im_i = 0$ and $\sum_{i=0}^3 m_i = vw$ to find that $m_3 = w-1$ and $m_2 = (w-1)(v-1)$. We then apply Proposition 6.1 of \cite{vanDam2013} to see that this association scheme is $Q$-antipodal. Conversely, let $(X,\mathcal{R})$ be a 3-class $Q$-antipodal association scheme with relations ordered naturally. Then Theorem \ref{suzukiimprim} tells us the system of imprimitivity is given by $\mathcal{J} = \left\{0,3\right\}$ and $\mathcal{I} = \left\{0,2\right\}$ and Theorem 5.1 of \cite{vanDam2013} tells us $(X,\mathcal{R})$ is uniform. We may then refer again to Proposition 6.1 of \cite{vanDam2013} to see that $m_2 = (w-1)m_1$. This, along with the antipodal property, forces $k_1 = v-1$ where $k_1$ is the valency of the nearest neighbor graph. Thus Theorem 5.8 of \cite{VanDam1999} tells us this nearest neighbor graph is the incidence graph of a linked system of symmetric designs with $0<\lambda<k-1$.\qed

In addition to the above proof, we list the intersection numbers, the first and second eigenmatrices, and a some of the Krein parameters for later use. Let $\Gamma_1 = (X,R_1)$ be a $\mu$-heavy $LSSD(v,k,\lambda)$ with complement design given by $\Gamma_2 = (X,R_3)$. The following are the intersection numbers, listed via the four matrices $L_0,L_1,L_2,L_3$ where $L_i = [p_{i,j}^k]_{k,j}$;
\[\begin{aligned}
L_0 &= \left[\begin{array}{cccc}
1 & 0 & 0 & 0\\
0 & 1 & 0 & 0\\
0 & 0 & 1 & 0\\
0 & 0 & 0 & 1\\
\end{array}\right], \qquad L_1 = \left[\begin{array}{cccc}
0 & k(w-1)       & 0   & 0               \\
1 & \mu(w-2)     & k-1 & (k-\mu)(w-2)    \\
0 & \lambda(w-1) & 0   & (k-\lambda)(w-1)\\
0 & \nu(w-2)     & k   & (k-\nu)(w-2)    \\
\end{array}\right],\\
L_2 &= \left[\begin{array}{cccc}
0 & 0   & v-1 & 0    \\
0 & k-1 & 0   & v-k  \\
1 & 0   & v-2 & 0    \\
0 & k   & 0   & v-k-1\\
\end{array}\right],
\\ L_3 &= \left[\begin{array}{cccc}
0 & 0             & 0     & (v-k)(w-1)\\
0 & (k-\mu)(w-2)  & v-k   & (v+\mu-2k)(w-2)\\
0 & (k-\lambda)(w-1) & 0     &(v+\lambda-2k)(w-1)\\
1 & (k-\nu)(w-2)  & v-k-1 & (v+\nu-2k)(w-2)\\
\end{array}\right].\\
\end{aligned}\]
We note here that, while $\mu$ and $\nu$ alone are not intersection numbers of our association scheme, we will often allow for the slight abuse of terminology and include both of these as parameters of an LSSD. The first and second eigenmatrices are given as:
\begin{equation}\label{3classP}
P = \left[\begin{array}{cccc}
1&k(w-1)&v-1&(v-k)(w-1)\\
1&\sqrt{k-\lambda}(w-1)&-1&-\sqrt{k-\lambda}(w-1)\\
1&-\sqrt{k-\lambda}&-1&\sqrt{k-\lambda}\\
1&-k&v-1&k-v\\
\end{array}\right]
\end{equation}
\begin{equation}\label{3classQ}Q = \left[\begin{array}{cccc}
1 & v-1 & (w-1)(v-1) & w-1\\
1 & \frac{v-k}{\sqrt{k-\lambda}} & -\frac{v-k}{\sqrt{k-\lambda}} & -1\\
1 & -1 & 1-w & w-1\\
1 & \frac{-k}{\sqrt{k-\lambda}} & \frac{k}{\sqrt{k-\lambda}} & -1\\
\end{array}\right].
\end{equation}
Finally, we use this matrix $Q$ to calculate our Krein parameters using standard techniques (see \cite{Brouwer1989}). Defining $L_i^* = [q_{ij}^k]_{k,j}$ similar to before, we find
\[\scalebox{.9}{$L_1^* = \left[\begin{array}{cccc}
	0 & v-1 & 0 & 0\\
	1 & \frac{(1-w)(2k-v)+(v-2)s}{ws} &\frac{(w-1)\left(s(v-2)+(2k-v)\right)}{ws}& 0\\
	0&\frac{s(v-2)+2k-v}{ws} & \frac{s(w-1)(v-2)-(2k-v)}{ws}& 1\\
	0& 0 & v-1& 0\\
	\end{array}\right],\quad L_3^*=\left[\begin{array}{cccc}
	0&0 &0 & w-1\\
	0& 0& w-1& 0\\
	0& 1& w-2& 0\\
	1& 0& 0& w-2\\
	\end{array}\right].$}\]
Note that our $Q$-polynomial property means that $L_1^*$ must be irreducible tridiagonal (\cite[Prop.~2.7.1(i')]{Brouwer1989}). While the tridiagonal property is clear from the above matrix, the irreducible property requires $s(v-2)>v-2k$, for which $k>1$ is necessary (and sufficient). Thus, while we may fulfill the LSSD conditions using \emph{degenerate design parameters}\index{linked system of symmetric designs!degenerate} $(v,1,0)$, these will not satisfy our $Q$-polynomial property. This is one reason why we will ignore this case for much of our discussion.

Finally, the final column of $L_3^*$ tells us that $E_3\circ E_3\in\left<E_0,E_3\right>$; that is, our scheme is $Q$-antipodal. We henceforth use the term \emph{linked system of symmetric designs}\index{linked system of symmetric designs} to refer to either the graph $\Gamma$ or to the association scheme it generates as in Theorem \ref{Qpoly}.
\subsection{Bounds on number of fibers}
\index{Noda bound}
One central question in the study of linked systems of symmetric designs is determining the maximum number of fibers one may use to build LSSDs. Theorem 2 of \cite{Noda1974} provides us with the main non-trivial bound known to date. In this paper, Noda proves
\begin{equation*}\resizebox{\textwidth}{!}{
	$(w-1)\left[(k-2)\lambda\binom{k}{3}-(v-2)\left[(v-k)\binom{\nu}{3} + k\binom{\mu}{3}\right]\right]\leq(v-2)\left[(v-1)\binom{\lambda}{3}+\binom{k}{3}-\left[(v-k)\binom{\nu}{3} + k\binom{\mu}{3}\right]\right]$
}\end{equation*}
with equality if and only if a pair $(X_1,X_2\cup X_3\cup\dots\cup X_w)$ forms a 3-design. If we restrict ourselves to the case of $\mu$-heavy LSSDs, this results in the following theorem
\begin{thm}
	Suppose there exists a $LSSD(v,k,\lambda;w)$ with $\nu = \frac{k(k-s)}{v}$ and $\mu = \nu+s$. Then if $k>\frac{v}{2}$,
	\begin{equation}\label{Noda}w \leq \frac{(v-2)\sqrt{k-\lambda}}{2k-v}+1.\end{equation}
\end{thm}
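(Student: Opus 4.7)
The plan is to derive the bound directly from the Krein condition applied to the 3-class $Q$-antipodal association scheme established by Theorem \ref{Qpoly}, rather than by massaging Noda's full counting inequality. This is much quicker and fits the spirit of the feasibility conditions developed in Chapter \ref{psdcone}.

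First, I would invoke Theorem \ref{Qpoly} to pass from an $LSSD(v,k,\lambda;w)$ to its associated 3-class $Q$-antipodal scheme, with the natural ordering of relations so that $R_1$ is the $\mu$-heavy LSSD. The Krein matrix $L_1^*$ for this scheme was already computed in the text following the proof of Theorem \ref{Qpoly}; its $(1,1)$ entry is
\[
q^{1}_{11} \;=\; \frac{(1-w)(2k-v)+(v-2)s}{ws},
\]
where $s=\sqrt{k-\lambda}$. The Krein condition (Lemma \ref{kreinfeas}, i.e.\ FC1) requires $q^{1}_{11}\geq 0$, which is equivalent to
\[
(v-2)s \;\geq\; (w-1)(2k-v).
\]

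Now I would use the hypothesis $k>\tfrac{v}{2}$, which ensures $2k-v>0$, so one may divide both sides by $2k-v$ without reversing the inequality:
\[
w \;\leq\; \frac{(v-2)\sqrt{k-\lambda}}{2k-v}+1,
\]
which is precisely \eqref{Noda}.

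There is essentially no obstacle here, only bookkeeping: one must verify that the Krein parameter indeed takes the stated form (a routine computation from $Q$ in \eqref{3classQ} using Lemma \ref{kitchensink}($\mathit{xiii'}$)), and one must check that in the natural ordering the ``1'' relation corresponds to the $\mu$-heavy design so that the computed $q^1_{11}$ is the relevant one. Note also that the complementary case $k<\tfrac{v}{2}$ is handled automatically: the inequality $(v-2)s\geq (w-1)(2k-v)$ is then vacuous, since the left side is non-negative while the right side is non-positive; this is consistent with Proposition \ref{complement}, since passing to the multipartite complement interchanges the two heavy types. Finally, it is worth remarking that this Krein-based derivation actually recovers Noda's bound without invoking the counting inequality at all, and that equality in \eqref{Noda} would force $q^{1}_{11}=0$, a condition that can be interpreted through the absolute bound (Lemma \ref{absolute}) and relates to the 3-design characterization in Noda's original statement.
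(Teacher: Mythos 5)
Your proof is correct, and it takes a different route from the one the paper officially uses. The paper obtains the bound as a specialization of Noda's counting inequality (the long inequality involving $\binom{k}{3}$, $\binom{\mu}{3}$, $\binom{\nu}{3}$, quoted from \cite{Noda1974} without proof), whereas you derive it from the Krein condition $q^1_{11}\geq 0$ on the 3-class $Q$-antipodal scheme attached to the LSSD by Theorem \ref{Qpoly}. The paper does record this second derivation --- immediately after the theorem it displays $q^1_{11} = \frac{(1-w)(2k-v)+(v-2)s}{ws}$ and credits Mathon with the observation that $q^1_{11}\geq 0$ is equivalent to Noda's bound --- but it presents that as a follow-up remark rather than as the proof. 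Your computation checks out: with $k_h=P_{0h}$ and column $1$ of $Q$ from \eqref{3classQ}, Lemma \ref{kitchensink}\emph{($\mathit{xiii'}$)} together with $k(v-k)=s^2(v-1)$ gives exactly the stated $q^1_{11}$, and dividing $(v-2)s\geq (w-1)(2k-v)$ by the positive quantity $2k-v$ yields \eqref{Noda}. Your handling of the sign hypothesis is also right, including the observation that for $k<\frac{v}{2}$ the inequality is vacuous (the paper's phrase ``as long as $k<\frac{v}{2}$'' in that passage appears to be a typo for $k>\frac{v}{2}$). As for what each approach buys: Noda's argument is purely combinatorial and comes with an equality characterization (tightness iff $(X_1, X_2\cup\dots\cup X_w)$ is a $3$-design), which the Krein derivation does not directly recover; your route is shorter and self-contained given Theorem \ref{Qpoly}, and it makes transparent why tightness corresponds to $q^1_{11}=0$, which is exactly the case the paper exploits afterward via the absolute bound. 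The only hypothesis you are implicitly using is non-degeneracy of the LSSD so that Theorem \ref{Qpoly} and the eigenmatrices \eqref{3classP}, \eqref{3classQ} apply; since $k>\frac{v}{2}$ rules out $k=1$ and $w\geq 3$ is needed for $\mu$ and $\nu$ to be defined at all, this costs nothing.
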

Note the condition becomes vacuous when $2k<v$, thus the bound only applies when $(2k-v)(\mu-\nu)>0$.

Examining the Krein parameters which arise (in particular those listed in the discussion following Theorem \ref{Qpoly}), we find that the only non-trivial condition from the Krein conditions is $q^1_{11}\geq 0$. While constructing the various different possible configurations of LSSDs using design parameters $(16,6,2)$ \cite{Mathon1981}, Mathon also shows that the condition $q^1_{11}\geq 0$ is equivalent to the Noda bound (seen above). In fact, we have
\begin{equation}\label{q111}
q_{11}^1 =\frac{(1-w)(2k-v)+(v-2)s}{ws}.
\end{equation}
Thus, as long as $k<\frac{v}{2}$ we may rearrange the terms to get
\[w\leq \frac{(v-2)s}{2k-v}+1.\]
This is, of course, equivalent to the previous bound since $s = \sqrt{k-\lambda}$. Note that, as before, we only arrive at this bound if $k<\frac{v}{2}$ for the $\mu$-heavy design.

The final bound we will consider is one which does not require $k<\frac{v}{2}$ for the $\mu$-heavy design. In \cite{Martin2007}, Martin, Muzychuk, and Williford use the absolute bound to bound the number of fibers, relying on the $Q$-polynomial structure (particularly that $q_{11}^3 = 0$ and $q_{11}^2 >0$). Let $m_i = \text{rank}(E_i)$. We know from our $Q$ matrix that $m_2 = (w-1)m_1$. Further,
\begin{equation}\label{abslssd}E_1\circ E_1 = \frac{1}{\vert X\vert}\left(q_{11}^0E_0 + q_{11}^1E_1+q_{11}^2E_2\right).\end{equation}
The bound itself will depend on whether or not $q^1_{11}$ is non-zero. Thus we must split our derivation into two cases. In what follows, we derive the bound for the case $q^1_{11}>0$ as well as examine further the case of $q^1_{11}=0$, deriving the absolute bound for this case and pairing this with tightness in the Noda bound to further restrict our parameters.

First consider the case when $q^1_{11}>0$. Here, we find the rank of the right hand side of \eqref{abslssd} is $m_2+m_1+1$ while the rank on the left is no larger than $\frac{1}{2}m_1\left(m_1+1\right)$. Thus we must have
\[\begin{aligned}
m_2+m_1+1&\leq \frac{1}{2}m_1(m_1+1)\\
(w-1)m_1 &\leq \frac{m_1^2-m_1}{2}-1,\\
w&\leq \frac{m_1-1}{2}-\frac{1}{m_1}.
\end{aligned}\]
Since $m_1 = v-1$, this gives $w \leq \frac{v}{2}-\frac{1}{v-1}$. Further, since $v>2$ this results in $w\leq\frac{v-1}{2}$. 

Now consider the case where $q_{11}^1=0$. The only change is that the right hand side of \eqref{abslssd} now has rank $m_2+1$. Thus our bound gives instead $w\leq \frac{v+1}{2}$. However, \eqref{q111} gives us an expression for $q^1_{11}$. Thus, if $q^1_{11}=0$, we must have $k>\frac{v}{2}$ and $w = \frac{(v-2)s}{2k-v}+1$. Using this value for $w$ in our bound $w\leq \frac{v+1}{2}$ results in the inequality $\frac{(v-2)s}{2k-v}+1\leq \frac{v+1}{2}$ or equivalently, 
\[2s\leq (2k-v) + \frac{2s}{v-1}.\]
Since $0<2s<k<v-1$, we must then have $2s\leq (2k-v)$. Squaring both sides then gives
$4(k-\lambda)\leq 4k^2-4kv+v^2$. Finally, using Equation \eqref{sym:2} we have
\[\begin{aligned}
4(k-\lambda)\leq v.
\end{aligned}\]

In summary, if $q^1_{11}>0$, the absolute bound tells us $w\leq \frac{v-1}{2}$ for both $\mu$-heavy and $\nu$-heavy designs independent of the size of $k$. Additionally, if $q^1_{11}=0$ (i.e.\ the Noda bound is tight), we must have $4(k-\lambda)\leq v$. Recall this is exactly the case where Noda showed $(X_1,X_2\cup X_3\cup\dots\cup X_w)$ forms a 3-design. There is only one known family of constructions which achieve the Noda bound. For this construction (see Section \ref{kerdock}) $v = 4(k-\lambda)$. Further, these designs have parameters belonging to the Menon family --- the only possible family for which this $v=4(k-\lambda)$.

We finish this section by noting that, in either case ($q^1_{11}>0$ and $q^1_{11}=0$), the exact bound arising from the absolute bound is non-integral assuming $m_1>2$. Thus, while we reduce these bounds to $w\leq\frac{v-1}{2}$ and $w\leq\frac{v+1}{2}$, the actual absolute bound will never be tight.

\section{Linked simplices}
\label{linked simplices}
In this section, we will write $\left\{b_j\right\}$ for the set $\left\{b_1,\dots,b_v\right\}$ for both sets of points and sets of blocks. For our purposes a \emph{regular simplex}\index{regular simplex} will be taken to be a set of $v$ unit vectors spanning $\mathbb{R}^{v-1}$ with the property that the inner product of any pair of distinct vectors is $-\frac{1}{v-1}$. Let $\mathcal{A} = \left\{a_i\right\}$ and $\mathcal{B} = \left\{b_j\right\}$ be two regular simplices in $\mathbb{R}^{v-1}$. We say $\mathcal{A}$ and $\mathcal{B}$ are \emph{linked}\index{linked simplices} if there exist two real numbers $\gamma$ and $\delta$ such that for all $1\leq i,j\leq v$, $\left<a_i,b_j\right>\in \left\{\gamma,\delta\right\}$. (Note that, here, ``linked" always implies regular.) Following the same abuse of terminology as we did with equiangular lines in Chapter \ref{psdcone}, we will refer to the real numbers $\gamma$ and $\delta$ as the ``angles" of our linked simplices. Extending this, given regular simplices $\mathcal{A}_1,\dots,\mathcal{A}_w$ in $\mathbb{R}^{v-1}$ we say $\left\{A_1,\dots,A_w\right\}$ is a \emph{set of $w$ linked simplices} if any two of them are linked with the same angles $\gamma$ and $\delta$. The next few theorems establish an equivalence between these objects and LSSDs. 
\begin{thm}
	\label{Association2Simplex}
	Consider a $LSSD(v,k,\lambda;w)$ with Bose-Mesner algebra $\mathbb{A}$. The first idempotent $E_1$ in a $Q$-polynomial ordering of $\mathbb{A}$, appropriately scaled, is the Gram matrix of a set of $w$ linked simplices. In the case $w=2$, $E_2$ scaled similarly is also the Gram matrix of a second set of two linked simplices.
\end{thm}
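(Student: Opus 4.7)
The plan is to realize the Gram matrix of the proposed set of linked simplices directly from the minimal idempotent $E_1$, reading off all inner products from the second eigenmatrix $Q$ displayed in \eqref{3classQ}.

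First I would note that $E_1$ is symmetric and idempotent, hence positive semidefinite of rank $m_1=v-1$, so there is a matrix $U$ of size $(v-1)\times vw$ with $E_1=U^TU$. The columns of $U$ are therefore a set of $vw$ vectors in $\mathbb{R}^{v-1}$ whose Gram matrix is $E_1$. Since $E_1 = \tfrac{1}{vw}\sum_i Q_{i1}A_i$ lies in $\BMA$, its diagonal is constant equal to $\tfrac{v-1}{vw}$, so after rescaling each column of $U$ by $\sqrt{vw/(v-1)}$ we obtain a set of $vw$ unit vectors with Gram matrix $\tfrac{vw}{v-1}E_1$. I would then index these vectors according to the fibers $X_1,\dots,X_w$ of the LSSD, labelling the vectors from $X_\ell$ as the candidate simplex $\mathcal A_\ell$.

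Next I would compute the four possible off-diagonal entries of $\tfrac{vw}{v-1}E_1$ by reading row $1$ of $Q$ in \eqref{3classQ}: for $(x,y)\in R_i$ the inner product is $Q_{i1}/(v-1)$. By Theorem \ref{Qpoly}, $R_2$ is exactly the relation ``distinct but in the same fiber,'' so for any two distinct vectors in the same fiber $\mathcal A_\ell$ the inner product equals $-\tfrac{1}{v-1}$. Thus each $\mathcal A_\ell$ consists of $v$ unit vectors with pairwise inner product $-\tfrac{1}{v-1}$; since the principal submatrix of $E_1$ supported on $X_\ell$ is a multiple of $vI-J$ with rank $v-1$, each $\mathcal A_\ell$ spans $\mathbb{R}^{v-1}$ and so is a regular simplex. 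For two vectors in distinct fibers the relation is either $R_1$ or $R_3$, and the corresponding inner products are the two values $\gamma=\tfrac{v-k}{s(v-1)}$ and $\delta=-\tfrac{k}{s(v-1)}$, which depend only on $(v,k,\lambda)$ and not on the choice of the two fibers. Hence every pair $(\mathcal A_i,\mathcal A_j)$ is linked with the same two angles $\gamma,\delta$, giving a set of $w$ linked simplices.

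For the case $w=2$, the multiplicity $m_2=(w-1)(v-1)=v-1$ makes the same scheme work for $E_2$: I would repeat the construction with $E_2$ in place of $E_1$, rescaling by $\tfrac{vw}{(w-1)(v-1)}=\tfrac{v}{v-1}$ so that the diagonal becomes $1$. Reading row $2$ of $Q$ with $w=2$ gives diagonal $1$, value $-\tfrac{1}{v-1}$ on $R_2$ (a regular simplex inside each fiber), and the two distinct values $-\tfrac{v-k}{s(v-1)}$ and $\tfrac{k}{s(v-1)}$ on $R_1$ and $R_3$, producing a second pair of linked simplices. (Note the angles are the negatives of those for $E_1$, corresponding to replacing $\Gamma$ with its multipartite complement.)

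The computations are essentially routine substitutions into $Q$; the only subtlety I anticipate is ensuring that each simplex $\mathcal A_\ell$ is genuinely full-dimensional in $\mathbb{R}^{v-1}$ and not contained in a proper subspace, which I would justify by the rank argument on the principal $v\times v$ submatrix of $E_1$ indexed by $X_\ell$. Everything else follows directly from Theorem \ref{Qpoly} and the explicit form of the second eigenmatrix in \eqref{3classQ}.
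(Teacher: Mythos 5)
Your proposal is correct and follows essentially the same route as the paper: scale $E_1$ by $\tfrac{vw}{v-1}$ to get a unit-diagonal positive semidefinite matrix of rank $v-1$, read the three off-diagonal inner products from row $1$ of $Q$, use $R_2 = I_w\otimes J_v$ (minus the diagonal) to identify the fibers as regular simplices, and observe that $E_2$ yields full-dimensional simplices only when $w=2$ since $\operatorname{rank}E_2=(w-1)(v-1)$. The only blemish is the arithmetic slip in the $w=2$ scaling: $\tfrac{vw}{(w-1)(v-1)}$ evaluates to $\tfrac{2v}{v-1}$, not $\tfrac{v}{v-1}$ (which would leave diagonal $\tfrac12$); this does not affect the argument.
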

\begin{proof}
	Let $(X,\mathcal{R})$ be a $LSSD(v,k,\lambda;w)$ with Bose-Mesner algebra $\mathbb{A}$. Let $\left\{A_i\right\}$ and $\left\{E_j\right\}$ be the bases of Schur and matrix idempotents respectively. We have from \eqref{PQmat}
	\[E_j = \frac{1}{\vert X\vert}\sum Q_{ij}A_i.\]
	As $E_j$ is an idempotent, $E_j$ is a positive semidefinite matrix with rank $Q_{0j}$. Therefore using \eqref{3classQ},
	\[G = \frac{vw}{v-1}E_1 = A_0 + \frac{v-k}{(v-1)\sqrt{k-\lambda}}A_1 -\frac{1}{v-1}A_2 -\frac{k}{(v-1)\sqrt{k-\lambda}}A_3\]
	is positive semidefinite with 1 on the main diagonal. Given that $Q_{01} = v-1$, $G$ is the Gram matrix of a set $Y$ of $vw$ vectors in $\mathbb{R}^{v-1}$. Further there are only three possible inner products among distinct vectors of $Y$ given by
	\[\alpha_1 = \frac{v-k}{(v-1)\sqrt{k-\lambda}},\qquad\alpha_2 =-\frac{1}{v-1},\qquad\alpha_3 = -\frac{k}{(v-1)\sqrt{k-\lambda}}.\]
	Since $A_2$ has the form $I_w\otimes J_v$, encoding adjacency in the complete graphs within fibers, our vectors form a set of $w$ linked simplices in $\mathbb{R}^{v-1}$ with $\gamma = \alpha_1$ and $\delta = \alpha_3$ as inner products between vectors in distinct simplices.\par
	Similarly we have
	\[\begin{aligned}
	G' = \frac{vw}{v-1}E_2 &= \left(A_0 + \frac{-k}{(v-1)\sqrt{k-\lambda}}A_1 -\frac{1}{v-1}A_2 +\frac{v-k}{(v-1)\sqrt{k-\lambda}}A_3\right)\\
	\end{aligned}\]
	forcing $G'$ also to be the Gram matrix of a set of vectors coming from $w$ distinct simplices. However, the rank of $E_2$ is $(w-1)(v-1)$ and thus these simplices are full-dimensional, hence linked, only when $w=2$. This means any pair of fibers from our LSSD will give us another set of linked simplices with inner products $-\alpha_1$ and $-\alpha_3$. This corresponds to choosing one of the two simplices and replacing each $x$ in that simplex by $-x$.
\end{proof}
This tells us that every LSSD gives rise to a set of linked simplices. Before proving the converse, we first prove a lemma arising from the observation that a regular simplex is an equiangular tight frame\index{equiangular tight frame}, that is, a set of vectors $\left\{v_j\right\}_{j\in J}\subset V$ for which there exists a constant $A$ so that $v = \frac{1}{A}\sum_{j\in J} \left<v,v_j\right>v_j$ for every vector $v\in V$. We will instead use the equivalent Plancherel definition $\left<v,w\right> = \frac{1}{A}\sum_{j\in J}\left<v,v_j\right>\left<v_j,w\right>$ for all $v,w\in V$ (see \cite{Jasper2014}).
\begin{lem}
	\label{simplextobasis}
	Let $\left\{a_i\right\}$ be a regular simplex in $\mathbb{R}^{v-1}$ and let $x,y\in \mathbb{R}^{v-1}$. Then
	\[\begin{aligned}
	\sum_i \left<a_i,x\right>\left<a_i,y\right> = \frac{v}{v-1}\left<x,y\right>.
	\end{aligned}\]
\end{lem}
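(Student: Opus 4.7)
\bigskip

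\noindent\textbf{Proof proposal.} The plan is to recognize the left-hand side as $x^T \left(\sum_i a_i a_i^T\right) y$ and to show that the matrix $M := \sum_i a_i a_i^T$ equals $\frac{v}{v-1} I_{v-1}$. Once this is established, the result is immediate since
\[\sum_i \langle a_i,x\rangle\langle a_i,y\rangle = \sum_i (a_i^T x)(a_i^T y) = x^T M y = \frac{v}{v-1}\langle x, y\rangle.\]

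To prove $M = \frac{v}{v-1} I_{v-1}$, I would arrange the simplex vectors as columns of a $(v-1)\times v$ matrix $A$, so that $M = AA^T$ while $A^T A$ is the Gram matrix $G$ of the simplex. By hypothesis $G$ has $1$'s on the diagonal and $-\frac{1}{v-1}$ off-diagonal, i.e.\ $G = \frac{v}{v-1} I_v - \frac{1}{v-1} J_v$. The eigenvalues of $J_v$ are $v$ (on the span of $\mathbf{1}$) and $0$ (on $\mathbf{1}^\perp$), so $G$ has eigenvalue $0$ once and eigenvalue $\frac{v}{v-1}$ with multiplicity $v-1$. Since the nonzero spectra of $A^T A$ and $AA^T$ agree and $AA^T$ is a $(v-1)\times(v-1)$ symmetric matrix, $AA^T$ has $\frac{v}{v-1}$ as its sole eigenvalue, with full multiplicity. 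Hence $M = AA^T = \frac{v}{v-1}I_{v-1}$.

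There is essentially no obstacle here: the only thing one must justify is that the simplex spans $\mathbb{R}^{v-1}$, which is built into our definition of a regular simplex (``$v$ unit vectors spanning $\mathbb{R}^{v-1}$''), ensuring that $G$ has rank exactly $v-1$ and that the spectral computation above is complete. Alternatively, one could prove $M$ is a scalar multiple of the identity by a symmetry/Schur-lemma argument applied to the action of the symmetric group $S_v$ (which permutes the $a_i$ and acts irreducibly on $\mathbb{R}^{v-1}$ via the standard representation), and then compute the scalar by taking the trace: $\operatorname{tr}(M) = \sum_i \|a_i\|^2 = v$, giving scalar $v/(v-1)$. Either route avoids any substantive computation.
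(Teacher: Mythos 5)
Your proof is correct, but it takes a different route from the one in the paper. You compute the frame operator directly: writing the simplex vectors as the columns of a $(v-1)\times v$ matrix $A$, you observe that $A^TA$ is the Gram matrix $\frac{v}{v-1}I_v - \frac{1}{v-1}J_v$, whose nonzero eigenvalue $\frac{v}{v-1}$ has multiplicity $v-1$, and conclude from the coincidence of the nonzero spectra of $A^TA$ and $AA^T$ (together with the spanning hypothesis) that $AA^T = \frac{v}{v-1}I_{v-1}$. The paper instead lifts each $a_i$ to a vector $\alpha_i = \sqrt{\tfrac{v-1}{v}}\left[a_i, \tfrac{1}{\sqrt{v-1}}\right]$ in $\mathbb{R}^v$, checks that $\{\alpha_i\}$ is an orthonormal basis, extends $x$ and $y$ by a zero coordinate, and applies Parseval's identity. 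The two arguments are essentially dual ways of expressing that a regular simplex is a tight frame (the paper even flags this by calling the simplex an equiangular tight frame before stating the lemma): the paper's lift makes the tightness visible as literal orthonormality one dimension up, while yours extracts it spectrally from the Gram matrix. Your version generalizes more readily to other tight frames where no such clean orthonormal completion is available; the paper's is more concrete and self-contained. Your secondary suggestion via Schur's lemma for the standard representation of $S_v$ also works, though it invokes more machinery than the problem needs.
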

\begin{proof}
	For a vector $x$, let $x(i)$ denote the $i^\text{th}$ entry of $x$. For each $1\leq i\leq v$, define $\alpha_i\in \mathbb{R}^{v}$ as the unit vector
	\[\alpha_i = \sqrt{\frac{v-1}{v}}\left[a_i(1),a_i(2),\dots,a_i(v-1),\frac{1}{\sqrt{v-1}}\right].\]
	For $i\neq i^\prime$,
	\[\left<\alpha_i,\alpha_{i^\prime}\right> = \frac{v-1}{v}\left(\left<a_i,a_{i^\prime}\right> + \frac{1}{v-1}\right)=0.\]
	Then $\left\{\alpha_i\right\}$ forms an orthonormal basis for $\mathbb{R}^v$. Now define $\chi,\psi\in\mathbb{R}^v$ as:
	\[\chi = \left[x(1),x(2),\dots,x(v-1),0\right],\qquad\psi = \left[y(1),y(2),\dots,y(v-1),0\right].\]
	Then for each $i$,
	\[\begin{aligned}
	\left<\alpha_i, \chi\right> = \sqrt{\frac{v-1}{v}}\left<a_i,x\right>,\qquad\left<\alpha_i, \psi\right> = \sqrt{\frac{v-1}{v}}\left<a_i,y\right>
	\end{aligned}\]
	giving us
	\[\left<x,y\right> =\left<\chi,\psi\right>=\sum_i\left<\alpha_i, \chi\right>\left<\alpha_i, \psi\right> =\frac{v-1}{v}\sum_i\left<a_i,x\right>\left<a_i,y\right>.\qedhere\]
\end{proof}
Using this lemma, we now seek to build a LSSD with $w$ fibers from a set of $w$ linked simplices. We first provide a construction of the graph $\Gamma$ and then split the verification into two parts: first that $\Gamma$ restricted to a pair of fibers represents a symmetric design, and second that the constants $\mu$ and $\nu$ given by \eqref{munudef} are well-defined. Clearly, we need only consider three fibers in the proofs to follow; the arguments extend to $w$ fibers.
\begin{thm}
	\label{2design}
	Let $\left\{a_i\right\}$ and $\left\{b_j\right\}$ be linked simplices in $\mathbb{R}^{v-1}$ with inner products $\gamma$ and $\delta$. For each $j$, let $B_j = \left\{a_i:\left<a_i,b_j\right> = \gamma\right\}$. Then $(\left\{a_i\right\},\left\{B_j\right\})$ is a symmetric 2-design.
\end{thm}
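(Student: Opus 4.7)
The plan is to verify each of the three defining conditions of a symmetric $2$-design by converting incidence counts into inner-product sums, and then exploiting Lemma~\ref{simplextobasis} (a Plancherel-type identity for regular simplices) in both ``directions'', i.e.\ using each of $\{a_i\}$ and $\{b_j\}$ as the ambient frame.

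First I would observe that any regular simplex $\{a_i\}$ in $\mathbb{R}^{v-1}$ satisfies $\sum_i a_i = \vec{0}$, since
\[\left\|\sum_i a_i\right\|^2 = v + v(v-1)\cdot\left(-\tfrac{1}{v-1}\right) = 0.\]
Pairing this with a fixed vector $b_j$ yields $|B_j|\gamma + (v-|B_j|)\delta = 0$, so $|B_j|$ is independent of $j$; call this common value $k$. Swapping the roles of the two simplices (the hypothesis is symmetric) shows each $a_i$ lies in exactly $k$ of the $B_j$. This settles the equireplicate and uniform block-size conditions, and gives the relation $k\gamma + (v-k)\delta = 0$ pinning down $k$ in terms of $\gamma,\delta$.

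Next I would fix two distinct points $a_i,a_{i'}$ and stratify the indices $j$ by the pair $(\langle a_i,b_j\rangle,\langle a_{i'},b_j\rangle)\in\{\gamma,\delta\}^2$, with cell sizes $N_{\gamma\gamma},N_{\gamma\delta},N_{\delta\gamma},N_{\delta\delta}$. The two marginal identities $N_{\gamma\gamma}+N_{\gamma\delta}=k=N_{\gamma\gamma}+N_{\delta\gamma}$ and $N_{\gamma\gamma}+N_{\gamma\delta}+N_{\delta\gamma}+N_{\delta\delta}=v$ leave a single free parameter. One more equation comes from Lemma~\ref{simplextobasis} applied with $x=a_i$, $y=a_{i'}$ and the simplex $\{b_j\}$:
\[
\sum_j \langle a_i,b_j\rangle\langle a_{i'},b_j\rangle \;=\; \frac{v}{v-1}\,\langle a_i,a_{i'}\rangle \;=\; -\frac{v}{(v-1)^2}.
\]
Substituting the marginals collapses this to an equation of the form $N_{\gamma\gamma}(\gamma-\delta)^2 = C(v,k,\gamma,\delta)$. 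Since $\gamma\neq\delta$ (they are genuinely two distinct values by hypothesis), $N_{\gamma\gamma}$ is forced to a constant $\lambda$ independent of the pair $(i,i')$.

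Finally, I would run the symmetric dual computation: applying Lemma~\ref{simplextobasis} with $\{a_i\}$ as the frame and $x=b_j$, $y=b_{j'}$ produces the identical equation, so $|B_j\cap B_{j'}|=\lambda$ for any two distinct blocks. Since the number of blocks equals the number of points, this completes the three conditions for a symmetric $(v,k,\lambda)$-design. I do not anticipate any real obstacle: the entire argument is driven by the frame identity in Lemma~\ref{simplextobasis} together with the vanishing sum $\sum_i a_i=0$, and the main care required is simply bookkeeping the marginals so that the single inner-product equation isolates $N_{\gamma\gamma}$ as a universal constant.
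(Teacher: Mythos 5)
Your proposal is correct and follows essentially the same route as the paper: the constant block size comes from $\sum_i a_i = \vec{0}$ paired against $b_j$, and the constant $\lambda$ comes from expanding $\sum \langle a_i,b_s\rangle\langle a_i,b_t\rangle$ via the marginals and equating it to $\frac{v}{v-1}\langle b_s,b_t\rangle = -\frac{v}{(v-1)^2}$ using Lemma~\ref{simplextobasis}, with the point/block duality handled by swapping the roles of the two simplices. The only cosmetic difference is that you compute the point-pair count first and dualize to block intersections, while the paper does the reverse.
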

\begin{proof}
	First we must prove that each block contains a constant number of points. Let $1\leq j\leq v$ be fixed and define $k_j = \vert B_j\vert$. Since the set $\left\{a_i\right\}$ of vectors form a regular simplex, the centroid of those vectors must be the origin. Then, $\sum_i\left<b_j,a_i\right> = \left<b_j,\sum_ia_i\right> = 0$ giving us the equation $k_j\gamma + (v-k_j)\delta = 0$. Solving this for $k_j$ gives $k_j = \frac{\delta v}{\gamma-\delta}$, independent of $j$. Now we will show that any pair of blocks have a constant number of points in common; swapping roles this gives that any pair of points is contained in a constant number of blocks. Fix $1\leq s,t\leq v$ so that $b_s$ and $b_t$ are two distinct vectors in $\left\{b_j\right\}$ with corresponding blocks $B_s$ and $B_t$ respectively. Define $\lambda_{s,t} = \vert B_s\cap B_t\vert$ and
	\[x_s = \left[\left<a_1,b_s\right>,\left<a_2,b_s\right>,\dots,\left<a_v,b_s\right>\right],\qquad x_t = \left[\left<a_1,b_t\right>,\left<a_2,b_t\right>,\dots,\left<a_v,b_t\right>\right].\]
	Recalling that $k\gamma+(v-k)\delta = 0$,
	\[\left<x_s,x_t\right> = \lambda_{s,t} \gamma^2 + 2(k-\lambda_{s,t})\gamma\delta + (v-2k+\lambda_{s,t})\delta^2=\lambda_{s,t}\left(\delta-\gamma\right)^2 - v\delta^2.\]
	We may instead apply Lemma \ref{simplextobasis} to get
	\[\left<x_s,x_t\right> = \sum_i \left<a_i,b_s\right>\left<a_i,b_t\right>=\frac{v}{v-1}\left<b_s,b_t\right>=-\frac{v}{(v-1)^2}.\]
	Equating these two values gives us
	\[\lambda_{s,t} =\frac{v\delta^2}{(\delta-\gamma)^2}-\frac{v}{(v-1)^2(\gamma-\delta)^2}.\]
	The quantity on the right is independent of $s$ and $t$ and therefore $\lambda_{s,t}$ does not depend on $s$ and $t$. This tells us our collection of blocks forms a $2$-design with the above values for $k$ and $\lambda$.
\end{proof}
As both $k$ and $\lambda$ are integers, this gives us restrictions on which inner products are allowed. We solve the system
\[\begin{aligned}
k\gamma + (v-k)\delta &= 0,\\
\lambda(\delta-\gamma)^2 - v\delta^2 &= -\frac{v}{(v-1)^2}\\
\end{aligned}\]
to find that $\delta^2 =\frac{k}{(v-1)(v-k)}$. Using \eqref{sym:2}, this simplifies to
\begin{equation}\label{angles}
\delta = \pm\frac{k}{(v-1)\sqrt{k-\lambda}},\qquad\gamma = \mp\frac{v-k}{(v-1)\sqrt{k-\lambda}}.
\end{equation}
These match the previously determined entries of $E_1$ and $E_2$ corresponding to the first and third relations. Our next theorem concerns the existence of $\mu$ and $\nu$, which arise between triples of fibers.
\begin{thm}
	\label{3fibers}
	Let $\left\{a_i\right\}$, $\left\{b_i\right\}$, and $\left\{c_i\right\}$ be three linked simplices in $\mathbb{R}^{v-1}$ with inner products $\gamma$ and $\delta$ as before. For each $1\leq j,k\leq v$, let $B_j = \left\{a_i:\left<a_i,b_j\right> = \gamma\right\}$ and $C_k = \left\{a_i:\left<a_i,c_k\right> = \gamma\right\}$. Then there exists integers $\mu$ and $\nu$ such that 
	\[\vert B_j\cap C_k\vert = \begin{cases}
	\mu  & \left<b_j,c_k\right> = \gamma\\
	\nu  & \left<b_j,c_k\right> = \delta\\
	\end{cases}\]
	where $\mu$ and $\nu$ are independent of our choice of $j$ and $k$.
\end{thm}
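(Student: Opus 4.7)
The plan is to mirror the two-simplex argument in Theorem~\ref{2design}, applying Lemma~\ref{simplextobasis} to a bilinear pairing between vectors that encode the ``$\gamma$-or-$\delta$'' pattern of each $b_j$ and each $c_k$ against the simplex $\{a_i\}$. Concretely, for each $j$ and $k$ I would define the vectors
\[
y_j = \bigl[\langle a_1,b_j\rangle,\ldots,\langle a_v,b_j\rangle\bigr],\qquad z_k = \bigl[\langle a_1,c_k\rangle,\ldots,\langle a_v,c_k\rangle\bigr].
\]
The strategy is to evaluate $\langle y_j,z_k\rangle$ in two different ways and force $\vert B_j\cap C_k\vert$ to be determined by the single scalar $\langle b_j,c_k\rangle$.

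First, by Lemma~\ref{simplextobasis} applied with $x=b_j,y=c_k$ (in the role of the ``outside'' vectors) and the simplex $\{a_i\}$, one obtains
\[
\langle y_j,z_k\rangle = \sum_i \langle a_i,b_j\rangle\langle a_i,c_k\rangle = \frac{v}{v-1}\,\langle b_j,c_k\rangle.
\]
Second, set $t := \vert B_j\cap C_k\vert$. Splitting the sum according to which of the four patterns $(\gamma,\gamma),(\gamma,\delta),(\delta,\gamma),(\delta,\delta)$ appears (using that $\vert B_j\vert=\vert C_k\vert = k$ from Theorem~\ref{2design}) yields
\[
\langle y_j,z_k\rangle = t\gamma^2 + 2(k-t)\gamma\delta + (v-2k+t)\delta^2 = t(\gamma-\delta)^2 + 2k\delta(\gamma-\delta) + v\delta^2.
\]

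Equating the two expressions gives
\[
t = \frac{1}{(\gamma-\delta)^2}\left[\frac{v}{v-1}\langle b_j,c_k\rangle - 2k\delta(\gamma-\delta) - v\delta^2\right].
\]
Since $\gamma\neq\delta$ (they appear in \eqref{angles} with opposite signs), the denominator is nonzero, so $\vert B_j\cap C_k\vert$ is an explicit affine function of $\langle b_j,c_k\rangle$. Because the simplices are linked, $\langle b_j,c_k\rangle$ can take only the two values $\gamma$ or $\delta$, producing exactly two possible cardinalities, which I would name $\mu$ and $\nu$. These values are integers (they are cardinalities) and depend only on $v,k,\gamma,\delta$, not on $j,k$, or on which pair of fibers is chosen, which is what we need.

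There is essentially no obstacle here beyond the bookkeeping: the only non-routine input is Lemma~\ref{simplextobasis}, which converts the $a_i$-expansion into the intrinsic inner product $\langle b_j,c_k\rangle$. If one wanted to be thorough, it would be worth remarking that the formula for $t$ automatically recovers the Noda values $\mu,\nu$ in \eqref{mu-nu} after substituting \eqref{angles}; this serves as a consistency check but is not needed for the statement of the theorem.
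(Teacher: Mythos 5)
Your proposal is correct and follows essentially the same route as the paper: the same coordinate vectors, the same double evaluation of $\langle y_j,z_k\rangle$ via Lemma~\ref{simplextobasis} on one side and the four-pattern count on the other, and the same conclusion that $\vert B_j\cap C_k\vert$ is an affine function of $\langle b_j,c_k\rangle$. The only cosmetic difference is that the paper simplifies the middle term using $k\gamma+(v-k)\delta=0$ from Theorem~\ref{2design} and then substitutes \eqref{angles} to recover the explicit Noda values, which you correctly identify as an optional consistency check.
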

\begin{proof}
	We follow a similar method of calculating an inner product in two ways, then equating the results. Fix $0\leq i,j\leq v$ and let $\eta_{i,j} = \vert B_i\cap C_j\vert$. Define
	\[x_i = \left[\left<a_1,b_i\right>,\left<a_2,b_i\right>,\dots,\left<a_v,b_i\right>\right],\qquad x_j = \left[\left<a_1,c_j\right>,\left<a_2,c_j\right>,\dots,\left<a_v,c_j\right>\right].\]
	Then we have $\left<x_i,x_j\right> =\eta_{i,j}(\gamma-\delta)^2 - v\delta^2$ and using Lemma \ref{simplextobasis},
	\[\left<x_i,x_j\right> = \sum_\ell \left<a_\ell,b_i\right>\left<a_\ell,c_j\right>=\frac{v}{v-1}\left<b_i,c_j\right>.\]
	Equating these two values and solving for $\eta_{i,j}$ gives us
	\[\eta_{i,j} = \frac{1}{(\gamma-\delta)^2}\left(v\delta^2 + \frac{v}{v-1}\left<b_i,c_j\right>\right).\]
	While the right side is not independent of $i$ and $j$ as we saw in the previous theorem, it is only dependent on the value of $\left<b_i,c_j\right>$. Using $\nu$ and $\mu$ for $\eta_{i,j}$ when $\left<b_i,c_j\right>$ is $\delta$ and $\gamma$ respectively, we have
	\[\begin{aligned}
	\nu&=\frac{v}{(\gamma-\delta)^2}\left(\frac{\delta^2(v-1)^2+\delta(v-1)}{(v-1)^2}\right),\\
	\mu&=\frac{v}{(\gamma-\delta)^2}\left(\frac{\delta^2(v-1)^2+\gamma(v-1)}{(v-1)^2}\right)= \nu + \frac{v}{(\gamma-\delta)(v-1)}.
	\end{aligned}\]
	Using equation \ref{angles}, we find
	\[\begin{aligned}
	\gamma-\delta&=\mp\frac{v}{(v-1)\sqrt{k-\lambda}},
	\end{aligned}\]
	giving us that
	\[\begin{aligned}
	\nu&= \frac{k(k\pm \sqrt{k-\lambda})}{v},\\
	\mu&=\nu\mp\sqrt{k-\lambda}.\qedhere
	\end{aligned}\]
\end{proof}
Note that, since $\mu$ and $\nu$ are both cardinalities of sets, any time we find non-integral values for $\mu$ and $\nu$ we can conclude that the hypothesized set of linked simplices does not exist. This brings us to the main theorem of this section (cf.\ \cite[Theorem~2.6(1)]{Suda2011}).
\lssdsimpequiv
\begin{proof}
	Theorem \ref{Association2Simplex} tells us that given any $LSSD(v,k,\lambda;w)$ we can always build a set of $w$ linked simplices using a scaled version of the first idempotent as the Gram matrix. For the converse, let $\left\{X_1,X_2,\dots,X_w\right\}$ be a set of $w$ linked simplices with inner products $\gamma>\delta$. Define a graph $\Gamma$ on vertex set $\bigcup_i X_i$ where $x\in X_j$ and $y\in X_\ell$ ($j\neq \ell$) are adjacent if and only if $\left<x,y\right> = \gamma$. Then $\Gamma$ is a multipartite graph with $w$ fibers. Theorem \ref{2design} tells us that the induced graph between a pair of fibers is a symmetric $2$-design. Theorem $\ref{3fibers}$ shows that given any pair of vertices in distinct fibers $x\in X_i$ and $y\in X_\ell$,
	\[\vert\Gamma(x)\cap\Gamma(y)\cap X_j\vert = \begin{cases}
	\mu & x\sim y\\
	\nu & x\not\sim y
	\end{cases}\]
	where $X_j$ is any third fiber. As we assumed $\gamma>\delta$, this also provides that $\mu>\nu$. Therefore $\Gamma$ is a $\mu$-heavy LSSD and adjacency in $\Gamma$ is the first relation of our proposed association scheme. The third relation (the $\nu$-heavy LSSD) is built using inner product $\delta$ to define adjacency.
\end{proof}
\subsection{A geometric classification}
As every association scheme has relations corresponding to complementary $\mu$-heavy and $\nu$-heavy LSSDs, it becomes useful to differentiate between LSSDs where $P_{01}$ (the valency of the $\mu$-heavy design) is greater than $P_{03}$ (the valency of the $\nu$-heavy design) or vice versa. Noting that the $\mu$-heavy LSSD gives the nearest neighbor graph\index{nearest neighbor graph}\footnote{The ``nearest neighbor graph" here refers to the basis relation in our association scheme corresponding to largest inner product not equal to 1. By definition, this is $R_1$ under the natural ordering of relations for a $Q$-polynomial scheme.} of our association scheme and thus the only positive inner product apart from 1, we classify a LSSD as \emph{optimistic}\index{linked system of symmetric designs!optimistic} if $P_{01}>P_{03}$ (and thus there are more positive inner products than negative). Likewise we classify the opposite case as \emph{pessimistic}\index{linked system of symmetric designs!pessimistic}. While this classification helps designate whether the set of linked simplices has mostly positive or mostly negative inner products between distinct simplices, we also note that every known non-degenerate example of a LSSD is optimistic. At the parameter level, an LSSD is optimistic if $(2k-v)(\mu-\nu)>0$ and pessimistic if $(2k-v)(\mu-\nu)<0$. The following table lists the possibilities.
\[\begin{tabular}{c|c|c}
& $2k>v$ & $2k<v$\\\hline
$\mu$-heavy & optimistic & pessimistic\\\hline
$\nu$-heavy & pessimistic & optimistic\\
\end{tabular}\]
Motivated by the natural ordering of relations, we will adopt the convention of focusing on the $\mu$-heavy LSSD. This forces us to allow for $k>\frac{v}{2}$.
\section{Connections to other Euclidean structures}
In the previous section, we developed the equivalence between LSSDs and linked simplices using the columns of the first idempotent as a spherical code. We now explore similar structures which can be built using combinations of these idempotents, though we will not always be able to reverse these constructions as we did with linked simplices. Recall that
\[E_j = \frac{1}{\vert X\vert}\sum_{i=0}^d Q_{ij}A_i\]
and the rank of $E_j$ is given by $Q_{0j}$. By considering non-negative linear combinations of these idempotents, we construct Gram matrices of systems of vectors with desirable properties. As we are interested in low rank Gram matrices, we will only consider non-negative combinations of two or three of these idempotents, avoiding $E_2$ as this has rank $(w-1)(v-1)$. Before moving to the examples, we note that the matrix $\alpha E_0 + \beta E_1+\gamma E_3$ is expressible as $\sum_i y_i A_i$ with the following values for $y_i$:
\begin{equation}\label{coeffs}
\begin{split}
y_0 &= \frac{1}{vw}(\alpha + (v-1)\beta + (w-1) \gamma),\\
y_1 &= \frac{1}{vw}(\alpha + \frac{v-k}{\sqrt{k-\lambda}}\beta -\gamma),\\
y_2 &= \frac{1}{vw}(\alpha - \beta +(w-1)\gamma),\\
y_3 &= \frac{1}{vw}(\alpha -\frac{k}{\sqrt{k-\lambda}}\beta - \gamma).
\end{split}
\end{equation}

\subsection{Equiangular lines}
Recall that a set of \emph{equiangular lines}\index{equiangular lines} in dimension $n$ with ``angle" $0<\alpha<1$ may be considered a set of unit vectors in $\mathbb{R}^n$ such that the inner product between any distinct vectors has a fixed magnitude $\alpha$. Thus we consider a set of equiangular lines equivalent to a Gram matrix with only one magnitude off the main diagonal. Our task of constructing equiangular lines from an LSSD then reduces to finding a low rank matrix inside the Bose-Mesner algebra for which the off diagonal entries ($y_1$, $y_2$, and $y_3$ in \eqref{coeffs}) all have the same norm.

We now show that we may find a set of equiangular lines from any LSSD whose size and dimension depend on the number of fibers used in the construction. This is a generalization of the construction of de Caen's \cite{deCaen2000} which uses the idempotents Cameron-Seidel scheme to build a family of equiangular lines with $\frac{2}{9}(d+1)^2$ lines in $\mathbb{R}^d$. Let $\BMA$ be the Bose-Mesner algebra of an LSSD with $Q$-polynomial ordering $E_0,\dots,E_3$ and relations ordered naturally. Consider the matrix
\[G = vw(\alpha E_0 + \beta E_1 + \gamma E_3)\]
for $\alpha,\beta,\gamma\geq 0$. This is a $vw\times vw$ matrix with rank at most $v+w-1$. The off-diagonal entries are given by $y_1$, $y_2$, $y_3$ in \eqref{coeffs}. In order to obtain a Gram matrix for a set of equiangular lines, we must have a constant positive value $c$ such that
\[\begin{aligned}
\Big| \alpha + \beta\left(\frac{v-k}{\sqrt{k-\lambda}}\right) - \gamma\Big| =\Big| \alpha - \beta +(w-1)\gamma\Big| =\Big| \alpha - \beta\left(\frac{k}{\sqrt{k-\lambda}}\right) - \gamma\Big| = c.
\end{aligned}\]
First note that, if $\beta = 0$ then the above equations imply $\vert \alpha-\gamma\vert = \vert\alpha+(w-1)\gamma\vert$ which is impossible unless $\gamma = 0$. Since the case $\beta=\gamma=0$ results in the rank 1 matrix $J$, we are not interested in this case. Thus we may assume $\beta>0$. This implies $\alpha+\beta\left(\frac{v-k}{\sqrt{k-\lambda}}\right) - \gamma >\alpha - \beta\left(\frac{k}{\sqrt{k-\lambda}}\right)-\gamma$ and we must have
\[c=\alpha+\beta\left(\frac{v-k}{\sqrt{k-\lambda}}\right) - \gamma = -\left[\alpha-\beta\left(\frac{k}{\sqrt{k-\lambda}}\right) - \gamma\right] .\]
This tells us that
\[\beta = \frac{2c\sqrt{k-\lambda}}{v}\qquad \text{and}\qquad \alpha - \gamma = c\left(\frac{2k-v}{v}\right).\]
Here we have one final choice: the sign of $\alpha-\beta+(w-1)\gamma$. Substituting in our value for $\beta$, we find that 
\[\alpha+(w-1)\gamma = c\left(\frac{2\sqrt{k-\lambda}\pm v}{v}\right).\]
Since we must have $2\sqrt{k-\lambda}<v$, we know that choosing the minus on the right hand side would make the entire side negative. However $\alpha$, $\gamma$, and $(w-1)$ are all positive so this is not possible. Therefore we must use the $+$, giving
\[\gamma=2c\left(\frac{v-k + \sqrt{k-\lambda}}{vw}\right),\qquad
\alpha = c\left(\frac{v+2\sqrt{k-\lambda} - (w-1)\left(v-2k\right)}{vw}\right).\]
Our final constraint is that the main diagonal of $G$ is equal to 1. Setting $y_0\equiv 1$ in \ref{coeffs}, we find
\[\begin{aligned}
c=\frac{1}{2\sqrt{k-\lambda}-1}.
\end{aligned}\]
Scaling by $vw$ for convenience, this gives us the final values:
\[\begin{aligned}vw\alpha&=\frac{v+2\sqrt{k-\lambda} - (w-1)(v-2k)}{(2\sqrt{k-\lambda}-1)},\\
vw\beta&=\frac{2w\sqrt{k-\lambda}}{2\sqrt{k-\lambda}-1},\\
vw\gamma&=\frac{2v-2k+2\sqrt{k-\lambda}}{2\sqrt{k-\lambda}-1},\\
\end{aligned}\]
with inner product $\frac{1}{2\sqrt{k-\lambda} -1}$. It is easy to see that $\beta$ and $\gamma$ will always be positive, thus the rank of our matrix will always be at least $v+w-2$. In the optimistic case, the same holds for $\alpha$ since $v-2k<0$. However, in the pessimistic case, it is possible that $w=\frac{2(k+s)}{v-2k}+2$, resulting in $\alpha = 0$. More broadly, whenever our LSSD is pessimistic, we must have $w\leq\frac{2(k+s)}{v-2k}+2$ where equality implies the rank of our matrix is $v+w-2$. This gives the following generalization of de Caen's construction:
\begin{thm}
	Let $(X,\cR)$ be the association scheme arising from a $LSSD(v,k,\lambda;w)$. If either $(X,\cR)$ is optimistic, or $w\leq2+\frac{2(k+s)}{v-2k}$ then we can build a set of $vt$ equiangular lines in $\mathbb{R}^{v+t-1}$ for any $1\leq t\leq w$. In the pessimistic case with $w> 2+\frac{2(k+s)}{v-2k}$, we can achieve the construction for any $t\leq 2+\frac{2(k+s)}{v-2k}$.\qed
\end{thm}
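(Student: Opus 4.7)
The plan is to recognize that the derivation immediately preceding the theorem already does the essential work for $t=w$; the theorem simply extends it to arbitrary $t$ and records when the coefficient $\alpha$ is forced to be zero or negative. The proof has three ingredients: (i) selecting $t$ of the $w$ fibers yields a sub-scheme with the same LSSD parameters but $t$ fibers; (ii) repeating the earlier derivation with $t$ in place of $w$ produces the desired Gram matrix; (iii) checking exactly when the coefficient $\alpha$ remains non-negative.

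First, I would verify that if $\{X_1,\dots,X_w\}$ are the fibers of an $LSSD(v,k,\lambda;w)$ then for any $2\leq t\leq w$, the restriction to $t$ chosen fibers is itself an $LSSD(v,k,\lambda;t)$. This is immediate from the definition: axioms $(i)$--$(iii)$ in the definition of an LSSD are local to subsets of fibers. The case $t=1$ is the trivial ``one-fiber LSSD'' consisting of a single regular simplex, which still has a two-dimensional Bose-Mesner algebra spanned by $I$ and $J-I$, and so admits the analogous construction with $\gamma=0$. By Theorem~\ref{Qpoly}, the sub-LSSD gives rise to a 3-class $Q$-antipodal association scheme whose Bose-Mesner algebra contains idempotents $E_0',E_1',E_3'$ of ranks $1$, $v-1$, and $t-1$ respectively.

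Second, I would repeat the calculation that led to the displayed formulas for $vw\alpha$, $vw\beta$, $vw\gamma$, but with the sub-LSSD in place of the original. The eigenmatrix $Q$ in \eqref{3classQ} depends on $w$ only through the entries involving $(w-1)$, and the same algebraic constraints (equal magnitudes among $y_1, y_2, y_3$ and $y_0=1$) give
\[ vt\alpha_t=\frac{v+2s-(t-1)(v-2k)}{2s-1},\quad vt\beta_t=\frac{2ts}{2s-1},\quad vt\gamma_t=\frac{2(v-k+s)}{2s-1}, \]
and the common off-diagonal magnitude is $\frac{1}{2s-1}$. Setting $G_t := vt(\alpha_t E_0'+\beta_t E_1'+\gamma_t E_3')$, if $\alpha_t,\beta_t,\gamma_t\geq 0$ then $G_t$ is positive semidefinite with all-ones diagonal and off-diagonal entries $\pm\frac{1}{2s-1}$, hence is the Gram matrix of $vt$ equiangular lines. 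The rank is at most $\operatorname{rank}(E_0')+\operatorname{rank}(E_1')+\operatorname{rank}(E_3')=v+t-1$, so the lines live in $\mathbb{R}^{v+t-1}$.

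Finally, I would check the sign conditions. Since $s\geq 1$ (and thus $2s-1>0$) and $v>k$, both $\beta_t>0$ and $\gamma_t>0$ for every $t\geq 1$; the only non-trivial constraint is $\alpha_t\geq 0$, equivalently $v+2s\geq (t-1)(v-2k)$. In the optimistic case $v-2k<0$, the right side is non-positive for all $t\geq 1$, so $\alpha_t\geq 0$ holds automatically for every $1\leq t\leq w$. In the pessimistic case $v-2k>0$, the constraint becomes $t-1\leq\frac{v+2s}{v-2k}$; a direct simplification shows $1+\frac{v+2s}{v-2k}=2+\frac{2(k+s)}{v-2k}$, yielding the stated bound. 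The main obstacle is simply being careful with this sign analysis (in particular the $t=1$ endpoint, where $E_3'$ is absent and one works in the two-dimensional sub-algebra) and confirming that the formulas derived for the full LSSD genuinely transfer to the sub-LSSD, which follows because the relevant entries of the eigenmatrix $Q$ of the sub-scheme depend on the same combinatorial parameters $v,k,\lambda$ and the same value of $s$.
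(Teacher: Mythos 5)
Your proposal is correct and follows essentially the same route as the paper: the theorem carries a \qed because its proof is exactly the derivation of $vw\alpha$, $vw\beta$, $vw\gamma$ immediately preceding it, applied to the $LSSD(v,k,\lambda;t)$ obtained by restricting to $t$ fibers, together with the sign analysis of $\alpha$. Your explicit verification that the restriction to $t$ fibers is again an LSSD with the same design parameters, your recomputed coefficients, and the identity $1+\frac{v+2s}{v-2k}=2+\frac{2(k+s)}{v-2k}$ all check out, so you have simply made explicit the steps the paper leaves implicit.
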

\subsection{Real mutually unbiased bases}
\label{bases}
Recall that a set of real mutually unbiased bases (MUBs)\index{real mutually unbiased bases} is a set of orthonormal bases of $\mathbb{R}^n$ such that any pair of vectors from distinct bases has inner product equal to one of $\pm\frac{1}{\sqrt{n}}$. We may build structures close to this by taking $x_2 = x_3 = 0$ and $x_0 = x_1 = w$. This gives us the Gram matrix
\[G = A_0 + \frac{v-k+\sqrt{k-\lambda}}{v\sqrt{k-\lambda}}A_1 -\frac{k-\sqrt{k-\lambda}}{\sqrt{k-\lambda}}A_3\]
of a set of $w$ orthonormal bases where two vectors from distinct bases have one of two inner products;
\[\beta_1 = \frac{v-k+\sqrt{k-\lambda}}{v\sqrt{k-\lambda}},\qquad
\beta_2= -\frac{k-\sqrt{k-\lambda}}{v\sqrt{k-\lambda}}.\]
Of particular interest is the case when $\vert \beta_1\vert = \vert\beta_2\vert;$ this is precisely when our construction gives a set of mutually unbiased bases. This will be discussed in greater detail in Section \ref{LSSDMUBs}.
\section{Known infinite families}
This section discusses two families of LSSDs, one trivial and one quite central to the subject. In each case we introduce the family and provide the parameters of the association schemes.
\subsection{Degenerate case}\index{linked system of symmetric designs!degenerate}
\label{degenerate}
We first examine the case when the $Q$-polynomial structure fails as seen in the discussion of Theorem $\ref{Qpoly}$. Arguably the most interesting property of this scheme is that there is no bound on $w$. In fact, for any choice of $v,w>0$, we can build a LSSD with $w$ fibers by building a set of cliques, each of size $w$, where each clique contains a single vertex from each fiber. This gives a $\mu$-heavy LSSD with the complement giving us the $\nu$-heavy LSSD. Below is a representation of the complementary pairs LSSD(4,1,0;3) and LSSD(4,3,2;3).
\[\begin{tikzpicture}[scale = .6,node distance=3cm,
thin,main node/.style={circle,fill=black,scale = .5}]
\node at (0,5) {$\mu$-heavy LSSD};
\def\x{1.5};
\node[main node,color = red] (11) at (-3*\x,1) {};
\node[main node,color = blue] (12) at (-2.5*\x,2) {};
\node[main node,color = green] (13) at (-2*\x,3) {};
\node[main node] (14) at (-1.5*\x,4) {};
\node[main node] (24) at (1.5*\x,4) {};
\node[main node,color = green] (23) at (2*\x,3) {};
\node[main node,color = blue] (22) at (2.5*\x,2) {};
\node[main node,color = red] (21) at (3*\x,1) {};
\node[main node] (34) at (0*\x,-1) {};
\node[main node,color = green] (33) at (0*\x,-2) {};
\node[main node,color = blue] (32) at (0*\x,-3) {};
\node[main node,color = red] (31) at (0*\x,-4) {};

\draw [-] (11) -- (21) -- (31) -- (11);
\draw [-] (12) -- (22) -- (32) -- (12);
\draw [-] (13) -- (23) -- (33) -- (13);
\draw [-] (14) -- (24) -- (34) -- (14);
\end{tikzpicture}\qquad
\begin{tikzpicture}[scale = .6,node distance=3cm,
thin,main node/.style={circle,fill=black,scale = .5}]
\node at (0,5) {$\nu$-heavy LSSD};
\def\x{1.5};
\node[main node,color = red] (11) at (-3*\x,1) {};
\node[main node,color = blue] (12) at (-2.5*\x,2) {};
\node[main node,color = green] (13) at (-2*\x,3) {};
\node[main node] (14) at (-1.5*\x,4) {};
\node[main node] (24) at (1.5*\x,4) {};
\node[main node,color = green] (23) at (2*\x,3) {};
\node[main node,color = blue] (22) at (2.5*\x,2) {};
\node[main node,color = red] (21) at (3*\x,1) {};
\node[main node] (34) at (0*\x,-1) {};
\node[main node,color = green] (33) at (0*\x,-2) {};
\node[main node,color = blue] (32) at (0*\x,-3) {};
\node[main node,color = red] (31) at (0*\x,-4) {};

\draw [-] (11) -- (22);
\draw [-] (11) -- (23);
\draw [-] (11) -- (24);
\draw [-] (11) -- (32);
\draw [-] (11) -- (33);
\draw [-] (11) -- (34);

\draw [-] (12) -- (21);
\draw [-] (12) -- (23);
\draw [-] (12) -- (24);
\draw [-] (12) -- (31);
\draw [-] (12) -- (33);
\draw [-] (12) -- (34);

\draw [-] (13) -- (32);
\draw [-] (13) -- (31);
\draw [-] (13) -- (34);
\draw [-] (13) -- (22);
\draw [-] (13) -- (21);
\draw [-] (13) -- (24);

\draw [-] (14) -- (22);
\draw [-] (14) -- (23);
\draw [-] (14) -- (21);
\draw [-] (14) -- (32);
\draw [-] (14) -- (33);
\draw [-] (14) -- (31);

\draw [-] (21) -- (32);
\draw [-] (21) -- (33);
\draw [-] (21) -- (34);
\draw [-] (22) -- (31);
\draw [-] (22) -- (33);
\draw [-] (22) -- (34);
\draw [-] (23) -- (32);
\draw [-] (23) -- (31);
\draw [-] (23) -- (34);
\draw [-] (24) -- (32);
\draw [-] (24) -- (33);
\draw [-] (24) -- (31);

\end{tikzpicture}\]
In this case the $\mu$-heavy LSSD has design parameters $(v,1,0)$, so we find that $s = \sqrt{k-\lambda} = 1$, $\nu = \frac{k(k-s)}{v} = 0$, and $\mu = \nu+s = 1$. We list the eigenmatrices and use these to further describe the LSSD:
\[\scalebox{.9}{$P = \left[\begin{array}{cccc}
	1&w-1&v-1&(v-1)(w-1)\\
	1&w-1&-1&-(w-1)\\
	1&-1&-1&1\\
	1&-1&v-1&1-v\\
	\end{array}\right],\qquad Q = \left[\begin{array}{cccc}
	1 & v-1 & (w-1)(v-1) & w-1\\
	1 & v-1 & -(v-1) & -1\\
	1 & -1 & 1-w & w-1\\
	1 & -1 & 1 & -1\\
	\end{array}\right]$}.\]
This means that our first idempotent is given by:
\[E_1 = (v-1)I+(v-1)A_1-A_2-A_3 = (vI-J)\otimes J.\]
If we scale this appropriately to obtain a Gram matrix of unit vectors, we find that $E_1$ is the Gram matrix of $w$ copies of the same simplex in $\mathbb{R}^v$. This can be seen as well from the fact that $Q_{01} = Q_{11}$ meaning that any simplex vector has inner product 1 with exactly one vector from each of the ``other" simplices, meaning the simplices are just copies of the same simplex. This explains why $w$ is unbounded as we can always copy the same simplex as many times as we would like. This also indicates why this example is not of interest to us as it is not giving $w$ distinct linked simplices.
\subsection{Cameron-Seidel scheme}\index{Cameron-Seidel Scheme}
\label{kerdock}
This construction is given originally by Goethals \cite{Goethals1976} in terms of Kerdock codes, though it was extensively studied by Cameron and Seidel. A restating of this concept is found in \cite{Bey2008}, where Bey and Kyureghyan frame the properties of Kerdock sets in terms of bent functions rather than quadratic forms. In addition, Kantor \cite{Kantor1982} showed that there are exponentially many non-isomorphic non-linear binary codes having Kerdock parameters. Just as the binary codes are not isomorphic, the resulting LSSDs will not be combinatorially isomorphic --- that is, given two LSSDs $(X,\mathcal{R})$ and $(X',\mathcal{R}')$ built from non-isomorphic Kerdock sets, there cannot be a mapping $\phi:X\rightarrow X'$ such that $(x,y)\in R_i$ if and only if $(\phi(x),\phi(y))\in R_i'$.
\begin{definition}[\cite{Cameron1973}]
	Let $V=V(2m,2)$ be a $2m$-dimensional vector space over the field $\mathbb{F} = GF(2)$, ($m\geq 2$). A quadratic form on $V$ is a function $Q$ from $V$ to $\mathbb{F}$ with the properties
	\begin{itemize}
		\item[(i)] $Q(x_0) = 0$, where $x_0$ is the zero vector;
		\item[(ii)] The function $B = B(Q)$: $V\times V\rightarrow\mathbb{F}$ defined by
		\[B(x,y) = Q(x+y) + Q(x)+Q(y), \forall x,y\in V\]
		is bilinear.
	\end{itemize}
\end{definition}
We note that, over any field $\mathbb{F}$, the bilinear form corresponding to any quadratic form is symmetric $(B(x,y) = B(y,x))$. In the specific case of $\mathbb{F} = GF(2)$ however, this bilinear form is also alternating $(B(x,x) = 0)$. That is, for any $x\in V$, $B(x,x) = Q(x+x) + Q(x) + Q(x) = Q(2x) + 2Q(x) = 0$. Thus, within this context, we associate any quadratic form with an alternating bilinear form given by a square matrix with 0's on the main diagonal.

We now give a derivation of the Kerdock codes coming from quadratic forms. Let $Q_1,Q_2,\dots,Q_w$ be a set of $w$ quadratic forms on $\mathbb{Z}_2^n$ for which $Q_i+Q_j$ is a full rank quadratic form whenever $i\neq j$. Note that, through evaluation, each quadratic form gives us a vector $[Q_i(v)]_{v\in\mathbb{Z}^n_2}$ of length $2^n$. Let $\mathcal{Q}_1,\mathcal{Q}_2,\dots,\mathcal{Q}_w$ be cosets, $\mathcal{Q}_i = [Q_i(v)]_{v\in\mathbb{Z}_2^n}+\mathscr{R}(1,n)$, of the first order Reed Muller code (see \cite[Section 4.5]{vanLint1999}). For each $1\leq i\leq w$, define the shortening of $\mathcal{Q}_i$ as the set of vectors
\[V_i = \big\{\left[q(1),q(2),\dots,q\left(2^n-1\right)\right]\mid q(2^n) = 0\big\}_{q\in\mathcal{Q}_i}\]
where $q(j)$ denotes the $j^\text{th}$ entry of $q$. Since $\mathcal{Q}_i$ is closed under complements, we know that $\vert V_i\vert = \frac{1}{2}\vert\mathcal{Q}_i\vert = 2^{n}$. Further, any pair of vectors $v,w\in\bigcup_iV_i$ come from vectors $q_v,q_w\in\bigcup_i\mathcal{Q}_i$ with last entry $0$, so $wt(v\oplus w) = wt(q_v\oplus q_w)$ where $wt(x)$, the Hamming weight of the binary tuple $x$, is equal to the number of non-zero entries. Finally, for each $i$, construct the set of vectors
\[X_i = \left\{\frac{1}{\sqrt{2^n-1}}\left(2v-\mathbbm{1}\right)\vert v\in V_i\right\}.\]
We claim $\left\{X_i\right\}_{i=1..w}$ is a set of linked simplices. To verify this, fix $1\leq i<j\leq w$ and let $x_i,y_i\in X_i$ and $z_j\in X_j$ with corresponding coset vectors $q_x,q_y,$ and $q_z$ respectively. Then,
\[\begin{aligned}
\left<x_i,x_i\right> &= \frac{1}{2^n-1}\left((wt(x_i)+(-1)^2(2^{n}-1-wt(x_i))\right)=1
\end{aligned}\]
giving that every vector in $\bigcup_iX_i$ is a unit vector. Next,
\[\begin{aligned}
\left<x_i,y_i\right> &= \frac{1}{2^n-1}\left((2^n-1)-2wt(q_x\oplus q_y)\right)=-\frac{1}{2^n-1}
\end{aligned}\]
giving us that $X_i$ forms a regular simplex. Finally,
\[\begin{aligned}
\left<x_i,z_j\right> &= \frac{1}{2^n-1}\left((2^n-1)-2wt(q_x\oplus q_w)\right).
\end{aligned}\]
Since $wt(q_x\oplus q_z)\in \left\{2^{n-1}\pm2^{r-1}\right\}$ we have that
\[\left<x_i,w_j\right> = \begin{cases}
\frac{2^{r}-1}{2^{n}-1} & wt(q_x\oplus q_w) = 2^{n-1}-2^{r-1}\\
-\frac{2^r+1}{2^n-1} & wt(q_x\oplus q_w) = 2^{n-1}+2^{r-1}\\
\end{cases}\]
meaning there are two possible angles between simplices.\par
Therefore we can build a LSSD with $w$ fibers whenever we have $w$ quadratic forms whose pairwise sums are full rank. We represent each quadratic form as the $n\times n$ matrix giving the corresponding alternating bilinear form. Then, any two of these matrices must differ in the first row in order for their difference to be full rank. This means $w\leq2^{n-1}$ as there are only $2^{n-1}$ possible choices for the first row. This upper bound is achievable whenever $n$ is even \cite{Cameron1973}. Below we give an example when $n = 4$ where $Q_i$ is the alternating bilinear form corresponding to the $i^{\text{th}}$ quadratic form.
\setlength{\arraycolsep}{2.5pt}
\[\begin{aligned}
Q_1 &= \left[\begin{array}{cccc}
0 & 0 & 0 & 0\\
0 & 0 & 0 & 0\\
0 & 0 & 0 & 0\\
0 & 0 & 0 & 0\\
\end{array}\right],\quad Q_2 = \left[\begin{array}{cccc}
0 & 1 & 0 & 0\\
1 & 0 & 0 & 0\\
0 & 0 & 0 & 1\\
0 & 0 & 1 & 0\\
\end{array}\right],\quad Q_3 = \left[\begin{array}{cccc}
0 & 0 & 1 & 0\\
0 & 0 & 0 & 1\\
1 & 0 & 0 & 1\\
0 & 1 & 1 & 0\\
\end{array}\right],\quad
Q_4 = \left[\begin{array}{cccc}
0 & 1 & 1 & 0\\
1 & 0 & 1 & 1\\
1 & 1 & 0 & 0\\
0 & 1 & 0 & 0\\
\end{array}\right],
\\
Q_5 &= \left[\begin{array}{cccc}
0 & 0 & 0 & 1\\
0 & 0 & 1 & 1\\
0 & 1 & 0 & 1\\
1 & 1 & 1 & 0\\
\end{array}\right],\quad
Q_6 = \left[\begin{array}{cccc}
0 & 1 & 0 & 1\\
1 & 0 & 1 & 0\\
0 & 1 & 0 & 0\\
1 & 0 & 0 & 0\\
\end{array}\right],\quad Q_7 = \left[\begin{array}{cccc}
0 & 0 & 1 & 1\\
0 & 0 & 1 & 0\\
1 & 1 & 0 & 1\\
1 & 0 & 1 & 0\\
\end{array}\right],\quad Q_8 =\left[\begin{array}{cccc}
0 & 1 & 1 & 1\\
1 & 0 & 0 & 1\\
1 & 0 & 0 & 0\\
1 & 1 & 0 & 0\\
\end{array}\right].\\
\end{aligned}\]
It is straightforward to form the characteristic vectors $[Q_i(v)]_v$. Below we display $[Q_2(v)]_v$ and $[Q_8(v)]_v$:
\[\begin{aligned}
\left[Q_2(v)\right]_v &= \left[\begin{array}{cccccccccccccccc}0 & 0 & 0 & 1 & 0 & 0 & 0 & 1 & 0 & 0 & 0 & 1 & 1 & 1 & 1 & 0\end{array}\right],\\
[Q_8(v)]_v &= \left[\begin{array}{cccccccccccccccc}0 & 0 & 0 & 1 & 0 & 1 & 0 & 0 & 0 & 1 & 1 & 1 & 0 & 0 & 1 & 0\end{array}\right].\\
\end{aligned}\]
Each of these binary vectors of length 16 is a codeword in the second-order Reed-Muller code $\scR(2,4)$, thus each codeword then determines a coset of $\scR(1,4)$ inside $\scR(2,4)$. The coset corresponding to $Q_2(v)$ is given below as the set of rows of the matrix. To improve readability, $+$ denotes a $1$ and an empty space denotes a $0$.
\[[Q_2(v)]_v+\mathscr{R}(1,4) =\setlength{\arraycolsep}{.8pt}{\fontsize{5}{6}\selectfont\left[\begin{array}{cccccccccccccccc}
	&  &  & + &  &  &  & + &  &  &  & + & + & + & + & \\
	+ & + & + &  & + & + & + &  & + & + & + &  &  &  &  & +\\
	& + &  &  &  & + &  &  &  & + &  &  & + &  & + & +\\
	+ &  & + & + & + &  & + & + & + &  & + & + &  & + &  & \\
	+ & + &  & + & + & + &  & + & + & + &  & + &  &  & + & \\
	&  & + &  &  &  & + &  &  &  & + &  & + & + &  & +\\
	+ &  &  &  & + &  &  &  & + &  &  &  &  & + & + & +\\
	& + & + & + &  & + & + & + &  & + & + & + & + &  &  & \\
	+ & + & + &  &  &  &  & + & + & + & + &  & + & + & + & \\
	&  &  & + & + & + & + &  &  &  &  & + &  &  &  & +\\
	+ &  & + & + &  & + &  &  & + &  & + & + & + &  & + & +\\
	& + &  &  & + &  & + & + &  & + &  &  &  & + &  & \\
	&  & + &  & + & + &  & + &  &  & + &  &  &  & + & \\
	+ & + &  & + &  &  & + &  & + & + &  & + & + & + &  & +\\
	& + & + & + & + &  &  &  &  & + & + & + &  & + & + & +\\
	+ &  &  &  &  & + & + & + & + &  &  &  & + &  &  & \\
	+ & + & + &  & + & + & + &  &  &  &  & + & + & + & + & \\
	&  &  & + &  &  &  & + & + & + & + &  &  &  &  & +\\
	+ &  & + & + & + &  & + & + &  & + &  &  & + &  & + & +\\
	& + &  &  &  & + &  &  & + &  & + & + &  & + &  & \\
	&  & + &  &  &  & + &  & + & + &  & + &  &  & + & \\
	+ & + &  & + & + & + &  & + &  &  & + &  & + & + &  & +\\
	& + & + & + &  & + & + & + & + &  &  &  &  & + & + & +\\
	+ &  &  &  & + &  &  &  &  & + & + & + & + &  &  & \\
	&  &  & + & + & + & + &  & + & + & + &  & + & + & + & \\
	+ & + & + &  &  &  &  & + &  &  &  & + &  &  &  & +\\
	& + &  &  & + &  & + & + & + &  & + & + & + &  & + & +\\
	+ &  & + & + &  & + &  &  &  & + &  &  &  & + &  & \\
	+ & + &  & + &  &  & + &  &  &  & + &  &  &  & + & \\
	&  & + &  & + & + &  & + & + & + &  & + & + & + &  & +\\
	+ &  &  &  &  & + & + & + &  & + & + & + &  & + & + & +\\
	& + & + & + & + &  &  &  & + &  &  &  & + &  &  & \\
	\end{array}\right]}.\]
To form our regular simplex we now choose all vectors with 0 in the last coordinate, discard the last entry, replace every 0 with a $-1$ (abbreviated to $-$), and then scale by $\frac{1}{\sqrt{15}}$ giving us the vectors (given by rows):
\[\begin{aligned}
X_2 = \frac{1}{\sqrt{15}}\setlength{\arraycolsep}{.8pt}{\fontsize{5}{6}\selectfont\left[\begin{array}{rrrrrrrrrrrrrrrr}
	- & - & - & + & - & - & - & + & - & - & - & + & + & + & + \\
	+ & - & + & + & + & - & + & + & + & - & + & + & - & + & - \\
	+ & + & - & + & + & + & - & + & + & + & - & + & - & - & + \\
	- & + & + & + & - & + & + & + & - & + & + & + & + & - & - \\
	+ & + & + & - & - & - & - & + & + & + & + & - & + & + & + \\
	- & + & - & - & + & - & + & + & - & + & - & - & - & + & - \\
	- & - & + & - & + & + & - & + & - & - & + & - & - & - & + \\
	+ & - & - & - & - & + & + & + & + & - & - & - & + & - & - \\
	+ & + & + & - & + & + & + & - & - & - & - & + & + & + & + \\
	- & + & - & - & - & + & - & - & + & - & + & + & - & + & - \\
	- & - & + & - & - & - & + & - & + & + & - & + & - & - & + \\
	+ & - & - & - & + & - & - & - & - & + & + & + & + & - & - \\
	- & - & - & + & + & + & + & - & + & + & + & - & + & + & + \\
	+ & - & + & + & - & + & - & - & - & + & - & - & - & + & - \\
	+ & + & - & + & - & - & + & - & - & - & + & - & - & - & + \\
	- & + & + & + & + & - & - & - & + & - & - & - & + & - & - \\
	\end{array}\right]}.
\end{aligned}\]
Likewise each $[Q_j(v)]_v$ gives us a coset of size 32, which in turn is transformed to a regular simplex $X_j$ of sixteen vectors in $\mathbb{R}^{15}$ in this manner.
\subsubsection*{Symmetric design parameters}
The design parameters of this scheme are $\left(2^{2r},2^{r-1}\left(2^{r}+1\right),2^{r-1}\left(2^{r-1}+1\right)\right)$. Using these, we have: $s = 2^{r-1}$, $\nu = 2^{r-2}\left(2^r+1\right)$,  and $\mu=2^{r-2}\left(2^r+3\right)$.
\subsubsection*{Intersection numbers}
Noting that $p_{ij}^k = p_{ji}^k$, we list the unique intersection numbers while omitting the trivial $p_{0i}^j$ parameters. Note that each $p^{j}_{ik}$ is scaled by a constant based on $i$ and $k$ given in the top row of our table.
\[\scalebox{.9}{$\begin{array}{c|ccc|cc|c}
	j & \bigslant{p^{j}_{11}}{2^{r-2}}			& \bigslant{p^{j}_{12}}{2^{r-1}} 			& \bigslant{p^{j}_{13}}{2^{r-2}} & p^{j}_{22} & \bigslant{p^{j}_{23}}{2^{r-1}} & \bigslant{p^{j}_{33}}{2^{r-2}}\\\hline
	0 & \left(2^{r+1}+2\right)(w-1)	& 0 							& 0 					& 2^{2r}-1	& 0   							& (2^{r+1}-2)(w-1)\\
	1 & \left(2^r+3\right)(w-2)		& \left(2^{r}+1\right)-2^{1-r}	& (2^r-1)(w-2) 			& 0   		& \left(2^r-1\right)   			& \left(2^{r-2}-1\right)(w-2)\\
	2 & \left(2^{r}+2\right)(w-1)	& 0 							& 2^{r}(w-1)			& 2^{2r}-2 	& 0 							& (2^{r}-2)(w-1)\\
	3 & \left(2^r+1\right)(w-2) 	& 2^{r}+1						& (2^r+1)(w-2) 			& 0   		&\left(2^{r}-1\right)-1 		& \left(2^{r-2}-3\right)(w-2)\\
	\end{array}$}\]
\subsubsection*{Krein parameters}
As with the intersection numbers, we recall that $q_{ij}^k = q_{ji}^k$ and list each unique Krein parameter omitting the trivial $q_{0i}^j$ parameters. No scaling is done here.
\[\begin{array}{c|ccc|cc|c}
j & q^{j}_{11}			& q^{j}_{12}							& q^{j}_{13}	& q^{j}_{22} 						& q^{j}_{23} 	& q^{j}_{33}\\\hline
0 & 2^{2r}-1			& 0										& 0 			& (w-1)(2^{2r}-1)								& 0  			& w-1\\
1 & \frac{2^{2r}}{w}-2	& 2^{2r}\left(\frac{w-1}{w}\right)		& 0 			& \frac{2^{2r}(w-1)^2}{w}-2(w-1)	& w-1			& 0\\
2 & \frac{2^{2r}}{w}	& 2^{2r}\left(\frac{w-1}{w}\right)-2	& 1				& \frac{2^{2r}(w-1)^2}{w}+2(w-2)	& w-2			& 0\\
3 & 0 					& 2^{2r}-1								& 0 			& (w-2)(2^{2r}-1)								& 0  			& w-2\\
\end{array}\]

\section{New examples with \texorpdfstring{$v\neq 2^m$}{v not equal to a power of 2}}
\label{LSSDMUBs}
Recall from Section $\ref{bases}$ that we found the Gram matrix for a set of bases by adding the first two idempotents of our scheme, giving us
\begin{equation}\label{MubGram}M = w(E_0+E_1) = \left(A_0+\frac{v-k+\sqrt{k-\lambda}}{v\sqrt{k-\lambda}}A_1-\frac{k-\sqrt{k-\lambda}}{v\sqrt{k-\lambda}}A_3\right)\end{equation}
where $k$ is the block size of the $\mu$-heavy LSSD. If $\frac{v-k+\sqrt{k-\lambda}}{v\sqrt{k-\lambda}}$ and $-\frac{k-\sqrt{k-\lambda}}{v\sqrt{k-\lambda}}$ have the same absolute value, then $M$ is the Gram matrix for a set of $w$ mutually unbiased bases in $\mathbb{R}^v$. This will only occur when
\[\begin{aligned}v-2k&=-2\sqrt{k-\lambda}.\\
\end{aligned}\]
This means our LSSD must be optimistic, leading to the following lemma:
\begin{lem}
	\label{twoside}
	Let $\mathbb{A}$ be the Bose-Mesner algebra of an optimistic $LSSD(v,k,\lambda;w)$ with $Q$-polynomial ordering $E_0,E_1,E_2,E_3$ of its primitive idempotents. If $\vert v-2k\vert = 2\sqrt{k-\lambda}$ then $w(E_0+E_1)$ is the Gram matrix of a set of $w$ real MUBs in dimension $v$.\qed
\end{lem}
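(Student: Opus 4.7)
The plan is essentially a direct computation using the expression \eqref{MubGram} for $M = w(E_0+E_1)$. I would verify three properties in sequence: (i) $M$ is PSD of rank exactly $v$ with unit diagonal (so $M$ is the Gram matrix of $vw$ unit vectors in $\mathbb{R}^v$); (ii) within each fiber, the corresponding $v$ vectors are pairwise orthogonal (so each fiber gives an orthonormal basis of $\mathbb{R}^v$); (iii) two vectors lying in distinct fibers have inner product $\pm\frac{1}{\sqrt{v}}$ (the mutually unbiased condition).

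For (i), since $E_0,E_1$ are pairwise orthogonal idempotents in $\mathbb{A}$, the sum $E_0+E_1$ is itself an orthogonal projection of rank $\mathrm{rank}(E_0)+\mathrm{rank}(E_1) = 1 + (v-1) = v$. Hence $M$ is PSD of rank $v$. The diagonal entry is the $A_0$-coefficient in \eqref{MubGram}, namely $1$. For (ii), observe from \eqref{MubGram} that the coefficient of $A_2$ in $M$ equals $0$, and since $A_2$ is exactly the relation encoding ``distinct vertices in the same fiber'' (i.e.\ $A_0+A_2 = I_w\otimes J_v$), this is exactly the statement that vectors within a fiber are pairwise orthogonal. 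This also confirms that each fiber yields $v$ orthonormal vectors in a $v$-dimensional space, i.e.\ an orthonormal basis.

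For (iii), which is where the hypothesis $|v-2k|=2\sqrt{k-\lambda}$ enters, let $s := \sqrt{k-\lambda}$. Since the scheme is optimistic and, under the natural ordering $R_1$ is the $\mu$-heavy LSSD, the optimistic condition $(2k-v)(\mu-\nu)>0$ together with $\mu>\nu$ forces $2k>v$. Thus the hypothesis rewrites as $2k-v=2s$, which yields the twin simplifications $k-s=v/2$ and $v-k+s=v/2$. Plugging these into the coefficients of $A_1$ and $A_3$ in \eqref{MubGram} collapses each of them to $\pm\frac{v/2}{vs}=\pm\frac{1}{2s}$. Finally, squaring $2k-v=2s$ gives $(2k-v)^2=4(k-\lambda)$, and combining with the symmetric-design identity $k(k-1)=\lambda(v-1)$ from \eqref{sym:1} (which rearranges to $k^2-\lambda v = k-\lambda$) gives $v=4(k-\lambda)=4s^2$; hence $\frac{1}{2s}=\frac{1}{\sqrt{v}}$. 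This shows every cross-fiber entry of $M$ equals $\pm\frac{1}{\sqrt{v}}$, completing the verification.

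There is no substantial obstacle here: all three conditions reduce to routine manipulations made possible by \eqref{MubGram}, the rank structure of primitive idempotents, and the identity relating the order $\sqrt{k-\lambda}$ to the design parameters. The only point requiring a moment of care is the sign determination at step (iii): the definition of ``optimistic'' together with the convention that $R_1$ is the $\mu$-heavy relation is what pins down $2k-v=+2\sqrt{k-\lambda}$ rather than its negative, and hence yields $v=4(k-\lambda)$ as the numerical constraint underlying the MUB property.
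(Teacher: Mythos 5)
Your proposal is correct and follows essentially the same route as the paper: the paper's argument is exactly the computation of the coefficients of $A_1$, $A_2$, and $A_3$ in \eqref{MubGram}, noting the $A_2$-coefficient vanishes (orthogonality within fibers) and that $\vert v-2k\vert=2\sqrt{k-\lambda}$ together with optimism equalizes the absolute values of the cross-fiber entries. Your added explicit verification that this common value is $\tfrac{1}{\sqrt{v}}$ via $v=4(k-\lambda)$ is the content the paper defers to Theorem \ref{MubLssdrestrictions}, so nothing is missing.
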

It is important to note here that there exist pessimistic LSSDs such that $v-2k = 2\sqrt{k-\lambda}$. One such example is a specific case of the degenerate design parameters in Section \ref{degenerate} given by $(v,k,\lambda)=(4,1,0)$ with the graph $\Gamma_1$ displayed below.
\[\begin{tikzpicture}[scale = .5,node distance=2cm,
thin,main node/.style={circle,fill=black,scale = .5}]

\node[main node] (11) at (-3,1) {};
\node[main node] (12) at (-2.5,2) {};
\node[main node] (13) at (-2,3) {};
\node[main node] (14) at (-1.5,4) {};
\node[main node] (24) at (1.5,4) {};
\node[main node] (23) at (2,3) {};
\node[main node] (22) at (2.5,2) {};
\node[main node] (21) at (3,1) {};
\node[main node] (34) at (0,-1) {};
\node[main node] (33) at (0,-2) {};
\node[main node] (32) at (0,-3) {};
\node[main node] (31) at (0,-4) {};

\draw [-] (11) -- (21) -- (31) -- (11);
\draw [-] (12) -- (22) -- (32) -- (12);
\draw [-] (13) -- (23) -- (33) -- (13);
\draw [-] (14) -- (24) -- (34) -- (14);
\end{tikzpicture}\]
We can easily see that $v-2k = 2 = 2\sqrt{k-\lambda}$. However, the sum of the first two eigenspaces gives
\[M = 3(E_0 + E_1) =\frac{1}{3}A_0 -\frac{1}{4}A_1\]
which is not the Gram matrix of a set of MUBs. In fact, any Menon parameter set with $v/4$ odd will satisfy $\vert v-2k\vert = 2\sqrt{k-\lambda}$ yet none of these will produce MUBs. Thus our restriction to optimistic LSSDs is required and we cannot say that any LSSD satisfying $\vert v-2k\vert = 2\sqrt{k-\lambda}$ will give us MUBs using this construction. Conversely, the existence of $w$ MUBs in $\mathbb{R}^v$ does not guarantee the existence of an optimistic $LSSD(v,k,\lambda;w)$; consider 3 MUBs in $\mathbb{R}^4$.
\subsection{Restrictions on the design parameters}
In this section, we show that $\vert v-2k\vert = 2\sqrt{k-\lambda}$ implies $(v,k,\lambda)$ are Menon design parameters. In the case of optimistic LSSDs, we also show $v/4$ is even. We now take a closer look at our restriction $v-2k = -2\sqrt{k-\lambda}$. First note we can square both sides to get
\[\begin{aligned}
4(k-\lambda)&=v^2-4k(v-k).
\end{aligned}\]
Using \eqref{sym:2}, this gives $v=4(k-\lambda)$ where we apply \eqref{sym:1} to get
\[k^2+k+\lambda=4\lambda(k-\lambda).\]
Solving this for $k$ gives $k=\frac{4\lambda+1}{2}\pm\frac{\sqrt{4\lambda+1}}{2}$ requiring $\frac{1}{2}\pm\frac{1}{2}\sqrt{4\lambda+1}$ to be an integer. Therefore $\sqrt{4\lambda+1}$ must be an odd integer. Assume $4\lambda+1 = (2u-1)^2$ for some positive integer $u$. Then $\lambda = u^2-u$ and
\[\begin{aligned}
k&=2u^2-(2\mp 1)u+\left(\frac{1\mp1}{2}\right).\\
\end{aligned}\]
If we re-parameterize the second family to avoid the trivial $(0,0,0)$ design when $u=1$, we get the complementary families:
\[\begin{aligned}
\lambda &= (u-1)u\qquad\qquad &\lambda^\prime &= (u+1)u\\
k&=(2u-1)u\qquad \text{and}\qquad&k^\prime &= (2u+1)u\\
v&= 4u^2\qquad \qquad&v^\prime &= 4u^2.
\end{aligned}\]
If we restrict to an optimistic LSSD, we must use the second family for our $\mu$-heavy LSSD $\Gamma$. From \eqref{mu-nu}, we find
\[\begin{aligned}
s=\sqrt{k-\lambda}=u,\qquad\nu =\frac{k(k-s)}{v}=u^2+\frac{u}{2},\qquad\mu=\nu+s=u^2+\frac{3}{2}u.
\end{aligned}\]
This forces $\nu$ (and $\mu$) to be integral if and only if $u$ is even, resulting in the following theorem
\begin{thm}
	\label{MubLssdrestrictions}
	Let $\Gamma$ be an optimistic $LSSD(v,k,\lambda;w)$. If $v-2k = -2\sqrt{k-\lambda}$ then $(v,k,\lambda)$ are \emph{Menon design parameters}\index{Menon parameters}. That is,
	\begin{enumerate}[label=$(\roman*)$]
		\item $v = 4u^2$
		\item $k = 2u^2+u$
		\item $\lambda = u^2+u$
		\item $w\leq 2u^2$
	\end{enumerate}
	Further, if $u$ is odd, then $w = 2$.
\end{thm}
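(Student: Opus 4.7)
The plan is to follow closely the algebraic manipulation already sketched in the prose preceding the theorem statement, turning it into a self-contained derivation. First I would square the hypothesis $v-2k=-2\sqrt{k-\lambda}$ to obtain $4(k-\lambda)=(v-2k)^2=v^2-4k(v-k)$, then combine with the identity $k(v-k)=(k-\lambda)(v-1)$ from Equation~\eqref{sym:2} to conclude $v=4(k-\lambda)$. Substituting this into Equation~\eqref{sym:1} and clearing gives the quadratic $k^2+k+\lambda=4\lambda(k-\lambda)$ in~$k$; solving it and observing that $\sqrt{4\lambda+1}$ must be an odd integer forces $4\lambda+1=(2u-1)^2$ for some positive integer~$u$. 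This yields the two complementary Menon families
\[
(v,k,\lambda)=(4u^{2},\,2u^{2}-u,\,u^{2}-u)
\quad\text{and}\quad
(v',k',\lambda')=(4u^{2},\,2u^{2}+u,\,u^{2}+u),
\]
and the optimistic hypothesis (which forces $2k>v$ in the $\mu$-heavy LSSD, as noted after Proposition~\ref{complement}) selects the second family. This establishes parts $(i)$--$(iii)$.

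For part $(iv)$, I would apply the Noda bound~\eqref{Noda}, which applies precisely in the optimistic regime $k>v/2$. Substituting $v-2=4u^{2}-2$, $\sqrt{k-\lambda}=u$, and $2k-v=2u$ gives
\[
w\leq\frac{(4u^{2}-2)\,u}{2u}+1=2u^{2}.
\]
Alternatively, since the Noda bound is equivalent to the Krein inequality $q^{1}_{11}\geq 0$ (as observed in the paragraph around Equation~\eqref{q111}), the same conclusion follows directly from FC1.

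For the final assertion, I would use Noda's formulas~\eqref{mu-nu} with the Menon parameters to compute $s=u$, $\nu=u^{2}+u/2$, and $\mu=u^{2}+3u/2$. Since $\mu$ and $\nu$ count the sizes of intersections $\Gamma(a)\cap\Gamma(b)\cap X_h$ across three distinct fibers (by the definition in~\eqref{munudef}), they must be non-negative integers whenever $w\geq 3$. Integrality fails exactly when $u$ is odd, so in that case no triple of fibers can coexist, forcing $w=2$. The only step that requires any care is making sure that the optimistic/$\mu$-heavy convention is applied consistently when selecting between the two families and when invoking the Noda bound; everything else is direct algebra and bookkeeping, so I do not anticipate a serious obstacle.
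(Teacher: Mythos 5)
Your proposal is correct and follows essentially the same route as the paper, which for parts $(i)$--$(iii)$ and the oddness restriction simply points back to the derivation in the surrounding prose (square the hypothesis, use \eqref{sym:2} to get $v=4(k-\lambda)$, solve the resulting quadratic, and check integrality of $\nu=u^2+\tfrac{u}{2}$ for three or more fibers), and for $(iv)$ invokes the Noda bound \eqref{Noda}. One tiny remark: the intermediate quadratic should read $k^2-k+\lambda=4\lambda(k-\lambda)$ rather than $k^2+k+\lambda=4\lambda(k-\lambda)$ (you have inherited a sign typo from the paper's prose), but the solution $k=\frac{4\lambda+1}{2}\pm\frac{\sqrt{4\lambda+1}}{2}$ that you quote is the one for the correct equation, so nothing downstream is affected.
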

\begin{proof}
	Statements \emph{(i) -- (iii)} as well as our restriction when $u$ is odd follow directly from above. Conclusion \emph{(iv)} follows from \eqref{Noda}.
\end{proof}
The upper bound for $w$ is achieved whenever $u$ is a power of two using the Cameron-Seidel scheme. In light of this family and the condition that $w=2$ whenever $v/4$ is odd, one might ask if $w$ is bounded as a function of the highest power of $2$ dividing $v$. We will show later that this is not true by constructing examples with $w$ as large as we like and $v/16$ odd.
\subsection{Mutually unbiased Hadamard matrices}
In this section, we establish an equivalence between LSSDs with design parameters as in Theorem \ref{MubLssdrestrictions} with sets of regular unbiased Hadamard matrices (see \cite{Kharaghani2010} for more detailed information on unbiased Hadamard matrices). Van Dam et al.\ \cite[p.~1423]{Davis2014} briefly mention this connection citing constructions of unbiased regular Hadamard matrices by Holzmann, Kharaghani, and Orrick \cite{Kharaghani2010} using mutually orthogonal latin squares. In this section, we describe the connection between unbiased regular Hadamard matrices and LSSDs in full, using Theorem \ref{twoside}. A real Hadamard matrix\index{Hadamard matrix!real} of order $v$ is a $v\times v$ matrix $H$ with entries $\pm 1$ such that $HH^T = vI$. $H$ is a \textit{regular Hadamard matrix}\index{Hadamard matrix!regular} if $HJ = JH = cJ$ for some constant $c$; one easily verifies that $c = \sqrt{v}$. Two Hadamard matrices $H_1$ and $H_2$ are \textit{unbiased}\index{Hadamard matrix!mutually unbiased} if $\frac{1}{\sqrt{v}}H_1H_2^T$ is itself a Hadamard matrix. Finally, a set of Hadamard matrix matrices are mutually unbiased if each pair is unbiased. Using these definitions, consider the following:
\begin{thm}
	\label{LSSDtoHad}
	Let $\Gamma$ be an optimistic $LSSD(v,k,\lambda;w)$. If $\vert v-2k\vert = 2\sqrt{k-\lambda}$, then there exists a set of $w-1$ real mutually unbiased regular Hadamard matrices of order $v$.
\end{thm}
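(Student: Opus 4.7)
The plan is to unpack Lemma \ref{twoside} and then read off the Hadamard matrices directly from the cross-Gram blocks of the MUB Gram matrix, verifying the three required properties (Hadamard, regular, mutually unbiased) from the combinatorics of the LSSD. By Lemma \ref{twoside}, the matrix $M = w(E_0+E_1)$ is the Gram matrix of $w$ orthonormal bases $B_1,\dots,B_w$ of $\mathbb{R}^v$, and the explicit expansion of $M$ computed in Section~\ref{LSSDMUBs} shows that once $v-2k=-2\sqrt{k-\lambda}$ the coefficients of $A_1$ and $A_3$ become $\pm\tfrac{1}{\sqrt{v}}$ while the coefficient of $A_2$ vanishes. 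Thus within-fiber blocks of $M$ equal $I$ (confirming the orthonormal structure) and between-fiber blocks have all entries in $\{\pm 1/\sqrt{v}\}$. Writing $U_i$ for the $v\times v$ orthogonal matrix whose columns are the vectors of $B_i$, the $(i,j)$ block of $M$ is exactly $U_i^T U_j$.

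Now I would fix $B_1$ as a reference basis and define, for $i=1,\dots,w-1$,
\[
H_i \;:=\; \sqrt{v}\,U_{i+1}^T U_1.
\]
Each $H_i$ is a $\{\pm 1\}$-matrix by the previous paragraph, and $H_i H_i^T = v\,U_{i+1}^T U_1 U_1^T U_{i+1} = vI$ shows that $H_i$ is Hadamard of order $v$. For regularity I would appeal directly to the LSSD combinatorics: given $b\in B_{i+1}$, the $+$-inner-products with vectors of $B_1$ occur precisely at neighbors of $b$ in the $\mu$-heavy relation $R_1$, of which there are exactly $k$ per fiber, leaving $v-k$ non-neighbors contributing $-1/\sqrt{v}$. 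Hence every row sum of $H_i$ is $2k-v$, and swapping the roles of $B_1$ and $B_{i+1}$ handles columns identically. Combined with the identity $2k-v = 2\sqrt{k-\lambda} = \sqrt{v}$ available in this parameter regime, this gives $H_i J = JH_i = \sqrt{v}\,J$, so each $H_i$ is regular.

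For mutual unbiasedness, the key algebraic step is that the $U_1$ factors cancel via $U_1 U_1^T = I$ when one forms $H_iH_j^T$:
\[
\tfrac{1}{\sqrt{v}}\,H_i H_j^T \;=\; \tfrac{1}{\sqrt{v}}\,v\,U_{i+1}^T U_1 U_1^T U_{j+1} \;=\; \sqrt{v}\,U_{i+1}^T U_{j+1}.
\]
Because $U_{i+1}^T U_{j+1}$ is another off-diagonal block of $M$, the right-hand side has entries in $\{\pm 1\}$ and squares to $vI$, hence is Hadamard, so $H_i$ and $H_j$ are unbiased. The only step requiring genuine care, and the reason I would adopt the convention $H_i=\sqrt{v}\,U_{i+1}^T U_1$ rather than $H_i=\sqrt{v}\,U_1^T U_{i+1}$, is precisely this cancellation: with the wrong orientation one is left with $\sqrt{v}\,U_1^T U_{i+1} U_{j+1}^T U_1$, which is merely orthogonal and has no a priori reason to be a $\{\pm 1/\sqrt{v}\}$-matrix. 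Once Lemma \ref{twoside} and the correct orientation are in hand, every remaining verification is a direct unpacking of the LSSD parameters.
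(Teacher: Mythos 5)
Your proof is correct and follows essentially the same route as the paper's: both read the Hadamard matrices off the cross-Gram blocks of $w(E_0+E_1)$, get regularity from the $k$ positive entries per row/column coming from the $\mu$-heavy incidence, and obtain mutual unbiasedness from the telescoping of cross-Gram blocks. The only cosmetic difference is that you cancel $U_1U_1^T=I$ directly, whereas the paper derives the equivalent identity $M_{i,k}M_{k,j}=M_{i,j}$ from $G^2=wG$ restricted to three fibers; these are the same fact.
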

\begin{proof}
	Let $(X,\mathcal{R})$ be the association scheme arising from $\Gamma$ with Bose-Mesner algebra $\mathbb{A}$. Let $\left\{E_0,E_1,E_2,E_3\right\}$ be the set of idempotents under the natural $Q$-polynomial ordering. From Lemma \ref{twoside}, $G = w(E_0+E_1)$ is the Gram matrix of a set of $w$ MUBs in $\mathbb{R}^v$. Let the $w$ MUBs be given by the columns of the $w$ orthogonal matrices $\left\{M_1,\dots,M_w\right\}$ and without loss of generality assume $M_1 = I$. Then any column from another $M_i$ $(i\neq 1)$ must have entries $\pm \frac{1}{\sqrt{v}}$. For $1<i\leq w$, let $H_i = \sqrt{v}M_i$. First note that $H_iH_i^T = vM_iM_i^T = vI$, therefore for each $1<i\leq w$, $H_i$ is a Hadamard matrix. Now consider that the first $v$ rows of $G$ will have the block form
	$\left[\begin{array}{ccccc}
	I & M_2 & M_3 & \dots & M_{w}\\
	\end{array}\right].$
	However from \eqref{MubGram}, we have that the positive (resp., negative) entries of $M_i$ represent adjacency between vertices in the first and $i^\text{th}$ fibers of the $\mu$-heavy (resp., $\nu$-heavy) LSSD. Since each vertex in the first fiber must be adjacent to $k$ vertices in the $i^\text{th}$ fiber in the $\mu$-heavy LSSD, each row of $M_i$ must have $k$ positive entries and $v-k$ negative entries. Similarly, each vertex in the $i^\text{th}$ fiber is adjacent to $k$ vertices in the first fiber in the $\mu$-heavy LSSD, thus each column of $M_i$ has $k$ positive entries and $v-k$ negative entries. Therefore each $H_i$ must be regular. Now define $\binom{w}{2}$ matrices $M_{i,j}$ where $M_{i,j}$ is the orthogonal matrix representing basis $j$ when basis $i$ is taken to be the standard basis (so $M_{1,j} = M_j$). Then we repeat all previous arguments to show that $G$ has the block form:
	\[G = \left[\begin{array}{ccccccc}
	I & M_{1,2} & \dots & M_{1,w}\\
	M_{2,1} & I & \dots & M_{2,w}\\
	\vdots & \vdots  & \ddots & \vdots\\
	M_{w,1} & M_{w,2} & \dots & I\\
	\end{array}\right]\]
	where $\sqrt{v}M_{i,j}$ is a Hadamard matrix for all $1\leq i\neq j\leq w$. Now consider a second association scheme $(X',\cR')$ arising from the subgraph of $\Gamma$ induced on three distinct fibers $X_i$, $X_j$, and $X_k$. The matrix $G^\prime = w(E_0^\prime + E_1^\prime)$ will have the form:
	\[G^\prime = \left[\begin{array}{ccccccc}
	I & M_{i,j} & M_{i,k}\\
	M_{j,i} & I  & M_{j,k}\\
	M_{k,i} & M_{k,j} & I\\
	\end{array}\right].\]
	Noting that $G^2 = wG$, the block in the $(1,2)$ block of $G^2$ gives us that
	\[2M_{i,j} + M_{i,k}M_{k,j} = 3M_{i,j},\qquad \text{ or equivalently } \qquad M_{i,k}M_{k,j} = M_{i,j}.\]
	Therefore, if we return to the original $LSSD$ and define $H_{i,j} = \sqrt{v}M_{i,j}$, we find that $\frac{1}{\sqrt{v}}H_i^TH_j = H_{i,j}$. It follows that the set $\left\{H_2,\dots,H_w\right\}$ is a set of $w-1$ regular mutually unbiased Hadamard matrices.
\end{proof}
We now show the converse:
\begin{thm}
	Assume $w>2$. Let $\left\{H_2,\dots,H_w\right\}$ be $w-1$ regular unbiased Hadamard matrices of order $v$. Then there exists an optimistic $LSSD(v,k,\lambda;w)$.
\end{thm}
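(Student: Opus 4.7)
The plan is to invert the construction from the proof of Theorem \ref{LSSDtoHad}. Set $M_1 := I$ and $M_i := \frac{1}{\sqrt{v}} H_i$ for $2 \leq i \leq w$; each $M_i$ is an orthogonal matrix of order $v$, and the regularity hypothesis $H_i J = \sqrt{v}\, J$ together with $H_i H_i^T = v I$ gives both $M_i J = J$ and $J M_i = J$. The unbiased condition forces $N_{ij} := \sqrt{v}\, M_i^T M_j$ to be a $\pm 1$ matrix for every $i \neq j$, and a short calculation (using row- and column-sum regularity of each $H_i$) shows that $N_{ij}$ is itself a regular Hadamard matrix. Form the vertex set $X = X_1 \cup \cdots \cup X_w$ of $w$ fibers of size $v$ --- labelling $X_i$ by the columns of $M_i$ --- and declare $x \in X_i$ adjacent to $y \in X_j$ (for $i \neq j$) exactly when $(N_{ij})_{xy} = +1$. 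Property (i) of the LSSD definition then holds by construction.

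Next, one verifies that the bipartite incidence matrix $B_{ij} = (J + N_{ij})/2$ between distinct fibers realises a symmetric $(v,k,\lambda)$-design. Expanding $B_{ij} B_{ij}^T$ and using $N_{ij} N_{ij}^T = v I$ with the regularity $N_{ij} J = \sqrt{v}\, J$ yields $B_{ij} B_{ij}^T = u^2 I + (u^2 + u) J$, forcing the Menon parameters $(v,k,\lambda) = (4u^2, 2u^2 + u, u^2 + u)$ prescribed by Theorem \ref{MubLssdrestrictions}. This establishes property (ii).

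The main step is verifying the triple-intersection condition (iii). Fix three distinct fibers indexed by $i, j, k$ and expand
\[B_{ik} B_{jk}^T = \tfrac{1}{4}\bigl(J + \sqrt{v}\, M_i^T M_k\bigr)\bigl(J + \sqrt{v}\, M_k^T M_j\bigr).\]
Using $M_k M_k^T = I$ (orthogonality of the middle matrix) collapses the quadratic term to $v\, M_i^T M_j = \sqrt{v}\, N_{ij}$, while the regularity identities $J M_k^T = J = M_k J$ simplify each cross term to $\sqrt{v}\, J$. This gives $B_{ik} B_{jk}^T = \frac{1}{4}\bigl((v + 2\sqrt{v}) J + \sqrt{v}\, N_{ij}\bigr)$, whose $(x,y)$-entry depends only on the sign of $(N_{ij})_{xy}$: substituting $v = 4u^2$ produces $\mu = u^2 + \tfrac{3u}{2}$ when $x \sim y$ and $\nu = u^2 + \tfrac{u}{2}$ when $x \not\sim y$, matching the values forced by \eqref{mu-nu} for a $\mu$-heavy design. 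Since $2k = v + 2\sqrt{v} > v$, the LSSD is optimistic, and the integrality of $\mu, \nu$ automatically forces $u$ to be even --- consistent with Theorem \ref{MubLssdrestrictions}.

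The main obstacle is the triple-intersection calculation: one must track the distinct algebraic roles of the three fibers and recognise that it is the orthogonality $M_k M_k^T = I$ of the \emph{middle} matrix (rather than the unbiased condition between $M_i$ and $M_j$) that collapses the double product into a single $N_{ij}$. Everything else --- the design property on a pair, the parameter computation, and the optimistic classification --- is a direct mirror of the forward direction.
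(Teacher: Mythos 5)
Your proof is correct, but it takes a genuinely different route from the paper's. The paper does not verify the LSSD axioms directly: it shifts and normalizes the columns of each $H_i$ (and of $\sqrt{v}I$) by the all-ones matrix to produce $w$ regular simplices in $\mathbb{R}^{v-1}$, checks that inner products between distinct simplices take only the two values $\frac{\pm\sqrt{v}-1}{v-1}$ (precisely because the mixed products $\frac{1}{\sqrt{v}}H_iH_{i'}^T$ are Hadamard), and then invokes the already-established equivalence between sets of linked simplices and LSSDs (Theorems \ref{2design} and \ref{3fibers}), which silently does the work of your pairwise and triple-intersection computations; optimism is read off from the fact that the centroid of each simplex is the origin while $|\gamma|<|\delta|$. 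You instead verify the combinatorial definition head-on: adjacency from the sign of $N_{ij}=\sqrt{v}\,M_i^TM_j$, the $2$-design property from $B_{ij}B_{ij}^T=u^2I+(u^2+u)J$, and condition \emph{(iii)} from $B_{ik}B_{jk}^T=\frac{1}{4}\bigl((v+2\sqrt{v})J+\sqrt{v}\,N_{ij}\bigr)$, correctly identifying that it is the orthogonality of the middle factor $M_kM_k^T=I$ that collapses the product. Your version is self-contained and has the virtue of exhibiting the Menon parameters and the explicit values $\mu=u^2+\tfrac{3u}{2}$, $\nu=u^2+\tfrac{u}{2}$, whereas the paper's is shorter because it reuses the linked-simplices machinery. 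One pedantic remark: the paper defines unbiasedness via $\frac{1}{\sqrt{v}}H_iH_{i'}^T$ while your $N_{ij}$ is $\frac{1}{\sqrt{v}}H_i^TH_j$; these differ by a transpose convention (the paper itself is inconsistent between its two directions), and the discrepancy is harmless since either can be arranged by replacing each $H_i$ by $H_i^T$, which preserves regularity — but it is worth a sentence to say so.
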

\begin{proof}
	Assume without loss of generality that the row sum of each of our Hadamard matrices is positive. Define vectors $x_{i,j}$ for $2\leq i\leq w$ and $1\leq j\leq v$ such that $x_{i,j}$ is the $j^\text{th}$ column of $H_i-\frac{1}{\sqrt{v}}J$. Let $x_{1,j}$ be the $j^\text{th}$ column of $\sqrt{v}I-\frac{1}{\sqrt{v}}{J}$. Note that for all $1\leq i\leq w$, $\vnorm{x_{i,j}} = v-1$. Then, for all $i,j$, let $\hat{x}_{i,j} = \frac{x_{i,j}}{\sqrt{v-1}}$. Letting $X_i = \left\{\hat{x}_{i,1},\dots,\hat{x}_{i,v}\right\}$, we claim that $\left\{X_1,\dots, X_w\right\}$ is a set of linked simplices. To show this, fix $j\neq k$, $i\neq i'$ and consider the following four inner products:
	\begin{align}
	\label{had:1}\left<\hat{x}_{1,j},\hat{x}_{1,k}\right> &= \frac{1}{v-1}\left[vI-J\right]_{j,k} = -\frac{1}{v-1},\\
	\label{had:2}\left<\hat{x}_{i,j},\hat{x}_{i,k}\right> &=\frac{1}{v-1}\left[H_iH_i^T -J\right]_{j,k} = -\frac{1}{v-1},\\
	\label{had:3}\left<\hat{x}_{1,j},\hat{x}_{i,k}\right> &=\frac{\sqrt{v}}{v-1}\left[H_i^T-\frac{1}{\sqrt{v}}J\right]_{j,k},\\
	\label{had:4}\left<\hat{x}_{i,j},\hat{x}_{i',k}\right> &=\frac{\sqrt{v}}{v-1}\left[\frac{1}{\sqrt{v}}H_iH_{i^\prime}^T -\frac{1}{\sqrt{v}}J\right]_{j,k}.
	\end{align}
	\eqref{had:1} and \eqref{had:2} give us the inner products within each $X_i$. Since $\frac{1}{\sqrt{v}}H_iH_{i^\prime}^T$ is a Hadamard matrix, \eqref{had:3} and \eqref{had:4} tell us that inner products between sets $X_i$ and $X_{i^\prime}$ take values of $\frac{\pm\sqrt{v}-1}{v-1}$. Finally note that the all ones vector is orthogonal to all $\hat{x}_{i,j}$, implying $\left\{X_1,\dots,X_w\right\}$ is a set of $w$ simplices in $\mathbb{R}^{v-1}$ such that inner products between simplices can take only two possible values. Finally consider that the possible inner products are $\frac{\sqrt{v}}{v-1}\left(\pm 1 - \frac{1}{\sqrt{v}}\right)$. This tells us that $\vert \gamma\vert <\vert \delta\vert$ where $\gamma$ is the positive inner product and $\delta$ is the negative. Since the centroid of any simplex is the origin, we must have more positive inner products between simplices than negative, telling us our $LSSD$ is optimistic.
\end{proof}
This provides us with the following theorem
\lssdhadequiv\vspace{-9mm}\qed
\subsection{Constructing LSSDs from real MUBs}
Using the results from the last section and the close relation between MUBs and Hadamard matrices, we wish to build new LSSDs. From Theorem \ref{equiv} and Theorem \ref{MubLssdrestrictions}, we are only going to find optimistic LSSDs with Menon design parameters. The Cameron-Seidel scheme is a construction for $w = 2u^2$ whenever $u$ is a power of 2 (see Section \ref{kerdock}). We skip this case and instead look for constructions where $u$ (and equivalently $v$) is not necessarily a power of 2.
\subsubsection{Wocjan and Beth construction}
Wocjan and Beth in \cite{Wocjan2005} detail a way to create MUBs from MOLS. They take a set of $t$ MOLS with side length $d$ and create $t+2$ MUBs in dimension $d^2$. The process is to convert the MOLS into an orthogonal array with $d^2$ rows. They then expand the array by replacing each column with $d$ columns given by the characteristic vector of each symbol in that column. Finally, they extend this matrix by replacing each 1 in the array with a row from a Hadamard matrix and each 0 by an appropriate length vector of 0s. The result is that the $d$ columns arising from each original column are orthogonal to each other. We will focus on the case where the resultant MUBs produce regular Hadamard matrices as scalar multiples of the mixed Gram matrices between any pair of fibers.\par
For our purposes, an orthogonal array of size $(n^2\times N)$ has entries from the set $\left\{1,\dots, n\right\}$ and any two columns contain each ordered pair exactly once. Let $O$ be an orthogonal array of size $n^2\times N$, let $C^i$ denote the $i^{th}$ column of $O$ with entries $C^i_h$ ($1\leq h\leq n^2$). We may uniquely express
\[C^i = \sum_{j=1}^n jB^{i,j} \]
where each $B^{i,j}$ is a 01-vector of length $n^2$. As each symbol $j$ appears in each column $C^i$ exactly $n$ times, $B^{i,j}$ will have $n$ 1s and $n^2-n$ 0s. Let $H$ be a Hadamard matrix matrix of order $n$. For $1\leq l\leq n$ define a matrix $M^{i,j,l}$ as follows: for $1\leq h\leq n$, we replace the $h^\text{th}$ 1, counting from the top, in $B^{i,j}$ with the $H_{h,l}$. This produces $n^2N$ columns each with $n^2$ entries $M_h^{i,j,l}\in\left\{0,1,-1\right\}$.\par
The fact that $H^TH = nI$ together with $B^{i,j}\circ B^{i,j^\prime} = 0$ for $j\neq j^\prime$ give us that $\mathcal{B}_i = \left\{M^{i,1,1},\dots,M^{i,n,n}\right\}$ is an orthogonal basis for each $i=1,\dots,N$. Each vector in these bases has squared norm $n$. For $i\neq i'$, $C^i$ and $C^{i'}$ denote distinct columns in our orthogonal array implying, for any $j$ and $j'$ (not necessarily distinct), $B^{i,j}\circ B^{i',j'}$ has one non-zero entry. Then $M^{i,j,l}\circ M^{i',j',l'}$ also has one non-zero entry giving
\begin{equation}\left<M^{i,j,l},M^{i',j',l'}\right>=M^{i,j,l}_hM^{i',j',l'}_h=\pm 1\\
\end{equation}
where $O_{h,i} = j$ and $O_{h,i'} = j'$. This tells us the bases $\mathcal{B}_1,\dots,\mathcal{B}_s$ produced by Wocjan and Beth are pairwise unbiased.\par
We now show that if $H$ is regular, then the resulting unbiased Hadamard matrices are regular (see proof of Theorem \ref{LSSDtoHad} for the construction of these Hadamard matrices). For each Hadamard matrix, the row sum is the sum of inner products between a column $M^{i,j,l}$ of one basis with the set of $n^2$ columns $M^{i',j',l'}$ ($i\neq i'$, $1\leq j',l'\leq n)$ of the second basis used in its construction. We first sum $\left<M^{i,j,l},M^{i',j',l'}\right>$ over $l'$ to get
\begin{equation*}
\sum_{l'} \left<M^{i,j,l},M^{i',j',l'}\right>=M^{i,j,l}_h\left(\sum_{l'}M^{i',j',l'}_h\right)=pM^{i,j,l}_h
\end{equation*}
where $p$ is the row sum of our Hadamard matrix $H$. We then sum this result over $j'$, noting that $h$ was chosen so that $O_{h,i} = j$ and $O_{h,i'} = j'$, meaning it depends on $j'$. As this sum will include every non-zero entry in $M^{i,j,l}$ exactly once, we know
\begin{equation}
\label{rowsum}
\sum_{j'}\sum_{l'}\left<M^{i,j,l},M^{i',j',l'}\right> =\sum_{j'}pM^{i,j,l}_{h}=p\sum_{j'}H_{j',l}=p^2.
\end{equation}
Then the sum of any row of the Hadamard matrix built from $M^{i,j,l}$ and $M^{i',j',l'}$ ($i\neq i'$) will be $p^2$. Further, we showed in Theorem \ref{LSSDtoHad} that these Hadamard matrices are unbiased. Noting that $n = 4t^2$ for some $t$, Theorem \ref{equiv} tells us that the resultant LSSD will be an optimistic $LSSD(16t^4,k,\lambda;N)$. This leads to our final theorem.
\buildinghad
The following theorem is due to Beth \cite{Beth1983} and will be used to show the number of fibers is unbounded as $v$ increases.
\begin{thm}\cite{Beth1983}\label{beth}
	Let $N(n)$ be the maximum size of a set of mutually orthogonal latin squares of side $n$. Then, for $n$ sufficiently large, $N(n)\geq n^\frac{1}{14.8}$.
\end{thm}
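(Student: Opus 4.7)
The plan is to sketch the classical route to lower bounds on $N(n)$ via transversal designs and Wilson-type constructions, with the exponent $1/14.8$ emerging from the prime-gap estimate available to Beth at the time. First, I would translate the problem into an equivalent one: a set of $k$ MOLS of side $n$ corresponds to a transversal design $TD(k+2,n)$, and also to an orthogonal array $OA(n^2, k+2)$. This equivalence lets me work with whichever object is easiest, and in particular it makes the basic operations (direct product, truncation, resolvable completion) transparent.

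The workhorse would then be a combination of three ingredients. First, the MacNeish product bound $N(mn)\geq \min\{N(m),N(n)\}$, proved by taking the Kronecker product of MOLS, which reduces the problem to behavior on prime-power orders. Second, the fact that $N(q)=q-1$ for every prime power $q$, established by the standard finite-field construction. Third, Wilson's theorem on PBDs/TDs: if one starts with a $TD(k,m)$, deletes a suitable set of points, and fills the holes using smaller transversal designs, one obtains a $TD(k, m')$ on a nearby order $m'$ with only a mild loss in $k$. I would state Wilson's filling lemma carefully, since the exponent will depend on exactly how large the losses are relative to the hole sizes.

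Next I would attack the asymptotic bound itself. Given large $n$, I would choose a prime power $q\leq n$ as large as possible and write $n = q + r$ with $r\geq 0$ small. Using the best prime-gap estimate available (the Iwaniec--Jutila / Heath-Brown type result current in 1983), I would bound $r = O(q^\theta)$ for an explicit $\theta<1$. Then Wilson's construction applied to $TD(k, q+s)$ with appropriate $s$ produces a $TD(k',n)$ whose order of magnitude $k'$ is controlled by $q$ minus the number of ingredient TDs needed to fill the holes; inductively the losses compound to give $N(n)\gg n^{(1-\theta)/c}$ for a small constant $c$ determined by the bookkeeping of the fill-in. Tuning $\theta$ and $c$ against the prime-gap exponent yields the specific figure $1/14.8$.

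The main obstacle, I expect, is not any single step but the quantitative bookkeeping in the Wilson filling argument: one must verify that each application of the fill-in lemma uses only ingredient TDs of orders at which $N$ has already been bounded below by a power of the input, so that the induction closes with the claimed exponent. The prime-gap input is a black box, but extracting the right exponent from it requires writing the Wilson step as an explicit recursion $N(n)\geq f(N(q), r)$ and checking that iterating this recursion along a chain of orders $n > n_1 > n_2 > \cdots$ does not degrade the exponent below $1/14.8$. This is where Beth's contribution sharpened earlier bounds of Chowla--Erd\H{o}s--Straus and Wilson, and it is the step where I anticipate having to be most careful.
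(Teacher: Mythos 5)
The paper does not prove this statement at all: Theorem \ref{beth} is imported verbatim from Beth's 1983 paper and used as a black box (its only role here is to feed Corollary \ref{asymptotics}), so there is no internal proof to compare your write-up against. That said, your sketch does correctly reconstruct the route Beth actually took: MacNeish's product bound, $N(q)=q-1$ for prime powers, Wilson's truncation-and-filling construction for transversal designs, and a prime-gap estimate to control the distance from $n$ to a usable prime (power). This is the Chowla--Erd\H{o}s--Straus $\rightarrow$ Wilson ($n^{1/17}$) $\rightarrow$ Beth ($n^{1/14.8}$) lineage, and the improvement from $1/17$ to $1/14.8$ is indeed driven by a sharper prime-gap exponent inserted into essentially Wilson's machinery.

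The gap in your proposal is that it stops exactly where the theorem begins. The entire content of the claimed bound is the number $14.8$, and your argument defers it to ``tuning $\theta$ and $c$ against the prime-gap exponent yields the specific figure $1/14.8$'' without ever writing down the recursion, the choice of decomposition $n=km+u$ that Wilson's construction actually requires (not simply $n=q+r$ with $q$ the largest prime power below $n$ --- the filling lemma needs $N(k)$, $N(m)$, $N(m+1)$, and $N(u)$ all simultaneously large, which is a genuinely different bookkeeping problem from locating one nearby prime), or the verification that the induction closes at that exponent. As a description of the strategy your proposal is accurate; as a proof it establishes nothing quantitative, and in particular it would equally well ``prove'' $N(n)\geq n^{1/17}$ or $N(n)\geq n^{1/10}$ depending on unstated choices. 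If you intend to supply a proof rather than a citation, you must carry out the recursion explicitly; otherwise the honest move --- and the one the thesis itself makes --- is to cite Beth and move on.
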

\asymplssd
\begin{proof}
	Using Theorem \ref{beth}, we know that for sufficiently large $n$, we may find a set of mutually orthogonal latin squares of side $n$ with at least $n^\frac{1}{14.8}$ squares. However, given $t$ mutually orthogonal latin squares of side $n$, we may build an orthogonal array with $n+2$ columns with $n$ symbols. This is achieved by indexing the rows of our orthogonal array by the $n^2$ positions in a latin square. The first and second columns of the orthgonal array denote the row and column (resp.) of the position within the latin square. Each remaining column corresponds to one of the mutually orthogonal latin squares where the symbols in each position generate the column. Thus, as long as we may find a regular Hadamard matrix of order $n$, we may build a $LSSD(n^2,k,\lambda;w)$ with $w\geq n^\frac{1}{14.8}$.
\end{proof}
\oddlssd
\begin{proof}
	Considering a symmetric Bush-type Hadamard matrix as a specific case of regular Hadamard matrices, Theorem 3.5 of \cite{Xiang2006} tell us that there exists regular Hadamard matrices of order $4t^4$ for any odd $t$. Let $t\geq 1$ and $H_t$ be one such regular Hadamard matrix of order $4t^4$. Using Corollary \ref{asymptotics}, we can choose $t$ large enough to guarantee the existence of a $LSSD(16t^8,k,\lambda;w)$. Now consider the Hadamard matrix
	\[H = \left[\begin{array}{rrrr}
	-1 & 1 & 1 & 1\\
	1 & -1 & 1 & 1\\
	1 & 1 & -1 & 1\\
	1 & 1 & 1 & -1\\
	\end{array}\right].\]
	Using this matrix, we can now build the regular Hadamard matrix $H_{n,t} = H_t\otimes^{n-1} H$ which is regular of order $4^nt^4$. This matrix, again paired with Corollary \ref{asymptotics}, now guarantees the existence of a $LSSD(16^nt^8,k,\lambda;w)$ for any choice of $n$.
\end{proof}
\lssdthreesix
\begin{proof}
	Using the MacNeish construction \cite{Macneish1922},\cite[Thm~1.1.2]{Bommel2015}, there exists an orthogonal array $O_n$ of size $36^{2n}\times(4^n+1)$. Consider the regular Hadamard matrix of order 36 found by Seberry \cite{SeberryHad}:
	\[H = \setlength{\arraycolsep}{0.6pt}{\fontsize{5}{6}\selectfont\left[\begin{array}{cccccccccccccccccccccccccccccccccccc}
		-&-&-&-&+&-&-&+&+&+&+&-&+&+&-&+&-&+&+&+&+&+&-&-&-&+&+&+&+&-&+&+&+&-&+&-\\
		+&-&-&-&-&+&-&-&+&+&-&+&+&-&+&-&+&+&+&+&+&-&-&-&+&+&+&+&-&+&+&+&-&+&-&+\\
		+&+&-&-&-&-&+&-&-&-&+&+&-&+&-&+&+&+&+&+&-&-&-&+&+&+&+&-&+&+&+&-&+&-&+&+\\
		-&+&+&-&-&-&-&+&-&+&+&-&+&-&+&+&+&-&+&-&-&-&+&+&+&+&+&+&+&+&-&+&-&+&+&-\\
		-&-&+&+&-&-&-&-&+&+&-&+&-&+&+&+&-&+&-&-&-&+&+&+&+&+&+&+&+&-&+&-&+&+&-&+\\
		+&-&-&+&+&-&-&-&-&-&+&-&+&+&+&-&+&+&-&-&+&+&+&+&+&+&-&+&-&+&-&+&+&-&+&+\\
		-&+&-&-&+&+&-&-&-&+&-&+&+&+&-&+&+&-&-&+&+&+&+&+&+&-&-&-&+&-&+&+&-&+&+&+\\
		-&-&+&-&-&+&+&-&-&-&+&+&+&-&+&+&-&+&+&+&+&+&+&+&-&-&-&+&-&+&+&-&+&+&+&-\\
		-&-&-&+&-&-&+&+&-&+&+&+&-&+&+&-&+&-&+&+&+&+&+&-&-&-&+&-&+&+&-&+&+&+&-&+\\
		+&+&-&+&+&-&+&-&+&+&+&+&+&-&+&+&-&-&+&-&+&-&+&+&+&-&+&-&-&-&+&+&+&-&-&-\\
		+&-&+&+&-&+&-&+&+&-&+&+&+&+&-&+&+&-&-&+&-&+&+&+&-&+&+&-&-&+&+&+&-&-&-&-\\
		-&+&+&-&+&-&+&+&+&-&-&+&+&+&+&-&+&+&+&-&+&+&+&-&+&+&-&-&+&+&+&-&-&-&-&-\\
		+&+&-&+&-&+&+&+&-&+&-&-&+&+&+&+&-&+&-&+&+&+&-&+&+&-&+&+&+&+&-&-&-&-&-&-\\
		+&-&+&-&+&+&+&-&+&+&+&-&-&+&+&+&+&-&+&+&+&-&+&+&-&+&-&+&+&-&-&-&-&-&-&+\\
		-&+&-&+&+&+&-&+&+&-&+&+&-&-&+&+&+&+&+&+&-&+&+&-&+&-&+&+&-&-&-&-&-&-&+&+\\
		+&-&+&+&+&-&+&+&-&+&-&+&+&-&-&+&+&+&+&-&+&+&-&+&-&+&+&-&-&-&-&-&-&+&+&+\\
		-&+&+&+&-&+&+&-&+&+&+&-&+&+&-&-&+&+&-&+&+&-&+&-&+&+&+&-&-&-&-&-&+&+&+&-\\
		+&+&+&-&+&+&-&+&-&+&+&+&-&+&+&-&-&+&+&+&-&+&-&+&+&+&-&-&-&-&-&+&+&+&-&-\\
		+&+&+&+&-&-&-&+&+&-&+&-&+&-&-&-&+&-&+&+&+&+&-&+&+&-&-&+&+&-&+&-&+&+&-&+\\
		+&+&+&-&-&-&+&+&+&+&-&+&-&-&-&+&-&-&-&+&+&+&+&-&+&+&-&+&-&+&-&+&+&-&+&+\\
		+&+&-&-&-&+&+&+&+&-&+&-&-&-&+&-&-&+&-&-&+&+&+&+&-&+&+&-&+&-&+&+&-&+&+&+\\
		+&-&-&-&+&+&+&+&+&+&-&-&-&+&-&-&+&-&+&-&-&+&+&+&+&-&+&+&-&+&+&-&+&+&+&-\\
		-&-&-&+&+&+&+&+&+&-&-&-&+&-&-&+&-&+&+&+&-&-&+&+&+&+&-&-&+&+&-&+&+&+&-&+\\
		-&-&+&+&+&+&+&+&-&-&-&+&-&-&+&-&+&-&-&+&+&-&-&+&+&+&+&+&+&-&+&+&+&-&+&-\\
		-&+&+&+&+&+&+&-&-&-&+&-&-&+&-&+&-&-&+&-&+&+&-&-&+&+&+&+&-&+&+&+&-&+&-&+\\
		+&+&+&+&+&+&-&-&-&+&-&-&+&-&+&-&-&-&+&+&-&+&+&-&-&+&+&-&+&+&+&-&+&-&+&+\\
		+&+&+&+&+&-&-&-&+&-&-&+&-&+&-&-&-&+&+&+&+&-&+&+&-&-&+&+&+&+&-&+&-&+&+&-\\
		+&+&-&+&+&+&-&+&-&+&+&+&-&-&-&+&+&+&-&-&+&-&+&-&-&+&-&+&+&+&+&-&+&+&-&-\\
		+&-&+&+&+&-&+&-&+&+&+&-&-&-&+&+&+&+&-&+&-&+&-&-&+&-&-&-&+&+&+&+&-&+&+&-\\
		-&+&+&+&-&+&-&+&+&+&-&-&-&+&+&+&+&+&+&-&+&-&-&+&-&-&-&-&-&+&+&+&+&-&+&+\\
		+&+&+&-&+&-&+&+&-&-&-&-&+&+&+&+&+&+&-&+&-&-&+&-&-&-&+&+&-&-&+&+&+&+&-&+\\
		+&+&-&+&-&+&+&-&+&-&-&+&+&+&+&+&+&-&+&-&-&+&-&-&-&+&-&+&+&-&-&+&+&+&+&-\\
		+&-&+&-&+&+&-&+&+&-&+&+&+&+&+&+&-&-&-&-&+&-&-&-&+&-&+&-&+&+&-&-&+&+&+&+\\
		-&+&-&+&+&-&+&+&+&+&+&+&+&+&+&-&-&-&-&+&-&-&-&+&-&+&-&+&-&+&+&-&-&+&+&+\\
		+&-&+&+&-&+&+&+&-&+&+&+&+&+&-&-&-&+&+&-&-&-&+&-&+&-&-&+&+&-&+&+&-&-&+&+\\
		-&+&+&-&+&+&+&-&+&+&+&+&+&-&-&-&+&+&-&-&-&+&-&+&-&-&+&+&+&+&-&+&+&-&-&+
		\end{array}\right]}.\]
	Alternatively, one may use the Menon difference set \[\begin{aligned}\left\{(0\right.&10),(011),(012),(020),(021),(022),(100),(110),\\&\left.(120),(200),(211),(222),(300),(312),(321)\right\}\end{aligned}\] within $\mathbb{Z}_4\times \mathbb{Z}_3^2$ to generate a regular Hadamard matrix $H$. In either case, since $H$ is regular, $H_n = H^{\otimes n}$ is a regular Hadamard of order $36^n$. Then $O_n$ and $H_n$, along with Theorem \ref{newLSSD}, give us the desired LSSD.
\end{proof}
The same construction gives, for example, $LSSD(100^{2n},k,\lambda;4^n+1)$ for all $n\geq 1$. Finally, we note that if we can build a regular Hadamard matrix of order $4t^2$ for $1\leq t\leq 50$, the table of largest known orthogonal arrays for small $n$ in \cite{Colbourn2006} gives us LSSDs with the following number of fibers.\par
\begin{table}[ht]
	\scalebox{.9}{\begin{tabular}{c|*{17}{c}}
			$t$ & 1 & 2 & 3 & 4 & 5 & 6 & 7 & 8 & 9 & 10& 11 & 12 & 13 & 14 & 15 & 16 & 17 \\\hline
			$w$&5&17&9&65&10&12&8&257&10&17&17&10&10&10&29&1025&10\\\\
			$t$ & 18 & 19 & 20& 21 & 22 & 23 & 24 & 25 & 26 & 27 & 28 & 29 & 30& 31 & 32 & 33 \\\hline
			$w$&26&11&26&11&17&11&32&10&17&10&50&30&30&12&4097&32\\\\
			$t$ & 34 & 35 & 36 & 37 & 38 & 39 & 40 & 41 & 42 & 43 & 44 & 45 & 46 & 47 & 48 & 49 &\\\hline
			$w$&18&32&65&32&18&32&26&13&20&32&65&17&32&32&30&17
	\end{tabular}}\caption[Number of fibers for each $t$ with $v = 4t^2$.]{For each $t$, $w$ represents the largest known number of fibers for a LSSD on $v=16t^4$ vertices. In each case, we will also require the existence of a regular Hadamard matrix of order $4t^2$ in order to apply Theorem $\ref{newLSSD}$; Remark 1.25 in the handbook of combinatorial designs \cite{Colbourn2006} guarantees such regular Hadamard matrices up to $t=46$.}
\end{table}
To give an example of the construction for Theorem $\ref{newLSSD}$ we build a $\text{LSSD}(16,10,6; 3)$. In all matrices that follow, ``$+$" denotes a positive 1, ``$-$" denotes a $-1$, and an empty space denotes $0$. We begin by using the orthogonal array $O$ and the Hadamard matrix $H$:
\[O^T = \left[\begin{array}{cccccccccccccccc}
1 & 1 & 1 & 1 & 2 & 2 & 2 & 2 & 3 & 3 & 3 & 3 & 4 & 4 & 4 & 4\\
1 & 2 & 3 & 4 & 1 & 2 & 3 & 4 & 1 & 2 & 3 & 4 & 1 & 2 & 3 & 4\\
1 & 2 & 3 & 4 & 2 & 3 & 4 & 1 & 3 & 4 & 1 & 2 & 4 & 1 & 2 & 3\\
\end{array}\right],\qquad H = \left[\begin{array}{rrrr}
- & + & + & +\\
+ & - & + & +\\
+ & + & - & +\\
+ & + & + & -\\
\end{array}\right].\]
Using this OA, we have
\[B^{:,:} = \setlength{\arraycolsep}{0.8pt}{\fontsize{5}{6}\selectfont\left[\begin{array}{cccc|cccc|cccc}
	+&&& &+&&& &+&&&\\
	+&&& &&+&& &&+&&\\
	+&&& &&&+& &&&+&\\
	+&&& &&&&+ &&&&+\\
	&+&& &+&&& &&+&&\\
	&+&& &&+&& &&&+&\\
	&+&& &&&+& &&&&+\\
	&+&& &&&&+ &+&&&\\
	&&+& &+&&& &&&+&\\
	&&+& &&+&& &&&&+\\
	&&+& &&&+& &+&&&\\
	&&+& &&&&+ &&+&&\\
	&&&+ &+&&& &&&&+\\
	&&&+ &&+&& &+&&&\\
	&&&+ &&&+& &&+&&\\
	&&&+ &&&&+ &&&+&\\
	\end{array}\right]}.\]
Below we display the three arrays $M^{1,:,:}$, $M^{2,:,:}$, and $M^{3,:,:}$ respectively\footnote{Note that we need not use the same Hadamard matrix $H$ in each of these substitutions: any twelve regular Hadamard matrices of order $4$ will do. This allows us to construct (potentially) non-isomorphic 3-class association schemes with the same parameters.}
\[\begin{aligned}
\setlength{\arraycolsep}{0.4pt}{\fontsize{5}{6}\selectfont\left[\begin{array}{rrrr|rrrr|rrrr|rrrr}
	- & + & + & +&&&&&&&&&&&&\\
	+ & - & + & +&&&&&&&&&&&&\\
	+ & + & - & +&&&&&&&&&&&&\\
	+ & + & + & -&&&&&&&&&&&&\\
	&&&&- & + & + & +&&&&&&&&\\
	&&&&+ & - & + & +&&&&&&&&\\
	&&&&+ & + & - & +&&&&&&&&\\
	&&&&+ & + & + & -&&&&&&&&\\
	&&&&&&&&- & + & + & +&&&&\\
	&&&&&&&&+ & - & + & +&&&&\\
	&&&&&&&&+ & + & - & +&&&&\\
	&&&&&&&&+ & + & + & -&&&&\\
	&&&&&&&&&&&&- & + & + & +\\
	&&&&&&&&&&&&+ & - & + & +\\
	&&&&&&&&&&&&+ & + & - & +\\
	&&&&&&&&&&&&+ & + & + & -\\
	\end{array}\right]},\quad&\setlength{\arraycolsep}{0.4pt}{\fontsize{5}{6}\selectfont\left[\begin{array}{rrrr|rrrr|rrrr|rrrr}
	- & + & + & +&&&&&&&&&&&&\\
	&&&&- & + & + & +&&&&&&&&\\
	&&&&&&&&- & + & + & +&&&&\\
	&&&&&&&&&&&&- & + & + & +\\
	+ & - & + & +&&&&&&&&&&&&\\
	&&&&+ & - & + & +&&&&&&&&\\
	&&&&&&&&+ & - & + & +&&&&\\
	&&&&&&&&&&&&+ & - & + & +\\
	+ & + & - & +&&&&&&&&&&&&\\
	&&&&+ & + & - & +&&&&&&&&\\
	&&&&&&&&+ & + & - & +&&&&\\
	&&&&&&&&&&&&+ & + & - & +\\
	+ & + & + & -&&&&&&&&&&&&\\
	&&&&+ & + & + & -&&&&&&&&\\
	&&&&&&&&+ & + & + & -&&&&\\
	&&&&&&&&&&&&+ & + & + & -\\
	\end{array}\right]},\quad&\setlength{\arraycolsep}{0.4pt}{\fontsize{5}{6}\selectfont\left[\begin{array}{rrrr|rrrr|rrrr|rrrr}
	- & + & + & +&&&&&&&&&&&&\\
	&&&&- & + & + & +&&&&&&&&\\
	&&&&&&&&- & + & + & +&&&&\\
	&&&&&&&&&&&&- & + & + & +\\
	&&&&+ & - & + & +&&&&&&&&\\
	&&&&&&&&+ & - & + & +&&&&\\
	&&&&&&&&&&&&+ & - & + & +\\
	+ & - & + & +&&&&&&&&&&&&\\
	&&&&&&&&+ & + & - & +&&&&\\
	&&&&&&&&&&&&+ & + & - & +\\
	+ & + & - & +&&&&&&&&&&&&\\
	&&&&+ & + & - & +&&&&&&&&\\
	&&&&&&&&&&&&+ & + & + & -\\
	+ & + & + & -&&&&&&&&&&&&\\
	&&&&+ & + & + & -&&&&&&&&\\
	&&&&&&&&+ & + & + & -&&&&\\
	\end{array}\right]}.
\end{aligned}\]
Finding the inner products of each pair of bases we find the three Hadamard matrices $H_{1,2}$, $H_{1,3}$, and $H_{2,3}$ respectively.

\[\begin{aligned}
\setlength{\arraycolsep}{.8pt}{\fontsize{5}{6}\selectfont\left[\begin{array}{rrrrrrrrrrrrrrrr}
	+&-&-&-&-&+&+&+&-&+&+&+&-&+&+&+\\
	-&+&+&+&+&-&-&-&-&+&+&+&-&+&+&+\\
	-&+&+&+&-&+&+&+&+&-&-&-&-&+&+&+\\
	-&+&+&+&-&+&+&+&-&+&+&+&+&-&-&-\\
	-&+&-&-&+&-&+&+&+&-&+&+&+&-&+&+\\
	+&-&+&+&-&+&-&-&+&-&+&+&+&-&+&+\\
	+&-&+&+&+&-&+&+&-&+&-&-&+&-&+&+\\
	+&-&+&+&+&-&+&+&+&-&+&+&-&+&-&-\\
	-&-&+&-&+&+&-&+&+&+&-&+&+&+&-&+\\
	+&+&-&+&-&-&+&-&+&+&-&+&+&+&-&+\\
	+&+&-&+&+&+&-&+&-&-&+&-&+&+&-&+\\
	+&+&-&+&+&+&-&+&+&+&-&+&-&-&+&-\\
	-&-&-&+&+&+&+&-&+&+&+&-&+&+&+&-\\
	+&+&+&-&-&-&-&+&+&+&+&-&+&+&+&-\\
	+&+&+&-&+&+&+&-&-&-&-&+&+&+&+&-\\
	+&+&+&-&+&+&+&-&+&+&+&-&-&-&-&+\\
	\end{array}\right]},\quad
&\setlength{\arraycolsep}{.8pt}{\fontsize{5}{6}\selectfont\left[\begin{array}{rrrrrrrrrrrrrrrr}
	+&-&-&-&-&+&+&+&-&+&+&+&-&+&+&+\\
	-&+&+&+&+&-&-&-&-&+&+&+&-&+&+&+\\
	-&+&+&+&-&+&+&+&+&-&-&-&-&+&+&+\\
	-&+&+&+&-&+&+&+&-&+&+&+&+&-&-&-\\
	+&-&+&+&-&+&-&-&+&-&+&+&+&-&+&+\\
	+&-&+&+&+&-&+&+&-&+&-&-&+&-&+&+\\
	+&-&+&+&+&-&+&+&+&-&+&+&-&+&-&-\\
	-&+&-&-&+&-&+&+&+&-&+&+&+&-&+&+\\
	+&+&-&+&+&+&-&+&-&-&+&-&+&+&-&+\\
	+&+&-&+&+&+&-&+&+&+&-&+&-&-&+&-\\
	-&-&+&-&+&+&-&+&+&+&-&+&+&+&-&+\\
	+&+&-&+&-&-&+&-&+&+&-&+&+&+&-&+\\
	+&+&+&-&+&+&+&-&+&+&+&-&-&-&-&+\\
	-&-&-&+&+&+&+&-&+&+&+&-&+&+&+&-\\
	+&+&+&-&-&-&-&+&+&+&+&-&+&+&+&-\\
	+&+&+&-&+&+&+&-&-&-&-&+&+&+&+&-\\
	\end{array}\right]},&\setlength{\arraycolsep}{.8pt}{\fontsize{5}{6}\selectfont\left[\begin{array}{rrrrrrrrrrrrrrrr}
	+&-&-&-&+&-&+&+&+&+&-&+&+&+&+&-\\
	-&+&+&+&-&+&-&-&+&+&-&+&+&+&+&-\\
	-&+&+&+&+&-&+&+&-&-&+&-&+&+&+&-\\
	-&+&+&+&+&-&+&+&+&+&-&+&-&-&-&+\\
	+&+&+&-&+&-&-&-&+&-&+&+&+&+&-&+\\
	+&+&+&-&-&+&+&+&-&+&-&-&+&+&-&+\\
	+&+&+&-&-&+&+&+&+&-&+&+&-&-&+&-\\
	-&-&-&+&-&+&+&+&+&-&+&+&+&+&-&+\\
	+&+&-&+&+&+&+&-&+&-&-&-&+&-&+&+\\
	+&+&-&+&+&+&+&-&-&+&+&+&-&+&-&-\\
	-&-&+&-&+&+&+&-&-&+&+&+&+&-&+&+\\
	+&+&-&+&-&-&-&+&-&+&+&+&+&-&+&+\\
	+&-&+&+&+&+&-&+&+&+&+&-&+&-&-&-\\
	-&+&-&-&+&+&-&+&+&+&+&-&-&+&+&+\\
	+&-&+&+&-&-&+&-&+&+&+&-&-&+&+&+\\
	+&-&+&+&+&+&-&+&-&-&-&+&-&+&+&+\\
	\end{array}\right]}.\\
\end{aligned}\]
This gives us a rank $v$ idempotent
\[M = \frac{1}{12}\left[\begin{array}{ccc}
4I & H_{12}& H_{13}\\
H_{12}^T & 4I & H_{23}\\
H_{13}^T& H_{23}^T & 4I
\end{array}\right].\]
Replacing the positive entries of the off-diagonal blocks of this matrix with ones and all other entries with zeros gives us the adjacency matrix of a $\mu$-heavy $\text{LSSD}(16,10,6;3)$.
\chapter{Orthogonal projective double covers}
\label{4classbip}
Throughout this thesis we have examined many different spherical $t$-distance sets and their close relation to association schemes. In some cases, we may find an equivalence between such $t$-distance sets and certain classes of association schemes; thus is the case discussed in Chapter \ref{3class} where we found 3-class $Q$-antipodal association schemes were equivalent to the 3-distance sets called linked simplices. In other cases we find that equivalence occurs only when we impose additional restrictions on the $t$-distance set, for instance only certain sets of equiangular lines occur within 3-class $Q$-bipartite association schemes. In this chapter, we introduce a line system known as a \emph{projective double} of a graph, denoted $PD_m(\Gamma;\beta,\delta)$. This object is a set of lines in $\mathbb{R}^m$ with two possible angles, $\beta$ and $\delta$, where $\Gamma$ is the incidence graph on $\beta$. We note that projective double covers of complete graphs $(\Gamma = K_n)$ give sets of equiangular lines, and thus are well-studied in the literature. Here, as with sets of equiangular lines, $\beta$ and $\delta$ represent the cosine of the angles between lines; despite this we will abuse notation and refer to them as the ``angles" of our projective double. We find that these systems are closely related to $4$- and $5$-class $Q$-bipartite association schemes. In this chapter, we will focus on the $4$-class case and define a related line system called an \emph{orthogonal projective double} of a graph $\Gamma$, denoted $OPD_m(\Gamma;\beta)$. These are projective doubles for which $\delta = 0$. We begin by considering the general objects but will add restrictions along the way with the motivation of finding an equivalence between these objects and our association schemes. This material in this chapter contains joint work with W.\ J.\ Martin. Below is a list of the main results found in this chapter.
\begin{restatable*}{prop}{projsrg}\label{projnec}
	An $OPD_m(\Gamma;\beta)$ for simple graph $\Gamma$ induces an association scheme only if $\Gamma$ is strongly regular.
\end{restatable*}
\begin{restatable*}{cor}{cocliquealpha}\label{snegsquare}
	Let $\Gamma$ be a connected strongly regular graph with $v$ vertices and eigenvalues $k>r>s$ which contains a Delsarte coclique $C$ of size $m$. Then an $OPD_{m}(\Gamma;\beta)$ exists only if $\beta=\frac{1}{\sqrt{-s}}$. Further, either $\Gamma$ is complete bipartite or $\beta^{-1}\in \mathbb{Z}$.
\end{restatable*}
\begin{restatable*}{cor}{dimiffassoc}\label{dimiffassoc}
	Let $\Gamma$ be a connected strongly regular graph with $v$ vertices and eigenvalues $k>r>s$. An $OPD_{m}(\Gamma;\beta)$ with $m<v$ induces an association scheme if and only if $m = v\left(1+k\beta^2\right)^{-1}$.
\end{restatable*}

\begin{restatable*}{thm}{fourclasssixzero}\label{thm60}
	Suppose we have a feasible parameter set for a $4$-class association scheme which is $Q$-bipartite but not $Q$-antipodal. Let $k=P_{01}$, $r=P_{21}$, and $s=P_{41}$ where $P$ is the first eigenmatrix using the natural ordering. Then the scheme is realizable only if $s=-n^2$ for some integer $n>1$ and
	\[15n^4(2n^2-3)r^2 + (n^6-45kn^2+76k)n^2r+k(16k+n^6)(n^2-2)\geq 0.\]
\end{restatable*}

\section{Orthogonal projective doubles of regular graphs}
As we saw with other examples, it becomes useful to represent a line system by a set of unit vectors, defining the ``angle" between two lines to be the absolute value of the inner product between representative unit vectors. As we have two choices of a representative for each line, we have many equivalent such representations of any line set. In this chapter, we will instead include both directions; that is, we will represent our line system with an antipodal set of unit vectors with twice as many vectors as there are lines. Further, we will describe a projective double by not only the parameters $m$, $\beta$, and $\delta$ but also the graph which is induced on the set of lines where two lines are adjacent if they intersect in angle $\beta$.

Let $\Gamma = \Gamma(V,E)$ be an undirected graph on $v$ vertices. A \emph{projective double}\index{projective double} ($PD_m(\beta,\delta)$) of $\Gamma$ is an antipodal spherical code, say $L = \left\{\ell_1,\dots,\ell_{2v}\right\}\subset \mathbb{R}^m$ with inner products $A = \left\{\pm 1,\pm \beta,\pm\delta\right\}$ such that there exists a surjective mapping $\phi:L\rightarrow V$ with the properties
\begin{enumerate}[label=$(\roman*)$]
	\item $\phi(\ell_i) = \phi(\ell_j)$ if and only if $\left\vert\left<\ell_i,\ell_j\right>\right\vert = 1$,
	\item $\phi(\ell_i) \sim \phi(\ell_j)$ if and only if $\left\vert\left<\ell_i,\ell_j\right>\right\vert = \beta$	
\end{enumerate}
for $1\leq i,j\leq 2v$. Note that, since $L$ is an antipodal set, any subset of size $v$ in which the inner product $-1$ does not occur will completely determine our entire set. This also tells us that we may order our vectors, say $\ell_1,\dots,\ell_v,-\ell_1,\dots,-\ell_v$, such that the Gram matrix corresponding to this ordering is 
\[\left[\begin{array}{cc}
G & -G\\
-G & G
\end{array}\right]\]
where $G$ is the Gram matrix of the set $\left\{\ell_1,\dots,\ell_v\right\}$. Further, this principal submatrix $G$ gives us the adjacency matrix of $\Gamma$ when we zero out the main diagonal and replace all $\pm\beta$ entries with $1$ and $\pm\delta$ entries with $0$.

We note that $PD_m(\beta,\delta)$ of $\Gamma$ is immediately a $PD_m(\delta,\beta)$ of $\overline{\Gamma}$ --- for this reason, we adopt the convention that $\beta>\delta$ for all of our projective doubles. So long as $\delta>0$, a $PD_m(\Gamma;\beta,\delta)$ will be a spherical $5$-distance set and thus the Gram matrix of such a projective double cannot belong to the Bose-Mesner algebra of a 4-class association scheme. Thus, our first restriction is that $\delta = 0$ and we define an \emph{orthogonal projective double},\index{orthogonal projective double} $OPD_m(\Gamma;\beta)$, as a projective double of $\Gamma$ for which $\delta = 0$. It is this class of objects which we will focus on for the entirety of this chapter.

Our primary goal is to determine which $OPD$s induce association schemes; that is, when is the Schur closure of the Gram matrix a Bose-Mesner algebra. We begin by asking the simpler question --- for which graphs does there exist an OPD. We find quickly that without any restrictions on the dimension, we may always find an $OPD$ for a given graph $\Gamma$ --- consider the following two propositions.
\begin{prop}\label{dirnaive}
	For any non-empty simple graph $\Gamma(V,E)$, there exists an $OPD_m(\Gamma;\frac{1}{d})$ for some $m\leq \vert V\vert$ with $d$ the maximum degree of $\Gamma$.
\end{prop}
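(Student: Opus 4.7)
The plan is to produce the required vector system explicitly by exhibiting its Gram matrix. Let $A$ denote the adjacency matrix of $\Gamma$ and consider the symmetric matrix
\[ G \;=\; I \;+\; \tfrac{1}{d}\,A \]
of order $v = |V|$. The first step is to show $G \succeq 0$. Since every row of $A$ has at most $d$ ones and no other nonzero entries, the Perron/Gershgorin bound gives $\rho(A) \leq d$; equivalently every eigenvalue of $\tfrac{1}{d}A$ lies in $[-1,1]$, so every eigenvalue of $G$ lies in $[0,2]$. Thus $G$ is positive semidefinite with constant diagonal $1$ and rank $m \leq v$.

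Next, I would factor $G = U^{T}U$ where $U$ is $m \times v$, take the columns as unit vectors $\ell_1,\dots,\ell_v \in \mathbb{R}^{m}$, and define
\[ L \;=\; \{\ell_1,\dots,\ell_v,\,-\ell_1,\dots,-\ell_v\}, \qquad \phi(\pm\ell_i)=v_i. \]
By construction the inner products among members of $L$ lie in $\{\pm 1,\,\pm\tfrac{1}{d},\,0\}$, with value $\pm 1$ occurring only within a fiber $\{\ell_i,-\ell_i\}$, value $\pm \tfrac{1}{d}$ occurring exactly when $v_i\sim v_j$, and value $0$ otherwise.

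To finish, I would verify the three defining properties of an $OPD_m(\Gamma;\tfrac{1}{d})$. The antipodal structure and the fact that $L\subset S^{m-1}$ are immediate. Condition $(i)$ in the definition follows because $\tfrac{1}{d}<1$ forces $|\langle \ell_i,\ell_j\rangle|=1$ only when $\ell_j=\pm\ell_i$, i.e.\ exactly when $\phi(\ell_i)=\phi(\ell_j)$. Condition $(ii)$ follows directly from the definition of the off-diagonal entries of $G$.

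The only real subtlety is the boundary case $d=1$, where $\beta=\tfrac{1}{d}=1$ would collide with the fiber inner product and condition $(i)$ would fail; this case (a disjoint union of edges and isolated vertices) is typically excluded as degenerate, and so the argument above handles every simple graph with $d\geq 2$. I do not anticipate any genuinely hard step: the entire proposition reduces to the elementary spectral bound $\rho(A)\leq d$ for a graph of maximum degree $d$, together with the standard Gram-matrix factorization.
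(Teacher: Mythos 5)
Your proof is correct, but it takes a different route from the paper's. The paper never invokes a spectral bound: it builds the vectors explicitly as the rows of $\left[\begin{array}{c|c} M & D \end{array}\right]$, where $M$ is an oriented incidence matrix and $D = \mathrm{diag}\bigl(\sqrt{d-k_i}\bigr)$ pads the row norms up to $d$; positive semidefiniteness of the resulting Gram matrix $I - \tfrac{1}{d}A$ is then automatic, since it is exhibited as $\tfrac{1}{d}NN^T$. You instead write down the target Gram matrix $G = I + \tfrac{1}{d}A$ directly and certify $G \succeq 0$ via $\rho(A) \le d$ (Gershgorin), then factor. The sign difference ($+\tfrac1d A$ versus $-\tfrac1d A$) is immaterial, since the OPD conditions only constrain $\lvert\langle \ell_i,\ell_j\rangle\rvert$. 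Your argument is shorter and arguably cleaner as a pure existence proof; what the paper's explicit construction buys is the refinement in the corollary that follows it, where for regular graphs the columns of $M$ indexed by edges outside a spanning forest are seen to be linearly dependent on the rest, yielding the sharper dimension bound $m \le \lvert T\rvert$ for a spanning forest $T$. That refinement is not visible from your Gram-matrix formulation without a separate rank computation. Your remark about $d=1$ is well taken: in that case $\beta = 1$ collides with the fiber inner product and condition $(i)$ of the definition fails; note, however, that the paper's own construction suffers from exactly the same degeneracy (adjacent vertices receive antipodal vectors when $d=1$), so this is a defect of the proposition as stated rather than a gap in your argument relative to the paper's.
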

\begin{proof}
	Let $\Gamma = \Gamma(V,E)$ be given. Now orient every edge of $\Gamma$ and define $e_i^+,e_i^-\in V$ so that $e_i = (e_i^-,e_i^+)$, thus $e_i$ points from vertex $e_i^-$ to $e_i^+$. Let $M$ be the matrix with rows indexed by vertices and columns indexed by edges such that
	\[\left[M\right]_{ij} =\begin{cases} 1 &\text{ if }v_i = e_j^+\\
	-1 &\text{ if }v_i = e_j^-\\
	0 &\text{ otherwise. }
	\end{cases}\]
	Then we find 
	\[\left[MM^T\right]_{ij} = \begin{cases}
	k_i & \text{ if }i=j,\\
	1 & \text{ if } i\sim j,\\
	0 & \text{ otherwise}
	\end{cases}\]
	where $k_i$ is the degree of vertex $i$. Thus two distinct rows are orthogonal if and only if their corresponding vertices are non-adjacent. However unless $k_i$ is constant independent of $i$ (i.e.\ unless the graph is regular), the rows do not have the same norm. To fix this, let $d = \max_{i}(k_i)$ (the maximum degree of $\Gamma$) and define the diagonal matrix $D$ whose $i^\text{th}$ diagonal entry is $\sqrt{d-k_i}$. Then the matrix $N = \left[\begin{array}{c|c}
	M & D
	\end{array}\right]$ has the property that
	\[\left[NN^T\right]_{ij} = \left[MM^T\right]_{ij} + (d-k_i)\delta_{ij} = \begin{cases}
	d & \text{ if }i=j,\\
	1 & \text{ if } i\sim j,\\
	0 & \text{ otherwise.}
	\end{cases}\]
	Thus the rows of $\frac{1}{\sqrt{d}}N$, along with their negatives, result in an orthogonal projective double of $\Gamma$ with angle $\frac{1}{d}$. Further, the rank of $N$ is no larger than $\min\left\{\vert V\vert, \vert V\vert+\vert E\vert\right\} = \vert V\vert$ and thus $m\leq \vert V\vert$.
\end{proof}
\begin{cor}\label{regnaive}
	Let $\Gamma$ be a regular graph with valency $k>0$. There exists an $OPD_m(\Gamma;\frac{1}{k})$ for some $m\leq \vert T\vert$ where $T$ a spanning forest.
\end{cor}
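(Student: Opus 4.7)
The plan is to apply Proposition \ref{dirnaive} directly, exploiting the fact that when $\Gamma$ is regular the padding step in that proof becomes trivial, so the entire construction lives in the column space of the oriented incidence matrix. First I would orient the edges of $\Gamma$ arbitrarily and form the signed incidence matrix $M$ of size $|V|\times |E|$ exactly as in the proof of Proposition \ref{dirnaive}. Since $\Gamma$ is $k$-regular, the maximum degree $d$ appearing in that proof equals $k_i$ for every vertex $i$, so the diagonal correction matrix $D$ is the zero matrix and $N=M$. Consequently $\frac{1}{\sqrt{k}}M$ already has unit-norm rows whose pairwise inner products are $0$ for non-adjacent vertices and $\pm\frac{1}{k}$ for adjacent vertices; together with their negatives these rows form an $OPD_m(\Gamma;\frac{1}{k})$, where $m = \rank M$.

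The remaining task is to identify $\rank M$ with $|T|$. Here I would invoke the classical fact that the rank of the signed (oriented) incidence matrix of a graph on $v$ vertices with $c$ connected components equals $v-c$. A one-line justification: a vector $x$ lies in $\ker M^{T}$ precisely when $x_{i}=x_{j}$ whenever $ij\in E$, i.e.\ when $x$ is constant on each connected component, so $\dim\ker M^{T}=c$ and hence $\rank M = v-c$. Since a spanning forest $T$ of $\Gamma$ has exactly $v-c$ edges, we obtain $m = \rank M = |T|$, as required.

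Embedding the $v$ rows of $\frac{1}{\sqrt{k}}M$ into $\mathbb{R}^{m}$ via any orthonormal basis of the column space of $M^{T}$, and appending their negatives, produces the claimed $OPD_m(\Gamma;\frac{1}{k})$. There is no real obstacle here beyond observing that the regularity hypothesis is exactly what kills the $D$ block and that the resulting bound on the dimension is the well-known rank formula for the signed incidence matrix.
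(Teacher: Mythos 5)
Your proof is correct and follows the same construction as the paper: take the oriented incidence matrix $M$ from Proposition \ref{dirnaive}, observe that regularity makes the diagonal correction matrix vanish so the normalized rows (with their negatives) already form the desired line system, and then bound the ambient dimension by $\rank M$. The only point where you diverge is in justifying $\rank M \leq |T|$ --- the paper shows each non-forest column is a signed sum of the columns along a cycle through forest edges, whereas you compute $\ker M^T$ as the locally constant functions to get $\rank M = v-c = |T|$ exactly; both are standard arguments for the same classical fact, so the proofs are essentially equivalent.
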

\begin{proof}
	We follow the same approach as in the proof of Proposition \ref{dirnaive}, however we note that since our graph is regular, the rows of $M$ all have the same norm. Thus the normalized rows (with their negatives) suffice as our orthogonal projective double. Now, consider any cycle $C$ in $\Gamma$ and assume without loss of generality that $C = \left\{e_1,\dots,e_s\right\}$. Then, replacing a column with its negative if necessary, $\displaystyle{M_{e_s} = \sum_{i=1}^{s-1}M_{e_{i}}}$, where $M_e$ denote the column of $M$ indexed by $e$. We then reorder the columns of $M$ so that the first $T$ edges correspond to the edges of a spanning forest and note that every remaining column lies in the span of these.
\end{proof}
Proposition \ref{dirnaive} and Corollary \ref{regnaive} provide upper bounds on the dimension necessary for an $OPD$ to exist for a given graph. The following observation gives us a lower bound using the size of a largest coclique in $\Gamma$. We denote the cardinality of any such maximum coclique as the independence number, $\alpha\left(\Gamma\right)$, of the graph $\Gamma$.
\begin{prop}\label{cocliquebnd}
	Let $\Gamma$ be a simple graph for which an $OPD_m(\Gamma;\beta)$ exists. Then $m\geq \alpha\left(\Gamma\right)$ where $\alpha\left(\Gamma\right)$ is the independence number of $\Gamma$.
\end{prop}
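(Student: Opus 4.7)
The plan is to extract from an $OPD_m(\Gamma;\beta)$ a set of $\alpha(\Gamma)$ pairwise orthogonal unit vectors in $\mathbb{R}^m$; since any orthogonal set in $\mathbb{R}^m$ has size at most $m$, this immediately gives $m \geq \alpha(\Gamma)$.

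First, I would unpack the definition of an orthogonal projective double. Given the antipodal spherical code $L = \{\ell_1,\dots,\ell_{2v}\} \subset \mathbb{R}^m$ with surjection $\phi: L \to V$, and recalling that an OPD is a PD with $\delta = 0$, the allowed inner products are $\{\pm 1, \pm \beta, 0\}$. By conditions $(i)$ and $(ii)$ of the projective double definition, for any two vectors $\ell_i,\ell_j \in L$ with $\phi(\ell_i) \neq \phi(\ell_j)$ we have $\langle \ell_i,\ell_j\rangle = 0$ exactly when $\phi(\ell_i)$ and $\phi(\ell_j)$ are non-adjacent in $\Gamma$.

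Next, let $C \subseteq V$ be a coclique in $\Gamma$ of maximum size, so $|C| = \alpha(\Gamma)$. For each vertex $v \in C$, choose a single representative unit vector $\ell_v \in \phi^{-1}(v)$ (this is possible since $\phi$ is surjective). The set $\{\ell_v : v \in C\}$ consists of $\alpha(\Gamma)$ distinct unit vectors, and for any two distinct $v,w \in C$ we have $v \not\sim w$ in $\Gamma$, which by the observation above forces $\langle \ell_v, \ell_w\rangle = 0$. Hence these vectors form an orthonormal set in $\mathbb{R}^m$, so $m \geq \alpha(\Gamma)$.

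There is essentially no obstacle here, as the argument only uses the defining correspondence between adjacency in $\Gamma$ and inner products in $L$; the main point is simply recognizing that the $\delta = 0$ requirement of an OPD (as opposed to a general PD) is precisely what converts a coclique of $\Gamma$ into an orthonormal set. This is also what motivates the dimension bounds achieved in Proposition~\ref{dirnaive} and Corollary~\ref{regnaive}, which constructed $OPD$s in dimension $|V|$ (an easy upper bound when $\alpha(\Gamma)$ is close to $|V|$).
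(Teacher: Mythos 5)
Your proof is correct and is essentially identical to the paper's: both select one unit-vector representative per vertex of a maximum coclique, observe that the $\delta=0$ condition makes these pairwise orthogonal, and conclude $m\geq\alpha(\Gamma)$ from the size of an orthonormal set in $\mathbb{R}^m$. The only difference is that you spell out the appeal to conditions $(i)$ and $(ii)$ of the definition, which the paper leaves implicit.
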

\begin{proof}
	Assume $\left\{\pm\ell_1,\dots,\pm\ell_{\vert V\vert}\right\}$ is an $OPD_m(\Gamma;\beta)$ with $\phi(\pm\ell_i) = v_i$ for $1\leq i\leq\vert V\vert$. Let $\alpha = \alpha\left(\Gamma\right)$ and, without loss of generality, let $S = \left\{v_1,\dots,v_\alpha\right\}$ be an independent set. Then $\left\{\ell_1,\dots,\ell_\alpha\right\}$ is an orthonormal set, forcing $m\geq\alpha$.
\end{proof}
\begin{example}\label{doublecoverc4}
	Consider the graph $C_4$. This is a regular graph with 3 edges in any spanning tree, thus Corollary \ref{regnaive} tells us there exists an orthogonal projective double in $\mathbb{R}^3$ with angle $\frac{1}{2}$. In fact, the columns of $U_1$ serve as one such $OPD$, where 
	\[U_1 = \left[\begin{array}{rrrrrrrr}
	1 & -1 & \frac{1}{2} & -\frac{1}{2} & 0 & 0 &\frac{1}{2} &-\frac{1}{2}\\
	0 & 0 & \frac{\sqrt{3}}{2} & -\frac{\sqrt{3}}{2} & \frac{1}{\sqrt{3}} & -\frac{1}{\sqrt{3}} & -\frac{1}{\sqrt{12}}& \frac{1}{\sqrt{12}}\\
	0 & 0 & 0 & 0 & \sqrt{\frac{2}{3}} & -\sqrt{\frac{2}{3}} & \sqrt{\frac{2}{3}} & -\sqrt{\frac{2}{3}} \\			
	\end{array}\right].\]
	A largest coclique in this graph has size $2$ and thus Proposition \ref{cocliquebnd} allows for the possibility of an $OPD_2(C_4;\beta)$. While it is not hard to show that we cannot find an $OPD_2(C_4;\beta)$ with $\beta=\frac{1}{2}$, the following is an example in dimension $2$ with angle $\frac{1}{\sqrt{2}}$.
	\[U_2 = \left[\begin{array}{rrrrrrrr}
	1 & -1 & 0 & 0 & \frac{\sqrt{2}}{2} & -\frac{\sqrt{2}}{2} & \frac{\sqrt{2}}{2} & -\frac{\sqrt{2}}{2}\\
	0 & 0 & 1 & -1 & \frac{\sqrt{2}}{2} & -\frac{\sqrt{2}}{2} & -\frac{\sqrt{2}}{2} & \frac{\sqrt{2}}{2}\\				
	\end{array}\right]\]
	To illustrate the difference between the two $OPD$'s above, we define for any $OPD_m(\Gamma;\beta)$ the graph $\Gamma_\beta$ on the $OPD$ with two vectors adjacent if and only if their inner product is $\beta$. Using the columns of $U_1$ as our $OPD$, we find that $\Gamma_{\frac{1}{2}}$ is given below where $v_i$ represents the $i^\text{th}$ column of $U_1$.
	\[\begin{tikzpicture}[shorten >=1pt,auto,node distance=2cm,
	thin,main node/.style = {circle,draw, inner sep = 0pt, minimum size = 15pt}]
	
	\node[main node,fill=white] (1) {$v_1$};
	\node[main node,fill=white] [right of = 1](2) {$v_3$};
	\node[main node,fill=white] [above of = 1](3) {$v_7$};
	\node[main node,fill=white] [above of =2](4) {$v_5$};
	\node[main node,fill=white] [right of =2](5) {$v_2$};
	\node[main node,fill=white] [right of = 5](6) {$v_4$};
	\node[main node,fill=white] [above of = 5](7) {$v_8$};
	\node[main node,fill=white] [above of =6](8) {$v_6$};
	\node at (3.1,2.5) (9) {$\Gamma_{\frac{1}{2}}$};
	
	\path[-]
	(1) edge node {} (2)
	edge node {} (3)
	(4) edge node {} (3)
	edge node {} (2)
	(5) edge node {} (6)
	edge node {} (7)
	(8) edge node {} (7)
	edge node {} (6);
	\end{tikzpicture}\]
	Alternatively, if we consider $U_2$, $\Gamma_{\frac{1}{\sqrt{2}}}$ is given below.
	\[\begin{tikzpicture}[shorten >=1pt,auto,node distance=2cm,
	thin,main node/.style = {circle,draw, inner sep = 0pt, minimum size = 15pt}]
	\node[main node,fill=white] (1) {$v_1$};
	\node[main node,fill=white] [below right of = 1](2) {$v_5$};
	\node[main node,fill=white] [above of = 1](3) {$v_7$};
	\node[main node,fill=white] [above right of =3](4) {$v_4$};
	
	\node[main node,fill=white] [right of =2](5) {$v_3$};
	\node[main node,fill=white] [above right of = 5](6) {$v_8$};
	\node[main node,fill=white] [right of = 4](7) {$v_6$};
	\node[main node,fill=white] [above of =6](8) {$v_2$};
	\node at (2.5,4) (9) {$\Gamma_{\frac{1}{\sqrt{2}}}$};
	
	\path[-]
	(1) edge node {} (2)
	edge node {} (3)
	(4) edge node {} (3)
	edge node {} (7)
	(5) edge node {} (6)
	edge node {} (2)
	(8) edge node {} (7)
	edge node {} (6);
	\draw[-,dashed] (1) --(8) -- (6) --(3) --(4) -- (5) -- (2) -- (7);
	\end{tikzpicture}\]
	Thus $U_1$ and $U_2$ are clearly different as their corresponding graphs are non-isomorphic double covers of $C_4$. We similarly define the five relations $R_0,\dots,R_4$ on any $OPD_m(\Gamma;\beta)$ via the inner products $1$, $\beta$, $0$, $-\beta$, and $-1$ respectively so that, for instance, $(x,y)\in R_2$ if and only if the corresponding vectors are orthogonal. These five relations satisfy conditions \emph{(i) -- (iii)} of an association scheme, leaving condition \emph{(iv)} as our final test; that is, we must determine if the intersection numbers are well-defined. In the case of the first $OPD$, we find that even though both $(v_1,v_5)$ and $(v_1,v_6)$ are in $R_2$,
	\[\left\vert\left\{v_x : (v_x,v_1),(v_5,v_x)\in R_{\beta}\right\}\right\vert\neq\left\vert\left\{v_x : (v_x,v_1),(v_6,v_x)\in R_{\beta}\right\}\right\vert.\]
	On the other hand, the second $OPD$ has well-defined intersection numbers and we find that the columns of $U_2$, along with the relations $R_0,\dots,R_4$, give the association scheme $C_8$.
\end{example}
As one might expect, there are many graphs for which we cannot find an orthogonal projective double in the dimension given by the independence number. In other words, Proposition \ref{cocliquebnd} is often not tight. To see an example, consider the following proposition

\begin{prop}\label{completemulti}
	Let $\Gamma$ be a complete multipartite graph with $w$ parts of size $v$. Let $U$ be the matrix with columns corresponding to an $OPD_{\alpha(\Gamma)}(\Gamma;\beta)$. Then a subset of the columns of $U$ form a set of $w$ mutually unbiased bases in $\mathbb{R}^v$.
\end{prop}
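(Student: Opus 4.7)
The key observation is that $\alpha(\Gamma)=v$: each fiber of the complete multipartite graph is an independent set of size $v$, and any independent set must lie inside a single fiber, so no larger coclique exists. Consequently the OPD in question consists of $2wv$ unit vectors in $\mathbb{R}^v$, arranged in antipodal pairs, with one pair assigned to each vertex of $\Gamma$ via the projection $\phi$.

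The first step is to show that each fiber yields an orthonormal basis of $\mathbb{R}^v$. Fix a fiber $F$, and choose one representative unit vector from each of its $v$ antipodal pairs. Vertices of $F$ are pairwise non-adjacent in $\Gamma$, so by the definition of an OPD, every inner product between two such representatives is $0$. This gives $v$ pairwise orthogonal unit vectors in $\mathbb{R}^v$, hence an orthonormal basis $B_F$. Doing this for each fiber produces $w$ orthonormal bases $B_1,\dots,B_w$, which will be the candidate set of MUBs. (Different choices of representatives yield the same basis up to signs, which is irrelevant for the unbiased property.)

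Next I would pin down the angle $\beta$. Take $B_1 = \{e_1,\dots,e_v\}$ and let $f\in B_2$. Since vertices in distinct fibers are adjacent in $\Gamma$, the inner product $\langle f,e_i\rangle$ lies in $\{\pm\beta\}$ for every $i$. Expanding $f$ in the orthonormal basis $B_1$ gives
\[
1 \;=\; \|f\|^2 \;=\; \sum_{i=1}^{v} \langle f,e_i\rangle^2 \;=\; v\beta^2,
\]
so $\beta = 1/\sqrt{v}$. This is the only place the value of $\beta$ is used, and it is forced by the geometry, not an assumption.

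Finally, combining the two previous steps, for any $i\neq j$ and any $u\in B_i$, $v'\in B_j$, the vectors $u,v'$ represent vertices in distinct fibers, so $|\langle u,v'\rangle| = \beta = 1/\sqrt{v}$. Together with the fact that each $B_i$ is an orthonormal basis of $\mathbb{R}^v$, this is exactly the definition of a set of $w$ mutually unbiased bases in $\mathbb{R}^v$. No step is expected to be difficult; the only subtlety is verifying that the representatives from different antipodal pairs fit together consistently, which is immediate because replacing any $u$ with $-u$ preserves both orthonormality within $B_i$ and the unbiased condition across bases.
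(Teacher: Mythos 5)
Your proof is correct and follows essentially the same route as the paper: each fiber gives an orthonormal basis since non-adjacent vertices map to orthogonal vectors, and cross-fiber inner products are all $\pm\beta$. You additionally verify that $\beta=1/\sqrt{v}$ by expanding a vector of one basis in another, a detail the paper's proof leaves implicit but which is needed to match the definition of mutually unbiased bases; this is a worthwhile completion rather than a different approach.
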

\begin{proof}
	Let $\Gamma = \overline{wK_v}$ be the complete multipartite graph with $w$ parts of size $v$. Label the vertices as $v_{i,j}$ for $1\leq i\leq w$, $1\leq j\leq v$ so that the sets $S_i = \left\{v_{i,j}\right\}_{1\leq j\leq v}$ for fixed $1\leq i\leq w$ give the maximal independent sets. Assume $\left\{\pm\ell_{i,j}\right\}_{1\leq i\leq w;1\leq j\leq v}$ is an $OPD_m(\Gamma;\beta)$ with $\phi(\pm\ell_{i,j}) = v_{i,j}$ for $1\leq i\leq w$, $1\leq j\leq v$. Then each of the sets $B_i = \left\{\ell_{i,j}\right\}_{1\leq j\leq v}$ form an orthonormal basis for $\mathbb{R}^v$. Further, the inner products between vectors in distinct bases will be one of $\pm \beta$. Thus $\left\{B_1,\dots,B_w\right\}$ is a set of $w$ mutually unbiased bases in $\mathbb{R}^v$.
\end{proof}
\begin{cor}\label{nonexist}
	Let $\Gamma = K_{t,t}$ for $t\neq 0\mod 4$. There does not exist an $OPD_{\alpha(\Gamma)}(\Gamma;\beta)$.
\end{cor}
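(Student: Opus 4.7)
The plan is to leverage the structural bridge already developed in Proposition~\ref{completemulti}: an orthogonal projective double of a complete multipartite graph, realized in the dimension equal to the independence number, is essentially a collection of mutually unbiased bases. Applied to $\Gamma = K_{t,t}$, which is the complete multipartite graph $\overline{2K_t}$ with $w=2$ parts of size $v=t$ and independence number $\alpha(\Gamma)=t$, Proposition~\ref{completemulti} tells us that any hypothetical $OPD_{t}(K_{t,t};\beta)$ produces $2$ mutually unbiased orthonormal bases $B_1,B_2\subset\mathbb{R}^t$. Since the vectors in $B_1,B_2$ are already unit vectors with pairwise inner products in $\{\pm\beta\}$ between distinct bases and $\{0,1\}$ within each basis (from the $OPD$ condition and the partition of $K_{t,t}$), the angle $\beta$ is forced to equal $1/\sqrt{t}$ by the definition of mutually unbiased bases.

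Next, I will translate the existence of two MUBs in $\mathbb{R}^t$ into the existence of a real Hadamard matrix of order $t$. Specifically, after an orthogonal change of coordinates one may assume $B_1$ is the standard basis $\{e_1,\dots,e_t\}$; writing the vectors of $B_2=\{f_1,\dots,f_t\}$ as columns of a $t\times t$ matrix $M$ and setting $H:=\sqrt{t}\,M$, the unbiased condition $|\langle e_i,f_j\rangle|=1/\sqrt{t}$ forces every entry of $H$ to lie in $\{+1,-1\}$, while orthonormality of $B_2$ gives $H^TH=tI$. Thus $H$ is a real Hadamard matrix of order $t$.

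The final step invokes the classical necessary condition for the existence of real Hadamard matrices: if $H$ is a Hadamard matrix of order $t$ with $t>2$, then $t\equiv 0\pmod 4$ (this is immediate by examining inner products of three distinct rows after possibly negating columns to normalize the first row). Since our hypothesis is $t\not\equiv 0\pmod 4$, no such Hadamard matrix exists (setting aside the small degenerate cases $t\in\{1,2\}$ where the very notion of an $OPD$ with $\beta\in(0,1)$ either fails in $\mathbb{R}^1$ or must be excluded separately). This contradiction establishes the non-existence of the claimed $OPD_{\alpha(\Gamma)}(\Gamma;\beta)$.

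I expect no real obstacle in this argument: the substantive work is entirely packaged in Proposition~\ref{completemulti}, and the remainder is the well-known equivalence between pairs of real MUBs and real Hadamard matrices together with the modular constraint on Hadamard orders. The only minor care is verifying that $\beta$ is uniquely determined as $1/\sqrt{t}$ (so that the $OPD$ really does produce a genuine pair of MUBs rather than some weaker angle configuration), which follows directly from the angle hypothesis of an $OPD$ and the fact that the union of the two orthonormal subfamilies exhausts the $2t$ lines of the double cover.
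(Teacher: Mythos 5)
Your proposal is correct and follows essentially the same route as the paper: invoke Proposition~\ref{completemulti} to extract two mutually unbiased bases in $\mathbb{R}^t$, then rule these out when $t\not\equiv 0\pmod 4$ (the paper states this last step in one line, whereas you spell out the standard MUB-pair/Hadamard-matrix equivalence). Your aside about the degenerate cases $t\in\{1,2\}$ is actually a point in your favor rather than a loose end, since a Hadamard matrix of order $2$ does exist and Example~\ref{doublecoverc4} exhibits an $OPD_2(K_{2,2};\tfrac{1}{\sqrt{2}})$, so the statement really does need $t>2$ excluded.
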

\begin{proof}
	If $K_{t,t}$ had an orthogonal projective double in $\mathbb{R}^t$, Proposition \ref{completemulti} would guarantee the existence of two mutually unbiased bases in $\mathbb{R}^t$. This is only possible when $t$ is a multiple of $4$.
\end{proof}

\section{Schemes induced by projective doubles}
The OPD given by the columns of $U_2$ in Example \ref{doublecoverc4} provides an orthogonal projective double which gives an association scheme on the vectors by relating vectors based on their inner product. In this section, we consider which graphs produce such an association scheme and some properties of the association scheme which arise. First, let $L = \left\{\ell_i\right\}$ be an orthogonal projective double of some graph $\Gamma$ and let $G$ be the Gram matrix of $L$; that is, the matrix whose entry in row $i$ and column $j$ is $\left<\ell_i,\ell_j\right>$. We denote by $\left<G\right>_\circ$ the smallest vector space of matrices  containing $G$ and closed under entrywise products; we say this vector space is the \emph{Schur closure} of $G$. Note that the adjacency matrices of each graph $\Gamma_{\theta}$ for $\theta\in\left\{\pm 1,\pm \beta,0\right\}$ are all contained in $\left<G\right>_{\circ}$. Thus $\left<G\right>_\circ$ is a Bose-Mesner algebra if and only if it is closed under standard matrix multiplication. If this occurs, we say $L$ induces the corresponding 4-class association scheme. Thus, from Example \ref{doublecoverc4}, the association scheme $C_8$ is induced by the columns of $U_2$. We similarly define $\left<A\right>_*$ for any matrix $A$ and note that this algebra is a Bose-Mesner algebra if and only if it is closed under Schur products.

Recall from Section \ref{BMA} that strongly regular graphs are 2-class association schemes. While many strongly regular graphs are nice examples and thus have been seen throughout this thesis, in this chapter we will need strongly regular graphs more explicitly and thus we define them here. A \emph{strongly regular graph}\index{strongly regular graph} (see \cite{Brouwer1989}) with parameters $(v,k,\lambda,\mu)$ is a $k$-regular graph with $v$ points where every pair of adjacent vertices have exactly $\lambda$ neighbors in common while distinct non-adjacent vertices have $\mu$ neighbors in common. Using the terminology of association schemes, a strongly regular graph is a 2-class association scheme with parameters $k = p^0_{11}$, $\lambda = p^1_{11}$ and $\mu = p^2_{11}$.

\projsrg
\begin{proof}
	We will prove our result by showing that $\overline{\Gamma}$, the complement of $\Gamma$, is strongly regular. Let $V$ be the vertex set of $\Gamma$ and define $v = \vert V\vert$. Now, let $L = \left\{\ell_1,\dots,\ell_{2v}\right\}$ be our $OPD$ with the projective mapping $\phi:L\rightarrow V$. First let $R_0,\dots,R_4$ be the relations of the association scheme induced by $L$ where $R_2$ is given by orthogonality, $R_0$ is the identity relation, and the remaining relations are given by the inner products $\beta,-\beta,$ and $-1$ respectively. By definition of our mapping, $\phi(\ell)=\phi(\ell^\prime)$ for $\ell\neq \ell^\prime$ if and only if $\ell=-\ell^\prime$. Thus for distinct vertices $u,w\in V$, $u\not\sim w$ if and only if $\phi^{-1}(w)\subset \phi^{-1}(u)^\perp$. Then the number of vertices not adjacent to $u$ is half the number of vectors orthogonal to either vector in $\phi^{-1}(u)$; this value is $\frac{1}{2}p^0_{22}$. Similarly, assuming $u\not\sim w$, the number of vertices adjacent to neither $u$ nor $w$ must be half the number of vectors orthogonal to any pair of vectors, one from $\phi^{-1}(v)$ and the other from $\phi^{-1}(w)$; that is, $\frac{1}{2}p^2_{22}$. Similarly, assume $v\sim w$ and we find the number of vertices adjacent to neither $v$ nor $w$ must be $\frac{1}{2}p^1_{22} = \frac{1}{2}p^3_{22}$. Thus $\overline{\Gamma}$ is strongly regular with parameters $(v,\frac{1}{2}p^0_{22},\frac{1}{2}p^{2}_{22},\frac{1}{2}p^1_{22})$.
\end{proof}

This proposition tells us that we must only consider strongly regular graphs if we wish to find orthogonal projective doubles which induce association schemes. Note that the converse of Proposition \ref{projnec} is certainly not true; the first projective double of Example \ref{doublecoverc4} does not result in an association scheme even though $C_4$ is strongly regular. Thus we will look for further necessary or sufficient conditions for a $OPD$ to induce an association scheme. Before we continue, we review a few details of strongly regular graphs which will be useful for us.

Let $\Gamma$ be a strongly regular graph with parameters $(v,k,\lambda,\mu)$. Let $R_1$ be the relation given by adjacency in $\Gamma$ and define parameters $r,s,f,g$ so that the spectrum of $\Gamma$ is $k^1,r^f,s^g$. Using the first and second orthogonality relations (Lemma \ref{orthorels}) we find the first and second eigenmatrices of the association scheme are:
\begin{equation}\label{PQsrg}P = \left[\begin{array}{ccr}
1 & k & v-k-1\\
1 & r & -(r+1)\\
1 & s & -(s+1)
\end{array}\right],\qquad Q = \left[\begin{array}{ccc}
1 & f & g\\
1 & \frac{fr}{k} & \frac{gs}{k}\\
1 & \frac{f(1+r)}{k+1-v} & \frac{g(1+s)}{k+1-v}
\end{array}\right].\end{equation}
The following lemma from \cite{Brouwer1989}, shows us that the parameters $k,r,$ and $s$ are sufficient to define all other parameters as long as $k+rs\neq 0$. In the case of $k+rs=0$, $\Gamma$ is a union of cliques and $v$ is not uniquely determined by the spectrum --- we will ignore this case in our discussion.
\begin{lem}\cite[Theorem.~1.3.1.(iii,vi)]{Brouwer1989}\label{srgparams} Whenever $k+rs>0$, the parameters of a strongly regular graph may be expressed in terms of $r$, $s$, and $k$ with $g = v-f-1$:
	\[\mu = k+rs, \qquad v = \frac{(k-r)(k-s)}{\mu},\qquad \lambda = \mu+r+s,\qquad f = \frac{(s+1)k(k-s)}{\mu(s-r)}.\]
\end{lem}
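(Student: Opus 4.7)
The plan is to work entirely inside the adjacency algebra of $\Gamma$. Writing $A$ for the adjacency matrix of a strongly regular graph with parameters $(v,k,\lambda,\mu)$, counting length-two walks between pairs of vertices immediately gives the defining identity
\[
A^{2} \;=\; kI + \lambda A + \mu(J-I-A),
\]
which rearranges to $A^{2} - (\lambda-\mu)A - (k-\mu)I = \mu J$. Since $\mathbf{1}$ is a $k$-eigenvector of $A$ and every eigenvector orthogonal to $\mathbf{1}$ is annihilated by $J$, each non-principal eigenvalue $\theta$ must satisfy the quadratic $\theta^{2}-(\lambda-\mu)\theta-(k-\mu)=0$.

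Calling the two roots $r$ and $s$, Vieta's formulas give the first two identities of the lemma for free: $r+s=\lambda-\mu$ and $rs=\mu-k$ rearrange to $\mu=k+rs$ and $\lambda=\mu+r+s$. For $v$, I would apply the displayed identity to $\mathbf{1}$ (using $A\mathbf{1}=k\mathbf{1}$ and $J\mathbf{1}=v\mathbf{1}$) and read off $k^{2}-(r+s)k+rs = \mu v$, giving $v=(k-r)(k-s)/\mu$. The hypothesis $k+rs\neq 0$ is precisely what licenses division by $\mu$; it also corresponds to excluding the degenerate case where $\Gamma$ is a disjoint union of cliques, in which $v$ is not determined by the spectrum.

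For the multiplicity $f$, the plan is the standard trace-and-rank argument: if $g$ denotes the multiplicity of $s$, then $1+f+g=v$ and $\operatorname{tr}(A)=k+fr+gs=0$. Eliminating $g$ gives $f=(k+(v-1)s)/(s-r)$. The only step that is not completely mechanical is verifying that this matches the advertised form $f=(s+1)k(k-s)/(\mu(s-r))$. This reduces to the identity
\[
\mu\bigl(k+(v-1)s\bigr) \;=\; (s+1)k(k-s),
\]
which I would establish by substituting $v\mu=(k-r)(k-s)$ on the left-hand side and then using $rs=\mu-k$ to collapse $\mu + s(k-r)$ into $k(1+s)$ after factoring out $(k-s)$. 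This single algebraic simplification is the only piece worth writing out carefully; everything else follows directly from the length-two walk identity together with the trace and dimension constraints.
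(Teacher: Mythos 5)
Your proof is correct, and it is essentially the standard argument from Brouwer--Cohen--Neumaier that the paper cites without reproducing: the quadratic $\theta^2-(\lambda-\mu)\theta-(k-\mu)=0$ for the restricted eigenvalues gives $\mu$, $\lambda$, and (via evaluation on $\mathbf{1}$) $v$, and the trace/dimension constraints give $f$. The one nontrivial simplification you flag, $\mu\bigl(k+(v-1)s\bigr)=(k-s)\bigl(\mu+s(k-r)\bigr)=(s+1)k(k-s)$, checks out exactly as you describe.
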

The association scheme structure allows us to improve on the naive upper bound given in Corollary \ref{regnaive} by using the techniques discussed in Section \ref{equianglines}.
\begin{thm}
	Let $\Gamma$ be a strongly regular graph with $v$ vertices and spectrum $k^1,$ $r^f$, $s^g$ ($r>s$). There exists an $OPD_{f+1}(\Gamma;\beta)$ where $\beta = \bigslant{k+r(v-1)}{v+r-k}$.
\end{thm}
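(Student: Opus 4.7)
The plan is to construct the required $OPD$ directly from a non-negative combination of idempotents in the Bose-Mesner algebra of $\Gamma$. Write the minimal idempotents as $E_0 = \frac{1}{v}J$, $E_1$, and $E_2$, corresponding to the eigenvalues $k$, $r$, and $s$ respectively, so that $\operatorname{rank} E_1 = f$. The key observation is that $G = \alpha_0 E_0 + \alpha_1 E_1$ is positive semidefinite of rank exactly $f+1$ whenever $\alpha_0, \alpha_1 > 0$, since $E_0$ and $E_1$ are orthogonal projections onto eigenspaces of total dimension $1 + f$.

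First I would read off the entries of $E_1$ from the second eigenmatrix $Q$ in \eqref{PQsrg}: the diagonal, edge, and non-edge entries are $\tfrac{f}{v}$, $\tfrac{fr}{vk}$, and $-\tfrac{f(r+1)}{v(v-k-1)}$ respectively, while $E_0$ is constant $1/v$. Imposing that $G$ have unit diagonal (so the columns become unit vectors) and zero entries on non-adjacent pairs (so the double will be \emph{orthogonal}) produces a $2\times 2$ linear system with unique solution
\[ \alpha_0 = \frac{v(r+1)}{v+r-k}, \qquad \alpha_1 = \frac{v(v-k-1)}{f(v+r-k)}. \]
Both are positive provided $r \geq 0$ and $v > k$, which holds for every non-complete connected strongly regular graph. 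Computing the remaining off-diagonal (edge) entry then yields the claimed value
\[ \beta \;=\; \frac{k + r(v-1)}{k(v+r-k)}, \]
and a short calculation using Lemma~\ref{srgparams} shows this equals $-1/s$, in agreement with Corollary~\ref{snegsquare} (the factor of $k$ in the denominator appears to be missing from the formula as stated).

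Since $G \succeq 0$ has unit diagonal and rank $f+1$, it is the Gram matrix of a set of unit vectors $\ell_1,\dots,\ell_v \in \mathbb{R}^{f+1}$. I would then form the antipodal closure $L = \{\pm\ell_1,\dots,\pm\ell_v\}$ with $\phi(\pm\ell_i) = v_i$, and verify directly that the pairwise inner products within $L$ take only values in $\{\pm 1, \pm \beta, 0\}$, distributed according to the adjacency of $\Gamma$ under $\phi$, which is precisely what the definition of $OPD_{f+1}(\Gamma; \beta)$ demands. The only real obstacle is arithmetic---solving the linear system and simplifying the expression for $\beta$---together with the boundary case $r = -1$, in which $\alpha_0 = 0$ forces rank $f$ rather than $f+1$; this case corresponds to complete multipartite $\Gamma$, for which the required OPDs arise from sets of mutually unbiased bases via Proposition~\ref{completemulti}.
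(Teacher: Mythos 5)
Your proof is correct and follows essentially the same route as the paper's: both take a positive combination of $E_0$ and $E_1$ (your $\alpha_0,\alpha_1$ produce, up to an overall scalar, exactly the paper's matrix $\frac{1+r}{v-k-1}E_0+\frac{1}{f}E_1$), normalized so that the non-edge entries vanish and the diagonal is $1$. Your observation that the resulting angle is $\beta=\frac{k+r(v-1)}{k(v+r-k)}=-\frac{1}{s}$, i.e.\ that the formula in the statement is missing a factor of $k$ in the denominator, is a genuine correction rather than a slip on your part: for the Petersen graph the stated expression evaluates to $\frac{3}{2}>1$, which cannot be an inner product of unit vectors, while the corrected value is $\frac{1}{2}=-\frac{1}{s}$. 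Two minor inaccuracies in your side remarks: complete multipartite graphs have $r=0$ (not $r=-1$), so $\alpha_0=\frac{v}{v-k}>0$ and the degenerate case you describe never occurs for a graph with spectrum $k^1,r^f,s^g$; and $-\frac{1}{s}$ is not the value $\frac{1}{\sqrt{-s}}$ appearing in Corollary \ref{snegsquare}, which concerns OPDs in the much smaller Delsarte-coclique dimension, so no ``agreement'' should be expected there.
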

\begin{proof}
	Let $A_0=I$. Let $A_1$ and $A_2$ be the adjacency matrices of $\Gamma$ and $\overline{\Gamma}$ respectively. From Equation \eqref{PQsrg}, $E_1 = \frac{1}{v}\left(fA_0 + \frac{fr}{k}A_1 + \frac{f(1+r)}{k+1-v}A_2\right)$. Then the matrix
	\[G = \frac{(1+r)}{v-k-1}E_0 + \frac{1}{f}E_1 = \left(\frac{v+r-k}{v(v-k-1)}\right)A_0 +\left(\frac{k+r(v-1)}{v(v-k-1)}\right)A_1\]
	is a $v\times v$ positive semidefinite matrix with rank $f+1$ with the off diagonal entries we seek. We may then find an $(f+1)\times v$ matrix $U$ such that $\left(\frac{v(v-k-1)}{v+r-k}\right)G = U^TU$; that is, the columns of $U$ are unit vectors in $\mathbb{R}^{f+1}$ such that $u_i\perp u_j$ if the corresponding points in $X$ are related by $R_2$ and $\left<u_i,u_j\right>$ is otherwise constant for $i\neq j$. Then $L = \left\{\pm u_1,\dots,\pm u_v\right\}$ is an $OPD_{f+1}\left(\Gamma;\frac{k+r(v-1)}{v+r-k}\right)$ where $u_i$ is the $i^\text{th}$ column of $U$.
\end{proof}
Note that this construction does not induce a 4-class association scheme. We see this by noting that all off diagonal entries in $G$ are non-negative. Thus we may split our $OPD$ into two sets $L^+$ and $L^-$ where $L^+$ contains all the columns of $U$ and $L^-$ contains their negatives. Then for vectors $u\perp v$, the number of vectors $w$ such that $\left<v,w\right> = \left<u,w\right>=\beta$ could be either $0$ (if $v\in L^+$ and $u\in L^-$) or $\lambda$ (if $v,w\in L^+$). Thus this value is not solely dependent on the inner product $\left<u,v\right>$ and $p^2_{11}$ is not well defined. While this does not solve our question of which $OPD$'s induce association schemes, it does provide us with a better upper bound on the dimension needed for strongly regular graphs. For example, this gives an $OPD$ of the Petersen graph in dimension $5$ while Corollary \ref{regnaive} produces one in dimension $9$. Now consider the following theorem of Delsarte.

\begin{thm}\cite[Theorem 3.8]{Delsarte1973} \label{delsarte}
	Let $\Gamma=\Gamma(V,E)$ be a strongly regular graph with $v$ vertices and eigenvalues $k>r>s$. Then
	\[\alpha\left(\Gamma\right)\leq v\left(1-\frac{k}{s}\right)^{-1}. \]
	A coclique $C\subset V$ achieves equality in this bound if and only if every vertex $x\notin C$ has the same number of neighbors (namely $-s$) in $C$.\qed
\end{thm}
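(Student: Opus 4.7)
The plan is to exploit the spectral decomposition $A = kE_0 + rE_1 + sE_2$ of the adjacency matrix of $\Gamma$ afforded by the two-class association scheme structure (cf.\ Equation~\eqref{PQsrg}), where $E_0 = \frac{1}{v}J$ and the $E_i$ are pairwise orthogonal projections summing to $I$. Let $C$ be a coclique of size $c$ with characteristic vector $\chi$. My argument will rest on three identities obtained by applying the orthogonal decomposition $\chi = \sum_i E_i\chi$: (a) $\|\chi\|^2 = c$ becomes $\sum_i\|E_i\chi\|^2 = c$; (b) $\chi^T A\chi = 0$ (since $C$ is independent) becomes $k\|E_0\chi\|^2 + r\|E_1\chi\|^2 + s\|E_2\chi\|^2 = 0$; and (c) $\|E_0\chi\|^2 = \frac{1}{v}(\mathbbm{1}^T\chi)^2 = c^2/v$.

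From here I would take the linear combination of (a) and (b) designed to eliminate $\|E_2\chi\|^2$, namely multiply (a) by $-s$ and add to (b), substituting (c). The outcome is an identity of the shape $(r-s)\|E_1\chi\|^2 = -sc + (s-k)c^2/v$. Since the left-hand side is nonnegative and $r > s$, the right-hand side must be nonnegative, and dividing by $c>0$ gives $c \leq -sv/(k-s)$, which is $v(1-k/s)^{-1}$. The only care needed is sign-tracking: because $s < 0 < k$, the quantities $-s$, $k-s$, and $1-k/s$ are all positive.

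For the equality characterization, tightness forces $E_1\chi = 0$, so $\chi$ lies in $\mathrm{span}(\mathbbm{1}) \oplus \ker(A - sI)$. Writing $E_2\chi = \chi - \frac{c}{v}\mathbbm{1}$ and computing $A\chi = s\chi + (k-s)\frac{c}{v}\mathbbm{1}$ coordinate-wise then recovers the uniform-neighbor property: entries indexed by $C$ vanish (reconfirming $c = -sv/(k-s)$), while entries indexed by $V \setminus C$ all equal $-s$. Conversely, starting from the hypothesis that each $x \notin C$ has exactly $-s$ neighbors in $C$ gives $A\chi = -s(\mathbbm{1}-\chi)$; left-multiplying by $\mathbbm{1}^T$ and using $\mathbbm{1}^T A = k\mathbbm{1}^T$ yields $kc = -s(v-c)$, hence $c = -sv/(k-s)$. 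No real obstacle arises — this is the Hoffman/Delsarte LP bound specialized to a two-class scheme — and the only subtlety is the sign bookkeeping noted above, together with making sure neither direction of the equality characterization uses the bound it is meant to verify.
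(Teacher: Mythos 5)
Your argument is correct and complete. The paper does not prove this statement at all---it cites it to Delsarte's thesis and marks it with a \qed---so there is no in-paper proof to compare against; what you have written is the standard Hoffman/Delsarte ratio-bound argument, which is the natural specialization of Delsarte's linear-programming bound to a two-class scheme. All three identities check out, the sign bookkeeping with $s<0$ is handled correctly, the identity $(r-s)\|E_1\chi\|^2 = -sc-(k-s)c^2/v$ gives both the bound and (via $E_1\chi=0$ at equality) the coordinate computation $A\chi = s\chi + (k-s)\frac{c}{v}\mathbbm{1}$ that yields the uniform-neighbor property, and your converse via $\mathbbm{1}^T A\chi = kc = -s(v-c)$ is independent of the forward direction.
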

We will refer to a \emph{Delsarte coclique}\index{Delsarte coclique} as a coclique for which this bound is tight. This theorem, along with Proposition \ref{cocliquebnd}, gives a lower bound on the dimension of any $OPD$ in terms of the spectrum whenever $\Gamma$ contains a Delsarte coclique. Further, we may use the second half of Theorem \ref{delsarte} to learn more information about any $OPD$ achieving this bound.
\cocliquealpha
\begin{proof}
	Let $L$ be the $OPD_{m}(\beta)$ of $\Gamma$. Further, let $\ell_1,\dots,\ell_{m}$ be vectors in $L$ such that the set $\left\{\phi(\ell_1),\dots,\phi(\ell_{m})\right\}$ is a Delsarte coclique. Then $\left\{\ell_1,\dots,\ell_{m}\right\}$ forms an orthonormal basis for $\mathbb{R}^{m} = \text{span}(L)$. Let $a\in L$ be given with $\phi(a)\notin C$. By Theorem \ref{delsarte}, $\phi(a)$ must be adjacent to exactly $-s$ points in $C$ and thus, reordering the vectors and replacing $\ell_i$ with $-\ell_i$ as needed, we may assume $\left<a,\ell_i\right> = \beta$ for $1\leq i\leq -s$. Therefore $a = \sum_{i=1}^{-s} \beta\ell_i$ implying that $-s\beta^2 = 1$ and thus $s = -\beta^{-2}$. Note $\left<a,\ell_i\right>=0$ for $-s<i\leq m$. Now, as long as $\Gamma$ is not complete bipartite (i.e., provided $s\neq k$), there must be another vector $b\in L$ for which $\phi(b)\notin C$ and $\phi(b) \sim\phi(a)$; assume $\left<b,a\right> = \beta$ taking $-b$ if needed. We again find that $\phi(b)$ is adjacent to exactly $-s$ vertices in $C$; let $h$ be the number of vertices adjacent to both $a$ and $b$. Without loss of generality $b =  \sum_{i=1}^{h} \beta_i\ell_i + \sum_{i=-s+1}^{-2s-h}\beta\ell_i$ where $\beta_i = \pm\beta$. Thus $\left<a,b\right> = (p-q)\beta^2$ where $p$ is the number of vectors in $\left\{\ell_1,\dots,\ell_h\right\}$ with $\left<b,\ell_i\right> = \left<a,\ell_i\right>$ and $q = h-p$. However, since $a$ and $b$ have inner product $\beta$, this implies $\beta^{-1} = p-q$.
\end{proof}
While this theorem only provides information about $OPD$s of strongly regular graphs with Delsarte cocliques, there are many common examples which contain these cocliques for which we may apply our theorem. For instance, consider the following result.
\begin{cor}
	There do not exist $OPD$s for either the Petersen graph in $\mathbb{R}^4$ or the Paley graph on $\bbF_9$ in $\mathbb{R}^3$. 
\end{cor}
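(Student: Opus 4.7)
The plan is to apply Corollary \ref{snegsquare} directly to each graph. Since that corollary requires both a Delsarte coclique meeting the Hoffman-type bound and forces $\beta=1/\sqrt{-s}$ with $\beta^{-1}\in\mathbb{Z}$ (unless the graph is complete bipartite), it suffices to identify the spectrum of each graph, verify the existence of a Delsarte coclique of the claimed dimension, and observe that the resulting $\beta$ fails the integrality condition.

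First I would recall the relevant parameters. The Petersen graph is strongly regular with parameters $(10,3,0,1)$ and spectrum $3^1, 1^5, (-2)^4$, so $s=-2$. By Theorem \ref{delsarte}, $\alpha(\Gamma)\le v(1-k/s)^{-1} = 10\cdot(1+3/2)^{-1} = 4$, and this bound is attained (for instance by the outer or inner $5$-cycle's complement; any maximum coclique in Petersen has size $4$). Hence a Delsarte coclique of size $m=4$ exists. Similarly, the Paley graph on $\mathbb{F}_9$ is strongly regular with parameters $(9,4,1,2)$ and spectrum $4^1, 1^4, (-2)^4$, so again $s=-2$. The Delsarte bound gives $\alpha(\Gamma)\le 9\cdot(1+2)^{-1} = 3$, achieved by any line through the origin in $\mathbb{F}_3^2\cong\mathbb{F}_9^+$ that is a coclique (equivalently, the affine subgroups of index $3$ that are independent sets), so there is a Delsarte coclique of size $m=3$.

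Next I would apply Corollary \ref{snegsquare} to each. In both cases $s=-2$, so any hypothetical $OPD_m(\Gamma;\beta)$ must have $\beta = 1/\sqrt{-s} = 1/\sqrt{2}$. Neither the Petersen graph nor the Paley graph on $\mathbb{F}_9$ is complete bipartite (both are connected and non-bipartite, as each contains odd cycles and has three distinct eigenvalues with $s\ne -k$). Therefore the corollary forces $\beta^{-1}\in\mathbb{Z}$, but $\beta^{-1}=\sqrt{2}\notin\mathbb{Z}$, a contradiction. Hence no such $OPD$ exists in either case.

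There is no real obstacle here: the work is all in Corollary \ref{snegsquare}, and the proof is essentially a bookkeeping exercise. The only mild subtlety is ensuring that we genuinely have Delsarte cocliques of the stated sizes in the ambient dimension $m$, so that Corollary \ref{snegsquare} applies --- but this is immediate from Theorem \ref{delsarte} together with the well-known independence numbers of these two graphs.
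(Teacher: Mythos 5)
Your proof is correct and is essentially the same argument as the paper's: compute $s=-2$ for each graph, confirm a Delsarte coclique of size $4$ (Petersen) resp.\ $3$ (Paley on $\bbF_9$) exists so that Corollary \ref{snegsquare} applies in the stated dimension, and conclude from $\beta^{-1}=\sqrt{2}\notin\mathbb{Z}$. Your extra care in exhibiting the cocliques and ruling out the complete-bipartite exception only makes explicit what the paper leaves as a quick verification.
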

\begin{proof}
	Recall that the Petersen graph is an srg$(10,3,0,1)$ with $s=-2$. Thus a Delsarte coclique has size $\bigslant{10}{\left(1+\frac{3}{2}\right)} = 4$; we may verify quickly that such a coclique exists. Thus Corollary \ref{snegsquare} tells us a projective double of the Petersen graph in $\mathbb{R}^4$ would require that $\sqrt{-s}$ is an integer, which is false. Similarly the Paley graph on $\bbF_9$ is an srg$(9,4,1,2)$ with $s=-2$. Using the same reasoning, noting that here a Delsarte coclique has size 3, we have our result.
\end{proof}
Corollary \ref{snegsquare} hints that OPDs in the smallest possible dimension for a given graph have some extra structure imposed on them. The Lemma \ref{mindim} and Theorem \ref{dimtoassociation} will detail much of this extra structure in the general case. Before those results, we collect several useful facts about Gram matrices and spherical designs from \cite{Delsarte1977}.
\begin{lem}\cite[Thm.\ 5.5 and Ex.\ 5.7]{Delsarte1977}\label{DGS}
	Let $X$ be a spherical $s$-distance set in $\mathbb{R}^m$ with inner products $A = \left\{\alpha_1,\dots,\alpha_s\right\}$.
	\begin{itemize}
		\item[(i)] Let $A' = A\cup\left\{1\right\}$ and denote by $Q^m_k(x)$ the degree $k$ Gegenbauer polynomial. Let $d_\alpha$ denote the sum of the elements of the distance matrix $D_\alpha$ for $\alpha\in A$. Then
		\[\sum_{\alpha\in A'}d_\alpha Q^{(m)}_k(\alpha)\geq 0,\]
		and equality holds for $k=1,2,\dots,t$ if and only if $X$ is a $t$-design.
		\item[(ii)] If $X$ is an antipodal set, $X$ is a 3-design if and only if $G_x$ has two eigenvalues, $\frac{\vert X\vert}{m}$ and $0$.
	\end{itemize} 
\end{lem}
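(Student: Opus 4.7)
The plan is to derive (i) as a direct consequence of Sch\"{o}nberg's theorem (Theorem \ref{schoenthm}) and to obtain the equality clause from the addition formula for Gegenbauer polynomials, and then to derive (ii) from the $2$-design specialization of (i) using the fact that antipodal sets automatically satisfy all odd-index design conditions. For (i), let $G_X$ denote the Gram matrix of $X\subset S^{m-1}$, whose off-diagonal entries lie in $A$ and diagonal entries equal $1\in A'$. Sch\"{o}nberg's theorem gives $Q_k^m\circ G_X\succeq 0$, so
\[
0\;\le\;\mathbf{1}^{T}\bigl(Q_k^m\circ G_X\bigr)\mathbf{1}\;=\;\sum_{x,y\in X}Q_k^m(\langle x,y\rangle)\;=\;\sum_{\alpha\in A'}d_\alpha Q_k^m(\alpha),
\]
which is exactly the claimed inequality.

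For the equality clause in (i), the key step is the reproducing-kernel identity for harmonic polynomials on the sphere. Fix $k\ge 1$ and an orthonormal basis $\{q_1,\dots,q_M\}$ for $\mathrm{Harm}_k(S^{m-1})$, the degree-$k$ harmonic polynomials. Because $Q_k^m(\langle a,x\rangle)$ is, by Theorem \ref{gegprop}, zonal and both harmonic and homogeneous of degree $k$ on the sphere, the classical addition formula gives
\[
\sum_{i=1}^{M} q_i(x)\,q_i(y)\;=\;c_k\,Q_k^m(\langle x,y\rangle)
\]
for a positive constant $c_k$ depending only on $m$ and $k$. Summing over $x,y\in X$ yields
\[
\sum_{\alpha\in A'}d_\alpha Q_k^m(\alpha)\;=\;\frac{1}{c_k}\sum_{i=1}^{M}\!\left(\sum_{x\in X}q_i(x)\right)^{\!2},
\]
so vanishing of the left side is equivalent to $\sum_{x\in X}q_i(x)=0$ for every $q_i\in\mathrm{Harm}_k$. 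Since polynomials of degree $\le t$ on the sphere decompose as direct sums of $\mathrm{Harm}_0,\dots,\mathrm{Harm}_t$ (using $\|x\|^2\equiv 1$ to absorb radial factors), requiring these vanishings for $k=1,\dots,t$ is precisely the $t$-design condition.

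For (ii), assume $X$ is antipodal. Then every odd polynomial averages to zero on $X$, so the odd-$k$ design conditions hold automatically; in particular, being a $3$-design is equivalent to being a $2$-design. Applying (i) with $k=1,2$, the $2$-design condition is
\[
\sum_{x\in X} x = 0 \qquad\text{and}\qquad \sum_{x\in X} xx^{T} = \frac{|X|}{m}\,I_m,
\]
the second being the tight-frame identity (this can also be read off from Lemma \ref{simplextobasis}-style reasoning: $Q_2^m$ applied to inner products sums to a scalar multiple of $\|\sum_x xx^T - \frac{|X|}{m}I\|^2$). Writing $G_X=U^{T}U$ with $U$ the $m\times|X|$ matrix whose columns are the points of $X$, the tight-frame identity is $UU^{T}=(|X|/m)I$, whence
\[
G_X^{2}\;=\;U^{T}(UU^{T})U\;=\;\tfrac{|X|}{m}\,G_X.
\]
Thus $G_X$ annihilates the polynomial $t\bigl(t-|X|/m\bigr)$, so its eigenvalues lie in $\{0,|X|/m\}$; since $\operatorname{tr}(G_X)=|X|$, the nonzero eigenvalue occurs with multiplicity exactly $m$. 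Conversely, if $\sigma(G_X)\subseteq\{0,|X|/m\}$ then $G_X^{2}=(|X|/m)G_X$, which after multiplying by $U$ on the left and $U^{T}$ on the right (and using that $U$ has full row rank $m$) yields $UU^{T}=(|X|/m)I$, giving the tight-frame identity and hence the $2$-design, hence $3$-design, property.

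The main technical obstacle is the addition formula $\sum_i q_i(x)q_i(y)=c_kQ_k^m(\langle x,y\rangle)$: Theorem \ref{gegprop} supplies the zonal/harmonic/homogeneous characterization of $Q_k^m(\langle a,\cdot\rangle)$, but turning this into the reproducing-kernel identity requires verifying that this zonal harmonic, integrated against $q_i$, reproduces $q_i$ up to the constant $c_k$. Once that identity is established, both parts of the lemma reduce to linear algebra on $G_X$ and on the sums $\sum_{x}q_i(x)$, with part (ii) further leveraging the automatic cancellation of odd moments for antipodal $X$.
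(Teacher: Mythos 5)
The paper does not prove this lemma at all --- it is stated with a citation to Delsarte, Goethals and Seidel \cite{Delsarte1977} and used as a black box (in Lemma \ref{mindim}), so there is no internal proof to compare against. Your argument reconstructs the standard DGS route, and it is essentially the one the paper itself gestures at in the discussion following Theorem \ref{gegprop}: positivity of $\sum_{\alpha}d_\alpha Q_k^m(\alpha)$ via Sch\"onberg (or, equivalently, via the sum-of-squares form), and the equality clause via the addition formula $\sum_i q_i(x)q_i(y)=c_kQ_k^m(\langle x,y\rangle)$ together with the decomposition of low-degree polynomials on the sphere into harmonic components. Part (ii) is also handled correctly: the identity $G_X^2=\frac{|X|}{m}G_X$ is equivalent to the tight-frame condition $UU^T=\frac{|X|}{m}I$, and the trace pins down the multiplicity.

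Two small remarks. First, the addition formula is the genuine content here and you assume it rather than derive it; Theorem \ref{gegprop} as stated in the paper only gives the zonal/harmonic/homogeneous characterization, so a self-contained proof would still need the reproducing-kernel computation (or a uniqueness argument for zonal harmonics). You flag this honestly, so it is a declared dependency rather than a gap. Second, in the converse direction of (ii) the spectral condition on $G_X$ only delivers the degree-$2$ moment identity; the degree-$1$ condition $\sum_{x\in X}x=0$ must be supplied by antipodality, which you use implicitly --- it would be worth making that one sentence explicit, since without antipodality the spectral condition alone does not imply the $2$-design property.
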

\begin{lem}\label{mindim}
	Let $\Gamma$ be a connected strongly regular graph with $v$ vertices and eigenvalues $k>r>s$. Let $G$ be the Gram matrix of an $OPD_m(\Gamma;\beta)$. Then $m\geq v\left(1+k\beta^2\right)^{-1}$ with equality if and only if $\frac{m}{2v}G$ is idempotent.
\end{lem}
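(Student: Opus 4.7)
The plan is to extract the bound from a Cauchy--Schwarz inequality relating $\tr(G)$, $\tr(G^2)$, and $\rank(G)$, exploiting that the entries of $G$ are almost entirely determined by the combinatorics of $\Gamma$ and the parameter $\beta$.

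First I would record that $G$ is positive semidefinite with $\rank(G)\leq m$, since $G=U^TU$ for some $m\times 2v$ matrix $U$ whose columns are the unit vectors of the OPD. Then I would compute the two key traces. Since every diagonal entry of $G$ equals $1$, $\tr(G)=2v$. For $\tr(G^2)=\sum_{i,j}G_{ij}^2$, I would partition the $4v^2$ entries of $G$ by type: the $2v$ diagonal entries contribute $2v$; the $2v$ antipodal entries (those equal to $-1$) contribute another $2v$; each edge of $\Gamma$ gives rise to exactly $8$ ordered pairs of non-antipodal vectors with inner product $\pm\beta$, yielding $8\cdot(vk/2)=4vk$ entries of squared value $\beta^2$; the remaining entries corresponding to non-edges are $0$. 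Adding these up gives
\[
\tr(G^2)=2v+2v+4vk\beta^2=4v(1+k\beta^2).
\]

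Next I would apply the inequality $(\tr A)^2\leq \rank(A)\cdot\tr(A^2)$, valid for any PSD matrix $A$ (this is Cauchy--Schwarz applied to the vector of nonzero eigenvalues versus the all-ones vector of the same length). With $A=G$ and $\rank(G)\leq m$, this yields
\[
(2v)^2\leq m\cdot 4v(1+k\beta^2),
\]
so $m\geq v(1+k\beta^2)^{-1}$, which is the desired bound.

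For the equality characterization, recall that equality in $(\tr A)^2\leq \rank(A)\tr(A^2)$ forces all nonzero eigenvalues of $A$ to be equal. Thus if $m=v(1+k\beta^2)^{-1}$, then $G$ has a single nonzero eigenvalue $\tr(G)/\rank(G)=2v/m$, and $\frac{m}{2v}G$ has spectrum $\{0,1\}$, hence is idempotent. Conversely, if $\frac{m}{2v}G$ is idempotent, its eigenvalues are $0$ and $1$, so $G$ has eigenvalues $0$ and $2v/m$; using $\tr(G)=2v$ the multiplicity of $2v/m$ must be $m$, so $\rank(G)=m$ and $\tr(G^2)=4v^2/m$, making the Cauchy--Schwarz inequality tight and giving $m=v(1+k\beta^2)^{-1}$. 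The main subtlety---though not really an obstacle---is the careful bookkeeping in the entry-count for $\tr(G^2)$: one must correctly tally that each edge of $\Gamma$ contributes exactly $8$ ordered pairs of squared value $\beta^2$ (four unordered pairs of non-antipodal vectors, each taken in two orders), and that non-edges contribute only zero entries once the diagonal and antipodal entries have been separated out.
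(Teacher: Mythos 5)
Your proof is correct, and the entry count for $\tr(G^2)$ (the one place where an error would be easy to make) checks out: $2v$ diagonal ones, $2v$ antipodal entries equal to $-1$, and $8$ ordered entries of squared value $\beta^2$ per edge of $\Gamma$, giving $\tr(G^2)=4v(1+k\beta^2)$, which agrees with the value implicit in the paper. The route you take is genuinely different from the paper's, though the two arguments are arithmetically parallel. The paper invokes the Delsarte--Goethals--Seidel machinery: it applies the degree-two Gegenbauer positivity condition $\sum_{i,j}Q_2^m(G_{ij})\geq 0$ to get exactly the inequality $m\,\tr(G^2)\geq(\tr G)^2$, and then uses the DGS characterization of antipodal spherical $2$-designs (equivalently $3$-designs) to conclude that equality forces $G$ to have the two eigenvalues $2v/m$ and $0$. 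You obtain the same inequality from the elementary bound $(\tr A)^2\leq\rank(A)\,\tr(A^2)\leq m\,\tr(A^2)$ for PSD $A$, and read off the equality case directly from Cauchy--Schwarz: equality forces $\rank(G)=m$ and all nonzero eigenvalues equal to $\tr(G)/m=2v/m$, so $\frac{m}{2v}G$ is a symmetric matrix with spectrum $\{0,1\}$, hence idempotent. (Do note, in the forward direction, that $\rank(G)=m$ must be extracted from the sandwiched chain of inequalities before writing the common eigenvalue as $2v/m$; you state this explicitly only in the converse, but it is forced in both directions.) What each approach buys: yours is self-contained and avoids Sch\"onberg's theorem and spherical designs entirely, which makes the lemma portable; the paper's version situates the bound inside the design-theoretic framework it develops in Chapter 3, which is what lets it recognize the extremal configurations as spherical $2$-designs and reuse that structure later.
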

\begin{proof}
	This is an immediate result of the previous theorem of Delsarte, Goethals, and Seidel. Lemma \ref{DGS} tells us $\sum_{i,j}Q^m_\ell\left(G_{ij}\right)\geq 0$ for all $\ell\geq 0$ with equality for $\ell=1,2$ if and only if $G$ is the Gram matrix of a spherical $2$-design. The antipodal nature of our OPD makes it clear that $\sum_{i,j}Q_1^m\left(G_{ij}\right) = \sum_{i,j}G_{ij} = 0$. Using the second degree Gegenbauer polynomial (see Equation \eqref{gegdef}) we find
	\[\sum_{i,j}Q_2^m\left(G_{ij}\right) = 2v\left(\frac{m\left(2+2k\beta^2\right)-2v}{m-1}\right).\]
	Thus we must have $m\left(2+2k\beta^2\right)\geq2v$ with equality if and only if our OPD admits a spherical $2$-design. The latter half of Lemma \ref{DGS} tells us this occurs exactly when $G$ has two distinct eigenvalues: $0$ and $\frac{2v}{m}$; that is, $\frac{m}{2v}G$ is idempotent.
\end{proof}
We note that, while $Q_2^m$ denoted the second degree Gegenbauer polynomial in this lemma, for the remainder of the chapter we will exclusively use $Q$ to denote the second eigenmatrix of an association scheme. The next few results give us a close connection between whether an OPD gives rise to an association scheme and the dimension which the OPD is in. These results are summarized in Corollary \ref{dimiffassoc}.

\begin{thm}\label{dimtoassociation}
	Let $\Gamma$ be a connected strongly regular graph with $v$ vertices and eigenvalues $k>r>s$. Let $G$ be the Gram matrix of an $OPD_m(\Gamma;\beta)$ with $m = v\left(1+k\beta^2\right)^{-1}$. Then $\left<G\right>_\circ$ is the Bose-Mesner algebra of a 4-class association scheme.
\end{thm}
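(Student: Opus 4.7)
\textbf{Proof plan for Theorem \ref{dimtoassociation}.}

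The plan is to exhibit $\{A_0, A_1, A_2, A_3, A_4\}$ (the characteristic matrices of the five inner-product values $1, \beta, 0, -\beta, -1$) as a basis of $\langle G\rangle_\circ$ that is simultaneously closed under ordinary matrix multiplication. Order the vectors as $\ell_1,\dots,\ell_v,-\ell_1,\dots,-\ell_v$ and write $B,C$ for the 01-matrices on $V$ supported on the positive and negative $\beta$-entries of $U^T U$ respectively, so that $B+C$ is the adjacency matrix $M_1'$ of the strongly regular graph $\Gamma$. Then a direct block calculation gives
\begin{align*}
A_0 &= I_2\otimes I_v, & A_4 &= \begin{pmatrix}0 & I\\I & 0\end{pmatrix}, & A_1+A_3 &= J_2\otimes M_1', & A_2 &= J_2\otimes M_2',
\end{align*}
while $A_3=A_4A_1=A_1A_4$. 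In particular, $A_4$ commutes with every $A_i$ and satisfies $A_4 A_1 = A_3$, $A_4 A_2 = A_2$.

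The heart of the argument is showing $A_1^2\in\mathrm{span}\{A_0,\dots,A_4\}$; the remaining products then follow quickly. First I would use the strong regularity of $\Gamma$: squaring $A_1+A_3=J_2\otimes M_1'$ and invoking $(M_1')^2 = kI+\lambda M_1'+\mu M_2'$ yields
\[
(A_0+A_4)A_1^2 \;=\; \tfrac12(A_1+A_3)^2 \;=\; k(A_0+A_4)+\lambda(A_1+A_3)+\mu A_2.
\]
To get a matching formula for $(A_0-A_4)A_1^2$, I would exploit the idempotent property supplied by Lemma \ref{mindim}: from $G=A_0-A_4+\beta(A_1-A_3)$ and $G^2=(2v/m)G$, together with $A_4A_1=A_3$, one computes
\[
2\beta^2(A_0-A_4)A_1^2 \;=\; \Bigl(\tfrac{2v}{m}-2\Bigr)(A_0-A_4)+\beta\Bigl(\tfrac{2v}{m}-4\Bigr)(A_1-A_3).
\]
Adding the two block equations separates $A_1^2$ as an explicit linear combination of $A_0,A_1,A_2,A_3,A_4$. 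This step is the main obstacle: it is the only place where both the SRG structure of $\Gamma$ and the tight-dimension hypothesis $m = v(1+k\beta^2)^{-1}$ are used together, and it is essential that the $\beta^2$ denominator produced by the idempotent identity combines cleanly with the SRG parameters.

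With $A_1^2$ in the span, the remaining closure is routine. Because $A_3=A_4A_1$ and $A_4$ commutes with each $A_i$, one gets $A_3^2=A_1^2$ and $A_1A_3=A_3A_1=A_4A_1^2$ in the span. The products $A_1A_2$, $A_2A_1$, $A_2A_3$, $A_3A_2$ and $A_2^2$ all have the block form $J_2\otimes(M_i'M_j')$, and because $\Gamma$ is an SRG each such product lies in $\mathrm{span}\{I_v,M_1',M_2'\}$, which lifts back to $\mathrm{span}\{A_0+A_4,A_1+A_3,A_2\}$. Finally, I would verify the axioms of a Bose-Mesner algebra: the five $A_i$ are pairwise orthogonal 01-matrices summing to $J_{2v}$ with $A_0=I$; they are symmetric, and the closure argument above then forces $A_iA_j=A_jA_i$ because $(A_iA_j)^T=A_jA_i$ is itself a sum of symmetric $A_k$. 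Thus $\langle G\rangle_\circ$ is a commutative algebra of symmetric matrices containing $I$ and $J$ and closed under both products, hence the Bose-Mesner algebra of a $4$-class association scheme as required.
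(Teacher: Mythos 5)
Your proposal is correct, and it takes a genuinely different route from the paper. The paper constructs the five primitive idempotents directly: it sets $E_0=\frac{1}{2v}J$, $E_1=\frac{m}{2v}G$ (idempotent precisely because of the tight-dimension hypothesis, via Lemma \ref{mindim}), builds $E_2$ and $E_4$ from the primitive idempotents $\tilde{E}_1,\tilde{E}_2$ of the strongly regular quotient scheme as $\frac12\bigl[\begin{smallmatrix}\tilde{E}_i&\tilde{E}_i\\ \tilde{E}_i&\tilde{E}_i\end{smallmatrix}\bigr]$, defines $E_3=I-E_0-E_1-E_2-E_4$, checks pairwise orthogonality from the block structure, and then observes that each $E_j$ has entries depending only on the corresponding entry of $G$, so the span of the $E_j$ coincides with the five-dimensional space $\langle G\rangle_\circ$. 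You instead work entirely on the Schur-idempotent side, proving closure of $\mathrm{span}\{A_0,\dots,A_4\}$ under ordinary multiplication; your two displayed identities are exactly the decomposition of $A_1^2$ into its $(A_0+A_4)$- and $(A_0-A_4)$-components, obtained respectively from the SRG relation $(M_1')^2=kI+\lambda M_1'+\mu M_2'$ and from $G^2=\frac{2v}{m}G$, and the division by $2\beta^2$ is legitimate since $\beta>\delta=0$. I verified the computations: $(A_1+A_3)^2=2(A_0+A_4)A_1^2$, $(A_1-A_3)^2=2(A_0-A_4)A_1^2$, and the stated coefficients all check out, as does the reduction of the remaining products via $A_3=A_4A_1$ and the $J_2\otimes(M_i'M_j')$ block forms. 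The paper's construction buys the primitive idempotents and hence the Krein matrices $L_0^*,\dots,L_4^*$ explicitly (which it needs later, e.g.\ to detect the $Q$-bipartite case), whereas your argument is more elementary and self-contained, yields the intersection numbers directly, and isolates cleanly the single step where both hypotheses --- strong regularity and $m=v(1+k\beta^2)^{-1}$ --- must interact. One cosmetic remark: to conclude the scheme has exactly four classes you should note (as the paper also leaves implicit) that all five inner-product values actually occur, which holds because a connected SRG with three distinct eigenvalues has both edges and non-edges.
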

\begin{proof}
	We begin by ordering the vectors in our orthogonal projective double so that $\ell_1,\dots,\ell_v$ are representatives from distinct lines and $\ell_{i+v} = -\ell_{i}$ for $1\leq i\leq v$. Likewise we order the vertices of $\Gamma$ so that $\ell_i$ and $\ell_{v+i}$ are mapped to vertex $i$. Let $G$ be the Gram matrix of our $OPD_m(\Gamma;\beta)$ with rows and columns ordered in this fashion; that is, $G_{ij} = \left<\ell_i,\ell_j\right>$ for $1\leq i,j\leq 2v$. This ordering implies there exists a matrix $\tilde{G}$\footnote{Here, we use $\sim$ only to emphasize that $\tilde{G}$ is the Gram matrix induced on a subset of the vertices. This matrix does not belong to the Bose-Mesner algebra of the quotient scheme.} such that
	\[G =\def\arraystretch{1.4}\left[\begin{array}{r:r}
	\tilde{G} & -\tilde{G}\\\hdashline[2pt/2pt]
	-\tilde{G} & \tilde{G}\\[2pt]
	\end{array}\right].\]
	Now let $\tilde{A}_1$ and $\tilde{A}_2$ be the adjacency matrices of $\Gamma$ and $\overline{\Gamma}$ respectively. Let $\tilde{E}_0$, $\tilde{E}_1$, and $\tilde{E}_2$ be the minimal idempotents corresponding to the eigenvalues $k$, $r$, and $s$ respectively; that is, $\tilde{A}_1\tilde{E}_0 = k\tilde{E}_0$, $\tilde{A}_1\tilde{E}_1 = r\tilde{E}_1$, and $\tilde{A}_1\tilde{E}_2 = s\tilde{E}_2$. For each matrix, assume the rows and columns are ordered via the vertex ordering defined above. Recall that the second eigenmatrix of this association scheme is
	\[\tilde{Q} = \left[\begin{array}{ccc}
	1 & f & g\\
	1 & \frac{fr}{k} & \frac{gs}{k}\\
	1 & \frac{f(r+1)}{k+1-v} & \frac{g(s+1)}{k+1-v}
	\end{array}\right].\]
	
	We now define five matrices $E_0,\dots,E_4$, which we will show are orthogonal idempotents. First, define $E_0 = \frac{1}{2v}J$ and $E_1 = \frac{m}{2v}G$. We then define $E_2$ and $E_4$ using the idempotents of our quotient scheme via
	\[E_2 = \frac{1}{2}\def\arraystretch{1.4}\left[\begin{array}{c:c}
	\tilde{E}_1 & \tilde{E}_1\\\hdashline[2pt/2pt]
	\tilde{E}_1 & \tilde{E}_1\\[2pt]
	\end{array}\right],\qquad E_4 = \frac{1}{2}\def\arraystretch{1.4}\left[\begin{array}{c:c}
	\tilde{E}_2 & \tilde{E}_2\\\hdashline[2pt/2pt]
	\tilde{E}_2 & \tilde{E}_2\\[2pt]
	\end{array}\right].\]
	We note that the definition of $E_0$ implies an analogous structure using $\tilde{E}_0$. It follows that $E_0$, $E_2$, and $E_4$ are pairwise orthogonal idempotents. Now, Lemma \ref{mindim} tells us that $E_1$ is also idempotent since $m = v\left(1+k\beta^2\right)^{-1}$. Additionally, the block structure of the matrices $E_0$, $E_1$, $E_2$, and $E_4$ imply that $E_1E_0=E_1E_2=E_1E_4 = 0$ telling us that $E_0$, $E_1$, $E_2$, $E_4$ are pairwise orthogonal idempotents. In view of Lemma \ref{AElem} \emph{($\mathit{iii'}$)}, we define $E_3 = I-E_0-E_1-E_2-E_4$ and may immediately compute.
	\[E_3^2 = \left(I-E_0-E_1-E_2-E_4\right)^2 = I-E_0-E_1-E_2-E_4\]
	and $E_3E_i = 0$ for $i\neq 3$. Therefore the vector space $\left<E_0,E_1,E_2,E_3,E_4\right>$ is symmetric, closed under matrix multiplication, and contains both the identity matrix and the all ones matrix. In order to show this vector space is a Bose-Mesner algebra, we must also show it is closed under entrywise products. First note that since $G$ contains exactly five distinct entries, we have $\left<G\right>_\circ = \left<A_1,A_{\beta},A_0,A_{-\beta},A_{-1}\right>$ where, for each $\theta\in\left\{\pm 1, \pm\beta, 0\right\}$,
	\[A_{\theta} = \begin{cases}
	1 & G_{ij} = \theta,\\
	0 & o.w.\\
	\end{cases}\]
	Now, the ordering of rows and columns of our quotient matrices imply the following\vspace{-1cm}
	\begin{multicols}{2}
		\[2v\left[E_2\right]_{ij} = \begin{cases}
		f & \text{ if }G_{ij} = \pm1,\\
		\frac{fr}{k} & \text{ if }G_{ij} = \pm\beta,\\
		\frac{f(r+1)}{k+1-v} & \text{ if }G_{ij} = 0,\\
		\end{cases}\]\[ 2v\left[E_4\right]_{ij} = \begin{cases}
		g & \text{ if }G_{ij} = \pm1,\\
		\frac{gs}{k} & \text{ if }G_{ij} = \pm\beta,\\
		\frac{g(s+1)}{k+1-v} & \text{ if }G_{ij} = 0.\\
		\end{cases}\]
		\null\vfill
		\[2v\left[E_3\right]_{ij} = \begin{cases}
		v-m & \text{ if }G_{ij} = 1,\\
		-m\beta & \text{ if }G_{ij} = \beta,\\
		0 & \text{ if }G_{ij} = 0,\\
		m\beta & \text{ if }G_{ij} = -\beta,\\
		m-v & \text{ if }G_{ij} = -1,\\
		\end{cases}
		\]\vfill\null
	\end{multicols}
	Thus the entries of our idempotents $E_0,\dots,E_4$ depend solely on the corresponding entries of $G$. It follows that each idempotent is contained in $\left<A_1,A_{\beta},A_0,A_{-\beta},A_{-1}\right>$ forcing $\left<E_0,E_1,E_2,E_3,E_4\right>\subset \left<G\right>_\circ$. Finally, since $\left<G\right>_\circ$ has dimension five, we must have equality. Therefore $\left<E_0,E_1,E_2,E_3,E_4\right>$ is Schur-closed and is the Bose-Mesner algebra of a 4-class association scheme. We complete our proof by listing the Krein parameters, referring to the parameter definitions in Theorem \ref{srgparams} for the strongly regular graph parameters.
	\[L_0^* =  \left[ \begin {array}{ccccc} 1&0&0&0&0\\ \noalign{\medskip}0&1&0&0&0
	\\ \noalign{\medskip}0&0&1&0&0\\ \noalign{\medskip}0&0&0&1&0
	\\ \noalign{\medskip}0&0&0&0&1\end {array} \right],\qquad L_1^* = \left[\begin{array}{ccccc}
	0 & m & 0 & 0 & 0\\
	1 & 0 & \frac{f(1+r\beta^2)}{1+k\beta^2} & 0 & \frac{g(1+s\beta^2)}{1+k\beta^2}\\
	0 & \frac{m(1+r\beta^2)}{1+k\beta^2} & 0 & \frac{\beta^2(k-r)m}{1+k\beta^2} & 0\\
	0 & 0 & \frac{f(k-r)}{k(1+k\beta^2)} & 0 & \frac{g(k-s)}{k(1+k\beta^2)}\\
	0 & \frac{m(1+s\beta^2)}{1+k\beta^2} & 0 & \frac{\beta^2(k-s)m}{1+k\beta^2} & 0
	\end{array}\right],\]
	\[L_2^* =   \left[ \begin {array}{ccccc} 0&0&f&0&0\\
	\noalign{\medskip}0&{\frac {\left( {\beta}^{2}r+1 \right)f }{  \left( {\beta}^{2}k+1 \right) }}&0&{\frac {
			\left( k-r \right) {\beta}^
			{2}f}{ \left( {\beta}^{2}k+1
			\right) }}&0\\
	\noalign{\medskip}1&0&f-1+{\frac { \left( k-r
			\right)gs}{ \left( r-s \right)k }}&0&{-\frac { \left( k-r
			\right)gs}{ \left( r-s \right)k }}\\
	\noalign{\medskip}0&{\frac { f\left( k-r \right) }{ k\left( {\beta}^{2}k+1 \right) }}&0&{\frac { \left( {\beta}^{2}{k}^{2}+r \right)f }{
			k  \left( {\beta}^{2}k+1
			\right) }}&0\\ \noalign{\medskip}0&0&{-\frac {\left( k-r \right) sf}{ \left( r-s \right)k }}&0&{\frac { \left( k-s \right) r f }{ \left( r-s \right) k }}
	\end {array} \right],\]
	\[L_3^* = \left[ \begin {array}{ccccc} 0&0&0&m{\beta}^{2}k&0\\
	\noalign{\medskip}0&0&{\frac {f{\beta}^{2} \left( k-r
			\right) }{{\beta}^{2}k+1}}&0&{\frac {g{\beta}^{2} \left( k-s
			\right) }{{\beta}^{2}k+1}}\\
	\noalign{\medskip}0&{\frac {m{\beta}^{
				2} \left( k-r \right) }{ {\beta}^{2}k+1 }}&0&{
		\frac {m{\beta}^{2} \left( {\beta}^{2}{k}^{2}+r \right) }{ {
				\beta}^{2}k+1}}&0\\ \noalign{\medskip}1&0&{\frac {f
			\left( {\beta}^{2}{k}^{2}+r \right) }{k \left( {\beta}^{2}k+1
			\right) }}&0&{\frac {g \left( {\beta}^{2}{k}^{2}+s \right) }{k
			\left( {\beta}^{2}k+1 \right) }}\\ \noalign{\medskip}0&{\frac {m{
				\beta}^{2} \left( k-s \right) }{  {\beta}^{2}k+1}
	}&0&{\frac {{\beta}^{2} \left( {\beta}^{2}{k}^{2}+s \right) }{
			{\beta}^{2}k+1}}&0\end {array} \right],\]
	\[ L_4^* = \left[ \begin {array}{ccccc} 0&0&0&0&g
	\\ \noalign{\medskip}0&{\frac {g \left( {\beta}^{2}s+1 \right) }{\left( {\beta}^{2}k+1 \right) }}&0&{\frac {
			\left( k-s \right) {\beta}^{2}g }{\left( {\beta}^{
				2}k+1 \right) }}&0\\
	\noalign{\medskip}0&0&-{\frac {\left( k-r \right)gs}{ \left( r-s \right)k}}&0&{\frac { \left( k-s \right) r g }{ \left( r-s \right) k }}
	\\ \noalign{\medskip}0&{\frac { \left( k-s \right) g }{k
			\left( {\beta}^{2}k+1 \right) }}&0&{\frac { g \left( {\beta}^{2}{k}^{2}+s \right) }{
			k\left( {\beta}^{2}k+1
			\right) }}&0\\
	\noalign{\medskip}1&0&{\frac { \left( k-s \right)rf }{ \left( r-s \right) k  }
	}&0&g-1-\frac{(k-s)rf}{(r-s)k}\end {array} \right].\qedhere\]
\end{proof}
\begin{cor}
	\label{delcocliquetoqbip}
	Let $\Gamma$ be a connected strongly regular graph with $v$ vertices and eigenvalues $k>r>s$ which contains a Delsarte coclique. Let $G$ be the Gram matrix of an $OPD_m(\Gamma;\beta)$ with $m = v\left(1-\frac{k}{s}\right)^{-1}$. Then $\left<G\right>_\circ$ is the Bose-Mesner algebra of a 4-class $Q$-bipartite association scheme.
\end{cor}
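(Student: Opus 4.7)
The plan is to combine Corollary~\ref{snegsquare}, which pins down $\beta$, with Theorem~\ref{dimtoassociation}, which promotes $\left<G\right>_\circ$ to a Bose-Mesner algebra, and then to verify the $Q$-bipartite condition from the parity structure of the primitive idempotents.

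First I would invoke Corollary~\ref{snegsquare} to conclude $\beta^2 = -1/s$. A one-line computation then gives $1+k\beta^2 = (s-k)/s$, so the dimension hypothesis $m = v(1-k/s)^{-1}$ is identical to $m = v(1+k\beta^2)^{-1}$. This matches the hypothesis of Theorem~\ref{dimtoassociation}, which immediately yields that $\left<G\right>_\circ$ is the Bose-Mesner algebra of a $4$-class scheme with idempotents $E_0,\ldots,E_4$ and with the explicit Krein matrix $L_1^*$ recorded in that proof.

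The critical observation is that $1 + s\beta^2 = 0$, so every entry of $L_1^*$ carrying the factor $(1+s\beta^2)$ vanishes. What remains is tridiagonal with zero diagonal, and a quick sign check (using $k>r>s$ with $s<0<\beta^2$, whence $1+r\beta^2 = (s-r)/s > 0$) shows that every super- and sub-diagonal entry is strictly positive. Hence $L_1^*$ is irreducible tridiagonal, so $E_0,E_1,E_2,E_3,E_4$ is a $Q$-polynomial ordering with $a_j^* = 0$ for every $j$.

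It then remains to upgrade ``zero diagonal of $L_1^*$'' to the full $Q$-bipartite condition $q^k_{ij}=0$ whenever $i+j+k$ is odd. This is transparent from the block structure in the proof of Theorem~\ref{dimtoassociation}: under the chosen vertex ordering, $E_0, E_2, E_4$ are symmetric block matrices of the form $\tfrac{1}{2}\left[\begin{smallmatrix}\tilde{E}&\tilde{E}\\\tilde{E}&\tilde{E}\end{smallmatrix}\right]$, while $E_1, E_3$ are anti-symmetric block matrices of the form $\tfrac{1}{2}\left[\begin{smallmatrix}X&-X\\-X&X\end{smallmatrix}\right]$. Schur products preserve these parities (even $\circ$ even and odd $\circ$ odd are even, even $\circ$ odd is odd), so $E_i \circ E_j$ lies in the span of the $E_k$ with $i+j+k$ even, which is precisely the $Q$-bipartite condition. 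The only non-routine step in the whole argument is the sign check forcing irreducibility of $L_1^*$; everything else is direct substitution into the formulas already established in Theorem~\ref{dimtoassociation}.
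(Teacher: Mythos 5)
Your proposal is correct and follows essentially the same route as the paper: apply Corollary~\ref{snegsquare} to get $\beta^2=-1/s$, note the dimension hypothesis then matches Theorem~\ref{dimtoassociation}, and observe that $1+s\beta^2=0$ forces $L_1^*$ to be (irreducible) tridiagonal. Your extra block-parity argument upgrading ``$a_j^*=0$ for all $j$'' to the full condition $q^k_{ij}=0$ for $i+j+k$ odd is a detail the paper leaves implicit, but it does not change the approach.
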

\begin{proof}
	Corollary \ref{snegsquare} tells us that $\beta = \frac{1}{\sqrt{-s}}$ and therefore $m = v\left(1+k\beta^2\right)$. Then Theorem \ref{dimtoassociation} gives that $\left<G\right>_\circ$ is the Bose-Mesner algebra of a 4-class association scheme. The $Q$-bipartite property follows as $(1+s\beta^2) = 0$, implying $L_1^*$ is tridiagonal.
\end{proof}

Theorem \ref{dimtoassociation} tells us that $OPD$s of strongly regular graphs induce association schemes whenever the dimension is tight with respect to Lemma \ref{mindim}. It turns out this is also a necessary condition as long as the dimension is not too far away from optimal; that is, $m<v$.
\begin{thm}\label{genqbip}
	Let $\Gamma$ be a strongly regular graph with $v$ vertices, valency $k$, and smallest eigenvalue $s$. Let $L$ be an $OPD_m(\Gamma;\beta)$ with $m<v$. $L$ induces an association scheme only if $m = v\left(1+k\beta^2\right)^{-1}$. Further, either $\text{rank}\left(G\circ G\right)=v$ or the induced scheme is $Q$-bipartite and $s=-\beta^{-2}$.
\end{thm}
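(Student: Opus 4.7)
The plan is to exploit the antipodal structure of $L$ to force the induced scheme to be imprimitive, and then use a rank argument to show $G$ must be proportional to a single primitive idempotent. Order the vectors of $L$ as $\ell_1,\dots,\ell_v,-\ell_1,\dots,-\ell_v$, so that $G = K_2 \otimes \tilde{G}$, where $K_2$ is the $2\times 2$ matrix with $1$ on the diagonal and $-1$ off the diagonal, and $\tilde{G}$ is the $v\times v$ Gram matrix of $\ell_1,\dots,\ell_v$. The union $R_0 \cup R_4$ (identity together with antipodal) is an equivalence relation with classes $\{\ell_i,-\ell_i\}$, so by Section \ref{imprimitivity} the scheme is imprimitive; the quotient scheme on $v$ points is, by Proposition \ref{projnec}, the strongly regular scheme of $\Gamma$. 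Its idempotents $\tilde{E}_0,\tilde{E}_1,\tilde{E}_2$ lift to ``even'' idempotents $E_0 = \frac{1}{2}J_2\otimes\tilde{E}_0$, $E_2 = \frac{1}{2}J_2\otimes\tilde{E}_1$, $E_4 = \frac{1}{2}J_2\otimes\tilde{E}_2$, which sum to $\frac{1}{2}J_2\otimes I_v$, so the remaining two primitive idempotents $E_1, E_3$ have ``odd'' block form $\frac{1}{2}K_2 \otimes M_i$ with $M_1 + M_3 = I_v$.

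Next, the identity $K_2 J_2 = 0$ yields $G E_j = 0$ for $j\in\{0,2,4\}$, so the expansion $G = \sum_j \gamma_j E_j$ reduces to $G = \gamma_1 E_1 + \gamma_3 E_3$. Since $\mathrm{rank}(G)=m < v = m_1+m_3$, exactly one of $\gamma_1, \gamma_3$ is nonzero; after relabelling, $G = \gamma E_1$ with $m = m_1$. The diagonal of $E_1$ is constant equal to $m_1/(2v)$ (because $E_1$ lies in the Bose-Mesner algebra), and matching this against the diagonal of $G$ forces $\gamma = 2v/m$. In particular $\frac{m}{2v}G$ is idempotent, and Lemma \ref{mindim} yields $m = v(1+k\beta^2)^{-1}$, proving the first assertion.

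For the second assertion, the entrywise square decomposes as $G\circ G = (K_2\circ K_2)\otimes(\tilde{G}\circ\tilde{G}) = J_2\otimes(I_v + \beta^2\tilde{A}_1)$, so $\mathrm{rank}(G\circ G) = \mathrm{rank}(I_v + \beta^2 \tilde{A}_1)$, which equals $v$ unless $-\beta^{-2}$ is an eigenvalue of $\tilde{A}_1$. Since $\beta>0$ rules out $-\beta^{-2} = k$, and for a connected non-trivial strongly regular graph the remaining eigenvalues satisfy $r\geq 0 > s$, the only possibility for $\mathrm{rank}(G\circ G) < v$ is $s = -\beta^{-2}$. Substituting $1+s\beta^2 = 0$ into the Krein matrix $L_1^*$ listed in the proof of Theorem \ref{dimtoassociation} annihilates entries $(1,4)$ and $(4,1)$, while all other off-tridiagonal positions are already zero; together with the identically vanishing diagonal of $L_1^*$ this is precisely the $Q$-bipartite condition. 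The main subtlety will be confirming that $G$ lies in the span of only the two ``odd'' idempotents rather than three or more --- this is handled cleanly by the Kronecker identity $K_2 J_2 = 0$, and once that is in place the rest of the argument reduces to rank bookkeeping and a direct inspection of the Krein table already computed.
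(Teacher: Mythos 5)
Your proof is correct, and it follows the same overall skeleton as the paper's argument --- imprimitivity via $R_0\cup R_4$, annihilation of the three ``even'' idempotents by $G$, a rank count forcing $G$ to be a scalar multiple of a single odd idempotent, and then Lemma \ref{mindim} --- but two key steps are executed by genuinely different means. Where the paper proves $GE_0=GE_2=GE_4=0$ by first importing three columns of $Q$ from the quotient scheme (Lemma \ref{repeatedcols}), computing $\mathrm{tr}(GE_j)=0$ entrywise, and then invoking symmetry and positive semidefiniteness, you obtain it in one line from the Kronecker identity $K_2J_2=0$ applied to $G=K_2\otimes\tilde{G}$ and $E_j=\tfrac{1}{2}J_2\otimes\tilde{E}_{j'}$; this also yields the odd block form $E_i=\tfrac{1}{2}K_2\otimes M_i$ essentially for free. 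For the dichotomy, the paper completes the eigenmatrix $Q$, computes $q^1_{11}=0$ and $q^4_{11}=m^2(1+\beta^2 s)/v$ via Lemma \ref{kitchensink}, and reads the rank of $G\circ G$ off the Krein expansion of $E_1\circ E_1$; you instead compute $G\circ G=J_2\otimes(I_v+\beta^2 A_\Gamma)$ directly and note that the rank drops below $v$ exactly when $-\beta^{-2}$ is an eigenvalue of $A_\Gamma$, which (since $k>0$ and $r\geq 0>s$) forces $s=-\beta^{-2}$. The computations are equivalent, but yours sidesteps the eigenmatrix bookkeeping and makes the spectral source of the rank drop transparent. Two remarks: your appeal to the Krein table of Theorem \ref{dimtoassociation} is legitimate only once $m=v(1+k\beta^2)^{-1}$ is in hand (the induced scheme then coincides, idempotent by idempotent, with the one constructed there), and your ordering of the argument respects this; and for the $Q$-bipartite conclusion you should note explicitly that irreducibility of the tridiagonal $L_1^*$ requires $k>r$ (connectivity of $\Gamma$) and $1+r\beta^2>0$ --- an assumption the paper's proof also uses without stating it in the hypotheses.
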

\begin{proof}
	We prove this by building the $Q$ matrix of the resultant scheme. First let $\BMA = \left<G\right>_\circ$ and $\BMB = \left<A_\Gamma\right>_*$ where $A_\Gamma$ is the adjacency matrix of $\Gamma$. Since $G$ has five distinct values, $\BMA$ must be a 4-class association scheme with basis matrices $A_0,A_1,A_2,A_3,$ and $A_4$ corresponding to the values $1,\beta,0,-\beta,$ and $-1$. By definition of an $OPD$, we find that $R_0\cup R_4$ gives a system of imprimitivity where $\BMB$ is the quotient algebra of $\BMA$. Since $\cI = \left\{0,4\right\}$, the matrix $A_0+A_4$ must be one basis matrix; the other two matrices are $A_1+A_3$ and $A_2$. Further, there exist three basis idempotents of $\BMA$, call them $E_0$, $E_2$, and $E_4$, which span the same subalgebra as $A_0+A_4$, $A_1+A_3$, and $A_2$. By Lemma \ref{repeatedcols}, we must have $\tilde{Q}_{\tilde{k}j} = Q_{kj}$ for $j\in\left\{0,2,4\right\}$ and $0\leq k\leq 4$ where $\tilde{Q}$ is the second eigenmatrix of the strongly regular graph. Equation \eqref{PQsrg} tells us this matrix is
	\[\tilde{Q} = \left[\begin{array}{ccc}
	1 & f & g\\
	1 & \frac{fr}{k} & \frac{gs}{k}\\
	1 & \frac{f(1+r)}{k+1-v} & \frac{g(1+s)}{k+1-v}
	\end{array}\right]\]
	and thus the second eigenmatrix of $\BMA$ must be ($*$ denotes an unknown value)
	\[Q = \left[\begin{array}{crcrc}
	1 & * & f & * & g\\
	1 & * & \frac{fr}{k}  & * & \frac{gs}{k}\\
	1 & * & \frac{f(r+1)}{k+1-v}  & *& \frac{g(1+s)}{k+1-v}\\
	1 & * & \frac{fr}{k} & * & \frac{gs}{k}\\
	1 & * & f & * & g\\
	\end{array}\right].\]
	Let $n_1$ and $n_3$ be the remaining two multiplicities corresponding to $E_1$ and $E_3$ respectively. Since $1+f+g=v$ and $\vert X\vert = 2v$, we must have $n_1+n_3 = v$. Now, by construction, $G = A_0 + \beta A_1 -\beta A_3 -A_4$ and therefore any diagonal entry of $GE_2$ is
	\[\left[GE_2\right]_{ii} = \frac{1}{\vert X\vert}\left(f+k\left(\frac{fr}{k}\right)\beta - k\left(\frac{fr}{k}\right)\beta -f\right) = 0.\]
	Similar calculations for $GE_4$ and $GE_0$ show that $\text{tr}(GE_0) = \text{tr}(GE_2) = \text{tr}(GE_4)=0$. Since $G$ is contained within this commutative algebra, we find $GE_i = E_iG$ for each idempotent $E_i$ and therefore the matrices $GE_4$, $GE_2$, and $GE_0$ are all symmetric, forcing $GE_0=GE_2=GE_4=0$. Then $G = c_1E_1+c_3E_3$ for some $c_1,c_2\in\bbR$. Since $m<v$, only one of these constants may be non-zero. Without loss of generality assume $c_1\neq0$ giving $G = \frac{2v}{m}E_1$. Lemma \ref{mindim} then provides $m = v\left(1+k\beta^2\right)^{-1}$.
	
	Now we may return to our $Q$ matrix and fill in the entries of the first column. Further, the orthogonality relations (Lemma \ref{orthorels}) tell us that $\sum_{j}Q_{ij} = \delta_{0j}\vert X\vert$. Using the same fact for $\tilde{Q}$, we may find the final column as well. 
	\[Q = \left[\begin{array}{crccc}
	1 & m & f & v-m & g\\
	1 & m\beta & \frac{fr}{k}  & -m\beta & \frac{gs}{k}\\
	1 & 0 & \frac{f(r+1)}{k+1-v}  & 0& \frac{g(1+s)}{k+1-v}\\
	1 & -m\beta & \frac{fr}{k} & m\beta & \frac{gs}{k}\\
	1 & -m & f & m-v & g\\
	\end{array}\right]\]
	Since we now have the entire $Q$ matrix, we may use Lemma $\ref{kitchensink}$ \emph{($\mathit{xiii^\prime}$)} to find the Krein parameters of our scheme. In particular we find that $q^3_{11} = q^4_{12} = 0$ as well as
	\[\begin{aligned}q^2_{11} &= \frac{1}{2v f}\sum_{h=0}^d\left(k_hQ_{h1}Q_{h1}Q_{h2}\right) = \frac{m^2\left(1+\beta^2r\right)}{v},\\
	q^3_{12} &= \frac{1}{2v (v-m)}\sum_{h=0}^d\left(k_hQ_{h1}Q_{h2}Q_{h3}\right) = \frac{mf\left(v-m(1+\beta^2r)\right)}{v(v-m)},\\
	q^4_{13} &= \frac{1}{2v g}\sum_{h=0}^d\left(k_hQ_{h1}Q_{h3}Q_{h4}\right) = \frac{m\left(v-m(1+\beta^2s)\right)}{v}.\\	
	\end{aligned}\]
	Recall that $m = v(1+k\beta^2)^{-1}$ and therefore $v-m(1+\beta^2k)=0$. Thus we have both $v-m(1+\beta^2r)>0$ and $v-m(1+\beta^2s)>0$ as long as $r<k$ (i.e.\ $\Gamma$ is connected), forcing all three of the above Krein parameters to be non-zero. Thus $\BMA$ is $Q$-polynomial if and only if $q^{4}_{11}=0$. Calculating this similarly, we find
	\[q^4_{11} = \frac{1}{2v g}\sum_{h=0}^d\left(k_hQ_{h1}Q_{h1}Q_{h4}\right) = \frac{m^2\left(1+\beta^2s\right)}{v}.\]
	We therefore find $q^4_{11}=0$ if and only if $s = -\beta^{-2}$. Finally, noting that $q^1_{11}=0$ (calculated similarly), we must have $\text{rank}(G\circ G) = 1+f+g=v$ if $q^4_{11}>0$ and $\text{rank}(G\circ G) = 1+f<v$ otherwise.
\end{proof}
\dimiffassoc
\begin{proof}
	The result follows immediately from Theorems \ref{genqbip} and \ref{dimtoassociation}.
\end{proof}

From these results we are very close to the statement ``The association scheme induced by an $OPD_m(\Gamma;\beta)$ is $Q$-bipartite if and only if $\beta = \frac{1}{\sqrt{-s}}$", however this statement is ultimately false. Consider the Gram matrix of any $OPD_m(\Gamma;\frac{1}{\sqrt{-s}})$ with $m = v\left(1-\frac{k}{s}\right)^{-1}$, following the proof of Theorem \ref{genqbip} we find that $\left<G\right>_\circ$ generates a
\chapter{Connectivity of basis relations}\label{connectivity}
Brouwer and Mesner \cite{Brouwer1985} showed in 1985 that the vertex connectivity of a strongly regular graph 
$\Gamma$ is equal to its valency and that the only disconnecting sets of minimum size are the neighborhoods $\Gamma(a)$  of its vertices.  (Brouwer \cite{Brouwer1996} mentions that the corresponding result for edge connectivity
was established by Plesn\'{\i}k in 1975.) This result on vertex connectivity was extended by Brouwer and Koolen 
\cite{Brouwer2009} in 2009 to show that a distance-regular graph of valency at least three has vertex connectivity equal to its valency  and that the only disconnecting sets of minimum size are again the neighborhoods $\Gamma(a)$.  Meanwhile a conjecture of Brouwer on the size and nature of the ``second smallest'' disconnecting sets in a strongly regular graph has inspired both new results and interesting examples by Cioab\u{a}, et al.\ \cite{Cioaba2010,Cioaba2012,Cioaba2013,Cioaba2014,Cioaba2017}.

Godsil \cite{Godsil1981} conjectured in 1981 that the edge connectivity of a connected basis relation in any symmetric association scheme is equal to the valency of that graph. Brouwer \cite{Brouwer1996} claimed in 1996 that the same should hold for the vertex connectivity. In \cite{Godsil1981}, Godsil proved that if $\Gamma=\Gamma(X,R_1)$ is 
regular of valency $k_1$, then the edge connectivity of $\Gamma$ is at least $\frac{k_1}{2} \frac{|X|}{|X|-1}$.
In 2006, Evdokimov and Ponomarenko proved Brouwer's conjecture for $\Gamma=\Gamma(X,R_1)$
in the case when $(X,\cR)$ is equal to the projection onto $X$  of the $k_1$-fold
tensor product $\bigotimes_{h=1}^{k_1} (X,\cR)$. See \cite{Evdokimov2004} for definitions and details.

Much more is known about the connectivity of  vertex- and edge-transitive graphs. 
(See  \cite[Sec.~3.3-4]{Godsil2001}.)  Mader  \cite{Mader1970}  and Watkins \cite{Watkins1970}
independently obtained the following two results in 1970. The vertex connectivity of an edge-transitive graph
is equal to the smallest valency. A vertex-transitive graph of valency $k$ has vertex connectivity 
at least $\frac23(k+1)$. Further, in 1971, Mader \cite{Mader1971} proved that any vertex-transitive graph 
has edge connectivity equal to its valency. 

We begin this chapter with some preliminary results which will work towards proving the main theorems in this chapter. With reference to a fixed undirected graph $\Gamma = \Gamma(V,E)$, we say that $a$ and $b$ are \emph{twins}\index{twins} if $a\neq b$ yet $\Gamma(a) = \Gamma(b)$. A graph $\Gamma$ is \emph{complete multipartite}\index{complete multipartite} if any two non-adjacent vertices are twins; i.e., the complement of $\Gamma$ is a union of complete graphs. We explore which association schemes contain twins, particularly in the polynomial case. We then move to examine the structure of a fixed relation given a basepoint $a$. First, we define a homomorphism mapping $\Gamma_i$, the graph of some basis relation, to the unweighted distribution diagram $H_i$ of that relation. This allows us to compare the structure of $\Gamma_i$ and $H_i$, particularly by projecting and lifting paths from one to the other. Write\footnote{Note that some authors assign another meaning to $\bot$; here, we follow \cite[p.\ 440]{Brouwer1989}.} $a^\bot = \left\{a\right\}\cup \Gamma(a)$ and define the subgraph $\Gamma_a = \Gamma\backslash a^\bot$. We decompose the vertices $\Gamma_a$ into three (possibly empty) sets based on each vertices relation with $a$ and compare these vertex sets to sets of vertices in $H_i^\prime=H'\backslash\left\{0,1\right\}$. By employing these techniques using multiple basepoints, we show that one of the sets in the decomposition of $\Gamma_a$ must be empty. Using this result we prove Theorem \ref{Tmain} below, along with the corollaries that follow. We finish this chapter by applying Theorem $\ref{Tmain}$ along with a spectral lemma in the case of small valency.

This chapter is based on joint work with W.\ J.\ Martin, published in \cite{Kodalen2017}. The main goal of this chapter is to prove the following theorem related to connectivity in symmetric association schemes:

\begin{thm} \label{Tmain}
	Let $(X,\cR)$ be a symmetric association scheme. Assume the graph $\Gamma=\Gamma(X,R_i)$ is connected 
	and not complete multipartite. Let
	$H=H_i$ be the corresponding unweighted distribution diagram on $\{0,1,\ldots, d\}$. The following are equivalent:
	\begin{itemize}
		\item[(1)] there exists $a\in X$ for which the subgraph $\Gamma \setminus a^\bot$ is connected;
		\item[(2)] for all $a\in X$, the subgraph $\Gamma \setminus a^\bot$ is connected;
		\item[(3)]  the subgraph $H \setminus \{0,i\}$ is connected;
		\item[(4)] $\Gamma$ contains no twins.
	\end{itemize} 
\end{thm}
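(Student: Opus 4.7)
I would prove the theorem via the cycle $(2) \Rightarrow (1) \Rightarrow (3) \Rightarrow (2)$ together with the equivalence $(1) \Leftrightarrow (4)$; since $(2) \Rightarrow (1)$ is immediate, this suffices. The central tool throughout is the vertex-to-class projection $\pi_a\colon X \to \{0,1,\ldots,d\}$ defined by $\pi_a(x) = j$ when $x \in R_j(a)$. Whenever $(x,y) \in R_i$ with $x \in R_j(a)$ and $y \in R_k(a)$, the parameter $p^k_{ij}$ is positive, so $\pi_a$ is a graph homomorphism from $\Gamma$ onto $H$. Its converse, the lifting property---any walk in $H$ starting at $j_0$ can be lifted to a walk in $\Gamma$ beginning at any prescribed vertex of $R_{j_0}(a)$---is just the statement that $p^k_{ij} > 0$ means every vertex of $R_j(a)$ has at least one $\Gamma$-neighbor in $R_k(a)$.

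For $(1) \Rightarrow (3)$, I would fix $a$ witnessing $(1)$. Since each subconstituent $R_j(a)$ has cardinality $k_j > 0$, the map $\pi_a$ restricts to a surjective homomorphism from $\Gamma \setminus a^\bot$ onto $H \setminus \{0,i\}$, and surjective graph homomorphisms preserve connectedness, yielding $(3)$. For $(3) \Rightarrow (2)$, fix an arbitrary $a$ and any pair $x, y \in \Gamma \setminus a^\bot$. By $(3)$, select a walk $j_0, j_1, \ldots, j_\ell$ in $H \setminus \{0,i\}$ from $\pi_a(x) = j_0$ to $\pi_a(y) = j_\ell$ and lift it starting at $x$; this produces a walk in $\Gamma \setminus a^\bot$ from $x$ to some $y' \in R_{j_\ell}(a)$. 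What remains is to connect $y'$ to $y$ within the class $R_{j_\ell}(a)$. This is the main technical hurdle, and it is addressed using the three-set decomposition of $\Gamma_a$ introduced in the preliminaries together with the multi-basepoint argument: the earlier lemma forcing one of the three sets to be empty precisely prevents $R_j(a) \cap (\Gamma \setminus a^\bot)$ from splitting across components.

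For $(1) \Leftrightarrow (4)$, one direction is short. If $\Gamma$ contains twins $b, c$, they must be non-adjacent (a vertex is not its own neighbor), so $(b,c) \in R_j$ for some $j \notin \{0,i\}$. Taking basepoint $a = b$, every neighbor of $c$ lies in $\Gamma(b) \subseteq b^\bot$, so $c$ is isolated in $\Gamma \setminus b^\bot$. Because $\Gamma$ is not complete multipartite, the complementary relation has valency at least $2$, so $|\Gamma \setminus b^\bot| \ge 2$, and this graph is disconnected, defeating $(1)$ at the instance $a = b$ and, by the already-proved chain $(1) \Rightarrow (3) \Rightarrow (2)$, at every instance. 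The reverse $(4) \Rightarrow (1)$ proceeds by contraposition: if $\Gamma \setminus a^\bot$ is disconnected, inspect a minimum-size component $C$; using the combinatorial homogeneity of the scheme and the same three-set decomposition, one shows that any two vertices of $C$ share their entire $\Gamma$-neighborhood (everything outside $C$ must lie in $a^\bot$, and the parameters force equality of neighborhoods), producing twins.

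The principal obstacle appears in both $(3) \Rightarrow (2)$ and $(4) \Rightarrow (1)$: propagating connectivity across vertices of a single subconstituent $R_j(a)$ requires coordinating information from multiple basepoints, since a single basepoint cannot distinguish a genuine within-class separation from an artifact of the projection. The three-set decomposition of $\Gamma_a$, together with the earlier ``one piece is empty'' lemma, is the technical engine that organizes this casework and, I expect, will do most of the heavy lifting.
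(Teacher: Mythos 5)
Your skeleton is sound in places---$(2)\Rightarrow(1)$ is trivial, your surjective-homomorphism argument for $(1)\Rightarrow(3)$ is a correct (and slightly cleaner) restatement of Proposition \ref{PH'disconn}, and your derivation that a twin pair defeats $(1)$ is fine once the rest of the cycle is in place. But the two hard implications are not actually proved, because you have assigned them to the wrong tool. The decomposition $\{I_a,U_a,W_a\}$ and the lemma $\tilde{W}=\emptyset$ (Theorem \ref{TWempty}) are set up under the standing assumption that $H'=H\setminus\{0,i\}$ is \emph{disconnected}: $\tilde{I}$ collects the isolated vertices of $H'$ coming from twins, $\tilde{U}$ is a smallest remaining component, and $\tilde{W}$ is everything else. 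That lemma is exactly what drives ``no twins $\Rightarrow$ $H'$ connected'' (the paper's $(4)\Rightarrow(3)$). Under your hypothesis in $(3)\Rightarrow(2)$, however, $H'$ is connected, the decomposition trivializes ($\tilde{I}=\emptyset$, $\tilde{U}$ is all of $H'$, $\tilde{W}=\emptyset$ vacuously), and it says nothing at all about the point where your argument actually stalls: whether two vertices of the \emph{same} subconstituent $R_j(a)$ can lie in different components of $\Gamma\setminus a^\bot$. That is a within-class phenomenon that the quotient $H$ cannot detect, so no statement about components of $H'$ can resolve it.

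The missing argument is the genuinely new content of $(3)\Rightarrow(2)$. Suppose $H'$ is connected but $\Gamma_a=\Gamma\setminus a^\bot$ is disconnected. Lifting walks of $H'$ shows every component of $\Gamma_a$ meets every subconstituent $R_j(a)$, $j\notin\{0,i\}$. Taking $j$ maximizing $D=d_H(0,j)$ and $x,y\in R_j(a)$ in distinct components, every $xy$-path passes through $\Gamma(a)$, so $2(D-1)\le d_\Gamma(x,y)\le D$, forcing $D\le 2$; hence $p_{11}^\ell>0$ for all $\ell>1$. Now choose $\ell$ minimizing $p_{11}^\ell$ and $x,y\in R_\ell(a)$ in distinct components, say $(x,y)\in R_j$. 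Then $\Gamma(x)\cap\Gamma(y)\subseteq\Gamma(a)\cap\Gamma(y)$, and comparing $|\Gamma(x)\cap\Gamma(y)|=p_{11}^j\ge p_{11}^\ell=|\Gamma(a)\cap\Gamma(y)|$ forces equality of these sets; a further count over the $p_{1\ell}^1$ neighbors of each $a'\in\Gamma(a)$ in $R_\ell(a)$ then shows $a$ and $x$ are twins, whence (by the twin characterization) $\Gamma$ is complete multipartite---contradiction. Note that the twins produced here are the basepoint $a$ and a vertex $x$ of $\Gamma_a$; your sketch of $(4)\Rightarrow(1)$, which asserts that ``any two vertices of $C$ share their entire $\Gamma$-neighborhood'' for a minimal component $C$, cannot be right as stated, since adjacent vertices are never twins and nothing prevents $C$ from containing an edge. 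Without the diameter-two and $p_{11}^\ell$-minimization argument (or a substitute for it), both $(3)\Rightarrow(2)$ and your contrapositive of $(4)\Rightarrow(1)$ remain open.
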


Recall that a \emph{commutative association scheme} (see Section \ref{association}) is a more general combinatorial object in which we replace property \emph{(iii)} with the condition: For each $0\leq i\leq d$, there exists some index $i^\prime$ such that $R_i^T = R_{i^\prime}$; that is, $(x,y)\in R_i$ if and only if $(y,x)\in R_{i^\prime}$. We also require that for all $0\leq i,j,k\leq d$, $p^k_{ij}=p^k_{ji}$ --- thus we preserve the property that our Bose-Mesner algebra is commutative. Given a commutative association scheme $(X,\mathcal{R})$ we define the \emph{symmetrization}\index{symmetrization!of relation} of the relation $R_i$ as the relation $R_i\cup R_{i'}$ noting that this is exactly $R_i$ if $i=i'$ (i.e.\ $R_i$ is already symmetric). In this way, we define the \emph{symmetrization}\index{symmetrization!of association scheme} of $(X,\mathcal{R})$ to be $(X,\mathcal{R}')$ where
\[\mathcal{R} = \left\{R\cup R^T\mid R\in\mathcal{R}\right\}.\]
The edge sets of all the graphs considered in the following corollaries are given by relations in the symmetrization, thus Theorem \ref{Tmain} extends immediately to give these corollaries. Due to this, we will only consider the symmetric case when proving the main theorem. The following are the remaining main results of this chapter.

\begin{cor}  \label{C2}
	Let $(X,\cR)$ be a commutative association scheme. Assume the undirected graph 
	$\Gamma=\Gamma(X,R_i  \cup R_{i'})$ is connected and $a\in X$. Then $\Gamma \setminus 
	\Gamma(a)$ contains at most one non-singleton component.
\end{cor}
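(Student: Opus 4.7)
The plan is to deduce the corollary from Theorem~\ref{Tmain} via symmetrization followed by a twin-quotient argument. Since $R_i \cup R_{i'}$ is a single basis relation in the symmetrization $(X,\mathcal{R}')$ --- which, as noted before the corollary statement, is itself a symmetric association scheme --- I may assume without loss of generality that $(X,\mathcal{R})$ is symmetric and that $\Gamma = \Gamma(X,R_j)$ for some index $j$. If $\Gamma$ is complete multipartite, then every non-neighbor of $a$ is a twin of $a$ with common neighborhood $\Gamma(a)$, so $\Gamma \setminus \Gamma(a)$ is an independent set; all components are singletons and the claim is trivial. I therefore assume henceforth that $\Gamma$ is not complete multipartite.

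Next I would pass to the twin quotient. Define $a \sim_T b$ if $\Gamma(a) = \Gamma(b)$. Using intersection numbers, this equivalence is exactly the union of basis relations $R_{\mathcal{T}} = \bigcup_{\ell \in \mathcal{T}} R_\ell$, where $\mathcal{T} = \{\ell : p^\ell_{jk} = 0 \text{ for all } k \neq j\}$; one checks that $0 \in \mathcal{T}$ and that $\sim_T$ is reflexive, symmetric, and transitive, so $R_{\mathcal{T}}$ defines a system of imprimitivity in the sense of Section~\ref{imprimitivity}. The associated quotient scheme carries a basis relation whose graph $\hat\Gamma$ is the quotient of $\Gamma$ by twin classes. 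Two features of $\hat\Gamma$ will be essential: $\hat\Gamma$ has no twins (two distinct twin classes of $\hat\Gamma$ would merge in $\Gamma$), and $\hat\Gamma$ is not complete multipartite (otherwise, being twin-free, it would be complete, forcing $\Gamma$ itself to be complete multipartite, contrary to assumption).

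I would then apply Theorem~\ref{Tmain} to $\hat\Gamma$: condition~(2) gives that $\hat\Gamma \setminus [a]^\bot$ is connected for every class $[a]$. Finally, I would lift this back to $\Gamma$. The graph $\Gamma$ is a blow-up of $\hat\Gamma$ --- each twin class is an independent set, and each edge of $\hat\Gamma$ expands to a complete bipartite subgraph between the corresponding classes. Deleting $\Gamma(a)$ strips away every class adjacent to $[a]$; the class $[a]$ itself survives, but its members are all isolated, because their common neighborhood is precisely $\Gamma(a)$. The remaining non-isolated vertices form the preimage of $\hat\Gamma \setminus [a]^\bot$, and by the blow-up structure this preimage is connected whenever $\hat\Gamma \setminus [a]^\bot$ contains an edge (otherwise the preimage is a single twin class, consisting entirely of singletons). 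In every case, $\Gamma \setminus \Gamma(a)$ has at most one non-singleton component.

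The main obstacle will be verifying that the twin equivalence corresponds to a genuine system of imprimitivity --- i.e., that the index set $\mathcal{T}$ above really does give a union of basis relations forming an equivalence on $X$, so that the quotient association scheme exists and Theorem~\ref{Tmain} may be invoked on $\hat\Gamma$. Once this structural step is settled, the blow-up picture makes the lifting automatic and the two special cases (complete multipartite graphs, and vertices with empty or trivial deleted subgraphs) are absorbed as trivial subcases.
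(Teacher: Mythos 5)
Your proof is correct, but it reaches the corollary by a different route than the paper. The paper deduces Corollary \ref{C2} directly from Theorem \ref{TWempty} (the statement $\tW=\emptyset$), i.e.\ from the basepoint decomposition $\{I_a,U_a,W_a\}$ and the two-basepoint comparison argument; the twins of $a$ account for the singleton components and the single surviving component of $H'\setminus\tI$ accounts for the one non-singleton component. You instead quotient out the twin classes first, apply Theorem \ref{Tmain} (implication $(4)\Rightarrow(2)$) to the twin-free quotient graph $\hat\Gamma$, and lift back through the blow-up structure. The structural step you flag as the main obstacle is genuinely the crux, but it does go through: Lemma \ref{Ltwins} already shows the twin equivalence is a system of imprimitivity, and since $\Gamma(x)=\Gamma(y)$ for twins $x,y$ forces adjacency in $\Gamma$ to be constant on products of twin classes, the matrix $A_i$ is block-constant, hence lies in the quotient subalgebra and is a minimal Schur idempotent there --- so $\hat\Gamma$ really is a basis relation of the quotient scheme and Theorem \ref{Tmain} applies. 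Your derivation is more modular and makes the role of twins transparent, at the cost of the extra imprimitivity bookkeeping; but note it is not logically independent of the paper's argument, since Theorem \ref{Tmain} is itself proved via Theorem \ref{TWempty}, so the same underlying machinery is doing the work in both proofs.
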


\begin{cor} \label{C1}
	Let $(X,\cR)$ be a commutative association scheme.  Assume the undirected graph 
	$\Gamma=\Gamma(X,R_i  \cup R_{i'})$ is connected and $a\in X$.  Then, for any  $T \subseteq a^\bot$ with $\Gamma(a) \not\subseteq T$, the graph  $\Gamma \setminus T$ is connected.
\end{cor}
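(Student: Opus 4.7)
The plan is to split on whether $\Gamma$ is complete multipartite. In view of the symmetrization remark preceding Corollary~\ref{C2}, I may assume $(X, \cR)$ is symmetric and write $\Gamma = \Gamma(X, R_i)$. The hypothesis provides a witness $b \in \Gamma(a) \setminus T$, and the task is to exhibit paths in $\Gamma \setminus T$ between arbitrary surviving vertices.

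First I would handle the complete multipartite case directly, without invoking Theorem~\ref{Tmain}. Because $\Gamma$ has constant valency, all parts share a common size $n$. When $n = 1$, $\Gamma$ is a clique and any induced subgraph is trivially connected. When $n \geq 2$, the containment $T \subseteq a^\bot$ forces $T \cap V_{i_a} \subseteq \{a\}$, where $V_{i_a}$ is the part containing $a$, so some $v \in V_{i_a} \setminus \{a\}$ survives in $\Gamma \setminus T$. Since $v$ and $b$ lie in distinct parts, $v \sim b$, and every other surviving vertex is adjacent either to $v$ (if it lies outside $V_{i_a}$) or to $b$ (if it lies in $V_{i_a}$), which delivers connectivity.

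If instead $\Gamma$ is not complete multipartite, Theorem~\ref{Tmain} applies and $C := \Gamma \setminus a^\bot$ is connected; moreover $C$ embeds into $\Gamma \setminus T$ because $T \subseteq a^\bot$. The remaining vertices of $\Gamma \setminus T$ all lie in $a^\bot \setminus T$, and if $a \notin T$ then $a$ attaches to any element of $\Gamma(a) \setminus T$. So the whole argument reduces to showing that every $v \in \Gamma(a) \setminus T$ has a $\Gamma$-neighbor outside $a^\bot$; this is the only substantive step.

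To establish it, I would count: the number of $\Gamma$-neighbors of $v$ inside $a^\bot = R_0(a) \cup R_i(a)$ equals $p^i_{i0} + p^i_{ii} = 1 + p^i_{ii}$, so the claim reduces to the inequality $p^i_{ii} \leq k_i - 2$. The only alternative is $p^i_{ii} = k_i - 1$, in which case homogeneity forces $\Gamma(v) \subseteq a^\bot$ for every $(a, v) \in R_i$; iterating this closure along paths out of $a$ and invoking connectedness of $\Gamma$ forces $X = a^\bot$, so $\Gamma = K_{k_i+1}$, which is complete multipartite and contradicts the standing case hypothesis. Once this is in hand, every $v \in \Gamma(a) \setminus T$ will be $\Gamma$-adjacent to some $w \in X \setminus a^\bot \subseteq X \setminus T$, hence to $C$ inside $\Gamma \setminus T$, and the corollary follows.
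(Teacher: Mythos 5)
Your complete-multipartite case is handled correctly, and your counting argument showing $p^i_{ii}\le k_i-2$ whenever $\Gamma$ is not complete (so that every vertex of $\Gamma(a)$ has a neighbor outside $a^\bot$) is sound and is exactly the implicit step in the paper's own proof. The gap is in your second case: you assert that if $\Gamma$ is not complete multipartite then Theorem \ref{Tmain} ``applies'' and $C=\Gamma\setminus a^\bot$ is connected. But Theorem \ref{Tmain} only asserts the \emph{equivalence} of that statement with the absence of twins; it does not hand you either one, and ``not complete multipartite'' does not imply ``no twins.'' A connected basis relation can have twins without being complete multipartite, and then \emph{all four} conditions of Theorem \ref{Tmain} fail, so $\Gamma\setminus a^\bot$ is disconnected. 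Concretely, let $\Gamma$ be the distance-$3$ relation of the $6$-cube: it is connected (weight-three vectors generate $\mathbb{F}_2^6$), it is not complete multipartite, and each antipodal pair $x,\bar{x}$ is a pair of twins since $d(x,y)=3$ if and only if $d(\bar{x},y)=3$. Here $\bar{a}$ is an isolated vertex of $\Gamma\setminus a^\bot$, so your set $C$ is disconnected and the second case of your argument collapses for this graph, even though the corollary is still true for it.

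The correct dichotomy is twins versus no twins, not complete multipartite versus not. In the no-twin, non-complete-multipartite case your argument goes through and matches the paper's. In the twin case the paper instead invokes Corollary \ref{C2}: $\Gamma\setminus a^\bot$ has at most one non-singleton component, and its singleton components are precisely the twins of $a$ (if $b\notin a^\bot$ and $\Gamma(b)\subseteq a^\bot$ then $\Gamma(b)\subseteq\Gamma(a)$, and equal valency forces $\Gamma(b)=\Gamma(a)$). Each such twin $b$ is adjacent to \emph{every} vertex of $\Gamma(a)$, so any surviving $a'\in\Gamma(a)\setminus T$ --- which exists by hypothesis --- reattaches all the twins of $a$, and $a$ itself when $a\notin T$, while the non-singleton component and the remaining vertices of $\Gamma(a)\setminus T$ are joined to $a'$ through their neighbors outside $a^\bot$. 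You would need to add this branch to complete the proof.
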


\begin{cor} \label{C3}
	Let $(X,\cR)$ be a commutative association scheme. Assume the undirected graph 
	$\Gamma=\Gamma(X,R_i  \cup R_{i'})$ is connected and $C\subseteq  X$ is the vertex set of a clique in $\Gamma$. Then $\Gamma \setminus C$ is connected.
\end{cor}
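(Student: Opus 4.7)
The plan is to obtain Corollary \ref{C3} as a short application of Corollary \ref{C1}. First I dispose of the degenerate situations: if $C=\emptyset$ the result is immediate, while if $C=X$ then $\Gamma\setminus C$ is empty and is taken to be vacuously connected. Thus I may assume $\emptyset\neq C\subsetneq X$.

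The key step is to locate a vertex $a\in C$ whose $\Gamma$-neighborhood meets $X\setminus C$. Because $\Gamma$ is connected and both $C$ and $X\setminus C$ are non-empty, some edge of $\Gamma$ joins a vertex $a\in C$ to a vertex $b\in X\setminus C$. This $b$ lies in $\Gamma(a)$ but not in $C$, so $\Gamma(a)\not\subseteq C$. Once $a$ is fixed I observe that $C\subseteq a^\bot$: indeed $a\in a^\bot$ by definition, and every other vertex of $C$ is adjacent to $a$ in $\Gamma$ (since $C$ is a clique of $\Gamma$), hence lies in $\Gamma(a)\subseteq a^\bot$. Taking $T=C$, both hypotheses of Corollary \ref{C1} are verified, and that corollary immediately yields that $\Gamma\setminus C$ is connected.

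The substantive work is already packaged inside Corollary \ref{C1} (and hence inside Theorem \ref{Tmain}), so there is no real obstacle to overcome here; the clique hypothesis on $C$ is used in exactly one place, namely to place $C$ inside the closed neighborhood $a^\bot$ of any one of its own vertices, which is precisely the containment Corollary \ref{C1} demands. The only subtlety worth flagging is the convention that the empty graph counts as connected, needed to cover the case where $\Gamma$ is itself a clique on $X$.
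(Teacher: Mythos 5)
Your proof is correct and is essentially the paper's own argument: the paper's proof of Corollary \ref{C3} is simply ``Let $a\in C$ and take $T=C$, then apply Corollary \ref{C1}.'' You add the (worthwhile) care of choosing $a\in C$ with a neighbor outside $C$ so that the hypothesis $\Gamma(a)\not\subseteq T$ of Corollary \ref{C1} is actually verified, and of disposing of the degenerate cases $C=\emptyset$ and $C=X$, but the route is the same.
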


\begin{restatable*}{thm}{cutsizetwo}
	\label{Tcutsize2}
	Let $(X,\cR)$ be a symmetric association scheme and let $\Gamma=\Gamma(X,R_1)$  be the graph 
	associated to a connected basis relation. If $\Gamma$ admits a  disconnecting set of size two, 
	then $\Gamma$ is isomorphic to a polygon.
\end{restatable*}

\begin{restatable*}{thm}{cutsizethree}
	\label{Tcutsize3}
	Let $(X,\cR)$ be a symmetric association scheme and let $\Gamma=\Gamma(X,R_1)$  be the graph 
	associated to a connected basis relation. If $\Gamma$ has diameter two, then either
	$\Gamma$ has vertex connectivity at least four or
	$\Gamma$ is isomorphic to one of the following graphs: the  4-cycle, the 5-cycle, $K_{3,3}$, 
	the Petersen graph.
\end{restatable*}

\begin{restatable*}{lem}{spectrallem}
	\label{Lspec-cut}
	Let $(X,\cR)$ be a symmetric association scheme and let $\Gamma=\Gamma(X,R_1)$ be the graph associated to a con\-nec\-ted basis relation.
	Assume that $\Gamma$ contains no induced subgraph isomorphic to $K_{2,1,1}$. 
	If  $T\subseteq X$ is a disconnecting set for $\Gamma$, then $|T|> p_{11}^1$.
\end{restatable*}

We should remark that Corollaries \ref{C1} and \ref{C3} extend naturally to the case where $\Gamma$ is not connected in that the deletion of vertices does not increase the number of components. One verifies this by applying the
respective corollary to the subscheme induced by vertices in a particular component of $\Gamma$.


\section{Twins}
Let $(X,\mathcal{R})$ be a symmetric association scheme. Let  $\Gamma = (X,R)$ be the graph of a basis relation in $(X,\cR)$.  Write $R(a)=\Gamma(a)$. Examples
where twins arise (i.e., $R(a)=R(b)$ for $a\neq b$) include not only complete multipartite graphs  but   
antipodal distance-regular graphs such as the $n$-cube in which case $R$ is the distance-$\frac{n}{2}$ 
relation of the association scheme.

\begin{lem} \label{Ltwins}
	Let $(X,\cR)$ be a symmetric association scheme and let $\Gamma=\Gamma(X,R_i)$ for some $i \neq 0$. If $a$ and $b$ 
	are twins, then $(X,\cR)$ is imprimitive and some  system of imprimitivity exists in which $a$ and $b$ belong
	to the same fiber.
\end{lem}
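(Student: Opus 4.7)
The plan is to extract the required system of imprimitivity directly from the hypothesis $R_i(a)=R_i(b)$ by reading off which basis relations force the twin condition. The starting observation is purely numerical: since $|R_i(a)\cap R_i(b)|=k_i$ and this cardinality is exactly $p^{j}_{ii}$ whenever $(a,b)\in R_j$, the twin pair witnesses the equality $p^{j}_{ii}=k_i$ for that specific $j$. Conversely, whenever $p^{k}_{ii}=k_i$ and $(x,y)\in R_k$, the two $k_i$-element sets $R_i(x)$ and $R_i(y)$ meet in $k_i$ points and therefore coincide, so $x,y$ are themselves twins.

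Guided by this, I would define
\[
\mathcal{I} \;=\; \{\, j : 0\le j\le d,\; p^{j}_{ii}=k_i\,\}
\]
and set $R:=\bigcup_{j\in\mathcal{I}}R_j$. The preceding paragraph shows that $R$ is precisely the twin relation for $R_i$: pairs in $R$ are twins, and every twin pair lies in some $R_j$ with $j\in\mathcal{I}$. Because ``having the same $R_i$-neighborhood'' is obviously an equivalence relation, $R$ is an equivalence relation; the only point that requires a brief check is that it is a union of \emph{basis} relations, which is automatic from the definition of $\mathcal{I}$. Alternatively one can verify reflexivity, symmetry and transitivity directly: $0\in\mathcal{I}$ since $p^{0}_{ii}=k_i$; symmetry uses that the scheme is symmetric; transitivity follows because $R_i(x)=R_i(y)=R_i(z)$ with $(x,z)\in R_k$ gives $p^{k}_{ii}=k_i$, hence $k\in\mathcal{I}$.

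It remains to rule out the two trivial index sets. Since $a\neq b$, the index $j$ with $(a,b)\in R_j$ is nonzero and lies in $\mathcal{I}$, so $\mathcal{I}\supsetneq\{0\}$; moreover $a$ and $b$ share a fiber of $R$ by construction. To exclude $\mathcal{I}=\{0,1,\dots,d\}$, suppose for contradiction that $p^{j}_{ii}=k_i$ for every $j$. Then there is a single set $S\subseteq X$ with $R_i(x)=S$ for every vertex $x$. Every vertex is an $R_i$-neighbor of some vertex (as $k_i>0$), hence lies in $S$, forcing $S=X$; but $|S|=k_i<|X|$, a contradiction.

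I do not anticipate any real obstacle here — the argument is essentially a dictionary between the combinatorial notion of twins and the intersection-number identity $p^{j}_{ii}=k_i$. The only mild subtlety is the observation that twin-ness, a priori a vertex-pair property, is automatically a union of basis relations because the value $|R_i(x)\cap R_i(y)|$ depends solely on which $R_k$ contains $(x,y)$; this is what lets the pointwise equality $R_i(a)=R_i(b)$ globalize into a full system of imprimitivity.
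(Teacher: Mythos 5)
Your proof is correct and follows essentially the same route as the paper: the twin relation (together with the diagonal) is identified as the union of the basis relations $R_j$ with $p^j_{ii}=p^0_{ii}$, which is automatically an equivalence relation and hence a system of imprimitivity containing $a$ and $b$ in one fiber. The only cosmetic difference is the non-triviality step, where the paper simply notes $p^i_{ii}<p^0_{ii}$ (so $i$ itself is excluded from the index set) while you argue globally that all indices qualifying would force $k_i=|X|$; both are valid.
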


\begin{proof}
	One easily checks that the following relation $\sim$ on $X$ is an equivalence relation: 
	$a\sim b$ if either $a=b$ or $a$ and $b$ are twins in $\Gamma$. To see that this is a system of imprimitivity, we verify that $\sim$
	is the union of basis relations $R_j$ for which $p_{ii}^j = p_{ii}^0$. Since $p_{ii}^i < p_{ii}^0$ and we are assuming
	at least one pair of twins exists, the equivalence 
	relation is non-trivial and  $(X,\cR)$ is imprimitive.
\end{proof}

\begin{remark} We now discuss twins in polynomial association schemes.  We use the well-known fact that
	an association scheme is imprimitive if and only if some idempotent $E_j$ ($1\le j\le d$) has repeated columns (see, e.g., \cite[Theorem~2.1]{Martin2007}).
	Denote by $u_j(a)$ the column of $E_j$ indexed by $a\in X$. If $R_i(a)=R_i(b)$, then for each $0 \le j\le d$
	$$ P_{ji} u_j(a) = A_i u_j(a) = \sum_{(x, a)\in R_i} u_j(x) =  \sum_{(x,b)\in R_i} u_j(x)   = A_i u_j(b) = P_{ji} u_j(b)$$
	so that either $P_{ji}=0$ or $u_j(a)=u_j(b)$.

	\begin{enumerate}
		\item Assume $(X,\cR)$ is the association scheme coming from a distance-regular graph $\Gamma=\Gamma(X,R_1)$ with  distance-$k$ relation $R_k$ for $0 \le k\le d$
		and assume $R_i(a)=R_i(b)$ for distinct vertices $a$ and $b$. Suppose $a$ and $b$ do not belong to a common antipodal fiber in an antipodal system of imprimitivity. Then $\Gamma$ must be bipartite, in which case columns $a$ and $b$ of $E_j$ can be identical only for $j\in \{0,d\}$ (where  $E_0,\ldots,E_d$ are ordered so
		that $P_{01}>P_{11}>\cdots >P_{d1}=-P_{01}$ \cite[Prop.~4.4.7]{Brouwer1989}). But then, except for $d=2$, there is 
		some $j \neq 0,d$ for which $P_{ji}\neq 0$; thus $a=b$ for $d>2$. So bipartite systems of imprimitivity only 
		arise for $d=2$. Viewing complete bipartite graphs as having the antipodal property, we then have that any distinct $a$ and $b$ with  $R_i(a)=R_i(b)$ must belong to the same antipodal fiber, $d$ is even, and $i=d/2$.
		\item Assume $(X,\cR)$ is a $Q$-polynomial (or ``cometric'') association scheme  \cite[Section~2.7]{Brouwer1989}, 
		not a polygon, and $a\neq b$ yet 
		$R_i(a)=R_i(b)$. Then, by Theorem \ref{suzukiimprim}, $(X,\cR)$ is either 
		$Q$-bipartite or $Q$-antipodal. Let $E_0,\ldots,E_d$ be a $Q$-polynomial ordering of the primitive idempotents
		and order relations such that $Q_{01}>Q_{11}>\cdots > Q_{d1}$. If $a$ and $b$ belong to the same fiber of a 
		$Q$-bipartite imprimitivity system, then $d$ must be even and $i=\frac{d}{2}$ by Corollary 4.2 in \cite{Martin2007}. Otherwise, $a$ and $b$ must belong to the same $Q$-antipodal fiber and $u_j(a)=u_j(b)$ only for 
		$j \in \{0,d \}$.  So $P_{ji}=0$ for $1\le j< d$, forcing $(X,R_i)$ to be an imprimitive strongly regular graph 
		(as it is regular with three  eigenvalues). Since the scheme is cometric with an imprimitive strongly regular 
		graph as a basis relation, we must have $d=2$ and $a$ and $b$ are non-adjacent vertices in a complete multipartite graph.
	\end{enumerate}
\end{remark}

\section{The graph homomorphism \texorpdfstring{$\varphi_a$}{phia}}

For $0< i \le d$, let $\Gamma_i=(X,R_i)$ and let $H_i$ denote the \emph{unweighted distribution diagram}\index{unweighted distribution diagram} corresponding to symmetric relation $R_i$ --- that is, $H_i = \Gamma(V,E)$ where $V = \left\{0,\dots,d\right\}$ and $(j,k)\in E$ if and only if $p^k_{ij}>0$. Note $H_i$ need not be simple, in fact, there will be a loop at vertex $j$ whenever $p^j_{ij}>0$. 

\begin{prop}  \label{Pphi}
	For any $a\in X$, the map $\varphi_{a,i} : \Gamma_i \rightarrow H_i$ sending $b\in X$ to $j$ where $(a,b)\in R_j$
	is a graph homomorphism. Under this map, every walk in $\Gamma_i$ projects to a walk in $H_i$ of the same length. As a partial converse, for any $b\in X$ with $(a,b)\in R_{j_0}$ and any walk 
	$$ w = (j_0,j_1,\ldots, j_\ell) $$
	in $H_i$, there is at least one walk $(b=b_0,b_1,\ldots, b_\ell)$  of length $\ell$ in $\Gamma_i$ such 
	that $\varphi_{a,i}(b_s)=j_s$ for each $0\le s \le \ell$.  $\Box$
\end{prop}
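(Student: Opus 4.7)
The plan is to break the proposition into three pieces and use the intersection-number symmetry from Lemma \ref{kitchensink}(iv) as the workhorse connecting the two directions of ``path projection'' with ``path lifting''.

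First I would verify that $\varphi_{a,i}$ is a graph homomorphism on edges. Take $(b,c)\in R_i$ and set $j=\varphi_{a,i}(b)$, $k=\varphi_{a,i}(c)$, so $(a,b)\in R_j$ and $(a,c)\in R_k$. Witnessing the configuration $x=a$, $z=b$, $y=c$ gives $(x,z)\in R_j$, $(z,y)\in R_i$ with $(x,y)\in R_k$, so $p^k_{ji}\ge 1$. By Lemma~\ref{kitchensink}(iv) together with symmetry of the scheme, $p^k_{ji}>0$ is equivalent to $p^k_{ij}>0$, which is exactly the condition for $(j,k)$ to be an edge of $H_i$ (a loop at $j$ if $j=k$). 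So $\varphi_{a,i}$ carries edges to edges, and in particular maps any walk $(b_0,b_1,\ldots,b_\ell)$ in $\Gamma_i$ to the walk $(\varphi_{a,i}(b_0),\ldots,\varphi_{a,i}(b_\ell))$ of the same length in $H_i$.

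The partial converse I would establish by induction on walk length, lifting one step at a time. Suppose a walk $(j_0,\ldots,j_s)$ in $H_i$ has already been lifted to $(b_0,\ldots,b_s)$ in $\Gamma_i$ with $\varphi_{a,i}(b_t)=j_t$ for all $t\le s$. Since $(j_s,j_{s+1})$ is an edge of $H_i$, we have $p^{j_{s+1}}_{i,j_s}>0$; applying Lemma~\ref{kitchensink}(iv) once more (and symmetry) converts this to $p^{j_s}_{j_{s+1},i}>0$. But $p^{j_s}_{j_{s+1},i}$ counts, given the pair $(a,b_s)\in R_{j_s}$ already in hand, the vertices $z$ with $(a,z)\in R_{j_{s+1}}$ and $(z,b_s)\in R_i$. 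Choosing any such $z$ as $b_{s+1}$ extends the lift. The base case $s=0$ is given by the hypothesis $(a,b)\in R_{j_0}$.

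There is no real obstacle here; the whole argument is a careful bookkeeping of the triangle-counting meaning of $p^k_{ij}$, with the only subtlety being the index-swap step. I would make sure to write out the intersection-number symmetry explicitly once at the start and then apply it uniformly in both the ``edge of $H_i$'' direction and the ``lift an edge into $\Gamma_i$'' direction, so that the proof highlights how the unweighted diagram $H_i$ records precisely the local neighborhood pattern around any basepoint $a$.
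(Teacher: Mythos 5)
Your argument is correct and is exactly the intended one: the paper states Proposition \ref{Pphi} with no proof (just a $\Box$), and your edge-witnessing step for the homomorphism plus the one-step-at-a-time lift via the triangle-counting meaning of $p^k_{ij}$ is the standard way to fill it in. One small citation point: for the homomorphism direction the swap $p^k_{ji}>0\Leftrightarrow p^k_{ij}>0$ is just Lemma \ref{kitchensink}\emph{(iii)}, whereas Lemma \ref{kitchensink}\emph{(iv)} (together with \emph{(iii)} and positivity of the valencies) is what you genuinely need in the lifting step to pass from $p^{j_{s+1}}_{i,j_s}>0$ to $p^{j_s}_{j_{s+1},i}>0$.
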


We will call $\varphi_{a,i}$ the \emph{projection map}\index{projection map} and will omit the second subscript when 
it is clear from the context.

For vertices $x$ and $y$ in  an undirected graph $\Gamma$, we use $d_\Gamma(x,y)$ 
to denote the path-length distance from $x$ to $y$ in  $\Gamma$, setting $d_\Gamma(x,y)=\infty$ when no path from
$x$ to $y$ exists in $\Gamma$.

\begin{lem} \label{LdiamGamma}
	Let $(X,\cR)$ be a symmetric association scheme and, for some $0<i\le d$, let $\Gamma=\Gamma(X,R_i)$ with corresponding unweighted distribution diagram $H$. If $\Gamma$ is connected, then for $(a,b)\in R_j$, $d_\Gamma(a,b)=d_H(0,j)$.
\end{lem}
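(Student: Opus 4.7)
The goal is to prove the two inequalities $d_H(0,j)\le d_\Gamma(a,b)$ and $d_\Gamma(a,b)\le d_H(0,j)$ separately, both by exploiting Proposition~\ref{Pphi} together with the intersection-number structure of the scheme.

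For the lower bound, note that since $\Gamma$ is connected, $d_\Gamma(a,b)=\ell<\infty$. Fix any path $a=x_0,x_1,\dots,x_\ell=b$ in $\Gamma$. Applying the graph homomorphism $\varphi_{a}$ vertex-wise yields a walk $0=\varphi_a(a),\varphi_a(x_1),\dots,\varphi_a(b)=j$ in $H$ of length $\ell$, so $d_H(0,j)\le\ell=d_\Gamma(a,b)$.

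For the upper bound, rather than appealing directly to the lifting half of Proposition~\ref{Pphi} (which would only place us at \emph{some} vertex in $R_j(a)$, not at the specific $b$), the plan is to prove by strong induction on $\ell$ the uniform statement:
\[
\text{for every } j \text{ with } d_H(0,j)\le\ell \text{ and every } (a,b)\in R_j,\quad d_\Gamma(a,b)\le\ell.
\]
The base case $\ell=0$ is immediate since $d_H(0,j)=0$ forces $j=0$, whence $a=b$. For the inductive step, suppose $d_H(0,j)=\ell\ge 1$ and pick an $H$-neighbor $j'$ of $j$ with $d_H(0,j')=\ell-1$; such $j'$ exists by definition of $d_H$. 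The defining condition on $H_i$ gives $p^{j}_{ij'}>0$, and hence $p^{j}_{j'i}>0$ by the commutativity of the scheme. Therefore, for the given pair $(a,b)\in R_j$, there is some $c\in X$ with $(a,c)\in R_{j'}$ and $(c,b)\in R_i$. The inductive hypothesis applied to the pair $(a,c)\in R_{j'}$ yields $d_\Gamma(a,c)\le\ell-1$, and since $c\sim b$ in $\Gamma$ we conclude $d_\Gamma(a,b)\le d_\Gamma(a,c)+1\le\ell=d_H(0,j)$.

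There is no substantial obstacle: both directions reduce to straightforward formal consequences of Proposition~\ref{Pphi}. The only piece of bookkeeping worth noting is the mild reindexing that turns the edge relation ``$p^j_{ij'}>0$'' in $H_i$ into a useful triangle $(a,c,b)$ with $(a,c)\in R_{j'}$ and $(c,b)\in R_i$; this relies on nothing more than the identity $p^j_{ij'}=p^j_{j'i}$ available in the symmetric (hence commutative) setting of the lemma.
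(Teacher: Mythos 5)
Your proof is correct. The lower bound $d_H(0,j)\le d_\Gamma(a,b)$ is exactly the paper's argument: project a shortest $a b$-path through $\varphi_a$. For the upper bound, however, you took a small detour. The paper simply lifts a shortest path in $H$ from $j$ to $0$ via $\varphi_{a,i}^{-1}$ starting at $b$; your worry that this ``would only place us at \emph{some} vertex in $R_j(a)$, not at the specific $b$'' evaporates once you notice that the lift is run in the direction ending at index $0$, and $R_0(a)=\{a\}$ is a singleton, so the lifted walk is forced to terminate at $a$ itself. Your induction on $\ell=d_H(0,j)$ is a perfectly valid workaround: choosing $j'\sim j$ in $H$ with $d_H(0,j')=\ell-1$ and using $p^{j}_{j'i}=p^{j}_{ij'}>0$ to produce $c$ with $(a,c)\in R_{j'}$ and $(c,b)\in R_i$ is precisely the single-step content of the lifting in Proposition~\ref{Pphi}, just unrolled by hand and anchored at $b$ at every stage. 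So the two arguments are morally identical; the paper's is shorter because it trusts the lifting proposition end to end, while yours reproves that lifting in the one place where the endpoint matters. No gap either way.
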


\begin{proof}
	A shortest path in $H$ from $j$ to $0$ lifts via $\varphi_{a,i}^{-1}$ to a walk in $\Gamma$ from $b$ to
	a vertex in $R_0(a)$ --- i.e., lifts to a walk from $b$ to $a$ --- of length $d_H(j,0)$. 
	Conversely, each path from $b$ to $a$ in $\Gamma$ projects to a walk of the same length from 
	$j$ to $0$ in $H$.
\end{proof}

\section{The decomposition \texorpdfstring{$\{ I_a, U_a, W_a\}$}{{Ia, Ua, Wa}}}
\label{Sec:IUW}

For simplicity, we henceforth take $\Gamma=\Gamma(X,R_1)$ with unweighted distribution diagram $H=H_1$ in the symmetric association scheme  $(X,\cR)$. We assume throughout the remainder of the chapter
that $\Gamma$ itself is a connected graph.
We will compare the graphs $\Gamma_a := \Gamma \setminus a^\bot$ and $H' :=H \setminus \{0,1\}$ and show that, with known exceptions, one is connected if and only if the other is connected. One direction is straightforward.

\begin{prop}
	\label{PH'disconn}
	If $H'$ is not a connected graph, then for any $a\in X$, $\Gamma_a$ is also disconnected. If 
	$i$ and $j$ are in distinct components of $H'$, then   $\Gamma_a$  contains no path from $R_i(a)$ to $R_j(a)$.
\end{prop}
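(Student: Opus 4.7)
The plan is to exploit the projection homomorphism $\varphi_a := \varphi_{a,1} : \Gamma \to H$ supplied by Proposition \ref{Pphi}. The key observation is that the set deleted to form $\Gamma_a$, namely $a^\perp = R_0(a) \cup R_1(a)$, is precisely the preimage under $\varphi_a$ of $\{0,1\}$, the set of vertices removed from $H$ to form $H'$. Thus $\varphi_a$ restricts to a graph homomorphism $\Gamma_a \to H'$, and every walk in $\Gamma_a$ projects to a walk of the same length in $H'$.

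I would first establish the second (more refined) statement, since the first follows from it. Suppose, toward a contradiction, that for some $a \in X$ there is a path $b_0, b_1, \ldots, b_\ell$ in $\Gamma_a$ with $b_0 \in R_i(a)$ and $b_\ell \in R_j(a)$, where $i$ and $j$ lie in distinct components of $H'$. Applying Proposition \ref{Pphi}, the image sequence $\varphi_a(b_0), \varphi_a(b_1), \ldots, \varphi_a(b_\ell)$ is a walk of length $\ell$ in $H$ from $i$ to $j$. Since each $b_s$ lies in $\Gamma_a$, we have $b_s \notin R_0(a) \cup R_1(a)$, whence $\varphi_a(b_s) \in \{2, \ldots, d\}$ for every $s$. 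The walk therefore lies entirely in $H'$, contradicting the hypothesis that $i$ and $j$ are in different components.

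For the first statement, assuming $H'$ is disconnected, I would choose indices $i, j \in \{2, \ldots, d\}$ lying in distinct components of $H'$. Since $|R_i(a)| = k_i = p_{ii}^0 > 0$ and similarly $|R_j(a)| > 0$, both neighborhoods are nonempty, and the second statement guarantees no path in $\Gamma_a$ joins $R_i(a)$ to $R_j(a)$. Hence $\Gamma_a$ is disconnected.

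There is essentially no obstacle in this argument: once the homomorphism $\varphi_a$ is in hand and the identification $\varphi_a^{-1}(\{0,1\}) = a^\perp$ is noted, both statements follow immediately. The proposition is really a warm-up for the harder converse direction (the equivalence (1)--(4) of Theorem \ref{Tmain}), where one must lift connectivity of $H'$ back to connectivity of $\Gamma_a$; that lifting will require genuinely new ideas such as the decomposition $\{I_a, U_a, W_a\}$ introduced in Section \ref{Sec:IUW}.
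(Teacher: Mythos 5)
Your argument is correct and is essentially identical to the paper's proof: both project a hypothetical path in $\Gamma_a$ through $\varphi_a$ to a walk in $H$, and observe that avoiding $a^\bot$ forces the projected walk to stay in $H'$, contradicting the assumption that $i$ and $j$ lie in distinct components. Your added remarks (that $\varphi_a^{-1}(\{0,1\}) = a^\bot$ and that $R_i(a), R_j(a)$ are nonempty because $k_i, k_j > 0$) merely make explicit what the paper leaves implicit.
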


\begin{proof}
	Let $x\in R_i(a)$ and $y\in R_j(a)$ and suppose 
	$ x=x_0,x_1,\ldots,x_\ell=y $
	is a path in    $\Gamma_a$. Then
	$ i = \varphi_a(x_0),  \varphi_a(x_1), \ldots,  \varphi_a(x_\ell) = j$
	is a walk from $i$ to $j$ in $H$. Since $H'$ is disconnected, $\varphi_a(x_t)\le 1$ for some $t$ which forces
	$x_t \in a^\bot$, a contradiction.
\end{proof}

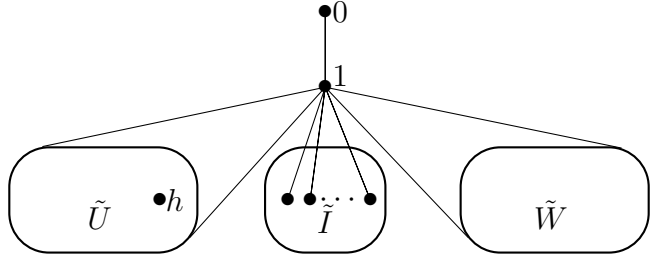
\begin{figure}[!ht]
	\begin{center}
		\begin{tikzpicture}
		\draw[thick,black]
		(-1.2,-0.7) {[rounded corners=15pt] --
			++(2.5,0)  -- 
			++(0,1.4) --
			++(-2.5,0) --
			cycle};
		\draw[thick,black]
		(2.2,-0.7) {[rounded corners=15pt] --
			++(1.6,0)  -- 
			++(0,1.4) --
			++(-1.6,0) --
			cycle};
		\draw[thick,black]
		(4.8,-0.7) {[rounded corners=15pt] --
			++(2.5,0)  -- 
			++(0,1.4) --
			++(-2.5,0) --
			cycle};
		\node (uu) at (0,-0.2) {$\tilde{U}$};
		\node (ii) at (3,-0.25) {$\tilde{I}$};
		\node (ww) at (6,-0.2) {$\tilde{W}$};
		\node (br0) at (3,2.5) {$\bullet$};
		\node (br1) at (3,1.5) {$\bullet$};
		\node (brh) at (0.8,0) {$\bullet$};
		\node (bri) at (2.2,0) {};
		\node (r0) at (3.2,2.5) {$0$};
		\node (r1) at (3.2,1.65) {$1$};
		\node (rh) at (1,0) {$h$};
		\node (bri1) at (2.5,0) {$\bullet$};
		\node (bri2) at (2.8,0) {$\bullet$};
		\node (brid) at (3.2,0) {$\cdots$};
		\node (bri3) at (3.6,0) {$\bullet$};
		\draw (-0.76,0.71) -- (3,1.5) -- (3,2.5) -- (3,1.5) -- (6.94,0.69);
		\draw (1.2,-0.5) -- (3,1.5);
		\draw (4.9,-0.5) -- (3,1.5);
		\draw (2.5,0) -- (3,1.5) --  (2.8,0) -- (3,1.5) --  (3.6,0) -- (3,1.5);
		\end{tikzpicture} 
	\end{center}
	\caption{Graph $H$. Upon deletion of $0$ and $1$, the isolated vertices in $\tI$ are those subconstituents which contain all twins of the basepoint while $\tU$ is the vertex set of a component outside $\tI$ which minimizes $\sum_{i\in \tU } k_i$.
	}\label{FigH}
\end{figure}

\begin{prop}
	\label{Pxya}
	If $x$ and $y$ lie in distinct components of  $\Gamma_a$, then $\Gamma(x)\cap \Gamma(y) \subseteq \Gamma(a)$. $\Box$
\end{prop}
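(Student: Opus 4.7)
The plan is to prove the contrapositive directly: I would fix a vertex $z \in \Gamma(x) \cap \Gamma(y)$ and show that if $z \notin \Gamma(a)$, then $x$ and $y$ must lie in the same component of $\Gamma_a = \Gamma \setminus a^\bot$, contradicting the hypothesis. The argument reduces to observing that a length-2 path $x \,\text{-}\, z \,\text{-}\, y$ is automatically contained in the induced subgraph $\Gamma_a$ as soon as all three endpoints avoid $a^\bot$.

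Concretely, my first step would be to note that $z \neq a$. For if $z = a$, then $z \in \Gamma(x)$ would give $x \in \Gamma(a) \subseteq a^\bot$, contradicting the fact that $x$ is a vertex of $\Gamma_a$. Second, by assumption $z \notin \Gamma(a)$, so combined with $z \neq a$ we conclude $z \notin a^\bot$, hence $z$ is a vertex of $\Gamma_a$. Finally, since $x, y, z$ are all vertices of $\Gamma_a$ and $\Gamma$-edges $\{x,z\}$ and $\{z,y\}$ are present (by $z \in \Gamma(x) \cap \Gamma(y)$), these edges survive in the induced subgraph $\Gamma_a$, so $x\,\text{-}\,z\,\text{-}\,y$ is a path in $\Gamma_a$ connecting $x$ and $y$. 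This contradicts the assumption that $x$ and $y$ lie in distinct components of $\Gamma_a$. Therefore every $z \in \Gamma(x) \cap \Gamma(y)$ must satisfy $z \in \Gamma(a)$.

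There is essentially no obstacle here: the entire content of the proposition is the trivial observation that common neighbors of two vertices of $\Gamma_a$ lying outside $a^\bot$ would immediately reconnect them. The statement is really a setup lemma for the more substantive later arguments (e.g., showing that only one component of $\Gamma_a$ can be non-singleton, or deriving structural restrictions via the intersection numbers $p_{11}^k$ applied to common neighborhoods of $x$ and $y$). Accordingly, I would keep the proof to two or three sentences and move on.
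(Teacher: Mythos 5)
Your proof is correct and is exactly the one-line argument the paper intends — the proposition is stated with an immediate $\Box$ and no written proof, precisely because a common neighbor $z\notin a^\bot$ would give the path $x\,\text{-}\,z\,\text{-}\,y$ inside $\Gamma_a$. Your extra care in ruling out $z=a$ is a correct (if nearly automatic) detail, and nothing more needs to be said.
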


For $\tU \subseteq \{0,1,\ldots, d\}$, note that $| \varphi_a^{-1}(\tU ) | = \sum_{i\in \tU } k_i$.
We now assume that $H'$ is disconnected and we define a decomposition of its vertex set. Let
$$ \tI = \{ i > 0 \mid  p_{11}^i = p_{11}^0 \} . $$
Now the set $\{2,\ldots,d\} \setminus \tI$ decomposes naturally into the vertex sets of the connected components of $H'$, excluding the isolated vertices in $\tI$. Let $\tU$ be the vertex set of some
component of $H' \setminus \tI$ such that $| \varphi_a^{-1}(\tU ) |$ is minimized. Let $\tW = \{2,\ldots, d\} \setminus \left( \tI \cup \tU \right)$ as depicted in Figure \ref{FigH}. For 
$x\in X$, set 
$$I_x = \varphi_x^{-1}(\tI ),  \qquad U_x = \varphi_x^{-1}(\tU ),   \qquad W_x = \varphi_x^{-1}(\tW ) $$
and note that $| I_x |$, $|U_x|$, and $|W_x |$ are independent of the choice of $x\in X$. Observe that $x$ and $y$ are twins if and only if $y \in I_x$.  While our basepoint will vary in what follows, our choice of $\tU$, $\tW$ and 
$\tI$ will remain fixed for this connected graph $\Gamma$.

\begin{lem}  \label{LUdist2}
	If $\tW \neq \emptyset$, then for every $u\in U_x$, $d_\Gamma(x,u)=2$.
\end{lem}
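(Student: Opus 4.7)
My plan is to reduce the statement to showing that every $i\in \tU$ is adjacent to $1$ in $H$, and then derive the conclusion by contradiction using the structural constraints accumulated so far together with the minimality of $|\varphi_x^{-1}(\tU)|$.

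First I would apply Lemma~\ref{LdiamGamma}: for $u \in R_i(x)$ with $i\in\tU$, $d_\Gamma(x,u) = d_H(0,i)$, and since vertex $0$ is adjacent only to $1$ in $H$, $d_H(0,i) = 1 + d_H(1,i)$. So it suffices to prove $p^i_{11}>0$ for every $i\in \tU$. I would assume for contradiction that $p^i_{11}=0$ for some $i^*\in \tU$ and fix $u\in R_{i^*}(x)$. The $k_1$ neighbors of $u$ split according to their relation to $x$: $u$ has $p^{i^*}_{11}=0$ neighbors in $\Gamma(x)$; no neighbors in $I_x$ (a vertex of $I_x$ is a twin of $x$, hence has all neighbors in $\Gamma(x)\not\ni u$); and no neighbors in $W_x$ by Proposition~\ref{PH'disconn}. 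Therefore $\Gamma(u)\subseteq U_x$, and swapping the roles of $x,u$ yields the symmetric inclusion $\Gamma(x)\subseteq U_u$.

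Next I would extend the symmetry to the twin classes. For $z\in I_x$, a case analysis on the relation $(u,z)\in R_\ell$ rules out $\ell\in \{0,1\}\cup \tI\cup \tW$: the case $\ell\in\tW$ fails because $\Gamma(z)=\Gamma(x)\subseteq U_u$ would force some $y\in\Gamma(z)$ to be at a $\tU$-relation from $u$ and at relation $1$ from $z$, giving a nonzero $p^{\ell}_{1i'}$ with $\ell\in\tW$, $i'\in \tU$ and contradicting that $\ell,i'$ lie in different components of $H'$. Thus $I_x\subseteq U_u$ and symmetrically $I_u\subseteq U_x$. Combining all of these,
\[
\{x\}\cup\Gamma(x)\cup I_x\ \subseteq\ U_u,\qquad \{u\}\cup\Gamma(u)\cup I_u\ \subseteq\ U_x,
\]
so $|U_x|=|U_u|\ge 1+k_1+|I_x|$. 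Exactly the same line of reasoning applied to a vertex $w\in W_x$ (which exists because $\tW\neq\emptyset$) shows $w\notin I_u$, hence $W_x\subseteq U_u\cup W_u$ and symmetrically $W_u\subseteq U_x\cup W_x$.

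For the final contradiction my plan is to use the minimality of $|\varphi_x^{-1}(\tU)|$, which gives $|U_x|\le|W_x|$. The inclusions above already account for $1+k_1+|I_x|$ vertices of $U_u$ lying in $\{x\}\cup\Gamma(x)\cup I_x$, and the overlap between the two decompositions forces $|W_x\cap U_u| = |W_u\cap U_x|$. I would then count edges of $\Gamma$ incident to $U_x\cup W_x$, using that every $v$ in the ``deep'' subset $T=\{i\in\tU:p^i_{11}=0\}$ has $\Gamma(v)\subseteq U_x$ while each $w\in W_x$ can still use neighbors in $\Gamma(x)$ when $p^j_{11}>0$; together with $|U_x|\le|W_x|$ this produces the strict inequality $|W_x|<|U_x|$ and contradicts the choice of $\tU$. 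The main obstacle is the last step: extracting the edge-counting inequality cleanly, in particular verifying that the symmetric ``core'' $1+k_1+|I_x|$ embedded in $U_u$ cannot be compensated by the $W_u\cap U_x$/$W_x\cap U_u$ exchange without forcing $\tU$ to be strictly larger than every component of $\tW$.
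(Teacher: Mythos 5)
Your reduction is sound: since $d_\Gamma(x,u)=d_H(0,i)$ and $0$ is adjacent only to $1$ in $H$, the claim is equivalent to $p^i_{11}>0$ for all $i\in\tU$, and your opening inclusions $\Gamma(u)\subseteq U_x$, $\Gamma(x)\subseteq U_u$, $I_x\subseteq U_u$ are all correct under the contradiction hypothesis $p^{i^*}_{11}=0$. But the proof does not close. The inclusions you establish only give $|U_u|\ge 1+k_1+|I_x|+|W_x\cap U_u|$ together with $W_x\subseteq U_u\cup W_u$, and since $|U_x|=|U_u|$, $|W_x|=|W_u|$, $|I_x|=|I_u|$ and $|W_x\cap U_u|=|W_u\cap U_x|$, these relations are mutually consistent (they merely force $|W_x\cap W_u|\ge 1+k_1+|I_x|$) and yield no contradiction with $|U_x|\le|W_x|$. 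The edge-counting finish you sketch is not carried out, and as you note yourself it is the main obstacle; as stated there is no argument that produces $|W_x|<|U_x|$.

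The missing idea is a connectivity step that upgrades $W_x\subseteq U_u\cup W_u$ to $W_x\subseteq U_u$. For $w\in W_x$, a geodesic from $x$ to $w$ projects under $\varphi_x$ to a geodesic in $H$ from $0$ to $\varphi_x(w)\in\tW$, so all of its vertices lie in $\{x\}\cup\Gamma(x)\cup W_x$; none of these meet $u^\bot$ (you already showed $\Gamma(x)\cap\Gamma(u)=\emptyset$ and $W_x\cap\Gamma(u)=\emptyset$, and $x,u$ are in relation $R_{i^*}$ with $i^*\notin\{0,1\}$). Hence $x$ and $w$ lie in the same component of $\Gamma_u$, and since components of $\Gamma_u$ map into single components of $H'$ (Proposition~\ref{PH'disconn}) and $\varphi_u(x)=i^*\in\tU$, we get $\varphi_u(w)\in\tU$. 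Thus $W_x\cup\{x\}\subseteq U_u$, giving $|W_x|+1\le|U_u|=|U_x|\le|W_x|$, a contradiction. This is exactly the paper's argument; note it needs only $\Gamma(x)\cap\Gamma(u)=\emptyset$ for the particular pair $x,u$, not the full relation-level statement $p^{i^*}_{11}=0$ (though the two are equivalent here).
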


\begin{proof}
	By way of contradiction, assume $u \in U_x$ with $\Gamma(x) \cap \Gamma(u) = \emptyset$. For any
	$w\in W_x$, we note that $\Gamma$ contains an $xw$-path which does not pass through $u^\bot$.
	So $x$ and $w$ lie in the same connected component of $\Gamma_u$. But if $(x,u)\in R_h$ then
	$h\in \tU$ so $x\in U_u$ by symmetry. It follows that $W_x \cup \{x\} \subseteq U_u$. But this contradicts
	$| \varphi_u^{-1}(\tU ) | \le | \varphi_x^{-1}(\tW ) |$.
\end{proof}

\subsection{Comparing the view from multiple basepoints}
In this section we will select $a,b\in X$ with $b\in U_a$ and compare components of $\Gamma_a$ against those of $\Gamma_b$, defining vertex sets $V\Delta$, $Y_a$, and $Z_a$ relative to the pair $a,b$.

\begin{prop}  \label{PW_b}
	For any $a\in X$ and any $b\in U_a$, we have $W_a \cap I_b = \emptyset$.
\end{prop}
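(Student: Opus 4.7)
The plan is to proceed by contradiction: suppose $c \in W_a \cap I_b$, so that $(a,b) \in R_h$ with $h \in \tU$ and $(a,c) \in R_j$ with $j \in \tW$. The key leverage is that $c \in I_b$ forces $c$ and $b$ to be twins --- they are distinct since $W_a \cap U_a = \emptyset$ --- and hence $\Gamma(b) = \Gamma(c)$. I would first observe that $a \notin \Gamma(b)$: otherwise $(a,b) \in R_1$, so $h=1$, contradicting $h \in \tU \subseteq \{2,\dots,d\}$.

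Next I would split on whether $\Gamma(b) \subseteq a^\bot$ or not. Suppose first that some $z \in \Gamma(b)$ lies outside $a^\bot$, and put $k = \varphi_a(z)$, so that $k \geq 2$. Since $\Gamma(b)=\Gamma(c)$, the vertex $z$ is an $R_1$-neighbor of both $b$ and $c$. Using $p_{1k}^\ell = p_{k1}^\ell$ in a symmetric scheme, the existence of $z$ yields $p_{1k}^h > 0$ and $p_{1k}^j > 0$, so both $\{k,h\}$ and $\{k,j\}$ are edges of $H$. Because $h,j,k \geq 2$, both edges survive in $H'$, producing a walk from $h$ to $j$ in $H'$. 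This contradicts the fact that $\tU$ and $\tW$ lie in different connected components of $H'$.

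In the remaining case, $\Gamma(b) \subseteq a^\bot$; combined with $a \notin \Gamma(b)$ this forces $\Gamma(b) \subseteq \Gamma(a)$. Matching valencies $|\Gamma(b)| = k_1 = |\Gamma(a)|$ gives $\Gamma(b) = \Gamma(a)$, so $a$ and $b$ themselves are twins, hence $h \in \tI$, contradicting $h \in \tU$ since $\tU \cap \tI = \emptyset$.

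The main thing to verify is that the intermediate index $k$ produced in the first case really places $h$ and $j$ in the same component of $H'$ (not some component that gets severed by the deletion of $\{0,1\}$); but this is automatic because $h,j \in \{2,\dots,d\}$ by definition of $\tU,\tW$ and we chose $z \notin a^\bot$ precisely to guarantee $k \geq 2$. I do not expect any serious obstacle: the only real content is translating the neighborhood-equality coming from twinning into simultaneous edges of $H'$ at a common vertex $k$.
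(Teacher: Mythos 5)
Your proof is correct and follows essentially the same route as the paper's: both hinge on the twin condition $\Gamma(b)=\Gamma(c)$, the observation that $\Gamma(b)\subseteq U_a\cup\Gamma(a)$, and the impossibility of a common neighbour outside $a^\bot$ linking $\tU$ to $\tW$ in $H'$. Your second case (forcing $a$ and $b$ to be twins when $\Gamma(b)\subseteq\Gamma(a)$) is exactly the paper's valency-matching step, just made explicit.
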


\begin{proof}
	If $x$ and $b$ are twins, then $x$ cannot be a twin of $a$ since $b$ is not a twin of $a$. So 
	$\Gamma(x) = \Gamma(b) \subseteq U_a \cup \Gamma(a)$ gives $\Gamma(x) \cap U_a \neq \emptyset$.
	So $x\not\in W_a$.
\end{proof}

Now fix $a\in X$ and choose $b\in U_a$. 
Consider the component $\Delta$ of $\Gamma_b$ containing $a$.
Denote by $V\Delta$ the vertex set of $\Delta$.
Since $b$ and $a$ are not twins, some element of $\Gamma(a)$ is a vertex of $\Delta$ and hence $\Delta$ contains vertices in $W_a$ unless $\tW = \emptyset$. Let $Z_a = V\Delta \cap W_a$ and let $Y_a= W_a \setminus Z_a$. This vertex decomposition is depicted in Figure \ref{FigGamma}. 
Since $b\in U_a$, we have $a\in U_b$ and, since $\Delta$ is connected, $Z_a \subseteq U_b$.

In  
the next two results, we proceed under the assumption that $H'$ is disconnected and that $\tilde{W}$ is non-empty. Further we assume that vertices  $a$ and $b \in U_a$ have been chosen and the 
sets $Y_a$ and $Z_a$ are defined as above relative to this pair of vertices.

\begin{lem} \label{LpathYa}
	Let $w=(u_0,u_1,\ldots,u_\ell)$ be a walk in $\Gamma$ with $u_0\in Y_a$ and $u_\ell$ lying some other component
	of $\Gamma_a$. Let $t \in \{1,\ldots,\ell\}$ be the smallest subscript with $u_t \not\in Y_a$. Then $u_t \in \Gamma(a)$. $\Box$
\end{lem}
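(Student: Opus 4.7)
The plan is to prove the lemma by a case analysis on the location of $u_t$ under the decomposition $X = \{a\} \cup \Gamma(a) \cup I_a \cup U_a \cup Y_a \cup Z_a$. The minimality of $t$ forces $u_{t-1} \in Y_a \subseteq W_a$, and of course $(u_{t-1},u_t) \in R_1$, i.e.\ $u_{t-1}$ and $u_t$ are adjacent in $\Gamma$. Since $u_t \notin Y_a$, I must rule out each of the four possibilities $u_t \in \{a\} \cup I_a \cup U_a \cup Z_a$ in order to conclude $u_t \in \Gamma(a)$.

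The first two cases collapse together. If $u_t = a$ then $u_{t-1} \in \Gamma(a)$; if $u_t \in I_a$ then $u_t$ is a twin of $a$ (by definition of $\tilde{I}$ together with Lemma \ref{Ltwins}), so $\Gamma(u_t) = \Gamma(a)$ and again $u_{t-1} \in \Gamma(a)$. Either way this contradicts $u_{t-1} \in W_a = \varphi_a^{-1}(\tilde{W})$, because $\tilde{W} \subseteq \{2,\dots,d\}$ and $1 \notin \tilde{W}$. The case $u_t \in U_a$ is handled directly by the projection map $\varphi_a$: writing $(a,u_{t-1}) \in R_i$ with $i \in \tilde{W}$ and $(a,u_t) \in R_j$ with $j \in \tilde{U}$, the edge $u_{t-1} u_t$ of $\Gamma$ projects to an edge $ij$ of $H$, so $p^i_{1j} > 0$. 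Since $i,j \notin \{0,1\}$, this edge survives in $H' = H \setminus \{0,1\}$, contradicting the hypothesis that $\tilde{W}$ and $\tilde{U}$ lie in distinct connected components of $H'$.

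The last case, $u_t \in Z_a = V\Delta \cap W_a$, is the subtle one and is where the specific choice of component $\Delta$ of $\Gamma_b$ enters. Here $u_t \in V\Delta$ while $u_{t-1} \in Y_a$ is by construction not in $V\Delta$. Because $V\Delta \subseteq X \setminus b^\bot$ and $u_{t-1} \sim u_t$ in $\Gamma$, if $u_{t-1}$ were also a vertex of $\Gamma_b$ the edge $u_{t-1} u_t$ would survive in $\Gamma_b$, forcing $u_{t-1} \in V\Delta$ --- a contradiction. Hence $u_{t-1} \in b^\bot$, and since $u_{t-1} \in W_a$ while $b \in U_a$ we must have $u_{t-1} \in \Gamma(b)$. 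Then $(b,u_{t-1}) \in R_1$ together with $(a,b) \in R_j$, $j \in \tilde{U}$, and $(a,u_{t-1}) \in R_i$, $i \in \tilde{W}$, again produces an edge $ij$ in $H'$ between the components $\tilde{U}$ and $\tilde{W}$, a contradiction. The main obstacle in the proof is organizing this last case correctly --- in particular, realizing that $u_{t-1}$ must actually be a neighbor of $b$, so that the component argument for $\Delta$ translates back into an adjacency that the distribution diagram $H$ forbids.
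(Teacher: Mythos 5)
Your proof is correct, and it follows the argument the authors evidently had in mind: the paper states Lemma \ref{LpathYa} with only a $\Box$, treating it as immediate from the decomposition $X=\{a\}\cup\Gamma(a)\cup I_a\cup U_a\cup Y_a\cup Z_a$ and the fact that any edge of $\Gamma$ between $\varphi_a^{-1}(i)$ and $\varphi_a^{-1}(j)$ forces an edge $ij$ of $H$, which for $i\in\tilde{W}$, $j\in\tilde{U}\cup\tilde{I}$ would contradict the component structure of $H'$. Your handling of the one genuinely non-obvious case, $u_t\in Z_a$ — forcing $u_{t-1}\in\Gamma(b)$ and then reading off the forbidden edge of $H'$ (equivalently, invoking $\Gamma(b)\subseteq\Gamma(a)\cup U_a$) — is exactly the right supplementary observation, so the case analysis is complete.
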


\begin{lem} \label{LYZsplit}
	For $0\le i \le d$, $R_i(a) \cap Y_a\neq \emptyset$ implies $R_i(a) \cap Z_a \neq \emptyset$. So no subconstituent of $\Gamma$ with respect to $a$ is entirely contained in $Y_a$.
\end{lem}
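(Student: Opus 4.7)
The plan is to show $R_i(a) \cap V\Delta \neq \emptyset$ for every $i \in \tilde{W}$, which is equivalent to the stated implication since $R_i(a) \subseteq W_a = Y_a \cup Z_a$ whenever $i \in \tilde{W}$ (and the implication is vacuous otherwise). My strategy is to exhibit such a vertex as the endpoint $v_d$ of a walk $a = v_0, v_1, \ldots, v_d$ in $\Gamma$, of length $d = d_H(0,i)$, that projects via $\varphi_a$ to a shortest walk in $H$ from $0$ to $i$ and that stays outside $b^\bot$ from step one onward; any such $v_d$ will lie in $V\Delta$ by virtue of the edge $v_{d-1} v_d$ belonging to $\Gamma_b$.

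The first step is free. Since $\tilde{W} \neq \emptyset$, Lemma \ref{LUdist2} yields $d_\Gamma(a,b)=2$, and hence $|\Gamma(a) \cap \Gamma(b)| = p_{11}^h$, where $h$ is the relation of $(a,b)$. Because $h \in \tilde{U}$ is disjoint from $\tilde{I}$, we have $p_{11}^h < k_1$, so $\Gamma(a) \setminus \Gamma(b) \neq \emptyset$; any $v_1$ chosen here lies in $V\Delta$. Next, because $H$ is connected, the vertex $1$ has at least one neighbor in the $H'$-component $C$ containing $i$, so I may select a shortest walk $0, 1, j_2, \ldots, j_d = i$ in $H$ with $j_t \in C \subseteq \tilde{W}$ for every $t \geq 2$. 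Finally, given $v_t \in V\Delta$, the inductive step seeks $v_{t+1} \in R_{j_{t+1}}(a) \cap \Gamma(v_t)$ outside $b^\bot$; the total number of candidates is $p_{1,j_{t+1}}^{j_t} > 0$, and since $v_t \not\sim b$ the bad candidates lie in $\Gamma(v_t) \cap \Gamma(b) \cap R_{j_{t+1}}(a)$.

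The hard part will be ruling out the possibility that at some step every extension lies in $b^\bot$. I expect to overcome this via a global double-count of all lifts $(v_1, \ldots, v_d)$ of the fixed $H$-walk with $v_1 \in \Gamma(a) \setminus \Gamma(b)$, compared against the subcount of lifts forced through $b^\bot$. Using the intersection identities from Lemma \ref{kitchensink} (in particular $p_{ij}^\ell k_\ell = p_{i\ell}^j k_j$), the lower bound $k_1 - p_{11}^h > 0$ for the starting pool, and the minimality defining $\tilde{U}$ (chosen so that $\sum_{j \in \tilde{U}} k_j$ is smallest among components of $H' \setminus \tilde{I}$), one should find the good count strictly positive, supplying the required $x = v_d$. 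An alternative, possibly more elegant, route is to argue inductively using Lemma \ref{LpathYa}: if every extension from some $v_t$ landed in $b^\bot$, one could reroute through $\Gamma(a)$ to obtain either a shorter path to $R_i(a)$ (contradicting $d = d_H(0,i)$) or a walk leaving $Y_a$ through a vertex outside $\Gamma(a)$ (contradicting the exit rule of Lemma \ref{LpathYa}).
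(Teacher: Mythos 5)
Your construction is set up the same way as the paper's proof --- lift a shortest $H$-walk $0,1,j_2,\ldots,j_d=i$ with $j_t$ in the $H'$-component of $i$ for $t\ge 2$, seeded by a vertex $v_1\in\Gamma(a)\setminus\Gamma(b)$ --- but the proof is not complete, and the place where it stalls is exactly where the paper's argument is a one-line observation rather than a counting problem. The point you are missing is that $\Gamma(b)\subseteq \Gamma(a)\cup U_a$. This follows immediately from $b\in U_a$: projecting by $\varphi_a$, every neighbor of $b$ maps to an $H$-neighbor of $h=\varphi_a(b)\in\tilde{U}$, and since $\tilde{U}$ is a connected component of $H'$ (vertices of $\tilde{I}$ are isolated in $H'$, being adjacent only to $1$ in $H$, and $0$ is adjacent only to $1$), those $H$-neighbors lie in $\tilde{U}\cup\{1\}$. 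Consequently $\Gamma(b)\cap W_a=\emptyset$. Since every $j_t$ with $t\ge 2$ lies in $\tilde{W}$, \emph{every} candidate $v_{t+1}\in R_{j_{t+1}}(a)\cap\Gamma(v_t)$ automatically lies in $W_a$ and hence outside $b^\bot$; there are no ``bad candidates'' to exclude, and the existence of the lift at each step is just $p_{1,j_{t+1}}^{j_t}>0$. The only step requiring any care is the choice of $v_1$, which you handle correctly.

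As written, your ``hard part'' is therefore a phantom, and neither of the two escape routes you sketch is carried out or needed. The global double-count is not executed (and the minimality of $\sum_{j\in\tilde{U}}k_j$, which you invoke, plays no role in this lemma --- it is used in Lemma \ref{LUdist2}, not here); the rerouting argument via Lemma \ref{LpathYa} is too vague to assess and risks circularity, since Lemma \ref{LpathYa} concerns walks starting in $Y_a$, not the lifted walk you are building from $a$. One further cosmetic difference from the paper: rather than picking an abstract shortest walk in $H$, the paper obtains the sequence $(j_0,\ldots,j_\ell)$ by projecting an actual shortest $\Gamma$-path from $a$ to a given $y\in Y_a$ (whose interior lies in $Y_a$ by Lemma \ref{LpathYa}), which guarantees $j_t\in\tilde{W}$ for $t\ge 2$ without appealing to connectivity of $H$; your version of this step is also valid. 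Insert the containment $\Gamma(b)\subseteq\Gamma(a)\cup U_a$ and your argument closes.
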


\begin{proof}
	Let $y \in Y_a$ and consider a shortest $ya$-path $\pi$ in $\Gamma$, of length $\ell$ say, and label its vertices as follows: $\pi=(y=v_\ell,v_{\ell-1},\ldots,v_1,v_{0} = a)$.
	Then, by Lemma \ref{LpathYa}, $v_s \in Y_a$ for  $1 < s \le \ell$.  Consider $j_s=\varphi_a(v_s)$, $0\le s \le \ell$, and assume $j_\ell=i$. Then we have $p_{1,j_{s+1}}^{j_s} >0$ for $0\le s < \ell$.  Note $j_0=0$ and $j_1=1$. Now we lift the walk $(j_0,\ldots,j_\ell)$ in $H$ to a different walk in $\Gamma$. Since $a$ and $b$ are not twins,
	we may choose $v'_1 \in \Gamma(a) \setminus \Gamma(b)$.
	Since $p_{1j_2}^{1}>0$, there exists $v'_2\in R_{j_2}(a)$ with $v'_2$ adjacent to  $v'_1$ in $\Gamma$. Continuing in this manner, we
	may construct a walk $\pi'=(a=v'_0,v'_1,\ldots, v'_\ell)$ in $\Gamma$ with $\varphi_a(v'_s)=j_s$. 
	Since $\Gamma(b) \subseteq \Gamma(a) \cup U_a$, none of the vertices $v'_s$ lie in 
	$\Gamma(b)$, so the entire walk $\pi'$ is contained in one component of $\Gamma_b$. By definition of $Z_a$, we then have $v'_\ell \in Z_a \cap R_i(a)$.
\end{proof}

\begin{figure}[!ht]
	\begin{center}
		\begin{tikzpicture}
		\draw[thick,black]
		(0,0) {[rounded corners=15pt] --  ++(2.25,0)  --   ++(0,5) --  ++(-2.25,0) --  cycle};
		\draw[thick,black]
		(2.5,0) {[rounded corners=15pt] --  ++(1.5,0)  --   ++(0,5) --  ++(-1.5,0) --  cycle};
		\draw[thick,black]
		(4.25,0) {[rounded corners=15pt] --  ++(2.25,0)  --   ++(0,5) --  ++(-2.25,0) --  cycle};
		\draw[thick,black]
		(0,5.25) {[rounded corners=15pt] --  ++(6.5,0)  --   ++(0,1) --  ++(-6.5,0) --  cycle};  
		\draw[thick,black]  (4.25,4) -- (6.5,2);
		\draw[thick,black]  (0.9,0) -- (0.9,5);
		\draw[thick,black]  (0.9,5.25) -- (0.9,6.25);
		\node (bra) at (3.25,6.6) {$\bullet$};  \node (ra) at (3.5,6.6) {$a$};
		\node (brb) at (0.25,2.5) {$\bullet$}; \node (rb) at (0.5,2.5) {$b$};
		\node (li) at (3.2,5.75) {$\Gamma(a)$};
		\node (lu) at (1.35,0.75) {$U_a$};
		\node (li) at (3.2,0.75) {$I_a$};
		\node (lw) at (5.45,0.75) {$W_a$};
		\node (ly) at (5,2.75) {$Y_a$};
		\node (lz) at (5.5,3.5) {$Z_a$};
		\node (lbb1) at (0.55,3.5) {$b^\bot$};
		\node (lbb2) at (0.55,5.75) {$b^\bot$};
		\fill[gray,opacity=.5]   
		(0.9,5) {[rounded corners=15pt] --   (2.25,5)  -- (2.25,3.5) -- (0.9,3.5)} -- cycle; 
		\fill[gray,opacity=.5]  
		(4.25,4) {[rounded corners=15pt] --  (4.25,5)  -- (6.5,5)}  -- (6.5,2) --  cycle;
		\fill[gray,opacity=.5]   
		(0.9,5.25) {[rounded corners=15pt] --   (6.5,5.25)  --  (6.5,6.25)} -- (0.9,6.25) -- cycle;
		\fill[gray,opacity=.5]   
		(2.5,0) {[rounded corners=15pt] --   ++(1.5,0)  --   ++(0,5) --  ++(-1.5,0) --  cycle};
		\end{tikzpicture} 
	\end{center}
	\caption{This diagram depicts $\Gamma$ as decomposed relative to basepoint $a$. In $\Gamma_b$,
		vertex $a$ belongs to component $\Delta$, whose vertex set is indicated by the shaded region. 
		The set $W_a$ splits into $Z_a$ and $Y_a$ according to membership in $V\Delta$.
		\label{FigGamma}}
\end{figure}
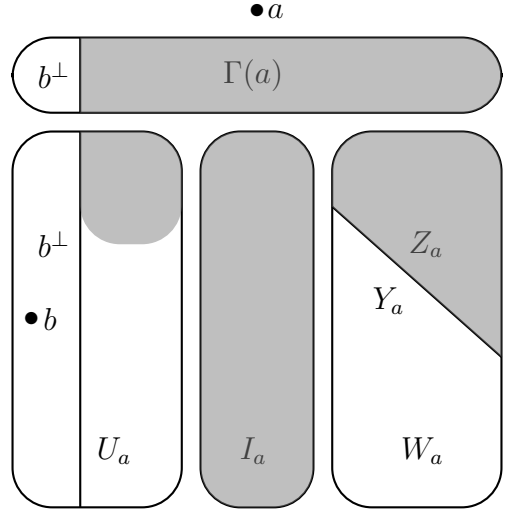

\begin{lem} \label{LWa-empty}
	If $\tW \neq \emptyset$, then $\Gamma$ has diameter two; i.e., $p_{11}^i > 0$ for all $i>1$.
\end{lem}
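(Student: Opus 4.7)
The plan is to verify $p_{11}^i > 0$ for each $i > 1$ by splitting into cases according to whether $i \in \tI$, $\tU$, or $\tW$. For $i \in \tI$, every vertex of $R_i(a)$ is a twin of $a$, so $\Gamma(b) = \Gamma(a)$ for $b \in R_i(a)$ and $p_{11}^i = k_1 > 0$. For $i \in \tU$, Lemma~\ref{LUdist2} gives $d_\Gamma(a, u) = 2$ for each $u \in U_a$, so a common neighbor of $a$ and $u$ exists. The substantive case is $i \in \tW$, which I would handle by contradiction, assuming $p_{11}^i = 0$.

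First observe that $\tW \neq \emptyset$ forces $\tU \neq \emptyset$, so one may fix $a \in X$ and $b \in U_a$ and invoke the machinery of Section~\ref{Sec:IUW}. The key structural observation is the containment $\Gamma(b) \subseteq \Gamma(a) \cup U_a$. To establish this, set $j_1 = \varphi_a(b) \in \tU$ and consider any $c \in \Gamma(b)$; the triangle on $a, b, c$ forces $(j_1, \varphi_a(c))$ to be an edge of $H$. Examining cases: $\varphi_a(c) = 0$ is impossible since $j_1 \neq 1$; $\varphi_a(c) \in \tI$ is impossible because $\tI$-vertices are $H$-adjacent only to $1$; and $\varphi_a(c) \in \tW$ would give an $H$-edge between $\tU$ and $\tW$ whose endpoints both lie in $H'$, contradicting their being different $H'$-components. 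Hence $\varphi_a(c) \in \{1\} \cup \tU$, i.e., $c \in \Gamma(a) \cup U_a$.

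Now apply Lemma~\ref{LYZsplit} to select $w \in R_i(a) \cap Z_a$. Since $Z_a \subseteq V\Delta \subseteq U_b$ by Proposition~\ref{PH'disconn} applied with basepoint $b$, one has $w \in U_b$, and Lemma~\ref{LUdist2} applied to $b$ yields a common neighbor $c \in \Gamma(b) \cap \Gamma(w)$. By the containment above, $c \in \Gamma(a) \cup U_a$. If $c \in \Gamma(a)$ then $c \in \Gamma(a) \cap \Gamma(w)$, contradicting $p_{11}^i = 0$. If instead $c \in U_a$ with $(a, c) \in R_{k_c}$ for some $k_c \in \tU$, then the triangle on $a, c, w$ forces $p_{1, k_c}^i > 0$, making $(k_c, i)$ an $H$-edge whose endpoints both have index at least $2$, hence an $H'$-edge. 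This contradicts $\tU$ and $\tW$ being different components of $H'$.

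The main obstacle will be recognizing that Lemma~\ref{LUdist2} should be invoked at the auxiliary basepoint $b$ rather than at $a$, and exploiting the containment $\Gamma(b) \subseteq \Gamma(a) \cup U_a$ (already used implicitly in the proof of Lemma~\ref{LYZsplit}) to transfer a common $bw$-neighbor into the desired contradiction at $a$.
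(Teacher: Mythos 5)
Your proof is correct and follows essentially the same route as the paper: both arguments fix $b\in U_a$, dispose of the $\tI$ and $\tU$ cases via twins and Lemma \ref{LUdist2}, handle $Z_a$ by applying Lemma \ref{LUdist2} at the basepoint $b$ (using $Z_a\subseteq U_b$) and forcing the common $bw$-neighbor into $\Gamma(a)$, and reduce $Y_a$ to $Z_a$ via Lemma \ref{LYZsplit}. Your contradiction framing and your explicit derivation of $\Gamma(b)\subseteq\Gamma(a)\cup U_a$ replace the paper's direct appeal to Proposition \ref{Pxya}, but these are cosmetic differences.
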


\begin{proof}
	Let $a,x \in X$ with $x \not\in a^\bot$. Let $V\Gamma_a$ be the vertex set of the graph $\Gamma_a$. Choose $b\in U_a$ as above and consider, in turn, each part of
	the decomposition 
	$$ V\Gamma_a \ = \ I_a \ \dot{\cup} \ U_a  \ \dot{\cup} \  Z_a  \ \dot{\cup}  \  Y_a $$
	relative to $a$ and $b$.
	If $x\in I_a$, $\Gamma(x) = \Gamma(a)$; if $x \in U_a$, then $d_\Gamma(a,x)=2$ by Lemma \ref{LUdist2}.
	Next consider $x\in Z_a$. Then $d(x,b)=2$ but $\Gamma(x)\cap \Gamma(b) \subseteq \Gamma(a)$ since 
	$x$ and $b$ lie in distinct components of $\Gamma_a$ (Proposition \ref{Pxya}). Finally, consider $x\in Y_a$ with $(a,x)\in R_i$.
	By Lemma \ref{LYZsplit}, there exists $x'\in Z_a \cap R_i(a)$. Since $x'$ has a neighbor in $\Gamma(a)$,
	$p_{11}^i > 0$ which then implies that some neighbor of $x$ lies in $\Gamma(a)$ as well.
\end{proof}

\begin{thm}  \label{TWempty}
	Let $(X,\cR)$ be any symmetric association scheme and let $\Gamma=\Gamma(X,R_1)$ be
	any connected basis relation. With reference to the above definitions, $\tW = \emptyset$.
\end{thm}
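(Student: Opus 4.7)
The plan is to argue by contradiction: suppose $\tW \neq \emptyset$ and leverage Lemma~\ref{LWa-empty} to derive a contradiction with the minimality baked into the definition of $\tU$. So for the remainder of the argument $\Gamma$ has diameter two and $p_{11}^k > 0$ for every $k > 0$. The main geometric picture is to fix $a \in X$, pick $b \in U_a$, and study the component $\Delta$ of $\Gamma_b$ containing $a$. First I would show $V\Delta \subseteq U_b$: any walk in $\Gamma_b$ projects under $\varphi_b$ to a walk in $H'$, and such a projection can never pass through $\tI$, because a vertex in $\varphi_b^{-1}(\tI)$ is a twin of $b$ whose entire open neighbourhood lies in $b^\bot$. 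Since $\tU$ and (each component of) $\tW$ lie in distinct components of $H'\setminus \tI$ and $\varphi_b(a) \in \tU$, the whole projection of $\Delta$ is confined to $\tU$.

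Next I would populate $V\Delta$ with a concrete lower-bound set: $V\Delta$ contains the four pairwise disjoint pieces $\{a\}$, $I_a$, $\Gamma(a)\setminus \Gamma(b)$ and $Z_a$. The only non-routine inclusion is $I_a \subseteq V\Delta$: a twin $v$ of $a$ lies outside $b^\bot$ (since $a\not\sim b$) and is adjacent in $\Gamma_b$ to every element of $\Gamma(a)\setminus\Gamma(b)$, a set that is non-empty precisely because $a$ and $b$ are not twins. Invoking Lemma~\ref{LYZsplit} in its strong form (if $R_k(a) \subseteq Y_a$ then $R_k(a)\cap Z_a \neq \emptyset$, a contradiction), we conclude that $Z_a$ meets every subconstituent $R_k(a)$ with $k \in \tW$. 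Combining these with Step~1 yields the key inequality
\[
1 + |I_a| + (k_1 - p_{11}^h) + |Z_a| \;\le\; |V\Delta| \;\le\; |U_b| \;=\; \sum_{j \in \tU} k_j ,
\]
where $h$ is the index with $(a,b) \in R_h$. A side-observation worth recording here is that $p_{1j}^h = 0$ for every $j \in \tW \cup \tI$ (because $h \in \tU$ lies in a different component of $H'\setminus\tI$), which forces $\Gamma(a)\setminus \Gamma(b) \subseteq U_b$ as a set equality and not merely an inclusion --- a rigidity that will be useful downstream.

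The final paragraph of the plan closes the argument against the minimality of $\tU$. By construction, every connected component $\tV$ of $H' \setminus \tI$ has $k$-weight $\sum_{i\in\tV} k_i \ge \sum_{j\in\tU}k_j$. I would then re-run Step~1 with the basepoint replaced by some $z \in Z_a$: because $z \in W_a$ forces $a \in W_z$, the component of $\Gamma_z$ containing $a$ lies inside $\varphi_z^{-1}(\tV_a)$, where $\tV_a$ is the unique component of $H'\setminus \tI$ containing $\varphi_z(a) \in \tW$. Juggling this symmetric picture against the one at $b$, together with the fact that $|I_x|$, $|U_x|$ and $|W_x|$ are independent of the basepoint $x$, should produce a connected component of $H'\setminus \tI$ whose $k$-weight is strictly smaller than $\sum_{j\in \tU}k_j$, contradicting the choice of $\tU$.

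The expected main obstacle is precisely this last step: converting the rough imbalance of Steps~1–2 into a \emph{strict} inequality among component $k$-weights in $H'\setminus \tI$. The inclusion $V\Delta \subseteq U_b$ is only a set inclusion, and a priori the component $\tV_a \subseteq \tW$ produced by swapping the basepoint could itself be a minimiser of the $k$-weight, in which case one would need a finer argument --- for example, a double count of $\Gamma$-edges between $U_b$ and $\Gamma(a)\setminus\Gamma(b)$, or Lemma~\ref{LUdist2} applied to try to force every $u \in U_a\setminus\{b\}$ with a common neighbour in $\Gamma(a)\setminus\Gamma(b)$ into $V\Delta$ --- to make the size comparison strict.
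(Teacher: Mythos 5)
Your Steps 1 and 2 are sound: the inclusion $V\Delta\subseteq U_b$ follows from the projection map $\varphi_b$ together with the fact that the vertices of $\tI$ are isolated in $H'$, the four pieces $\{a\}$, $I_a$, $\Gamma(a)\setminus\Gamma(b)$, $Z_a$ are indeed pairwise disjoint subsets of $V\Delta$, and the strong form of Lemma~\ref{LYZsplit} does put a point of $Z_a$ in every subconstituent $R_i(a)$ with $i\in\tW$. The problem is that the argument never closes. Your inequality
$1+|I_a|+(k_1-p_{11}^h)+|Z_a|\le\sum_{j\in\tU}k_j$
is a genuine constraint, but it is only an upper bound on a set of vertices none of which lie in $U_a$, so it contradicts nothing by itself; and re-running the computation from a basepoint $z\in Z_a$ produces the \emph{same shape} of inequality with $\sum_{j\in\tU}k_j$ replaced by the weight of the component $\tV_a\subseteq\tW$ containing $\varphi_z(a)$, which is at least $\sum_{j\in\tU}k_j$ by the very definition of $\tU$. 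Two one-sided bounds of the form ``(collection of vertices) $\le$ (component weight)'' cannot be incompatible unless one of them forces a component of weight strictly below the minimum, and nothing in your plan manufactures such a component. You flag this yourself as the ``expected main obstacle,'' and the suggested remedies (a double count of edges, or Lemma~\ref{LUdist2}) are not developed; as written the proof has a genuine gap at its only essential step.

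The paper closes the contradiction by a different and much lighter mechanism, which you may want to compare against your cardinality bookkeeping. It never compares the $k$-weights $\sum_{j}k_j$ of components at all (that comparison is spent once, inside Lemma~\ref{LUdist2}, to get diameter two). Instead it sets $\mu=\min\{p_{11}^i\mid i\in\tU\}$ and $\omega=\min\{p_{11}^i\mid i\in\tW\}$, picks $x\in R_k(a)$ with $p_{11}^k=\mu$, a vertex $a'\in\Gamma(a)\setminus\Gamma(x)$ (available since $x$ is not a twin of $a$), and a neighbour $y\in R_\ell(a)$ of $a'$ with $p_{11}^\ell=\omega$. The walk $a\sim a'\sim y$ avoids $x^\bot$, so $y\in U_x$ and $|\Gamma(x)\cap\Gamma(y)|\ge\mu$; on the other hand $x$ and $y$ lie in different components of $\Gamma_a$, so Proposition~\ref{Pxya} gives $\Gamma(x)\cap\Gamma(y)\subseteq\Gamma(a)\cap\Gamma(y)$, a set of size $\omega$ containing $a'\notin\Gamma(x)$, whence $\omega\ge 1+\mu$. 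Swapping the roles of $\tU$ and $\tW$ gives $\mu\ge 1+\omega$, and these two strict inequalities collide. If you want to salvage your approach, the triangle-count comparison via Proposition~\ref{Pxya} is the missing ingredient; the component-weight comparison alone will not do it.
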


\begin{proof}
	By way of contradiction, assume $\tW \neq \emptyset$ and define
	$$ \mu = \min \{ p_{11}^i \mid i \in \tU \},  \qquad \omega = \min \{ p_{11}^i \mid i \in \tW \}$$
	and select $k\in \tU$ and $\ell \in \tW$ with $p_{11}^k = \mu$ and $p_{11}^\ell = \omega$. Note that
	$\mu,\omega > 0$ by Lemma \ref{LWa-empty}.
	Now choose $a\in X$, and select $x$ in $R_k(a)$.  Since $x$ is not a twin of $a$, we may choose
	$a'\in \Gamma(a) \setminus \Gamma(x)$ and since $p_{1\ell}^1>0$, we may choose $y$ in $R_\ell(a)$ which is a neighbor of 
	$a'$.  Since $\Gamma_x$ contains a path from $a$ to $y$ and $a\in U_x$, we have $y\in U_x$. So
	$| \Gamma(x) \cap \Gamma(y) | \ge \mu$.  By Proposition \ref{Pxya}, $\Gamma(x)\cap \Gamma(y) \subseteq \Gamma(a)$. (See Figure \ref{FigWempty}.) But $a' \in \Gamma(y)\cap \Gamma(a)$. So 
	$$\omega \ge 1 + | \Gamma(x) \cap \Gamma(y) | > \mu.$$
	
	Now we simply reverse the roles of $x$ and $y$; more precisely, we swap $\ell$ and $k$.
	
	Select $x$ in $R_\ell(a)$ and,  choosing
	$a'\in \Gamma(a) \setminus \Gamma(x)$, we may find a vertex $y$ in $R_k(a)$ which is a neighbor of 
	$a'$.  Since $\Gamma_x$ contains a path from $a$ to $y$ and $a\in W_x$, we have $y\in W_x$. So
	$| \Gamma(x) \cap \Gamma(y) | \ge \omega$.  By Proposition \ref{Pxya}, $\Gamma(x)\cap \Gamma(y) \subseteq \Gamma(a)$. But $a' \in \Gamma(y)\cap \Gamma(a)$. So 
	$$\mu \ge 1 + | \Gamma(x) \cap \Gamma(y) | > \omega.$$
	We have $\omega > \mu$ and $\mu > \omega$, producing the desired contradiction.
\end{proof}


\begin{figure}[!ht]
	\begin{center}\vspace{-1.5em}
		\begin{tikzpicture}
		\def\y{.2}
		\def\x{.5}
		\def\z{.2} 
		\draw[thick,black] (1.8,\y) ellipse (0.75in and 0.3in);
		\draw[thick,black] (4.2,\y) ellipse (0.75in and 0.3in);
		\draw[thick,black]
		(-.1,-2.9+\x+\y+\z) {[rounded corners=15pt] --
			++(3.,0)  -- 
			++(0,1.7-\x) --
			++(-3.,0) --
			cycle};
		\draw[thick,black]
		(3.1,-2.9+\x+\y+\z) {[rounded corners=15pt] --
			++(3,0)  -- 
			++(0,1.7-\x) --
			++(-3,0) --
			cycle};
		\node (uu) at (2.2,-2.4+\x+\y+\z) {$U_a$};
		\node (ww) at (3.8,-2.4+\x+\y+\z) {$W_a$};
		\node (ba) at (3,1.7) {$\bullet$};   \node (la) at (3.3,1.8) {a};
		\node (bx) at (.8,-1.7+\y+\z) {$\bullet$};   \node (lx) at (1.1,-1.8+\y+\z) {$x$};
		\node (by) at (5.2,-1.7+\y+\z) {$\bullet$};   \node (ly) at (5.5,-1.7+\y+\z) {$y$};
		\node (bap) at (4.8,.35+\y) {$\bullet$};   \node (lap) at (5.1,.35+\y) {$a'$};
		\node (lgamax) at (1.2,-.1+\y) {$\Gamma(a)\cap \Gamma(x)$};
		\node (lgamay) at (4.9,-.1+\y) {$\Gamma(a)\cap \Gamma(y)$};
		\draw (-0.08,-0.2+.25) -- (.8,-1.7+\y+\z) -- (3.2,-0.7+.4);
		\draw (6.08,-0.18+.25) -- (5.2,-1.7+\y+\z) -- (2.8,-0.7+.4);
		\draw (0.6,0.78) -- (3,1.7) -- (5.4,0.8);
		\end{tikzpicture} 
	\end{center}
	\vspace{-1em}
	\caption{Since $\Gamma$ has diameter two, all common neighbors of $x$ and $y$ are 
		in  $\Gamma(a)$. }\label{FigWempty}
\end{figure}
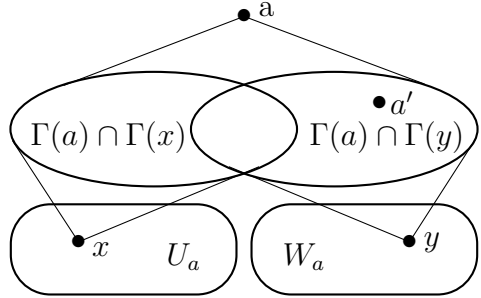


\section{Proofs of the main theorem and its corollaries}
\label{Sec:proofs}

We are now ready to present the proof of our main theorem. We continue with the notation established previously in this chapter. Recall that, for $a\in X$,  $\Gamma_a := \Gamma \setminus a^\bot$ is the subgraph of $\Gamma = \Gamma(X,R_i)$ 
obtained by deleting basepoint $a$ and all its neighbors.

\bigskip

\noindent {\sl Proof of Theorem \ref{Tmain}.} \ As before, we assume $i=1$ for notational convenience.

We begin by showing $(3) \Leftrightarrow (4)$. If $a$ and $b$ are twins in $\Gamma$ with $(a,b)\in R_j$, then
$j > 1$ and $p_{11}^j = k_1$ so that $j\in \tI$ and $\{ j\}$ is the entire vertex set of some component of 
$H' = H   \setminus \{0,1\}$. So either $H'$ is not connected or $d=j=2$ and $\Gamma$, being imprimitive,
is a complete multipartite graph.  Conversely, by Theorem \ref{TWempty}, $\tW=\emptyset$ so if $\tI=\emptyset$
we have that  $H'$ is connected.

The assertion $(2) \Rightarrow (1)$ is trivial. Proposition \ref{PH'disconn} gives us $(1) \Rightarrow (3)$. So
we need only check that $(3)$ implies $(2)$.

Assume now that $H'$ is connected and yet there is some $a\in X$ with $\Gamma_a$ not connected. 
By Proposition \ref{Pphi}, any $x$ in $\Gamma_a$ is joined by a walk in $\Gamma_a$ to some vertex in 
$R_j(a)$ for every $j>1$. (Simply lift a walk in $H'$ from $\ell$ to $j$ where $(a,x)\in R_\ell$.)
So for every $j>1$ every connected component of $\Gamma_a$ intersects 
subconstituent $R_j(a)$  non-trivially.  
Select $j>1$ so as to maximize $D:=d_H(0,j)$ and choose $x,y \in R_j(a)$ such that $x$ and $y$ lie in
distinct components of $\Gamma_a$.  Then every $xy$-path in $\Gamma$ must include a vertex in $\Gamma(a)$,
so $d_\Gamma(x,y) \ge 2(D-1)$. Since $d_\Gamma(x,y) \le D$ by Lemma~\ref{LdiamGamma}, this forces $D\le 2$. In particular, $p_{11}^\ell >0$ for every $\ell > 1$.

Select $\ell >1$
so as to minimize $p_{11}^\ell$ and select $x,y \in R_\ell(a)$ from distinct components of $\Gamma_a$.
Then $(x,y) \in R_j$ for some $j>1$ and so $|\Gamma(x)\cap \Gamma(y)| \ge p_{11}^\ell$. But 
since these two vertices lie in distinct components, Proposition \ref{Pxya} gives us
$$ \Gamma(x) \cap  \Gamma(y) \subseteq \Gamma(a) \cap \Gamma(y)$$
so $p_{11}^j = p_{11}^\ell$ and $ \Gamma(x) \cap  \Gamma(y) =\Gamma(a) \cap \Gamma(y)$.
If $a' \in \Gamma(a)$, then $a'$ has $p_{1\ell}^1 > 0$ neighbors in $R_\ell(a)$. For any such neighbor $z$, we
must have either $\Gamma(z) \cap \Gamma(a) = \Gamma(x) \cap \Gamma(a)$ or $\Gamma(z) \cap \Gamma(a) = \Gamma(y) \cap \Gamma(a)$, both of which force $a' \in
\Gamma(x)$. So vertices $a$ and $x$ must be twins. The only possibility that remains is that $\Gamma$ is a 
complete multipartite graph.
$\Box$

The proofs of Corollaries \ref{C1}, \ref{C2} and \ref{C3} are now rather immediate. Since each is a statement about the  symmetrization of some commutative scheme, Theorem \ref{Tmain} applies.

\bigskip

\noindent {\sl Proof of Corollary \ref{C2}.}  This is essentially Theorem \ref{TWempty}. $\Box$

\bigskip

\noindent {\sl Proof of Corollary \ref{C1}.} We apply Theorem \ref{Tmain} to prove this. 
First, if we have no twins then
$\Gamma_a$ is connected. Any 
$a' \in \Gamma(a)$ has at least one neighbor in $V\Gamma_a$. If $a\not\in T$, then some $a' \in \Gamma(a)$
is also not included in $T$. So the graph $\Gamma\setminus T$ is 
connected as long as $T \neq \Gamma(a)$. 

If $b$ is a twin of $a$ in $\Gamma$, then $b$ is adjacent to every $x\in \Gamma(a)$. 
Since $\Gamma(a) \not\subseteq  T$,    some $a' \in \Gamma(a)$ is a vertex of $\Gamma \setminus T$.
By Corollary \ref{C2}, $\Gamma \setminus a^\bot$ has at most one non-singleton component. Let 
$\Xi$ be the component of $\Gamma \setminus T$ containing this component as a connected subgraph.
(If $\Gamma \setminus a^\bot$ consists only of singletons, choose $\Xi$ to be any component
of $\Gamma \setminus T$ containing some twin of $a$.) Since $a'$ has at least one neighbor in $V\Gamma_a$, 
the component $\Xi$ contains $a'$ and every twin $b$ of $a$ since each of these is a neighbor of $a'$.
Likewise, if $a\not\in T$, then $a$ belongs to $\Xi$ since it is adjacent to $a'$. So in this case as well,
$\Gamma \setminus T$ is connected.
$\Box$

\bigskip

\noindent {\sl Proof of Corollary \ref{C3}.} Let $a\in C$ and take $T=C$. Then apply Corollary \ref{C1}. $\Box$

We finish this section with a simple generalization arising from the proof above. 

\begin{thm}
	Assume $(X,\cR)$, $\Gamma$ and $H$ are defined as in Theorem \ref{Tmain}.
	Let $B_{H,t}(0) = \{ i \mid 0\le i\le d, \ d_H(0,i)\le t\}$ and $B_{\Gamma,t}(a) = \cup_{B_{H,t}(0)} R_i(a)$.
	\begin{itemize}
		\item[(a)] 
		If $\Gamma' := \Gamma \setminus B_{\Gamma,t}(a)$ is disconnected and $b\in X$ with $d_\Gamma(a,b)=D$ (the diameter of $\Gamma$), then for any $x\not\in B_{\Gamma,t}(a)$ not in the same component of $\Gamma'$ as 
		$b$, we have $d_\Gamma(a,x) \le 2t$. 
		\item[(b)]
		If $H \setminus B_{H,t}(0)$ is connected and yet $\Gamma'$ is disconnected,
		then $D \le 2t$.  
	\end{itemize}
\end{thm}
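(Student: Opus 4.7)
The plan is to prove (a) directly by a triangle inequality, then deduce (b) from (a) via the lifting property of the projection homomorphism $\varphi_a$ (Proposition~\ref{Pphi}). The crucial preliminary observation is that, by Lemma~\ref{LdiamGamma}, $d_\Gamma(a,y) = d_H(0,\varphi_a(y))$ for every $y \in X$, so $B_{\Gamma,t}(a)$ is precisely the closed metric ball of radius $t$ around $a$ in $\Gamma$. In particular $X \setminus B_{\Gamma,t}(a) = \{y : d_\Gamma(a,y) > t\}$, a reformulation that will be used throughout.

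For (a): assume $\Gamma'$ is disconnected and let $x,b \in V\Gamma'$ lie in distinct components, with $d_\Gamma(a,b)=D$. A shortest $xb$-path in $\Gamma$ has length $m = d_\Gamma(x,b) \le D$ and, because $x$ and $b$ lie in different components of $\Gamma'$, this path must traverse some vertex $z \in B_{\Gamma,t}(a)$. Then $d_\Gamma(a,z) \le t$ and $d_\Gamma(x,z) + d_\Gamma(z,b) \le m$, so two applications of the triangle inequality give
\[ d_\Gamma(a,x) + d_\Gamma(a,b) \;\le\; 2\,d_\Gamma(a,z) + d_\Gamma(x,z) + d_\Gamma(z,b) \;\le\; 2t + m \;\le\; 2t + D. \]
Substituting $d_\Gamma(a,b)=D$ yields $d_\Gamma(a,x) \le 2t$, as required.

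For (b): since $\Gamma'$ is disconnected it is non-empty, forcing $D > t$. Fix any index $j_b$ with $d_H(0,j_b)=D$; by Lemma~\ref{LdiamGamma}, every vertex of $R_{j_b}(a)$ lies at distance $D$ from $a$ in $\Gamma$. I claim each component $C$ of $\Gamma'$ meets $R_{j_b}(a)$. Indeed, for any $c \in C$ set $j_0 = \varphi_a(c)$; then $d_H(0,j_0) > t$, and by hypothesis there is a walk in $H \setminus B_{H,t}(0)$ from $j_0$ to $j_b$. Proposition~\ref{Pphi} lifts this walk to one in $\Gamma$ starting at $c$, and since its projection never enters $B_{H,t}(0)$, the lifted walk never enters $B_{\Gamma,t}(a)$; thus it stays entirely inside $C$ and ends at some vertex of $R_{j_b}(a)$. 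Choosing two distinct components of $\Gamma'$ now produces vertices $b$ and $x$ of $R_{j_b}(a)$ lying in distinct components, and part (a) applied to such an $x$ yields $D = d_\Gamma(a,x) \le 2t$.

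Neither step presents a serious obstacle; (a) is essentially a triangle-inequality argument once $B_{\Gamma,t}(a)$ is recognized as a metric ball, and (b) is a routine combination of the lifting property with (a). The only subtlety worth checking is the nonemptiness condition $D>t$ needed to make sense of a component of $\Gamma'$ at distance $D$ from $a$, and this is immediate from the assumption that $\Gamma'$ is disconnected.
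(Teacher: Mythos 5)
Your proof is correct and follows essentially the same route as the paper's: part (a) is the same triangle-inequality argument through a vertex of $B_{\Gamma,t}(a)$ on a shortest $xb$-path (you sum two triangle inequalities where the paper bounds $d_\Gamma(y,b)\ge D-t$ first, but these are the same computation), and part (b) likewise uses connectivity of $H\setminus B_{H,t}(0)$ to place a distance-$D$ vertex in each component before invoking (a). The only difference is that you spell out, via the lifting property of Proposition \ref{Pphi}, the step the paper asserts without detail — that every component of $\Gamma'$ meets $R_j(a)$ for each $j\notin B_{H,t}(0)$ — which is a welcome clarification but not a different approach.
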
 

\begin{proof}
	(a) Since $x$ and $b$ are in distinct components of $\Gamma'$, there must exist some $y\in X$ such that
	$  d_\Gamma(a,y) \le t$ and $ d_\Gamma(x,b) = d_\Gamma(x,y) + d_\Gamma(y,b)$. This gives
	$ d_\Gamma(y,b) \ge D-t$ which then implies  $ d_\Gamma(x,a) \le  d_\Gamma(x,y) + d_\Gamma(y,a) \le 2t$.
	
	(b) Since $H \setminus B_{H,t}(0)$ is connected, for every $j\not\in B_{H,t}(0)$, $R_j(a)$ has non-trivial intersection
	with every component of $\Gamma'$. So we may select $x,b$ in distinct components of $\Gamma'$ both 
	satisfying  $ d_\Gamma(a,x) = d_\Gamma(a,b)  = D$ and then apply part (a).
\end{proof}

\section{Further results on connectivity}
\label{Sec:conn}

In this section, we develop some machinery for the study of small disconnecting sets which are not
localized. We then apply these tools to show that, with the exception of polygons, a basis relation in a  symmetric association scheme has vertex connectivity at least three. We can say a bit more in the case where $\Gamma$
has diameter two. For the remainder of this chapter, we assume without loss of generality that $\Gamma=\Gamma(X,R_1)$
in order to simplify notation.

Elementary graph theoretic techniques allow us to handle the case where $\Gamma$ is in some sense locally connected. For example, if $\Gamma(y)$ induces a connected subgraph for every $y\in T$ and $d_\Gamma(y,y')\ge 3$ for any pair of distinct elements $y,y'\in T$, then $\Gamma \setminus T$ is connected. 
The proof of this claim is essentially the same as the proof of the following proposition, which applies more generally
to any connected simple graph $\Gamma$.

\begin{prop}
	\label{Psptree}
	Let $\Gamma=\Gamma(X,R_1)$ be the graph associated to a con\-nec\-ted basis relation in  a 
	symmetric association scheme  $(X,\cR)$.
	Suppose any two vertices at distance two in  $\Gamma$ lie in some common cycle of length at most  $g$ and $T \subseteq V\Gamma$ satisfies $d_\Gamma(y,y') 
	\ge g+1$ for all pairs $y,y'$ of distinct vertices from $T$. Then $\Gamma \setminus T$ is connected.
\end{prop}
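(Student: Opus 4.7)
The plan is to show that any two vertices of $V\Gamma \setminus T$ are joined by a walk in $\Gamma \setminus T$, by taking an arbitrary $uv$-walk in $\Gamma$ and iteratively re-routing around any internal vertex of $T$ that it visits. Observe first that any cycle containing two vertices at distance two in $\Gamma$ has length at least $4$, so the hypothesis forces $g \ge 4$; in particular, any two distinct vertices of $T$ are at distance at least $g+1 \ge 5$ and therefore non-adjacent. Consequently, if $w_i \in T$ is an internal vertex of a walk $W = (u = w_0, w_1, \ldots, w_\ell = v)$ with $u, v \notin T$, then its predecessor $a = w_{i-1}$ and successor $b = w_{i+1}$ both lie outside $T$ and satisfy $d_\Gamma(a,b) \le 2$.

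Next I would handle the easy case: if $a = b$ or $a \sim b$, simply splice $a$ or the edge $ab$ in place of the segment $a, w_i, b$, removing $w_i$ from the walk. Otherwise $d_\Gamma(a,b) = 2$, and the hypothesis gives a cycle $C$ through $a$ and $b$ of length at most $g$. Let $P$ be an $ab$-path along $C$ that avoids $w_i$; independent of whether $w_i$ lies on $C$, such a $P$ has length at most $g-1$. Replace the segment $a, w_i, b$ of $W$ by $P$ to obtain a new walk with strictly fewer visits to $T$.

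The main (and essentially only) step to check is that $P$ itself meets no vertex of $T \setminus \{w_i\}$. Since $a, b \notin T$, any putative $y' \in V(P) \cap T$ with $y' \ne w_i$ must be an interior vertex of $P$, so $d_P(a, y') \le g - 2$, and hence
\[
d_\Gamma(w_i, y') \ \le \ 1 + d_P(a, y') \ \le \ g - 1,
\]
contradicting $d_\Gamma(w_i, y') \ge g + 1$. Iterating this replacement produces a $uv$-walk lying entirely inside $\Gamma \setminus T$, which establishes the connectedness of $\Gamma \setminus T$. The hard part is really just the bookkeeping in this last calculation: the spacing hypothesis on $T$ is exactly what is needed to guarantee that a detour introduced to bypass one vertex of $T$ cannot accidentally pass through another.
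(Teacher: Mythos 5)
Your proof is correct, and it takes a genuinely different route from the paper. You argue by walk surgery: take any $uv$-walk in $\Gamma$ with $u,v\notin T$ and iteratively bypass each visit to a vertex $w_i\in T$, using either the edge/identity between its walk-neighbors $a,b$ or, when $d_\Gamma(a,b)=2$, an arc of the guaranteed short cycle that avoids $w_i$; the spacing hypothesis $d_\Gamma(y,y')\ge g+1$ is then exactly what prevents the detour (whose vertices stay within distance $g-1$ of $w_i$) from hitting another vertex of $T$. The paper instead builds a spanning tree of $\Gamma\setminus T$ directly: it takes the balls $B_{\lfloor g/2\rfloor}(y)$ for $y\in T$, chooses in each a spanning tree in which $y$ is a leaf (using the fact that $y$ is not a cut vertex of the ball), contracts the balls, and glues a spanning tree of the resulting minor to the pruned local trees. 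The two arguments rest on the same key estimate --- a cycle of length at most $g$ through two vertices at distance two supplies a detour short enough to stay clear of every other vertex of $T$ --- but yours makes that step fully explicit at the level of walks, whereas the paper buries it inside the assertion that $y$ is not a cut vertex of $\Gamma[B_{\lfloor g/2\rfloor}(y)]$. Your version is more elementary and self-contained; the paper's version yields slightly more (an explicit spanning tree of $\Gamma\setminus T$). Two cosmetic remarks: in your Case 2 the avoiding arc $P$ actually has length at most $g-2$ (both arcs of the cycle have length at least $2$ since $a\not\sim b$), though your weaker bound $g-1$ suffices; and the claim that the hypotheses force $g\ge 4$ presupposes that some pair of vertices at distance two exists, which fails only when $\Gamma$ is complete, a case where the conclusion is trivial anyway.
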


\begin{proof}
	Set $\delta = \lfloor g/2 \rfloor$ and, for $y\in T$ set 
	$B_\delta(y) =  \{ x\in X \mid d_\Gamma(x,y) \le \delta \}$. The 
	induced subgraph $\Gamma[B]$ of $\Gamma$ determined by $B=B_\delta(y)$ is connected so 
	admits a spanning tree. Moreover, since $y$ is not a cut vertex of $\Gamma[B]$,
	there exists a spanning tree $\cT_y$ for $\Gamma[B]$ in which $y$ is a leaf vertex. For $y\in T$, 
	let $E_y$ denote the edge set of $\cT_y$ with the sole edge incident to $y$ removed.
	
	Now consider the minor $\Delta$
	of $\Gamma$ obtained by contracting $B_\delta(y)$ to a single vertex for every $y\in T$. Since $\Delta$ is
	again a connected graph, it admits a spanning tree $\cT$. Lift the edge set $E_\cT$ of $\cT$ back to $E\Gamma$
	and note that $E_\cT$ contains no edge from any of the induced subgraphs $\Gamma[B_\delta(y)]$, $y\in T$.
	So $E_\cT \cup \left( \cup_{y\in T} E_y \right)$ is the edge set of a spanning tree in $\Gamma \setminus T$,
	which demonstrates that   $\Gamma \setminus T$ is connected.
\end{proof}

\subsection{A spectral lemma}
\label{Sec:spec}

Eigenvalue techniques such as applications of eigenvalue interlacing play an important role in \cite{Brouwer1985} and \cite{Brouwer2009}. The following lemma is inspired by those ideas. This can be used, in conjunction with
Lemma \ref{PK211}, to show that a graph with a small disconnecting set $T$ whose elements are not too close
together must be locally a disjoint union of cliques of size at most $|T|$. 

\spectrallem

\begin{proof}
	The result obviously holds when $\Gamma$ is complete multipartite, so assume $\Gamma$ is not a com\-plete 
	multipartite graph.  By \cite[Cor.~3.5.4(ii)]{Brouwer1989}, we then know that the second largest eigenvalue $\theta$ of $\Gamma$ is positive. Order the eigenspaces of the scheme so that $A_1 E_1 = \theta E_1$ and abbreviate 
	$E=E_1$. For $K,L\subseteq X$, denote by $E_{K,L}$ the submatrix of $E$ obtained by restricting
	to rows in $K$ and columns in $L$. Let $C$ be any
	clique in $\Gamma$. Then, because $k_1 > \theta > 0$, the matrix $E_{C,C} =  \frac{m_1}{|X|}I
	+ \frac{\theta m_1}{k_1|X|}(J-I)$ is invertible. 
	
	Assume now that some disconnecting set $T \subseteq X$ has $|T| \le p_{11}^1$.
	Let $\Xi$ and $\Xi'$ be two connected components of $\Gamma \setminus T$ with 
	vertex sets $B$ and $ B'$, respectively,  and let
	$\rho$ and $\rho'$ denote the spectral radii of these two graphs.  Assume, without loss, that $\rho\le \rho'$. By eigenvalue interlacing, $\rho \le \theta$.  (see, e.g.,\cite[Theorem~3.3.1]{Brouwer1989}.) We now show $\rho=\theta$.

	Since $\Gamma$ does not contain $K_{2,1,1}$ as an induced subgraph, it is locally a disjoint union of cliques
	and every edge of $\Gamma$ lies in a clique $C$ of size $p_{11}^1 + 2$.  If $\Xi$ is edgeless, then $T$
	contains all neighbors of some vertex, which is impossible since $|T| \le p_{11}^1 < k_1$. So $\Xi$
	contains at least one edge and  $B\cup T$ contains some clique $C$ 
	of size at least $p_{11}^1 + 2$. It follows that the submatrix  $E_{X,B\cup T}$
	has rank at least $p_{11}^1 + 2$.  But $|T|\le p_{11}^1$. So the row space of $E_{X,B\cup T}$ 
	contains at least two linearly  independent vectors which are zero in every entry indexed by an 
	element of $T$. Restricting these two vectors to coordinates in $B$ only, we obtain two linearly
	independent eigenvectors for graph $\Xi$ belonging to eigenvalue $\theta$. It follows that
	$\rho = \theta$ and $\rho$, the spectral radius of $\Xi$, is not a simple eigenvalue. This contradicts
	the Perron-Frobenius Theorem  (see, e.g., \cite[Theorem~3.1.1]{Brouwer1989}) since $\Xi$ was chosen to
	be a connected graph. 
\end{proof}

\begin{remark}
	The hypotheses of the above lemma may clearly be weakened. The proof simply requires that both $B\cup T$ and 
	$B' \cup T$ contain cliques of size $|T|+2$ or larger and that the entries $E_{xy}$ of idempotent $E$ 
	are the same for all adjacent $x$ and $y$ in $V\Gamma$.
\end{remark}

\subsection{Intervals and metric properties of \texorpdfstring{$\Gamma$}{Gamma}}
\label{Sec:intervals}

Let $(X,\cR)$ be a symmetric association scheme and $\Gamma=\Gamma(X,R_1)$ with unweighted distribution diagram $H$.
For $a,b\in X$, if  $(a,b)\in R_i$, Lemma \ref{LdiamGamma} tells us that 
the path-length distance  $d_\Gamma(a,b)$ between $a$ and $b$ in 
graph $\Gamma$  is equal to the path-length distance $d_H(0,i)$ between $0$ and $i$ in $H$. It follows
that the diameter, $D$ say, of $\Gamma$ is equal to $\max_i d_H(0,i)$, which happens to be the diameter 
of $H$. We thus partition the index set  $\{0,1,\ldots,d\}$ according to distance from $0$ in $H$.
For each $0\le h\le D$, define $I_h = \{ i : d_H(0,i) = h\}$. 
For $0 \le i \le d$ with $i \in I_h$, define   
$$c(i) = \sum_{ j \in I_{h-1} } p_{1j}^i ~.$$

\begin{prop}
	\label{Lc(i)}
	With $c(i)$ defined as above
	\begin{itemize}
		\item[(a)]
		For any geodesic $0=\ell_0, 1=\ell_1,\ell_2,\ldots, \ell_h$ in $H$, 
		$$1= c(\ell_1) \le c(\ell_2) \le \cdots \le c(\ell_h). $$
		\item[(b)]
		If $c(i)=1$, then for any $\ell \in \{1,\ldots, d\}$ which lies along a geodesic from $0$ to $i$ in $H$, $c(\ell)=1$ as well.
		\item[(c)]
		If $c(i)=1$, then there is a unique shortest path in $H$ from $0$ to $i$ and, for $(a,b)\in R_i$, there is a unique
		shortest path in $\Gamma$ from $a$ to $b$.
	\end{itemize}
\end{prop}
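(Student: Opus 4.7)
The plan is to prove (a) first and note that (b) and (c) follow essentially immediately from it. For (a), it suffices to establish the single-step inequality $c(\ell_r) \le c(\ell_{r+1})$ for each edge $(\ell_r,\ell_{r+1})$ of the $H$-geodesic, since $c(\ell_1) = p^1_{10} = 1$ is the base case and transitivity handles the chain.

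To prove this single-step inequality, I would fix any pair $(a,b) \in R_{\ell_{r+1}}$, so that $d_\Gamma(a,b) = r+1$ by Lemma~\ref{LdiamGamma}. The crux is choosing the right auxiliary vertex: because $(\ell_r,\ell_{r+1})$ is an edge of $H$, we have $p^{\ell_{r+1}}_{1,\ell_r} > 0$, so I can select $u \in \Gamma(a)$ with $(u,b) \in R_{\ell_r}$, whence $d_\Gamma(u,b) = r$. By the definition of $c$, the set $S = \{ v : v \sim b,\ d_\Gamma(u,v) = r-1 \}$ has cardinality $c(\ell_r)$. For any $v \in S$, two applications of the triangle inequality give $r = d_\Gamma(a,b) - d_\Gamma(v,b) \le d_\Gamma(a,v) \le d_\Gamma(a,u) + d_\Gamma(u,v) = r$, forcing $d_\Gamma(a,v) = r$. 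Hence $S \subseteq \Gamma(b) \cap \Gamma_r(a)$, a set of cardinality $c(\ell_{r+1})$, which yields $c(\ell_r) \le c(\ell_{r+1})$.

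Part (b) then follows at once: if $c(i) = 1$ and $\ell$ lies on some geodesic $0 = \ell_0, \ldots, \ell_r = \ell, \ldots, \ell_h = i$, chaining (a) along it gives $1 = c(\ell_1) \le \cdots \le c(\ell) \le \cdots \le c(i) = 1$, forcing $c(\ell) = 1$. Part (c) then follows from (b): when $c(\ell) = 1$ for every $\ell$ on a $0i$-geodesic of $H$, each such $\ell \in I_{h'}$ has a unique predecessor (the unique $j \in I_{h'-1}$ with $p^\ell_{1j} > 0$), so working backward from $i$ produces a unique $H$-geodesic. For the uniqueness of the shortest $ab$-path in $\Gamma$ with $(a,b) \in R_i$, the same reasoning applies to any geodesic $(a = w_0, \ldots, w_h = b)$: each $w_{s-1}$ is the unique neighbor of $w_s$ at distance $s-1$ from $a$.

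The main subtlety is precisely the choice of the auxiliary vertex $u$ in the argument for (a). A naive choice --- such as a neighbor of $b$ inside $R_{\ell_r}(a)$ --- does not align the relations correctly to invoke the triangle inequality; what is needed is that the \emph{shifted} pair $(u,b)$ lie in $R_{\ell_r}$, which is exactly what the edge $(\ell_r,\ell_{r+1})$ of $H$ guarantees. With this alignment in place, the argument reduces to a clean adaptation of the classical Brouwer--Cohen--Neumaier proof of $c_i \le c_{i+1}$ for distance-regular graphs.
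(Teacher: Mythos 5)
Your proof is correct and follows essentially the same route as the paper: the paper also establishes the single-step inequality by choosing an auxiliary vertex $a'\in R_{\ell_{h-1}}(b)$ adjacent to $a$ (guaranteed by $p^{\ell_h}_{1,\ell_{h-1}}>0$) and showing via the triangle inequality that $\{x\in R_1(b): d_\Gamma(x,a')=d_\Gamma(b,a')-1\}\subseteq\{x\in R_1(b): d_\Gamma(x,a)=d_\Gamma(b,a)-1\}$, with (b) and (c) following immediately as you describe. Your write-up merely makes explicit the counting of these sets and the backward-induction argument for uniqueness, which the paper leaves to the reader.
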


\begin{proof}
	For part (a), observe that for $(a,b)\in R_{\ell_h}$ there exists $a' \in R_{\ell_{h-1}}(b)$ adjacent  to $a$ since
	$p_{1,\ell_{h-1}}^{\ell_h} > 0$ so that 
	$$ \{ x\in R_1(b) \mid \ d_\Gamma(x,a') = d_\Gamma(b,a')-1 \} \! \subseteq  \! 
	\{ x\in R_1(b) \mid\ d_\Gamma(x,a) = d_\Gamma(b,a)-1 \}. $$ 
	Parts (b) and (c) follow immediately.
\end{proof}

\medskip

For $a,b\in X$, we define the \emph{interval} $[a,b]$ to be the union of the vertex sets of all geodesics in 
$\Gamma$ from $a$ to $b$:
$$ [a,b] = \left\{ x\in X \mid d_\Gamma(a,x) + d_\Gamma(x,b) = d_\Gamma(a,b) \right\}. $$

For the purpose of the present discussion, we introduce a piece of terminology. For $x\in X$ and 
$y\in T\subseteq X$,  we say that $x$ is \emph{proximal}\index{proximal} to $y$ (relative to $T$) if 
$d_\Gamma(x,y) \le d_\Gamma(x,y')$ for all $y'\in T$. Vertex $x$ is then \emph{proximal only} to $y\in T$ if 
$d_\Gamma(x,y) < d_\Gamma(x,y')$ for all $y'\in T$ distinct from $y$.

\begin{prop}
	\label{Pci=1}
	Let $T$ be a disconnecting set for $\Gamma$  and let $x$ and $z$ be vertices lying in different components of  $\Gamma \setminus T$  with $(x,z)\in R_i$. Suppose there is some $y\in T$ such that $x$  is proximal only to $y$ and $z$ is proximal to $y$ with $(x,y)\in R_s$ and  $(z,y)\in R_t$.  If $c(s)=1$ or $c(t)=1$, then $c(i)=c(s)=c(t)=1$.
\end{prop}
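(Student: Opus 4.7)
The strategy is to show that the proximity hypotheses force every $xz$-geodesic in $\Gamma$ to pass through the specific cut-vertex $y$; once this is in hand, a straightforward count of penultimate vertices of $xz$-geodesics from each endpoint yields $c(s)=c(i)=c(t)$ without any further hypothesis, and the conclusion follows.

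First I would prove that $y$ lies on every $xz$-geodesic. Since $x$ and $z$ lie in different components of $\Gamma\setminus T$, any path from $x$ to $z$ in $\Gamma$ must meet $T$; in particular, every $xz$-geodesic contains some $y^*\in T$, and, as a vertex on a geodesic, $d_\Gamma(x,z)=d_\Gamma(x,y^*)+d_\Gamma(y^*,z)$. Because $x$ is proximal only to $y$ we have $d_\Gamma(x,y^*)\ge d_\Gamma(x,y)$ with strict inequality when $y^*\neq y$, and because $z$ is proximal to $y$ we have $d_\Gamma(y^*,z)\ge d_\Gamma(y,z)$. Combining these with the triangle inequality $d_\Gamma(x,z)\le d_\Gamma(x,y)+d_\Gamma(y,z)$ forces equality throughout and $y^*=y$. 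In particular $d_\Gamma(x,z)=d_\Gamma(x,y)+d_\Gamma(y,z)$, so with $i\in I_h$, $s\in I_{h_s}$, $t\in I_{h_t}$ we have $h=h_s+h_t$.

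Next I would identify, from the $z$-side, the set of penultimate vertices of $xz$-geodesics. By definition $c(i)$ equals the cardinality of $N_z=\{w\in R_1(z):d_\Gamma(x,w)=d_\Gamma(x,z)-1\}$. Any $w\in N_z$ lies on an $xz$-geodesic, which by the previous paragraph passes through $y$; since $w$ is adjacent to $z$ and sits at position $d_\Gamma(x,z)-1$ along such a geodesic, the sub-geodesic from $y$ through $w$ to $z$ forces $d_\Gamma(y,w)=d_\Gamma(y,z)-1$. Conversely, any $w\in R_1(z)$ with $d_\Gamma(y,w)=d_\Gamma(y,z)-1$ satisfies $d_\Gamma(x,w)\le d_\Gamma(x,y)+d_\Gamma(y,w)=d_\Gamma(x,z)-1$ and $d_\Gamma(x,w)\ge d_\Gamma(x,z)-1$ since $w\sim z$, so $w\in N_z$. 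Hence $N_z=\{w\in R_1(z):d_\Gamma(y,w)=d_\Gamma(y,z)-1\}$, whose cardinality (since $(z,y)\in R_t$) is $c(t)$; this gives $c(i)=c(t)$. Repeating this analysis with $x$ and $z$ interchanged, using that $y$ lies on every $zx$-geodesic and $(x,y)\in R_s$, yields $c(i)=c(s)$ by the same double-inequality argument on $\{w\in R_1(x):d_\Gamma(w,z)=d_\Gamma(x,z)-1\}$.

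Thus $c(s)=c(i)=c(t)$ holds unconditionally under the stated proximity hypotheses, and so the conclusion is immediate once either one of $c(s)$ or $c(t)$ is assumed to equal $1$. The main obstacle is the very first step: making sure every $xz$-geodesic passes through $y$, not just some $xz$-path. This is precisely where the asymmetric form of the proximity hypotheses (namely that $x$ is proximal \emph{only} to $y$, while $z$ need only be proximal to $y$) is essential, for without the strict inequality on the $x$-side one cannot rule out geodesics that exit through a different cut-vertex $y^*\in T$.
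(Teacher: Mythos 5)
Your proof is correct and takes essentially the same route as the paper, whose entire argument is the single observation that every shortest $xz$-path must pass through $y$, followed by an appeal to Proposition \ref{Lc(i)}. You have supplied the details the paper leaves implicit; in particular, your count of penultimate vertices yields the unconditional equality $c(s)=c(i)=c(t)$, which is what is actually needed, since the monotonicity of $c$ along geodesics by itself only gives the inequalities $c(s),c(t)\le c(i)$.
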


\begin{proof}
	Every shortest path joining $x$ to $z$ in $\Gamma$ must pass through $y$. Apply Proposition \ref{Lc(i)}(b).
\end{proof}

\subsection{Small disconnecting sets}
\label{Sec:smallcut}

We continue under the assumption that $\Gamma=\Gamma(X,R_1)$ is the graph of some connected basis relation in 
the symmetric association scheme $(X,\cR)$. We begin by examining a simple condition which 
guarantees that $\Gamma$ is locally a disjoint union of  cliques.

\begin{lem}
	\label{PK211}
	Let $T$ be a minimal disconnecting set for $\Gamma$, $y\in T$. Suppose $d_\Gamma(y,y')\ge 4$ for all $y'\in T$ with $y'\neq y$. Then $c(j)=1$ for all indices $j$ in $I_2$.
\end{lem}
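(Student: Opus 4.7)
The plan is to exploit minimality of $T$ together with the large-distance assumption to pin down the structure of length-two paths near $y$. First, since $T\setminus\{y\}$ is not disconnecting, every path in $\Gamma\setminus(T\setminus\{y\})$ joining distinct components of $\Gamma\setminus T$ must pass through $y$, so $y$ has neighbors in at least two components of $\Gamma\setminus T$; call two of them $\Xi$ and $\Xi'$. The distance hypothesis $d_\Gamma(y,y')\geq 4$ for all $y'\in T\setminus\{y\}$ forces $\Gamma(y)\cap T = \emptyset$ and makes every vertex of $\Gamma(y)$ proximal only to $y$ among the vertices of $T$.

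The key observation is a common-neighbor bound: if $u\in \Gamma(y)$ and $v$ lies in a component of $\Gamma\setminus T$ different from the one containing $u$, then any common $R_1$-neighbor $w$ of $u$ and $v$ must lie in $T$, since the length-two $uv$-path cannot be contained in $\Gamma\setminus T$. The relation $w\sim u\in \Gamma(y)$ gives $d_\Gamma(w,y)\leq 2$, so the distance hypothesis forces $w=y$. Thus any such pair $(u,v)\in R_j$ has $y$ as its unique common neighbor, and because $p^j_{11}$ depends only on $j$, this immediately yields $c(j)=p^j_{11}=1$ for the corresponding $j$.

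Now fix any $j\in I_2$. Since $p^j_{11}\geq 1$ and $p^1_{1j}k_1 = p^j_{11}k_j$ by Lemma \ref{kitchensink}(iv), we have $p^1_{1j}\geq 1$, so for every $u\in \Gamma(y)$ there is at least one $v\in \Gamma(y)$ with $(u,v)\in R_j$. Moreover, if $u\in \Gamma(y)$ has any $R_j$-neighbor $z$ in a component different from that of $u$, then $z$ must lie in $\Gamma(y)$: otherwise $y\notin \Gamma(z)$, yet the previous paragraph forces every common neighbor of $u$ and $z$ to equal $y$, leaving no common neighbor at all, contradicting $j\in I_2$. Hence we need only produce, for each $j\in I_2$, a pair $(u,v)\in R_j$ with both endpoints in $\Gamma(y)$ but in distinct components.

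The heart of the proof is to show that for every $j\in I_2$ at least one such pair actually straddles two components; equivalently, that $R_j$ restricted to $\Gamma(y)$ cannot respect the component partition induced by $\Gamma\setminus T$. My expectation is that this follows from a counting argument: if for some $j\in I_2$ no cross-component $R_j$-pair existed in $\Gamma(y)$, then for each component $C$ meeting $\Gamma(y)$ and each $u\in \Gamma(y)\cap C$, all $p^1_{1j}$ of its $R_j$-neighbors in $\Gamma(y)$ would lie in $C$, forcing $|\Gamma(y)\cap C|\geq p^1_{1j}+1$. Applying this simultaneously to the (at least two) components neighboring $y$, and then using $p^1_{1j}=p^j_{11}k_j/k_1$ together with the pigeonhole bound $\sum_C |\Gamma(y)\cap C|\leq k_1$, will produce the contradiction. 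This counting step, where the distance hypothesis and Lemma \ref{kitchensink}(iv) must be combined carefully, is the main obstacle.
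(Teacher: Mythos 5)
Your second paragraph is sound: for a cross-component pair $(u,v)\in R_j$ with $u,v$ adjacent to $y$, every common neighbour lies in $T$ and within distance $2$ of $y$, hence equals $y$, giving $p^j_{11}=1$; and since $I_1=\{1\}$, indeed $c(j)=p^j_{11}$ for $j\in I_2$. But you have correctly identified, and not closed, the real gap: you must still show that \emph{every} $j\in I_2$ is witnessed by such a pair with \emph{both} endpoints in $\Gamma(y)$ straddling two components. The counting argument you sketch cannot do this. If no cross-component $R_j$-pair existed in $\Gamma(y)$, you would get $|\Gamma(y)\cap C|\ge p^1_{1j}+1$ for each of the (at least two) components $C$ meeting $\Gamma(y)$, hence only $k_1\ge 2\bigl(p^1_{1j}+1\bigr)$; since $p^1_{1j}=p^j_{11}k_j/k_1$ can easily be small compared with $k_1$, this is not a contradiction. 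Nothing in the hypotheses prevents the restriction of a particular $R_j$ ($j\in I_2$) to $\Gamma(y)$ from respecting the component partition, so the reduction itself is the obstruction, not just the bookkeeping.

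The paper avoids this entirely by not insisting that both witnesses be neighbours of $y$. Take $x\in R_j(y)$ itself (so $d_\Gamma(x,y)=2$, and $x\notin T$ by the distance hypothesis) and a neighbour $z$ of $y$ lying in a different component of $\Gamma\setminus T$ from $x$ (such $z$ exists by minimality of $T$). Then $d_\Gamma(x,z)=3$ and, because any $y'\in T\setminus\{y\}$ satisfies $d_\Gamma(x,y')+d_\Gamma(y',z)\ge 2+3=5$, every geodesic from $x$ to $z$ passes through $y$; in particular $y$ is the unique neighbour of $z$ closer to $x$, so $c(i)=1$ for $(x,z)\in R_i$, and the monotonicity of $c$ along geodesics (Propositions \ref{Pci=1} and \ref{Lc(i)}) pushes this down to $c(j)=1$. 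If you want to salvage your framework, replace ``cross-component pair inside $\Gamma(y)$'' with this ``distance-$3$ pair through $y$'' and your paragraph-two argument becomes the special case $h=2$ of exactly that mechanism.
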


\begin{proof}
	Let $j\in I_2$ and let $x \in R_j(y)$. Let $z \sim y$ be some vertex lying in a different component of 
	$\Gamma \setminus T$ from that containing $x$. For $(z,x)\in R_i$, we find $c(i)=1$ by Proposition
	\ref{Pci=1}.  So $c(j)=1$ by Lemma \ref{Lc(i)}.
\end{proof}

\begin{lem}
	\label{Pfarawayx}
	Let $T$ be a disconnecting set for $\Gamma$, $y\in T$.
	\begin{itemize}
		\item[(a)] Let $x$ and $z$ be vertices lying in different components of $\Gamma \setminus T$. If
		$d_\Gamma(x,y')+d_\Gamma(y',z) > D$ for every $y'\in T$ except $y$, then $z$ has a 
		unique neighbor lying closer to $x$ and $z$ has a unique neighbor lying closer to $y$.
		\item[(b)] Suppose $x\in X \setminus T$ satisfies $d_\Gamma(x,y')=D$ for every $y'\in T$ except $y$. If $z\in X$ lies in a component of $\Gamma \setminus T$ distinct from that containing $x$, then $z$ has a unique neighbor 
		lying closer to $x$ and $z$ has a unique neighbor lying closer to $y$. 
	\end{itemize}
	In both cases, for $(x,z)\in R_i$, and $(y,z)\in R_j$,  we have $c(i)=c(j)=1$.
\end{lem}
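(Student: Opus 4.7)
The plan is as follows. First I would observe that part (b) implies the hypothesis of part (a): under (b), $d_\Gamma(x,y') = D$ for every $y' \in T\setminus\{y\}$, and since $z \notin T$ we have $d_\Gamma(y',z) \geq 1$, so $d_\Gamma(x,y') + d_\Gamma(y',z) \geq D+1 > D$. It therefore suffices to prove (a).

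The first key step is to show that $y$ is the unique element of $T$ lying on any geodesic in $\Gamma$ from $x$ to $z$, so in particular $y \in [x,z]$. Indeed, for any geodesic $\pi = (x = v_0, v_1, \ldots, v_n = z)$ of length $n = d_\Gamma(x,z) \leq D$: since $x$ and $z$ lie in different components of $\Gamma \setminus T$, some $v_k \in T$; and if $v_k = y' \neq y$, then $d_\Gamma(x,y') + d_\Gamma(y',z) = k + (n-k) = n \leq D$, contradicting the hypothesis. Writing $s := d_\Gamma(x,y)$ and $t := d_\Gamma(y,z)$, we obtain $d_\Gamma(x,z) = s + t$, and with $(x,z) \in R_i$, $(y,z) \in R_j$ we have $d_H(0,i) = s+t$ and $d_H(0,j) = t$.

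Next I would establish a bijection between the neighbors of $z$ closer to $x$ and those closer to $y$. Given $w \in \Gamma(z)$ with $d_\Gamma(x,w) = s+t-1$, any geodesic from $x$ to $w$ extended by the edge $wz$ is a geodesic from $x$ to $z$, which by the above must contain $y$. When $t = 1$ this forces $w = y$; when $t \geq 2$, we deduce $y \in [x,w]$ and hence $d_\Gamma(y,w) = t-1$. Conversely, the triangle inequality yields $d_\Gamma(x,w) = s+t-1$ for any $w \in \Gamma(z)$ with $d_\Gamma(y,w) = t-1$. Hence the two sets of neighbors coincide, and $c(i) = c(j)$.

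The final step is to deduce $c(i) = c(j) = 1$. When $t = 1$ (or symmetrically $s = 1$) this is immediate, since the unique neighbor of $z$ at distance $0$ from $y$ is $y$ itself. For $s, t \geq 2$, I would proceed by induction on $d_\Gamma(x,z)$: the pair $(x,w)$ (for $w$ any neighbor of $z$ closer to $x$) inherits the ``geodesic-capture'' property that $y$ is the unique $T$-vertex on every geodesic from $x$ to $w$, since any such geodesic extends to one from $x$ to $z$ which must meet $T$ exactly at $y$. Strengthening the inductive hypothesis to assert that geodesic-capture implies a unique $\Gamma$-geodesic then forces $c(i) = 1$. An alternative route is to invoke Proposition~\ref{Pci=1} after partitioning $T \setminus \{y\}$ into those $y'$ with $d_\Gamma(x,y') > s$ and those with $d_\Gamma(y',z) > t$ (the hypothesis, rearranged, forces at least one for each $y'$), and handling the two parts separately. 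The main obstacle lies precisely here: making the inductive step (or the appeal to Proposition~\ref{Pci=1}) rigorous when $s,t \geq 2$ requires careful tracking of which ``geodesic-capture'' properties transfer to the smaller pair $(x,w)$ and which proximality conditions can be verified in restriction.
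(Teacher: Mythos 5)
Your setup is correct and matches the paper's: the reduction of (b) to (a), the observation that every $xz$-geodesic passes through $y$ and no other vertex of $T$ (so $[x,z]=[x,y]\cup[y,z]$ and $d_\Gamma(x,z)=d_\Gamma(x,y)+d_\Gamma(y,z)$), and the identification of the two neighbor sets of $z$, which gives $c(i)=c(j)$. But the heart of the lemma is the equality $c(i)=1$, and that is exactly the step you leave open. Your proposed induction on $d_\Gamma(x,z)$ via a neighbor $w$ of $z$ closer to $x$ does not close: the inductive hypothesis (uniqueness of the geodesic from $x$ to each such $w$) says nothing about \emph{how many} such $w$ there are, which is the quantity $c(i)$ you need to bound. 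Likewise the appeal to Proposition~\ref{Pci=1} is circular, since that proposition presupposes $c(s)=1$ or $c(t)=1$ for the relations of $(x,y)$ and $(z,y)$.

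The paper closes the gap by counting from the $x$ side instead of the $z$ side. Since $[x,z]=[x,y]\cup[y,z]$ and every vertex of $[y,z]$ lies at distance $d_\Gamma(x,y)+d_\Gamma(y,\cdot)$ from $x$, the set $\Gamma(x)\cap[x,z]$ of neighbors of $x$ closer to $z$ is contained in $[x,y]\cup\{y\}$; hence it coincides with $\Gamma(x)\cap[x,z']$ for $z'\in\Gamma(y)\cap[y,z]$, because $[x,z']=[x,y]\cup\{z'\}$. For the pair $(x,z')$ one is in your easy case $d_\Gamma(y,z')=1$: every $xz'$-geodesic ends with the edge $yz'$, so $z'$ has a unique neighbor closer to $x$, giving $c(i')=1$ for $(x,z')\in R_{i'}$ and, by Proposition~\ref{Lc(i)}(c), a unique $xz'$-geodesic. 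Therefore $\lvert\Gamma(x)\cap[x,z]\rvert=\lvert\Gamma(x)\cap[x,z']\rvert=1$, which says $c(i)=1$ — here the homogeneity of the scheme is doing real work, since $c(i)$ simultaneously counts the neighbors of $x$ closer to $z$ and the neighbors of $z$ closer to $x$ for any pair in $R_i$. Your bijection then yields $c(j)=1$. In short: you have the right ingredients, but the one-step reduction from $z$ to $z'$ (rather than an induction peeling neighbors off $z$) together with the two-sided interpretation of $c(i)$ is the missing idea.
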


\begin{proof}
	Clearly (b) follows from (a). So first verify (a) for the case $z\sim y$. Next, 
	observe that any geodesic joining $x$ to $z$ passes through $y$. So $ [x,z] = [x,y] \cup [y,z]$.
	Let $z' \in \Gamma(y) \cap [y,z]$. Since $[x,y] 
	\subseteq [x,z]$ and $[x,z'] = [x,y] \cup \{z'\}$, 
	we find $\Gamma(x) \cap [x,z] = \Gamma(x) \cap [x,z']$, a set of size one. 
	By the same token, $[y,z] \subseteq [x,z]$ and so $\Gamma(z) \cap [y,z] \subseteq 
	\Gamma(z) \cap [x,z]$ gives $|\Gamma(z) \cap [y,z] |=1$. 
\end{proof}

\begin{lem}
	\label{Lxp111}
	Let $T$ be a minimal disconnecting set for $\Gamma$, $y\in T$,  and suppose $x\in X$ satisfies $d_\Gamma(x,y')=D$ for every $y'\in T$ except $y$. Then
	\begin{itemize}
		\item[(a)]
		for $(x,y)\in R_i$ where $i\in I_h$, we have
		$ \sum_{\ell \in I_h} p_{1\ell}^i = p_{11}^1$.
		\item[(b)]
		for $z \in X\setminus T$ which is separated from $x$ by deletion of $T$, if 
		$\Gamma(z) \cap T \subseteq \{y\}$, then
		$ \sum_{\ell \in I_k} p_{1\ell}^j = p_{11}^1$ where $(y,z)\in R_j$ with $j\in I_k$.
	\end{itemize}
\end{lem}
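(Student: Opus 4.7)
My plan is to prove both parts via a common strategy: first establish a uniqueness-of-geodesic property from the hypotheses, yielding a distinguished neighbor of $x$ (resp.\ $z$) on the geodesic to $y$, then count common neighbors along this geodesic.

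For part (a), the first step is to use the minimality of $T$: every element of $T$ must have a neighbor in every component of $\Gamma \setminus T$, so I would choose $y^* \in \Gamma(y)$ in a component $\Xi'$ of $\Gamma\setminus T$ distinct from the component $\Xi$ containing $x$. Since $d_\Gamma(x, y'')= D$ for every $y'' \in T\setminus\{y\}$ and every $xy^*$-path must traverse $T$, any such path routed through $y'' \neq y$ has length exceeding $D$, while the route through $y$ has length $h+1$; hence $d_\Gamma(x, y^*) = h+1$ (and in particular $h < D$). Applying Lemma~\ref{Pfarawayx}(b) to the pair $(x, y^*)$ yields $c(i^*)=1$ where $(x,y^*)\in R_{i^*}$, and Proposition~\ref{Lc(i)}(b) propagates this to $c(i)=1$ for $(x,y)\in R_i$. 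This produces a unique neighbor $x_1$ of $x$ at graph-distance $h-1$ from $y$.

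Next I would bound $\sum_{\ell \in I_h} p_{1\ell}^i$ from above and below. The edge $\{x, x_1\}$ has exactly $p_{11}^1$ common neighbors $w$; the triangle inequality restricts $d_\Gamma(w,y) \in \{h-1, h\}$, and uniqueness of $x_1$ rules out $d_\Gamma(w,y)=h-1$, yielding $\sum_{\ell \in I_h} p_{1\ell}^i \geq p_{11}^1$. For equality --- the main obstacle of the proof --- I need to show that every neighbor $w$ of $x$ at graph-distance $h$ from $y$ lies in $\Gamma(x_1)$. My approach here is to consider the pair $(w, y^*)$: since $d_\Gamma(w, y'') \geq D-1$ for $y'' \in T\setminus\{y\}$ and $d_\Gamma(w,y)=h<D$, the only route to $y^*$ of small length passes through $y$, forcing $d_\Gamma(w,y^*)=h+1$; an application of Lemma~\ref{Pfarawayx}(a) --- after arranging for $y^*$ to be non-adjacent to $T \setminus \{y\}$, or handling that case separately --- then gives uniqueness of the $wy^*$-geodesic. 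Combining uniqueness of both the $xy^*$- and $wy^*$-geodesics with the shared final edge $y \to y^*$ lets one lift the walk $w \to x \to x_1 \to \cdots \to y \to y^*$ back to deduce $w \sim x_1$. Edge cases (for instance $h=D-1$, or $y^*$ unavoidably adjacent to some $y'' \in T\setminus\{y\}$) will likely require separate care, perhaps via induction on $h$ with a weaker but self-propagating hypothesis.

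Part (b) follows a parallel outline with $z$ playing the role of $x$. Applying Lemma~\ref{Pfarawayx}(b) directly to the pair $(x, z)$ --- valid since $z$ lies in a different component from $x$ --- delivers $c(j)=1$ for $(y,z)\in R_j$, producing a unique neighbor $z_1$ of $z$ at distance $k-1$ from $y$. The hypothesis $\Gamma(z)\cap T \subseteq \{y\}$ takes on the role of the far-from-$T\setminus\{y\}$ condition in (a): it guarantees that every non-$y$ neighbor of $z$ lies outside $T$, and in particular inside $z$'s own component, providing the structural isolation needed to run the common-neighbor counting argument. The lower and upper bounds on $\sum_{\ell \in I_k} p_{1\ell}^j$ then transfer, with the edge $\{z, z_1\}$ and its $p_{11}^1$ common neighbors replacing $\{x, x_1\}$ in part (a), yielding the required equality.
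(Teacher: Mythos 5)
Your plan for part (a) divides the identity into two inclusions, and the easy half is sound: taking $y^*\in\Gamma(y)$ in a component of $\Gamma\setminus T$ other than the one containing $x$, Lemma \ref{Pfarawayx}(b) together with Proposition \ref{Lc(i)}(b) does give $c(i)=1$, hence a unique neighbor $x_1$ of $x$ at distance $h-1$ from $y$, and each of the $p_{11}^1$ common neighbors of $x$ and $x_1$ lies at distance exactly $h$ from $y$ (distance $h-1$ would force it to coincide with $x_1$). That yields $\sum_{\ell\in I_h}p_{1\ell}^i\ge p_{11}^1$. The gap is the reverse inequality, which is the entire content of the lemma, and your proposed mechanism does not deliver it. You must show that every neighbor $w$ of $x$ with $d_\Gamma(w,y)=h$ satisfies $w\sim x_1$; but uniqueness of the $xy^*$- and $wy^*$-geodesics (granting you can establish the latter) only produces a unique neighbor $w_1$ of $w$ at distance $h-1$ from $y$ and tells you the paths $x,x_1,\dots,y,y^*$ and $w,w_1,\dots,y,y^*$ are the unique paths of length $h+1$ to $y^*$; nothing forces $w_1=x_1$ or $w\sim x_1$, and the walk $w\to x\to x_1\to\cdots\to y\to y^*$ you propose to ``lift'' has length $h+2>d_\Gamma(w,y^*)$, so geodesic uniqueness is silent about it. Given your lower bound, the assertion $w\sim x_1$ is in fact \emph{equivalent} to the lemma, so whatever proves it must contain the real work. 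The same gap recurs in your part (b), where moreover the neighbors of $z$ satisfy no distance-$D$ condition, so the $(w,y^*)$ set-up is unavailable; note that in the paper part (b) is not parallel to (a) but is deduced from it.

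The ingredient you are missing is the role-reversal built into the symmetry $p_{1\ell}^{j}=p_{\ell 1}^{j}$, which lets one count on whichever side of the cut the answer is visible. For $z=y^*$ a neighbor of $y$ separated from $x$, every $x$--$z$ route of small length passes through $y$, so the neighbors of $z$ at distance $h+1$ from $x$ are exactly $\Gamma(y)\cap\Gamma(z)$, giving $\sum_{\ell\in I_{h+1}}p_{1\ell}^{j}=p_{11}^1$ for $(x,z)\in R_j$ with no need to identify any distinguished neighbor of $x$. By the symmetry of the intersection numbers this same sum counts the neighbors of $x$ at distance $h+1$ from $z$, and for $x'\sim x$ one has $d_\Gamma(x',z)=d_\Gamma(x',y)+1$, so these are precisely the neighbors of $x$ at distance $h$ from $y$; that is part (a). Part (b) then follows by running the identical reversal on the pair $(x,z)$ for the $z$ of the hypothesis, using part (a) to evaluate the count on the $x$ side and the condition $\Gamma(z)\cap T\subseteq\{y\}$ to force all relevant routes from $z$'s neighbors back to $x$ through $y$. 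If you want to keep your $x_1$-based formulation, you should derive $w\sim x_1$ as a corollary of this counting rather than attempt it directly.
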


\begin{proof}
	Let $z$ be a neighbor of $y$ which is separated from $x$ by deletion of $T$.
	Since $d_\Gamma(x,z) \le D$, we see that $x$ is proximal only to $y$ and $[x,z]=[x,y] \cup \{z\}$.  
	The set  $\Gamma(y)\cap \Gamma(z)$ has size $p_{11}^1$ and every $z' \in \Gamma(y)\cap \Gamma(z)$ 
	lies at distance $h+1$  from $x$ in $\Gamma$. Since every other neighbor of $z$, with the exception 
	of $y$, is further away from  $x$, we have $\sum_{ \ell \in I_{h+1} } p_{1 \ell}^j = p_{11}^1$ where 
	$(x,z)\in R_j$. Reversing roles, we
	see that $x$ then has exactly $p_{11}^1$ neighbors which lie at distance $h+1$ from $z$. But, for
	$x' \sim x$, $d_\Gamma(x',y)=d_\Gamma(x',z)-1$. This gives (a). To obtain (b), observe that 
	every neighbor $x'$ of $x$ with $d_\Gamma(x',z)=d_\Gamma(x,z)$ must have  $d_\Gamma(x',y)=
	d_\Gamma(x,y)$. By part (a), there are exactly $p_{11}^1$ such vertices. So, for $(x,z)\in R_s$, 
	$ \sum_{\ell \in I_{h+k}} p_{1\ell}^s = p_{11}^1$. Reversing roles, we see that exactly $p_{11}^1$
	neighbors of $z$ lie at distance $h+k$ from $x$. But this is precisely the set of vertices adjacent 
	to $z$ which lie at distance $k$ from $y$.
\end{proof}

\cutsizetwo
\begin{proof}
	Let $T=\{y,y'\}$ be a disconnecting set of size two. Let  $D=\diam \Gamma$ and let $B$  be the vertex set of some connected component  of $\Gamma \setminus T$. First consider the 
	case where $y'$ is the unique vertex at distance $D$ from $y$ in $\Gamma$. Then every vertex
	is at distance $D$ from exactly one other vertex. On the other hand,  if $x\in B \cap \Gamma(y)$, then 
	any neighbor of $y'$ not lying in $B$ must be at distance $D$ from $x$ by the triangle inequality. It follows that 
	$y$ has exactly one neighbor not in $B$ and, symmetrically, exactly one neighbor in $B$. So the
	graph has valency two in this special case. For the remainder of the proof, assume $d_\Gamma(y,y') < D$.
	
	By Corollary \ref{C1}, we have $d_\Gamma(y,y')\ge 3$.   Let $x$ (resp., $x'$) 
	denote some vertex  at distance $D$ from $y'$ (resp., $y$). (Note $x\neq y$, $x' \neq y'$.) Let $B$ and $B'$ be the vertex sets of  two connected components  $\Xi$ and $\Xi'$, respectively, of $\Gamma \setminus T$ and assume
	$x\in B$. By Lemma  \ref{Pfarawayx}(b), any $z\in B'$ has a unique neighbor lying closer to $y$.  (Choosing $j\in I_2$ and $z\in R_j(y)$, we see that this implies $\Gamma$ is $K_{2,1,1}$-free.) By 
	Lemma \ref{Lxp111}(a), any $z \in B' \setminus \Gamma(y')$ has exactly $p_{11}^1$ neighbors $z'$ satisfying
	$d_\Gamma(z',y)=d_\Gamma(z,y)$.  Since $d_\Gamma(x,y)+d_\Gamma(y,x') > D$ and $d_\Gamma(x,y')
	+d_\Gamma(y',x') > D$, we must have $x' \in B$ also. So we can swap the roles of $x$ and $x'$, $y'$ and $y$,
	to find that any $z  \in B' \setminus \Gamma(y)$  has a unique neighbor closer to $y'$ and exactly $p_{11}^1$
	neighbors $z'$ with  $d_\Gamma(z',y')=d_\Gamma(z,y')$. Now select $z\in B'$ so as to maximize 
	$d_\Gamma(z,y) + d_\Gamma(z,y')$.   Since $d_\Gamma(y,y') \ge 3$, $z$ is non-adjacent to at least one of 
	$y,y'$; assume $z$ is not adjacent to $y'$.  Then $z$  has exactly $p_{11}^1$ neighbors
	$z'$ satisfying $d_\Gamma(z',y)=d_\Gamma(z,y)$.  Since $z$ maximizes  $d_\Gamma(z,y) + d_\Gamma(z,y')$,
	any neighbor of $z$ which lies farther away from $y$ must lie closer to $y'$. But there is exactly one such 
	vertex. In all, we have $| \Gamma(z) | = 1 + p_{11}^1 + 1$.  But $\Gamma$ is $K_{2,1,1}$-free so the
	neighborhood of any vertex is partitioned into cliques of size $p_{11}^1+1$. We find that $p_{11}^1+1$ 
	divides $p_{11}^1+2$. This can only happen if $p_{11}^1=0$; i.e., $\Gamma$ is triangle-free. But then 
	$z$ has degree two and $\Gamma$ must be a polygon.
\end{proof}

Our final two results deal with the special case where graph $\Gamma$ has diameter two.

\begin{thm}
	\label{Tdiam2}
	Let $(X,\cR)$ be a symmetric association scheme and let $\Gamma=\Gamma(X,R_1)$  be the graph 
	associated to a connected basis relation. If $\Gamma$ has diameter two and $|X| > k_1(t-1)+2$, 
	then  $\Gamma$ has vertex connectivity at least $t+1$ unless $t=k_1$.  
\end{thm}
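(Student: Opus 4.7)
The plan is to argue the contrapositive: suppose $\Gamma$ admits a disconnecting set $T$ with $|T| = t$ where $t < k_1$, and show that $|X| \leq k_1(t-1)+2$.

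The first step is to extract the key consequence of Corollary \ref{C1}. Since $|T| = t < k_1 = |\Gamma(x)|$ for every $x \in X$, the containment $\Gamma(x) \subseteq T$ is impossible, so if $T \subseteq x^\bot$ held for some $x \in X \setminus T$, Corollary \ref{C1} would force $\Gamma \setminus T$ to be connected, contradicting the hypothesis. Hence $T \not\subseteq \{x\} \cup \Gamma(x)$ for every $x \in X \setminus T$, which simplifies to the crucial bound
\[ a_x := |\Gamma(x) \cap T| \leq t - 1 \quad\text{for every } x \in X \setminus T. \]
Combined with $k_1$-regularity, this implies every connected component $B_i$ of $\Gamma \setminus T$ satisfies $|B_i| \geq k_1 - (t-1) + 1 = k_1 - t + 2$.

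Next I would exploit the diameter two hypothesis. For $x \in B_i$ and $z \in B_j$ with $i \neq j$, the pair lies at distance two in $\Gamma$ with $\Gamma(x) \cap \Gamma(z) \subseteq T$. Double-counting length-two paths $x \sim y \sim z$ with $y \in T$ and $z \notin B_i \cup T$ yields, for each fixed $x \in B_i$,
\begin{equation*}
\Big| \bigcup_{j \neq i} B_j \Big| \;\leq\; \sum_{z \notin B_i \cup T} |\Gamma(x) \cap \Gamma(z)| \;=\; \sum_{y \in \Gamma(x) \cap T} |\Gamma(y) \setminus (B_i \cup T)| \;\leq\; a_x(k_1 - 1),
\end{equation*}
where the last inequality uses that each $y \in \Gamma(x) \cap T$ counts $x \in B_i$ among its $k_1$ neighbors, leaving at most $k_1 - 1$ neighbors outside $B_i \cup T$. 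Substituting $a_x \leq t-1$ yields $|X| \leq |B_i| + k_1(t-1) + 1$.

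The main obstacle is to tighten this to $|X| \leq k_1(t-1) + 2$, since $|B_i| \geq k_1 - t + 2 \geq 2$ prevents a direct substitution. I would handle this by averaging the path-counting inequality over all $x \in B_i$ to obtain a bound on $|B_i| \cdot \sum_{j \neq i} |B_j|$ in terms of $|E(B_i, T)|$, and coupling with the dual bound from a second component $B_j$. An AM--GM estimate applied to $\sum_{y \in T} |\Gamma(y) \cap B_i| \cdot |\Gamma(y) \cap B_j|$ (which equals the number of length-two paths between $B_i$ and $B_j$ through $T$, and is bounded below by $|B_i||B_j|$ by diameter two) yields a product bound of the form $|B_i| \cdot |B_j| \leq t k_1(k_1-1)/2$. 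Combined with $|B_i|, |B_j| \geq k_1 - t + 2$, the strict inequality $t \neq k_1$, and parity or structural obstructions (invoking Lemma \ref{Lspec-cut} in the $K_{2,1,1}$-free case where neighborhoods decompose into cliques of size $p_{11}^1 + 1$), the only feasible configurations satisfy $|X| \leq k_1(t-1) + 2$; the case $s \geq 3$ components is ruled out by the lower bound $|X| \geq t + s(k_1-t+2)$ exceeding this threshold.
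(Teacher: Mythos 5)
Your opening steps are correct and overlap with ingredients of the paper's argument: Corollary \ref{C1} does force $|\Gamma(x)\cap T|\le t-1$ for every $x\in X\setminus T$ when $t<k_1$, the component bound $|B_i|\ge k_1-t+2$ follows, and the diameter-two count $\sum_{j\ne i}|B_j|\le a_x(k_1-1)$ is valid. But the proof breaks exactly where you flag the ``main obstacle,'' and the repairs you sketch do not close it. The inequality $|X|\le |B_i|+k_1(t-1)+1$ combined with $|B_i|\ge k_1-t+2\ge 3$ can never deliver $|X|\le k_1(t-1)+2$; even choosing $B_i$ to be the smallest of $s$ components only gives $|X|\le\bigl(s(k_1(t-1)+1)-t\bigr)/(s-1)$, which exceeds the target for every $s$. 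The AM--GM step yields at best $|B_i||B_j|\le tk_1^2/4$, which is not in tension with $|X|$ being large (already for $t=3$ and modest $k_1$ nothing is contradicted); Lemma \ref{Lspec-cut} cannot be invoked because its hypothesis ($K_{2,1,1}$-freeness) is not available in this theorem; and the final claim that $s\ge 3$ components is ``ruled out'' by a \emph{lower} bound on $|X|$ is logically backwards in your contrapositive framing --- a large $|X|$ contradicts nothing you have established, since the upper bound on $|X|$ is precisely what you are trying to prove.

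The missing idea is a sharper use of the hypothesis $|X|>k_1(t-1)+2$ itself, to produce a vertex meeting $T$ in at most \emph{one} point rather than $t-1$. The paper fixes $y\in T$ and bounds the set of vertices adjacent to some element of $T\setminus\{y\}$ by $(k_1-1)(t-1)+1$, which is strictly less than $|X|-t$; hence some $x\in X\setminus T$ has $\Gamma(x)\cap T\subseteq\{y\}$. Diameter two then forces $x\sim y$ and forces every $z$ outside $B\cup T$ (where $B$ is the component of $x$) to be adjacent to $y$, since the only common neighbor available to $x$ and $z$ is $y$. Repeating this for each $y\in T$ (each new witness $x_y$ necessarily lies in $B$) shows $T\subseteq\Gamma(z)$ for every $z\notin B\cup T$, and Corollary \ref{C1} then forces $T=\Gamma(z)$, i.e.\ $t=k_1$. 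Your bound $a_x\le t-1$, obtained from Corollary \ref{C1} alone without the cardinality hypothesis, is too weak to substitute for this step.
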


\begin{proof}
	Let $T$ be a minimal disconnecting set of size at most $t$. For each $y\in T$, we 
	use the fact that any two vertices have at least one common neighbor to obtain
	$$ \left| \bigcup_{y \neq y' \in T} \Gamma(y') \right|  \le (k_1-1)(t-1)+1 $$
	so that there is some $x\in X \setminus T$ not adjacent to any element of $T$ except possibly $y$.
	Let $B$ be the component of 
	$\Gamma \setminus T$ containing $x$.  Since $\Gamma$ has diameter two, $x \sim y$ and 
	every $z \in X \setminus (B\cup T)$ must also be adjacent to $y$.  Swapping roles of the vertices in 
	$T$, we find that, for every $y$ in $T$, there is some vertex $x$ (necessarily in $B$) 
	with $\Gamma(x) \cap T = \{y\}$. But this implies that every 
	$z \in X \setminus (B\cup T)$ is adjacent to every vertex in $T$,  so $T = \Gamma(z)$
	for every $z \not\in B \cup T$.
\end{proof}

\begin{remark}
	With reference to Theorem \ref{Tdiam2}, we expect the case of $t=k_1$ to be very rare. If $t=k_1$, then we find that 
	$X \setminus (B\cup T)=\{z\}$ is a singleton and all but at most $k_1-2$ elements of 
	$B$ have exactly one neighbor in $T=\Gamma(z)$.  With $|X| \ge k_1^2 - k_1 + 3$  so
	close to the Moore bound, does this condition force $\Gamma$ to be a Moore graph?
\end{remark}

\cutsizethree
\begin{proof} The case where $\Gamma$ admits a disconnecting set of size two is handled by Theorem \ref{Tcutsize2}. Let $T=\{y_1,y_2,y_3\}$ be a minimal disconnecting set of size three.
	
	\noindent \underline{\sl Case (i):} $T \subseteq a^\bot$ for some $a\in X$.
	
	By Corollary \ref{C1}, we have $T=\Gamma(a)$ and $\Gamma$ has valency three; i.e., 
	$\Gamma$ is isomorphic to either $K_{3,3}$ or the Petersen graph.
	
	\medskip
	
	\noindent \underline{\sl Case (ii):} Assume $T$ is not contained in $a^\bot$ for any vertex $a$.
	
	In view of Theorem \ref{Tdiam2}, we may assume $|X| \le 2k_1  + 2$. (There is no cubic graph on nine vertices.)
	Let $B$ and $B'$ denote the vertex sets of two distinct connected components of $\Gamma \setminus T$
	and assume, without loss of generality, that $|B| \le |B'|$. Then we have $|B| \le \frac{ |X| - 3}{2}$. So
	$|B| - 1 \le k_1 - 2$. In view of Case (i), 
	we may assume each $x\in B$ is adjacent to exactly two members of $T$ and every pair of distinct
	vertices in $B$ is adjacent. This forces $|B|= k_1-1$. 
	Looking at $x \sim x'$ in $B$, we find that $p_{11}^1 \ge |B|-2 + 1$
	since $x$ and $x'$ must share a common neighbor in $T$.  Now compare this to some $y \in T$. Since 
	we are not in Case (i), some $y\in T$ is not adjacent to any other element of $T$. For this $y$, choose  some 
	neighbor $z$ of $y$ where $z\in B$ if $|\Gamma(y) \cap B| \le \frac{k_1}{2}$ and $z \in B'$ if  
	$|\Gamma(y) \cap B| > \frac{k_1}{2}$. The number of common neighbors of $y$ and $z$ is then at
	most $\frac{k_1}{2} - 1$. The inequalities $k_1 -2 \le p_{11}^1 \le \frac{k_1}{2} - 1$ then imply that 
	$\Gamma$ is a polygon, which is impossible as $T$ was chosen to be minimal.
\end{proof}

	\appendix
\label{appendix}
\chapter{Feasible parameter sets for LSSDs}
\label{families}
The Handbook of Combinatorial Designs gives us a list of 21 distinct families of symmetric designs. We now examine each family to determine which parameter sets could be the incident symmetric design between fibers in a LSSDs with three or more fibers. The two main conditions we will employ are that $s=\sqrt{k-\lambda}$ and $\nu = \frac{k(k\pm s)}{v}$ are integers, though recall from Lemma \ref{gcd} (and the remark that followed) that for any feasible parameter set, the following must hold in order for the set to be realizable:
\begin{enumerate}[label=(\roman*)]
	\item (Corollary \ref{comp}) $v$ must be composite;
	\item (Lemma \ref{gcd}\emph{(ii)}) $\gcd(v,k)>1$;
	\item (Lemma \ref{gcd}\emph{(iii)}) $\gcd(v,s)>1$;
	\item (Lemma \ref{gcd}\emph{(i)}) At most one of $\frac{k(k\pm s)}{v}$ is integral.
\end{enumerate}
Our results show that Families 6, 7, 9, 12, 13, and 14 are feasible. Further, Families 15-19 are feasible in specific cases ($m=1$) but will not be feasible in general. It should be noted that this does not mean that we can find LSSDs in each of these families with $w>2$, instead this means that we cannot disprove the existence of such LSSDs using only our integrality conditions. In fact, two of the families (McFarland/Wallis and Spence) were ruled out by Jedwab et al.\ (\cite{Jedwab2017}) in the case where the symmetric designs come from certain known constructions of difference sets. It is still open whether these families can produce LSSDs which do not arise from linking systems of difference sets.
\subsection*{Family 1 (Point-hyperplane Designs)}
\[\begin{aligned}
v &= q^m+\dots+1, \quad
k = q^{m-1}+\dots+1,\quad
\lambda = q^{m-2}+\dots+1,\quad n= q^{m-1},\quad s=q^{\frac{m-1}{2}}.\end{aligned}\]
Since $s$ is a power of $q$, we know that $\gcd(s,v) = 1$. Therefore via \emph{(iii)}, any LSSD with these design parameters will have $w=2$.
\subsection*{Family 2 (Hadamard matrix designs)}
\[\begin{aligned}
v &= 4n-1, \qquad k= 2n-1, \qquad \lambda = n-1,\qquad s = \sqrt{n}.\\
\end{aligned}\]
Since $s$ divides $v+1$, we know that $\gcd(s,v) = 1$. Therefore via \emph{(iii)}, any LSSD with these design parameters will have $w=2$.

\subsection*{Family 3 (Chowla)}
\[\begin{aligned}
v&=4t^2+1, \qquad k=t^2, \qquad \lambda = \frac{1}{4}(t^2-1),
\qquad s=\frac{1}{2}\sqrt{3t^2+1}.
\end{aligned}\]
Chowla designs require that $v$ is prime, therefore any LSSD with these design parameters will have $w=2$ due to \emph{(i)}.

\subsection*{Family 4 (Lehmer)}
\begin{enumerate}[label=(\arabic*)]
	\item
	\[\begin{aligned}
	v&= 4t^2+9,\qquad k=t^2+3,\qquad \lambda = \frac{1}{3}(t^2+3),\qquad  n=\frac{3}{4}k.
	\end{aligned}\]
	\item
	\[\begin{aligned}
	v&=8t^2+1 = 64u^2+9, \qquad k=t^2,\qquad \lambda=u^2,\qquad  n=t^2-u^2.\\
	\end{aligned}\]
	
	\item
	\[\begin{aligned}
	v&=8t^2+49 = 64u^2+441,\qquad k=t^2+6,\qquad \lambda = u^2+7,\qquad  n=t^2-u^2-1.\\
	\end{aligned}\]
	All three of the Lehmer designs require $v$ to be prime, therefore any LSSD with these design parameters will have $w=2$ due to \emph{(i)}.
\end{enumerate}

\subsection*{Family 5 (Whiteman)}
\[\begin{aligned}
v&=pq, \qquad k=\frac{1}{4}(pq-1), \qquad \lambda = \frac{1}{16}(pq-5),\qquad s=\frac{1}{4}(3p+1).
\end{aligned}\]
where $p$ and $q = 3p+2$ are both prime. Since $\gcd(s,v) >1$ we must have $s = p$ or $s=q$. However $s=q$ implies that $p$ is negative while $s=p$ implies that $p=1$ and $q=5$. As this case gives the design parameters $(5,1,0)$, only the degenerate case is possible. Therefore any non-degenerate LSSD using Whiteman design parameters will require $w=2$.

\subsection*{Family 6 (Menon)}
\[\begin{aligned}
v &= 4t^2, \qquad k= 2t^2-t, \qquad \lambda = t^2-t,\\
n&= t^2, \qquad s = t,\\
\nu &= \frac{(2t^2-t)(2t^2-t\pm t)}{4t^2}=\frac{1}{2}(2t-1)\left(t-\frac{1\mp1}{2}\right).
\end{aligned}\]
Since $2t-1$ will always be odd, we must have that $\left(t-\frac{1\mp1}{2}\right)$ is even. This means that for odd $t$, we must choose the $+$ so that we have $\nu = (2t-1)\frac{t-1}{2}$. If instead $t$ is even then we must choose the $-$ so that $\nu = (2t-1)\frac{t}{2}$. This means that Menon design parameters are feasible for all $t>0$, though our choice of $\mu$-heavy or $\nu$-heavy depends on the parity of $t$.

\subsection*{Family 7 (Wallis; McFarland)}
\[\begin{aligned}
v &= q^{m+1}(q^m+\dots+q+2), \quad k= q^m(q^m+\dots+q+1), \quad \lambda = q^m(q^{m-1}+\dots+q+1),\\ s &= q^m,\quad
\nu = \frac{q^m(q^m+\dots+q+1)(q^m(q^m+\dots+q+1)\pm q^m)}{q^{m+1}(q^m+\dots+q+2)}.\\
\end{aligned}\]
Consider first the case of $\nu$-heavy parameters,
\[\begin{aligned}
\nu&=\frac{q^m(q^m+\dots+q+1)(q^m(q^m+\dots+q+2))}{q^{m+1}(q^m+\dots+q+2)}=q^{m-1}(q^m+\dots+q+1).
\end{aligned}\]
As this is always an integer, we note using \emph{(iv)} that $\mu$-heavy parameters will never be feasible.

\subsection*{Family 8 (Wilson; Shrikhande and Singhi)}
\[\begin{aligned}
v&= m^3+m+1, \qquad k=m^2+1,\qquad \lambda = m,\qquad n=m^2-m+1.\\
\end{aligned}\]
Note that $v = mk+1$. Therefore $\gcd(k,v) = 1$ and, from \emph{(ii)}, any LSSD using these design parameters will have $w=2$.

\subsection*{Family 9 (Spence)}
\[\begin{aligned}
v&= 3^m\left(\frac{3^m-1}{2}\right),\qquad k=3^{m-1}\left(\frac{3^m+1}{2}\right),\qquad \lambda = 3^{m-1}\left(\frac{3^{m-1}+1}{2}\right),\qquad 
s=3^{m-1},\\
\nu&=\frac{\frac{1}{2}3^{m-1}(3^m+1)(\frac{1}{2}3^{m-1}(3^m+1)\pm3^{m-1})}{ \frac{1}{2}3^m(3^m-1)}.\end{aligned}\]

First consider $\mu$-heavy parameters,
\[\begin{aligned}
\nu&=\frac{(3^m+1)\left(\frac{1}{2}3^{m-1}(3^m-1)\right)}{3(3^m-1)}=3^{m-2}\left(\frac{3^m+1}{2}\right).
\end{aligned}\]
As this is always an integer, we note using \emph{(iv)} that $\nu$-heavy parameters will never be feasible.
\subsection*{Family 10 (Rajkundlia and Mitchell; Ionin)}
\[\begin{aligned}
v&=1+qr\left(\frac{r^m-1}{r-1}\right),\qquad k=r^m,\qquad \lambda = r^{m-1}\left(\frac{r-1}{q}\right),\qquad r=\frac{q^d-1}{q-1}.
\end{aligned}\]
Since $r$ divides $v-1$ and $k$ is a power of $r$, we know that $\gcd(v,k) = 1$. Therefore, by \emph{(ii)}, any LSSD using these design parameters will require $w=2$.
\subsection*{Family 11 (Wilson; Brouwer)}
\[\begin{aligned}
v&= 2(q^m+\dots + q)+1,\qquad k=q^m,\qquad \lambda=\frac{1}{2}q^{m-1}(q-1),\qquad n=\frac{1}{2}q^{m-1}(q+1).\\
\end{aligned}\]
Since $q$ divides $v-1$ and $k$ is a power of $q$, we must have that $\gcd(k,v) = 1$. Therefore, by \emph{(ii)}, any LSSD using these design parameters will require $w=2$.
\subsection*{Family 12 (Spence, Jungnickel and Pott, Ionin)}
\[\begin{aligned}
v&=q^{d+1}\left(\frac{r^{2m}-1}{r-1}\right),\qquad k=r^{2m-1}q^d,\qquad\lambda = (r-1)r^{2m-2}q^{d-1},\qquad s=r^{m-1}q^d, \\r&=\frac{q^{d+1}-1}{q-1},\qquad
\nu=\frac{r^{2m-1}q^d(r^{2m-1}q^d\pm r^{m-1}q^d)}{q^{d+1}\left(\frac{r^{2m}-1}{r-1}\right)} = \frac{q^{d-1}r^{3m-2}(r^{m}\pm 1)}{r^{2m-1}+\dots+1}.
\end{aligned}\]
First consider when $m=1$,
\[\begin{aligned}
v&=q^{d+1}\left(q^d+\dots+q+2\right),\qquad k=q^d(q^d+\dots+1),\qquad\lambda=q^{d}\left(q^{d-1}+\dots+q+1\right),\\ s&=q^d.
\end{aligned}\]
giving us the same design parameters as McFarland (Family 7). While these constructions may be distinct, our conditions only depend on the design parameters and thus these will work for $\nu$-heavy designs when $m=1$. If $m> 1$ however, $r^{3m-2}$ is relatively prime with the denominator, so we must have $(r^{2m-1}+\dots+1)\vert q^{d-1}\left(r^m\pm 1\right)$. Since $r = \frac{q^{d+1}-1}{q-1} = q^{d}+\dots+1$, we have that $q^{d-1}<r$. Therefore $q^{d-1}\left(r^m\pm 1\right)<r^{m+1}\pm r<r^{2m-1}\dots+1$ and thus any LSSD using these design parameters with $m> 1$ will require $w=2$.

\subsection*{Family 13 (Davis and Jedwab)}
\[\begin{aligned}
v&=2^{2d+4}\left(\frac{2^{2d+2}-1}{3}\right),\quad k=2^{2d+1}\left(\frac{2^{2d+3}+1}{3}\right),\quad \lambda = 2^{2d+1}\left(\frac{2^{2d+1}+1}{3}\right),\quad s=2^{2d+1},\\
\nu&=\frac{2^{2d+1}\left(\frac{2^{2d+3}+1}{3}\right)\left(2^{2d+1}\left(\frac{2^{2d+3}+1}{3}\right)\pm 2^{2d+1}\right)}{2^{2d+4}\left(\frac{2^{2d+2}-1}{3}\right)}=\frac{\left(2^{2d+3}+1\right)\left(\left(2^{2d+3}+1\right)\pm 3\right)2^{2d-2}}{3\left(2^{2d+2}-1\right)}.\\
\end{aligned}\]
First consider $\mu$-heavy parameters,
\[\begin{aligned}
\nu&=\frac{\left(2^{2d+3}+1\right)\left(2^{2d+3}-2\right)2^{2d-2}}{3\left(2^{2d+2}-1\right)}=\frac{\left(2^{2d+3}+1\right)2^{2d-1}}{3}.
\end{aligned}\]
As $2^n + 1$ is divisible by 3 any time $n$ is odd, this will always be an integer. Therefore, using \emph{(iv)}, $\nu$-heavy parameters will never be feasible.

\subsection*{Family 14 (Chen)}
\[\begin{aligned}
v&=4q^{2d}\left(\frac{q^{2d}-1}{q^2-1}\right),\quad k=q^{2d-1}\left(1+2\left(\frac{q^{2d}-1}{q+1}\right)\right),\quad\lambda = q^{2d-1}(q-1)\left(\frac{q^{2d-1}+1}{q+1}\right),\\ s&=q^{2d-1},\quad
\nu=\frac{q^{2d-1}\left(1+2\left(\frac{q^{2d}-1}{q+1}\right)\right)\left(q^{2d-1}\left(1+2\left(\frac{q^{2d}-1}{q+1}\right)\right)\pm q^{2d-1}\right)}{4q^{2d}\left(\frac{q^{2d}-1}{q^2-1}\right)}.\\
\end{aligned}\]
First consider $\mu$-heavy parameters,
\[\begin{aligned}
\nu&=\frac{\left(1+2\left(\frac{q^{2d}-1}{q+1}\right)\right)\left(2q^{2d-1}\left(\frac{q^{2d}-1}{q+1}\right)\right)}{4q\left(\frac{q^{2d}-1}{q^2-1}\right)}=\frac{q^{2d-2}(q-1)\left(1+2\left(\frac{q^{2d}-1}{q+1}\right)\right)}{2}\\
\end{aligned}\]
Since $2$ will always divide either $q^{2d-2}$ or $q-1$, we have that $\nu$ is integral under $\mu$-heavy parameters. Then from \emph{(iv)}, $\nu$-heavy parameters will never be feasible.

\subsection*{Family 15 (Ionin)}
\[\begin{aligned}
v&=q^d\left(\frac{r^{2m}-1}{(q-1)(q^d+1)}\right),\quad k= q^dr^{2m-1},\quad \lambda = q^d(q^d+1)(q-1)r^{2m-2},\quad s=q^dr^{m-1},\\
r&=q^{d+1}+q-1,\\
\nu&=\frac{q^dr^{2m-1}\left(q^dr^{2m-1}\pm q^dr^{m-1}\right)}{q^d\left(\frac{r^{2m}-1}{(q-1)(q^d+1)}\right)}=\frac{(q-1)(q^d+1)q^dr^{3m-2}}{(r^m\mp1)}.\\
\end{aligned}\]
First assume that $m=1$. Then,
\[\begin{aligned}
\nu&=\frac{(q-1)(q^d+1)q^dr}{(r\mp1)}.
\end{aligned}\]
First considering $\mu$-heavy parameters,
\[\begin{aligned}
\nu&=\frac{(q-1)(q^d+1)q^dr}{(r+1)}=\frac{(q-1)(q^d+1)q^dr}{q(q^d+1)}=(q-1)q^{d-1}r
\end{aligned}\]
Therefore these design parameters are feasible using $\mu$-heavy parameters when $m=1$ (and via \emph{(iv)}, $\nu$-heavy parameters are infeasible). Now consider when $m> 2$. In this case, note that $r^{3m-2}$ is relatively prime to $r^m\mp 1$. Therefore if $\nu$ is integral, then $r^m\mp 1$ must divide $q^d(q-1)(q^d+1)$. However, since $q\geq 2$ we know that $r = q(q^d+1)-1>q^d+1$ and $r = q^{d+1}+q-1>q^{d+1}-q^d$. Therefore $r^m\geq r^2 > q^d(q-1)(q^d+1)$ meaning that it is not possible for $r^m$ to divide the latter. Therefore $\nu$ will never be integral when $m>1$.

\subsection*{Family 16 (Ionin)}
\[\begin{aligned}
v&=2\cdot3^d\left(\frac{q^{2m}-1}{3^d+1}\right),\qquad k= 3^dq^{2m-1},\qquad\lambda = \frac{1}{2}3^d(3^d+1)q^{2m-2},\qquad s = 3^dq^{m-1},\\ q&=\frac{1}{2}(3^{d+1}+1),\qquad
\nu=\frac{3^dq^{2m-1}(3^dq^{2m-1}\pm3^dq^{m-1})}{2\cdot3^d\left(\frac{q^{2m}-1}{3^d+1}\right)}=\frac{3^d(3^d+1)q^{3m-2}}{2(q^m\mp1)}.\\
\end{aligned}\]
We again must consider the case when $m=1$ separately. If $m=1$, then
\[\begin{aligned}\nu&=\frac{3^d(3^d+1)q}{2(q\mp1)}\\
\end{aligned}\]
We first consider $\mu$ heavy parameters,
\[\begin{aligned}
\nu&=\frac{3^d(3^d+1)q}{3^{d+1}+3}=3^{d-1}q\\
\end{aligned}\]
Therefore when $m=1$, these design parameters are feasible with $\mu$-heavy parameters (and via \emph{(iv)}, $\nu$-heavy parameters are infeasible). Using the same arguments as before, we can quickly find that $\nu$ will not be an integer for $m>1$ noting that $q$ is relatively prime to $q^m\mp 1$ and $q^m-1> 3^d(3^d+1)$.

\subsection*{Family 17 (Ionin)}
\[\begin{aligned}
v&=3^d\left(\frac{q^{2m}-1}{2(3^d-1)}\right),\qquad k= 3^dq^{2m-1},\qquad\lambda = 2\left(3^d\right)(3^d-1)q^{2m-2},\qquad s=3^dq^{m-1},\\ q&=3^{d+1}-2,\\
\nu&=\frac{3^dq^{2m-1}\left(3^dq^{2m-1}\pm 3^dq^{m-1}\right)}{3^d\left(\frac{q^{2m}-1}{2(3^d-1)}\right)}=\frac{3^dq^{3m-2}\left(q^m\pm 1\right)\left(2(3^d-1)\right)}{\left(q^{2m}-1\right)}=\frac{2q^{3m-2}3^d(3^d-1)}{\left(q^{m}\mp1\right)}.\\
\end{aligned}\]
As before, we first consider the case when $m=1$ using $\nu$-heavy parameters,
\[\begin{aligned}
\nu&=\frac{2q^{3m-2}3^d(3^d-1)}{\left(q-1\right)}=\frac{2q^{3m-2}3^d(3^d-1)}{\left(3^{d+1}-3\right)}=2q^{3m-2}3^{d-1}.\\
\end{aligned}\]
Therefore when $m=1$, these design parameters are feasible with $\nu$-heavy parameters (and via \emph{(iv)}, $\mu$-heavy parameters are infeasible). We again find that $m>1$ will not permit $\nu$ to be an integer as $q^{3m-2}$ is relatively prime to $q^m\pm 1$ and $q^m-1>2\cdot3^d(3^d-1)$.

\subsection*{Family 18 (Ionin)}
\[\begin{aligned}
v&=2^{2d+3}\left(\frac{q^{2m}-1}{q+1}\right),\quad k= 2^{2d+1}q^{2m-1},\quad \lambda = 2^{2d-1}(q+1)q^{2m-2},\quad s=2^{2d+1}q^{m-1},\\ q&=\frac{1}{3}\left(2^{2d+3}+1\right),\quad
\nu=\frac{2^{2d+1}q^{2m-1}\left(2^{2d+1}q^{2m-1}\pm 2^{2d+1}q^{m-1}\right)}{2^{2d+3}\left(\frac{q^{2m}-1}{q+1}\right)}=\frac{(q+1)2^{2d-1}q^{3m-2}}{\left(q^{m}\mp1\right)}.\\
\end{aligned}\]
First consider $m=1$ using $\mu$-heavy parameters, then $\nu = 2^{2d-1}q^{3m-2}$. Due to \emph{(iv)} we see that $\nu$-heavy parameters will not be feasible. Further, $\nu$ is non integral when $m>1$ noting that $q^{3m-2}$ is relatively prime to $q^m\mp 1$ and $q^m-1>(q+1)2^{2d-1}$.

\subsection*{Family 19 (Ionin)}
\[\begin{aligned}
v&=2^{2d+3}\left(\frac{q^{2m}-1}{3q-3}\right),\quad k = 2^{2d+1}q^{2m-1}, \quad \lambda = 3\left(2^{2d-1}\right)(q-1)q^{2m-2},\quad s = 2^{2d+1}q^{m-1},\\ q&=2^{2d+3}-3,\quad
\nu=\frac{2^{2d+1}q^{2m-1}\left(2^{2d+1}q^{2m-1}\pm 2^{2d+1}q^{m-1}\right)}{2^{2d+3}\left(\frac{q^{2m}-1}{3q-3}\right)}=\frac{2^{2d-1}q^{3m-2}3(q-1)}{\left(q^m\mp1\right)}.\\
\end{aligned}\]
If $m=1$ and we take $\nu$-heavy parameters then $\nu = 2^{2d-1}3q$. Due to \emph{(iv)} we see that $\nu$-heavy parameters will not be feasible. Further, $\nu$ is non integral when $m>1$ noting that $q^{3m-2}$ is relatively prime to $q^m\mp 1$ and $q^m\mp1>3\cdot2^{2d-1}(q-1)$.

\subsection*{Family 20 (Ionin)}
For this family we use the only known realization where $p=2$ and $q=2^d-1$ is a Mersenne prime.
\[\begin{aligned}
v&=1+2^{d+1}\frac{2^{dm}-1}{2^d+1},\qquad k=2^{2dm} ,\qquad \lambda = 2^{2dm-d-1}(2^{d}+1),\qquad n=2^{2dm-d-1}(2^d-1).\\
\end{aligned}\]
Our first restriction tells us that $n$ must be a square. However since $2$ does not divide $2^d-1$, we know that $2^d-1$ must be a square in order for $n$ to be a square giving us a contradiction. Thus any LSSD with these design parameters will require $w=2$.
\subsection*{Family 21 (Kharaghani and Ionin)}
\[\begin{aligned}
v&=4t^2\left(\frac{q^{m+1}-1}{q-1}\right),\quad k=(2t^2-t)q^m,\quad \lambda = (t^2-t)q^m,\quad s=tq^{\frac{m}{2}},\quad q=(2t-1)^2,\\
\nu &=\frac{(2t^2-t)q^m\left((2t^2-t)q^m\pm tq^{\frac{m}{2}}\right)}{4t^2\left(\frac{q^{m+1}-1}{q-1}\right)}
=\frac{(2t-1)^{3m+1}(q-1)}{4\left((2t-1)^{m+1}\mp1\right)}.\\
\end{aligned}\]
First, since $(2t-1)$ is odd, we have that $(2t-1)^{3m+1}$ is relatively prime to $4((2t-1)^{m+1}\mp 1)$. However since $m\geq1$, $4((2t-1)^{m+1}\mp1) \geq 4(q-1)$ and thus $\nu$ is never integral. Thus any LSSD with these design parameters will require $w=2$.
\subsection*{Summary}
We have shown here that only Families 6, 7, 9, 13, and 14 will always satisfy our integrality conditions. Further, Families 12, 15, 16, 17, 18, and 19 satisfy our integrality conditions whenever $m=1$. Finally all remaining families will not allow for any LSSDs with $w>2$. 
	\addcontentsline{toc}{chapter}{Bibliography}
	\bibliographystyle{abbrv}
	\bibliography{Dissertation}
	\addcontentsline{toc}{chapter}{Index}
	\printindex
\end{document}